\newcommand{\ideal}[1]{\left\langle #1 \right\rangle}
\newcommand{\nat}{\mathbb{N}}
\renewcommand{\l}{\ell}
\def\rightharpoonupfill@{\arrowfill@\relbar\relbar\rightharpoonup}
\newcommand{\overrightharpoonup}{\mathpalette{\overarrow@\rightharpoonupfill@}}
\newcommand{\cl}[1]{\mathit{c}\ell\left({#1}\right)}
\newcommand{\quotient}{\faktor}
\newcommand{\comments}[1]{}
\newcommand{\combgame}[1]{#1}
\newcommand{\Mis}{Mis\`ere }
\newcommand{\mis}{mis\`ere }
\newcommand{\ms}{mis\`ere}
\newcommand{\then}{\hookrightarrow}
\newcommand{\Next}{\mathcal{N}}
\newcommand{\Prev}{\mathcal{P}}
\newcommand{\Left}{\mathcal{L}}
\newcommand{\Right}{\mathcal{R}}
\newcommand{\monoid}[1]{\mathscr{#1}}
\newcommand{\rhob}{\overline{\rho}}
\newcommand{\sigmab}{\overline{\sigma}}
\newcommand{\taub}{\overline{\tau}}
\newcommand{\Genus}[1]{\Gamma\left(#1\right)}
\newcommand{\xib}{\overline{\xi}}
\newcommand{\dand}{\bigtriangleup}
\newcommand{\dor}{\bigtriangledown}
\newcommand{\ab}[1]{\mathrm{ab}\mathit{#1}}
\renewcommand{\L}{\mathsf{L}}
\newcommand{\R}{\mathsf{R}}
\newcommand{\Gr}{\mathcal{G}^{+}}
\newcommand{\Gm}{\mathcal{G}^{-}}
\newcommand{\mex}{\text{mex}}
\def\imod#1{\allowbreak\mkern3mu({\operator@font mod}\,\,#1)}
\newtheorem{theorem}{Theorem}[section]
\newtheorem{lemma}[theorem]{Proposition}
\newtheorem{proposition}[theorem]{Proposition}
\newtheorem{corollary}[theorem]{Corollary}
\newtheorem{conjecture}[theorem]{Conjecture}
\newtheorem{openproblem}[theorem]{Open Problem}
\newtheorem{notation}[theorem]{Notation}
\newtheorem{definition}[theorem]{Definition}
\newtheorem{construction}[theorem]{Construction}
\newtheorem{example}[theorem]{Example}
\numberwithin{theorem}{section}
\numberwithin{figure}{section}
\numberwithin{table}{section}
\begin{document}
%\draft

\phd

\title{AN INVESTIGATION OF PARTIZAN MIS\`ERE GAMES}
\author{M.R. Allen}

\submitdate{July 17, 2009}
\defencedate{July 17, 2009}
\convocation{October}{2009}

\supervisor{\text{}}
\examiner{\text{}}
\firstreader{\text{}}
\secondreader{\text{}}

\dedicate{nnnnnnnnnn \\ \small{(she'll know what it means)}}

%\notitlepage
%\nofront
%\nobib
%\nolistoftables
%\nolistoffigures

\frontmatter

\begin{abstract}
	Combinatorial games are played under two different play conventions: normal play, where the last player to move wins, and \mis play, where the last player to move loses.   Combinatorial games are also classified into impartial positions and partizan positions, where a position is impartial if both players have the same available moves and partizan otherwise.  
	
	\Mis play games lack many of the useful calculational and theoretical properties of normal play games.  Until Plambeck's indistinguishability quotient and \mis monoid theory were developed in 2004, research on \mis play games had stalled.  This thesis investigates partizan combinatorial \mis play games, by taking Plambeck's indistinguishability and \mis monoid theory for impartial positions and extending it to partizan ones, as well as examining the difficulties in constructing a category of \mis play games in a similar manner to Joyal's category of normal play games. 
	
	This thesis succeeds in finding an infinite set of positions which each have finite \mis monoid, examining conditions on positions for when $* + *$ is equivalent to 0, finding a set of positions which have Tweedledum-Tweedledee type strategy, and the two most important results of this thesis: giving necessary and sufficient conditions on a set of positions $\Upsilon$ such that the \mis monoid of $\Upsilon$ is the same as the \mis monoid of $*$ and giving a construction theorem which builds all positions $\xi$ such that the \mis monoid of $\xi$ is the same as the \mis monoid of $*$.
\end{abstract}

\begin{acknowledgements}
	I would like to thank my supervisor, Richard J. Nowakowski, for his support during the past three years, especially as I took a rather meandering path through Ethiopia, pregnancy, and raising a baby to get here.
	
	I would like to thank the Natural Sciences and Engineering Research Council of Canada, the Killam Foundation, and Patrick Lett for their monetary support.  I also received money from my parents and my grandparents to put towards my education.
	
	Neil McKay, Jason Brown, and David Wolfe from Dalhousie University each proofread this thesis and offered invaluable comments, criticism, and advice.
	
	On a personal note, many thanks must be given to Geoff, and I would be remiss if I did not also mention Tesfa and Sunshine.
\end{acknowledgements}

\mainmatter

%\tableofcontents

\chapter{Introduction}\label{chapter-intro}

\section{Introduction}\label{sec-intro}

Combinatorial games are played under two different play conventions, normal play, where the last player to move wins, and \mis play, where the last player to move loses.  Combinatorial games are also classified into impartial games, where both players have the same move set, and partizan games, where each player has a different move set.  As such, a combinatorial game can be categorized as one of four types based on their move set and their play convention: impartial normal play, partizan normal play, impartial \mis play, and partizan \mis play.  

The theory for games played under the normal play convention is well understood, with the theory of impartial normal play games having been developed in the 1930s independently by Sprague and Grundy \cite{SPRAGUE, GRUNDY}, and the theory of partizan normal play games initially explored by Conway, Berlekamp, and Guy in the 1970s \cite{WW1, ONAG}.  However, with the exception of work regarding a {\sc chess}-based problem posed by Dawson in 1935 \cite{DAWSON}, very little effort was devoted to the study of \mis play games.   A chapter in both \cite{ONAG} and \cite{WW1} was devoted to impartial \mis play games, but after that, the theory seemed to have stalled due to complications.  As Conway stated, these
	\begin{quote}
		complications so produced persist indefinitely, and make the \mis play theory much more complicated than the normal one \cite{ONAG}.
	\end{quote}
Many of these obstacles arose from game theorists trying to take normal play ideas and applying them directly to \mis play.  In developing the theory for normal play games, positions from one game are played with positions from any other game using disjunctive sum.  While this does give rise to a very strong normal play theory, it leads to problems when applied to \mis play, as there are difficulties in easily determining the outcome of a disjunctive sum of positions played under the \mis play convention \cite{MESDAL}.  In 2004, Plambeck developed a new theory for \mis play games.  He kept disjunctive sum, but only examined positions of a game with respect to other positions of the same game \cite{TAMING}.  In comparing positions from within the same game, rather than comparing positions from arbitrary games, Plambeck was able to overcome many of the complications that seemed insurmountable with \mis play games.  Plambeck's theory is considered to be ``the biggest advance in \mis play theory in the past 50 years" \cite{BANFF2005}.  Partnering with Siegel, the two produced a theory which allows for the analysis of \mis play games \cite{ADVANCES, TAMING, MISQUOTIENT, NOTES, STRUCTURE}.  While their work was generally concerned with impartial \mis play games. there is nothing about their theory which restricts it solely to impartial games.  This thesis takes Plambeck and Siegel's theory and applies it to partizan \mis play games, an area of combinatorial game theory which has not been examined much until now.  We present a number of preliminary examples and initial theoretical results that will serve as a starting point into further investigations of this subject.

This chapter is divided as follows:
	\begin{itemize}
		\item Section \ref{sec-bg} gives some background into Combinatorial Game Theory for those who are not as familiar with the subject as they would like to be.  
		
		\item Section \ref{sec-history} gives a history on \mis play games from Conway's Genus theory to Plambeck's \mis monoids.    It also includes the definitions and basic theory on \mis theory that we will be using in the rest of the thesis.
		
		\item Section \ref{sec-poset} discusses how to construct the partial order on positions, as we do this for all our examples in Chapter \ref{chapter-examples}.
		
		\item Section \ref{sec-thesis} gives an overview of the remaining chapters of this thesis.
	\end{itemize}

\section{Introducing Combinatorial Game Theory}\label{sec-bg}
In this section, we will give an overview of the main ideas of combinatorial game theory.  This is merely to give the reader a quick introduction.  Full details are available from the following sources: \cite{LIP, WW1, ONAG}.

Most readers are familiar with the idea of a game; some familiar ones are {\sc tic-tac-toe}, {\sc poker}, {\sc chess}, and {\sc monopoly}.  This thesis concerns itself with \emph{combinatorial games}, which are games satisfying the following conditions:
		\begin{enumerate}
			\item There are two players, generally denoted as \emph{Left} and \emph{Right}.  These two players have genders assigned to them.  In this thesis, Left will be female, while Right is male.  This agrees with the Louise/Left and Richard/Right convention of \cite{WW1}.  It was also chosen since the author of this thesis is both female and left-handed. 
			
			\item There is a clearly defined rule set which states which moves are legal moves and which moves are not.
			
			\item There is complete information.  That is, all information is available to both Left and Right at all points during the game.
			
			\item There are no elements of chance which affect the outcome of the game.  That is, for example, there are no dice, spinners, or revealing of cards.
			
			\item Each game has only a finite number of moves.
			
			\item Each game ends with one winner and one loser.  There are no draws.   During the game, each player \emph{plays perfectly}. That is, both players make the optimal moves available to them. If a player can make a move that guarantees a win, the player will make that move.  If no such move is available, a non-winning move will be made. 
			
			\item Determining the winner and loser depends on whether we are playing under the \emph{normal} play convention or the \emph{\mis} play convention.  In normal play, a player loses if there is no move available on the player's turn.  In \mis play, a player wins if there is no move available on the player's turn.  
		\end{enumerate}

\begin{example}
	Let us review the four games given at the start of this section to see whether they are combinatorial games.
		\begin{enumerate}
			\item {\sc tic-tac-toe}: {\sc tic-tac-toe} satisfies conditions 1, 2, 3, 4, and 5.  However, {\sc tic-tac-toe} can end in a draw, so it does not satisfy condition 6.  Thus {\sc tic-tac-toe} is not a combinatorial game.
			
			\item {\sc poker}: {\sc poker} satisfies condition 2.  However {\sc poker} can have multiple players, there is not complete information as a player is unaware of the cards in the other players' hands or remaining in the deck, there are elements of chance in the revealing of the cards, it is possible for play to continue indefinitely, and since the play may continue forever, there may never be a winner or a loser.  Thus {\sc poker} is not a combinatorial game.
			
			\item {\sc chess}: Like {\sc tic-tac-toe}, {\sc chess} can end in a draw.  As well, the last player to move in {\sc chess} does not necessarily determine the winner or loser of the game. Therefore {\sc chess} is not a combinatorial game.
			
			\item {\sc monopoly}: {\sc monopoly} fails to satisfy the same conditions as {\sc poker}, and hence {\sc monopoly} is not a combinatorial game. 
		\end{enumerate}
\end{example}

With the exclusion of so many familiar games, one might wonder if there are any games which satisfy the conditions required to be a combinatorial game.  Of course, the answer is yes.  The following gives an example of one of the most useful combinatorial games.

\begin{example}\label{example-nim}
	An example of a combinatorial game is the game of {\sc nim}.  One plays {\sc nim} as follows:  given several heaps of tokens, a player's move is to pick a heap and remove some number of tokens from that heap.  Play continues until no heaps remain.  Played under the normal play convention, the player who takes the last token is the winner.  Played under the \mis play convention, the player who takes the last token is the loser.
\end{example}

In Section \ref{sec-history}, we see how important this seemingly innocuous game is.

In this thesis, all games are \emph{combinatorial games}.  As such, we often just 
refer to them as \emph{games} and drop the \emph{combinatorial}. 

In game theory, we often also refer to a position in a certain game as a game.  For example, we would say the \emph{game of {\sc nim}} when talking about {\sc nim} and its rule set in general.  However, we may also say that two heaps of size three and a heap of size six is a \emph{game} where we are playing {\sc nim} on those heaps.  This can often be confusing to a non-game theorist. This thesis endeavours to avoid this confusion by using \emph{game} to denote a rule-set (in our example, {\sc nim}) and a \emph{position} to be a position in a game (in our example, the two heaps of size three and a heap of size six where we are to play {\sc nim} on those heaps).  We have tried to ensure that this convention is followed throughout the thesis. 

With the exception of the named positions $0$, $1$, $\overline{1}$, and $*$, this thesis denotes positions by lower-case Greek letters.  This is to ensure that there is no confusion between positions in a game and elements of a \mis monoid, for which we generally use Roman letters.

We now continue our game theory definitions.

\begin{definition}
	Suppose we are given a position $\xi$. A \textbf{left option} of $\xi$ is a new position which arises after one move from Left. Similarly, a \textbf{right option} of $\xi$ is a new position which arises after one move from Right. The \textbf{set of options of a position} is the union of the left options and the right options.  We let $\xi^L$ denote the set of left options of $\xi$ and $\xi^R$ denote the set of right options of $\xi$.  
\end{definition}

In an abuse of notation, for a position $\xi$, we often use $\xi^L$ or $\xi^R$ to denote an element of the set of left or right options respectively.  However, it is always clear from context whether we mean an element of the set or the set itself.

Given a set of left options, $\xi^L$, we can again, for example, determine the left options of $\xi^L$, which we denote by $\xi^{LL}$.  For those starting their investigations into combinatorial game theory, this feels decidedly strange, as earlier we stated that the players Left and Right alternate moves and $\xi^{LL}$ records the results of Left moving twice in the position $\xi$.  However, it is important to keep track of such things.  Suppose, for example, that we have the position $\xi + \gamma$, for some other position $\gamma$.  Moreover, suppose Left moves first to $\xi^L + \gamma$, to which Right responds with $\xi^L + \gamma^R$.  Then Left can move to $\xi^{LL} + \gamma^R$, and it becomes clear why we would need to record those occurrences where one player moves multiple times in the same component.

To define a position, we define it recursively in terms of its options, as follows.

\begin{definition}
	A \textbf{position} $\xi$ is a bipartite set, written as 
		\[ \xi = \combgame{\{\xi^L \mid \xi^R\}},\]
	where $\xi^L$ and $\xi^R$ are the sets of positions of left and right options of $\xi$ respectively.
\end{definition}

That is, we define positions recursively based on their left and right options.  If either $\xi^L$ or $\xi^R$ is empty, we use a $\cdot$ in the above notation to denote that there are no options.  When using the above notation, again game theorists often refer to it interchangeably as either a \emph{game} or a \emph{position}; we will endeavour to refer to it as a \emph{position}.

\begin{example}\label{example-0-1-*}
	Suppose we have a position where neither Left nor Right has an option.  We call this position 0 and, using the above notation, we write
		\[ 0 = \combgame{\{\cdot\mid\cdot\}}.\]
	The position in which Left can move to 0 but Right has no move is called 1, and we write
		\[ 1 = \combgame{\{0\mid\cdot\}}.\]
	The position in which move Left and Right can move to 0 is called $*$, and we write
		\[ * = \combgame{\{0\mid0\}}.\]
\end{example}

As we saw in Example \ref{example-0-1-*}, we constructed new positions by using positions we had constructed earlier as options.  We formalize this with the notion of a \emph{birthday}.

\begin{definition}
	The \textbf{birthday} of a position $\xi = \combgame{\{\xi^L\mid\xi^R\}}$ is 1 plus the maximum birthday of any position in $\xi^L \cup \xi^R$, where we take the birthday of 0 to be 0.
\end{definition}

\begin{example}
	By definition, the birthday of the position 0 is 0.  The birthday of the positions $1$ and $*$ is 1.
\end{example}

In proving things in combinatorial game theory, we often use induction on birthday, since, by definition, the options of a position have birthday strictly less than the position itself.  In doing such, we either say \emph{induction on the birthday} or \emph{induction on the options}.

Sometimes we would like to represent a position in a different, more visual fashion.  In this case, we draw the game tree of a position.  In doing such, we draw the left options below and to the left and the right options below and to the right.

\begin{example}
	We wish to draw the game trees of the positions given in Example \ref{example-0-1-*}.  Since 0 has neither Left nor Right options, the game tree of 0 is simply a vertex.  For the game tree of 1, Left can move to 0 and Right has no moves, so the game tree of 1 is two vertices connected by a line.  The line slopes down and to the left since 0 is a left option.  For the game tree of $*$, there are three vertices which are connected to form an upside down V, representing that both Left and Right can move to 0.  All these game trees are drawn in Figure \ref{fig-gt-0-1*}.

		\begin{figure}[htb]
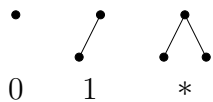

		\unitlength 8pt
		\begin{center}
		\begin{graph}(9,3.5)(-7,-1.5)
		\graphnodesize{0.4}
		\fillednodestrue

		\roundnode{A}(1,2)
		\roundnode{B}(0,0)
		\roundnode{C}(2,0)
		
		\edge{A}{C}
		\edge{A}{B}
		
		\freetext(1,-1.5){$*$}

		\roundnode{a1}(-3,2)
		\roundnode{b1}(-4,0)
		
		\edge{a1}{b1}
		
		\freetext(-3.5,-1.5){1}

		\roundnode{c1}(-7,2)
		
		\freetext(-7,-1.5){0}
		
		\end{graph}
		\caption{The game trees of 0, 1, and $*$.}
		\label{fig-gt-0-1*}
		\end{center}
		\end{figure}		
\end{example}
Using game trees, we have an equivalent definition for birthday, namely the birthday of a position is the height of its game tree.

In Section \ref{sec-intro}, we introduced impartial and partizan.  We now have the tools to define these terms formally.

\begin{definition}
	A position $\xi$ is \textbf{impartial} if the left options and right options of $\xi$ are equal as sets.   Otherwise the position is \textbf{partizan}
	
	A game is \textbf{impartial} if every position in the game is impartial.   Otherwise the game is \textbf{partizan}.  
\end{definition}

When we are considering whether a game is impartial or partizan, we must consider all positions, including such positions as $\xi^{LLL}$ which may never occur during allowed play.

We again return to our three example positions.

\begin{example}
	In 0 and $*$, Left and Right have the same options.  Thus 0 and $*$ are impartial positions.  This is not true for 1, and so 1 is a partizan position.
\end{example}

\begin{example}
	The game {\sc nim} (Example \ref{example-nim}) is an impartial game.  If we modified the rules of {\sc nim} so that Left's moves are to take an even number of tokens, while Right's moves are to take an odd number of tokens, then this modified game is partizan.
\end{example}

We also classify games based on whether Left and Right can both move from any position, even if they do not necessarily have the same moves from every position.

\begin{definition}
	A position $\xi$ is \textbf{all-small} if Left can move if and only if Right can, and every option of $\xi$ is all-small.
	
	A game is \textbf{all-small} if every position in the game is all-small.
\end{definition}

\begin{example}
	All impartial games are all-small.  The position $*$ is all-small.
\end{example}

We often wish to play sums of positions.  The following definitions clarifies what we mean by a sum of positions.

\begin{definition}
	Given positions $\xi_1$, $\xi_2$, $\ldots$, $\xi_m$, the \textbf{disjunctive sum} is the position $\xi_1 + \xi_2 +\cdots + \xi_m$ where on a given player's turn, that player picks a position $\xi_i$ and plays in it according to the rules of $\xi_i$. In \mis play, when a player has no moves in every position in the disjunctive sum, that player wins. In normal play, when a player has no moves in every position in the disjunctive sum, that player loses. 
\end{definition}

Notice that the $\xi_i$ need not all be from the same game.  For example, $\xi_1$ could be a {\sc nim} position, $\xi_2$ could be a {\sc domineering} position (for the rules of {\sc domineering}, see \cite{LIP}), $\xi_3$ could be an {\sc amazons} position (for the rules of {\sc amazons}, see \cite{LIP}), etc.  

To represent the disjunctive sum of two positions $\xi$ and $\kappa$ in terms of its options, we write the following:
	\[ \xi + \kappa = \combgame{\{\xi^L+\kappa, \xi+\kappa^L \mid \xi^R + \kappa, \xi+\kappa^R\}},\]
where the commas in the expressions denote union.

Disjunctive sum is not the only way we can play in a set of positions.  In Chapter \ref{chapter-cat}, we look at some other types of sums on positions.  

In our definition of combinatorial game, we stated that in each game there is always a winner, and hence, at least one of the players has a winning strategy.  The following definition separates positions based on who has the winning strategy.

\begin{definition}\label{def-outcomes}
	Let $\xi$ be a position of a combinatorial game.  Then $\xi$ belongs to an \textbf{outcome class} which specifies which player(s) has a winning strategy in $\xi$.  They are
		\begin{enumerate}
			\item $\Left$ if Left has a winning strategy in $\xi$ regardless of moving first or second.
			\item $\Right$ if Right has a winning strategy in $\xi$ regardless of moving first or second.
			\item $\Next$ if the next player to move has a winning strategy in $\xi$.
			\item $\Prev$ if the next player moving does not have a winning strategy (i.e.\ the next player to move will lose) in $\xi$.  The $\Prev$ stands for previous, as if the next player loses, the player who would have played previous will win.
		\end{enumerate}
\end{definition} 

The \emph{Fundamental Theorem of Combinatorial Games} states that every position belongs to precisely one of the four outcome classes listed above \cite{LIP}.

We need to be careful when we are considering the outcome class of a position.  The definition is framed as to who has the winning strategy, but the concept of winning differs between normal play and \mis play.  As such, we need some further definition.

\begin{definition}
	For a position $\xi$, we let $o^+(\xi)$ denote the normal play outcome of $\xi$ while $o^-(\xi)$ denotes the \mis play outcome.  
\end{definition}

\begin{notation}
	In considering outcomes, $\cup$ denotes \emph{or}.  
\end{notation}

\begin{example}
	If $o^-(\xi) = \Next \cup \Prev$, this means that the \mis outcome of $\xi$ is either $\Next$ or $\Prev$.
\end{example}

Knowing the outcomes of the options of a position allows us to determine what the outcome of the position itself is, irrespective of whether we are playing under the normal play or \mis play convention.  Table \ref{table-outcomes} \cite{LIP} shows how the outcome class of a position can be determined from the outcome classes of its options.

\begin{table}[htb]
\begin{center}
\begin{tabular}[b]{rcccc}
&  some $\xi^R \in \Right \cup \Prev$ & all $\xi^R \in \Left \cup \Next$ \\

some $\xi^L \in \Left \cup \Prev$ & 
	\cellcolor[gray]{0.8}$\Next$
	& 
	\cellcolor[gray]{0.8}$\Left$ 
\\ 
all $\xi^L \in \Right \cup \Next$ & 
	$\Right$
	& 
	$\Prev$
\end{tabular}
\end{center}
\caption{Determining the outcome class of a position $\xi$ given information about the outcome classes of $\xi$'s options.}
\label{table-outcomes}
\end{table}

We can also sometimes glean information about the outcomes of the options of a position if we know the outcome of the position itself.  The most important instance of this is if $\xi$ is a $\Prev$ position, then no option of $\xi$ is also a $\Prev$ position, otherwise one of the players would have moved to this position and won, meaning our initial position could not have been a $\Prev$ position.

We examine our three example positions again.

\begin{example}\label{example-o+-o-}
	Consider the position 0.  In this position, neither Left nor Right has any moves available.  Therefore
		\begin{align*}
			o^+(0) &= \Prev \\
			o^-(0) &= \Next.
		\end{align*}
	
	Consider the position 1.  In the position, Left can move to 0, while Right has no moves.  Therefore
		\begin{align*}
			o^+(1) &= \Left \\
			o^-(1) &= \Right.
		\end{align*}
		
	Consider the position $*$.  In this position, both Left and Right can move to 0.  Therefore
		\begin{align*}
			o^+(*) &= \Next \\
			o^-(*) &= \Prev.
		\end{align*}
\end{example}

If $\xi$ is an impartial position, then 
	\begin{align*}
		o^+(\xi) &= \Next \cup \Prev,\\
		o^-(\xi) &= \Next \cup \Prev \text{ } (\cite{WW1}).
	\end{align*}
That is, played under either normal play or \mis play, an impartial position is always in either $\Next$ or $\Prev$ and never $\Left$ or $\Right$.  However, it is important to note that if $\xi$ is an impartial position and $o^+(\xi) = \Next$ does not necessarily imply that $o^-(\xi) = \Prev$, or vice versa (this is discussed further in Section \ref{sec-problems-mis}).  

This concludes our quick overview of combinatorial game theory.  It is by no means exhaustive as the theory is very well developed.  For those interested in delving more into the theory, \cite{LIP, WW1, ONAG} are all excellent resources with which to begin.

\section{All About \Mis Play}\label{sec-history}

We now turn our attention to \mis play and some of the theory we will be employing in this thesis.

\subsection{Problems with \Mis Play}\label{sec-problems-mis}

To be blunt, \mis defies all expectations.  Things that seem to work well in normal play fail (miserably) in \mis play.  For example, initial perusal of Example \ref{example-o+-o-} seems to suggest that the outcome of a position played under the \mis play convention is simply the \emph{opposite} of what its outcome was under normal play.  This is far from always being true.   Table \ref{table-normal-mis-outcomes} gives examples of positions with varying outcomes in normal play and \mis play.

\begin{table}[htb]
\begin{center}
\begin{tabular}[b]{ccccc}
\cellcolor[gray]{0.9}$o^-$ $\backslash$ $o^+$ &  $\Next$ & $\Prev$ & $\Left$ & $\Right$ \\

$\Next$ & 
	\cellcolor[gray]{0.8} $*_2 := \combgame{\{0, * \mid 0, *\}}$
	& 
	\cellcolor[gray]{0.8} $0$ 
	& 
	\cellcolor[gray]{0.8} $\combgame{\{*, 0 \mid \cdot \}}$
	& 
	\cellcolor[gray]{0.8} $\combgame{\{\cdot\mid*, 0\}}$
\\ 
$\Prev$ & 
	$*$
	& $*_2 + *_2$
	&
	$\combgame{\{ \combgame{\{*,0\mid\cdot\}}\mid*_2 \} }$
	&
	$\combgame{\{*_2\mid\combgame{\{\cdot\mid*,0\}}\}}$
\\
$\Left$ & 
	\cellcolor[gray]{0.8}
	$\combgame{\{\combgame{\{*_2+*_2\mid*_2\}}\mid0\}}$
	&
	\cellcolor[gray]{0.8}
	$\combgame{\{0\mid*_2\}}$
	&
	\cellcolor[gray]{0.8}
	$\combgame{\{*_2 + *_2 \mid *_2\}}$
	&
	\cellcolor[gray]{0.8}
	$\{\cdot \mid 0\}$
	\\
$\Right$ & 
	$\combgame{\{0\mid\combgame{\{*_2 + *_2 \mid *_2\}}\}}$
	&
	$\combgame{\{*_2 \mid 0\}}$ 
	&
	$\combgame{\{0\mid\cdot\}}$
	&
	$\combgame{\{*_2 \mid *_2 + *_2\}}$
	\\
\end{tabular}
\end{center}
\caption{Positions with varying outcomes in normal play and \mis play.}
\label{table-normal-mis-outcomes}
\end{table}

Our next failure occurs with outcomes and disjunctive sum.  In normal play, Table \ref{table-normal-outcomes-+} \cite{LIP} gives the possible outcome classes of a disjunctive sum of two positions given the outcome classes of the summands.  

\begin{table}[htb]
\begin{center}
\begin{tabular}[b]{ccccc}
\cellcolor[gray]{0.9}$+$ &  $\Next$ & $\Prev$ & $\Left$ & $\Right$ \\

$\Next$ & 
	\cellcolor[gray]{0.8}?
	& 
	\cellcolor[gray]{0.8}$\Next$ 
	& 
	\cellcolor[gray]{0.8}$\Left \cup \Next$
	& 
	\cellcolor[gray]{0.8}$\Right \cup \Next$
\\ 
$\Prev$ & 
	$\Next$
	& 
	$\Prev$
	&
	$\Left$
	&
	$\Right$
\\
$\Left$ & 
	\cellcolor[gray]{0.8}%
	$\Left \cup \Next$
	&
	\cellcolor[gray]{0.8}%
	$\Left$
	&
	\cellcolor[gray]{0.8}%
	$\Left$
	&
	\cellcolor[gray]{0.8}%
	?
	\\
$\Right$ & 
	$\Right \cup \Next$
	&
	$\Right$ 
	&
	?
	&
	$\Right$
	\\
\end{tabular}
\end{center}
\caption{Normal play outcomes under disjunctive sum where ? denotes that the outcome could be in any of the four outcome classes.}
\label{table-normal-outcomes-+}
\end{table}

We can see that, for the most part, knowing the outcome class of the summands gives us an idea of the outcome class of the sum if we play under the normal play convention.  Compare this with in Table \ref{table-mis-outcomes-+} \cite{MESDAL}.

\begin{table}[htb]
\begin{center}
\begin{tabular}[b]{ccccc}
\cellcolor[gray]{0.9}$+$ &  $\Next$ & $\Prev$ & $\Left$ & $\Right$ \\

$\Next$ & 
	\cellcolor[gray]{0.8}?
	& 
	\cellcolor[gray]{0.8}? 
	& 
	\cellcolor[gray]{0.8}?
	& 
	\cellcolor[gray]{0.8}?
\\ 
$\Prev$ & 
	?
	& 
	?
	&
	?
	&
	?
\\
$\Left$ & 
	\cellcolor[gray]{0.8}%
	?
	&
	\cellcolor[gray]{0.8}%
	?
	&
	\cellcolor[gray]{0.8}%
	?
	&
	\cellcolor[gray]{0.8}%
	?
	\\
$\Right$ & 
	?
	&
	?
	&
	?
	&
	?
	\\
\end{tabular}
\end{center}
\caption{\Mis play outcomes under disjunctive sum where ? denotes that the outcome could be in any of the four outcome classes.}
\label{table-mis-outcomes-+}
\end{table}

Combining the results of Table \ref{table-mis-outcomes-+} with the results of Table \ref{table-normal-mis-outcomes}, it seems that almost all of our intuition regarding \mis play games is incorrect. 

In \mis play, we also lose some useful strategical tools we had in normal play; the most frustrating loss is that of the Tweedledum-Tweedledee strategy.  To describe this strategy, we first need to describe the \emph{conjugate of a position}.

\begin{definition}\label{def-conjugate}
	For a position $\xi = \combgame{\{\xi^L\mid\xi^R\}}$, we recursively define $\overline{\xi}$ as $\overline{\xi} = \combgame{\{\overline{\xi^R}\mid\overline{\xi^L}\}}$ and call $\overline{\xi}$ the \textbf{conjugate of $\xi$}.
\end{definition}

\begin{example}\label{example-conjugate-0-1-*}
	Returning to our three positions $0$, $*$, and $1$, we calculate their conjugates.
	
	\begin{align*}
		\overline{0} &= 0;\\
		\overline{*} &= \combgame{\{\overline{0} \mid \overline{0}\}} \\
		&= \combgame{\{0\mid0\}}\\
		&= *;\\
		\overline{1} &= \combgame{\{\cdot \mid \overline{0}\}} \\
		&= \combgame{\{\cdot \mid 0\}}.
	\end{align*}
\end{example}

Playing in the conjugate of a position $\xi$ means that in $\overline{\xi}$, Left's available moves are those of Right in $\xi$, and Right's available moves are those of Left in $\xi$.  Essentially, the players are switching roles.  With this in mind, we have the following result:

\begin{theorem}\label{theorem-outcome-class-conj}
	For a position $\xi$, we have
		\begin{align*}
			o^-(\xi) = \Left &\implies o^-(\overline{\xi}) = \Right;\\
			o^-(\xi) = \Next &\implies o^-(\overline{\xi}) = \Next;\\
			o^-(\xi) = \Prev &\implies o^-(\overline{\xi}) = \Prev;\\
			o^-(\xi) = \Right &\implies o^-(\overline{\xi}) = \Left.
		\end{align*}
	This result also holds if we replace $o^-$ by $o^+$.
\end{theorem}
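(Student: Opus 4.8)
The plan is to prove a single symmetric statement by induction on birthday, from which all four implications — and their normal-play analogue — fall out at once. First I would record the structural fact behind Definition \ref{def-conjugate}: conjugation exchanges the two players' option sets, so the left options of $\overline{\xi}$ are exactly the conjugates of the right options of $\xi$, and the right options of $\overline{\xi}$ are the conjugates of the left options of $\xi$. Each such option has strictly smaller birthday than $\xi$, so these are precisely the positions on which the inductive hypothesis will be available.

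Rather than track the four outcome classes directly, I would encode the outcome of a position $\xi$ by the ordered pair $(w_{\L}(\xi), w_{\R}(\xi))$, where $w_{\L}(\xi)$ names the winner (Left or Right) when Left moves first and $w_{\R}(\xi)$ names the winner when Right moves first; by the Fundamental Theorem these are well defined, and the four classes correspond to the four pairs $\Left = (\text{Left},\text{Left})$, $\Next = (\text{Left},\text{Right})$, $\Prev = (\text{Right},\text{Left})$, $\Right = (\text{Right},\text{Right})$. The central claim to prove by induction is that conjugation both swaps the two scenarios and swaps the players' labels: the winner when Left moves first in $\overline{\xi}$ is the opposite player to the winner when Right moves first in $\xi$, and symmetrically with the roles of $w_{\L}$ and $w_{\R}$ interchanged. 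Reading this correspondence off the four pairs yields exactly the four listed implications. (An alternative, more computational route is a four-way induction feeding Table \ref{table-outcomes} directly, but the player-swap encoding collapses all four cases into one.)

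The inductive step rests on a move-by-move correspondence: a legal move by Left in $\overline{\xi}$ goes to some $\overline{\eta}$ with $\eta \in \xi^R$, which mirrors a legal move by Right in $\xi$ to $\eta$, and vice versa. Hence any play of $\overline{\xi}$ with Left to move first is in bijection with a play of $\xi$ with Right to move first in which every player label is switched, and applying the inductive hypothesis to the option reached after the first move shows the winner-correspondence propagates. The base case is $\xi = 0 = \combgame{\{\cdot \mid \cdot\}}$, where $\overline{0} = 0$ and $o^-(0) = \Next = (\text{Left},\text{Right})$ maps to itself, consistent with the claim.

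Finally, I would observe that the \mis winning rule — the player unable to move wins — is symmetric in the two players, so relabeling Left and Right throughout a play sends a win to a win for the relabeled player; this symmetry is the only feature of the convention the induction uses. Because the normal-play rule (the player unable to move loses) is equally symmetric, the identical induction yields the $o^+$ statement, the sole change being the base-case value $o^+(0) = \Prev$. The hard part will be making the play-bijection and the relabel-the-winner step fully rigorous while correctly juggling whose turn it is; it is exactly to tame this turn-order bookkeeping that I would carry both scenarios $w_{\L}$ and $w_{\R}$ through the induction as a symmetric pair rather than arguing the $\Left$, $\Right$, $\Next$, $\Prev$ cases one at a time.
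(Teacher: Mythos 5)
Your proof is correct. The paper states this theorem without a formal proof, offering only the one-line remark that in $\overline{\xi}$ the players switch roles; your induction on birthday, carrying the pair (winner when Left moves first, winner when Right moves first) and showing that conjugation swaps both the move-order scenario and the player labels, is a faithful rigorous formalization of exactly that idea, and your observation that the same symmetry handles $o^+$ with only the base case changing is likewise sound.
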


Conjugation is generally written as a negative when dealing only with normal play, i.e.\ rather than $\overline{\xi}$, one would write $-\xi$.  However, this leads to confusion in \mis play, as it causes us to write things as unintuitive as $\xi + (-\xi) \not = 0$.  As \mis play is unintuitive enough already, we use conjugation instead.  

We now define Tweedledum-Tweedledee.

\begin{definition}\label{def-tweedle}
	The \textbf{Tweedledum-Tweedledee} strategy is a method of playing in $\xi + \overline{\xi}$ that ensures that the second player makes the last move.  In $\xi + \overline{\xi}$, whatever move the first player makes, the second player makes the symmetric move in the other component, and play continues as such.  
\end{definition}

In normal play, this strategy is excellent for the second player as one wants to be the last player to move.  In fact, this strategy is a basis in forming a non-trivial category of normal play games (see Chapter \ref{chapter-cat} and \cite{CAT}).  However, in \mis play, the second player will never willingly use this strategy through to the end of a game, as to do so guarantees a loss for the second player. 

We have now seen three ways in which \mis play defies our expectations: it is not simply the reversal of outcomes, disjunctive sum no longer works as nicely, and we lose the Tweedledum-Tweedledee strategy.  It seems that almost all of our intuition regarding \mis play games is incorrect.  Because of this, almost all combinatorial game theory research has focused on games played under the normal play convention.

\subsection{Genus}

The first organised attack on \mis play games was led by Conway.  To analyse impartial \mis play games, he developed \emph{genus theory} in the 1970s.  Essentially, genus theory determines, for a position in an impartial game, how closely this position behaves to that of a {\sc nim} heap or a sum of {\sc nim} heaps.  To formally define genus, we first need some definitions.

\begin{definition}
	The \textbf{minimal excludant}, or \textbf{mex}, of a set of ordinals $\mathscr{S}$, is the least ordinal not in the set $\mathscr{S}$.
\end{definition}

\begin{example}
	For the set $\mathscr{S}_1 =\{1,2,4,6\}$,  $\mex(\mathscr{S}_1) = 0$.  
	
	For the set $\mathscr{S}_2 = \{n \mid n \ge 0, n \equiv 0 \imod 2\}$, $\mex(\mathscr{S}_2) = 1$.
	
	For the set $\mathscr{S}_3 = \{\mathbb{N}\}$, $\mex(\mathscr{S}_3) = 0$.	
	
	For the set $\mathscr{S}_4 = \{\mathbb{N} \cup \{0\}\}$, $\mex(\mathscr{S}_4) = \omega$.
\end{example}

Using mex, we now define $\Gr$ and $\Gm$ of a position.

\begin{definition}
	Fix an impartial position $\xi$.  We define
		\[ \Gr(\xi) = \begin{cases}
			0 &\text{if $\xi$ has no options};\\
			\mex\{\Gr(\xi^{\prime}) \mid \xi^{\prime} \text{ is an option of } \xi\} &\text{else},
		\end{cases}\]
	and
		\[ \Gm(\xi) = \begin{cases}
			1 &\text{if $\xi$ has no options};\\
			\mex\{\Gr(\xi^{\prime}) \mid \xi^{\prime} \text{ is an option of } \xi\} &\text{else},
		\end{cases}\]
\end{definition}

Notice that the only difference between the definitions of $\Gr$ and $\Gm$ is the value a position with no options, i.e.\ the value of the position 0.  This arises due to the fact that $o^+(0) = \Prev$ while $o^-(0) = \Next$.

We now have the tools to define genus.

\begin{definition}\label{def-genus}
	The \textbf{genus} of an impartial position $\xi$, denoted by $\Genus{\xi}$ is a list of the form $x^{x_0x_1x_2\ldots}$ where $x \in \mathbb{Z}^{\ge 0}$ and $x_0x_1x_2\ldots$ is a string of non-negative integers.  We determine the values of $x$ and $x_i$ as follows: 	
		\begin{align*}
			x &= \Gr(\xi), \\
			x_0 &= \Gm(\xi), \\
			x_1 &= \Gm(\xi + *_2), \\
			x_2 &= \Gm(\xi + *_2 + *_2), \\
			&\vdots \\
			x_n &= \Gm\left(\xi + \sum_{i=1}^n *_2 \right), \\
			&\vdots
		\end{align*}
	recalling from Table \ref{table-normal-mis-outcomes} that $*_2$ is the position defined by
		\[ *_2 = \combgame{\{0, *\mid 0, *\}}.\]
\end{definition}

The definition of $\Gr$, $\Gm$, and genus are only for \textbf{impartial games}.  As such, we can only use genus for analysing impartial games.  In the following example, we calculate the genera of the three impartial positions we have encountered thus far.

\begin{example}
	In this example, we will calculate the genera of $0$, $*$, and $*_2$.  
		\begin{align*}
			\Gr(0) &= 0, & \Gr(*) &= \mex\{\Gr(0)\} & \Gr(*_2) &= \mex\{\Gr(0), \Gr(*)\}\\
			&&&= \mex\{0\} && = \mex\{0,1\}\\
			&&&=1, &&=2.
		\end{align*}
	Therefore $\Genus{0} = 0^{z_0z_1z_2\cdots}$, $\Genus{*} = 1^{s_0s_1s_2\cdots}$, and $\Genus{*_2} = 2^{t_0t_1t_2\cdots}$.  We now need to find $z_i$, $s_i$, and $t_i$.  We start by finding $z_0$, $s_0$, and $t_0$.
		\begin{align*}
			\Gm(0) &= 1, & \Gm(*) &= \mex\{\Gm(0)\} & \Gm(*_2) &= \mex\{\Gm(0), \Gm(*)\}\\
			&&&= \mex\{1\} && = \mex\{0,1\}\\
			&&&=0, &&=2.
		\end{align*}
	We move onto $z_1$, $s_1$, and $t_1$.
		\begin{align*}
			\Gm(0+*_2) &= \Gm(*_2)\\
			&= 2,\\
			\Gm(* +*_2) &= \mex\{\Gm(*), \Gm(*_2), \Gm(*+*)\} \\
			&= \mex\left\{0,2, \mex\{\Gm(*)\}\right\}\\
			&=\mex\left\{0,2, \mex\{0\}\right\}\\
			&=\mex\{0,2,1\}\\
			&=3,\\
			\Gm(*_2 + *_2) &= \mex\{\Gm(*_2), \Gm(* +*_2)\}\\
			&= \mex\{2,3\} \\
			&= 0.
		\end{align*}
	Thus far we have $\Genus{0} = 0^{12z_2\cdots}$, $\Genus{*} = 1^{03s_3\cdots}$, and $\Genus{*_2} = 2^{20t_3\cdots}$.  We will now calculate $z_2$, $s_2$, and $t_2$.
		\begin{align*}
			\Gm(0 + *_2 + *_2) &= \Gm(*_2 + *_2) \\
			&= 0,\\
			\Gm(* + *_2 + *_2) &= \mex\{\Gm(*_2 + *_2), \Gm(* + *_2), \Gm(* + * + *_2)\} \\
			&= \mex\left\{0, 3, \mex\{\Gm(* + *_2), \Gm(*+*), \Gm(*+*+*)\}\right\} \\
			&= \mex\left\{0, 3, \mex\left\{3, 1, \mex\{\Gm(*+*)\}\right\}\right\} \\
			&= \mex\left\{0, 3, \mex\left\{3, 1, \mex\{1\}\right\}\right\} \\
			&= \mex\{0,3,\mex\{3,1,0\}\} \\
			&= \mex\{0,3,2\}\\
			&= 1,\\
			\Gm(*_2 + *_2 + *_2) &= \mex\{\Gm(* + *_2 + *_2), \Gm(*_2 + *_2)\} \\
			&= \mex\{1,0\}\\
			&= 2.
		\end{align*}
	We now have $\Genus{0} = 0^{120\cdots}$, $\Genus{*} = 1^{031\cdots}$, and $\Genus{*_2} = 2^{202\cdots}$.  Further calculations, which are left to the interested reader, give us
		\begin{align*}
			\Genus{0} &= 0^{1202020202\cdots} \\
			\Genus{*} &= 1^{0313131313\cdots} \\
			\Genus{*_2} &= 2^{2020202020\cdots}.
		\end{align*}
	Therefore, we have calculated the genera of 0, $*$, and $*_2$.  
\end{example}

In each of the three positions in the preceding example, the genus of the position eventually alternates between two digits.  This is true for all genera \cite{MTHESIS,  ONAG}.  Because of this, we usually truncate the genus so that the last two digits in it are the two digits which continue to repeat indefinitely.  In the preceding example, we would then write
		\begin{align*}
			\Genus{0} &= 0^{120}, \\
			\Genus{*} &= 1^{031}, \\
			\Genus{*_2} &= 2^{20}.
		\end{align*}	

\begin{example}\cite{MTHESIS, ONAG}
	Let $h_n$ denote a {\sc nim} heap of $n$ tokens.  Then
		\begin{align*}
			\Genus{h_n} &= \begin{cases}
				0^{120} &\text{if } n=0;\\
				1^{031} &\text{if } n=1;\\
				n^{n(n \oplus 2)} &\text{else}.
			\end{cases}
		\end{align*}
	where
		\begin{align*}
			n \oplus 2 &= \begin{cases}
				n+2 &\text{if } n \equiv 0,1 \imod 4;\\
				n-2 &\text{if } n \equiv 2,3 \imod 4.
			\end{cases}
		\end{align*}
\end{example}

We use genus to determine how similar a position behaves to a {\sc nim} heap.  Our concern with {\sc nim} is a direct consequence of the Sprague-Grundy Theorem for normal play impartial games \cite{GRUNDY, SPRAGUE}, which says that every normal play  impartial position is equivalent to a {\sc nim} heap of a certain size, where we define equivalent as follows.

\begin{definition}\label{def-equivalent}
	Given two positions $\xi$ and $\kappa$, we say that $\xi$ is \textbf{equivalent} to $\kappa$ if for all positions $\gamma$, 
		\[ o^+(\xi + \gamma) = o^+(\kappa + \gamma).\]
\end{definition}

It is worth noting that $\xi$, $\kappa$, and most importantly $\gamma$, can be positions from vastly different games with vastly different rule sets.  

Again letting $h_n$ denote a {\sc nim} heap of $n$ tokens, given any position $\xi$ from some impartial game, the Sprague-Grundy Theorem for normal play impartial games says that there exists some $n \in \mathbb{Z}^{\ge 0}$ such that $\xi$ and $h_n$ are equivalent.   This is the most important result in impartial normal play game theory.  Unfortunately, there is no Sprague-Grundy Theorem for \mis play impartial games.  However, using genus, we can determine which positions behave like \mis play {\sc nim} heaps.  We call such positions \emph{tame}.

\begin{definition}
	A position $\xi$ is \textbf{tame} if
		\begin{itemize}
			\item there exists an $n \in \mathbb{Z}^{\ge 0}$ such that $\Genus{\xi} = \Genus{h_n}$ or $\Genus{\xi} = 0^{02}$ or $\Genus{\xi} = 1^{13}$ (these last two genera are equivalent to the genera of certain sums of {\sc nim} heaps);
			\item all options of $\xi$ are tame.
		\end{itemize}
	If it is not tame, then we say that $\xi$ is \textbf{wild}.  
	
	A game is tame if every position in the game is tame.  Otherwise it is wild.
\end{definition}

The structure of tame games is quite manageable.  Most notably, their outcomes behave nicely under disjunctive sum \cite{MTHESIS, ONAG}.  However, when a game is found to be wild (as many are, see \cite{MTHESIS, WW2, ONAG, TAMING} for some examples of wild games), genus does not easily allow us to say much else about the game.  Thus, while genus theory is a useful tool in the classification of impartial \mis play games, game theorists were still unable to analyse \mis play games as fully as normal play ones.

\subsection{Indistinguishability and \Mis Monoids}

The next breakthrough in \mis play theory was by Plambeck in 2004 \cite{TAMING}.  Recall that in our definition of equivalence (Definition \ref{def-equivalent}), we used positions from any game.  Plambeck found that by restricting himself to positions from the same game, he was able to construct a theory for \mis play games, the indistinguishability quotient and \mis monoid theory.  Working with Siegel, the two further developed the theory, publishing a number of papers which further advanced Plambeck's original idea, focusing on how the theory applied to impartial games \cite{ADVANCES, TAMING, MISQUOTIENT, NOTES, STRUCTURE}.  

To understand indistinguishability, we need some starting definitions.

\begin{definition}\label{definition-closed}
	A set of positions $\Upsilon$ is \textbf{closed} if it is
		\begin{enumerate}
			\item closed under addition, i.e.\ if $\alpha$, $\beta \in \Upsilon$, then $\alpha + \beta \in \Upsilon$, and
			\item option closed, i.e.\ if $\alpha \in \Upsilon$, then every option of $\alpha$ is also in $\Upsilon$.
		\end{enumerate}
\end{definition}

Since a closed set is option closed, the position 0 is always an element of a closed set.

Frequently, the set of positions over which we want to work is not closed.  As such, we are required to take the \textbf{closure of the set of positions}.

\begin{definition}
	Let $\Upsilon$ be a set of positions.  Then the \textbf{closure of $\Upsilon$}, denoted by $\cl{\Upsilon}$, is the smallest set of positions (in terms of set inclusion) such that $\Upsilon \subseteq \cl{\Upsilon}$.
\end{definition}

Given a set of positions $\Upsilon$, $\cl{\Upsilon}$ can be obtained by recursively taking options of all positions in $\Upsilon$, and then taking arbitrary disjunctive sums.

We note that the closure of a set of positions is itself closed.

If we had a set of positions $\{\xi_1, \xi_2, \ldots, \xi_n\}$ and we wish to calculate their closure, we drop the $\{$ and $\}$ in the notation, i.e.\ we write
	\[ \cl{\xi_1, \xi_2, \ldots, \xi_n}\]
rather than
	\[ \cl{\{\xi_1, \xi_2, \ldots, \xi_n\}}.\]
This is solely for ease of notation and to eliminate excess visual noise.

\begin{example}
	For our three positions, $0$, $1$, and $*$, their closures are as follows:
		\begin{itemize}
			\item $\cl{0} = \{0\}$,
			\item $\cl{1} = \{0,1, 1+1, 1+1+1, \ldots\}$
			\item $\cl{*} = \{0, *, *+*, *+*+*, \ldots\}$.
		\end{itemize}
\end{example}

We now define what it means for two positions to be indistinguishable.

\begin{definition}\label{def-indis}
	Suppose $\Upsilon$ is a closed set of positions with $\alpha$, $\beta \in \Upsilon$.  Then \textbf{$\alpha$ and $\beta$ are indistinguishable over $\Upsilon$} if
			\[ o^-(\alpha + \gamma) = o^-(\beta + \gamma) \text{ for all } \gamma \in \Upsilon,\]
	and we write 
		\[ \alpha \equiv \beta \imod{\Upsilon}.\]
	If $\alpha$ and $\beta$ are not indistinguishable, then we say that they are \textbf{distinguishable}.  Moreover, if $\alpha$ and $\beta$ are distinguishable, then there must exist some $\gamma \in \Upsilon$ such that
		\[ o^-(\alpha + \gamma) \not = o^-(\beta+ \gamma),\]
	and we say that \textbf{$\gamma$ distinguishes $\alpha$ and $\beta$}.
\end{definition}

\begin{example}\label{example-1}
	Consider $\cl{1}$.  Let $n 1$ denote $n$ copies of 1 under disjunctive sum.  We see
		\[ o^-(n1) = \begin{cases}
		\Next &\text{if } $n = 0$;\\
		\Right &\text{else}.
		\end{cases}\]
	Then
		\[ o^-(1 + \gamma) = o^-(n1 + \gamma)\]
	for all $n \ge 1$, $\gamma \in \cl{1}$.  Therefore
		\[ 1 \equiv n1 \imod{\cl{1}},\]
	but 
		\[ 1 \not \equiv 0 \imod{\cl{1}}\]
	since
		\[ o^-(1+0) = o^-(1) = \Right,\]
	while
		\[ o^-(0+0) = o^-(0) = \Next,\]
	so the two positions are distinguished by 0.
\end{example}

\begin{example}
	Consider $\cl{1, \overline{1}}$.  Moreover, consider the positions $1$ and $1+1$.  We saw in Example \ref{example-o+-o-} that $o^-(1) = \Right$.  It is easy to see that $o^-(1+1) = \Right$ also.  
	
	We now add $\overline{1}$ to both positions.  In doing such, we can see that $o^-(1 + \overline{1}) = \Next$ while $o^-(1+1+\overline{1}) = \Right$.  That is $\overline{1}$ distinguishes $1$ and $1+1$.  Therefore
		\[ 1 \not \equiv 1+1 \imod{\cl{1,\overline{1}}}.\]
\end{example}

Once two elements are distinguished, they remain distinguished as the following theorem shows.

\begin{proposition}\label{proposition-distinguishable-subseteq}
	Take a closed set of positions $\Upsilon$ with elements $\alpha$, $\beta \in \Upsilon$ such that $\alpha$ and $\beta$ are distinguishable over $\Upsilon$.  If $\Upsilon \subseteq \Gamma$ for some closed set of positions $\Gamma$, then $\alpha$ and $\beta$ remain distinguishable over $\Gamma$.
\end{proposition}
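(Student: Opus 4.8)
The plan is to exploit the fact that distinguishability, unlike indistinguishability, is an \emph{existential} condition: it asserts that \emph{some} test position separates the \mis outcomes of $\alpha$ and $\beta$. Enlarging the ambient set of test positions can only provide more candidate separators, never remove the one we already possess, so the conclusion should follow almost immediately from the definitions without any induction on birthday or computation of outcome classes.

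Concretely, I would proceed as follows. First I would unpack Definition \ref{def-indis}: since $\alpha$ and $\beta$ are distinguishable over $\Upsilon$, that definition guarantees a witness $\gamma \in \Upsilon$ with $o^-(\alpha + \gamma) \neq o^-(\beta + \gamma)$. Next I would invoke the hypothesis $\Upsilon \subseteq \Gamma$ to conclude that this very same $\gamma$ is an element of $\Gamma$. Finally, since $\gamma \in \Gamma$ and $o^-(\alpha + \gamma) \neq o^-(\beta + \gamma)$, the element $\gamma$ witnesses, again by Definition \ref{def-indis}, that $\alpha$ and $\beta$ are distinguishable over $\Gamma$. This completes the argument.

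I do not anticipate any genuine obstacle; the result is essentially a logical observation about the quantifier structure of the definition. It is worth highlighting the contrast with indistinguishability, which is a \emph{universal} condition (namely $o^-(\alpha + \gamma) = o^-(\beta + \gamma)$ for \emph{all} $\gamma$): the analogous monotonicity statement for indistinguishability \emph{fails}, precisely because passing to the larger set $\Gamma$ introduces additional test positions that may separate $\alpha$ and $\beta$. Thus the only subtlety is conceptual — recognizing that the two halves of Definition \ref{def-indis} behave oppositely under set inclusion — rather than technical.
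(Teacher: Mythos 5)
Your proof is correct and follows exactly the same route as the paper's: extract the witnessing $\gamma \in \Upsilon$ from the definition of distinguishability, observe that $\Upsilon \subseteq \Gamma$ places $\gamma$ in $\Gamma$, and conclude that the same $\gamma$ distinguishes $\alpha$ and $\beta$ over $\Gamma$. Your additional remark about the existential versus universal quantifier structure is a nice conceptual observation but not part of the paper's argument.
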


\begin{proof}
	Since $\alpha$ and $\beta$ are distinguishable over $\Upsilon$, this means there exists some $\gamma \in \Upsilon$ such that
		\[ o^-(\alpha + \gamma) \not = o^-(\beta + \gamma).\]
	Since $\Upsilon \subseteq \Gamma$, we have $\alpha$, $\beta$, and $\gamma \in \Gamma$, so $\gamma$ also distinguishes $\alpha$ and $\beta$ over $\Gamma$.
\end{proof}

It is important to note the following:  If $\Upsilon \subseteq \Gamma$ and $\alpha, \beta \in \Upsilon$, then $\alpha$ and $\beta$ may be indistinguishable in $\Upsilon$ but distinguishable in $\Gamma$, or vice versa.

Sometimes, when we want to be clear about the set from which we are building indistinguishability, we write $\stackrel{\Upsilon}{\equiv}$ rather than just $\equiv$.  

For a closed set $\Upsilon$ with indistinguishability relation $\stackrel{\Upsilon}{\equiv}$, Plambeck showed that $\stackrel{\Upsilon}{\equiv}$ is an equivalence relation \cite{TAMING}.  In fact, it is even stronger than that; $\stackrel{\Upsilon}{\equiv}$ is a congruence with respect to disjunctive sum.  That is, for $\alpha$, $\beta$ and $\gamma \in \Upsilon$, 
	\[ \alpha \stackrel{\Upsilon}{\equiv} \beta \implies \alpha + \gamma \stackrel{\Upsilon}{\equiv}\beta + \gamma \text{ } (\cite{ADVANCES}).\]

If we took $\Upsilon$ to be the set of all positions from all games, then we could call indistinguishability over this set \emph{\mis equivalence} (see Definition \ref{def-equivalent}).  However, we rarely take $\Upsilon$ to be this large set; rather $\Upsilon$ is usually taken to be a smaller closed set, such as all the positions which occur in a particular game.

Before we can introduce Plambeck's \emph{indistinguishability quotient}, we must recall some basic algebra definitions.

\begin{definition}
	A \textbf{monoid} is a set $\monoid{M}$ along with a binary operation $\star: \monoid{M} \times \monoid{M} \to \monoid{M}$ such that
		\begin{itemize}
			\item $\star$ is associative, that is for $a$, $b$, $c \in \monoid{M}$, $a \star (b \star c) = (a \star b) \star c$;
			
			\item there exists an element $e \in \monoid{M}$, called the \textbf{identity element of $\monoid{M}$}, such that $a \star e = a = e \star a$. 
		\end{itemize}
	The monoid is \textbf{commutative} if the following is also true:
		\begin{itemize}
			\item for all $a$, $b \in \monoid{M}$, $a \star b = b \star a$.
		\end{itemize}
\end{definition}

In other words, we can consider a monoid as a semigroup with identity, or as group without inverses.  

While we should denote a monoid by $(\monoid{M}, \star, e)$, i.e.\\ the set, the binary operation, and the identity element, in practice, we often just use $\monoid{M}$ to denote the monoid when the binary operation and the identity are in some way obvious.

We now construct the \textbf{indistinguishability quotient of $\Upsilon$}.  We do this as follows:
	\begin{enumerate}
		\item Take a closed set of positions $\Upsilon$ with indistinguishability relation $\stackrel{\Upsilon}{\equiv}$ as given in Definition \ref{def-indis}.  
		
		\item Calculate the quotient of $\Upsilon$ over $\stackrel{\Upsilon}{\equiv}$, denoted by $\quotient{\Upsilon}{\stackrel{\Upsilon}{\equiv}}$.
		
		\item Each element of $\quotient{\Upsilon}{\stackrel{\Upsilon}{\equiv}}$ is written as $\alpha + \Upsilon$ and called \textbf{the equivalence class of $\alpha$ over $\Upsilon$}.
		
		\item Given two classes $\alpha + \Upsilon$ and $\beta + \Upsilon$,
			\[\alpha + \Upsilon = \beta + \Upsilon\]
			if
			\[o^-(\alpha + \gamma) = o^-(\beta + \gamma) \text{ for all } \gamma \in \Upsilon.\]  
		
		\item Given two classes $\alpha + \Upsilon$ and $\beta + \Upsilon$, we have
			\[ (\alpha + \Upsilon) + (\beta + \Upsilon) = (\alpha + \beta) + \Upsilon.\]
		where the addition in $(\alpha + \beta)$ is disjunctive sum.  We say that the sum in $\quotient{\Upsilon}{\stackrel{\Upsilon}{\equiv}}$ is \textbf{inherited from disjunctive sum}.
	\end{enumerate}
Since $\stackrel{\Upsilon}{\equiv}$ is a congruence on $\Upsilon$, all the above are well-defined.  Moreover, 
	\[\left(\quotient{\Upsilon}{\stackrel{\Upsilon}{\equiv}}, +, 0 + \Upsilon\right)\]
forms a monoid \cite{TAMING}.  Since disjunctive sum is commutative, the inherited sum is also commutative, so we have a commutative monoid.   That is, we define the indistinguishability quotient of $\Upsilon$ as follows:

\begin{definition}
	For $\Upsilon$ a closed set of positions, the commutative monoid formed from indistinguishability $\stackrel{\Upsilon}{\equiv}$, with the sum inherited from disjunctive sum, and the  identity being the equivalence class of 0 under $\stackrel{\Upsilon}{\equiv}$ is called the \textbf{indistinguishability quotient of $\Upsilon$}.  
\end{definition}

\begin{example}
	Returning to Example \ref{example-1}, we see that the indistinguishability quotient of $\cl{1}$ has two elements, $0 + \cl{1}$ and $1 + \cl{1}$.
\end{example}

Sometimes we are interested in the map which takes $\Upsilon$ to its indistinguishability quotient.  

\begin{definition}\label{def-quotient-map}
	Let $\Upsilon$ be a closed set of positions with indistinguishability relation $\stackrel{\Upsilon}{\equiv}$.  Then the map
		\begin{center}
			\begin{tabular}{cccc}
			$\mathscr{Q}:$&$\Upsilon$&$\rightarrow$&$\quotient{\Upsilon}{\stackrel{\Upsilon}{\equiv}}$\\
			      &	$\alpha$&	$\mapsto$&	$\alpha+\Upsilon$ \\
			\end{tabular}
		\end{center}
	is called the \textbf{canonical quotient map of $\Upsilon$}.  
\end{definition}

In older \mis monoid papers, most notably \cite{ADVANCES, TAMING}, the map in Definition \ref{def-quotient-map} is referred to as the \emph{pretending function}.  The reason for this was that proofs regarding the validity of this method had yet to be completed.  As we now know the method to be correct, we are no longer \emph{pretending} that this method gives a valid result;  we know it does.  

Much as we divided positions into outcome classes in Definition \ref{def-outcomes}, we divide equivalence classes as well based on outcomes.

\begin{definition}\label{def-outcome-tetra}
	We divide $\quotient{\Upsilon}{\stackrel{\Upsilon}{\equiv}}$ into four \textbf{outcome portions}, $\Left$, $\Right$, $\Next$, and $\Prev$, where an equivalence class $\alpha + \Upsilon$ is placed into an outcome portion such that $o^-(\alpha + \Upsilon) = o^-(\alpha)$.
\end{definition}

Because each position is in exactly one outcome class, each equivalence class is placed into exactly one of the outcome portions.  Using the facts that  outcome classes are disjoint \cite{LIP} and the definition of $\stackrel{\Upsilon}{\equiv}$, we see that the outcome portions of $\quotient{\Upsilon}{\stackrel{\Upsilon}{\equiv}}$ are also disjoint.  

If we think of $o^-$ as a function which takes positions and equivalence classes of positions to elements of $\{\Next, \Prev, \Left, \Right\}$, and $\mathscr{Q}$ the canonical quotient map, then what we have is a commutative diagram given in Figure \ref{fig-commutative-diagram}.
		\begin{figure}[htb]
			\unitlength 16pt
			\begin{center}
			\begin{graph}(6,4)(0,0)
			\graphlinecolour{1}
			\graphlinewidth{.01}
			\grapharrowlength{.5}
			\newcommand{\arr}[3]{%
			\edge{#1}{#2}[\graphlinewidth{.1}]
			\diredge{#1}{#2}[\graphlinecolour{0}]
			\edgetext{#1}{#2}{\footnotesize #3}}

			\textnode{A}(6,0){$\{\Next, \Prev, \Left, \Right\}$}
			\textnode{B}(6,4){$\quotient{\Upsilon}{\stackrel{\Upsilon}{\equiv}}$}
			\textnode{E}(0,4){$\Upsilon$}
			
			\arr{B}{A}{$o^-$}
			\arr{E}{A}{$o^-$}
			\arr{E}{B}{$\mathscr{Q}$}

			\end{graph}	
		\caption{Commutativity of $o^-$ between positions in $\Upsilon$ and their equivalence classes under $\Upsilon$.}
		\label{fig-commutative-diagram}
		\end{center}
		\end{figure} 

To make it easier to compare indistinguishability quotients of different sets, Plambeck developed the \mis monoid notation.  To form the \mis monoid of an indistinguishability quotient, addition is replaced by multiplication, and each equivalence class is denoted by (generally) a lower-case Roman letter.   Indistinguishability relations are denoted as relations on the monoid.

\begin{notation}
	We denote the \mis monoid of $\Upsilon$ by $\monoid{M}_{\Upsilon}$.
\end{notation}

Sometimes, rather than write another map which re-labels the elements of the indistinguishability quotient to obtain the \mis monoid, we just say that the canonical quotient map (Definition \ref{def-quotient-map}) goes from $\Upsilon$ to $\monoid{M}_{\Upsilon}$.

As with the indistinguishability quotient, we also divide the \mis monoid into outcome portions. When we write out the \mis monoid, we almost always also write out the outcome tetrapartition, as we are generally concerned with the outcomes of positions from our initial set $\Upsilon$.

\begin{example}\label{example-M-1}
	We now construct the monoid for the \mis monoid of $\cl{1}$.  With the mappings:
		\begin{align*}
			0 &\mapsto 1;\\
			1 &\mapsto a,
		\end{align*}
	we obtain the following monoid:
		\begin{align*}
		\monoid{M}_{\cl{1}} &= \ideal{1,a \mid a^2 = a} \\
		\Next &= \{1\} \\
		\Prev &= \emptyset \\
		\Left &= \emptyset \\
		\Right &= \{a\}.
		\end{align*}
\end{example}

\begin{example}\label{example-*}\cite{TAMING}
	We wish to construct $\monoid{M}_{\cl{*}}$.  With the mappings:
		\begin{align*}
			0 &\mapsto 1;\\
			1 &\mapsto a,
		\end{align*}
	we obtain the following monoid:
		\begin{align*}
		\monoid{M}_{\cl{*}} &= \ideal{1,a \mid a^2=1} \\
		\Next &= \{1\} \\
		\Prev &= \{a\} \\
		\Left &= \emptyset \\
		\Right &= \emptyset.
		\end{align*}

	Examining the relations on $\monoid{M}_{\cl{*}}$, the astute reader may realise that the elements not only form a recognisable monoid; they form a recognisable group, namely $(\mathbb{Z}_2, \oplus)$.  However, it is important to note that $\monoid{M}_{\cl{*}}$ is not the same as $(\mathbb{Z}_2, \oplus)$, as the latter does not have an outcome tetrapartition associated with it and hence is not a \mis monoid.  
\end{example}

If we calculate the \mis monoid of an impartial position, then in the outcome tetrapartition we have $\Left = \emptyset$ and $\Right = \emptyset$, as it does in the preceding example with $\monoid{M}_{\cl{*}}$.  While it is tempting to assume that if we are given a \mis monoid with $\Left = \emptyset$ and $\Right = \emptyset$, then it must have come from an impartial position, this is not true as examples in Chapter \ref{chapter-examples} demonstrate. 

By using this \mis monoid construction, we are now able to analyse games with respect to themselves.  This differs from normal play, in which we analyse games with respect to all other games (see Definition \ref{def-equivalent}).  However, this seems more natural in practise.  In playing a position, it often decomposes into sub-positions (like in {\sc go} where play splits the board into different regions), but the initial position and these sub-positions are all from the same game; how often in practise does one play a game of {\sc go} in conjunction with a game of {\sc chess}?  \Mis monoid theory takes advantage of this in its analysis of \mis play games.

Plambeck and Siegel's work is on impartial \mis play games \cite{ADVANCES, TAMING, MISQUOTIENT, NOTES, STRUCTURE};  however the construction of \mis monoids is not specific to impartial games, as we saw in the construction of $\monoid{M}_{\cl{1}}$ in Example \ref{example-M-1}.  The structure and the theory behind partizan \mis monoids differs from their impartial counterparts.  This thesis takes the \mis monoid construction and extends it to partizan \mis play games.  In doing such, we develop the basis of partizan \mis monoid theory.

\section{The Partial Order on Positions}\label{sec-poset}

In Chapter \ref{chapter-examples}, we calculate the partial order of the \mis monoids given in the examples, in the hopes that we can find connections between the partial order and some other theoretical results. This section reviews the definitions and tools necessary to calculate the partial order of an indistinguishability quotient.

In this section, we will work under the multiplicative notation of the \mis monoid;  however, all the definitions and results could be stated in terms of the additive notation of the indistinguishability quotient, should one so desire.

We define $\ge$ as follows:

\begin{definition}\label{def-<=}
	Let $\Upsilon$ be a closed set of positions with \mis monoid $\monoid{M}_{\Upsilon}$ and $x,y \in \monoid{M}_{\Upsilon}$.  We say that \textbf{$x$ is greater than $y$}, and write $x \ge y$ if
		\[ x \ge y \text{ if } o^-(xz) \ge o^-(yz) \text{ for all } z \in \monoid{M}_{\Upsilon},\]
	where the outcome lattice is still given in Figure \ref{fig-outcome-poset-1}. 
	
	If $x \not \ge y$ and $y \not \ge x$, then we say that $x$ and $y$ are \textbf{incomparable}.

	\begin{figure}[htb]
	\unitlength 15pt
	\begin{center}
	\begin{graph}(4,4)(0,0)
	\graphlinecolour{1}
	\graphlinewidth{.01}
	\grapharrowlength{.5}

	\textnode{L}(2,4){$\Left$}
	\textnode{P}(0,2){$\Prev$}
	\textnode{N}(4,2){$\Next$}
	\textnode{R}(2,0){$\Right$}
	
	\newcommand{\arr}[2]{%
	\edge{#1}{#2}[\graphlinewidth{.01}\graphlinecolour{0}]
	%\diredge{#1}{#2}[\graphlinecolour{0}]
	}
	
	\arr{L}{P}
	\arr{L}{N}
	\arr{P}{R}
	\arr{N}{R}
	
	\end{graph}
	\caption{Outcome Class Partial Order.}
	\label{fig-outcome-poset-1}
	\end{center}
	\end{figure}
\end{definition}

Figure \ref{fig-outcome-poset-1} gives two chains of outcome ordering, namely $\Left \ge \Next \ge \Right$ and $\Left \ge \Prev \ge \Right$.  $\Next$ and $\Prev$ are incomparable elements in the outcome ordering.  That is, if for two positions $a, b \in \monoid{M}_{\Upsilon}$, if $o^-(a) = \Next$ and $o^-(b) = \Prev$, then these two elements are incomparable.  

We place the partial order on $\Right$, $\Next$, $\Prev$, and $\Left$ in the way given in Figure \ref{fig-outcome-poset-1} as under normal play, this positions are assigned values which are compatible with this partial ordering (i.e. a position in outcome class $\Left$ is assigned a value which is greater than that of any position in $\Right$, $\Next$, or $\Prev$).  For a further explanation of normal play values, please see \cite{LIP, WW1, ONAG}.  

We will make use of the following results in calculating partial orders in Chapter \ref{chapter-examples}:

\begin{proposition}\label{prop-results-<=}
	For $\Upsilon$ a closed set, in $\monoid{M}_{\Upsilon}$, we have the following:

	\begin{enumerate}
		\item If $o^-(a) \not \ge o^-(b)$, then $a \not \ge b$.
		
		\item If $a$ and $b$ are incomparable, then either
		\begin{enumerate}
			\item there exists $z$ such that $o^-(az) = \Next$ while $o^-(bz) = \Prev$; or
			
			\item there exist $z_1$ and $z_2$ such that:
				\begin{itemize}
					\item$o^-(az_1) \ge o^-(bz_1)$,
					\item$o^-(az_2) \le o^-(bz_2)$,
				\end{itemize}
			with at least one of these two inequalities being strict.
		\end{enumerate}
	\end{enumerate}
\end{proposition}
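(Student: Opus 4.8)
The plan is to handle the two parts separately, the first being essentially immediate and the second requiring a case analysis against the outcome poset of Figure \ref{fig-outcome-poset-1}. For part (1) I would argue by contrapositive. Suppose $a \ge b$. By Definition \ref{def-<=} this means $o^-(az) \ge o^-(bz)$ for every $z \in \monoid{M}_{\Upsilon}$. Specialising to the identity class $z = 0 + \Upsilon$ and using that multiplication by the identity fixes both $a$ and $b$, we get $o^-(a) \ge o^-(b)$. Hence $o^-(a) \not\ge o^-(b)$ forces $a \not\ge b$. The only point needing care is that $o^-$ is well defined on classes, but this is exactly Definition \ref{def-outcome-tetra}.

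For part (2) I would begin by unwinding incomparability. Since $a$ and $b$ are incomparable we have both $a \not\ge b$ and $b \not\ge a$, so Definition \ref{def-<=} supplies elements $z_1, z_2 \in \monoid{M}_{\Upsilon}$ with $o^-(az_1) \not\ge o^-(bz_1)$ and $o^-(bz_2) \not\ge o^-(az_2)$. The heart of the proof is to read these two failures off the outcome poset. The key structural observation is that in Figure \ref{fig-outcome-poset-1} the only incomparable pair of outcomes is $\{\Next, \Prev\}$; every other pair is comparable. Consequently $o^-(az_1) \not\ge o^-(bz_1)$ can only mean either $o^-(az_1) < o^-(bz_1)$, a strict inequality the ``wrong'' way for $a \ge b$, or $\{o^-(az_1), o^-(bz_1)\} = \{\Next, \Prev\}$; and symmetrically for $z_2$.

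Running the case analysis, if either witness realises the incomparable pair with $a$'s outcome equal to $\Next$ and $b$'s equal to $\Prev$, then we land immediately in alternative (a). Otherwise I would extract from $z_1$ a point at which $o^-(az_1) \le o^-(bz_1)$ strictly, and from $z_2$ a point at which $o^-(az_2) \ge o^-(bz_2)$ strictly; these two strict inequalities are precisely alternative (b), in fact with both inequalities strict, which is stronger than needed.

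The main obstacle is the leftover sub-case, where a witness gives the incomparable pair in the reversed orientation, with $a$'s outcome $\Prev$ and $b$'s outcome $\Next$. Such a point certifies incomparability but fits neither alternative as literally stated, since its two outcomes are themselves incomparable. To close this gap I would exploit the structural fact that the identity class has outcome $\Next$ (because $o^-(0) = \Next$ by Example \ref{example-o+-o-}), together with the $\Next \leftrightarrow \Prev$ symmetry of the poset, to convert such a reversed witness $w$ into a usable one: either by multiplying $w$ by a suitable monoid element to flip its orientation into the $\Next$/$\Prev$ pattern of alternative (a), or by showing that a reversed witness cannot be the sole source of incomparability without some other $z$ already producing a genuine strict crossing, and hence alternative (b). Pinning down this conversion, rather than the routine poset bookkeeping, is where the real content of part (2) lies.
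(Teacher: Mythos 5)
Your part (1) is exactly the paper's argument: the paper simply observes that $o^-(a1) \not\ge o^-(b1)$, which is your specialisation to the identity class. For part (2) the paper offers only the one-line remark that the claim ``follows from the definition of incomparability,'' so your unwinding of $a \not\ge b$ and $b \not\ge a$ against the outcome poset is already more explicit than the source, and the easy sub-cases (a strict inequality in each direction feeding alternative (b), with both inequalities strict) are handled correctly.

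The gap is in your final sub-case, and neither of your proposed repairs would close it. A witness $w$ with $o^-(aw)=\Prev$ and $o^-(bw)=\Next$ already certifies \emph{both} $a \not\ge b$ and $b \not\ge a$, so nothing forces some other $z$ to produce a strict crossing; and multiplying $w$ by a monoid element gives you no control whatsoever over the resulting outcomes, so there is no ``flip'' to be engineered. The actual resolution is much simpler: alternative (a) is meant to be read without a preferred orientation --- a $z$ at which the two outcomes are $\Next$ and $\Prev$ in either order is the intended witness, since such a pair is incomparable in Figure \ref{fig-outcome-poset-1} and that is all the dichotomy records. The paper itself uses it this way: several rows of Table \ref{table-A-incomparable}, which the text explicitly assigns to ``the first case,'' have $o^-(p_1 e) = \Prev$ and $o^-(p_2 e) = \Next$. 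Once (a) is read symmetrically, your case analysis closes with no leftover and no conversion argument is needed.
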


\begin{proof}\text{}
	\begin{enumerate}
		\item 	If $o^-(a) \not \ge o^-(b)$, then $a \not \ge b$ since $o^-(a1) \not \ge o^-(b1)$.  
		\item This follows from the definition of incomparability. \qedhere
	\end{enumerate}
\end{proof}

We also have a result which we will make use of in Chapter \ref{chapter-examples}.  It is as follows.  Since this result is regarding conjugates, information which is not kept when examining the \mis monoid, this result will be stated in terms of the indistinguishability quotient's additive notation rather than the \mis monoid.

\begin{proposition}\label{prop->-<-conjugate}
	Let $\Upsilon$ be a set of positions such that if $\xi \in \cl{\Upsilon}$, then $\overline{\xi} \in \cl{\Upsilon}$ as well.  Take $\alpha, \beta  \in \cl{\Upsilon}$.  Then we have the following:
		\begin{enumerate}
			\item\label{prop->-<-conjugate-<=} If $\alpha \le \beta$, then $\overline{\alpha} \ge \overline{\beta}$.
			\item\label{prop->-<-conjugate-||} If $\alpha$ and $\beta$ are incomparable, then so are $\overline{\alpha}$ and $\overline{\beta}$.
		\end{enumerate}
\end{proposition}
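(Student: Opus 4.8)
The plan is to exploit the fact that conjugation reverses the outcome order, combined with the substitution $\gamma \mapsto \overline{\gamma}$ in the definition of $\le$ (Definition \ref{def-<=}, read in additive notation as $\alpha \le \beta$ iff $o^-(\alpha + \gamma) \le o^-(\beta + \gamma)$ for all $\gamma \in \cl{\Upsilon}$). Two preliminary facts about conjugation are needed, both established by a routine induction on birthday directly from Definition \ref{def-conjugate}: first, that conjugation is an involution, $\overline{\overline{\xi}} = \xi$; and second, that it commutes with disjunctive sum, $\overline{\alpha + \beta} = \overline{\alpha} + \overline{\beta}$. I would record these as a short lemma (or cite them if they appear earlier) and not dwell on the inductions, since they are standard.

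Next I would extract the key order-theoretic observation from Theorem \ref{theorem-outcome-class-conj}: the map sending each outcome to the outcome of its conjugate swaps $\Left$ and $\Right$ and fixes $\Next$ and $\Prev$. Checking this map against each covering relation of the lattice in Figure \ref{fig-outcome-poset-1} shows that it is an order-reversing bijection of the four outcome classes. Concretely, for any positions $\mu, \nu \in \cl{\Upsilon}$, we have $o^-(\mu) \le o^-(\nu)$ if and only if $o^-(\overline{\mu}) \ge o^-(\overline{\nu})$.

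For part (\ref{prop->-<-conjugate-<=}), assume $\alpha \le \beta$, i.e.\ $o^-(\alpha + \gamma) \le o^-(\beta + \gamma)$ for every $\gamma \in \cl{\Upsilon}$. To show $\overline{\alpha} \ge \overline{\beta}$, fix an arbitrary $\delta \in \cl{\Upsilon}$. Since $\cl{\Upsilon}$ is closed under conjugation, $\overline{\delta} \in \cl{\Upsilon}$, so the hypothesis applies with $\gamma = \overline{\delta}$, giving $o^-(\alpha + \overline{\delta}) \le o^-(\beta + \overline{\delta})$. Using that conjugation commutes with sum and is an involution, $\overline{\alpha + \overline{\delta}} = \overline{\alpha} + \delta$ and likewise $\overline{\beta + \overline{\delta}} = \overline{\beta} + \delta$. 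Applying the order-reversing observation with $\mu = \alpha + \overline{\delta}$ and $\nu = \beta + \overline{\delta}$ then yields $o^-(\overline{\alpha} + \delta) \ge o^-(\overline{\beta} + \delta)$. As $\delta$ was arbitrary, $\overline{\alpha} \ge \overline{\beta}$.

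Finally, part (\ref{prop->-<-conjugate-||}) should fall out as a corollary. Running the argument of part (\ref{prop->-<-conjugate-<=}) a second time on the pair $\overline{\alpha}, \overline{\beta}$ and invoking involutivity upgrades the implication to the biconditional $\alpha \le \beta \iff \overline{\alpha} \ge \overline{\beta}$, and symmetrically $\alpha \ge \beta \iff \overline{\alpha} \le \overline{\beta}$. Hence if $\overline{\alpha}$ and $\overline{\beta}$ were comparable, then one of $\overline{\alpha} \ge \overline{\beta}$ or $\overline{\alpha} \le \overline{\beta}$ would hold, forcing $\alpha \le \beta$ or $\alpha \ge \beta$ and contradicting the incomparability of $\alpha$ and $\beta$. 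I expect the only real care needed is bookkeeping the direction of the inequalities under the order reversal; the genuine content is the single observation that conjugation is an order-reversing involution compatible with disjunctive sum, after which both parts are short.
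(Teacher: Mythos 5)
Your proposal is correct and follows essentially the same route as the paper's proof: substitute $\gamma = \overline{\delta}$ into the definition of $\le$, conjugate, and use that conjugation commutes with disjunctive sum and reverses the outcome order, with part (2) handled by the same mechanism. Your explicit packaging of the biconditional $\alpha \le \beta \iff \overline{\alpha} \ge \overline{\beta}$ for the incomparability case is a slightly cleaner write-up of what the paper leaves as ``similar,'' but it is not a different argument.
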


\begin{proof}\text{ }
	Recall from Theorem \ref{theorem-outcome-class-conj},
		\begin{align*}
			o^-(\xi) = \Left &\implies o^-(\overline{\xi}) = \Right;\\
			o^-(\xi) = \Next &\implies o^-(\overline{\xi}) = \Next;\\
			o^-(\xi) = \Prev &\implies o^-(\overline{\xi}) = \Prev;\\
			o^-(\xi) = \Right &\implies o^-(\overline{\xi}) = \Left.
		\end{align*}	
	
	\begin{enumerate}
		\item Take arbitrary $\gamma \in \cl{\Upsilon}$.  We want 
			\[o^-(\gamma + \overline{\alpha}) \ge o^-(\gamma + \overline{\beta}).\]
			We have $\overline{\gamma} \in \cl{\Upsilon}$, and since $\alpha \le \beta$, this means
				\[ o^-(\overline{\gamma} + \alpha) \le o^-(\overline{\gamma} + \beta).\]
			By how outcomes work under conjugation, we get
				\[ o^-(\overline{\overline{\gamma} + \alpha)} \ge o^-(\overline{\overline{\gamma} + \beta}).\]
			But
				\begin{align*}
					\overline{\overline{\gamma} + \alpha} &= \gamma + \overline{\alpha},\\
					\overline{\overline{\gamma} + \beta} &= \gamma + \overline{\beta},
				\end{align*}
			which gives our desired result.
		\item The proof for this assertion is similar to that of the proof for the previous case. \qedhere
	\end{enumerate}
\end{proof}

Finally, in classifying the partial orders, we use the following definitions.

\begin{definition}
	A partial order $\mathscr{P}$ is 
		\begin{itemize}
			\item \textbf{down directed} if for all elements $x$, $y \in \mathscr{P}$, there exists an element $z \in \mathscr{P}$ such that $x \ge z$ and $y \ge z$.  
			
			\item \textbf{up directed} if for all elements $x$, $y \in \mathscr{P}$, there exists an element $z \in \mathscr{P}$ such that $z \ge x$ and $z \ge y$.
			
			\item a \textbf{lattice} if it contains all binary meets and joins.  That is, it is a lattice if for all elements $x$, $y \in \mathscr{P}$,
				\begin{itemize}
					\item there exists an element $z \in \mathscr{P}$ such that $x \ge z$ and $y \ge z$ and for any other element $w \in \mathscr{P}$ with $x \ge w$, $y \ge w$, we have $z \ge w$,
					
					\item there exists an element $a \in \mathscr{P}$ such that $a \ge x$ and $a \ge y$, and for any other element $b \in \mathscr{P}$ with $b \ge x$ and $b \ge y$, we have $b \ge a$.  
				\end{itemize}
		\end{itemize}
\end{definition}

We conclude with the following proposition.

\begin{proposition}
	If the partially ordered set $\mathscr{P}$ is finite, then:
		\begin{enumerate}
			\item $\mathscr{P}$ is down directed $\iff$ there exists a minimum element.
			\item $\mathscr{P}$ is up directed $\iff$ there exists a maximum element.
		\end{enumerate}
\end{proposition}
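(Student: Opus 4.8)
The plan is to prove part~1 in full and obtain part~2 by the obvious order-reversing duality (replace every $\ge$ by $\le$, equivalently apply part~1 to the opposite order on $\mathscr{P}$, interchanging the roles of lower/upper bound and minimum/maximum). Each biconditional splits into a routine direction and a direction that genuinely uses finiteness.

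For the routine direction of part~1 ($\Leftarrow$), suppose $\mathscr{P}$ has a minimum element $m$, i.e.\ $x \ge m$ for every $x \in \mathscr{P}$. Then given any $x, y \in \mathscr{P}$ the element $z = m$ witnesses down-directedness, since $x \ge m$ and $y \ge m$. This direction needs no finiteness. For the substantive direction ($\Rightarrow$), I would enumerate $\mathscr{P} = \{p_1, \dots, p_n\}$ and build a common lower bound by iterating down-directedness. Set $z_1 = p_1$; given $z_k \in \mathscr{P}$ with $z_k \le p_i$ for all $i \le k$, apply down-directedness to $z_k$ and $p_{k+1}$ to obtain $z_{k+1} \in \mathscr{P}$ with $z_{k+1} \le z_k$ and $z_{k+1} \le p_{k+1}$; transitivity then gives $z_{k+1} \le p_i$ for all $i \le k+1$. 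After the finitely many steps permitted by $|\mathscr{P}| = n < \infty$, the element $z_n \in \mathscr{P}$ satisfies $z_n \le p_i$ for every $i$, so $x \ge z_n$ for all $x \in \mathscr{P}$, and hence $z_n$ is a minimum.

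The one place to be careful — and the only real content of the argument — is that finiteness must be used honestly: the construction terminates precisely because $\mathscr{P}$ has finitely many elements, and the resulting lower bound is automatically a member of $\mathscr{P}$, since each $z_k$ is \emph{produced} by down-directedness and therefore lies in $\mathscr{P}$. Without finiteness the statement fails, as $(\mathbb{Z}, \le)$ is down directed yet has no minimum, so I would flag that finiteness is necessary rather than merely convenient. Uniqueness of the minimum, if one wants it, is immediate from antisymmetry. Part~2 then follows verbatim from part~1 applied to the reversed order.
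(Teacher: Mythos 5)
Your proof is correct and complete. Note that the paper itself gives no proof of this proposition at all: it is stated as a standard fact about finite partially ordered sets, with the reader directed to a reference for background. Your argument — the trivial $\Leftarrow$ direction via the minimum itself, the iterative construction $z_1, z_2, \ldots, z_n$ of a common lower bound for the $\Rightarrow$ direction (using that each $z_k$ produced by down-directedness lies in $\mathscr{P}$ and that the process terminates by finiteness), and the order-reversing duality for part 2 — is the standard proof, and your observation that $(\mathbb{Z}, \le)$ shows finiteness is genuinely needed is a worthwhile addition.
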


Further information on partially ordered sets can be found in \cite{POSET}.

\section{Thesis Layout}\label{sec-thesis}
The remainder of this thesis is laid out as follows:

\begin{itemize}
	\item Chapter \ref{chapter-examples} calculates five indistinguishability quotients and \mis monoids.  Four of these are from partizan positions, while one is from an impartial position.  These examples demonstrate the wide range of \mis monoids which may occur, from finite to infinite, lattice to non-lattice, etc.  After finishing Chapter \ref{chapter-examples}, the reader should be intimately aware of the calculations behind finding \mis monoids and their partial orders.
	
	\item Chapter \ref{chapter-cardinality-left} discusses the cardinality of \mis monoids.  In it, we give some positions which ensure that the monoid is infinite.  A set of positions which always have finite monoids is also given.
	
	\item Chapter \ref{chapter-stars-0} looks at conditions on $\xi$ for when $* + * \equiv 0 \imod{\cl{\xi}}$. In particular, we extend the known result for impartial games to encompass all all-small games.
	
	\item Chapter \ref{chapter-0} constructs a set of positions, which we call $\ab{3}$, such that for $\xi$ an $\ab{3}$ position, $\xi + \xib \equiv 0 \imod{\cl{\text{$\ab{3}$}}}$, a result which mimics that of normal play.   Chapter \ref{chapter-0} also shows how these $\ab{3}$ positions also demonstrate a Tweedledum-Tweedledee type strategy under \mis play.
	
	\item Chapter \ref{chapter-cat} discusses our current difficulties in building a non-trivial category of \mis play games.
	
	\item Chapter \ref{chapter-congruent} focuses on \mis monoids being isomorphic.  Particular emphasis is given on being isomorphic to $\monoid{M}_{\cl{*}}$.  The two most important results of the thesis lie in this chapter, namely we give necessary and sufficient conditions on a set of positions $\Upsilon$ such that $\monoid{M}_{\cl{\Upsilon}} \cong \monoid{M}_{\cl{*}}$, and a construction theorem which builds all positions $\xi$ such that $\monoid{M}_{\cl{\xi}} \cong \monoid{M}_{\cl{*}}$.
	
	\item Chapter \ref{chapter-heap} calculates the \mis monoid of two heap based games using the new method developed by Weimerskirch \cite{MIKE}.  
	
	\item Chapter \ref{chapter-conclusion} concludes the thesis, listing some open problems and future avenues of research for partizan \mis play theory.
	
	\item Appendix \ref{appendixA} gives a list and details of the most used positions in this thesis.
\end{itemize}

This thesis endeavours to give detailed proofs of most of the results in this thesis to ensure that those unfamiliar with the style of game theoretic proofs are able to follow.

\chapter{Examples of \Mis Monoids for Certain Partizan Positions}\label{chapter-examples}

\section{Introduction}
It is vital that we understand how to calculate the \mis monoids of partizan positions.  The role of this chapter is to familiarize the reader with the work required the \mis monoid of a position, as well as its partial order.  While the literature contains examples of this for impartial positions \cite{MTHESIS, ADVANCES, TAMING, MISQUOTIENT, NOTES, STRUCTURE}, there have been no examples of partizan ones.  

This chapter calculates the \mis monoids of five different positions.  Four of these positions are partizan while one is impartial, which is included as it will be of use to us in Chapter \ref{chapter-cardinality-left}.  The partizan examples were chosen as they demonstrate the huge differences of partizan \mis monoids - from finite to infinite, from lattices to sets of incomparable elements, from \mis monoids isomorphic to $\cl{*}$ to \mis monoids which are  beyond simple classification.  

Before we begin, we need a piece of notation.

\begin{notation}
	For $n \in \mathbb{Z}^{\ge 0}$, $\xi$ a position in some game, $n  \xi$ denotes the disjunctive sum of $n$ copies of $\xi$.   
\end{notation}

\begin{example}
	The position $2 *$ is the disjunctive sum of 2 copies of $*$, i.e.\ $* + *$.  
\end{example}

Let the games begin!

\section{The \Mis Monoid of $\cl{1, \bar{1}}$}\label{sec-1-1}

This section gives an example of a \mis monoid with the following properties:
	\begin{itemize}
		\item infinite cardinality,
		\item inverses of elements exist, meaning that the monoid is also a group,
		\item partial order is a lattice.
	\end{itemize}
	
\begin{definition}
	Recall from Example \ref{example-0-1-*}, the position $1$ which we define as $1 = \combgame{\{0\mid\cdot\}}$.  That is, 1 is the position where Left has one move to 0 while Right has no move.
	
	The position $\overline{1}$ is defined as $\overline{1} = \combgame{\{\cdot\mid0\}}$.  That is, $\overline{1}$ is the position where Right has one move to 0 while Left has no move.
\end{definition}

The game trees of $1$ and $\overline{1}$ are given in Figure \ref{fig-gt-1}.

	\begin{figure}[htb]
	\unitlength 8pt
	\begin{center}
	\begin{graph}(6,2)(0,0)
	\graphnodesize{0.4}
	\fillednodestrue

	\roundnode{A}(1,2)
	\roundnode{B}(0,0)
	%\roundnode{C}(2,0)
		
	%\edge{A}{C}
	\edge{A}{B}

	\roundnode{A1}(5,2)
	%\roundnode{B}(0,0)
	\roundnode{C1}(6,0)
		
	\edge{A1}{C1}
		
	\end{graph}
	\caption{The game trees of $1$ and $\overline{1}$.}
	\label{fig-gt-1}
	\end{center}
	\end{figure}
	
We wish to examine $\cl{1, \overline{1}}$.  Elements of this set are of the form $a1 + b \overline{1}$ for $a,b \in \mathbb{Z}^{\ge 0}$.  

We now determine the outcome classes for all positions in the closure.

\begin{proposition}\label{prop-a1+b(-1)}
	Suppose $a,b \in \mathbb{Z}^{\ge 0}$.  Then
		\[ o^-(a  1 + b  \bar{1}) = \begin{cases}
		\Next &\text{if } a = b;\\
		\Right &\text{if } a > b;\\
		\Left &\text{if } a < b.
		\end{cases}\]
\end{proposition}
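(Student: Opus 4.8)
The plan is to induct on the total count $a+b$, using Table~\ref{table-outcomes} to read off $o^-(a1+b\overline{1})$ from the outcomes of its options. First I would record the option structure: since the $a$ copies of $1$ are mutually identical and Left can only move in them (reducing such a copy to $0$), the sole Left option is $(a-1)1+b\overline{1}$ when $a\ge 1$ and there is no Left option when $a=0$; dually the sole Right option is $a1+(b-1)\overline{1}$ when $b\ge 1$ and none when $b=0$. I would also note the conjugation symmetry $\overline{a1+b\overline{1}}=b1+a\overline{1}$, so by Theorem~\ref{theorem-outcome-class-conj} the case $a<b$ follows from the case $a>b$ with $\Left$ and $\Right$ interchanged; this halves the work.

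Next I would dispose of the boundary cases where one player has no move. If $a=b=0$ the position is $0$, and $o^-(0)=\Next$, matching the $a=b$ clause. If $a\ge 1$ and $b=0$ the position is $a1$: Right never has a move, so Right wins moving first, and if Left moves first she is forced down the chain $a1\to(a-1)1\to\cdots$ until it is Right's turn at $0$, where Right again wins; hence $o^-(a1)=\Right$, matching the $a>b$ clause. The remaining boundary case $a=0$, $b\ge 1$ gives $\Left$ by the conjugation symmetry.

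For the inductive interior step I would assume $a,b\ge 1$, so both option sets are non-empty and Table~\ref{table-outcomes} applies to the misère outcomes supplied by the induction hypothesis. In the subcase $a=b$, the Left option $(a-1)1+b\overline{1}$ has fewer $1$'s than $\overline{1}$'s, hence lies in $\Left\subseteq\Left\cup\Prev$, while the Right option lies in $\Right\subseteq\Right\cup\Prev$; the table then returns $\Next$. In the subcase $a>b$, the Left option satisfies $a-1\ge b$ and so lies in $\Right\cup\Next$, while the Right option satisfies $a>b-1$ and lies in $\Right$; the table returns $\Right$. The subcase $a<b$ is handled by conjugation, giving $\Left$. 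This covers all three clauses.

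The step needing the most care is the boundary analysis, not the interior one. Table~\ref{table-outcomes} treats an empty option set by vacuous truth, which matches the \emph{normal}-play terminal convention, so when one player has no move it does not directly give the misère outcome (for instance it would assign $\Prev$ rather than the correct $\Right$ to $1=\combgame{\{0\mid\cdot\}}$, exactly as the hand computations in Example~\ref{example-o+-o-} show). I therefore would not invoke the table when $a=0$ or $b=0$, but instead argue straight from the misère winning condition that a player facing no remaining copies on their turn wins. Once this subtlety is isolated the argument is short; as a sanity check one may also prove the result non-inductively by noting that play is entirely forced—Left has exactly $a$ interchangeable moves and Right exactly $b$—so the winner is decided purely by which player first faces an empty turn, a parity comparison of $2a+1$ against $2b+2$ when Left moves first and of $2b+1$ against $2a+2$ when Right moves first.
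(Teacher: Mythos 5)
Your proof is correct, and it takes a more formal route than the paper's. The paper disposes of the proposition in three sentences by exactly the argument you relegate to your closing sanity check: each player's moves are entirely forced and interchangeable, Left has $a$ of them and Right has $b$, so under mis\`ere play the winner is simply whoever first faces an empty turn, which is the parity comparison you write down. Your main argument instead inducts on $a+b$, reads off the interior cases from Table~\ref{table-outcomes}, handles the boundary cases $a=0$ or $b=0$ directly from the mis\`ere winning condition, and uses Theorem~\ref{theorem-outcome-class-conj} to halve the work. What your version buys is rigor at exactly the point where the table is dangerous: you are right that Table~\ref{table-outcomes} with empty option sets read vacuously encodes the normal-play terminal convention, so it must not be applied when a player has no move in mis\`ere play, and isolating that is the only real content beyond bookkeeping. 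What the paper's version buys is brevity and the structural insight that the game is equivalent to two independent move-counters. One very minor slip in your boundary case: with Left moving first in $a1$, Right already has no move at $(a-1)1$ after Left's single forced move, so he wins there rather than ``at $0$''; the conclusion $o^-(a1)=\Right$ is unaffected.
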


\begin{proof}
	If $a=b$, then it is a simple parity argument allowing whoever moves first to win.
	
	If $a > b$, then there are more moves available for Left, so Right will run out of moves before Left does.  If $a<b$, then the argument is reversed.
\end{proof}

We are now concerned with which elements in $\cl{1, \overline{1}}$ are distinguishable and which elements are indistinguishable (the definitions for distinguishable and indistinguishable are given by Definition \ref{def-indis}).  In determining this, we will show that an infinite number of elements of $\cl{1, \overline{1}}$ are distinguishable.  This means that the \mis monoid of $\cl{1, \overline{1}}$ is infinite.  

\begin{proposition}\label{prop-a1-b(-1)}
	The positions $a1$ and $b\overline{1}$ are distinguishable for any $a, b \in \mathbb{Z}^{\ge 0}$ provided $a,b > 0$.
\end{proposition}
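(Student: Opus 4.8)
The plan is to exhibit, for each pair $a, b > 0$, an explicit position $\gamma \in \cl{1, \overline{1}}$ witnessing distinguishability, that is, one with $o^-(a1 + \gamma) \neq o^-(b\overline{1} + \gamma)$. The key observation is that the simplest conceivable candidate, $\gamma = 0$, already works, so no nontrivial witness is needed. Recall that $0$ belongs to every closed set (option closure forces $0 \in \cl{1, \overline{1}}$), so $\gamma = 0$ is a legitimate choice.

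With $\gamma = 0$, I would simply apply Proposition \ref{prop-a1+b(-1)} twice. Writing $a1$ as $a \cdot 1 + 0 \cdot \overline{1}$ and using $a > 0 = b$ puts it in the case $a > b$, so $o^-(a1) = \Right$. Dually, writing $b\overline{1}$ as $0 \cdot 1 + b \cdot \overline{1}$ and using $0 = a < b$ puts it in the case $a < b$, so $o^-(b\overline{1}) = \Left$. Since $\Right \neq \Left$, the position $0$ distinguishes $a1$ and $b\overline{1}$, which is exactly what the definition of distinguishability requires.

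Since the whole argument is an immediate corollary of the outcome computation in Proposition \ref{prop-a1+b(-1)}, there is no genuine obstacle; the only thing one might overlook is that a trivial witness suffices. Intuitively, in $a1$ only Left has moves while Right has none, so Right is perpetually the player unable to move and therefore wins under the \mis convention, whereas $b\overline{1}$ is the mirror image favouring Left. The two positions thus already carry opposite \mis outcomes before any sum is formed, which is why $\gamma = 0$ separates them.
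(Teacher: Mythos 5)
Your proposal is correct and matches the paper's proof exactly: both use $\gamma = 0$ as the distinguishing element and invoke Proposition \ref{prop-a1+b(-1)} to conclude $o^-(a1) = \Right$ while $o^-(b\overline{1}) = \Left$. If anything, your write-up is cleaner, since the paper's phrase ``Since $a \not= b$'' is a slight misstatement of the hypothesis actually used (namely $a, b > 0$).
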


\begin{proof}
	Since $a \not = b$, we have $o^-(a1) = \Right$ while $o^-(b \overline{1}) = \Left$, so these two positions are distinguished by 0.
\end{proof}

\begin{proposition}\label{prop-1-a1}
	The positions $1$ and $a  1$ are distinguishable for any $a \in \mathbb{Z}^{\ge 2}$.  
\end{proposition}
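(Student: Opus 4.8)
The plan is to produce a single explicit distinguishing position $\gamma \in \cl{1, \overline{1}}$ and invoke the definition of distinguishability (Definition \ref{def-indis}) directly. That is, I want to exhibit one $\gamma$ for which $o^-(1 + \gamma) \neq o^-(a1 + \gamma)$, and the two outcome computations will then be immediate consequences of Proposition \ref{prop-a1+b(-1)}. No induction or general structural argument is needed; the whole task is to select $\gamma$ cleverly.

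The natural candidate is $\gamma = \overline{1}$, chosen precisely because it balances the lone copy of $1$ in the first position while leaving the $a \ge 2$ copies of $1$ unbalanced in the second. First I would observe that $\overline{1} \in \cl{1, \overline{1}}$, so it is a legitimate test position. Applying Proposition \ref{prop-a1+b(-1)} to $1 + \overline{1}$, which has one copy of $1$ against one copy of $\overline{1}$ (the case $a = b$), gives $o^-(1 + \overline{1}) = \Next$. Applying the same proposition to $a1 + \overline{1}$, which pits $a \ge 2$ copies of $1$ against a single $\overline{1}$ so that the number of $1$'s strictly exceeds the number of $\overline{1}$'s, gives $o^-(a1 + \overline{1}) = \Right$. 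Since $\Next \neq \Right$, the position $\overline{1}$ distinguishes $1$ and $a1$, which is exactly the required conclusion.

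There is essentially no obstacle once Proposition \ref{prop-a1+b(-1)} is in hand: the argument reduces to checking that adjoining a single $\overline{1}$ equalizes the move-counts in $1 + \overline{1}$ but not in $a1 + \overline{1}$. The only point deserving care is the hypothesis $a \ge 2$, which is precisely what yields the strict inequality $a > 1$ in the second computation; for $a = 1$ the two positions would be identical and no distinguishing $\gamma$ could possibly exist, so the bound on $a$ is sharp rather than incidental.
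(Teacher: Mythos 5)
Your proof is correct and follows exactly the paper's own argument: both use $\overline{1}$ as the distinguishing element and invoke Proposition \ref{prop-a1+b(-1)} to compute $o^-(1+\overline{1}) = \Next$ versus $o^-(a1+\overline{1}) = \Right$. No further comment is needed.
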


\begin{proof}
	$\bar{1}$ distinguishes $1$ and $a  1$ for any $a \in \mathbb{Z}^{\ge 2}$ since by Proposition \ref{prop-a1+b(-1)}, $o^-(1  + \bar{1}) = \Next$ while $o^-(a  1 + \bar{1}) = \Right$.  
\end{proof}

\begin{corollary}\label{cor-a1-b1-distinguishable}
	The positions $a1$ and $b1$ are distinguishable for any $a,b \in \mathbb{Z}^{\ge 2}$ where $a \not = b$.
\end{corollary}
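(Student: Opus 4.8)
The plan is to imitate the witness-finding strategy of Proposition \ref{prop-1-a1}, where a single copy of $\overline{1}$ was used to separate $1$ from $a1$. There the asymmetry was created by adding just enough Right moves to tip the smaller position onto the diagonal. Here I expect to need several copies of $\overline{1}$, one for each unit of the smaller of the two positions, to achieve the same effect. First I would assume without loss of generality that $a < b$; this is harmless because distinguishability is symmetric in its two arguments, so relabelling costs nothing.

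Next I would propose the distinguishing position $\gamma = a\overline{1}$, which lies in $\cl{1,\overline{1}}$ since $a \in \mathbb{Z}^{\ge 0}$. The intuition is to add exactly enough copies of $\overline{1}$ to balance the Left moves of the smaller position $a1$, driving it onto the diagonal where the two summand-counts agree and the outcome is $\Next$, while the larger position $b1$ retains a surplus of Left moves and therefore stays in $\Right$.

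To make this rigorous I would invoke Proposition \ref{prop-a1+b(-1)} twice. Applied to $a1 + a\overline{1}$ it yields $o^-(a1 + a\overline{1}) = \Next$, since the counts of $1$ and $\overline{1}$ are equal; applied to $b1 + a\overline{1}$ it yields $o^-(b1 + a\overline{1}) = \Right$, since $b > a$. As $\Next \neq \Right$, the position $\gamma = a\overline{1}$ distinguishes $a1$ and $b1$ over $\cl{1,\overline{1}}$, which is precisely what Definition \ref{def-indis} demands.

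I do not anticipate a genuine obstacle, as the statement follows almost immediately from Proposition \ref{prop-a1+b(-1)}; indeed the hypothesis $a,b \ge 2$ is not even needed for the argument and merely reflects that the cases involving $1$ are already handled by Proposition \ref{prop-1-a1}. The one point that warrants a moment's care is the choice of witness: one must add $\min(a,b)$ copies of $\overline{1}$ rather than the single copy that sufficed in Proposition \ref{prop-1-a1}, so that the smaller position lands exactly on the line $a=b$ where the outcome is $\Next$, while the larger remains strictly above it in $\Right$.
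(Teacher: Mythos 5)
Your proof is correct and is essentially identical to the paper's: the paper also assumes $a<b$ without loss of generality and uses $a\overline{1}$ as the distinguishing element, citing Proposition \ref{prop-a1+b(-1)} to get outcomes $\Next$ and $\Right$ respectively. No gaps.
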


\begin{proof}
	Suppose, without loss of generality, that $a < b$.  Then Proposition \ref{prop-a1+b(-1)} gives $o^-(a1 + a \overline{1}) = \Next$ while $o^-(b1 + a \overline{1}) = \Right$.  Therefore $a \overline{1}$ distinguishes $a1$ and $b1$.  
\end{proof}

\begin{corollary}\label{cor-1,bar1-infinite}
	The \mis monoid of $\cl{1, \bar{1}}$ is infinite.
\end{corollary}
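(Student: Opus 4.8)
The plan is to read off the conclusion directly from the distinguishability results just established, together with the fact that the elements of the \mis monoid $\monoid{M}_{\cl{1,\bar{1}}}$ are precisely the equivalence classes of positions under the indistinguishability relation $\stackrel{\cl{1,\bar{1}}}{\equiv}$. Since distinguishable positions cannot lie in the same equivalence class, it suffices to exhibit an infinite family of pairwise distinguishable positions inside $\cl{1,\bar{1}}$; each such position then contributes a distinct element to the monoid.

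First I would single out the family $\{a1 : a \in \mathbb{Z}^{\ge 1}\}$, all of which are genuinely in $\cl{1,\bar{1}}$ (they are of the form $a1 + b\bar{1}$ with $b=0$). Then I would argue that any two distinct members of this family are distinguishable. For two members $a1$ and $b1$ with $a,b \ge 2$ and $a \ne b$, this is exactly Corollary \ref{cor-a1-b1-distinguishable}. For the remaining comparisons, namely $1$ against $a1$ with $a \ge 2$, this is Proposition \ref{prop-1-a1}. Together these cover every unordered pair from the family, so all the positions $1, 2\cdot 1, 3\cdot 1, \ldots$ are pairwise distinguishable over $\cl{1,\bar{1}}$.

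Finally I would invoke that $\stackrel{\cl{1,\bar{1}}}{\equiv}$ is an equivalence relation, so pairwise distinguishable positions fall into pairwise distinct equivalence classes. This produces a countably infinite set of distinct classes $\{a1 + \cl{1,\bar{1}} : a \ge 1\}$ in $\quotient{\cl{1,\bar{1}}}{\stackrel{\cl{1,\bar{1}}}{\equiv}}$, and hence infinitely many elements of the \mis monoid $\monoid{M}_{\cl{1,\bar{1}}}$, which is therefore infinite.

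There is no serious obstacle remaining: the technical content is entirely carried by the preceding propositions, whose proofs rest on the parity/outcome computation of Proposition \ref{prop-a1+b(-1)}. The only point worth stating explicitly is the passage from ``pairwise distinguishable positions'' to ``distinct monoid elements,'' which is immediate from the quotient construction of the \mis monoid and requires no further computation.
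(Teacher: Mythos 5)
Your proposal is correct and follows essentially the same route as the paper: the paper's proof simply cites Corollary \ref{cor-a1-b1-distinguishable} to obtain an infinite family of pairwise distinguishable positions and concludes immediately that the monoid is infinite. Your additional care in handling the pair $(1, a1)$ via Proposition \ref{prop-1-a1} and in spelling out the passage from pairwise distinguishability to distinct equivalence classes is a harmless elaboration of the same argument.
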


\begin{proof}
	By Corollary \ref{cor-a1-b1-distinguishable}, there is an infinite number of positions which are distinguishable.  Therefore the \mis monoid of $\cl{1, \bar{1}}$ is infinite.
\end{proof}

We also have the following corollary.

\begin{corollary}
	The positions $a\overline{1}$ and $b\overline{1}$ are distinguishable for any $a,b \in \mathbb{Z}^{\ge 0}$ where $a \not = b$.  
\end{corollary}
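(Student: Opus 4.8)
The plan is to exhibit, for each pair $a \neq b$, a single position in $\cl{1,\overline{1}}$ that distinguishes $a\overline{1}$ from $b\overline{1}$, mirroring the strategy used in Corollary \ref{cor-a1-b1-distinguishable} but with the roles of $1$ and $\overline{1}$ interchanged. Without loss of generality I would assume $a < b$, and propose the witness $\gamma = a1$ (the disjunctive sum of $a$ copies of $1$). The whole argument then reduces to two evaluations of the outcome function supplied by Proposition \ref{prop-a1+b(-1)}.

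The key steps, in order: first, observe that $a\overline{1} + a1 = a1 + a\overline{1}$ has equal numbers of each summand, so Proposition \ref{prop-a1+b(-1)} gives $o^-(a1 + a\overline{1}) = \Next$. Second, observe that $b\overline{1} + a1 = a1 + b\overline{1}$ has strictly more copies of $\overline{1}$ than of $1$ (as $a < b$), so the same proposition gives $o^-(a1 + b\overline{1}) = \Left$. Since $\Next \neq \Left$, the position $a1$ distinguishes $a\overline{1}$ and $b\overline{1}$, and because $a1 \in \cl{1,\overline{1}}$ for every $a \geq 0$, this completes the claim. Note that this single witness handles the boundary case $a = 0$ uniformly, where $\gamma = 0$ separates $0$ from $b\overline{1}$.

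An appealing alternative would be to argue purely by conjugation: since $\overline{a1} = a\overline{1}$ and the set $\cl{1,\overline{1}}$ is closed under conjugation, one can show that whenever $\gamma$ distinguishes two positions, $\overline{\gamma}$ distinguishes their conjugates, using that conjugation acts as a bijection on outcome classes by Theorem \ref{theorem-outcome-class-conj}. This would let me deduce the $a\overline{1}$-versus-$b\overline{1}$ statement from the $a1$-versus-$b1$ statement. The main obstacle with this route is coverage: Corollary \ref{cor-a1-b1-distinguishable} is stated only for $a,b \geq 2$, so the conjugation shortcut would still require separately patching the low cases ($a$ or $b$ in $\{0,1\}$) using Proposition \ref{prop-1-a1} and Example \ref{example-1}. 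For this reason I expect the direct witness $\gamma = a1$ to be the cleaner path, since it dispatches all $a \neq b$ in $\mathbb{Z}^{\ge 0}$ in one stroke with no case analysis beyond the harmless $a < b$ normalization.
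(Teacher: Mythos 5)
Your proof is correct and is essentially the paper's own argument made explicit: the paper proves this corollary by mirroring Proposition \ref{prop-1-a1} and Corollary \ref{cor-a1-b1-distinguishable} with $1$ and $\overline{1}$ interchanged, and your witness $\gamma = a1$ is precisely the conjugate of the witness $a\overline{1}$ used there, with both evaluations coming from Proposition \ref{prop-a1+b(-1)}. Your write-up is marginally cleaner in that the single witness covers all $a,b \in \mathbb{Z}^{\ge 0}$ with $a \neq b$ uniformly, rather than patching together the low cases from the earlier results.
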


\begin{proof}
	Take the arguments of Proposition \ref{prop-1-a1} and Corollary \ref{cor-a1-b1-distinguishable} replacing $1$ with $\overline{1}$ and vice versa.
\end{proof}

We must also determine which elements are indistinguishable.

\begin{proposition}\label{prop-a1-b1-b-a}
	Suppose $a, b \in \nat$.  Then
		\[ a1 + b\bar{1} \equiv \begin{cases}
		(a-b)1 \imod{\cl{1,\bar{1}}} &\text{if } a > b;\\
		(b-a)\bar{1} \imod{\cl{1,\bar{1}}} &\text{if } a \le b.
		\end{cases}
		\]
\end{proposition}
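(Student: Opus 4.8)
The plan is to appeal directly to the definition of indistinguishability (Definition \ref{def-indis}) together with the outcome computation of Proposition \ref{prop-a1+b(-1)}. Every element of $\cl{1,\bar{1}}$ has the form $c1 + d\bar{1}$ with $c,d \in \nat$, so it suffices to show, for each such test position $\gamma = c1 + d\bar{1}$, that $a1 + b\bar{1} + \gamma$ has the same \mis outcome as the claimed reduced position plus $\gamma$.

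First I would observe that Proposition \ref{prop-a1+b(-1)} tells us the outcome of any position $e1 + f\bar{1}$ in the closure depends only on the sign of $e - f$: it is $\Next$ when $e = f$, $\Right$ when $e > f$, and $\Left$ when $e < f$. Adding two such positions simply adds their coefficients, so
\[ o^-(a1 + b\bar{1} + c1 + d\bar{1}) = o^-\bigl((a+c)1 + (b+d)\bar{1}\bigr), \]
whose value is governed by the sign of $(a+c)-(b+d) = (a-b)+(c-d)$.

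Next I would treat the two cases. When $a > b$, the proposed reduced form is $(a-b)1$, and $(a-b)1 + \gamma = (a-b+c)1 + d\bar{1}$, whose outcome is governed by the sign of $(a-b+c)-d = (a-b)+(c-d)$, the very same quantity. When $a \le b$, the reduced form is $(b-a)\bar{1}$, and $\gamma + (b-a)\bar{1} = c1 + (b-a+d)\bar{1}$, whose outcome is governed by the sign of $c-(b-a+d) = (a-b)+(c-d)$, again identical. Thus in every case the two positions share the same outcome for all $\gamma$, so they are indistinguishable over $\cl{1,\bar{1}}$.

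There is no genuine obstacle here: the whole content is the observation that the outcome depends only on the net balance of $1$'s against $\bar{1}$'s, so cancelling the $\min(a,b)$ matched pairs of $1 + \bar{1}$ leaves the outcome unchanged in every context. Conceptually this is the assertion $1 + \bar{1} \equiv 0 \imod{\cl{1,\bar{1}}}$ combined with the fact that $\equiv$ is a congruence; one could alternatively prove that single cancellation and iterate, but the direct coefficient computation above is cleaner and disposes of the boundary case $a=b$ (which yields $0$) uniformly. The only point meriting a word of care is that the test positions $\gamma$ range over exactly the forms $c1 + d\bar{1}$, which is guaranteed by the description of the closure recorded just before Proposition \ref{prop-a1+b(-1)}.
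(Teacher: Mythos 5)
Your proof is correct and takes essentially the same route as the paper's: both fix an arbitrary test position $c1 + d\bar{1}$, invoke Proposition \ref{prop-a1+b(-1)}, and check that the coefficient difference governing the outcome is $(a-b)+(c-d)$ for both the original and reduced positions. Your phrasing in terms of the sign of the net balance is just a compact restatement of the paper's case-by-case comparison.
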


\begin{proof}
	Suppose $a > b$.  To show
		\[ a1 + b\overline{1} \equiv (a-b)1 \imod{\cl{1,\bar{1}}},\]
	we must show that for an arbitrary position of $\cl{1, \overline{1}}$, say $c1 + d\overline{1}$, 
		\[ o^-((a1 + b\overline{1})+(c1 + d \overline{1})) = o^-((a-b)1 + (c1 + d\overline{1})).\]
	So, take arbitrary $c1 + d\bar{1}$.  Adding this position to $a1 + b \bar{1}$ gives, by Proposition \ref{prop-a1+b(-1)}, 
		\begin{align*}
			o^-((a+c) 1 + (b+d)  \bar{1}) &= \begin{cases}
			\Next &\text{if } a+c = b+d;\\
			\Right &\text{if } a+c > b+d;\\
			\Left &\text{if } a+c < b+d.
			\end{cases}
		\end{align*}	
	while adding it to $(a-b) 1$ gives
		\begin{align*}
			o^-((a-b+c) 1 + d  \bar{1}) &= \begin{cases}
			\Next &\text{if } a-b+c = d;\\
			\Right &\text{if } a-b+c > d;\\
			\Left &\text{if } a-b+c < d.
			\end{cases}\\
			&= \begin{cases}
			\Next &\text{if } a+c = b+d;\\
			\Right &\text{if } a+c > b+d;\\
			\Left &\text{if } a+c < b+d.
			\end{cases}
		\end{align*}	
	Since $c1 + d\bar{1} \in \cl{1,\bar{1}}$ was arbitrary, this gives that
		\[a1 + b\bar{1} \equiv (a-b)1 \imod{\cl{1,\bar{1}}} \text{ if } a > b,\]
	as required.  The case where $a \le b$ follows similarly.
\end{proof}

We have now checked distinguishability and indistinguishability for all positions in $\cl{1, \overline{1}}$.  We can now explicitly write the \mis monoid of $\cl{1,\overline{1}}$. With the mappings:
	\begin{align*}
		0 &\mapsto 1;\\
		a1 &\mapsto x^{-a};\\
		b\bar{1} &\mapsto x^{b},
	\end{align*}
we obtain the following monoid:
	\begin{align*}
	\monoid{M}_{\cl{1, \bar{1}}} &= \ideal{x^n \text{ where } n \in \mathbb{Z} \mid x^n x^m = x^{n+m}} \\
	\Next &= \{1\} \\
	\Prev &= \emptyset \\
	\Left &=  \{x^n \mid n \in \nat\}\\
	\Right &= \{x^{-n} \mid n \in \nat\}
	\end{align*}
However, $\monoid{M}_{\cl{1, \overline{1}}}$ is more than a monoid;  the relation $x^{n}x^{m} = x^{n+m}$ shows that each element of $\monoid{M}_{\cl{1,\overline{1}}}$ has an inverse, namely the inverse of $x^n$ is $x^{-n}$.  This means that $\monoid{M}_{\cl{1, \overline{1}}}$ is also group.

\begin{example}
	Thus, for example, to determine what element of the monoid the position $7 \cdot 1 + 18 \cdot \bar{1}$ we proceed as follows:
		\begin{enumerate}
			\item $7 \cdot 1 \mapsto x^{-7}$ while 	$18 \cdot \bar{1} \mapsto x^{18}$.
			\item Then $(7 \cdot 1 + 18 \cdot \bar{1}) \mapsto x^{-7}x^{18}$.
			\item Adding $-7$ and $18$ together gives $11$.  
			\item Therefore $(7 \cdot 1  + 18 \cdot \bar{1}) \mapsto x^{11}$.  
		\end{enumerate}
	Equally, we could apply Proposition \ref{prop-a1-b1-b-a} first to get that $7 \cdot 1 + 18 \cdot \bar{1} \equiv 11 \cdot \bar{1} \imod{\cl{1,\bar{1}}}$, and then $11 \cdot \bar{1} \mapsto x^{11}$.  
\end{example}
	
We now examine the partial order of the elements.  Recall from Definition \ref{def-<=} that 
	\[ x \ge y \text{ if } o^-(xz) \ge o^-(yz) \text{ for all } z \in \monoid{M}_{\Upsilon},\]
and that, in terms of outcomes, the outcome lattice is given in Figure \ref{fig-outcome-poset}.

	\begin{figure}[htb]
	\unitlength 15pt
	\begin{center}
	\begin{graph}(4,4)(0,0)
	\graphlinecolour{1}
	\graphlinewidth{.01}
	\grapharrowlength{.5}

	\textnode{L}(2,4){$\Left$}
	\textnode{P}(0,2){$\Prev$}
	\textnode{N}(4,2){$\Next$}
	\textnode{R}(2,0){$\Right$}
	
	\newcommand{\arr}[2]{%
	\edge{#1}{#2}[\graphlinewidth{.01}\graphlinecolour{0}]
	%\diredge{#1}{#2}[\graphlinecolour{0}]
	}
	
	\arr{L}{P}
	\arr{L}{N}
	\arr{P}{R}
	\arr{N}{R}
	
	\end{graph}
	\caption{Outcome Class Partial Order.}
	\label{fig-outcome-poset}
	\end{center}
	\end{figure}

We are now equipped with enough tools to determine the partial order of the elements.

\begin{proposition}
	The partial order of the elements of $\monoid{M}_{\cl{1,\bar{1}}}$ is
		\[ \cdots < x^{-3} <  x^{-2} < x^{-1} < 1 < x^{1} < x^{2}  < x^{3} < \cdots. \]
\end{proposition}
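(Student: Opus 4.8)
The plan is to reduce the order on $\monoid{M}_{\cl{1,\bar{1}}}$ entirely to the usual order on the exponents $n \in \mathbb{Z}$, exploiting the group structure to translate any comparison into one involving the identity. First I would record the outcome of each power of the generator: combining the mappings $a1 \mapsto x^{-a}$ and $b\bar{1}\mapsto x^{b}$ with Proposition \ref{prop-a1+b(-1)}, one finds that $o^-(x^n)$ is $\Left$ when $n>0$, is $\Next$ when $n=0$, and is $\Right$ when $n<0$; in short, $o^-(x^n)$ is governed by the sign of $n$. Since this monoid contains no $\Prev$ elements, the only relevant portion of the outcome lattice of Figure \ref{fig-outcome-poset} is the single chain $\Right < \Next < \Left$, so the outcomes arising here are totally ordered.

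For the forward direction, suppose $n \ge m$. Every $z \in \monoid{M}_{\cl{1,\bar{1}}}$ has the form $x^k$, and because the relation $x^n x^m = x^{n+m}$ makes the monoid a group, $o^-(x^n z) = o^-(x^{n+k})$ and $o^-(x^m z) = o^-(x^{m+k})$. The map $t \mapsto o^-(x^t)$ is order-preserving from $(\mathbb{Z},\le)$ into the chain $\Right < \Next < \Left$ (it is the sign map followed by the order-preserving inclusion of signs into that chain), so from $n+k \ge m+k$ we get $o^-(x^n z) \ge o^-(x^m z)$. As $z$ was arbitrary, Definition \ref{def-<=} yields $x^n \ge x^m$.

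For the reverse direction I would argue by contrapositive: assuming $n < m$, I must exhibit a single witness $z$ with $o^-(x^n z) \not\ge o^-(x^m z)$. Here the group structure does the work. Taking $z = x^{-n}$ gives $o^-(x^n z) = o^-(1) = \Next$ while $o^-(x^m z) = o^-(x^{m-n}) = \Left$, since $m-n > 0$. Because $\Next \not\ge \Left$, Definition \ref{def-<=} forces $x^n \not\ge x^m$. Combining the two directions shows that $x^n \ge x^m$ if and only if $n \ge m$, which is exactly the asserted total order $\cdots < x^{-1} < 1 < x^{1} < \cdots$.

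The one step I would watch most carefully is the outcome translation of the first paragraph: one must verify that it is the sign of $n$, not its magnitude, that controls $o^-(x^n)$, since for instance $x^{-2}$ and $x^{-1}$ both lie in $\Right$ and hence cannot be separated by the trivial test $z=1$ of Proposition \ref{prop-results-<=}(1). This is precisely why the multiplicative witness $z = x^{-n}$, available only because $\monoid{M}_{\cl{1,\bar{1}}}$ is a group, is indispensable for establishing strictness; once that observation is in hand, the rest of the argument is routine.
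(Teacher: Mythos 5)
Your proof is correct and takes essentially the same route as the paper: both reduce every comparison to the sign-governed outcome formula for $o^-(x^{a+b})$, and both use the translate $z=x^{-n}$ (the paper's case $a=-b$) as the witness separating $\Next$ from $\Left$ to get strictness. The only difference is presentational --- the paper verifies just the covering relations $x^{a} < x^{a+1}$ and lets transitivity assemble the chain, whereas you establish the full equivalence $x^{n}\ge x^{m}\iff n\ge m$ in one pass.
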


\begin{proof}
	Take $x^{a}$, $x^{a+1} \in \monoid{M}_{\cl{1,\bar{1}}}$, and arbitrary $x^{b} \in \monoid{M}_{\cl{1,\bar{1}}}$.  To show that $x^{a} \le x^{a+1}$, we want to show that $o^-(x^{a+b}) \le o^-(x^{a+1+b})$. 
	
	Rewriting Proposition \ref{prop-a1-b1-b-a} in terms of the monoid notation of $\monoid{M}_{\cl{1,\overline{1}}}$, we obtain 
		\[ o^-(x^{a+b}) = \begin{cases}
			\Next &\text{if } a=-b;\\
			\Left &\text{if } a+b > 0;\\
			\Right &\text{if } a+b < 0.
		\end{cases}
		\]
	
	If $a=-b$, then $a+1-b > 0$ which gives $o^-(x^{a+1+b}) = \Left$, so $o^-(x^{a+1+b}) > o^-(x^{a+b})$.
	
	If $a+b > 0$, then $a+1+b > 0$ which gives $o^-(x^{a+1+b}) = \Left$, so $o^-(x^{a+1+b}) = o^-(x^{a+b})$.
	
	If $a+b < 0$, then $a+1+b \le 0$, which gives $o^-(x^{a+1+b}) = \Right \cup \Next$, so $o^-(x^{a+b+1}) \ge o^-(x^{a+b})$.
	
	Therefore $x^{a} < x^{a+1}$.
\end{proof}

With these results in hand, we obtain the final proposition of this section.

\begin{proposition}
	Ignoring the outcome class tetrapartition, the \mis monoid $\monoid{M}_{\cl{1, \overline{1}}}$ is, as a partially ordered monoid, isomorphic to the totally ordered group of integers.
\end{proposition}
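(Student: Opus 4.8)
The plan is to exhibit one explicit map and check that it is simultaneously a monoid isomorphism and an order isomorphism, since both the monoid structure and the order have already been pinned down in the preceding propositions. Concretely, I would define $\phi \colon \monoid{M}_{\cl{1, \overline{1}}} \to \mathbb{Z}$ by $\phi(x^n) = n$, where $(\mathbb{Z}, +, \le)$ denotes the additive group of integers with its usual order, and then verify the three conditions that make $\phi$ an isomorphism of partially ordered monoids.

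First I would confirm that $\phi$ is a well-defined bijection. Surjectivity is immediate, since every integer $n$ is the image of $x^n$. For injectivity I would appeal to the distinguishability results above, in particular Corollary \ref{cor-a1-b1-distinguishable} together with its companion corollaries, which guarantee that distinct powers $x^n$ arise from genuinely distinct (pairwise distinguishable) positions; hence $n \mapsto x^n$ is one-to-one and $\phi$ is a bijection. I would then check the homomorphism property, which is immediate from the defining relation $x^n x^m = x^{n+m}$ of $\monoid{M}_{\cl{1, \overline{1}}}$: indeed $\phi(x^n x^m) = \phi(x^{n+m}) = n + m = \phi(x^n) + \phi(x^m)$, and $\phi$ carries the identity $1 = x^0$ to $0$.

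Finally I would verify that $\phi$ is an order isomorphism, which is exactly the content of the total chain established just above, namely $\cdots < x^{-1} < 1 < x^{1} < \cdots$. Reading off this chain gives $x^n \le x^m$ if and only if $n \le m$, so both $\phi$ and $\phi^{-1}$ preserve order. Moreover the order on $\monoid{M}_{\cl{1, \overline{1}}}$ is translation-invariant, since multiplying the chain by a fixed $x^c$ shifts it rigidly ($x^a \le x^b$ gives $x^{a+c} \le x^{b+c}$, matching $a \le b \Rightarrow a+c \le b+c$ in $\mathbb{Z}$); this is what certifies that we are genuinely comparing two partially ordered monoids. Combining the three verifications yields the asserted isomorphism once we discard the outcome tetrapartition, which has no analogue in $(\mathbb{Z}, +, \le)$.

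I do not expect a serious obstacle here, since the heavy lifting — deriving the monoid presentation via Proposition \ref{prop-a1-b1-b-a} and establishing the total order — was completed in the earlier results, so this statement is essentially an assembly step. The one point meriting mild care is the justification of bijectivity through the distinguishability corollaries, to be sure the $x^n$ are honestly distinct elements of the \mis monoid rather than merely formal symbols.
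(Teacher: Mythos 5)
Your proposal is correct and takes the same route the paper intends: the paper states this proposition without any written proof, treating it as an immediate assembly of the presentation $x^n x^m = x^{n+m}$, the pairwise distinguishability of the powers, and the total chain $\cdots < x^{-1} < 1 < x^1 < \cdots$ established just before. Your write-up simply makes that assembly explicit via the map $x^n \mapsto n$, which is exactly what is needed.
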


\section{The \Mis Monoid of $\cl{\sigma, \sigmab}$}\label{sec-sigma}

This section gives an example of a \mis monoid with the following properties:
	\begin{itemize}
		\item finite cardinality,
		\item it is the same as $\monoid{M}_{\cl{*}}$,
		\item its partial order contains two incomparable elements.
	\end{itemize}

\begin{definition}\label{def-sigma}
	The position $\sigma$\index{$\sigma$} is the position $\combgame{\{*\mid \cdot \}}$.  That is, it is the position in which Left has a move to $*$ while Right has no move.  
\end{definition}

The game tree of $\sigma$ is given in Figure \ref{fig-gt-sigma}.
		\begin{figure}[htb]
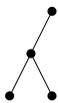

		\unitlength 8pt
		\begin{center}
		\begin{graph}(4,4)(-1,0)
		\graphnodesize{0.4}
		\fillednodestrue

		\roundnode{A}(1,4)
		\roundnode{B}(0,2)
		%\roundnode{C}(2,2)
		\roundnode{D}(-1,0)
		\roundnode{E}(1,0)

		%\edge{A}{C}
		\edge{A}{B}
		\edge{B}{D}
		\edge{B}{E}
		
		\end{graph}
		\caption{The game tree of $\sigma$.}
		\label{fig-gt-sigma}
		\end{center}	
		\end{figure}

As with $\cl{1, \overline{1}}$, we wish to calculate the \mis monoid $\cl{\sigma, \sigmab}$.  Our first step is to determine the outcome classes of arbitrary positions in the closure.  

\begin{proposition}\label{prop-n*+m-sigma+m-sigma_L}
	Suppose $n,m,\l \in \mathbb{Z}^{\ge 0}$.  Then
		\[ o^-(n  * + m  \sigma + \l  \sigmab) = \begin{cases}
		\Next &\text{if } n \equiv 0 \imod 2;\\
		\Prev &\text{if } n \equiv 1 \imod 2.
		\end{cases}\]
\end{proposition}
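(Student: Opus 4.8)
The plan is to induct on the quantity $T = n + 2m + 2\l$. First I would list the legal moves from $n* + m\sigma + \l\sigmab$: either player may collapse a $*$ to $0$ (possible when $n \ge 1$), reaching $(n-1)* + m\sigma + \l\sigmab$; only Left may play a $\sigma \to *$ move (possible when $m \ge 1$), reaching $(n+1)* + (m-1)\sigma + \l\sigmab$; and only Right may play a $\sigmab \to *$ move (possible when $\l \ge 1$), reaching $(n+1)* + m\sigma + (\l-1)\sigmab$. In every case $T$ drops by exactly $1$, so all options have strictly smaller $T$ and the induction is well founded; crucially $T \equiv n \imod 2$, which is the reason the outcome can depend on $n$ alone.

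For the inductive step I would argue directly from the \mis play rules, keeping in mind that under \mis play a player with no available move \emph{wins}, and that a player with moves wins precisely when some option is a $\Prev$ position. Since the position is partizan I must treat Left-moving-first and Right-moving-first separately. When $n$ is even I would show each mover either has no move (Left only when $n = 0$ and $m = 0$; Right only when $n = 0$ and $\l = 0$) and so wins outright, or else has a move changing the $*$-count to an odd value --- via $* \to 0$ when $n \ge 1$, or via $\sigma \to *$ (Left) and $\sigmab \to *$ (Right) when $n = 0$ --- which by the inductive hypothesis is a $\Prev$ position; either way the mover wins, so the whole position is $\Next$. When $n$ is odd, the inequality $n \ge 1$ guarantees both players have the $* \to 0$ move (so neither is ever stuck), and every available option ($* \to 0$, $\sigma \to *$, or $\sigmab \to *$) flips the parity of the $*$-count to even and hence lands in $\Next$; thus both movers lose and the position is $\Prev$.

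The base case $n = m = \l = 0$ is the position $0$, which is $\Next$ because its mover has no move, matching the even-$n$ verdict. I expect the main obstacle to be exactly the edge behaviour when $n = 0$ while $m$ or $\l$ is positive: here the tempting shortcut ``the game always lasts $T$ moves, so parity of $T$ decides everything'' breaks down, since a player can be stranded at a pure $\sigma$ position (Right stuck) or a pure $\sigmab$ position (Left stuck) and win immediately under \mis play. Verifying that these premature \mis wins are nonetheless consistent with the parity-of-$n$ verdict --- and doing so separately for the two players, who see genuinely different move sets --- is the delicate point, and is precisely where the asymmetric, option-by-option induction earns its keep.
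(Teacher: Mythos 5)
Your proof is correct and follows essentially the same route as the paper: induction over the options (your explicit ranking $T=n+2m+2\ell$ is just a concrete instance of the well-founded ordering the paper invokes), with the same case split on the parity of $n$ and the same observation that every move flips that parity while a stranded player simply wins under \mis play. The edge cases you flag ($n=0$ with $m$ or $\ell$ positive) are handled in the paper in the same way, by exhibiting the $\sigma\to *$ or $\sigmab\to *$ move into a $\Prev$ position.
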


\begin{proof}
	We proceed by induction on the options of a position.  Thus, when we have a position $n * + m \sigma + \l \sigmab$ and we are assuming the induction hypothesis is true for its options, we are assuming that the induction hypothesis is true for the positions 
		\begin{align*}
			(n-1) * + m \sigma + \l \sigmab,\\
			(n+1) * + (m-1) \sigma + \l \sigmab, \\
			(n+1) * + m \sigma + (\l-1) \sigmab.
		\end{align*}
	Other proofs in this section will also make use of the same style of argument without explicitly justifying the lack of circularity as is done here.  This is akin to defining some sort of lexicographical ordering and proceeding by induction on that. 
	
	If $n=m=\l=0$, then the position is 0, which has outcome $\Next$, which agrees with the statement of the proposition.
	
	Consider position $n * + m \sigma + \l \sigmab$ and suppose that the outcomes for its options are as given in the statement of the proposition.
	
	Suppose $n=0$ and Left is moving first.  If $m=0$, then Left has no moves, and so Left wins.  If $m \not = 0$, then Left moves to $* + (m-1) \sigma + \l \sigmab$, which is a $\Prev$ position by induction.  Similarly, Right can win moving first.  Thus $o^-(m \sigma + \l \sigmab) = \Next$.
	
	Suppose $n \equiv 0 \imod 2$ and $n > 0$.  Then both Left and Right can move to $(n-1) * + m \sigma + \l \sigmab$, which is a $\Prev$ position by induction.
	
	Therefore $o^-(n * + m \sigma + \l \sigmab) = \Next$ if $n \equiv 0 \imod 2$.
	
	Suppose $n \equiv 1 \imod 2$.  Left moving first has two options:
		\begin{enumerate}
			\item $(n-1) * + m \sigma + \l \sigmab$, which is an $\Next$ position by induction; or
			\item $(n+1) * + (m-1) \sigma + \l \sigmab$, which is an $\Next$ position by induction.
		\end{enumerate}
	Therefore Left loses moving first.  Similarly, Right loses moving first.  Thus $o^-(n * + m \sigma + \l \sigmab) = \Prev$ for $n \equiv 1 \imod 2$.
\end{proof}

We notice that the outcome of a position in $\cl{\sigma, \sigmab}$ depends only on the parity of the number of $*$ positions.  Using this, we obtain the following indistinguishability relations on $\cl{\sigma, \sigmab}$.

\begin{proposition}\label{prop-sigma-*+*=0}
	The following indistinguishability relationships exist on $\cl{\sigma,\sigmab}$:
		\begin{enumerate}
			\item $* + * \equiv 0 \imod{{\cl{\sigma, \sigmab}}}$,
			\item $a  \sigma + b  \sigmab \equiv 0 \imod{{\cl{\sigma, \sigmab}} }$ for any $a,b \in \mathbb{Z}^{\ge 0}$,
			\item $* + a  \sigma + b  \sigmab \equiv * \imod{\cl{\sigma,\sigmab}}$ for all $a,b \in \mathbb{Z}^{\ge 0}$.
		\end{enumerate}
\end{proposition}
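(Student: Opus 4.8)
The plan is to reduce all three statements to the single fact established in Proposition~\ref{prop-n*+m-sigma+m-sigma_L}, namely that the \mis outcome of any position in $\cl{\sigma, \sigmab}$ depends \emph{only} on the parity of the number of copies of $*$. First I would pin down the shape of an arbitrary test position. Since the only option of $\sigma$ is $*$, the only option of $\sigmab$ is $*$, the only option of $*$ is $0$, and $0$ has no options, the option-closure of $\{\sigma,\sigmab\}$ is exactly $\{0, *, \sigma, \sigmab\}$; taking disjunctive sums then shows that every $\gamma \in \cl{\sigma,\sigmab}$ can be written as $\gamma = n* + m\sigma + \l\sigmab$ for some $n,m,\l \in \mathbb{Z}^{\ge 0}$. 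This three-parameter parametrisation is what lets the parity fact do all the work.

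To verify each indistinguishability I would take an arbitrary such $\gamma$ and compare the two outcomes after adding $\gamma$, appealing directly to Proposition~\ref{prop-n*+m-sigma+m-sigma_L}. For part~(1), adding $\gamma$ to $*+*$ produces $(n+2)* + m\sigma + \l\sigmab$, while adding it to $0$ produces $n* + m\sigma + \l\sigmab$; since $n+2 \equiv n \imod 2$, the two outcomes coincide. For part~(2), adding $\gamma$ to $a\sigma + b\sigmab$ gives $n* + (m+a)\sigma + (\l+b)\sigmab$, whose outcome again depends only on the parity of $n$, exactly as for $0 + \gamma$. For part~(3), both $*+a\sigma+b\sigmab+\gamma$ and $*+\gamma$ contain $n+1$ copies of $*$, so their outcomes agree.

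The common engine is that none of the three operations alters the parity of the number of $*$-summands: part~(1) changes it by two, part~(2) by zero, and part~(3) by one on each side. Because $\gamma$ ranges over all of $\cl{\sigma,\sigmab}$ in each case, equality of outcomes for every $\gamma$ is precisely indistinguishability as in Definition~\ref{def-indis}, and the three claims follow. I do not expect a genuine obstacle here, since the substantive argument was already carried out in Proposition~\ref{prop-n*+m-sigma+m-sigma_L}; the only point deserving a moment's care is confirming that the closure is genuinely exhausted by the family $n* + m\sigma + \l\sigmab$, so that an arbitrary test position really does have this form. Once that is in hand, each part is an immediate parity check.
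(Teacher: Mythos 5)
Your proposal is correct and follows essentially the same route as the paper: every part is reduced to the parity statement of Proposition \ref{prop-n*+m-sigma+m-sigma_L} by adding an arbitrary $n* + m\sigma + \l\sigmab$ and observing that the parity of the $*$-count is unchanged (or changed by one on both sides). The only cosmetic difference is that you explicitly justify that the closure is exhausted by positions of this form and argue part (3) directly, whereas the paper takes the parametrisation for granted and derives (3) from (2); neither difference is substantive.
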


\begin{proof}
	We start with an arbitrary $n * + m \sigma + \l \sigmab$.  
	\begin{enumerate}
		\item\label{item-*+*=0-1} By Proposition \ref{prop-n*+m-sigma+m-sigma_L},
			\[ o^-((n+2) * + m \sigma + \l \sigmab) = o^-(n * + m \sigma + \l \sigmab).\]
		Therefore 
			\[ * + * \equiv 0 \imod{{\cl{\sigma, \sigmab}}}.\]
		
		\item Since we just showed
			\[ * + * \equiv 0 \imod{{\cl{\sigma, \sigmab}}},\]		
		we need only to consider $n=0$ or $n=1$ in our arbitrary position$n * + m \sigma + \l \sigmab$.
		
		We have
		\begin{align*}
			o^-((a  \sigma + b  \sigmab) + (n  * + m  \sigma + \l  \sigmab)) &= o^-(n  * + (m+a)  \sigma + (\l +b)  \sigmab) \\
			&= \begin{cases}
				\Next &\text{if } n =0;\\
				\Prev &\text{if } n = 1.
			\end{cases}
		\end{align*}
		But 
		\begin{align*}
			o^-(0 + (n  * + m  \sigma + \l  \sigmab)) &= \begin{cases}
				\Next &\text{if } n =0;\\
				\Prev &\text{if } n = 1.
			\end{cases}
		\end{align*}
		Therefore the two elements are indistinguishable.
		
		\item This follows from the previous case. \qedhere
	\end{enumerate}
\end{proof}

\begin{corollary}\label{cor-sigma-finite}
	Given a position $n * + m \sigma + \l \sigmab$ in $\cl{\sigma, \sigmab}$, it is indistinguishable from exactly one of either $0$ or $*$.
\end{corollary}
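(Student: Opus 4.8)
The plan is to establish the two halves of the claim separately: existence, that every position $n * + m \sigma + \l \sigmab$ is indistinguishable from $0$ or from $*$; and uniqueness, that it cannot be indistinguishable from both. Everything needed is already contained in Proposition \ref{prop-sigma-*+*=0}, together with the fact, cited earlier, that indistinguishability over $\cl{\sigma,\sigmab}$ is a congruence with respect to disjunctive sum, so that I may substitute within sums.

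For existence I would split on the parity of $n$. Using relation (1) of Proposition \ref{prop-sigma-*+*=0}, namely $* + * \equiv 0 \imod{\cl{\sigma, \sigmab}}$, and applying it repeatedly, I reduce $n *$ to $0$ when $n$ is even and to $*$ when $n$ is odd. In the even case, $n * + m \sigma + \l \sigmab \equiv m \sigma + \l \sigmab \equiv 0 \imod{\cl{\sigma, \sigmab}}$ by relation (2); in the odd case, $n * + m \sigma + \l \sigmab \equiv * + m \sigma + \l \sigmab \equiv * \imod{\cl{\sigma, \sigmab}}$ by relation (3). Thus the given position is indistinguishable from $0$ or from $*$, according to the parity of $n$.

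For uniqueness, the word \emph{exactly}, it remains only to check that $0$ and $*$ are themselves distinguishable, so that the two possibilities cannot coincide. Taking the distinguishing position $\gamma = 0 \in \cl{\sigma, \sigmab}$, Proposition \ref{prop-n*+m-sigma+m-sigma_L} gives $o^-(0) = \Next$ (the $n = 0$ instance) while $o^-(*) = \Prev$ (the $n = 1$ instance). Since $\Next \ne \Prev$, the position $0$ distinguishes $0$ and $*$, so they lie in distinct equivalence classes. Combined with existence, every position in $\cl{\sigma, \sigmab}$ is indistinguishable from precisely one of $0$ or $*$.

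There is no serious obstacle here; the statement is a direct corollary of the two preceding propositions. The only point requiring care is the word \emph{exactly}: existence alone would give only ``at least one'', so I must separately verify that $0 \not\equiv * \imod{\cl{\sigma, \sigmab}}$, which is the content of the third paragraph. I would also be mindful to invoke the congruence property explicitly when collapsing the $* + *$ and $\sigma + \sigmab$ summands inside a larger sum.
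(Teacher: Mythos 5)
Your proof is correct and follows exactly the route the paper intends: the paper states this corollary without proof, as an immediate consequence of Proposition \ref{prop-sigma-*+*=0} (parity of $n$ plus the three indistinguishability relations) together with the fact that $0$ and $*$ are distinguished by $0$ since $o^-(0) = \Next \ne \Prev = o^-(*)$. Your explicit attention to the word \emph{exactly} and to the congruence property is a faithful filling-in of what the paper leaves implicit.
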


We now explicitly write the \mis monoid.
With the mappings:
	\begin{align*}
		0 &\mapsto 1;\\
		* &\mapsto a;\\
		\sigma &\mapsto 1;\\
		\sigmab &\mapsto 1;
	\end{align*}
we obtain the following monoid:
	\begin{align*}
	\monoid{M}_{\cl{\sigma,\sigmab}} &= \ideal{1,a \mid a^2=1} \\
	\Next &= \{1\} \\
	\Prev &= \{a\} \\
	\Left &= \emptyset \\
	\Right &= \emptyset
	\end{align*}
with the additive notation in $\cl{\sigma,\sigmab}$ becoming a multiplicative notation in $\monoid{M}_{\cl{\sigma,\sigmab}}$.

But, of course, this is the same of $\monoid{M}_{\cl{*}}$ (Example \ref{example-*}).  That is, we have a partizan position with \mis monoid the same as that of an impartial position.  We also recall from Example \ref{example-*} that, as a monoid ignoring the outcome tetrapartition, that $\monoid{M}_{\cl{*}}$ is equivalent to $(\mathbb{Z}_2, \oplus)$.

Since the two elements of $\monoid{M}_{\cl{\sigma, \sigmab}}$ have outcome classes $\Next$ and $\Prev$, they are incomparable.  Therefore, the partial order of $\monoid{M}_{\cl{\sigma, \sigmab}}$ contains two incomparable elements.

\section{The \mis monoid of $\cl{\rho}$}\label{section-rho}
 
 This section gives an example of a \mis monoid with the following properties:
 	\begin{itemize}
 		\item finite cardinality,
 		\item not congruent to that of any impartial game,
 		\item partial order is down-directed but not up-directed.
	\end{itemize}
 
\begin{definition}\label{def-rho}
	Let $\rho$ be the position $\combgame{\{*\mid0\}}$.  That is, $\rho$ is the position whose Left option is to $*$ and whose Right option is to 0. 
\end{definition}

The game tree of $\rho$ is given in Figure \ref{fig-gt-rho}.
		\begin{figure}[htb]
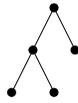

		\unitlength 8pt
		\begin{center}
		\begin{graph}(4,4)(-1,0)
		\graphnodesize{0.4}
		\fillednodestrue

		\roundnode{A}(1,4)
		\roundnode{B}(0,2)
		\roundnode{C}(2,2)
		\roundnode{D}(-1,0)
		\roundnode{E}(1,0)
		
		\edge{A}{C}
		\edge{A}{B}
		\edge{B}{D}
		\edge{B}{E}
		
		\end{graph}
		\caption{The game tree of $\rho$.}
		\label{fig-gt-rho}
		\end{center}	
		\end{figure}

As in our previous examples, we begin by determining the outcome class of an arbitrary position in $\cl{\rho}$, $n  * + m  \rho$.  As the following proposition shows, although one copy of $\rho$ gives a win for Left, once we are given enough copies of $\rho$ (at least four), this game becomes favourable to Right.

\begin{proposition}\label{prop-rho-oc}
	For a position $n * + m \rho$, its outcome is given in Table \ref{table-outcomes-cl-rho}.
\end{proposition}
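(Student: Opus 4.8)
The plan is to prove Table \ref{table-outcomes-cl-rho} by induction on the options. The one subtlety is the choice of induction measure: Left's move in a copy of $\rho$ turns it into $*$, so it sends $(n,m) \mapsto (n+1,m-1)$ and \emph{increases} $n$; thus neither $n$ nor $n+m$ decreases under every move, and I would instead induct on the weighted quantity $2m+n$, which strictly decreases under all three legal moves (the shared $*$-move sends $(n,m)\mapsto(n-1,m)$, Left's $\rho$-move sends $(n,m)\mapsto(n+1,m-1)$, and Right's $\rho$-move sends $(n,m)\mapsto(n,m-1)$). This is the well-founded order implicit in ``induction on the options.'' The base case is the empty position $0 = 0\,*+0\,\rho$, with $o^-(0)=\Next$, which agrees with the table.

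For the inductive step I would first record the options of $n\,*+m\,\rho$ once and for all: both players may move in a copy of $*$ to reach $(n-1)\,*+m\,\rho$ (available only when $n\ge 1$); Left may move in a copy of $\rho$ to reach $(n+1)\,*+(m-1)\,\rho$; and Right may move in a copy of $\rho$ to reach $n\,*+(m-1)\,\rho$ (the two $\rho$-moves available only when $m\ge 1$). I would then restate Table \ref{table-outcomes} in the convenient form: Left wins moving first exactly when some left option lies in $\Left \cup \Prev$, and Right wins moving first exactly when some right option lies in $\Right \cup \Prev$; the outcome of $n\,*+m\,\rho$ is read off from which of these two statements holds.

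The heart of the argument is a case analysis on $m$. For $m=0$ the position is $n$ copies of $*$, i.e.\ misère {\sc nim} on $n$ singletons, and a parity count gives $\Next$ for $n$ even and $\Prev$ for $n$ odd. For each of $m=1,2,3$ I would substitute the option outcomes — which the induction hypothesis supplies from row $m$ (smaller $n$) and row $m-1$ — into the reformulated rule, splitting on the parity of $n$ and treating $n=0$ separately, since then no $*$-move exists. Each such substitution is a short mechanical check; for instance, when $m=3$ and $n$ is even, Left's options all lie in $\Next$ while Right's $\rho$-move reaches the $\Prev$ position $n\,*+2\,\rho$, so Left loses but Right wins moving first, giving $\Right$. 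Finally I would treat the regime $m\ge 4$, where the parity dependence disappears and the outcome is uniformly $\Right$: the induction hypothesis gives that every left option lies in $\Right \cup \Next$ (the $*$-move and, when $m=4$, Left's $\rho$-move into the $m=3$ row both avoid $\Left$ and $\Prev$), so Left always loses moving first, while some right option always lies in $\Right \cup \Prev$ (the $*$-move when $n\ge 1$, or Right's $\rho$-move into the $m=3$ or $m\ge 4$ rows when $n=0$), so Right always wins moving first.

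The main obstacle I anticipate is closing the induction precisely at the transition $m=4$: this is the single row whose $\rho$-options drop into the parity-sensitive region $m=3$, so the verification that Left can never reach a $\Left$- or $\Prev$-option there, while Right always can reach a $\Right$- or $\Prev$-option, must be carried out carefully for both parities of $n$ and for the boundary $n=0$. Once that row is secured, the rows $m\ge 5$ are self-sustaining, and the remaining cases reduce to routine bookkeeping.
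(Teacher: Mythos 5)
Your proof is correct, and it rests on the same underlying idea as the paper's: determine the outcome of $n\,* + m\,\rho$ recursively from the outcomes of its options via Table \ref{table-outcomes}. The execution, however, differs in a way worth recording. The paper computes a finite block of the outcome table ($n \le 3$, $m \le 5$) by hand and then asserts that the table has ``become periodic'' because rows $m=4$ and $m=5$ agree and columns $n=1$ and $n=3$ agree, justifying this with the remark that moves ``only depend on the previous row and the previous column.'' That remark is not literally true of Left's move in a copy of $\rho$, which lands in the previous row but the \emph{next} column --- precisely the subtlety you flag at the outset and absorb with the explicit well-founded measure $2m+n$ (which indeed drops by $1$, $1$, and $2$ under the three moves). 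Your structural induction, with its case split on $m \in \{0,1,2,3\}$ versus $m \ge 4$, is therefore the more careful route: it closes the induction explicitly at the transition row $m=4$, where Left's $\rho$-option falls into the parity-sensitive row $m=3$, and your checks there (every Left option lies in $\Right \cup \Next$; Right reaches a $\Right$ position via the $*$-move when $n \ge 1$ or via $(0,m-1)$ when $n=0$) are accurate. What the paper's presentation buys is brevity and a visual argument; what yours buys is a rigorous treatment of the well-foundedness and of the tail $m \ge 4$ that the paper leaves to the reader's goodwill.
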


\begin{table}[htb]
\begin{center}
\begin{tabular}{ccc}
& $n \equiv 0$&  $n \equiv 1$ \\
$m=0$
&\cellcolor[gray]{0.8}$\Next$
&\cellcolor[gray]{0.8}$\Prev$
\\
$m=1$
&$\Left$
&$\Next$
\\
$m=2$
&\cellcolor[gray]{0.8}$\Prev$
&\cellcolor[gray]{0.8}$\Next$
\\
$m=3$
&$\Right$
&$\Next$
\\
$m \ge 4$
&\cellcolor[gray]{0.8}$\Right$
&\cellcolor[gray]{0.8}$\Right$
\end{tabular}
\end{center}
\caption{Outcomes of positions $n * + m \rho$ where $n \equiv 0$ or $1 \imod 2$.}
\label{table-outcomes-cl-rho}
\end{table}	

\begin{proof}
	We wish to show that Table \ref{table-outcomes-cl-rho} is correct.  We will start by filling out the following table, with $n \le 3$ and $m \le 5$.  
		
		\begin{center}
		\begin{tabular}{ccccc}
		& $n = 0$&  $n = 1$ & $n=2$ & $n=3$ \\
		$m=0$
		&\cellcolor[gray]{0.8}
		&\cellcolor[gray]{0.8}
		&\cellcolor[gray]{0.8}
		&\cellcolor[gray]{0.8}
		\\
		$m=1$
		&
		&
		&
		\\
		$m=2$
		&\cellcolor[gray]{0.8}
		&\cellcolor[gray]{0.8}
		&\cellcolor[gray]{0.8}
		&\cellcolor[gray]{0.8}
		\\
		$m=3$
		&
		&
		&
		\\
		$m=4$
		&\cellcolor[gray]{0.8}
		&\cellcolor[gray]{0.8}
		&\cellcolor[gray]{0.8}
		&\cellcolor[gray]{0.8}
		\\
		$m=5$
		&
		&
		&
		\end{tabular}
		\end{center}
	
	When $m=0$, we either have the positions $0$, $*$, $2 *$, and $3 *$, with outcomes $\Next$, $\Prev$, $\Next$, and $\Prev$ respectively.  We add these into our table.

		\begin{center}
		\begin{tabular}{ccccc}
		& $n = 0$&  $n = 1$ & $n=2$ & $n=3$ \\
		$m=0$
		&\cellcolor[gray]{0.8}$\Next$
		&\cellcolor[gray]{0.8}$\Prev$
		&\cellcolor[gray]{0.8}$\Next$
		&\cellcolor[gray]{0.8}$\Prev$
		\\
		$m=1$
		&
		&
		&
		\\
		$m=2$
		&\cellcolor[gray]{0.8}
		&\cellcolor[gray]{0.8}
		&\cellcolor[gray]{0.8}
		&\cellcolor[gray]{0.8}
		\\
		$m=3$
		&
		&
		&
		\\
		$m=4$
		&\cellcolor[gray]{0.8}
		&\cellcolor[gray]{0.8}
		&\cellcolor[gray]{0.8}
		&\cellcolor[gray]{0.8}
		\\
		$m=5$
		&
		&
		&
		\end{tabular}
		\end{center}
	The alternating between $\Next$ and $\Prev$ in the first row will continue, i.e.\ for $m=0$ and $n \equiv 0 \imod 2$, we have outcome $\Next$ and for $m=0$ and $n \equiv 1 \imod 2$, we have outcome $\Prev$.  	
		
	To determine the outcomes of the other positions, we proceed as follows:  From a position in the table, Left's possible moves are
		\begin{enumerate}
			\item to move one position up and to the right (corresponding with taking a position $\rho$ and leaving $*$),
			\item to move one position to the left (corresponding with taking a position $*$ and leaving 0).
		\end{enumerate}
	For a position in the table, Right's possible moves are
		\begin{enumerate}
			\item to move one position up (corresponding with taking a position $\rho$ and leaving 0),
			\item to move one position to the left (corresponding with taking a position $*$ and leaving 0).
		\end{enumerate}
	With this in mind, we fill out the remaining outcomes in our table.
		\begin{center}
		\begin{tabular}{ccccc}
		& $n = 0$&  $n = 1$ & $n=2$ & $n=3$\\
		$m=0$
		&\cellcolor[gray]{0.8}$\Next$
		&\cellcolor[gray]{0.8}$\Prev$
		&\cellcolor[gray]{0.8}$\Next$
		&\cellcolor[gray]{0.8}$\Prev$
		\\
		$m=1$
		&$\Left$
		&$\Next$
		&$\Left$
		&$\Next$
		\\
		$m=2$
		&\cellcolor[gray]{0.8}$\Prev$
		&\cellcolor[gray]{0.8}$\Next$
		&\cellcolor[gray]{0.8}$\Prev$
		&\cellcolor[gray]{0.8}$\Next$
		\\
		$m=3$
		&$\Right$
		&$\Next$
		&$\Right$
		&$\Next$
		\\
		$m=4$
		&\cellcolor[gray]{0.8}$\Right$
		&\cellcolor[gray]{0.8}$\Right$
		&\cellcolor[gray]{0.8}$\Right$
		&\cellcolor[gray]{0.8}$\Right$
		\\
		$m=5$
		&$\Right$
		&$\Right$
		&$\Right$
		&$\Right$
		\end{tabular}
		\end{center}
	We notice two things, namely that the rows $m=4$ and $m=5$ have the same outcomes, and the columns $n=1$ and $n=3$ have the same outcomes.  Since our moves in the table only depend on the previous row and the previous column, this means that our table has ``become periodic", i.e.\ the outcomes for rows $a$ and $b$ are equal if $a, b \ge 4$, and the outcomes for columns $c$ and $c+2$ are equal.  This gives us the result of Table \ref{table-outcomes-cl-rho}.  	
\end{proof}

Using the preceding result, the following corollaries regarding the indistinguishability of certain positions are obtained.

\begin{corollary}\label{cor-cl-rho-indistinguishability}
	The following indistinguishability relations exist on $\cl{\rho}$:
		\begin{enumerate}
			\item $* + * \equiv 0 \imod{\cl{\rho}}$,
			\item $4  \rho \equiv u  \rho \imod{\cl{\rho}}$ for any $u \in \mathbb{Z}^{\ge 4}$,
			\item $4  \rho \equiv * + u  \rho \imod{\cl{\rho}}$ for any $u \in \mathbb{Z}^{\ge 4}$.
		\end{enumerate}
\end{corollary}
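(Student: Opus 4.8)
The plan is to read all three relations directly off the outcome table established in Proposition~\ref{prop-rho-oc} (Table~\ref{table-outcomes-cl-rho}). By Definition~\ref{def-indis}, to prove $\alpha \equiv \beta \imod{\cl{\rho}}$ it suffices to verify $o^-(\alpha + \gamma) = o^-(\beta + \gamma)$ for every test position $\gamma \in \cl{\rho}$, and every such $\gamma$ has the form $n * + m \rho$ with $n, m \in \mathbb{Z}^{\ge 0}$. The single structural feature of the table that drives all three cases is this: the outcome of $n * + m \rho$ depends only on the parity of $n$ together with whether $m \in \{0,1,2,3\}$ or $m \ge 4$, and, crucially, once $m \ge 4$ the outcome is uniformly $\Right$, independent of the parity of $n$.

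For the first relation, adding $* + *$ to a test position $n * + m \rho$ produces $(n+2) * + m \rho$, whereas adding $0$ leaves $n * + m \rho$. Since $n$ and $n+2$ have the same parity and $m$ is unchanged, the table assigns both positions the same outcome; hence $* + * \equiv 0 \imod{\cl{\rho}}$.

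For the second and third relations I would exploit the $\Right$-absorbing region $m \ge 4$. Adding $n * + m \rho$ to $4 \rho$ gives $n * + (m+4) \rho$, and adding it to $u \rho$ (with $u \ge 4$) gives $n * + (m+u) \rho$; both $\rho$-counts are at least $4$, so the table returns $\Right$ in each case, giving relation~(2). For relation~(3), adding $n * + m \rho$ to $4 \rho$ yields $n * + (m+4) \rho$, while adding it to $* + u \rho$ yields $(n+1) * + (m+u) \rho$; again both $\rho$-counts are at least $4$, so both outcomes are $\Right$ regardless of the differing parities of $n$ and $n+1$, establishing~(3).

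There is no genuine obstacle here, since the corollary is a bookkeeping consequence of Proposition~\ref{prop-rho-oc}; the only point requiring care is to notice that the parity-in-$n$ dependence of the outcome completely disappears in the $m \ge 4$ region, which is precisely what lets the two copies of $\rho$ in relation~(3) absorb the parity mismatch introduced by the extra $*$. Once that observation is in hand, each verification is a one-line appeal to the table.
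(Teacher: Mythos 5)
Your proposal is correct and is essentially identical to the paper's own proof: both verify each relation by adding an arbitrary test position $n\,* + m\,\rho$ and reading the resulting outcomes off Table~\ref{table-outcomes-cl-rho}, using the parity-invariance in $n$ for relation (1) and the uniform $\Right$ outcome for $m \ge 4$ for relations (2) and (3). No further comment is needed.
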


\begin{proof}
	For arbitrary $n  * + m  \rho$, Proposition \ref{prop-rho-oc} gives
		\begin{enumerate}
			\item $o^-((n+2)  * + m  \rho) = o^-(n  * + m  \rho)$,
			\item $o^-(n  * + (m+4)  \rho) = o^-(n  * + (m+u)  \rho) = \Right$ for any $u \in \mathbb{Z}^{\ge 4}$,
			\item $o^-(n  * + (m+4)  \rho) = o^-((n+1)  * + (m+u)  \rho) = \Right$ for any $u \in \mathbb{Z}^{\ge 4}$.\qedhere
		\end{enumerate}
\end{proof}

Note that $3  \rho \not \equiv m  \rho \imod{\cl{\rho}}$ for any $m \in \mathbb{Z}^{\ge 4}$, as Proposition \ref{prop-rho-oc} gives that $o^-(* + 3  \rho) = \Next$ while  $o^-(* + m  \rho) = \Right$.  That is, $*$ distinguishes $3  \rho$ and $m  \rho$ for any $m \in \mathbb{Z}^{\ge 4}$.

We claim that Table \ref{table-A-positions} gives all the positions in $\cl{\rho}$ up to indistinguishability.   By Corollary \ref{cor-cl-rho-indistinguishability}, given any other position, it is indistinguishable from one of the positions given in Table \ref{table-A-positions}.  It remains to show that the positions in the table are pairwise distinguishable.  

\begin{proposition}\label{prop-cl-rho-pairwise}
	All positions in Table \ref{table-A-positions} are pairwise distinguishable.
\end{proposition}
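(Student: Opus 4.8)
The plan is to lean on the cheapest distinguisher available and reserve real work for pairs sharing an outcome class. Recall that $\alpha,\beta \in \cl{\rho}$ are distinguishable as soon as some $\gamma \in \cl{\rho}$ gives $o^-(\alpha+\gamma) \neq o^-(\beta+\gamma)$, and the simplest candidate is $\gamma = 0$. So first I would read the $o^-$ value of each representative in Table \ref{table-A-positions} straight off Proposition \ref{prop-rho-oc} and sort the representatives by outcome class: $\{0,\;*+\rho,\;*+2\rho,\;*+3\rho\}$ lie in $\Next$, $\{*,\;2\rho\}$ in $\Prev$, $\{\rho\}$ in $\Left$, and $\{3\rho,\;4\rho\}$ in $\Right$. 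Any two representatives in different outcome classes are then distinguished by $\gamma = 0$ with no further argument, which dispatches the great majority of the $\binom{9}{2}$ pairs immediately.

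What remains are the pairs lying inside a single class, and by Proposition \ref{prop-rho-oc} these are exactly the $\Prev$ pair $\{*,\;2\rho\}$, the $\Right$ pair $\{3\rho,\;4\rho\}$, and the four $\Next$ positions. For each I would exhibit a concrete $\gamma$ (always of the form $k\rho$ or $*$, hence certainly in $\cl{\rho}$) and verify via the table that the outcomes split. The $\Right$ pair is already handled in the remark following Corollary \ref{cor-cl-rho-indistinguishability}: taking $\gamma = *$ gives $o^-(*+3\rho)=\Next$ but $o^-(*+4\rho)=\Right$. For the $\Prev$ pair, $\gamma = \rho$ works, since $o^-(*+\rho) = \Next$ while $o^-(2\rho + \rho) = o^-(3\rho) = \Right$.

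The only block needing genuine bookkeeping is the four $\Next$ positions, and the organizing idea is to add successive copies of $\rho$. Because the table stabilizes to $\Right$ once the $\rho$-count reaches $4$, each of the four representatives first leaves the $\Next$ class after a different number of added copies of $\rho$: adding one $\rho$ sends $0$ to $\rho \in \Left$ and $*+3\rho$ to $*+4\rho \in \Right$ while keeping $*+\rho$ and $*+2\rho$ in $\Next$, so $\gamma = \rho$ already separates every $\Next$ pair except $\{*+\rho,\;*+2\rho\}$; that last pair is then separated by $\gamma = 2\rho$, which carries $*+\rho$ to $*+3\rho \in \Next$ but $*+2\rho$ to $*+4\rho \in \Right$.

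The main obstacle here is not conceptual but organizational: there is no single $\gamma$ that separates all nine representatives simultaneously, so I expect the bulk of the write-up to be the tidy tabulation of these within-class distinguishers, with the four-element $\Next$ block demanding the most care. The one point to keep honest is that every chosen $\gamma$ genuinely lies in $\cl{\rho}$ and that each outcome lookup is the reduced value supplied by Proposition \ref{prop-rho-oc} rather than an unreduced expression.
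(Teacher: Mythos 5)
Your proposal is correct and follows essentially the same strategy as the paper's proof: positions in different outcome classes are distinguished by $0$, and the remaining same-class pairs are handled by exhibiting explicit distinguishing elements read off Proposition \ref{prop-rho-oc}. Your particular choices of distinguishers (e.g.\ $\rho$ for the $\Prev$ pair and for most of the $\Next$ block, $2\rho$ for $\{*+\rho,\,*+2\rho\}$) differ from those in Table \ref{table-A-dist-elements} but all check out against Table \ref{table-outcomes-cl-rho}.
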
	

\begin{proof}
	Table \ref{table-A-dist-elements} gives the distinguishing elements.  If two positions have different outcome classes, then they are distinguishable by 0.  Thus, only positions with the same outcome classes are in the table.
	
	Therefore, up to indistinguishability, Table \ref{table-A-positions} completely details the elements of $\cl{\rho}$.  
\end{proof} 

\begin{table}[p]
\begin{center}
\begin{tabular}{rc}
\textbf{Positions}&\textbf{Outcome}\\
\rowcolor[gray]{.8} 0&$\Next$\\
$*$ & $\Prev$\\
\rowcolor[gray]{.8} $\rho$&$\Left$\\
$2  \rho$&$\Prev$\\
\rowcolor[gray]{.8} $3  \rho$&$\Right$\\
$4  \rho$&$\Right$\\
\rowcolor[gray]{.8} $* + \rho$&$\Next$\\
$* + 2  \rho$&$\Next$\\
\rowcolor[gray]{.8} $* + 3  \rho$&$\Next$\\
\end{tabular}
\end{center}
\caption{Positions of $\cl{\rho}$ up to Indistinguishability.}
\label{table-A-positions}
\end{table}

	\begin{table}[p]
	\begin{center}
	\begin{tabular}{rrc}
	\textbf{Position 1}&\textbf{Position2}&\textbf{Distinguishing Element}\\
	\rowcolor[gray]{.8} 0&$*+\rho$&$*$\\
	0&$*+2 \rho$&$\rho$\\
	\rowcolor[gray]{.8} 0&$*+3 \rho$&$\rho$\\
	$*+\rho$&$*+2\rho$&$*$\\
	\rowcolor[gray]{.8} $* +\rho$&$*+3\rho$&$*$\\
	$* + 2  \rho$&$* + 3  \rho$&$*$\\
	\rowcolor[gray]{.8} $*$&$2  \rho$&$2  \rho$\\
	$3  \rho$&$4  \rho$&$*$\\
	\end{tabular}
	\end{center}
	\caption{Positions of $\cl{\rho}$ and the elements which distinguish them.}
	\label{table-A-dist-elements}
	\end{table}

We now determine $\monoid{M}_{\cl{\rho}}$.  With the mappings:
	\begin{align*}
		0 &\mapsto 1;\\
		* &\mapsto a;\\
		\rho &\mapsto p,
	\end{align*}
we obtain the following monoid:
	\begin{align*}
	\monoid{M}_{\cl{\rho}} &= \ideal{1,a,p \mid a^2 = 1, p^4 = p^5 = ap^4} \\
	\Next &= \{1, ap, ap^2, ap^3 \} \\
	\Prev &= \{a, p^2\} \\
	\Left &= \{p\} \\
	\Right &= \{p^3, p^4\}
	\end{align*}
with the additive notation in $\cl{\rho}$ becoming a multiplicative notation in $\monoid{M}_{\cl{\rho}}$.  

It is worthwhile justifying how we know that the relations we give in the monoid are the only relations which exist.  The relations which are there each correspond to one of the indistinguishability relations given in Corollary \ref{cor-cl-rho-indistinguishability}:
	\begin{enumerate}
		\item $a^2 = 1$ corresponds to $* + * \equiv 0 \imod{\cl{\rho}}$,
		\item $p^4 = p^5$ corresponds to $4  \rho \equiv u  \rho \imod{\cl{\rho}}$ for any $u \in \mathbb{Z}^{\ge 4}$,
		\item $p^4 = ap^4$ corresponds to  $4  \rho \equiv * + u  \rho \imod{\cl{\rho}}$ for any $u \in \mathbb{Z}^{\ge 4}$.
	\end{enumerate}
Using these relations to reduce elements in the monoid, we obtain the following elements in the monoid:
\begin{center}
\begin{tabular}{c}
\textbf{Positions}\\
\rowcolor[gray]{.8} 1\\
$a$\\
\rowcolor[gray]{.8} $p$\\
$p^2$\\
\rowcolor[gray]{.8} $p^3$\\
$p^4$\\
\rowcolor[gray]{.8} $ap$\\
$ap^2$\\
\rowcolor[gray]{.8} $ap^3$\\
\end{tabular}
\end{center}
But Proposition \ref{prop-cl-rho-pairwise} tells us that all these positions are pairwise distinguishable, and so we have completely determined the monoid elements, and the relations in the monoid are the only relations which exist.

In impartial games, every finite \mis monoid has either cardinality one or is of even cardinality \cite{MISQUOTIENT}.  Contrast this with the cardinality of $\monoid{M}_{\cl{\rho}}$, which is nine.  This is our first partizan \mis monoid result which differs from that of impartial play and is important enough to place into a theorem for safe-keeping.

\begin{theorem}
	In impartial games, every \mis monoid has either cardinality one or is of even cardinality.  This is \textbf{not} true for partizan games.
\end{theorem}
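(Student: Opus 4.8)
The plan is to observe that the first sentence of the theorem merely restates a result for impartial games already established in \cite{MISQUOTIENT}, which I am free to assume. Hence the entire substance of the proof lies in the second assertion, and for a claim of the form ``this property fails for partizan games'' the cleanest strategy is to exhibit a single explicit counterexample rather than attempt any general argument.

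Conveniently, all the required computation has already been performed in this section, so first I would simply invoke the \mis monoid $\monoid{M}_{\cl{\rho}}$ computed above, whose underlying set is
	\[ \{1, a, p, p^2, p^3, p^4, ap, ap^2, ap^3\}. \]
Next I would appeal to Corollary \ref{cor-cl-rho-indistinguishability}, which guarantees that every position in $\cl{\rho}$ is indistinguishable from one of these nine representatives, and to Proposition \ref{prop-cl-rho-pairwise}, which guarantees that these nine representatives are pairwise distinguishable. Taken together, these two facts pin the cardinality of $\monoid{M}_{\cl{\rho}}$ down to exactly nine.

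The concluding step is the trivial arithmetic remark that $9$ is neither $1$ nor even. Since $\rho$ is a partizan position, $\monoid{M}_{\cl{\rho}}$ is a \mis monoid arising from a partizan game with odd cardinality strictly greater than one, directly violating the impartial dichotomy. This establishes that the impartial cardinality result does not carry over to the partizan setting.

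There is essentially no hard part remaining: the genuine difficulty was entirely absorbed into the earlier outcome analysis (Proposition \ref{prop-rho-oc}) and the distinguishing-element table. If I were to single out the one point deserving care, it would be ensuring the counting is airtight in both directions, namely that no further collapses occur among the nine candidates (supplied by pairwise distinguishability) and that no position escapes the list (supplied by the reduction relations). Once both halves of this count are secured, the theorem follows at once from the observation that $9$ is odd.
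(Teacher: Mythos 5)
Your proposal is correct and is exactly the paper's own argument: the impartial dichotomy is quoted from \cite{MISQUOTIENT}, and the counterexample is the nine-element monoid $\monoid{M}_{\cl{\rho}}$ just computed, whose cardinality is pinned down by Corollary \ref{cor-cl-rho-indistinguishability} together with Proposition \ref{prop-cl-rho-pairwise}. Nothing further is needed.
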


We will now determine the partial order of the \mis monoid $\monoid{M}_{\cl{\rho}}$.

Recall that, under monoid multiplicative notation,
	\[ x \ge y \text{ if } o^-(xz) \ge o^-(yz) \text{ for all elements } z\]
and that, in terms of outcomes, the outcome lattice is given in Figure \ref{fig-outcome-poset}.

Notice that if $o^-(x) \not \ge o^-(y)$, then $x \not \ge y$ since $o^-(x1) \not \ge o^-(y1)$.  

\begin{proposition}\label{prop-A-poset}
	Figure \ref{fig-A-poset} gives the partially ordered set of positions $\monoid{M}_{\cl{\rho}}$ up to indistinguishability.  Thus, the partially ordered set is down-directed but not up-directed.  
\end{proposition}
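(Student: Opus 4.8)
The plan is to compute, for every pair of elements of $\monoid{M}_{\cl{\rho}}$, the two outcome vectors $z \mapsto o^-(xz)$ and $z \mapsto o^-(yz)$ as $z$ ranges over the nine monoid elements, and then read off whether $x \ge y$ directly from Definition \ref{def-<=} by comparing these vectors componentwise against the outcome order of Figure \ref{fig-outcome-poset}. The crucial simplification is that no fresh game analysis is needed: each monoid element has a canonical representative of the form $n * + m \rho$ (namely $p^k \leftrightarrow k\rho$ and $a p^k \leftrightarrow * + k\rho$, with $1 \leftrightarrow 0$, $a \leftrightarrow *$), products correspond to disjunctive sums of representatives, and every such sum reduces via $a^2 = 1$ and $p^4 = p^5 = a p^4$ to one of the nine listed elements. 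Since each equivalence class has the outcome of its representative, every entry $o^-(xz)$ equals $o^-(r_x + r_z)$ and can simply be looked up in Table \ref{table-outcomes-cl-rho} of Proposition \ref{prop-rho-oc}. This reduces the whole problem to bookkeeping over that single table.

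Before grinding through all $\binom{9}{2}$ comparisons I would prune aggressively using Proposition \ref{prop-results-<=}(1): $x \ge y$ forces $o^-(x) \ge o^-(y)$. Because $\Next$ and $\Prev$ are incomparable in Figure \ref{fig-outcome-poset}, this instantly discards every comparison between a $\Next$-element ($1, ap, ap^2, ap^3$) and a $\Prev$-element ($a, p^2$), and it similarly restricts the possible roles of the unique $\Left$-element $p$ and the $\Right$-elements $p^3, p^4$. For each surviving candidate pair I would verify the relation by the componentwise check above; the expected result is exactly the Hasse diagram of Figure \ref{fig-A-poset}, with covering relations $1 \gtrdot a p^2$, $ap \gtrdot a p^2$, $ap \gtrdot p^3$, $a \gtrdot p^2$, $p \gtrdot p^2$, $p \gtrdot a p^3$, $a p^2 \gtrdot a p^3$, $p^2 \gtrdot p^3$, and $p^3, a p^3 \gtrdot p^4$. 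A mild surprise to flag is that the $\Left$-element $p$ does \emph{not} dominate everything: it is incomparable to $1$, $a$, $ap$, and $a p^2$.

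The directedness claims then fall straight out of the finished diagram. For down-directedness I would exhibit $p^4$ as a least element: its representative $4\rho$ satisfies $o^-(4\rho + \gamma) = \Right$ for every $\gamma \in \cl{\rho}$ by Proposition \ref{prop-rho-oc}, so its outcome vector is constantly $\Right$, the bottom of Figure \ref{fig-outcome-poset}; hence $o^-(p^4 z) = \Right \le o^-(yz)$ for all $y,z$, giving $p^4 \le y$ for every $y$. A least element is a common lower bound of any pair, so the poset is down-directed (equivalently, by the earlier criterion that a finite poset is down-directed iff it has a minimum). For failure of up-directedness it suffices to show the pair $\{1, a\}$ has no common upper bound: any bound $z$ would require $o^-(z) \ge \Next$ and $o^-(z) \ge \Prev$, which in Figure \ref{fig-outcome-poset} forces $o^-(z) = \Left$ and hence $z = p$; but $p \not\ge 1$, since at $z = a$ one has $o^-(p \cdot a) = o^-(ap) = \Next$ while $o^-(1 \cdot a) = o^-(a) = \Prev$, and $\Next \not\ge \Prev$. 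Thus no common upper bound exists and the poset is not up-directed.

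The only genuine obstacle is the sheer volume and error-proneness of the casework, not any conceptual difficulty. It is worth remarking that the conjugation shortcut of Proposition \ref{prop->-<-conjugate}, which would ordinarily halve such computations, is \emph{unavailable} here: that result requires $\cl{\rho}$ to be closed under conjugation, whereas $\rhob = \combgame{\{0 \mid *\}} \notin \cl{\rho}$, since every element of $\cl{\rho}$ has the form $n * + m \rho$. Consequently each comparison really must be checked individually, and the main care is simply keeping the representatives-and-reductions bookkeeping straight so that the componentwise comparisons reproduce Figure \ref{fig-A-poset} exactly.
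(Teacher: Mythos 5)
Your proposal is correct and follows essentially the same route as the paper: both reduce every comparison to looking up $o^-(xz)$ via the canonical representatives in Table \ref{table-outcomes-cl-rho}, prune with Proposition \ref{prop-results-<=}, verify the surviving relations componentwise, and certify incomparabilities (and strictness) using distinguishing elements. The only difference is cosmetic --- you make explicit the directedness arguments (the absorbing element $p^4$ as minimum, and the pair $\{1,a\}$ having no common upper bound since the sole $\Left$-element $p$ fails $p \ge 1$ at $z=a$), which the paper leaves to inspection of Figure \ref{fig-A-poset} --- and your observation that the conjugation shortcut is unavailable because $\rhob \notin \cl{\rho}$ is a correct and worthwhile remark.
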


\begin{figure}[htb]
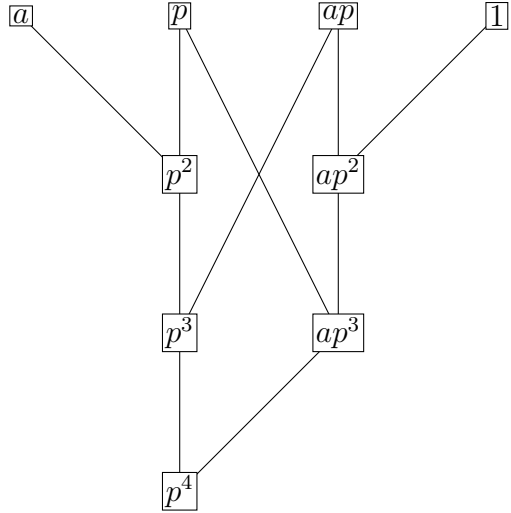

\begin{center}
		\unitlength 20pt
		\begin{center}
		\begin{graph}(9,9)(0,0)
		\graphnodesize{0.4}
		\fillednodestrue

		\textnode{p4}(3,0){$p^4$}
		\textnode{p3}(3,3){$p^3$}
		\textnode{p2}(3,6){$p^2$}
		\textnode{p}(3,9){$p$}
		
		\textnode{a}(0,9){$a$}
		
		\textnode{ap}(6,9){$ap$}
		\textnode{ap2}(6,6){$ap^2$}
		\textnode{ap3}(6,3){$ap^3$}
		\textnode{1}(9,9){1}
		
		\edge{a}{p2}
		\edge{p}{p2}
		\edge{p}{ap3}
		\edge{ap}{p3}
		\edge{ap}{ap2}
		\edge{1}{ap2}
		%\edge{1}{ap3}
		\edge{p2}{p3}
		\edge{ap2}{ap3}
		\edge{p3}{p4}
		\edge{ap3}{p4}	
		\end{graph}
		\end{center}

\end{center}
\caption{$\cl{\rho}$'s Partially Ordered Set.}
\label{fig-A-poset}
\end{figure}

\begin{proof}
	We will show that all relations in Figure \ref{fig-A-poset} exist and then show that no other relations exist.
	
	\begin{enumerate}
		\item $p^3 \ge p^4$ and $ap^3 \ge p^4$:  Since $p^4x = p^4$ for any $x \in \monoid{M}_{\cl{\rho}}$, we have $o^-(p^4x) = \Right$, so $o^-(p^3x) \ge o^-(p^4x)$ and $o^-(ap^3x) \ge o^-(p^4x)$, giving the desired inequalities.
		
		\item $p^2 \ge p^3$ and $ap \ge p^3$:  We know that $o^-(p^3x) = \Right$ unless $x = a$, in which case $o^-(pa) = \Next$.  Thus, for every $x \not = a$, $o^-(p^2x) \ge o^-(p^3x)$ and $o^-(apx) \ge o^-(p^3x)$.  Suppose $x=a$.  Then $o^-(p^2a) = \Next$ and $o^-(apa) = \Left$, so $o^-(p^2a) \ge o^-(p^3a)$ and $o^-(apa) \ge o^-(p^3a)$.  Thus the desired inequalities are obtained. 

		\item $p \ge ap^3$, and $ap^2 \ge ap^3$:  We know that $o^-(ap^3x) = \Right$ unless $x=1$, in which case $o^-(ap^3) = \Next$.  Thus, for every $x \not = 1$, $o^-(px) \ge o^-(ap^3x)$, and $o^-(ap^2x) \ge o^-(ap^3x)$.  Now examine $x=1$.  Then $o^-(p1) = \Left$, and $o^-(ap^2) = \Next$, so the outcome inequalities also hold for $x=1$.  Thus the desired inequalities are obtained.

		\item $a \ge p^2$ and $p \ge p^2$:  We know that $o^-(p^2x) = \Right$ unless $x = 1$, $a$, or $ap$.  So, for $x \not = 1, a,$ or $ap$, $o^-(ax) \ge o^-(p^2x)$ and $o^-(px) \ge o^-(p^2x)$.  Remains to check what happens when $x = 1, a$, or $ap$, the results of which are given in Table \ref{table-A-a>p2}. 

			\begin{table}[htb]
			\begin{center}
			\begin{tabular}{rrcrcrc}
			$\boldsymbol{x}$&$\boldsymbol{ax}$&$\boldsymbol{o^-(ax)}$&$\boldsymbol{px}$&$\boldsymbol{o^-(px)}$&$\boldsymbol{p^2x}$&$\boldsymbol{o^-(p^2x)}$\\
			\rowcolor[gray]{.8}$1$&$a$&$\Prev$&$p$&$\Left$&$p^2$&$\Prev$\\
			$a$&$1$&$\Next$&$ap$&$\Next$&$ap^2$&$\Next$\\
			\rowcolor[gray]{.8}$ap$&$p$&$\Left$&$ap^2$&$\Next$&$ap^3$&$\Next$\\
			\end{tabular}
			\caption{Showing $o^-(ax) \ge o^-(p^2x)$ and $o^-(px) \ge o^-(p^2x)$ for $x \in \{1$, $a$, $ap\}$ in $\monoid{M}_{\cl{\rho}}$.}
			\label{table-A-a>p2}
			\end{center}
			\end{table}
			
		Thus, when $x = 1,a$, or $ap$, also have $o^-(ax) \ge o^-(p^2x)$ and $o^-(px) \ge o^-(p^2x)$.  Thus the desired inequalities are obtained.
		
		\item $ap \ge ap^2$ and $1 \ge ap^2$:  We know $o^-(ap^2x) = \Right$ unless $x = 1, a$, or $p$.  So, for $x \not = 1,a$, or $p$, $o^-(apx) \ge o^-(ap^2x)$ and $o^-(1x) \ge o^-(ap^2x)$.  Remains to check what happens when $x = 1,a$ or $p$, the results of which are given in Table \ref{table-A-ap>ap2}.
			
			\begin{table}[htb]
			\begin{center}
			\begin{tabular}{rrcrcrc}
			$\boldsymbol{x}$&$\boldsymbol{apx}$&$\boldsymbol{o^-(apx)}$&$\boldsymbol{1x}$&$\boldsymbol{o^-(1x)}$&$\boldsymbol{ap^2x}$&$\boldsymbol{o^-(ap^2x)}$\\
			\rowcolor[gray]{.8}$1$&$ap$&$\Next$&$1$&$\Next$&$ap^2$&$\Next$\\
			$a$&$p$&$\Left$&$a$&$\Prev$&$p^2$&$\Prev$\\
			\rowcolor[gray]{.8}$p$&$ap^2$&$\Next$&$p$&$\Left$&$ap^3$&$\Next$\\
			\end{tabular}
			\caption{Showing $o^-(apx) \ge o^-(ap^2x)$ and $o^-(1x) \ge o^-(ap^2x)$ for $x\in\{1,a,p\}$ in $\monoid{M}_{\cl{\rho}}$.}
			\label{table-A-ap>ap2}
			\end{center}
			\end{table}		
		
		Thus, when $x=1,a,$ or $p$, also have $o^-(apx) \ge o^-(ap^2x)$ and $o^-(1x) \ge o^-(ap^2x)$.  Therefore the desired inequalities are obtained.
	\end{enumerate}
		
	By Proposition \ref{prop-cl-rho-pairwise}, all the elements in Figure \ref{fig-A-poset} are distinguishable.  Thus all the inequalities determined above are strict.  
	
	It remains to show that there are no other inequalities, i.e.\ if there is no downward path between two elements, then they are incomparable.  For there to be no downward path between elements $x$ and $y$, either:
		\begin{enumerate}
			\item there exists $z$ such that $o^-(xz) = \Next$ while $o^-(yz) = \Prev$; or
			
			\item there exist $z_1$ and $z_2$ such that $o^-(xz_1) > o^-(yz_1)$ while $o^-(xz_2) < o^-(yz_2)$. 
		\end{enumerate}
	There are sixteen pairs of elements in Figure \ref{fig-A-poset}	which do not have a downward path between them.  Fifteen pairs $(p_1,p_2)$ fall into the first case, with the element rendering them incomparable ($e$) and outcomes given in Table \ref{table-A-incomparable}.
			
	\begin{table}[htb]
	\begin{center}
	\begin{tabular}{rrrrcrc}
	$\boldsymbol{p_1}$&$\boldsymbol{p_2}$&$\boldsymbol{e}$&$\boldsymbol{p_1e}$&$\boldsymbol{o^-(p_1e)}$&$\boldsymbol{p_2e}$&$\boldsymbol{o^-(p_2e)}$\\

	\rowcolor[gray]{.8}1&
	$a$&
	$1$
	&$1$&
	$\Next$&
	$a$
	&$\Prev$\\
	
	1&
	$p$&
	$a$
	&$a$&
	$\Prev$&
	$ap$
	&$\Next$\\
	
	\rowcolor[gray]{.8}1&
	$p^2$&
	$1$
	&$1$&
	$\Next$&
	$p^2$
	&$\Prev$\\

	1&
	$p^3$&
	$a$
	&$a$&
	$\Prev$&
	$ap^3$
	&$\Next$\\

	\rowcolor[gray]{.8}1&
	$ap$&
	$p^2$
	&$p^2$&
	$\Prev$&
	$ap^3$
	&$\Next$\\

	$a$&
	$p$&
	$p$
	&$ap$&
	$\Next$&
	$p^2$
	&$\Prev$\\

	\rowcolor[gray]{.8}$a$&
	$ap$&
	$1$
	&$a$&
	$\Prev$&
	$ap$
	&$\Next$\\

	$a$&
	$ap^2$&
	$1$
	&$a$&
	$\Prev$&
	$ap^2$
	&$\Next$\\

	\rowcolor[gray]{.8}$a$&
	$ap^3$&
	$1$
	&$a$&
	$\Prev$&
	$ap^3$
	&$\Next$\\

	$p$&
	$ap$&
	$p$
	&$p^2$&
	$\Prev$&
	$ap^2$
	&$\Next$\\

	\rowcolor[gray]{.8}$p$&
	$ap^2$&
	$p$
	&$p^2$&
	$\Prev$&
	$ap^3$
	&$\Next$\\

	$p^2$&
	$ap$&
	$1$
	&$p^2$&
	$\Prev$&
	$ap$
	&$\Next$\\

	\rowcolor[gray]{.8}$p^2$&
	$ap^2$&
	$1$
	&$p^2$&
	$\Prev$&
	$ap^2$
	&$\Next$\\

	$p^2$&
	$ap^3$&
	$1$
	&$p^2$&
	$\Prev$&
	$ap^3$
	&$\Next$\\

	\rowcolor[gray]{.8}$p^3$&
	$ap^2$&
	$a$
	&$ap^3$&
	$\Next$&
	$p^2$
	&$\Prev$\\
	\end{tabular}
	\caption{Determining which elements are incomparable in $\monoid{M}_{\cl{\rho}}$.}
	\label{table-A-incomparable}
	\end{center}
	\end{table}	
	
	The only pair yet to be examined is $p^3$ and $ap^3$.  We have $o^-(p^3) = \Right$ while $o^-(ap^3) = \Next$.  Therefore $o^-(p^31) < o^-(ap^31)$.  But $o^-(p^3a) = \Next$ while $o^-(ap^3a) = \Right$ so $o^-(p^3a) > o^-(ap^3a)$.  Thus these two elements are incomparable.  

	Thus Figure \ref{fig-A-poset} completely determines the poset of the elements of $\monoid{M}_{\cl{\rho}}$. 
\end{proof}

As we can see, in the poset of $\monoid{M}_{\cl{\rho}}$, there is no top element, no meets, and no joins.  However, there is a bottom element, $p^4$.

\section{The \Mis Monoid of $\cl{\rho,\rhob}$}\label{sec-rho-rhob}

This section gives an example of a \mis monoid with the following properties:
	\begin{itemize}
		\item infinite cardinality,
		\item partial order is both down-directed and up-directed, but it is not a lattice.
	\end{itemize}

This example, $\cl{\rho,\rhob}$, has an infinite \mis monoid like that of the \mis monoid of $\cl{1, \bar{1}}$.  However, unlike the \mis monoid of $\cl{1, \bar{1}}$, the \mis monoid of $\cl{\rho,\rhob}$ is not a lattice.  The definition of $\rho$ can be found in Definition \ref{def-rho}.

As in the other examples, we begin by determining the outcome class of an arbitrary position.  

\begin{theorem}\label{theorem-B-oc}
	Given an arbitrary element of $\cl{\rho,\rhob}$,
		\[ n  * + m  \rho + \l  \rhob,\]
	the outcome class of this element is given in Table \ref{table-B-oc}.
\end{theorem}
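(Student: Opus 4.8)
The plan is to proceed exactly as in the proof of Proposition \ref{prop-rho-oc}, by induction on the options of a position, exploiting the self-similar structure of $\cl{\rho,\rhob}$ together with the conjugate symmetry supplied by Theorem \ref{theorem-outcome-class-conj}. First I would record the moves from a general position $n* + m\rho + \l\rhob$. Since $* = \combgame{\{0\mid0\}}$, $\rho = \combgame{\{*\mid0\}}$, and $\rhob = \combgame{\{0\mid*\}}$, Left's options are
\[ (n-1)* + m\rho + \l\rhob, \quad (n+1)* + (m-1)\rho + \l\rhob, \quad n* + m\rho + (\l-1)\rhob,\]
obtained by moving in a $*$, a $\rho$, or a $\rhob$ respectively, while Right's options are
\[ (n-1)* + m\rho + \l\rhob, \quad n* + (m-1)\rho + \l\rhob, \quad (n+1)* + m\rho + (\l-1)\rhob.\]
Each of these has strictly smaller birthday, so the induction on options is well-founded, exactly the ordering alluded to after Proposition \ref{prop-n*+m-sigma+m-sigma_L}.

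Second, I would halve the work using conjugation. Because $\overline{*} = *$ and $\overline{\rho} = \rhob$, the conjugate of $n* + m\rho + \l\rhob$ is $n* + \l\rho + m\rhob$. Theorem \ref{theorem-outcome-class-conj} then forces $o^-(n* + m\rho + \l\rhob)$ and $o^-(n* + \l\rho + m\rhob)$ to correspond under the swap $\Left\leftrightarrow\Right$, $\Next\leftrightarrow\Next$, $\Prev\leftrightarrow\Prev$. Thus it suffices to pin down the outcomes on the region $m \ge \l$ and read off the remainder by reflecting across the diagonal, which should cut the verification of Table \ref{table-B-oc} roughly in half.

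Third, I would compute the outcomes on a finite block of small $(m,\l)$ values for each parity of $n$, assembling the outcome of each position from the inductively known outcomes of its options via Table \ref{table-outcomes}, precisely as the intermediate tables in Proposition \ref{prop-rho-oc} were filled in. As there, I expect rows and columns to stabilize once $m$ or $\l$ passes a small threshold (the value $4$ played that role for $\cl{\rho}$), after which every move out of the block lands on an already-determined periodic entry; this stabilization is what lets a finite computation certify the full infinite table.

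The hard part will be the region where $m$ and $\l$ are both moderately large. There the Right-favouring pull of the $\rho$-summands --- Proposition \ref{prop-rho-oc} shows four copies of $\rho$ hand the game to Right --- competes against the symmetric Left-favouring pull of the $\rhob$-summands, so the naive ``one component dominates'' heuristic fails. I would argue by a parity-plus-tempo analysis, tracking how each forced $\rho$ or $\rhob$ move manufactures an extra $*$ and thereby shifts the parity of $n$, to show that the outcome settles according to the comparison of $m$ and $\l$ together with the parity of $n$. Confirming that the proposed stabilized pattern is genuinely a fixed point of the move-recursion, rather than merely eventually periodic along one axis as in the one-sided case, is the crux that distinguishes this two-sided computation from Proposition \ref{prop-rho-oc}.
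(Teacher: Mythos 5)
Your skeleton matches the paper's: the same induction on options, the same six moves for Left and Right, and the use of Theorem \ref{theorem-outcome-class-conj} to transfer outcomes across the reflection $(n,m,\l)\mapsto(n,\l,m)$ (the paper invokes this only for the two outermost cases $m\ge \l+4$; your more aggressive halving is legitimate, provided you observe that the options of a reflected position are the reflections of the options, so the claimed table --- which is conjugate-symmetric --- transfers at each inductive step). The gap is that the entire content of the proof, namely showing that the pattern of Table \ref{table-B-oc} is invariant under the move recursion, is never carried out. The paper does this by a twelve-case symbolic verification (three ranges of $m-\l$, four residues of $m-\l$ modulo $4$ in the middle strip, two parities of $n$), checking for each case into which region every option falls and what the inductive hypothesis then asserts about it. You replace this with ``compute a finite block and expect rows and columns to stabilize,'' plus an unexecuted ``parity-plus-tempo analysis.''

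The stabilization step, as you describe it, would fail. In Proposition \ref{prop-rho-oc} the table is one-dimensional and genuinely becomes constant past a fixed threshold, so two agreeing rows close the recursion. Here the threshold at which the row indexed by $\l$ becomes constantly $\Right$ is $m=\l+4$, which grows with $\l$: the nontrivial behaviour lives on the infinite diagonal strip $\l-4<m<\l+4$, which no finite block captures, so it is false that ``every move out of the block lands on an already-determined periodic entry.'' What actually closes the recursion is the diagonal shift-invariance $o^-(n\,{*}+m\,\rho+\l\,\rhob)=o^-(n\,{*}+(m+1)\,\rho+(\l+1)\,\rhob)$ --- but the paper obtains this only afterwards, as Corollary \ref{cor-B-*+*=0}, by reading it off the completed table. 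If you want a finite-computation-plus-periodicity proof, you must build that invariance into the inductive hypothesis and verify that every one of the six moves respects it (this is not vacuous: a Left move in $\rho$ and a Right move in $\rhob$ each change $m-\l$ by one while simultaneously flipping the parity of $n$). As written, your proposal neither establishes this invariance nor performs the case analysis that would substitute for it, so the theorem is not yet proved.
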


\begin{table}[htb]
\begin{center}
\begin{tabular}{rcccccc}
&$m \le \l+4$&\multicolumn{4}{c}{$\l-4 < m < \l+4$}&$m \ge \l+4$ \\
&&$m \equiv \l$&$m \equiv \l+1$&$m \equiv \l+2$&$m \equiv \l+3$&\\
$n \equiv 0 \imod 2$&\cellcolor[gray]{0.8}$\Left$&\cellcolor[gray]{0.8}$\Next$&\cellcolor[gray]{0.8}$\Left$&\cellcolor[gray]{0.8}$\Prev$&\cellcolor[gray]{0.8}$\Right$&\cellcolor[gray]{0.8}$\Right$\\
$n \equiv 1 \imod 2$&$\Left$&$\Prev$&$\Next$&$\Next$&$\Next$&$\Right$\\
\end{tabular}
\end{center}
\caption{The outcome classes of positions in $n * + m \rho + \l \rhob$ where $m \equiv \l + i \imod 4$.}
\label{table-B-oc}
\end{table}

Tables \ref{table-B-oc-n=0} and \ref{table-B-oc-n=1} place the outcome classes into two tables depending on whether $n \equiv 0 \imod 2$ or $n \equiv 1 \imod 2$.  The reader may find it more illuminating to refer to these tables as well as to Table \ref{table-B-oc} when following the proof of Theorem \ref{theorem-B-oc}.

	\begin{table}[p]
	\begin{center}

	\unitlength 12pt
	\begin{graph}(30,20)(0,0)
	\graphnodesize{0}
	%\fillednodestrue

	\freetext(2,19){$n \equiv 0 \imod 2$}
		
	\roundnode{a1}(4,18)
	\roundnode{a2}(4,0)
	\roundnode{c1}(2,16)
	\roundnode{c2}(2,0)
	
	\roundnode{b1}(2,16)
	\roundnode{b2}(20,16)
	\roundnode{d1}(4,18)
	\roundnode{d2}(20,18)

	\edge{a1}{a2}
	\edge{b1}{b2}
	\edge{c1}{c2}
	\edge{d1}{d2}

	\roundnode{e1}(4,10)
	\roundnode{e2}(13.5,1)
	
	\edge{e1}{e2}
	
	\roundnode{f1}(9.5,16)
	\roundnode{f2}(19,7.5)
	
	\edge{f1}{f2}
	
	\freetext(7,3.5){\begin{huge}$\Left$\end{huge}}
	\freetext(17,13){\begin{huge}$\Right$\end{huge}}

	\freetext(1,8.5){$\l$}
	\freetext(12,19){$m$}
	
	\freetext(3,15){0}
	\freetext(3,13.5){1}
	\freetext(3,12){2}
	\freetext(3,10.5){3}
	\freetext(3,9){4}
	\freetext(3,7.5){5}
	\freetext(3,6){6}
	\freetext(3,4.5){7}
	\freetext(3,3){8}
	\freetext(3,1.5){9}
	
	\freetext(5,17){0}
	\freetext(6.5,17){1}
	\freetext(8,17){2}
	\freetext(9.5,17){3}
	\freetext(11,17){4}
	\freetext(12.5,17){5}
	\freetext(14,17){6}
	\freetext(15.5,17){7}
	\freetext(17,17){8}
	\freetext(18.5,17){9}
		
	\freetext(5,15){$\Next$}
	\freetext(6.5,15){$\Left$}
	\freetext(8,15){$\Prev$}
	\freetext(9.5,15){$\Right$}

	\freetext(5,13.5){$\Right$}
	\freetext(6.5,13.5){$\Next$}
	\freetext(8,13.5){$\Left$}
	\freetext(9.5,13.5){$\Prev$}
	\freetext(11,13.5){$\Right$}

	\freetext(5,12){$\Prev$}
	\freetext(6.5,12){$\Right$}
	\freetext(8,12){$\Next$}
	\freetext(9.5,12){$\Left$}
	\freetext(11,12){$\Prev$}
	\freetext(12.5,12){$\Right$}

	\freetext(5,10.5){$\Left$}
	\freetext(6.5,10.5){$\Prev$}
	\freetext(8,10.5){$\Right$}
	\freetext(9.5,10.5){$\Next$}
	\freetext(11,10.5){$\Left$}
	\freetext(12.5,10.5){$\Prev$}
	\freetext(14,10.5){$\Right$}

	\freetext(6.5,9){$\Left$}
	\freetext(8,9){$\Prev$}
	\freetext(9.5,9){$\Right$}
	\freetext(11,9){$\Next$}
	\freetext(12.5,9){$\Left$}
	\freetext(14,9){$\Prev$}
	\freetext(15.5,9){$\Right$}

	\freetext(8,7.5){$\Left$}
	\freetext(9.5,7.5){$\Prev$}
	\freetext(11,7.5){$\Right$}
	\freetext(12.5,7.5){$\Next$}
	\freetext(14,7.5){$\Left$}
	\freetext(15.5,7.5){$\Prev$}
	\freetext(17,7.5){$\Right$}

	\freetext(9.5,6){$\Left$}
	\freetext(11,6){$\Prev$}
	\freetext(12.5,6){$\Right$}
	\freetext(14,6){$\Next$}
	\freetext(15.5,6){$\Left$}
	\freetext(17,6){$\Prev$}
	\freetext(18.5,6){$\Right$}

	\freetext(11,4.5){$\Left$}
	\freetext(12.5,4.5){$\Prev$}
	\freetext(14,4.5){$\Right$}
	\freetext(15.5,4.5){$\Next$}
	\freetext(17,4.5){$\Left$}
	\freetext(18.5,4.5){$\Prev$}

	\freetext(12.5,3){$\Left$}
	\freetext(14,3){$\Prev$}
	\freetext(15.5,3){$\Right$}
	\freetext(17,3){$\Next$}
	\freetext(18.5,3){$\Left$}

	\freetext(14,1.5){$\Left$}
	\freetext(15.5,1.5){$\Prev$}
	\freetext(17,1.5){$\Right$}
	\freetext(18.5,1.5){$\Next$}
	
	\end{graph}

	\end{center}
	\caption{The outcome classes of positions in $\cl{\rho,\rhob}$ where $n \equiv 0 \imod 2$.}
	\label{table-B-oc-n=0}
	\end{table}

	\begin{table}[p]
	\begin{center}

	\unitlength 12pt
	\begin{graph}(30,20)(0,0)
	\graphnodesize{0}
	%\fillednodestrue

	\freetext(2,19){$n \equiv 1 \imod 2$}
		
	\roundnode{a1}(4,18)
	\roundnode{a2}(4,0)
	\roundnode{c1}(2,16)
	\roundnode{c2}(2,0)
	
	\roundnode{b1}(2,16)
	\roundnode{b2}(20,16)
	\roundnode{d1}(4,18)
	\roundnode{d2}(20,18)

	\edge{a1}{a2}
	\edge{b1}{b2}
	\edge{c1}{c2}
	\edge{d1}{d2}

	\roundnode{e1}(4,10)
	\roundnode{e2}(13.5,1)
	
	\edge{e1}{e2}
	
	\roundnode{f1}(9.5,16)
	\roundnode{f2}(19,7.5)
	
	\edge{f1}{f2}
	
	\freetext(7,3.5){\begin{huge}$\Left$\end{huge}}
	\freetext(17,13){\begin{huge}$\Right$\end{huge}}

	\freetext(1,8.5){$\l$}
	\freetext(12,19){$m$}
	
	\freetext(3,15){0}
	\freetext(3,13.5){1}
	\freetext(3,12){2}
	\freetext(3,10.5){3}
	\freetext(3,9){4}
	\freetext(3,7.5){5}
	\freetext(3,6){6}
	\freetext(3,4.5){7}
	\freetext(3,3){8}
	\freetext(3,1.5){9}
	
	\freetext(5,17){0}
	\freetext(6.5,17){1}
	\freetext(8,17){2}
	\freetext(9.5,17){3}
	\freetext(11,17){4}
	\freetext(12.5,17){5}
	\freetext(14,17){6}
	\freetext(15.5,17){7}
	\freetext(17,17){8}
	\freetext(18.5,17){9}
		
	\freetext(5,15){$\Prev$}
	\freetext(6.5,15){$\Next$}
	\freetext(8,15){$\Next$}
	\freetext(9.5,15){$\Next$}

	\freetext(5,13.5){$\Next$}
	\freetext(6.5,13.5){$\Prev$}
	\freetext(8,13.5){$\Next$}
	\freetext(9.5,13.5){$\Next$}
	\freetext(11,13.5){$\Next$}

	\freetext(5,12){$\Next$}
	\freetext(6.5,12){$\Next$}
	\freetext(8,12){$\Prev$}
	\freetext(9.5,12){$\Next$}
	\freetext(11,12){$\Next$}
	\freetext(12.5,12){$\Next$}

	\freetext(5,10.5){$\Next$}
	\freetext(6.5,10.5){$\Next$}
	\freetext(8,10.5){$\Next$}
	\freetext(9.5,10.5){$\Prev$}
	\freetext(11,10.5){$\Next$}
	\freetext(12.5,10.5){$\Next$}
	\freetext(14,10.5){$\Next$}

	\freetext(6.5,9){$\Next$}
	\freetext(8,9){$\Next$}
	\freetext(9.5,9){$\Next$}
	\freetext(11,9){$\Prev$}
	\freetext(12.5,9){$\Next$}
	\freetext(14,9){$\Next$}
	\freetext(15.5,9){$\Next$}

	\freetext(8,7.5){$\Next$}
	\freetext(9.5,7.5){$\Next$}
	\freetext(11,7.5){$\Next$}
	\freetext(12.5,7.5){$\Prev$}
	\freetext(14,7.5){$\Next$}
	\freetext(15.5,7.5){$\Next$}
	\freetext(17,7.5){$\Next$}

	\freetext(9.5,6){$\Next$}
	\freetext(11,6){$\Next$}
	\freetext(12.5,6){$\Next$}
	\freetext(14,6){$\Prev$}
	\freetext(15.5,6){$\Next$}
	\freetext(17,6){$\Next$}
	\freetext(18.5,6){$\Next$}

	\freetext(11,4.5){$\Next$}
	\freetext(12.5,4.5){$\Next$}
	\freetext(14,4.5){$\Next$}
	\freetext(15.5,4.5){$\Prev$}
	\freetext(17,4.5){$\Next$}
	\freetext(18.5,4.5){$\Next$}

	\freetext(12.5,3){$\Next$}
	\freetext(14,3){$\Next$}
	\freetext(15.5,3){$\Next$}
	\freetext(17,3){$\Prev$}
	\freetext(18.5,3){$\Next$}

	\freetext(14,1.5){$\Next$}
	\freetext(15.5,1.5){$\Next$}
	\freetext(17,1.5){$\Next$}
	\freetext(18.5,1.5){$\Prev$}
	
	\end{graph}

	\end{center}
	\caption{The outcome classes of positions in $\cl{\rho,\rhob}$ where $n \equiv 1 \imod 2$.}
	\label{table-B-oc-n=1}
	\end{table}

\begin{notation}
	Let $(a,b,c)$ denote the position $a  * + b  \rho + c  \rhob$.  
\end{notation}

\begin{proof}
	We proceed by induction on the options.  
	
	Consider $(0,0,0)$.  Then $n = m = \l = 0$, so $\l-4 < m < \l+4$, $m \equiv \l \imod 4$, and $n \equiv 0 \imod 2$, so the theorem claims that the outcome class is $\Next$, which indeed it is.
	
	Consider position $(n,m,\l)$ and suppose that the outcome classes of all of its options are as given in Table \ref{table-B-oc}.  
	
	From a position $(n,m,\l)$, both Left and Right have three possible moves to positions which we list below.
		\begin{align*}
			&\text{Left's} & &\text{Right's}\\
			&(n-1,m,\l);   & &(n-1,m,\l)  \\
			&(n+1,m-1,\l); & &(n, m-1,\l)\\
			&(n,m,\l-1).   & &(n+1,m,\l-1)
		\end{align*}
	In the proof, we will refer back to these moves.

	Clearly $(n,m,\l)$ must fall into one of the twelve cases given in Table \ref{table-B-oc}.  We will consider each of these twelve cases separately.  
	
	\begin{enumerate}
		\item $m \le \l-4$, $n \equiv 0 \imod 2$:   Consider Left's three moves.		
			\begin{enumerate}
				\item $(n-1,m,\l)$: $(n-1,m,\l)$ is an $\Left$ position by induction since $m \le \l-4$.
				
				\item $(n+1,m-1,\l)$: Since $m \le \l-4$, we have $m-1 \le \l-4$, so, by induction, $(n+1,m-1,\l)$ is also an $\Left$ position.
				
				\item $(n,m,\l-1)$: If $m < \l-4$, then $m \le \l-5$, and so, by induction, $(n,m,\l-1)$ is an $\Left$ position.  Otherwise $m = \l-4$.  Then $(n,m,\l-1)$ becomes $(n, \l-4, \l-1)$.  Then 
					\[\l-4 \equiv 1+(\l-1) \imod 4,\]
				so, by induction, 
					\[o^-((n,\l-4,\l-1)) = \Left \text{ if }n \equiv 0 \imod 2\]
				or 
					\[o^-((n,\l-4,\l-1)) = \Next \text{ if } n \equiv 1 \imod 2.\]
				But if $n \equiv 1 \imod 2$, then $n \ge 1$, so Left can move to $(n-1,m,\l)$ rather than this position, and so Left can win moving first.  
			\end{enumerate}
			
		Therefore Left can win moving first.
		
		Suppose Right moves first.  Consider Right's three moves.
			\begin{enumerate}
				\item $(n-1,m,\l)$: This is an $\Left$ position by induction since $m \le \l-4$.
				
				\item $(n, m-1,\l)$:  Since $m \le \l-4$, we have $m-1 \le \l-4$, so by induction, this is also an $\Left$ position.
				
				\item $(n+1,m,\l-1)$:  By Left's case (c) above, $o^-((n+1,m,\l-1)) = \Left \cup \Next$.
			\end{enumerate}
		
		This shows that Right cannot win moving first.
		
		Therefore $o^-((n,m,\l)) = \Left$ if $m \le \l-4$.
			
		\item $m \le \l-4$, $n \equiv 1 \imod 2$:  In the preceding argument for $m \le \l-4$, $n \equiv 0 \imod 2$, nowhere did we use the fact that $n \equiv 0 \imod 2$.  Therefore the same argument works for $m \le \l-4$, $n \equiv 1 \imod 2$.
		
		\item $\l-4 < m < \l+4$, $m \equiv \l \imod 4$, $n \equiv 0 \imod 2$:  The conditions on $m$ and $\l$ mean $m$ must equal $\l$.  If $m=\l=0$, then the result follows.  Thus, take $m \ge 1$.  
		
		Left moving first moves to $(n,m,\l-1) = (n,m,m-1)$.  Since
			\begin{align*}
				&m-5 < m < m+3,\\
				&m \equiv 1 + (m-1) \imod 4, \text{ and}\\
				&n \equiv 0 \imod 2,
			\end{align*}
		by induction, we have $o^-((n,m,\l-1)) = \Left$.  
		
		Right moving first will move to $(n, m-1,\l)=(n,m-1,m)$.  Since 
			\begin{align*}
				&m-4<m-1<m+4,\\
				&m-1 \equiv 3 + m \imod 4, \text{ and}\\
				&n \equiv 0 \imod 2,
			\end{align*}
		by induction, we have $o^-((n, m-1,\l)) = \Right$.
		
		Therefore $o^-((n,m,\l)) = \Next$ if $\l-4 < m < \l+4$, $m \equiv \l \imod 4$, and $n \equiv 0 \imod 2$.
		
		\item $\l-4 < m < \l+4$, $m \equiv \l \imod 4$, $n \equiv 1 \imod 2$:  The conditions on $m$ and $\l$ mean $m$ must equal $\l$.  If $m=\l=0$, then the result follows.  Thus, take $m \ge 1$.
		
		We consider Left's three moves:
			\begin{enumerate}
				\item $(n-1,m,\l)$: By induction, this is an $\Next$ position.
				
				\item $(n+1,m-1,\l)$:  Since 
					\begin{align*}
						&m-4 < m-1 < m+4,\\
						&m-1 \equiv m+3 \imod 4, \text{ and}\\
						&n+1 \equiv 0 \imod 2,
					\end{align*}
				induction gives $o^-((n+1,m-1,\l)) = \Right$.  
				
				\item $(n,m,\l-1)$:  Since 
					\begin{align*}
						&m-5<m<m+3,\\
						&m \equiv 1 + (m-1) \imod 4, \text{ and}\\
						&n \equiv 1 \imod 2, 
					\end{align*}
				induction gives $o^-((n,m,\l-1)) = \Next$.  
			\end{enumerate}
		Therefore Left moving first does not have a winning move.
		
		We consider Right's three moves.
			\begin{enumerate}
				\item $(n-1,m,\l)$:  By induction, this is an $\Next$ position.
				
				\item $(n, m-1,\l)$:  Since 
					\begin{align*}
						&m-4 < m-1 < m+4,\\
						&m-1 \equiv 3+m \imod 4, \text{ and}\\
						&n \equiv 1 \imod 2,
					\end{align*}
				induction gives $o^-((n, m-1,\l)) = \Next$.  
				
				\item $(n+1,m,\l-1)$:  Since 
					\begin{align*}
						&m-5 < m < m+3,\\
						&m \equiv 1 + (m-1) \imod 4, \text{ and}\\
						&n+1 \equiv 0 \imod 2,
					\end{align*}
				induction gives $o^-((n+1,m,\l-1)) = \Left$.  
			\end{enumerate}
		Thus Right moving first does not have a good move.
		
		Therefore $o^-((n,m,\l)) = \Prev$ if $\l-4 < m < \l+4$, $m \equiv \l \imod 4$,  and $n \equiv 1 \imod 2$.

		\item  $\l-4 < m < \l+4$, $m \equiv 1 + \l \imod 4$, $n \equiv 0 \imod 2$:  Suppose $m=0$.  Then we claim that Left moving to $(n+1,m-1,\l)$ is a winning move for Left.  Either 
			\[\l-4 < m-1 < \l+4 \text{ or } m = \l-3.\]
		Suppose $\l-4 < m-1 < \l+4$.  Then $m-1 \equiv 0 + \l \imod 4$ and $n+1 \equiv 1 \imod 2$.  Therefore, by induction, $o^-((n+1,m-1,\l)) = \Prev$.  Otherwise $m=\l-3$, our initial position is $(n,\l-3,\l)$ and Left moves to the position $(n+1, \l-4,\l)$.  Then $\l -4 \le \l-4$, so by induction, this position is $\Left$.  
		
		Suppose $m=0$.  This forces $\l =3$ and our initial position becomes $(n,0,3)$.  Left moves to $(n,0,2)$, which is a $\Prev$ position by induction.  Therefore Left wins moving first.  
		
		Suppose Right moves first.  Consider Right's three moves.
			\begin{enumerate}
				\item $(n-1,m,\l)$:   Then the relationship between $m$ and $\l$ remains unchanged, while $n-1 \equiv 1 \imod 2$.  By induction, $o^-((n-1,m,\l)) = \Next$.
				
				\item $(n, m-1,\l)$:  Similarly to the argument for Left's winning by moving to $(n+1,m-1,\l)$, we see that $o^-((n, m-1,\l)) = \Next \cup \Left$.
				
				\item $(n+1,m,\l-1)$:  Either 
					\[\l-5<m<\l-3 \text{ or } m = \l+3.\]
				Suppose $\l-5<m<\l+3$.  Then $m \equiv 2 + (\l-1) \imod 4$ and $n+1 \equiv 1 \imod 2$.  Then, by induction, $o^-((n+1,m,\l-1)) = \Next$.  Otherwise $m = \l+3$, which gives $\l+1 \equiv \l+3 \imod 4$, a contradiction.  So $\l-5 < m < \l+3$.
			\end{enumerate}
		
		We then see that Right cannot win moving first.
		
		Therefore $o^-((n,m,\l)) = \Left$ if  $\l-4 < m < \l+4$, $m \equiv 1 + \l \imod 4$, and $n \equiv 0 \imod 2$.
		
		\item  $\l-4 < m < \l+4$, $m \equiv 1 + \l \imod 4$, $n \equiv 1 \imod 2$:  Since $n \equiv 1 \imod 2$, we have $n \ge 1$.  Since $m \equiv 1+\l \imod 4$ and $\l \ge 0$, we have $m \ge 1$ as well.
		
		Left moving first can move to $(n-1,m,\l)$, which is an $\Left$ position by induction.
		
		We now consider Right's move.  Either 
			\[\l-4 < m-1 < \l+4 \text{ or } m=\l-3.\]
		Suppose $\l-4 < m-1 < \l+4$ and Right moves first to $(n, m-1,\l)$.  Then $m-1 \equiv \l \imod 4$ and $n \equiv 1 \imod 2$, so, by induction, $o^-((n, m-1,\l)) = \Prev$.  Otherwise $m = \l-3$.  Then our initial position $(n,m,\l)$ is $(n,\l-3,\l)$.  If Right moves to $(n+1, \l-3,\l-1)$, then $\l-5 < \l-3 < \l+3$ and $\l-3 \equiv 2 +(\l-1) \imod 4$, so, by induction, $o^-((n+1,\l-3,\l-1)) = \Prev$.    Therefore Right has a winning move moving first.
		
		Therefore $o^-((n,m,\l)) = \Next$ if  $\l-4 < m < \l+4$, $m \equiv 1 + \l \imod 4$, and $n \equiv 1 \imod 2$.
		
		\item  $\l-4 < m < \l+4$, $m \equiv 2 + \l \imod 4$, $n \equiv 0 \imod 2$:  Consider Left's three moves.
			\begin{enumerate}
				\item $(n-1,m,\l)$:  The relationship between $m$ and $\l$ remains unchanged, while $n-1 \equiv 1 \imod 2$.  By induction, $o^-((n-1,m,\l)) = \Next$.
				
				\item $(n+1,m-1,\l)$:  Either 
					\[\l-4 < m-1 < \l+4 \text{ or } m=\l-3.\]
				Suppose $\l-4 < m-1 < \l+4$.  Then $m-1 \equiv 1 + \l \imod 4$ and $n+1 \equiv 1 \imod 4$, so, by induction, $o^-((n+1,m-1,\l)) = \Next$.  Otherwise $m = \l-3$, but $m \equiv 2 + \l \imod 4$, a contradiction.  Therefore $\l-4 < m-1 < \l+4$ and Left loses moving first to $(n+1,m-1,\l)$.
				
				\item $(n,m,\l-1)$:  Either 
					\[\l-5 < m < \l+3 \text{ or } m=\l+3.\]
				Suppose $\l-5 < m < \l+3$.  Then $m \equiv 3 + (\l-1) \imod 4$ and $n \equiv 0 \imod 2$, so, by induction, $o^-((n,m,\l-1)) = \Right$.  Otherwise $m = \l+3$, but $m \equiv 2 + \l \imod 4$, a contradiction.  Therefore $\l-5 < m < \l+3$ and Left loses moving first to $(n,m,\l-1)$.  
			\end{enumerate}
		Therefore Left moving first has no winning move.
		
		Suppose Right moves first.
			\begin{enumerate}
				\item $(n-1,m,\l)$: As when Left moved to this position above, this is an $\Next$ position.
				
				\item $(n, m-1,\l)$:  We showed above that $\l-4 < m-1 < \l+4$, and so since $m-1 \equiv 1 + \l \imod 4$ and $n \equiv 0 \imod 2$, induction gives that $o^-((n, m-1,\l)) = \Left$.  
			
				\item $(n+1,m,\l-1)$:  We showed above that $\l-5<m<\l-3$, and so, since $m \equiv 3 + (\l-1) \imod 4$ and $n+1 \equiv 1 \imod 4$, we have $o^-((n+1,m,\l-1)) = \Next$.
			\end{enumerate}
		Thus Right moving first has no good move.
		
		Therefore $o^-((n,m,\l)) = \Prev$ if  $\l-4 < m < \l+4$, $m \equiv 2 + \l \imod 4$, and $n \equiv 0 \imod 2$.
		
		\item  $\l-4 < m < \l+4$, $m \equiv 2 + \l \imod 4$, $n \equiv 1 \imod 2$:  Since $n \equiv 1 \imod 2$, we have $n \ge 1$.  Then, Left and Right have the option to move to $(n-1,m,\l)$,$(n-1,m,\l)$ respectively, which is a $\Prev$ position by induction.
		
		\item $\l-4 < m < \l+4$, $m \equiv 3 + \l \imod 4$, $n \equiv 0 \imod 2$:  Consider Left's possible moves.
			\begin{enumerate}
				\item $(n-1,m,\l)$:  The relationship between $m$ and $\l$ remains unchanged, while $n-1 \equiv 1 \imod 2$.  By induction, $o^-((n-1,m,\l)) = \Next$.
				
				\item $(n+1,m-1,\l)$:  Either 
					\[\l-4 < m-1 < \l+4 \text{ or } m= \l-3.\]
				Suppose $\l-4 < m-1 < \l+4$.  Since $m-1 \equiv 2 + \l \imod 4$ and $n+1 \equiv 1 \imod 2$, induction gives that $o^-((n+1,m-1,\l)) = \Next$.  Otherwise $m = \l-3$.  But $m \equiv 3 + \l \imod 4$, a contradiction.  Therefore $\l-4 < m-1 < \l+4$ and Left loses moving first to $(n+1,m-1,\l)$.
				
				\item $(n,m,\l-1)$:  Either 
					\[\l-5 < m < \l+3 \text{ or } m=\l-3.\]
				Suppose $\l-5 < m  < \l+3$.  Since $m \equiv \l-1 \imod 4$ and $n \equiv 0 \imod 2$, induction gives that $o^-((n,m,\l-1)) = \Next$.  Otherwise $m= \l+3$ and the position $(n,m,\l-1)$ is $(n,\l+3,\l-1)$.  Since $\l+3 \equiv \l-1 \imod 4$, induction gives that this position is $\Next$.
			\end{enumerate}
		
		Suppose Right moves first.  Since $m \equiv 3 + \l \imod 4$ and $\l \ge 0$, we have that $m \ge 1$.  We claim that Right has a winning move moving to $(n, m-1,\l)$.  Above, we showed that $\l-4 < m-1<\l+4$.  Since $m-1 \equiv 2 + \l \imod 4$ and $n \equiv 0 \imod 2$, induction gives that $o^-((n, m-1,\l)) = \Prev$.  
		
		Therefore $o^-((n,m,\l)) = \Right$ if $\l-4 < m < \l+4$, $m \equiv 3 + \l \imod 4$, and $n \equiv 0 \imod 2$.
		
		\item $\l-4 < m < \l+4$, $m \equiv 3 + \l \imod 4$, $n \equiv 1 \imod 2$:  Since $n \equiv 1 \imod 2$, we have $n \ge 1$.  Since $m \equiv 3 + \l \imod 4$ and $\l \ge 0$, we have $m \ge 1$ as well.
		
		Suppose Left is moving first.  We claim that $(n+1,m-1,\l)$ is a winning move for Left.  Either 
			\[\l-4 < m-1 < \l+4 \text{ or } m = \l+3.\]
		Suppose $\l-4 < m-1 < \l+4$.  Then $m-1 \equiv 2 + \l \imod 4$ and $n+1 \equiv 0 \imod 2$, so, by induction $o^-((n+1,m-1,\l)) = \Prev$.  Otherwise $m = \l-3$.  But $m \equiv 3 + \l \imod 4$, a contradiction.  Therefore $\l-4 < m-1 < \l+4$ and Left wins moving first to $(n+1,m-1,\l)$.  
		
		Right moving first moves to $(n-1,m,\l)$.  Since the relationship between $m$ and $\l$ remains unchanged, induction gives that $o^-((n-1,m,\l)) = \Right$.
		
		Therefore $o^-((n,m,\l)) = \Next$ if $\l-4 < m < \l+4$, $m \equiv 3 + \l \imod 4$, and $n \equiv 1 \imod 2$.
			
		\item $m \ge \l+4$, $n \equiv 0 \imod 2$:  If $o^-((n,m,\l)) = \Left$, then by Theorem \ref{theorem-outcome-class-conj}, we have $o^-(\overline{(n,m,\l)}) = \Right$.  But $\overline{(n,m,\l)} = (n,\l,m)$.  By the very first case we examined in this proof, if $m \le \l-4$, we have $o^-((n,m,\l)) = \Left$, giving $o^-((n,\l,m)) = \Right$.  Rearranging the $\l$ and $m$ gives that if $m \ge \l+4$, then $o^-((n,m,\l)) = \Right$.  
		
		\item $m \ge \l+4$, $n \equiv 1 \imod 2$:  The argument given in the preceding case is also valid for this case.  
	\end{enumerate}
	
	This completes the induction and the proof.
\end{proof}

The search for distinguishability and indistinguishability relations within $\cl{\rho,\rhob}$ now begins.

\begin{corollary}\label{cor-B-*+*=0}
	The following indistinguishability relations exist on $\cl{\rho, \rhob}$:
		\begin{enumerate}
			\item\label{item-200-000} $(2,0,0) \equiv (0,0,0) \imod{\cl{\rho,\rhob}}$,
			\item\label{item-n-m-l-n-m+1-l+1} $(n,m,\l) \equiv (n,m+1,\l+1) \imod{\cl{\rho,\rhob}}$.
		\end{enumerate}
\end{corollary}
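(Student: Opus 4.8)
The plan is to derive both relations directly from Theorem~\ref{theorem-B-oc}, since the definition of indistinguishability (Definition~\ref{def-indis}) reduces each claim to comparing outcome classes of sums. Concretely, to establish $\alpha \equiv \beta \imod{\cl{\rho,\rhob}}$ I must show $o^-(\alpha + \gamma) = o^-(\beta + \gamma)$ for every $\gamma \in \cl{\rho,\rhob}$, and every such $\gamma$ has the form $(n',m',\l')$. The crucial observation, which I would extract by inspecting Table~\ref{table-B-oc}, is that $o^-((n,m,\l))$ depends only on two data: the parity of $n \imod 2$, and the quantity $m - \l$ (both which of the three ranges $m \le \l-4$, $\l-4 < m < \l+4$, $m \ge \l+4$ it lands in, and, in the middle range, its residue $m - \l \imod 4$). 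Once this invariance is recorded, both parts are immediate.

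For the first relation I would take an arbitrary $\gamma = (n',m',\l')$ and compute
\[ (2,0,0) + (n',m',\l') = (n'+2, m', \l'). \]
Since $n' + 2 \equiv n' \imod 2$ and the second and third coordinates are unchanged, the parity and the value of $m' - \l'$ are both preserved, so Theorem~\ref{theorem-B-oc} gives $o^-((n'+2,m',\l')) = o^-((n',m',\l')) = o^-((0,0,0)+(n',m',\l'))$. As $\gamma$ was arbitrary, $(2,0,0) \equiv (0,0,0) \imod{\cl{\rho,\rhob}}$.

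For the second relation I would again fix an arbitrary $\gamma = (n',m',\l')$ and compare
\begin{align*}
(n,m,\l) + (n',m',\l') &= (n+n', m+m', \l+\l'),\\
(n,m+1,\l+1) + (n',m',\l') &= (n+n', m+m'+1, \l+\l'+1).
\end{align*}
Setting $N = n+n'$, $M = m+m'$, and $L = \l+\l'$, the two sums are $(N,M,L)$ and $(N,M+1,L+1)$. These have the same first-coordinate parity, and $(M+1)-(L+1) = M - L$, so the quantity $m - \l$ is unchanged; by the invariance they share an outcome class. Hence $(n,m,\l) \equiv (n,m+1,\l+1) \imod{\cl{\rho,\rhob}}$.

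The only point requiring genuine care is the invariance claim itself, and in particular its behaviour at the boundaries between the three ranges and between residue classes. I expect no real difficulty here: because the operations $n \mapsto n+2$ and $(m,\l) \mapsto (m+1,\l+1)$ preserve the parity of $n$ and the integer $m - \l$ \emph{exactly}, the range membership and the residue modulo $4$ cannot change, so no boundary case is ever crossed. Thus the corollary is a clean bookkeeping consequence of the outcome computation rather than a new argument.
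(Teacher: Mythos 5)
Your proposal is correct and follows essentially the same route as the paper: both arguments reduce each relation to Theorem \ref{theorem-B-oc} and check that adding $(2,0,0)$ or $(0,1,1)$ preserves the data that determine the outcome, namely the parity of the first coordinate and the value of $m-\l$ (equivalently, which of the twelve cases of the theorem the summed position falls into). The paper simply spells out the invariance range by range where you state it once as a single observation; the content is identical.
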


\begin{proof}
	Take arbitrary $(a,b,c)$ in $\cl{\rho,\rhob}$.
	
		\begin{enumerate}
			\item By Theorem \ref{theorem-B-oc}, $o^-((a+2,b,c)) = o^-((a,b,c))$.
			\item Consider 
				\[o^-((a,b,c) + (n,m,\l)) = o^-((a+n,b+m, c+\l))\]
			and
				\[o^-((a,b,c) + (n,m+1,\l+1)) = o^-((a+n, b+m+1, c+\l+1)).  \]
			Using Theorem \ref{theorem-B-oc}, we will determine the outcome classes of both of these positions.  There are twelve possibilities for relationships between $n$, $m$, and $\l$ based on the ten cases given in the statement of Theorem \ref{theorem-B-oc}.  Number these cases 1 through 12 as in the proof of Theorem \ref{theorem-B-oc}.
			
			\begin{enumerate}
				\item Cases 1 and 2: We have
					\[ \l+c \ge m+b + 4 \iff \l+c +1 \ge (m+b+1) + 4.\]
				Thus $(n+a, m+b,\l+c)$ is covered by case $i$ of Theorem \ref{theorem-B-oc} if and only if $(n+a, m+b+1, \l+c+1)$ is for $i \in \{1,2\}$.  
	
				\item Cases 3 through 10:  We have
					\[ \l+c-4 < m+b < \l+c+4 \iff (\l+c+1)-4 < m+b+1 < (\l+c+1)+4.\]
				Also, since
					\[ m+b \equiv t + (\l+c) \imod 4 \iff m+b+1 \equiv t + (\l+c+1) \imod 4,\]
				and the coefficient for $*$ is the same for both positions.   Therefore both positions are covered by case $i$ of Theorem \ref{theorem-B-oc} for $i \in \{3,4,\ldots,10\}$, and such, they both have the same outcome.
				
				\item Cases 11 and 12: We have
					\[ m+b \ge \l+c+4 \iff m+b+1 \ge (\l+c+1) + 4.\]
				Thus $(n+a, m+b, \l+c)$ is in case $i$ of Theorem \ref{theorem-B-oc} if and only if $(n+a, m+b+1, \l+c+1)$ is, where $i \in \{11,12\}$.			
			\end{enumerate}
			
			Therefore $(n,m,\l)$ and $(n,m,\l+1)$ are indistinguishable.\qedhere
		\end{enumerate}
\end{proof}

Notice that in both $\cl{\rho}$ and $\cl{\rho, \rhob}$, $2 *$ and 0 are indistinguishable.

We continue by examining further indistinguishability relations.

\begin{theorem}\label{theorem-B-ind-elements}
	Suppose $(a,b,c)$ is an arbitrary element of $\cl{\rho,\rhob}$.  Then there exists $u \in \mathbb{Z}^{\ge 0}$ such that $(a,b,c)$ is indistinguishable from one of the following positions:
		\begin{enumerate}
			\item $(0,0,u)$,
			\item $(0,u,0)$,
			\item $(1,0,u)$, or
			\item $(1,u,0)$.
		\end{enumerate}
	Moreover, these four positions are distinguishable.  That is, when examining $\cl{\rho,\rhob}$ up to indistinguishability, only positions of the above form need to be examined.
\end{theorem}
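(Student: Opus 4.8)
The plan is to split the statement into its two halves: the \emph{existence} of a canonical representative, and the \emph{distinguishability} of the four resulting families. For the existence half I would use only the two congruences of Corollary~\ref{cor-B-*+*=0} together with the fact, recorded earlier, that indistinguishability over $\cl{\rho,\rhob}$ is a congruence for disjunctive sum. Writing positions as $(a,b,c)$, adding $(a-2,b,c)$ to both sides of $(2,0,0)\equiv(0,0,0)$ peels off copies of $2*$ until the first coordinate is reduced to $\epsilon\in\{0,1\}$, giving $(a,b,c)\equiv(\epsilon,b,c)$. Then applying $(n,m,\l)\equiv(n,m+1,\l+1)$ in reverse $\min(b,c)$ times subtracts $1$ from the $\rho$- and $\rhob$-coordinates simultaneously until one of them vanishes. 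This lands $(a,b,c)$ on exactly one of $(0,0,u)$, $(0,u,0)$, $(1,0,u)$, $(1,u,0)$ with $u=|b-c|$, chosen by the parity $\epsilon$ and the sign of $b-c$, which is the existence claim.

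The engine for the distinguishability half is a single observation read off Theorem~\ref{theorem-B-oc}: the outcome of $(n,m,\l)$ depends only on the parity of $n$ and on the integer $d=m-\l$. The table's regimes are exactly $d\ge4$ (always $\Right$), $d\le-4$ (always $\Left$), and $-3\le d\le3$ (determined by $d\imod 4$ and $n\imod 2$). Moreover each $\gamma=(g_0,g_1,g_2)\in\cl{\rho,\rhob}$ acts on a summand by shifting its parity by $g_0$ and its value of $d$ by $g_1-g_2$, and as $\gamma$ ranges over the closure these shifts realise every pair in $\{0,1\}\times\mathbb{Z}$. Hence two canonical positions are indistinguishable precisely when the one-dimensional \emph{outcome profiles} they induce, assigning an outcome to each shifted value of $d$ at the relevant parity, coincide under every shift.

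With this in hand the rest is a finite comparison of profiles, since the four families carry the distinct invariant pairs $(\epsilon,d)=(0,-u),(0,u),(1,-u),(1,u)$. If two representatives share a parity but differ in $d$, I would place their images on opposite sides of a jump of the profile for that parity --- for instance, when the two values of $d$ are consecutive, aligning them at $d=0$ and $d=1$, where the outcomes $\Next$ and $\Left$ differ; the general case follows because the profile is eventually constant with distinct limits $\Left$ and $\Right$ and so admits no nonzero shift symmetry. If the representatives have different parities, the even and odd profiles already disagree at $d=0$ ($\Next$ versus $\Prev$) and are not translates of one another, so a distinguishing shift again exists, witnessed concretely by adding a suitable number of copies of $\rho$ or $\rhob$ and possibly one $*$.

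I expect the real obstacle to be the bookkeeping of this last comparison rather than any conceptual difficulty: one must verify that no nonzero shift stabilises a profile and that the even and odd profiles stay distinct under every alignment, a small but fiddly case analysis on $d\imod 4$ and on which regime each shifted position occupies. The reduction half is immediate once Corollary~\ref{cor-B-*+*=0} is invoked. The only point needing extra care is the degeneration at $u=0$, where all four forms collapse to $(0,0,0)$ and $(1,0,0)$; there the assertion is merely that these two are distinguishable, which the disagreement $\Next$ versus $\Prev$ at $d=0$ between the two parities already supplies.
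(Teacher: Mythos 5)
Your proposal is correct, and its first half (the reduction to the four canonical forms via Corollary~\ref{cor-B-*+*=0}) is exactly the paper's argument. The second half, however, takes a genuinely different and more conceptual route. The paper proves distinguishability by brute force: it enumerates the ten pairs of families and, for each, exhibits an explicit distinguishing element of $\cl{\rho,\rhob}$ (sometimes after a further sub-case split on small values of $u$ and $v$). You instead observe that by Theorem~\ref{theorem-B-oc} the outcome of $(n,m,\ell)$ is a function only of the invariant $(n \bmod 2,\, d)$ with $d=m-\ell$, that adding $\gamma=(g_0,g_1,g_2)$ shifts this invariant by $(g_0 \bmod 2,\, g_1-g_2)$, and that these shifts exhaust $\{0,1\}\times\mathbb{Z}$; distinguishability then reduces to showing that the even-parity profile $F_0(d)$ admits no nonzero translation symmetry (immediate from its distinct limits $\Left$ as $d\to-\infty$ and $\Right$ as $d\to+\infty$) and that $F_0$ and the odd-parity profile $F_1$ are not translates of one another (e.g.\ the set of $d$ with outcome $\Right$ is the interval $[4,\infty)$ for $F_1$ but the non-interval $\{-1\}\cup[3,\infty)$ for $F_0$). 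This buys a uniform argument covering all pairs and all $u,v$ at once, including the degenerate $u=0$ collapse you flag, at the cost of not producing the concrete distinguishing elements that the paper later reuses when computing the partial order of $\monoid{M}_{\cl{\rho,\rhob}}$; the "fiddly" verification you defer is precisely the two non-translation facts above, both of which are routine given the explicit values of $F_0$ and $F_1$ near $d=0$.
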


\begin{proof}
	By Corollary \ref{cor-B-*+*=0}\eqref{item-200-000}, $(a,b,c)$ is indistinguishable from either $(0,b,c)$ or $(1,b,c)$.   Let $a^{\prime} \in \{0,1\}$ such that $(a,b,c)$ and $(a^{\prime},b,c)$ are indistinguishable.  
		
	Consider the relationship between $b$ and $c$.   If $b >c$, then repeated applications of Corollary \ref{cor-B-*+*=0}\eqref{item-n-m-l-n-m+1-l+1} $(a^{\prime},b,c)$ is indistinguishable from $(a^{\prime}, b-c,0)$.  If $b\le c$, then, again by repeated applications of Corollary \ref{cor-B-*+*=0}\eqref{item-n-m-l-n-m+1-l+1}, $(a^{\prime},b,c)$ is indistinguishable from $(a^{\prime},0,c-b)$.  
	
	It remains to show that the four types of positions are distinguishable.  There are ten comparisons which must be checked.  Take $u,v \in \mathbb{Z}^{\ge 0}$.

	\begin{enumerate}
		\item\label{item-00u-00v} $(0,0,u)$ and $(0,0,v)$ distinguishable for $u \not = v$:   Suppose, without loss of generality, that $u<v$.  
		
		If $u \in \{0,1,2\}$, then the positions are distinguished by $(0,0,0)$.  
		
		Suppose $u \ge 3$.  The positions are distinguished by $(0,u,0)$ since $o^-((0,u,u)) = \Next$ while $o^-((0,u,v)) \not = \Next$. 
		
		\item\label{item-00u-0v0} $(0,0,u)$ and $(0,v,0)$ are distinguishable for $u+v > 0$:  Consider position $(0,0,v)$.  Then $o^-((0,v,v)) = \Next$ but $o^-((0,0,u+v)) = \Next$ if and only if $u+v = 0$, contradicting that $u+v > 0$.  Thus $o^-((0,0,u+v)) \not = \Next$, so the two positions are distinguished by $(0,0,v)$.
		
		\item\label{item-00u-10v} $(0,0,u)$ and $(1,0,v)$ are distinguishable:  If $o^-((0,0,u)) \not = o^-((1,0,v))$, then the two positions are distinguished by $(0,0,0)$.  When are the outcomes equal?
		
			\begin{enumerate}
				\item $u=0$, $v=1$:  The positions are $(0,0,0)$ and $(1,0,1)$, which are distinguished by $(1,0,0)$ since $o^-((1,0,0)) = \Prev$ while $o^-((0,0,1)) = \Right$.
				
				\item $u=0$, $v=2$:  The positions are $(0,0,0)$ and $(1,0,2)$, which are distinguished by $(1,0,1)$ since $o^-(1,0,1) = \Next$ while $o^-((0,0,3)) = \Left$.
				
				\item $u=0$, $v=3$:  The positions are $(0,0,0)$ and $(1,0,3)$, which are distinguished by $(1,0,2)$ since $o^-((1,0,2)) = \Next$ while $o^-((0,0,5)) = \Left$.
				
				\item $u=2$, $v = 0$:  The positions are $(0,0,2)$ and $(1,0,0)$ which are distinguished by $(0,0,1)$ since $o^-((0,0,3)) = \Left$ while $o^-((1,0,1)) = \Next$.
				
				\item $u \ge 3$, $ v \ge 4$:  Let us first suppose that $u<v$.  Consider the position $(1,v-3,0)$.  Adding this to $(1,0,v)$ gives $(0,v-3,v)$, which is an $\Left$ position.  Adding this to $(0,0,u)$ gives $(1,v-3,u)$, which is only an $\Left$ position if $v-3 \le u-4$, which implies $v \le u-1$, but $u<v$, contradiction.  Therefore $o^-((1,v-3,u)) \not = \Left$, and the two positions are distinguished.
				
				Suppose $u=v$.  Then the two positions are $(0,0,u)$ and $(1,0,u)$, which are distinguished by $(1,u,0)$ since $o^-((1,u,u)) = \Prev$ while $o^-((0,u,u)) = \Next$.
				
				Suppose $u > v$.  Consider the position $(0,u-3,0)$.  Adding this to $(0,0,u)$ gives $(0,u-3,u)$, which is an $\Left$ position.  Adding this to $(1,0,v)$ gives $(1,u-3,v)$, which is an $\Left$ position only if $u-3 \le v-4$, which implies $u \le v-1$ but $v < u$, contradiction.  Therefore $o^-((1,u-3,v)) \not = \Left$, and the two positions are distinguished.
			\end{enumerate}

		\item\label{item-00u-1v0} $(0,0,u)$ and $(1,v,0)$ are distinguishable: If $v=0$, then the two positions are $(0,0,u)$ and $(1,0,0)$, which were shown to be distinguishable in Case \ref{item-00u-10v}.  
		
		Suppose $v \in \{1,2,3\}$.  Then $o^-((1,v,0)) = \Next$ and $o^-((0,0,u)) = \Next$ only if $u = 0$.  But $(0,1,0)$ distinguishes the two since $o^-((0,1,0)) = \Left$ while $o^-((1,v+1,0)) = \Next \cup \Right$.
		
		Suppose $v \ge 4$.  Then $o^-((1,v,0)) = \Right$ and $o^-((0,0,u)) \not = \Right$ unless $u=1$.  Thus if $u \not = 1$, then $(0,0,0)$ distinguishes the two positions.  Suppose $u=1$ and the two positions are $(0,0,1)$ and $(1,v,0)$ with $v \ge 4$.  Then $(0,1,0)$ distinguishes the two positions since $o^-((0,1,1)) = \Next$ and $o^-((1,v+1,0)) = \Right$.  

		\item\label{item-0u0-0v0} $(0,u,0)$ and $(0,v,0)$ are distinguishable for $u \not = v$:  Suppose, without loss of generality, that $u < v$.  
		
		If $u \in \{0,1,2\}$, then the positions are distinguished by $(0,0,0)$.
		
		Suppose $u \ge 3$.  Consider the position $(0,0,u)$.  Then $o^-((0,u,u)) = \Next$ while $o^-((0,v,u)) \not = \Next$. Therefore the two positions are distinguishable.
 				
		\item\label{item-0u0-10v} $(0,u,0)$ and $(1,0,v)$ are distinguishable:  If $o^-((0,u,0)) \not = o^-((1,0,v))$, then the two positions are distinguished by $(0,0,0)$.  When are the two outcomes equal?
		
			\begin{enumerate}
				\item $u=0$ and $v \in \{1,2,3\}$:  This was already considered in Case \ref{item-00u-10v} and the positions shown to be distinguishable.

				\item $u=1$ and $v \ge 4$:  The two positions are $(0,1,0)$ and $(1,0,v)$ with $v \ge 4$, which are distinguished by $(0,0,1)$ since $o^-((0,1,1)) = \Next$ and $o^-((1,0,v+1)) = \Right$.
				
				\item $u=2$ and $v=0$:  The two positions are $(0,2,0)$ and $(1,0,0)$, which are distinguished by $(0,1,0)$ since $o^-((0,3,0)) = \Right$ while $o^-((1,1,0)) = \Next$.				
			\end{enumerate}
		
		\item $(0,u,0)$ and $(1,v,0)$ are distinguishable: If $v = 0$, then the two positions are $(0,u,0)$ and $(1,0,0)$ where $o^-((0,u,0)) \not = o^-((1,0,0))$ unless $u= 2$.  Then the two positions are $(0,2,0)$ and $(1,0,0)$, which were shown to be distinguishable in Case \ref{item-0u0-10v}.  
		
		If $v \in \{1,2,3\}$, then $o^-((0,u,0)) \not = o^-((1,v,0))$ unless $u = 0$.  These positions were shown to be distinguishable in Case \ref{item-00u-1v0}.
		
		If $v \ge 4$, then $o^-((0,u,0)) \not = o^-((1,v,0))$ unless $u \ge 3$.  Suppose then $u \ge 3$ and $v \ge 4$.  Firstly, take $u < v$.  Consider position $(1,0,v-3)$.  Adding this to $(1,v,0)$ gives position $(0,v,v-3)$, which is an $\Right$ position.  Adding it to $(0,u,0)$ gives $(1,u,v-3)$, which is an $\Right$ position only if $u \ge v-3+4$, but $u < v$, contradiction.  Therefore $o^-((1,u,v-3)) \not = \Right$, and so the two initial positions are distinguishable.
		
		Suppose $u=v$.  Then $(0,0,u)$ distinguishes the positions since $o^-((0,u,u)) = \Next$ while $o^-((1,u,u)) = \Prev$.
		
		Suppose $u > v$.  Consider position $(0,0,u-3)$.  Adding this to $(0,u,0)$ gives $(0,u,u-3)$, whose outcome is $\Right$.  Adding it to $(1,v,0)$ gives $(1,v,u-3)$ which is an $\Right$ position only if $v \ge u-3+4$, but $u > v$, contradiction.  Therefore $o^-((1,v,0)) \not = \Right$, and so the two initial positions are distinguishable.  
	
		\item $(1,0,u)$ and $(1,0,v)$ are distinguishable if $u \not = v$:  Repeat the argument given in Case \ref{item-00u-00v} replacing the distinguishing element $(a,b,c)$ by $(a+1 \imod 2, b, c)$.  
		
		\item $(1,0,u)$ and $(1,v,0)$ are distinguishable for $u+v > 0$:  Repeat the argument given in Case \ref{item-00u-0v0} replacing the distinguishing element $(a,b,c)$ by $(a+1 \imod 2, b, c)$.  
				
		\item\label{item-1u0-1v0} $(1,u,0)$ and $(1,v,0)$ are distinguishable for $u \not = v$:  Repeat the argument given in Case \ref{item-0u0-0v0} replacing the distinguishing element $(a,b,c)$ by $(a+1 \imod 2, b, c)$.  \qedhere
	\end{enumerate}
\end{proof}

In Corollary \ref{cor-B-*+*=0}, we saw that the relationship $(2,0,0) \equiv (0,0,0) \imod{\cl{\rho,\rhob}}$ holds, as it does over $\cl{\rho}$ (Corollary \ref{cor-cl-rho-indistinguishability}).  However, this is one of the few relations which stays constant through the addition of $\rhob$ to the closure.  For example, Corollary \ref{cor-cl-rho-indistinguishability} gives that $4  \rho \equiv * + 4  \rho \imod{\cl{\rho}}$, while in $\cl{\rho,\rhob}$, $\rhob$ distinguishes these two elements.  Moreover, and more importantly, the \mis monoid of $\cl{\rho,\rhob}$ contains an infinite number of distinguishable elements, which follows as a corollary from Theorem \ref{theorem-B-ind-elements}.

\begin{corollary}\label{cor-B-infinite}
	In $\cl{\rho,\rhob}$, there are an infinite number of distinguishable elements.
\end{corollary}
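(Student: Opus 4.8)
The plan is to exhibit an explicit infinite family of pairwise distinguishable positions in $\cl{\rho,\rhob}$; once such a family is in hand, the \mis monoid must have infinitely many elements. The natural candidate is the family $\{(0,0,u) : u \in \mathbb{Z}^{\ge 0}\}$ of positions built from $u$ copies of $\rhob$, which is plainly infinite.

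First I would appeal directly to the distinguishability computations already carried out inside Theorem \ref{theorem-B-ind-elements}. Its Case \eqref{item-00u-00v} proves exactly that $(0,0,u)$ and $(0,0,v)$ are distinguishable whenever $u \neq v$. Applying this to every pair of distinct indices shows that the family above is pairwise distinguishable, and the corollary is immediate.

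If a self-contained argument is preferred, I would instead read the needed facts straight off the outcome classification of Theorem \ref{theorem-B-oc}. Fix any two distinct indices $u, u' \ge 3$ and use $(0,u,0)$ as a test position. Adding it to $(0,0,u)$ produces $(0,u,u)$, which sits in the central band $\l - 4 < m < \l + 4$ with $m \equiv \l \imod 4$ and $n$ even, so its outcome is $\Next$. Adding the same test to $(0,0,u')$ produces $(0,u,u')$, where now $m = u$ and $\l = u'$. Since $u \neq u'$, either $|u - u'| \ge 4$, so the position lies outside the central band and has outcome $\Left$ or $\Right$, or $|u - u'| < 4$, in which case it stays inside the band but with $m \not\equiv \l \imod 4$ (a nonzero gap smaller than $4$ cannot be a multiple of $4$), giving outcome $\Left$, $\Prev$, or $\Right$. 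In every case the outcome is not $\Next$, so $(0,u,0)$ distinguishes the two positions, and ranging over all $u \ge 3$ yields infinitely many pairwise distinguishable elements.

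The key point is that essentially no obstacle remains to be overcome here: the entire weight of the result rests on Theorem \ref{theorem-B-ind-elements} and, beneath it, on the outcome table of Theorem \ref{theorem-B-oc}. The only thing that must be verified is that at least one of the four representative families classified in Theorem \ref{theorem-B-ind-elements} genuinely contains infinitely many inequivalent members, and the $(0,0,u)$ family does precisely that. I therefore expect the proof to be a short appeal to the preceding theorem rather than a new computation.
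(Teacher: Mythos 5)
Your proposal is correct and matches the paper's proof, which likewise derives the corollary directly from Theorem \ref{theorem-B-ind-elements} by citing the pairwise distinguishability of the family $(0,0,u)$. Your self-contained verification via $(0,u,0)$ as the test position is the same computation used in Case (1) of that theorem, so nothing new is needed.
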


\begin{proof}
	This follows directly from the proof of Theorem \ref{theorem-B-ind-elements}.  For example, $(0,0,a)$ and $(0,0,b)$ are distinguishable for all $a,b \in \mathbb{Z}^{\ge 0}$ with $a \not = b$.
\end{proof}

Our next step is to examine the partial order of elements of $\cl{\rho,\rhob}$.  As in $\cl{\rho}$, we first construct the monoid, as working in the monoid is easier visually than working within the \mis monoid.

With the mappings:
	\begin{align*}
		(0,0,0) &\mapsto 1;\\
		(1,0,0) &\mapsto a;\\
		(0,1,0) &\mapsto p;\\
		(0,0,1) &\mapsto q,
	\end{align*}
we obtain the following monoid:
	\begin{align*}
	\monoid{M}_{\cl{\rho,\rhob}} &= \ideal{1,a,p,q \, \left| \, a^2 = 1, p^mq^n = \begin{cases}
	p^{m-n} &\text{if } m > n;\\
	q^{n-m} &\text{if } m \le n.
	\end{cases}\right.} \\
	\Next &= \{1, ap, ap^2, ap^3,aq, aq^2,aq^3\} \\
	\Prev &= \{a, p^2, q^2\} \\
	\Left &= \{p, q^3, q^4, q^5, \ldots, aq^4, aq^5, aq^6 \ldots\} \\
	\Right &= \{q, p^3, p^4, p^5,\ldots, ap^4, ap^5, ap^6, \ldots\}
	\end{align*}
with the additive notation in $\cl{\rho, \rhob}$ becoming a multiplicative notation in $\monoid{M}_{\cl{\rho, \rhob}}$.

We are now ready to start examining the partial order.  At first, we restrict ourselves to elements that do not have an $a$ coefficient, i.e.\ elements of the form either $p^m$, or $q^n$ for $m,n \in \mathbb{Z}^{\ge 0}$.

Since there are so few, our first result is regarding elements which are incomparable.

\begin{proposition}\label{prop-cl-rho-rhob-incomparable}
	The following elements are incomparable:
		\begin{enumerate}
			\item $p$ and $q$;
			\item $p$ and $q^2$;
			\item $q^n$ and $q^t$ where $n,t \in \mathbb{Z}^{\ge 0}$ with $1 \le t-n \le 3$;
			\item $p^n$ and $p^t$ where $n,t \in \mathbb{Z}^{\ge 0}$ with $1 \le t-n \le 3$.
		\end{enumerate}
\end{proposition}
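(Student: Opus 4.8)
The plan is to work directly from Definition~\ref{def-<=}: to show $x\not\ge y$ it suffices to exhibit a single $z$ with $o^-(xz)\not\ge o^-(yz)$, so to prove $x$ and $y$ incomparable I need only witnesses establishing both $x\not\ge y$ and $y\not\ge x$. Concretely, following the pattern in the proof of Proposition~\ref{prop-A-poset}, I will either produce one $z$ with $o^-(xz)=\Next$ and $o^-(yz)=\Prev$ (these being incomparable in the outcome order of Figure~\ref{fig-outcome-poset}), or two witnesses $z_1,z_2$ giving strict outcome inequalities in opposite directions. Every witness I choose will be a power of $p$ or $q$, so the only work is to reduce $p^iq^j$ by the monoid relation $p^mq^n=p^{m-n}$ (if $m>n$) or $q^{n-m}$ (if $m\le n$) and then read its outcome from Theorem~\ref{theorem-B-oc} under $p^k\mapsto(0,k,0)$ and $q^k\mapsto(0,0,k)$. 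I record the outcomes I will need: $o^-(1)=\Next$, $o^-(p)=o^-(q^3)=\Left$, $o^-(p^2)=o^-(q^2)=\Prev$, and $o^-(q)=\Right$.

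For part (1) I would take $z=q$: then $pq=1\in\Next$ while $q\cdot q=q^2\in\Prev$, so $p$ and $q$ are incomparable at once. For part (2) a single $z$ does not obviously separate $p$ from $q^2$, so I would use two witnesses: at $z_1=1$ we have $o^-(p)=\Left>\Prev=o^-(q^2)$, while at $z_2=q$ we have $o^-(pq)=o^-(1)=\Next<\Left=o^-(q^3)$. The two strict inequalities point in opposite directions, giving incomparability.

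The heart of the argument is part (3), which I would treat uniformly over $1\le t-n\le 3$ with the single pair of witnesses $z_1=p^n$ and $z_2=p^{n+2}$. Reducing, $q^n z_1=1$ and $q^n z_2=p^2$, while $q^t z_1=q^{t-n}$ and $q^t z_2$ equals $p$, $1$, or $q$ according as $t-n=1,2,3$. Reading off outcomes: when $t-n=2$ the witness $z_1$ already gives $o^-(q^nz_1)=\Next$ against $o^-(q^tz_1)=\Prev$, so those are incomparable directly; when $t-n=1$ the witnesses give $\Next$ versus $\Right$ paired with $\Prev$ versus $\Left$, and when $t-n=3$ they give $\Next$ versus $\Left$ paired with $\Prev$ versus $\Right$, both being pairs of strict inequalities in opposite directions. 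Finally, for part (4) I would avoid all recomputation and appeal to conjugation: since $\cl{\rho,\rhob}$ is closed under conjugation and $\overline{\rho}=\rhob$, conjugating $(0,k,0)$ gives $(0,0,k)$, so $\overline{p^k}=q^k$; Proposition~\ref{prop->-<-conjugate}\eqref{prop->-<-conjugate-||} then transports the incomparability of $q^n,q^t$ from part (3) to $p^n,p^t$.

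The step I expect to demand the most care is part (3): I must keep the reduction of each $p^iq^j$ and the ensuing lookup in Theorem~\ref{theorem-B-oc} exactly straight, in particular respecting the residue of $m-\l$ modulo $4$ and the boundary between the ``$\l-4<m<\l+4$'' band and the outer $\Left$ and $\Right$ regions, since an off-by-one in any exponent would misread an outcome and collapse the opposite-inequality bookkeeping.
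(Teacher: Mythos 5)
Your proposal is correct and follows essentially the same route as the paper: in both cases incomparability is established by exhibiting distinguishing elements, reducing products via the relation $p^mq^n=p^{m-n}$ or $q^{n-m}$, and reading outcomes from the tetrapartition, with part (4) obtained from part (3) by symmetry. The only difference is cosmetic: the paper multiplies by elements in the $a$-coset ($aq$ for part (2), $ap^n$ for part (3)) so that a single witness lands on an $\Next$/$\Prev$ pair, whereas your witnesses stay among the $p^m,q^n$ and therefore need two opposite strict inequalities in cases (2) and in the $t-n\in\{1,3\}$ subcases of (3); all of your reductions and outcome lookups check out.
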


\begin{proof}\text{}
	\begin{enumerate}
		\item Consider $q$ and $p$ with element $p$.  Then $o^-(qp) = o^-(1) = \Next$ while $o^-(pp) = o^-(p^2) = \Prev$.  Thus $q$ and $p$ are incomparable.
		\item Consider $q^2$ and $p$ with element $aq$.  Then $o^-(aqq^2) = o^-(aq^3) = \Next$ while $o^-(aqp) = o^-(a) = \Prev$.  Thus $q^2$ and $p$ are incomparable.
		\item Consider $q^n$ and $q^t$ and element $ap^n$.  Then $ap^nq^n = a$ with $o^-(a) = \Prev$ while $ap^nq^t = a q^{t-n}$ with $o^-(aq^{t-n}) = \Next$ since $1 \le t-n \le 3$.  Therefore $q^n$ and $q^t$ are incomparable.
		\item This case is similar to the previous one.\qedhere
	\end{enumerate}
\end{proof}

These are the only incomparability relations for elements with no $a$ coefficient.  All other pairs of elements of the form $(p^m, q^n)$ for $m,n \in \mathbb{Z}^{\ge 0}$ are comparable, as the following lemmas show.  

\begin{proposition}\label{prop-qn<qt}
	Take $n,t \in \mathbb{Z}^{\ge 0}$ where $t-n \ge 4$.  Then $q^n < q^t$.  
\end{proposition}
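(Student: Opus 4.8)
The plan is to work directly from the definition of the partial order (Definition \ref{def-<=}): to establish $q^n < q^t$ I would first prove the non-strict inequality $q^n \le q^t$, that is, that $o^-(q^n z) \le o^-(q^t z)$ for every $z \in \monoid{M}_{\cl{\rho,\rhob}}$, and then upgrade it to a strict inequality. Strictness is immediate: since $t - n \ge 4 > 0$ the elements $q^n$ and $q^t$ are distinguishable (this is exactly the content of case \eqref{item-00u-00v} in the proof of Theorem \ref{theorem-B-ind-elements}, see also Corollary \ref{cor-B-infinite}), so they lie in distinct classes, and because $\le$ is antisymmetric on $\monoid{M}_{\cl{\rho,\rhob}}$, the relations $q^n \le q^t$ and $q^n \ne q^t$ together yield $q^n < q^t$.

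For the non-strict inequality I would pass to coordinate notation, writing $(a,b,c)$ for $a * + b \rho + c \rhob$. Then $q^n = (0,0,n)$, and by Theorem \ref{theorem-B-ind-elements} every $z$ is indistinguishable from a position of coordinate form $(a',b,c)$. Since $q^n (a',b,c) = (a',b,c+n)$ and $q^t (a',b,c) = (a',b,c+t)$, the inequality $o^-(q^n z) \le o^-(q^t z)$ becomes $o^-((a',b,c+n)) \le o^-((a',b,c+t))$. Setting $L_1 = c+n$ and $L_2 = c+t$, I have $L_2 - L_1 = t - n \ge 4$, so it suffices to prove the following monotonicity fact about Table \ref{table-B-oc}: for every parity $N$, every $M \ge 0$, and all $L_1 \le L_2$ with $L_2 \ge L_1 + 4$, one has $o^-((N,M,L_1)) \le o^-((N,M,L_2))$.

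This last statement I would verify by a finite case analysis read off Theorem \ref{theorem-B-oc}, organized by the value of $o^-((N,M,L_1))$ in the outcome lattice of Figure \ref{fig-outcome-poset}, exploiting that $\Right$ is the minimum and $\Left$ the maximum. If $o^-((N,M,L_1)) = \Right$ there is nothing to prove. If it equals $\Left$, I would check that $\Left$ is attained only when $L_1 \ge M + 4$ (either parity) or, for $N \equiv 0$, when $L_1 \in \{M-1, M+3\}$; in each instance $L_2 \ge L_1 + 4$ again lands in a region forcing $o^-((N,M,L_2)) = \Left$. The outcomes $\Next$ and $\Prev$ occur only inside the middle band $M - 4 < L_1 < M + 4$, and here the key observation is that increasing $L$ by a multiple of $4$ preserves the residue $L \bmod 4$ (hence the middle-band outcome), while any larger increase pushes $L_2$ to or beyond the threshold $M + 4$ where the outcome is $\Left$; either way $o^-((N,M,L_2)) \ge o^-((N,M,L_1))$.

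The main obstacle is purely the bookkeeping in the middle band, where the outcome is not monotone step-by-step in $L$ and genuinely dips to $\Right$ and $\Prev$ at certain residues. I expect the crux to be confirming that no admissible pair produces a strict decrease, for example that a $\Prev$ at $L_1 = M - 2$ (for $N \equiv 0$) is matched by the $\Prev$ at $L_2 = M + 2$ rather than dropping to $\Right$. This cannot happen, because $\Right$ is attained only for small $L$ (when $M \ge L + 4$) and we are increasing $L$; the residue-preservation under adding $4$ then secures the remaining $\Next$ and $\Prev$ equalities. I would record these comparisons compactly as a short case list rather than in prose.
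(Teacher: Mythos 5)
Your proposal is correct, but it is organized quite differently from the thesis's own proof, and the comparison is worth making. The thesis proves $q^n \le q^t$ by splitting into the four cases $t-n = 4, 5, 6$ and $t-n \ge 7$ and, for each, tabulating $o^-(q^n e)$ against $o^-(q^t e)$ over the six syntactic types of monoid elements $1, a, p^m, q^m, ap^m, aq^m$ (Tables \ref{table-qn<qn+4}--\ref{table-qn<qn+7}), using the shortcut that nothing need be checked when $q^t e \in \Left$ or $q^n e \in \Right$; strictness then comes from distinguishability, exactly as you argue via antisymmetry. You instead absorb the multiplier $z$ into the coordinates and reduce the whole statement to a single monotonicity lemma about Theorem \ref{theorem-B-oc}: for fixed parity $N$ and fixed $M$, $o^-((N,M,L_1)) \le o^-((N,M,L_2))$ whenever $L_2 \ge L_1 + 4$. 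That lemma is true and your band analysis establishes it; what your route buys is a uniform treatment of all $t-n\ge 4$ at once and a statement that isolates exactly why the threshold is $4$ (it is the width of the middle band together with the period-$4$ residue structure), at the cost of having to confront the $\Next$/$\Prev$ incomparability head-on (which works out because, within the band, $\Prev$ and $\Next$ occupy residue classes that a shift of $\ge 4$ cannot exchange). One sentence of yours should be repaired before this is written up: the claim that any increase other than a multiple of $4$ ``pushes $L_2$ to or beyond the threshold $M+4$'' is false as stated --- for instance $L_1 = M-3$, $L_2 = M+2$ is an increase of $5$ that stays inside the band --- so the short case list you defer to must explicitly include the increases of $5$ and $6$ with both endpoints in the band (for $N\equiv 0$ these give $\Right\to\Prev$, $\Prev\to\Left$ and $\Right\to\Left$, and for $N\equiv 1$ they give $\Next\to\Next$, so the lemma survives). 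This is a presentational slip rather than a gap, since you correctly flag the middle-band bookkeeping as the crux.
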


\begin{proof}
	Take $m \in \mathbb{Z}$.  Then, in our monoid, there are six types of elements: $1$, $a$, $p^m$, $q^m$, $ap^m$, and $aq^m$.  It suffices to show that for any one element $e$ of these forms, $o^-(q^ne) \le o^-(q^{t}e)$.  That the inequality on the elements is strict comes from the fact that $q^n$ and $q^{t}$ are distinguishable by Theorem \ref{theorem-B-ind-elements}.  

	We construct Tables \ref{table-qn<qn+4}, \ref{table-qn<qn+5}, \ref{table-qn<qn+6}, and \ref{table-qn<qn+7} for the cases $(q^n, q^{n+4})$, $(q^n, q^{n+5})$, $(q^n, q^{n+6})$, and $(q^n, q^t)$ where $t-n \ge 7$ respectively, where an entry is $\star$ if it does not need to be determined as the other entry is either $\Right$ or $\Left$ as appropriate.

	\begin{table}[p]
	\begin{center}
	    \begin{tabular}{p{2.7cm}llll}
	    \multicolumn{1}{c}{$\boldsymbol{e}$} & $\boldsymbol{q^ne}$ & $\boldsymbol{o^-(q^ne)}$ & $\boldsymbol{q^{n+4}e}$ & $\boldsymbol{o^-(q^{n+4}e)}$ 
	    \\
	    \rowcolor[gray]{.8}1 & $\star$ & $\star$ & $q^{n+4}$ & $\Left$ \\
	    $a$ & $\star$ & $\star$ & $aq^{n+4}$ & $\Left$ \\
	    \rowcolor[gray]{.8}	$p^m$ \,\,\,\,\,\,\,\,\,\,\,\,\,\,\,\,\,\,\,\,\,
	    	$m-n \ge 3$
	    & $p^{m-n}$ & $\Right$ & $\star$ & $\star$\\	    
	    	$p^m$ \,\,\,\,\,\,\,\,\,\,\,\,\,\,\,\,\,\,\,\,\,
	    	$m-n=2$&
	    $p^2$ & $\Prev$ & $q^2$ & $\Prev$\\
	    \rowcolor[gray]{.8}	$p^m$ \,\,\,\,\,\,\,\,\,\,\,\,\,\,\,\,\,\,\,\,\,
	    	$m-n \le 1$
	    & $\star$ & $\star$ & $q^{n-m+4}$ & $\Left$\\
	    $q^m$ & $\star$ & $\star$ & $q^{n+m+4}$ & $\Left$\\
	    \rowcolor[gray]{.8}	$ap^m$ \,\,\,\,\,\,\,\,\,\,\,\,\,\,\,\,\,\,\,\,\,
	    	$m-n \ge 4$
	    & $ap^{m-n}$ & $\Right$ & $\star$ & $\star$\\	    
	    	$ap^m$ \,\,\,\,\,\,\,\,\,\,\,\,\,\,\,\,\,\,\,
		$1 \le m-n \le 3$
	    & $ap^{m-n}$ & $\Next$ & $aq^{m-n}$ & $\Next$\\
	    \rowcolor[gray]{.8}	$ap^m$ \,\,\,\,\,\,\,\,\,\,\,\,\,\,\,\,\,\,\,\,\,
		$m-n \le 0$
	    & $\star$ & $\star$ & $aq^{n-m+4}$ & $\Left$\\
	    $aq^m$&$\star$&$\star$&$aq^{n+m+4}$&$\Left$\\
	    \end{tabular}
	\end{center}
	\caption{Showing $q^n < q^{n+4}$ in $\monoid{M}_{\cl{\rho,\rhob}}$.}
	\label{table-qn<qn+4}
	\end{table}

	\begin{table}[p]
	\begin{center}
	    \begin{tabular}{p{2.7cm}llll}
	    \multicolumn{1}{c}{$\boldsymbol{e}$} & $\boldsymbol{q^ne}$ & $\boldsymbol{o^-(q^ne)}$ & $\boldsymbol{q^{n+5}e}$ & $\boldsymbol{o^-(q^{n+5}e)}$ 
	    \\ 
	    \rowcolor[gray]{.8}1 & $\star$ & $\star$ & $q^{n+5}$ & $\Left$ \\
	    $a$ & $\star$ & $\star$ & $aq^{n+5}$ & $\Left$ \\
	    \rowcolor[gray]{.8}
		$p^m$ \,\,\,\,\,\,\,\,\,\,\,\,\,\,\,\,\,\,\,\,\,
		$m-n \ge 3$
	    & $p^{m-n}$ & $\Right$ & $\star$ & $\star$\\    
		$p^m$ \,\,\,\,\,\,\,\,\,\,\,\,\,\,\,\,\,\,\,\,\,
		$m-n \le 2$
	    & $\star$ & $\star$ & $q^{n-m+5}$ & $\Left$\\
	    \rowcolor[gray]{.8}$q^m$ & $\star$ & $\star$ & $q^{n+m+5}$ & $\Left$\\
	    	$ap^m$ \,\,\,\,\,\,\,\,\,\,\,\,\,\,\,\,\,\,\,\,\,
		$m-n \ge 4$
	    & $ap^{m-n}$ & $\Right$ & $\star$ & $\star$\\	    
	    \rowcolor[gray]{.8}
	    	$ap^m$ \,\,\,\,\,\,\,\,\,\,\,\,\,\,\,\,\,\,\,
		$2 \le m-n \le 3$
	    & $ap^{m-n}$ & $\Next$ & $aq^{n-m+5}$ & $\Next$\\
		$ap^m$ \,\,\,\,\,\,\,\,\,\,\,\,\,\,\,\,\,\,\,
		$m-n \le 1$
	    & $\star$ & $\star$ & $aq^{n-m+5}$ & $\Left$\\
	    \rowcolor[gray]{.8} $aq^m$&$\star$&$\star$&$aq^{n+m+5}$&$\Left$\\
	    \end{tabular}
	\end{center}
	\caption{Showing $q^n < q^{n+5}$ in $\monoid{M}_{\cl{\rho,\rhob}}$.}
	\label{table-qn<qn+5}
	\end{table}	

	\begin{table}[p]
	\begin{center}
	    \begin{tabular}{p{2.7cm}llll}
	    \multicolumn{1}{c}{$\boldsymbol{e}$} & $\boldsymbol{q^ne}$ & $\boldsymbol{o^-(q^ne)}$ & $\boldsymbol{q^{n+6}e}$ & $\boldsymbol{o^-(q^{n+6}e)}$ 
	    \\ 
	    \rowcolor[gray]{.8}1 & $\star$ & $\star$ & $q^{n+6}$ & $\Left$ \\
	    $a$ & $\star$ & $\star$ & $aq^{n+6}$ & $\Left$ \\
	    \rowcolor[gray]{.8}
		$p^m$  \,\,\,\,\,\,\,\,\,\,\,\,\,\,\,\,\,\,\,\,\,
		$m-n \ge 3$
	    & $p^{m-n}$ & $\Right$ & $\star$ & $\star$\\	    
		$p^m$  \,\,\,\,\,\,\,\,\,\,\,\,\,\,\,\,\,\,\,\,\,
		$m-n \le 2$
	    & $\star$ & $\star$ & $q^{n-m+6}$ & $\Left$\\
	    \rowcolor[gray]{.8}$q^m$ & $\star$ & $\star$ & $q^{n+m+6}$ & $\Left$\\
		$ap^m$  \,\,\,\,\,\,\,\,\,\,\,\,\,\,\,\,\,\,\,\,\,
		$m-n \ge 4$
	    & $ap^{m-n}$ & $\Right$ & $\star$ & $\star$\\	    
	    \rowcolor[gray]{.8}
		$ap^m$  \,\,\,\,\,\,\,\,\,\,\,\,\,\,\,\,\,\,\,\,\,
		$m-n=3$
	    & $ap^3$ & $\Next$ & $aq^3$ & $\Next$\\	
		$ap^m$  \,\,\,\,\,\,\,\,\,\,\,\,\,\,\,\,\,\,\,\,\,
		$m-n \le 2$
	    & $\star$ & $\star$ & $aq^{n-m+6}$ & $\Left$\\
	    \rowcolor[gray]{.8}$aq^m$&$\star$&$\star$&$aq^{n+m+6}$&$\Left$\\
	    \end{tabular}
	\end{center}
	\caption{Showing $q^n < q^{n+6}$ in $\monoid{M}_{\cl{\rho,\rhob}}$.}
	\label{table-qn<qn+6}
	\end{table}	

	\begin{table}[p]
	\begin{center}
	    \begin{tabular}{p{2.7cm}llp{3cm}l}
	    \multicolumn{1}{c}{$\boldsymbol{e}$} & $\boldsymbol{q^ne}$ & $\boldsymbol{o^-(q^ne)}$ & $\boldsymbol{q^te}$ & $\boldsymbol{o^-(q^te)}$ 
	    \\ 
	    \rowcolor[gray]{.8}1 & $\star$ & $\star$ & $q^t$ & $\Left$ \\
	    $a$ & $\star$ & $\star$ & $aq^t$ & $\Left$ \\
	    \rowcolor[gray]{.8}
		$p^m$ \,\,\,\,\,\,\,\,\,\,\,\,\,\,\,\,\,\,\,\,\,
		$m-n \ge 3$
	    & $p^{m-n}$ & $\Right$ & $\star$ & $\star$\\
		$p^m$ \,\,\,\,\,\,\,\,\,\,\,\,\,\,\,\,\,\,\,\,\,
		$m-n \le 2$
	    & $\star$ & $\star$ & 
	    $q^u$ \,\,\,\,\,\,\,\,\,\,\,\,\,\,\,\,\,\,\,\,\,\,\,\,\,
	    such that $u \ge 5$
	    & $\Left$\\
	    \rowcolor[gray]{.8}
	    $q^m$ & $\star$ & $\star$ & $q^{m+t}$ & $\Left$\\
		$ap^m$ \,\,\,\,\,\,\,\,\,\,\,\,\,\,\,\,\,\,\,\,\,
		$m-n \ge 4$
	    & $ap^{m-n}$ & $\Right$ & $\star$ & $\star$\\
	    \rowcolor[gray]{.8}
		$ap^m$ \,\,\,\,\,\,\,\,\,\,\,\,\,\,\,\,\,\,\,\,\, 
		$m-n \le 3$
	    & $\star$ & $\star$ & 
	    $aq^u$ \,\,\,\,\,\,\,\,\,\,\,\,\,\,\,\,\,\,\,\,\,
	    such that $u \ge 4$
	    & $\Left$\\
	    $aq^m$&$\star$&$\star$&$aq^{m+t}$&$\Left$\\
	    \end{tabular}
	\end{center}
	\caption{Showing $q^n < q^t$ where $t-n\ge 7$ in $\monoid{M}_{\cl{\rho,\rhob}}$.}
	\label{table-qn<qn+7}
	\end{table}	
	
	These tables show that $q^n \le q^t$ if $t-n \ge 4$.  The inequality is strict as $q^n$, $q^t$ are distinguishable.  
\end{proof}

This shows us the results for elements $q^t$.   Elements $p^t$ have similar results.  

\begin{proposition}\label{prop-pn>pt-t-n=4}
	Take $n,t \in \mathbb{Z}^{\ge 0}$ where $t-n \ge 4$.  Then $p^n > p^t$.
\end{proposition}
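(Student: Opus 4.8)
The plan is to avoid repeating the four-table computation behind Proposition \ref{prop-qn<qt} and instead deduce the statement directly from it, exploiting the conjugation symmetry that interchanges $\rho$ and $\rhob$. The key observation is that $\cl{\rho,\rhob}$ is closed under conjugation: since $\rhob = \overline{\rho}$ and conjugation distributes over disjunctive sum, the conjugate of an arbitrary position $n*+m\rho+\l\rhob$ is $n*+\l\rho+m\rhob$, which again lies in $\cl{\rho,\rhob}$. Specializing to $n=\l=0$ gives $\overline{m\rho}=m\rhob$, which in monoid notation reads $\overline{p^m}=q^m$. This identification is exactly what lets me transport an inequality about the $q$-powers into one about the $p$-powers.

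First I would invoke Proposition \ref{prop-qn<qt}: because $t-n\ge 4$, we have $q^n<q^t$, and in particular $q^n\le q^t$. Rewriting this in the additive indistinguishability-quotient notation, it says $n\rhob\le t\rhob$ in $\cl{\rho,\rhob}$. Next I would apply Proposition \ref{prop->-<-conjugate}\eqref{prop->-<-conjugate-<=} with $\alpha=n\rhob$ and $\beta=t\rhob$; since $\cl{\rho,\rhob}$ is conjugation-closed and $\alpha\le\beta$, the proposition yields $\overline{\alpha}\ge\overline{\beta}$, that is $n\rho\ge t\rho$, or equivalently $p^n\ge p^t$ back in the multiplicative monoid.

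To upgrade this to the strict inequality $p^n>p^t$, I would note that $p^n$ and $p^t$ are distinguishable: the positions $(0,n,0)$ and $(0,t,0)$ are distinguishable whenever $n\ne t$ by case \eqref{item-0u0-0v0} of Theorem \ref{theorem-B-ind-elements}. Hence $p^n\ne p^t$ as elements of $\monoid{M}_{\cl{\rho,\rhob}}$, and since $\ge$ is the partial order in which indistinguishable elements are identified, $p^n\ge p^t$ together with $p^n\ne p^t$ forces $p^n>p^t$.

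The main obstacle here is bookkeeping rather than mathematics: one must be careful that Proposition \ref{prop->-<-conjugate} is stated in the additive notation of the indistinguishability quotient while the desired conclusion is phrased multiplicatively, so the dictionary $\overline{p^m}=q^m$ and the conjugation-closedness of $\cl{\rho,\rhob}$ should both be recorded explicitly before the order-reversal is applied. The only genuine alternative — reconstructing the analogues of Tables \ref{table-qn<qn+4}--\ref{table-qn<qn+7} with the roles of $\Left$ and $\Right$ interchanged — is entirely routine but lengthy, and the conjugation argument bypasses it completely.
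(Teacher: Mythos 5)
Your proposal is correct and follows essentially the same route as the paper's own proof: both deduce $p^n > p^t$ from Proposition \ref{prop-qn<qt} by applying the order-reversal under conjugation from Proposition \ref{prop->-<-conjugate}\eqref{prop->-<-conjugate-<=}, using that $\rho$ and $\rhob$ are conjugates. Your additional remarks on the conjugation-closedness of $\cl{\rho,\rhob}$ and on strictness via distinguishability (Theorem \ref{theorem-B-ind-elements}) only make explicit what the paper leaves implicit.
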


\begin{proof}
	Proposition \ref{prop-qn<qt} gives that $q^n < q^t$ if $t-n \ge 4$.  That is, in the \mis monoid, $n \rhob < t \rhob$ for $t-n \ge 4$.  Since $\rho$ and $\rhob$ are conjugates, Proposition \ref{prop->-<-conjugate}\eqref{prop->-<-conjugate-<=} means that $n \rho > t \rho$ for $t-n \ge 4$.  Translating this back into monoid notation, this gives $p^n > p^t$ for $t-n \ge 4$.  
\end{proof}

There are still a few more comparability relations which must be noted.

\begin{proposition}\label{prop-monod-<-p-q}
	The following are comparability relations on $\monoid{M}_{\cl{\rho,\rhob}}$:
		\begin{align*}
		p^6 &<q & p^5 &<q   & p^4 &<q & p^3 &< q & p^2 &< q^2 & p &< q^3\\
		    &   & p^5 &<q^2 & p^4 &<q^2 & p^3 &< q^2 & p^2 &< q^3 & p &< q^4\\
		    &   &     &     &p^4 &<q^3 & p^3 &< q^3 & p^2 &< q^4 & p &< q^5\\
	            &   &     &     &    &     & p^3 &< q^4 & p^2 &< q^5 & p &< q^6.
		\end{align*}
\end{proposition}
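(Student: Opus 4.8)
The plan is to descend from the monoid to the indistinguishability quotient, where $\le$ is transparent, and then to exploit how little data the outcome function of Theorem~\ref{theorem-B-oc} actually depends on. Since $p$ and $q$ are the images of $\rho$ and $\rhob$, establishing $p^m \le q^n$ is the same as establishing $m\rho \le n\rhob$ in the quotient, i.e.\ that $o^-((a, b+m, c)) \le o^-((a, b, c+n))$ for every $(a,b,c) \in \cl{\rho,\rhob}$, where $(a,b,c) = a* + b\rho + c\rhob$. Strictness will then be automatic: $m\rho$ and $n\rhob$ are distinguishable by Theorem~\ref{theorem-B-ind-elements}, so once $\le$ is proved the relation is in fact $<$.

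The decisive observation is that, by Theorem~\ref{theorem-B-oc}, the outcome of a position $s* + r\rho + t\rhob$ depends only on the parity of $s$ and on the difference $\delta = r - t$: the band conditions $r \le t-4$, $|r-t| \le 3$, $r \ge t+4$ and the residue of $r-t \imod 4$ are all functions of $\delta$ alone. Writing $f(s,\delta)$ for this value, the left-hand position has $\delta_L = (b+m) - c$ while the right-hand position has $\delta_R = b - (c+n)$, so $\delta_L - \delta_R = m+n$. As $(b,c)$ runs over all non-negative pairs, $\delta_R$ runs over all of $\mathbb{Z}$, and the entire family of inequalities collapses to one condition in a single parameter $K = m+n$: for each parity and every $\delta \in \mathbb{Z}$, $f(s, \delta+K) \le f(s,\delta)$. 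In particular each relation $p^m < q^n$ depends only on $m+n$, which, together with Proposition~\ref{prop->-<-conjugate}, also accounts for the symmetry of the tabulated list under $m \leftrightarrow n$.

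It remains to verify this one-parameter condition for the relevant $K$. Every listed pair satisfies $4 \le m+n \le 7$, so I would prove that $f(s,\delta+K) \le f(s,\delta)$ holds for all $\delta$ whenever $K \ge 4$; the unlisted relations with $m+n \ge 8$ then come for free, or from Propositions~\ref{prop-qn<qt} and~\ref{prop-pn>pt-t-n=4} by transitivity. The check is finite: when $\delta \le -4$ the right-hand outcome is $\Left$, the top of the outcome order, and when $\delta \ge 4-K$ the left-hand outcome is $\Right$, the bottom, so in either range the inequality is automatic; only the short window $-3 \le \delta \le 3-K$ (empty once $K \ge 7$) must be inspected by hand. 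The main obstacle, and the one spot where care is genuinely required, is the oscillating mod-$4$ behaviour of $f$ on the middle band: one must confirm that in this window the left position never lands in $\Next$ while the right sits in $\Prev$ (these being incomparable), and that a $\Right$ on the right always forces a $\Right$ on the left. Checking that $K \ge 4$ is exactly the threshold eliminating every such clash is the heart of the argument; everything else is routine bookkeeping, and in particular the long element-by-element tables used for Proposition~\ref{prop-qn<qt} are avoided.
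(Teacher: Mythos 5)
Your proof is correct, but it takes a genuinely different route from the paper. The paper disposes of this proposition in one line by deferring to the method of Proposition~\ref{prop-qn<qt}: for each claimed inequality, build a table over the six element types $1, a, p^m, q^m, ap^m, aq^m$, compute each product and its outcome, and compare. Your argument instead isolates a structural fact the paper never states explicitly: by Theorem~\ref{theorem-B-oc} the outcome of $s\,{*} + r\rho + t\rhob$ is a function $f$ of the parity of $s$ and the single integer $\delta = r-t$ (constant $\Left$ for $\delta \le -4$, constant $\Right$ for $\delta \ge 4$, and $4$-periodic in between), so the whole family of inequalities $p^m \le q^n$ collapses to the one-parameter condition $f(s,\delta+K) \le f(s,\delta)$ for all $\delta$ with $K = m+n$, with strictness supplied by Theorem~\ref{theorem-B-ind-elements}. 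The boundary bands are automatic and the residual window $-3 \le \delta \le 3-K$ is tiny (and your arithmetic there is right: at $K=4$ the residues match so the outcomes agree, at $K=5,6$ the left outcome degrades to $\Right$ or stays $\Next$, and at $K=7$ the window is empty, whereas at $K=3$ the pair $(\Next,\Prev)$ appears at $\delta=0$, $s$ odd, which is exactly why $p$ and $q^2$ are incomparable). What your approach buys is economy and an explanation of \emph{why} the list is governed by the threshold $m+n \ge 4$, plus the $m \leftrightarrow n$ symmetry via Proposition~\ref{prop->-<-conjugate}; what the paper's approach buys is that it needs no observation beyond reading Theorem~\ref{theorem-B-oc} entry by entry, at the cost of many tables. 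One small caution: your reduction shows comparability of $p^m$ and $q^n$ depends only on $m+n$, which (correctly) implies $p^2$ and $q$ are also incomparable even though the paper's Proposition~\ref{prop-cl-rho-rhob-incomparable} omits that pair from its list — so your method is, if anything, slightly more thorough than the source.
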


\begin{proof}
	This proof is similar to that for Proposition \ref{prop-qn<qt}, where constructing a table and comparing the outcomes with the product of the six types of elements of $\monoid{M}_{\cl{\rho,\rhob}}$ will yield the result.
\end{proof}

We place all the information about incomparability and comparability into Figure \ref{fig-B-poset-1}.   

\begin{figure}[p]
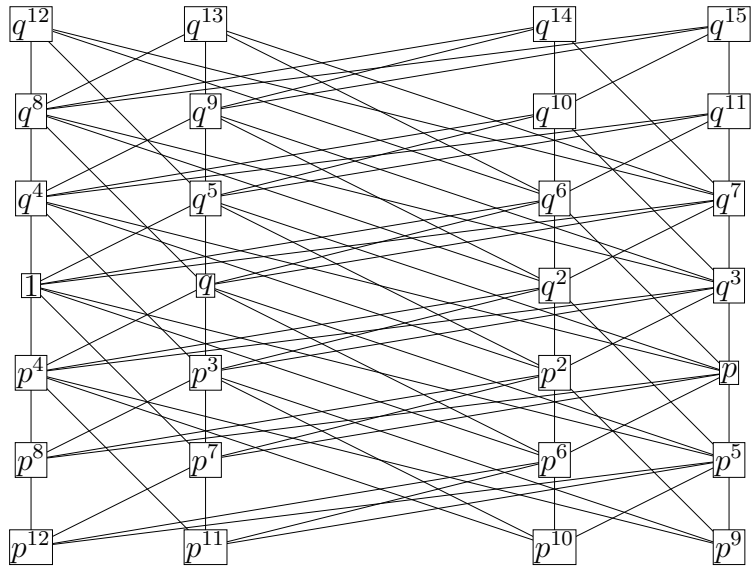

\begin{center}
		\unitlength 11pt
		\begin{center}
		\begin{graph}(27.5,21.5)(0,-1.5)
		\graphnodesize{0}
		\fillednodestrue

		\textnode{p12}(0,0){$p^{12}$}
		\textnode{p8}(0,3){$p^8$}
		\textnode{p4}(0,6){$p^4$}
		\textnode{1}(0,9){$1$}
		\textnode{q4}(0,12){$q^4$}
		\textnode{q8}(0,15){$q^8$}
		\textnode{q12}(0,18){$q^{12}$}

		\textnode{p11}(6,0){$p^{11}$}
		\textnode{p7}(6,3){$p^7$}
		\textnode{p3}(6,6){$p^3$}
		\textnode{q}(6,9){$q$}
		\textnode{q5}(6,12){$q^5$}
		\textnode{q9}(6,15){$q^9$}
		\textnode{q13}(6,18){$q^{13}$}
		
		\textnode{p10}(18,0){$p^{10}$}
		\textnode{p6}(18,3){$p^6$}
		\textnode{p2}(18,6){$p^2$}
		\textnode{q2}(18,9){$q^2$}
		\textnode{q6}(18,12){$q^6$}
		\textnode{q10}(18,15){$q^{10}$}
		\textnode{q14}(18,18){$q^{14}$}

		\textnode{p9}(24,0){$p^9$}
		\textnode{p5}(24,3){$p^5$}
		\textnode{p}(24,6){$p$}
		\roundnode{d2}(27.5,18)
		\textnode{q3}(24,9){$q^3$}
		\textnode{q7}(24,12){$q^7$}
		\textnode{q11}(24,15){$q^{11}$}
		\textnode{q15}(24,18){$q^{15}$}

		\edge{1}{q4}
		\edge{1}{q5}
		\edge{1}{q6}
		\edge{1}{q7}
		
		\edge{q}{p6}
		\edge{q}{q5}
		\edge{q}{q6}
		\edge{q}{q7}
		\edge{q}{q8}
		\edge{q}{p5}
		
		\edge{q2}{q6}
		\edge{q2}{q7}
		\edge{q2}{q8}
		\edge{q2}{q9}
		\edge{q2}{p5}

		\edge{q3}{q7}
		\edge{q3}{q8}
		\edge{q3}{q9}
		\edge{q3}{q10}

		\edge{q4}{q8}
		\edge{q4}{q9}
		\edge{q4}{q10}
		\edge{q4}{q11}
		
		\edge{q5}{q9}
		\edge{q5}{q10}
		\edge{q5}{q11}
		\edge{q5}{q12}

		\edge{q6}{q10}
		\edge{q6}{q11}
		\edge{q6}{q12}
		\edge{q6}{q13}		

		\edge{q7}{q11}
		\edge{q7}{q12}
		\edge{q7}{q13}
		\edge{q7}{q14}		

		\edge{q8}{q12}
		\edge{q8}{q13}
		\edge{q8}{q14}
		\edge{q8}{q15}		

		\edge{q9}{q13}
		\edge{q9}{q14}
		\edge{q9}{q15}

		\edge{q10}{q14}
		\edge{q10}{q15}

		\edge{q11}{q15}

		\edge{p}{p5}
		\edge{p}{p6}
		\edge{p}{p7}
		\edge{p}{p8}

		\edge{p2}{p6}
		\edge{p2}{p7}
		\edge{p2}{p8}
		\edge{p2}{p9}
		
		\edge{p3}{p7}
		\edge{p3}{p8}
		\edge{p3}{p9}
		\edge{p3}{p10}		

		\edge{p4}{p8}
		\edge{p4}{p9}
		\edge{p4}{p10}
		\edge{p4}{p11}

		\edge{p5}{p9}
		\edge{p5}{p10}
		\edge{p5}{p11}
		\edge{p5}{p12}
		\edge{p5}{1}

		\edge{p6}{p10}
		\edge{p6}{p11}
		\edge{p6}{p12}
		\edge{p6}{1}

		\edge{p7}{p11}
		\edge{p7}{p12}
		\edge{p7}{1}

		\edge{p8}{p12}

		\edge{p4}{1}
		\edge{p4}{q}
		\edge{p4}{q2}
		\edge{p4}{q3}
		
		\edge{p3}{q}
		\edge{p3}{q2}
		\edge{p3}{q3}
		\edge{p3}{q4}
		
		\edge{p2}{q2}
		\edge{p2}{q3}
		\edge{p2}{q4}
		\edge{p2}{q5}
		
		\edge{p}{q3}
		\edge{p}{q4}
		\edge{p}{q5}
		\edge{p}{q6}
	
		\end{graph}
		\end{center}

\end{center}
\caption{A snippet of the partially ordered set of elements of the form $p^m$ and $q^n$ in $\monoid{M}_{\cl{\rho,\rhob}}$.}
\label{fig-B-poset-1}
\end{figure}

\begin{figure}[p]
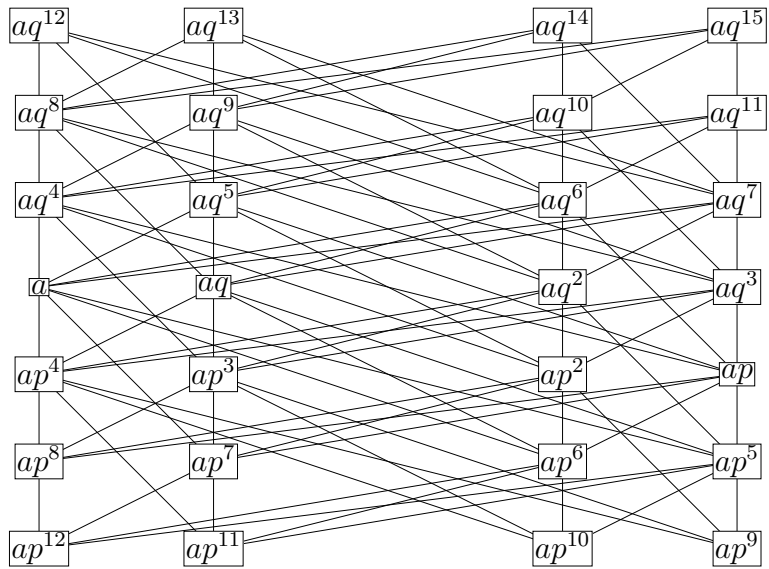

\begin{center}
		\unitlength 11pt
		\begin{center}
		\begin{graph}(27.5,21.5)(0,-1.5)
		\graphnodesize{0}
		\fillednodestrue

		\textnode{p12}(0,0){$ap^{12}$}
		\textnode{p8}(0,3){$ap^8$}
		\textnode{p4}(0,6){$ap^4$}
		\textnode{1}(0,9){$a$}
		\textnode{q4}(0,12){$aq^4$}
		\textnode{q8}(0,15){$aq^8$}
		\textnode{q12}(0,18){$aq^{12}$}

		\textnode{p11}(6,0){$ap^{11}$}
		\textnode{p7}(6,3){$ap^7$}
		\textnode{p3}(6,6){$ap^3$}
		\textnode{q}(6,9){$aq$}
		\textnode{q5}(6,12){$aq^5$}
		\textnode{q9}(6,15){$aq^9$}
		\textnode{q13}(6,18){$aq^{13}$}
		
		\textnode{p10}(18,0){$ap^{10}$}
		\textnode{p6}(18,3){$ap^6$}
		\textnode{p2}(18,6){$ap^2$}
		\textnode{q2}(18,9){$aq^2$}
		\textnode{q6}(18,12){$aq^6$}
		\textnode{q10}(18,15){$aq^{10}$}
		\textnode{q14}(18,18){$aq^{14}$}

		\textnode{p9}(24,0){$ap^9$}
		\textnode{p5}(24,3){$ap^5$}
		\textnode{p}(24,6){$ap$}
		\textnode{q3}(24,9){$aq^3$}
		\textnode{q7}(24,12){$aq^7$}
		\textnode{q11}(24,15){$aq^{11}$}
		\textnode{q15}(24,18){$aq^{15}$}
		\edge{1}{q4}
		\edge{1}{q5}
		\edge{1}{q6}
		\edge{1}{q7}
		
		\edge{q}{p6}
		\edge{q}{q5}
		\edge{q}{q6}
		\edge{q}{q7}
		\edge{q}{q8}
		\edge{q}{p5}
		
		\edge{q2}{q6}
		\edge{q2}{q7}
		\edge{q2}{q8}
		\edge{q2}{q9}
		\edge{q2}{p5}

		\edge{q3}{q7}
		\edge{q3}{q8}
		\edge{q3}{q9}
		\edge{q3}{q10}

		\edge{q4}{q8}
		\edge{q4}{q9}
		\edge{q4}{q10}
		\edge{q4}{q11}
		
		\edge{q5}{q9}
		\edge{q5}{q10}
		\edge{q5}{q11}
		\edge{q5}{q12}

		\edge{q6}{q10}
		\edge{q6}{q11}
		\edge{q6}{q12}
		\edge{q6}{q13}		

		\edge{q7}{q11}
		\edge{q7}{q12}
		\edge{q7}{q13}
		\edge{q7}{q14}		

		\edge{q8}{q12}
		\edge{q8}{q13}
		\edge{q8}{q14}
		\edge{q8}{q15}		

		\edge{q9}{q13}
		\edge{q9}{q14}
		\edge{q9}{q15}

		\edge{q10}{q14}
		\edge{q10}{q15}

		\edge{q11}{q15}

		\edge{p}{p5}
		\edge{p}{p6}
		\edge{p}{p7}
		\edge{p}{p8}

		\edge{p2}{p6}
		\edge{p2}{p7}
		\edge{p2}{p8}
		\edge{p2}{p9}
		
		\edge{p3}{p7}
		\edge{p3}{p8}
		\edge{p3}{p9}
		\edge{p3}{p10}		

		\edge{p4}{p8}
		\edge{p4}{p9}
		\edge{p4}{p10}
		\edge{p4}{p11}

		\edge{p5}{p9}
		\edge{p5}{p10}
		\edge{p5}{p11}
		\edge{p5}{p12}
		\edge{p5}{1}

		\edge{p6}{p10}
		\edge{p6}{p11}
		\edge{p6}{p12}
		\edge{p6}{1}

		\edge{p7}{p11}
		\edge{p7}{p12}
		\edge{p7}{1}

		\edge{p8}{p12}

		\edge{p4}{1}
		\edge{p4}{q}
		\edge{p4}{q2}
		\edge{p4}{q3}
		
		\edge{p3}{q}
		\edge{p3}{q2}
		\edge{p3}{q3}
		\edge{p3}{q4}
		
		\edge{p2}{q2}
		\edge{p2}{q3}
		\edge{p2}{q4}
		\edge{p2}{q5}
		
		\edge{p}{q3}
		\edge{p}{q4}
		\edge{p}{q5}
		\edge{p}{q6}
	
		\end{graph}
		\end{center}

\end{center}
\caption{A snippet of the partially ordered set of elements of the form $ap^m$ and $aq^n$ in $\monoid{M}_{\cl{\rho,\rhob}}$.}
\label{fig-B-poset-2}
\end{figure}

Compared to the partial order obtained for $\monoid{M}_{\cl{\rho}}$ (Figure \ref{fig-A-poset}), Figure \ref{fig-B-poset-1} looks a real mess.  To make matters worse, we haven't even begun to examine elements of the form $ap^m$ or $aq^n$!   Luckily, the structure of $\monoid{M}_{\cl{\rho,\rhob}}$ means we get some results virtually without effort.

\begin{proposition}\label{prop-x<y-ax<ay}
	Suppose $x < y$ for $x,y \in \{p^m, q^n \mid m,n \in \mathbb{Z}^{\ge 0}\}$.  Then $ax < ay$.  Similarly, if $x$ and $y$ are incomparable for $x,y \in \{p^m, q^n \mid m,n \in \mathbb{Z}^{\ge 0}\}$, then so are $ax$ and $ay$.  
\end{proposition}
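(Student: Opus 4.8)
The plan is to recognize that the defining relation $a^2 = 1$ makes multiplication by $a$ an order-automorphism of the poset underlying $\monoid{M}_{\cl{\rho,\rhob}}$, from which both assertions follow at once. The whole argument is structural and avoids any case analysis on exponents.

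First I would record a general monotonicity fact that holds for \emph{any} element $c$ of the monoid: left multiplication by $c$ is order-preserving. Indeed, suppose $u \ge v$, so that $o^-(uw) \ge o^-(vw)$ for every $w$. Given an arbitrary $z$, apply this with the choice $w = cz$; by commutativity $o^-(cuz) = o^-(u(cz)) \ge o^-(v(cz)) = o^-(cvz)$, and since $z$ was arbitrary we conclude $cu \ge cv$. Specializing to $c = a$, from $x \le y$ we immediately obtain $ax \le ay$.

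The essential new ingredient, and the only place the specific relation is used, is $a^2 = 1$. This says that multiplication by $a$ is its own inverse, hence a bijection on $\monoid{M}_{\cl{\rho,\rhob}}$; combined with the monotonicity above it is an order-automorphism. In particular it \emph{reflects} the order as well as preserving it: if $ax \ge ay$ held, then multiplying by $a$ once more and using $a^2 = 1$ would give $x \ge y$. Thus multiplication by $a$ preserves and reflects $\ge$ simultaneously.

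Strictness and incomparability then drop out. For the first claim, $x < y$ gives $ax \le ay$ by monotonicity; were $ax$ and $ay$ indistinguishable, reflecting by $a$ would force $x$ and $y$ indistinguishable, contradicting $x < y$. Equivalently, if $w$ distinguishes $x$ and $y$, then $aw$ distinguishes $ax$ and $ay$, since $o^-(ax \cdot aw) = o^-(a^2 xw) = o^-(xw)$ and likewise for $y$; hence $ax < ay$. For the second claim, if $ax$ and $ay$ were comparable, reflecting by $a$ would make $x$ and $y$ comparable, again a contradiction, so $ax$ and $ay$ are incomparable. The main point to be careful about is that bare monotonicity holds for every monoid element and is therefore \emph{not} sufficient on its own; the real content lies in reflecting the order, which is exactly where the invertibility of $a$ (that is, $a^2 = 1$) is indispensable.
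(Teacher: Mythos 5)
Your proof is correct and rests on the same mechanism as the paper's: the relation $a^2 = 1$ makes multiplication by $a$ an involution, so any order statement about $ax$ and $ay$ transfers back to one about $x$ and $y$ by substituting $az$ for $z$. Your packaging of this as an order-automorphism (monotonicity for arbitrary $c$, plus reflection coming only from invertibility of $a$) is just a cleaner organization of the paper's two-case contradiction argument, with the small bonus that your incomparability half works for any incomparable pair directly rather than appealing to the explicit witnesses $e_{x,y}$ from Proposition~\ref{prop-cl-rho-rhob-incomparable}.
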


\begin{proof}
	Suppose $x < y$ but $ax \not < ay$.  Therefore, either
		\begin{enumerate}
			\item $ax \ge ay$: Thus, for every element $z \in \monoid{M}_{\cl{\rho,\rhob}}$, $o^-(axz) \ge o^-(ayz)$.  In particular, $o^-(axa) \ge o^-(aya)$, or $o^-(x) \ge o^-(y)$, contradicting that $x<y$; or
			
			\item $ax$ and $ay$ are incomparable:  But $o^-(ax) \le o^-(ay)$ since $x<y$.  Thus, there must exist $z \in \monoid{M}_{\cl{\rho,\rhob}}$ such that $o^-(axz) > o^-(ayz)$.  That is, there exists $az \in \monoid{M}_{\cl{\rho,\rhob}}$ such that $o^-(x(az)) > o^-(y(az))$, which contradicts $x < y$.
		\end{enumerate}
	Thus $ax < ay$.  
	
	Now suppose $x$ and $y$ are incomparable.  Proposition \ref{prop-cl-rho-rhob-incomparable} gave all pairs $(x,y)$ which were incomparable, as well as element $e_{x,y}$ which showed the incomparability.  Then $ax$ and $ay$ are incomparable via the element $ae_{x,y}$.
\end{proof}

Proposition \ref{prop-x<y-ax<ay} means that we can take all the comparability and incomparability relations we calculated for $p^m$ and $q^n$ and use them to determine the comparability and incomparability relations for $ap^m$ and $aq^n$.  Figure \ref{fig-B-poset-2} draws some of these relationships in the same style as Figure \ref{fig-B-poset-1}.

How are Figure \ref{fig-B-poset-1} and Figure \ref{fig-B-poset-2} related?   By inspection, each element of $\monoid{M}_{\cl{\rho,\rhob}}$ is contained in precisely one of the two figures.  Is there an element $x$ of Figure \ref{fig-B-poset-1} and element $y$ of Figure \ref{fig-B-poset-2} such that $x$ and $y$ are comparable.  Unfortunately, yes.

\begin{proposition}\label{prop-ax-y-relations-B}
	Let $t,n \in \mathbb{Z}^{\ge 0}$.  The following comparability relations exist on $\monoid{M}_{\cl{\rho,\rhob}}$:
			
			\begin{center}
			\begin{tabular}{p{5cm}p{5cm}}
			\begin{enumerate}
				\item If $t-n \ge 6$, we have
				\begin{enumerate}
					\item $aq^n < q^t$;
					\item $q^n < aq^t$;
					\item $ap^n > p^t$;
					\item $p^n > ap^t$;
				\end{enumerate}
			\end{enumerate}
			\begin{enumerate}
			\setcounter{enumi}{2}
			\item If $n \ge 4$, we have
				\begin{enumerate}
					\item $ap^2 < q^n$;
					\item $p^2 < aq^n$;
					\item $aq^2 > p^n$;
					\item $q^2 > ap^n$;%
				\end{enumerate}%
			\end{enumerate}%
			&
			\begin{enumerate}
			\setcounter{enumi}{1}
				\item If $n \ge 5$, we have
				\begin{enumerate}
					\item $ap < q^n$;
					\item $p < aq^n$;
					\item $aq > p^n$;
					\item $q > ap^n$;
				\end{enumerate}
			\end{enumerate}
			\begin{enumerate}
			\setcounter{enumi}{3}
			\item If $3 \le n \le t$, we have
					\begin{enumerate}
						\item $ap^n < q^t$;
						\item $p^n < aq^t$;
						\item $aq^n > p^t$;
						\item $q^n > ap^t$.%
					\end{enumerate}%
			\end{enumerate}%
			\end{tabular}			
			\end{center}
	Otherwise, if $(ax,y)$ is a pair with
		\begin{align*}
			x, y &\in \{p^m, q^n \mid m,n \in \mathbb{Z}^{\ge 0}\},
		\end{align*}
	and $(ax,y)$ not listed in the four cases above, then $x$ and $y$ are incomparable.
\end{proposition}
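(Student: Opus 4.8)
The plan is to exploit two order-preserving symmetries of $\monoid{M}_{\cl{\rho,\rhob}}$ to collapse the sixteen listed relations (the four sub-relations in each of the four cases) down to essentially one representative per case, and then to dispatch the ``otherwise'' claim by producing explicit distinguishing multipliers. First I would record the two symmetries. Since $a^2=1$, multiplication by $a$ is a bijection of the monoid, so for any $x,y$ and any $z$ we may substitute $z=az'$ to see that $o^-(axz)\ge o^-(ayz)$ for all $z$ is equivalent to $o^-(xz')\ge o^-(yz')$ for all $z'$; hence $x\le y \iff ax\le ay$, i.e.\ multiplication by $a$ is an order-automorphism (this sharpens Proposition \ref{prop-x<y-ax<ay}, which I may also cite). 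Secondly, because $\cl{\rho,\rhob}$ is closed under conjugation ($\overline{\rho}=\rhob$, $\overline{\rhob}=\rho$, $\overline{*}=*$), Proposition \ref{prop->-<-conjugate} applies; in monoid notation conjugation fixes $a$, swaps $p\leftrightarrow q$, and reverses the order. Thus within each case relation (b) is the $a$-image of (a) (e.g.\ $aq^n<q^t \iff q^n<aq^t$), while (c) and (d) are the conjugate-images of (a) and (b) (e.g.\ $aq^n<q^t \iff ap^n>p^t$ and $q^n<aq^t \iff p^n>ap^t$). So it suffices to prove one relation per case, say $aq^n<q^t$ ($t-n\ge 6$), $ap<q^n$ ($n\ge 5$), $ap^2<q^n$ ($n\ge 4$), and $ap^n<q^t$ ($3\le n\le t$).

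Each such ``$<$'' splits into a ``$\le$'' for all $z$ (Definition \ref{def-<=}) together with strictness, and strictness is immediate from Theorem \ref{theorem-B-ind-elements}, which guarantees the two sides are distinguishable. For the ``$\le$'' I would run the table argument of Proposition \ref{prop-qn<qt}: every $z\in\monoid{M}_{\cl{\rho,\rhob}}$ has one of the six forms $1,\,a,\,p^m,\,q^m,\,ap^m,\,aq^m$, so I compute both products via the defining relation ($p^mq^n=p^{m-n}$ or $q^{n-m}$) and read off outcomes from the tetrapartition. The verification is routine because $\Left$ is the top and $\Right$ the bottom of the outcome poset (Figure \ref{fig-outcome-poset}): whenever the larger side lands in $\Left$, or the smaller side in $\Right$, the inequality is automatic, and the gap hypotheses (such as $t-n\ge 6$) are exactly what push the few borderline products into the correct classes.

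It remains to prove the incomparability statement, and this is where the real work lies. Here I would again use the $a$-multiplication and conjugation symmetries to reduce the cross pairs $(ax,y)$ to a short list of representatives, and then appeal to Proposition \ref{prop-results-<=}: a pair is incomparable once I exhibit one $z$ with $o^-(axz)=\Next$ and $o^-(yz)=\Prev$ (or the reverse), or two multipliers $z_1,z_2$ giving strictly opposite inequalities. For instance $aq^2$ and $q^2$ are incomparable since $z=q$ gives $o^-(aq^3)=\Next<\Left=o^-(q^3)$ while $z=p$ gives $o^-(aq)=\Next>\Right=o^-(q)$. The main obstacle is purely bookkeeping: one must enumerate all cross pairs lying just outside the four positive cases --- the small-exponent and near-boundary configurations such as $t-n\in\{0,1,\dots,5\}$, the case $t<n$, and the mixed $p$-versus-$q$ pairs --- and supply a witnessing multiplier for each, while checking that none of them secretly satisfies the ``$\le$'' tables above. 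Arranging this enumeration so that the symmetries genuinely cut the caseload, rather than merely reshuffle it, is the delicate part; once the representative list is fixed, each individual incomparability reduces to a one- or two-line outcome computation.
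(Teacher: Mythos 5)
Your proposal is correct and follows essentially the same route as the paper: reduce (b), (c), (d) to (a) in each case via multiplication by $a$ and conjugation (Propositions \ref{prop-x<y-ax<ay} and \ref{prop->-<-conjugate}), verify the four representative inequalities $aq^n<q^t$, $ap<q^n$, $ap^2<q^n$, $ap^n<q^t$ by the six-element-type table method of Proposition \ref{prop-qn<qt} with strictness from Theorem \ref{theorem-B-ind-elements}, and then dispatch the remaining finite list of cross pairs by exhibiting witnessing multipliers as in Proposition \ref{prop-results-<=}. The paper does exactly this, using the same symmetry reductions to cut the incomparability check down to a short explicit list.
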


\begin{proof}
	In each of the four cases listed above, similar to the proof of Proposition \ref{prop-pn>pt-t-n=4}, if (a) and (b) are true, Proposition \ref{prop->-<-conjugate}\eqref{prop->-<-conjugate-<=} forces (c) and (d) to be true since (c) and (d) are the respective statements of (a) and (b) under conjugation.  
	
	Suppose now that for each of the four cases listed above, (a) is true.  Similar to the proof that if $x<y$ then $ax < ay$ given in Proposition \ref{prop-x<y-ax<ay}, we have that if $ax <y$, then $x < ay$.  Thus, if (a) is true, this forces (b) to be true as well.
	
	Therefore, all that must be shown for comparability is case (a) in each of the four cases.  We construct four tables (Tables \ref{table-aqn<qn+6}, \ref{table-ap<qn-n>5}, \ref{table-ap2<qn-n>4}, and \ref{table-apn<qn-3<n<t}) to show these results.  As always, the strictness of the inequalities comes from the distinguishability of the elements.

	\begin{table}[p]
	\begin{center}
	    \begin{tabular}{p{2.5cm}llll}
	    \multicolumn{1}{c}{$\boldsymbol{e}$} & $\boldsymbol{aq^ne}$ & $\boldsymbol{o^-(aq^ne)}$ & $\boldsymbol{q^te}$ & $\boldsymbol{o^-(q^te)}$ 
	    \\ 
	    \rowcolor[gray]{.8}1 & $\star$ & $\star$ & $q^t$ & $\Left$ \\
	    $a$ & $\star$ & $\star$ & $aq^t$ & $\Left$ \\
	    \rowcolor[gray]{.8}
		$p^m$ \,\,\,\,\,\,\,\,\,\,\,\,\,\,\,\,\,\,\,\,\,
		$m \le t- 3$
	   & $\star$ & $\star$ & $q^{t-m}$ & $\Left$\\
	  	$p^m$ \,\,\,\,\,\,\,\,\,\,\,\,\,\,\,\,\,\,
		$m \ge t-2$ \,\,\,\,\,\,\,\,\,\,
		($m \ge n+4$)
	    & $ap^{m-n}$ & $\Right$ & 
	    $\star$
	    & $\star$\\
	    \rowcolor[gray]{.8}
	    $q^m$ & $\star$ & $\star$ & $q^{t+m}$ & $\Left$\\
		$ap^m$ \,\,\,\,\,\,\,\,\,\,\,\,\,\,\,\,\,\,
		$m \le t-4$
	    & $\star$ & $\star$ & $aq^{t-m}$ & $\Left$\\
	    \rowcolor[gray]{.8}
		$ap^m$\,\,\,\,\,\,\,\,\,\,\,\,\,\,\,\,\,\,
		$m \ge  t-3$ \,\,\,\,\,\,\,\,\,\,
		($m \ge n+3$)
	    & $p^{m-n}$ & $\Right$ & 
	    $\star$
	    & $\star$\\
	    $aq^m$&$\star$&$\star$&$aq^{t+m}$&$\Left$\\
	    
	    \end{tabular}
	\end{center}
	\caption{Showing $aq^n < q^t$ for $t-n \ge 6$.}
	\label{table-aqn<qn+6}
	\end{table}		

	\begin{table}[p]
	\begin{center}
	    \begin{tabular}{p{2.5cm}llll}
	    \multicolumn{1}{c}{$\boldsymbol{e}$} & $\boldsymbol{ape}$ & $\boldsymbol{o^-(ape)}$ & $\boldsymbol{q^ne}$ & $\boldsymbol{o^-(q^ne)}$ 
	    \\ 
	    \rowcolor[gray]{.8}1 & $\star$ & $\star$ & $q^n$ & $\Left$ \\
	    $a$ & $\star$ & $\star$ & $aq^n$ & $\Left$ \\
	    \rowcolor[gray]{.8}	$p^m$ \,\,\,\,\,\,\,\,\,\,\,\,\,\,\,\,\,\,\,\,\,
		$m \le n-3$ 
	     & $\star$ & $\star$ & $q^{n-m}$ & $\Left$\\	    
	    	$p^m$\,\,\,\,\,\,\,\,\,\,\,\,\,\,\,\,\,\,
		$m \ge n-2$\,\,\,\,\,\,\,\,\,\, 
		($m \ge 3$)
	    & $ap^{m+1}$ & $\Right$ & 
	    $\star$
	    & $\star$\\
	    \rowcolor[gray]{.8}$q^m$ & $\star$ & $\star$ & $q^{n+m}$ & $\Left$\\
	 	$ap^m$ \,\,\,\,\,\,\,\,\,\,\,\,\,\,\,\,\,\,
		$m \le n-4$
	    & $\star$ & $\star$ & $aq^{n-m}$ & $\Left$\\	    
	    \rowcolor[gray]{.8}	$ap^m$\,\,\,\,\,\,\,\,\,\,\,\,\,\,\,\,\,\,
		$m \ge n-3$\,\,\,\,\,\,\,\,\,\,
		($m \ge 2$)
	    & $p^{m+1}$ & $\Right$ & 
	    $\star$
	    & $\star$\\
	    $aq^m$&$\star$&$\star$&$aq^{n+m}$&$\Left$\\
	    \end{tabular}
	\end{center}
	\caption{Showing $ap < q^n$ for $n \ge 5$.}
	\label{table-ap<qn-n>5}
	\end{table}	

	\begin{table}[p]
	\begin{center}
	    \begin{tabular}{p{2.5cm}llll}
	    \multicolumn{1}{c}{$\boldsymbol{e}$}&$\boldsymbol{ap^2e}$&$\boldsymbol{o^-(ap^2e)}$&$\boldsymbol{q^ne}$&$\boldsymbol{o^-(q^ne)}$ 
	    \\
	    \rowcolor[gray]{.8}1 & $\star$ & $\star$ & $q^n$ & $\Left$ \\
	    $a$ & $\star$ & $\star$ & $aq^n$ & $\Left$ \\
		\rowcolor[gray]{.8}$p^m$\,\,\,\,\,\,\,\,\,\,\,\,\,\,\,\,\,\,\,\,\,
		$m \le n-3$
	    & $\star$ & $\star$ & $q^{n-m}$ & $\Left$\\
		$p^m$\,\,\,\,\,\,\,\,\,\,\,\,\,\,\,\,\,\,
		$m \ge n-2$\,\,\,\,\,\,\,\,\,\, 
		($m \ge 2$)
	    & $ap^{m+2}$ & $\Right$ & 
	    $\star$
	    & $\star$\\
	    \rowcolor[gray]{.8}$q^m$ & $\star$ & $\star$ & $q^{n+m}$ & $\Left$\\
		$ap^m$\,\,\,\,\,\,\,\,\,\,\,\,\,\,\,\,\,\,
		$m \le n-4$
	    & $\star$ & $\star$ & $aq^{n-m}$ & $\Left$\\
		\rowcolor[gray]{.8}$ap^m$\,\,\,\,\,\,\,\,\,\,\,\,\,\,\,\,\,\,
		$m \ge n-3$\,\,\,\,\,\,\,\,\,\,
		($m \ge 1$)
	    & $p^{m+2}$ & $\Right$ & 
	    $\star$
	    & $\star$\\
	    $aq^m$&$\star$&$\star$&$aq^{n+m}$&$\Left$\\
	    \end{tabular}
	\end{center}
	\caption{\,\,Showing $ap^2 < q^n$ for $n \ge 4$.}
	\label{table-ap2<qn-n>4}
	\end{table}
	
	\begin{table}[p]
	\begin{center}
	    \begin{tabular}{p{2.5cm}llll}
	    \multicolumn{1}{c}{$\boldsymbol{e}$}&$\boldsymbol{ap^ne}$&$\boldsymbol{o^-(ap^ne)}$&$\boldsymbol{q^te}$&$\boldsymbol{o^-(q^te)}$ 
	    \\ 
	    \rowcolor[gray]{.8}1 & $\star$ & $\star$ & $q^t$ & $\Left$ \\
	    $a$ & $p^n$ & $\Right$ & $\star$ & $\star$ \\
		\rowcolor[gray]{.8}$p^m$
	    & $ap^{n+m}$ & $\Right$ & $\star$ & $\star$\\    
		$q^m$
	    & $\star$ & $\star$ & 
	    $q^{t+m}$
	    & $\Left$\\
	    \rowcolor[gray]{.8}$ap^m$
	    & $p^{n+m}$ & $\Right$ & $\star$ & $\star$\\	    
	    $aq^m$&$\star$&$\star$&$aq^{t+m}$&$\Left$\\
	    \end{tabular}
	\end{center}
	\caption{\,\,Showing $ap^n < q^t$ for $3 \le n \le t$.}
	\label{table-apn<qn-3<n<t}
	\end{table}		

	What remains is to show that there are no other comparability relations for 
	$(ax,y)$ with
		\begin{align*}
			x, y &\in \{p^m, q^n \mid m,n \in \mathbb{Z}^{\ge 0}\}.
		\end{align*}
	
	Firstly, suppose that $ax$ and $y$ are incomparable.  We claim then that $x$ and $ay$ are incomparable.  Since $ax$ and $y$ are incomparable, either	
		\begin{enumerate}
			\item there exists $z \in \monoid{M}_{\cl{\rho,\rhob}}$ such that, without loss of generality, $o^-(axz) = \Next$ and $o^-(yz) = \Prev$.  Then $o^-(x(az)) = \Next$ while $o^-((ay)(az)) = \Prev$, and so $x$ and $ay$ are incomparable; or
			\item there exist $z_1, z_2 \in \monoid{M}_{\cl{\rho,\rhob}}$ such that, without loss of generality, $o^-(axz_1) \ge o^-(yz_1)$ while $o^-(axz_2) < o^-(yz_2)$.  Then $o^-(x(az_1)) \ge o^-((ay)(az_1))$ while $o^-(x(az_2)) < o^-((ay)(az_2))$, and so $x$ and $ay$ are incomparable.
		\end{enumerate}
	Thus $x$ and $ay$ are incomparable if $ax$ and $y$ are.
	
	Now suppose $(aq^n, p^m)$ are incomparable.  Then $(ap^n, q^m)$ are incomparable since $p$ and $q$ arise from conjugates in the indistinguishability quotient.    Similarly, if $(q^m, q^n)$ are incomparable, then so are $(p^m, p^n)$.
	
	Using these three facts, to check that there are no other comparability relations, it suffices to check that the following pairs of elements are incomparable:
		\begin{align*}
			(aq^n, q^n) && (aq^n, q^{n+1}) && (aq^n, q^{n+2}) && (aq^n, q^{n+3}) && (aq^n, q^{n+4}) && (aq^n, q^{n+5}).\\
			 && (ap,q) && (ap, q^2) && (ap, q^3) && (ap, q^4)\\
			 && (ap^2, q) && (ap^2,q^2) && (ap^2, q^3) \\
			 && (ap^3, q) && (ap^3, q^2) &&\\
			&& (ap^4,q).
		\end{align*}
	Table \ref{table-(ax,y)-incomparable-1} lists elements $(ax,y)$ such that there exists $e \in \monoid{M}_{\cl{\rho,\rhob}}$ such that $o^-(eax) = \Next$ while $o^-(ey) = \Prev$ (or vice versa).  Table \ref{table-(ax,y)-incomparable-2} lists elements $(ax,y)$ such that there exist $e_1, e_2 \in \monoid{M}_{\cl{\rho,\rhob}}$ such that $o^-(e_1ax) \ge o^-(e_2y)$ and $o^-(e_2ax) < o^-(e_2y)$ (or vice versa).
	
	\begin{table}[htb]
	\begin{center}
	\begin{tabular}{rrrrcrc}
	$\boldsymbol{ax}$&$\boldsymbol{y}$&$\boldsymbol{e}$&$\boldsymbol{axe}$&$\boldsymbol{o^-(axe)}$&$\boldsymbol{ye}$&$\boldsymbol{o^-(ye)}$\\
	\rowcolor[gray]{.8}
	$aq^n$&
	$q^n$&
	$p^n$
	&$a$&
	$\Prev$&
	$1$
	&$\Next$\\

	$aq^n$&
	$q^{n+1}$&
	$ap^{n+2}$
	&$p^2$&
	$\Prev$&
	$ap$
	&$\Next$\\
	\rowcolor[gray]{.8}
	$aq^n$&
	$q^{n+3}$&
	$p^{n+1}$
	&$ap$&
	$\Next$&
	$q^2$
	&$\Prev$\\

	$aq^n$&
	$q^{n+4}$&
	$p^{n+2}$
	&$ap^2$&
	$\Next$&
	$q^2$
	&$\Prev$\\
	\rowcolor[gray]{.8}
	$aq^n$&
	$q^{n+5}$&
	$p^{n+3}$
	&$ap^3$&
	$\Next$&
	$q^2$
	&$\Prev$\\
	
	$ap$&
	$q^2$&
	$ap$
	&$p^2$&
	$\Prev$&
	$aq$
	&$\Next$\\
	\rowcolor[gray]{.8}
	$ap$&
	$q^3$&
	$ap$
	&$p^2$&
	$\Prev$&
	$aq^2$
	&$\Next$\\

	$ap$&
	$q^4$&
	$ap$
	&$p^2$&
	$\Prev$&
	$aq^3$
	&$\Next$\\
	\rowcolor[gray]{.8}
	$ap^2$&
	$q$&
	$a$
	&$p^2$&
	$\Prev$&
	$aq$
	&$\Next$\\

	$ap^2$&
	$q^2$&
	$a$
	&$p^2$&
	$\Prev$&
	$aq^2$
	&$\Next$\\
	\rowcolor[gray]{.8}
	$ap^2$&
	$q^3$&
	$a$
	&$p^2$&
	$\Prev$&
	$aq^3$
	&$\Next$\\

	$ap^3$&
	$q$&
	$aq$
	&$p^2$&
	$\Prev$&
	$aq^2$
	&$\Next$\\
	\rowcolor[gray]{.8}
	$ap^3$&
	$q^2$&
	$aq$
	&$p^2$&
	$\Prev$&
	$aq^3$
	&$\Next$\\
	
	$ap^4$&
	$q$&
	$q$
	&$ap^3$&
	$\Next$&
	$q^2$
	&$\Prev$\\	
	\end{tabular}
	\caption{\, The incomparability of elements of the form $(ax,y)$ in $\monoid{M}_{\cl{\rho,\rhob}}$.}
	\label{table-(ax,y)-incomparable-1}
	\end{center}
	\end{table}

	\begin{table}[htb]
	\begin{center}
	\begin{tabular}{rrrrcrc}
	$\boldsymbol{ax}$&$\boldsymbol{y}$&$\boldsymbol{e_1}$&$\boldsymbol{axe_1}$&$\boldsymbol{o^-(axe_1)}$&$\boldsymbol{ye_1}$&$\boldsymbol{o^-(ye_1)}$\\
	\rowcolor[gray]{.8}
	$aq^n$&
	$q^{n+2}$&
	$p^{n+4}$
	&$ap^4$&
	$\Right$&
	$p^2$
	&$\Prev$
	\\

	$ap$&
	$q$&
	$aq^3$
	&$p^2$&
	$\Prev$&
	$aq^4$
	&$\Left$\\
	\end{tabular}
	
	\begin{tabular}{rrrrcrclllllllll}
	$\boldsymbol{ax}$&$\boldsymbol{y}$&$\boldsymbol{e_2}$&$\boldsymbol{axe_2}$&$\boldsymbol{o^-(axe_2)}$&$\boldsymbol{ye_2}$&$\boldsymbol{o^-(ye_2)}$\\
	\rowcolor[gray]{.8}
	$aq^n$&
	$q^{n+2}$&
	$ap^{n+1}$&
	$p$&
	$\Left$&
	$aq$&
	$\Next$
	\\

	$ap$&
	$q$&
	$a$&
	$p$&
	$\Left$&
	$aq$&
	$\Next$
	\\
	\end{tabular}
	\caption{\, The incomparability of elements of the form $(ax,y)$ in $\monoid{M}_{\cl{\rho,\rhob}}$ - Continued.}
	\label{table-(ax,y)-incomparable-2}
	\end{center}
	\end{table}	
	
	This completes the proof.
\end{proof}

This thesis will not attempt to draw the partial order of the elements of $\monoid{M}_{\cl{\rho,\rhob}}$.  However, even without a diagram, we are able to show the following Proposition.

\begin{proposition}
	Given any two elements $u,v \in \monoid{M}_{\cl{\rho,\rhob}}$, there exists $w,z \in \monoid{M}_{\cl{\rho,\rhob}}$ such that $w \le u,v$ and $z \ge u,v$.  That is, the set is both down-directed and up-directed.  However, the partial order of $\monoid{M}_{\cl{\rho,\rhob}}$ does not form a lattice.  
\end{proposition}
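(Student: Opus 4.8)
The plan is to treat the three assertions separately, exploiting the order-reversing symmetry coming from conjugation to halve the work. Recall from the presentation that every element of $\monoid{M}_{\cl{\rho,\rhob}}$ has one of the forms $p^m$, $q^n$, $ap^m$, $aq^n$ (with $1$ and $a$ the $m=0$ cases), and that $\rho$ and $\rhob$ are conjugate. Hence by Proposition \ref{prop->-<-conjugate} the assignment $p^m \mapsto q^m$, $q^n \mapsto p^n$, $a \mapsto a$ (fixing $1$) extends to an order-reversing automorphism $\phi$ of the poset. Consequently up-directedness is exactly the $\phi$-image of down-directedness, and a failure of joins is the $\phi$-image of a failure of meets; it therefore suffices to prove down-directedness and to exhibit one pair with no least upper bound.

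For down-directedness I would first prove the lemma: if $x$ has $\rho/\rhob$-exponent $m$ (taking $m=0$ for $1$ and $a$), then $p^n \le x$ for every $n \ge m+7$. Granting this, down-directedness is immediate, since for arbitrary $u,v$ one picks $n$ beyond both thresholds and $p^n$ is a common lower bound. The lemma is proved by testing the defining inequality $o^-(p^n z) \le o^-(xz)$ against an arbitrary multiplier $z$, splitting on the exponent $k$ of $\rhob$ occurring in $z$. If $k \le n-4$, then $p^n z$ collapses to $p^{j}$ or $ap^{j}$ with $j \ge 4$, whose outcome is $\Right$, the bottom of the outcome order, so the inequality holds trivially. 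If $k \ge n-3$, then, because $n$ (hence $k$) is large relative to $m$, the product $xz$ collapses to $q^{j}$ with $j \ge 3$ or $aq^{j}$ with $j \ge 4$, whose outcome is $\Left$, the top of the outcome order, so again the inequality holds. Up-directedness then follows by applying $\phi$, giving $q^n \ge u,v$ for $n$ large.

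For the failure of the lattice property I would use the incomparable pair $p$ and $q$ (Proposition \ref{prop-cl-rho-rhob-incomparable}) and show that their set $U$ of common upper bounds has no least element. Since $x \ge p$ forces $o^-(x) = o^-(x\cdot 1) \ge o^-(p) = \Left$, every element of $U$ lies in the outcome portion $\Left$, and must in addition satisfy $x \ge q$. Reading off the comparabilities of Propositions \ref{prop-qn<qt}, \ref{prop-monod-<-p-q}, \ref{prop-ax-y-relations-B} and \ref{prop-x<y-ax<ay} identifies $U$ explicitly as $\{q^n : n \ge 5\} \cup \{aq^n : n \ge 7\}$, and shows that $q^5$ and $q^6$ both lie in $U$, are minimal in $U$ (nothing in $U$ lies strictly below either), and are incomparable. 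Were a join $p \vee q$ to exist it would belong to $U$ and lie below every element of $U$, hence below both $q^5$ and $q^6$; minimality would force it to equal each of them, which is impossible. Thus $p \vee q$ does not exist and $\monoid{M}_{\cl{\rho,\rhob}}$ is not a lattice.

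The routine but lengthy part is the lemma's bookkeeping: confirming exactly which residual power survives in $p^n z$ and in $xz$ across the six element-types of $z$, and pinning down the offset so that the two cases genuinely exhaust all $z$. The genuinely delicate point, and the one I would be most careful about, is the minimality claim for $q^5$ and $q^6$: I must rule out \emph{every} would-be smaller common upper bound, including the $aq^n$ family, which is precisely where the mixed $a$-relations of Proposition \ref{prop-ax-y-relations-B} are essential.
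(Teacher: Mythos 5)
Your proposal is correct, but it reaches the conclusion by a genuinely different route from the thesis in both halves. For directedness, the thesis proves up-directedness by running through the finitely many types of incomparable pairs from Proposition \ref{prop-cl-rho-rhob-incomparable} and exhibiting an ad hoc upper bound for each (e.g.\ $p,q \le q^5$; $p^n, p^t < q^4$), handles the $(ax,ay)$ and $(ax,y)$ cases via Propositions \ref{prop-x<y-ax<ay} and \ref{prop-ax-y-relations-B}, and declares down-directedness ``similar''; you instead prove a single uniform lemma ($p^n$ lies below every element of exponent $m$ once $n \ge m+7$, verified directly from the outcome tetrapartition by the $k \le n-4$ versus $k \ge n-3$ dichotomy, which I checked does exhaust all multipliers and does land in $\Right$ and $\Left$ respectively) and then transport it through the conjugation anti-automorphism. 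Your version buys a cleaner, threshold-style statement that avoids re-enumerating the incomparable pairs; the thesis's version stays closer to the explicit poset data already tabulated. For the lattice failure, the thesis takes $p^9, p^{10}$ and observes that $p^3$ and $p^4$ are incomparable upper bounds; as written that observation alone does not preclude a join sitting below both, so the argument implicitly relies on $p^3, p^4$ being minimal upper bounds without saying so. Your argument with the pair $(p,q)$ closes exactly this gap: you identify the full set of common upper bounds $\{q^n : n \ge 5\} \cup \{aq^n : n \ge 7\}$ (which is where the mixed relations of Proposition \ref{prop-ax-y-relations-B} are genuinely needed, as you note), and verify that $q^5$ and $q^6$ are incomparable \emph{and} minimal in that set, so no least element can exist. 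Both approaches are sound in substance, but yours is the more airtight of the two on the non-lattice claim.
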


\begin{proof}
	We will show that given two elements of $\monoid{M}_{\cl{\rho,\rhob}}$, that we can find an element greater than both of these; the proof for finding an element less than both is similar.
	
	Take $x, y \in \{p^m, q^n \mid m,n \in \mathbb{Z}^{\ge 0}\}$.  If, without loss of generality, $x \le y$, then $y \ge x,y$.  Otherwise, suppose $x$ and $y$ are incomparable.  Then by Proposition \ref{prop-cl-rho-rhob-incomparable}, we have one of the following:
		\begin{enumerate}
			\item $(x,y) = (p,q)$: By Propositions \ref{prop-qn<qt} and \ref{prop-monod-<-p-q}, $p < q^n$ for all $n \ge 3$ while $q < q^t$ for all $t \ge 5$.  Therefore $p, q \le q^5$. 
			\item $(x,y) = (p,q^2)$: By inspection of Figure \ref{fig-B-poset-1}, we can see that $p, q^2 < q^7$.  
			\item\label{item-3-lattice} $(x,y) = (q^n, q^t)$ where $1 \le t-n \le 3$:  By Proposition \ref{prop-qn<qt}, $q^t < q^{t+8}$.   Moreover, $t+8 - n \ge 4$, so Proposition \ref{prop-qn<qt} also gives $q^n < q^{t+8}$.    
			\item $(x,y) = (p^n, p^t)$ where $1 \le t-n \le 3$:  By inspection of Figure \ref{fig-B-poset-1}, $p^m < q^4$ for any $m \in \mathbb{Z}^{\ge 0}$.  This gives the result.
		\end{enumerate}
		
	Similarly, if for pairs $(ax, ay)$, there exists $w \in \monoid{M}_{\cl{\rho,\rhob}}$ such that $ax, ay \le w$.  
	
	Now consider pairs of the form $(ax, y)$.   By Proposition \ref{prop-ax-y-relations-B}, there exists $w \in \{p^m, q^n \mid m,n \in \mathbb{Z}^{\ge 0}\}$ such that $ax < w$.   Consider $(w,y)$.  We have just shown that there exists $z \in \monoid{M}_{\cl{\rho,\rhob}}$ such that $w,y \le z$.  Thus $ax, y < z$, as required.
	
	To show that $\monoid{M}_{\cl{\rho,\rhob}}$ does not form a lattice, consider elements $p^9$, $p^{10}$.  By Proposition \ref{prop-pn>pt-t-n=4}, 
		\begin{align*}
			p^3 &> p^9, p^{10} \\
			p^4 &> p^9, p^{10},
		\end{align*}
	but, by Proposition \ref{prop-cl-rho-rhob-incomparable}, $p^3$ and $p^4$ are incomparable.  Therefore there is no least upper bound for $p^9$ and $p^{10}$.  
\end{proof}

Thus, we see that while the partially ordered set of $\monoid{M}_{\cl{\rho,\rhob}}$ does have some structure, it is not as nice as one might like it to be.  

\section{The \Mis Monoid of $\cl{\tau}$}

In this section, we calculate the \mis monoid of an impartial position, which we will denote as $\tau$.  This position is used extensively in Chapter \ref{chapter-cardinality-left}.  

\begin{definition}\label{def-tau}
	We define the position $\tau$ as $\tau = \combgame{\{*\mid*\}}$.  That is, $\tau$ is the position where both Left and Right have a move to $*$.  
\end{definition}

The game tree of $\tau$ is given in Figure \ref{fig-gt-tau}.
		
		\begin{figure}[htb]
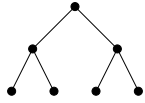

		\unitlength 8pt
		\begin{center}
		\begin{graph}(4,4)(-1,0)
		\graphnodesize{0.4}
		\fillednodestrue

		\roundnode{A}(1,4)
		\roundnode{B}(-1,2)
		\roundnode{C}(3,2)
		\roundnode{D}(-2,0)
		\roundnode{E}(0,0)
		\roundnode{F}(2,0)
		\roundnode{G}(4,0)

		\edge{A}{C}
		\edge{A}{B}
		\edge{B}{D}
		\edge{B}{E}
		\edge{C}{F}
		\edge{C}{G}
		
		\end{graph}
		\caption{The game tree of $\tau$.}
		\label{fig-gt-tau}
		\end{center}	
		\end{figure}

We can see that $\tau = \taub$.  

As with all our other examples, we begin by calculating the outcome classes of arbitrary positions in $\cl{\tau}$.

\begin{proposition}\label{prop-n*+m.tau}
	For $n,m \in \mathbb{Z}^{\ge 0}$, we have
		\[ o^-(n  * + m  \tau) = \begin{cases}
		\Next &\text{if } n \equiv 0 \imod 2;\\
		\Prev &\text{if } n \equiv 1 \imod 2.
		\end{cases}\]
\end{proposition}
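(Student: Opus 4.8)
The plan is to proceed by induction on the options, exactly as in the proof of Proposition~\ref{prop-n*+m-sigma+m-sigma_L}, since the structure of the argument is essentially identical. The starting observation is that $\tau = \combgame{\{*\mid*\}}$ is impartial, so every position $n*+m\tau$ is impartial and hence $o^-(n*+m\tau) \in \{\Next, \Prev\}$. Thus it suffices to decide, for each parity of $n$, which of the two outcomes occurs, and because the position is impartial we need only analyse a single mover rather than Left and Right separately.

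First I would record the options. From $n*+m\tau$ the available moves are to reduce a copy of $*$ to $0$, yielding $(n-1)*+m\tau$, or to reduce a copy of $\tau$ to $*$, yielding $(n+1)*+(m-1)\tau$. The key structural fact is that \emph{both} of these moves flip the parity of $n$: the first decreases $n$ by one, and the second increases $n$ by one. With the base case $o^-(0) = \Next$ (agreeing with $n \equiv 0 \imod 2$), the proposition then follows from a clean parity induction.

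For the inductive step, suppose the claimed outcomes hold for all options. If $n \equiv 0 \imod 2$ and the position is not $0$, then the mover has a move to an odd-$n$ position: if $n>0$ move $*\to 0$ to reach $(n-1)*+m\tau$, and if $n=0$ (so $m>0$) move $\tau\to *$ to reach $*+(m-1)\tau$. In either case the target has odd $n$-parity and so, by induction, outcome $\Prev$; hence the mover wins and $o^-(n*+m\tau)=\Next$. If instead $n \equiv 1 \imod 2$, then $n \ge 1$ and every option has even $n$-parity, so by induction every option is an $\Next$ position; whatever the mover does, the opponent then wins, giving $o^-(n*+m\tau)=\Prev$.

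I do not expect any real obstacle here; the only point requiring a moment's care is the edge-case bookkeeping in the even case, namely ensuring that a parity-flipping move genuinely exists (moving in $\tau$ when $n=0$, and recognising $0$ itself as the base case when $n=m=0$). Everything else is forced once one observes that every legal move toggles the parity of the number of $*$ summands.
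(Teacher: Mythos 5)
Your proof is correct and follows essentially the same route as the paper's: induction on the options, with the key observation that every move toggles the parity of $n$, base case $o^-(0)=\Next$, and the edge case $n=0$, $m>0$ handled by moving a $\tau$ to $*$. The only cosmetic difference is that you invoke impartiality explicitly to collapse Left and Right into a single mover, whereas the paper simply notes that both players have the same moves at each step.
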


\begin{proof}
	We proceed by induction on the options of a position.
	
	If $n=m=0$, then position is 0, which is an $\Next$ position, agreeing with the statement in the proposition.
	
	Consider position $n * + m \tau$ and suppose that the outcomes of all its options are as given in the statement of this proposition.  
	
	Suppose $n=0$.  Left and Right moving first can move to $* + (m-1) \tau$, which is a $\Prev$ position by induction.  Thus $o^-(m \tau) = \Next$.
	
	Suppose $n \equiv 0 \imod 2$ and $n > 0$.  Left and Right moving first can move to $(n-1) * + m \tau$, which is a $\Prev$ position by induction.
	
	Therefore $o^-(n * + m \tau) = \Next$ if $n \equiv 0 \imod 2$.
	
	Suppose $n \equiv 1 \imod 2$.  Left and Right moving first have two options:
		\begin{enumerate}
			\item $(n-1) * + m \tau$, which is an $\Next$ position by induction; or
			\item $(n+1) * + (m-1) \tau$, which is an $\Next$ position by induction.
		\end{enumerate}
	Therefore $o^-(n * + m \tau) = \Prev$ if $n \equiv 1 \imod 2$.
\end{proof}

By examining the results of Proposition \ref{prop-n*+m.tau}, the indistinguishability and distinguishability relationships can be determined.

\begin{corollary}
	The following indistinguishability relationships exist on $\cl{\tau}$:
		\begin{enumerate}
			\item $* + * \equiv 0 \imod{{\cl{\tau}}}$,
			\item $a  \tau \equiv 0 \imod{{\cl{\tau}}}$ for any $a \in \mathbb{Z}^{\ge 0}$.  
			\item $* + a  \tau \equiv * \imod{{\cl{\tau}}}$ for any $a \in \mathbb{Z}^{\ge 0}$.
		\end{enumerate}
\end{corollary}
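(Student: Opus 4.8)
The plan is to verify each of the three relations directly from the definition of indistinguishability (Definition \ref{def-indis}), leaning on the fact established in Proposition \ref{prop-n*+m.tau} that the \mis outcome of any position $n * + m \tau \in \cl{\tau}$ depends only on the parity of $n$ and is completely independent of $m$. The first thing I would record is that, since $\tau = \taub$ and $*$ is the only option of $\tau$, every element of $\cl{\tau}$ has the form $n * + m \tau$ for some $n, m \in \mathbb{Z}^{\ge 0}$; hence to test indistinguishability over $\cl{\tau}$ it suffices to add an arbitrary position of this form.

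For the first relation, I would take an arbitrary $\gamma = n * + m \tau$ and compute $(* + *) + \gamma = (n+2) * + m \tau$. Since $n + 2 \equiv n \imod 2$, Proposition \ref{prop-n*+m.tau} gives $o^-((n+2) * + m \tau) = o^-(n * + m \tau) = o^-(0 + \gamma)$, which establishes $* + * \equiv 0 \imod{\cl{\tau}}$. For the second relation I would again take $\gamma = n * + m \tau$ and note $a \tau + \gamma = n * + (m+a) \tau$; because the outcome ignores the $\tau$-count entirely, $o^-(n * + (m+a) \tau) = o^-(n * + m \tau)$, giving $a \tau \equiv 0 \imod{\cl{\tau}}$.

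The third relation I would obtain for free from the second: since indistinguishability over a closed set is a congruence with respect to disjunctive sum (noted earlier in the excerpt), the relation $a \tau \equiv 0 \imod{\cl{\tau}}$ may be summed with $*$ to yield $* + a \tau \equiv * \imod{\cl{\tau}}$. Alternatively, one could argue directly that $(* + a \tau) + \gamma = (n+1) * + (m+a)\tau$ and $* + \gamma = (n+1) * + m\tau$ both carry $*$-parity $n + 1$ and therefore share an outcome by Proposition \ref{prop-n*+m.tau}.

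Since the entire argument reduces to the parity bookkeeping already packaged in Proposition \ref{prop-n*+m.tau}, I do not anticipate a genuine obstacle; the only point requiring minor care is confirming at the outset that arbitrary members of $\cl{\tau}$ really are exhausted by the positions $n * + m \tau$, which is immediate from $\tau = \taub$. This proof is structurally identical to that of Proposition \ref{prop-sigma-*+*=0} for $\cl{\sigma, \sigmab}$, so I would model the write-up on that one.
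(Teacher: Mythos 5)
Your proposal is correct and follows essentially the same route as the paper: both arguments reduce everything to Proposition \ref{prop-n*+m.tau}'s observation that the outcome of $n\,* + m\,\tau$ depends only on the parity of $n$, verify relations (1) and (2) by adding an arbitrary $n\,* + m\,\tau$ and comparing outcomes, and obtain (3) from (2). Your explicit remark that $\cl{\tau}$ is exhausted by positions of the form $n\,* + m\,\tau$, and your appeal to the congruence property for (3), are harmless elaborations of steps the paper leaves implicit.
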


\begin{proof}
	Take an arbitrary $n * + m \tau$.  
	\begin{enumerate}
		\item\label{item-*+*=0-1-2} By Proposition \ref{prop-n*+m.tau}, we have
			\[o^-((n+2) * + m \tau) = o^-(n * + m \tau),\]
		which gives the result.
		
		\item Since we just showed
			\[ * + * \equiv 0 \imod{{\cl{\tau}}},\]		
		we need only to consider $n=0$ or $n=1$ in our arbitrary position $n * + m \tau$. Then we have
		\begin{align*}
			o^-(a  \tau + (n  * + m  \tau)) &= o^-(n  * + (n+a)  \tau) \\
			&= \begin{cases}
				\Next &\text{if } n = 0;\\
				\Prev &\text{if } n = 1.
			\end{cases}
		\end{align*}
		But
		\[ o^-(0 + (n  * + m  \tau)) = \begin{cases}\
		\Next &\text{if } n \equiv 0 \imod 2;\\
		\Prev &\text{if } n \equiv 1 \imod 2.
		\end{cases}\]
		Therefore the two positions are indistinguishable.
		
		\item This follows from the previous case.\qedhere
	\end{enumerate}
\end{proof}

\begin{corollary}\label{cor-tau-finite}
	Given a position $n * + m \tau$, it is indistinguishable from exactly one of either $0$ or $*$.
\end{corollary}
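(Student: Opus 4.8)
The plan is to prove this directly from the three indistinguishability relations established in the preceding corollary, using Proposition \ref{prop-n*+m.tau} to read off parity, and then to close the argument by verifying that $0$ and $*$ are genuinely distinguishable so that ``exactly one'' is justified. The only structural fact I rely on is that $\stackrel{\cl{\tau}}{\equiv}$ is a congruence with respect to disjunctive sum (established earlier from Plambeck and Siegel's work), which is what permits chaining reductions and substituting inside a sum.

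First I would reduce the number of $*$ summands modulo $2$. By the relation $* + * \equiv 0 \imod{\cl{\tau}}$, repeatedly cancelling pairs of $*$'s gives
	\[ n  * + m  \tau \equiv \begin{cases}
		m  \tau \imod{\cl{\tau}} &\text{if } n \equiv 0 \imod 2;\\
		* + m  \tau \imod{\cl{\tau}} &\text{if } n \equiv 1 \imod 2.
	\end{cases}\]
Then I would split into the two parity cases. If $n$ is even, the relation $a  \tau \equiv 0 \imod{\cl{\tau}}$ (with $a = m$) yields $m  \tau \equiv 0$, hence $n  * + m  \tau \equiv 0 \imod{\cl{\tau}}$. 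If $n$ is odd, the relation $* + a  \tau \equiv * \imod{\cl{\tau}}$ (again with $a = m$) yields $* + m  \tau \equiv *$, hence $n  * + m  \tau \equiv * \imod{\cl{\tau}}$. Thus every position in $\cl{\tau}$ is indistinguishable from at least one of $0$ or $*$, with the parity of $n$ determining which.

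Finally, to upgrade ``at least one'' to ``exactly one'', I would check that $0$ and $*$ cannot both be equivalent to the same position, i.e.\ that $0 \not\equiv * \imod{\cl{\tau}}$. Since $o^-(0) = \Next$ while $o^-(*) = \Prev$ (Example \ref{example-o+-o-}), the position $0 \in \cl{\tau}$ distinguishes them: $o^-(0 + 0) = \Next \not= \Prev = o^-(* + 0)$. Hence no position can be indistinguishable from both $0$ and $*$, and the classification is exact. There is no genuinely hard step here, as all the analytic content was absorbed into Proposition \ref{prop-n*+m.tau} and the preceding corollary; the only point requiring care is the appeal to the congruence property, which legitimises each of the substitutions made during the reduction.
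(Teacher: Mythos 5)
Your proof is correct and follows exactly the route the paper intends: the corollary is stated without proof precisely because it is the immediate consequence of the three relations in the preceding corollary (reducing $n$ modulo $2$ and absorbing the $\tau$'s), together with the observation that $0$ and $*$ are distinguished by $0$ since $o^-(0) = \Next \neq \Prev = o^-(*)$. Your explicit appeal to the congruence property and to transitivity for the ``exactly one'' claim is a sound filling-in of details the paper leaves implicit.
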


We now explicitly write the \mis monoid. 
With the mappings:
	\begin{align*}
		0 &\mapsto 1;\\
		* &\mapsto a;\\
		\tau &\mapsto 1;\\
	\end{align*}
we obtain the following monoid:
	\begin{align*}
	\monoid{M}_{\cl{\tau}} &= \ideal{1,a \mid a^2=1} \\
	\Next &= \{1\} \\
	\Prev &= \{a\} \\
	\Left &= \emptyset \\
	\Right &= \emptyset
	\end{align*}
with the additive notation in $\cl{\tau}$ becoming a multiplicative notation in $\monoid{M}_{\cl{\tau}}$.  Again, as with $\monoid{M}_{\cl{\sigma, \sigmab}}$, this monoid is the same as $\monoid{M}_{\cl{*}}$.  Its inclusion is due to its extensive use in Chapter \ref{chapter-cardinality-left}.

Since the two elements of $\monoid{M}_{\cl{\tau}}$ have outcome classes $\Next$ and $\Prev$, they are incomparable.  Therefore, the partial order of $\monoid{M}_{\cl{\tau}}$ contains two incomparable elements.

\section{Conclusion}

After perusing these five examples, the reader should now be comfortable in the calculations required to determine \mis monoids. These initial examples were chosen as they demonstrate the wide range of structure that arise with partizan \mis monoids.  Now, confident in our abilities to calculate partizan \mis monoids, we proceed to more theoretical matters.

\chapter{The Cardinality of Partizan \Mis Monoids}\label{chapter-cardinality-left}

\section{Introduction}

Via the efforts of Plambeck and Siegel, much is known about the \mis monoids of impartial games, especially for octal and other heap based games \cite{ADVANCES, TAMING, MISQUOTIENT, MCF, NOTES, STRUCTURE}.  In examining the \mis monoids of impartial game positions, some quotients are finite while others are infinite.  It is not well-understood why certain positions yield finite quotients while others yield infinite ones and no investigation has yet occurred regarding partizan \mis monoids.  This chapter examines certain partizan positions and whether their \mis monoids are finite or infinite.   It succeeds in the following:
	\begin{itemize}
		\item finding a set of positions such that the inclusion of these positions yields an infinite monoid; and
		\item finding an infinite set of positions whose monoids are always finite.
	\end{itemize}

\section{Closures of Binary Positions of Birthday Two or Less and their Conjugates Under Indistinguishability}

\begin{definition}
	A position is a \textbf{binary position}\index{binary position}, if at any point, a player has at most one move available.   Equivalently, a position is a \textbf{binary position} if the game tree representing that position is a binary tree. 
\end{definition}

\begin{example}
	Some examples of binary positions are 0, $*$, $\sigma$, $\rho$, $\rhob$, and $\tau$ (see Chapter \ref{chapter-examples} or Appendix \ref{appendixA} for definitions of these positions).
\end{example}

Let $\chi$ be a binary position of birthday two or less.  The following theorem completely characterizes when $\monoid{M}_{\cl{\chi, \overline{\chi}}}$ is infinite.

\begin{theorem}\label{theorem-4-elements-infinite}
	Let $\chi$ be a  binary position of birthday two or less.  Then $\monoid{M}_{\cl{\chi, \overline{\chi}}}$ is infinite if and only if at least one of the following positions is an element of $\monoid{M}_{\cl{\chi, \overline{\chi}}}$: $1$, $\bar{1}$, $\rho$, or $\rhob$.
\end{theorem}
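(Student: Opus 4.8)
The plan is to prove the two implications separately, reading the hypothesis ``one of $1,\bar1,\rho,\rhob$ is an element of $\monoid{M}_{\cl{\chi,\overline\chi}}$'' as the assertion that one of these named positions lies in the closure $\cl{\chi,\overline\chi}$ (equivalently, appears among its generators). The forward direction is the easy one and rides entirely on results already in hand. The key structural observation is that $\cl{\chi,\overline\chi}$ is closed under conjugation: since it is generated by the conjugate-symmetric pair $\{\chi,\overline\chi\}$, a routine induction on birthday shows $\xi\in\cl{\chi,\overline\chi}\implies\overline\xi\in\cl{\chi,\overline\chi}$.

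Granting this, if $1\in\cl{\chi,\overline\chi}$ then also $\bar1=\overline{1}\in\cl{\chi,\overline\chi}$, so the smallest closed set containing both, namely $\cl{1,\bar1}$, is a subset of $\cl{\chi,\overline\chi}$; symmetrically, if $\rho\in\cl{\chi,\overline\chi}$ then $\rhob\in\cl{\chi,\overline\chi}$ and $\cl{\rho,\rhob}\subseteq\cl{\chi,\overline\chi}$. In the first situation Corollary \ref{cor-a1-b1-distinguishable} furnishes infinitely many pairwise distinguishable positions inside $\cl{1,\bar1}$, and in the second Corollary \ref{cor-B-infinite} does the same inside $\cl{\rho,\rhob}$. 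By Proposition \ref{proposition-distinguishable-subseteq}, distinguishability persists when one passes to a larger closed set, so in either case $\cl{\chi,\overline\chi}$ contains infinitely many pairwise distinguishable positions and $\monoid{M}_{\cl{\chi,\overline\chi}}$ is infinite.

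For the converse I would argue by contraposition via a finite enumeration. A binary position of birthday at most two has at most one left and one right option, each either absent or drawn from the birthday-$\le 1$ positions $0,1,\bar1,*$. I would first note that $1$ or $\bar1$ can enter $\cl{\chi,\overline\chi}$ only as a direct option of $\chi$ or of $\overline\chi$, and that $\rho$ or $\rhob$ can enter only by $\chi$ itself being $\rho$ or $\rhob$ (each has birthday two, so cannot be a proper option, and neither is a nontrivial disjunctive sum). Hence, if none of $1,\bar1,\rho,\rhob$ lies in $\cl{\chi,\overline\chi}$, then every option of $\chi$ lies in $\{0,*\}$ and $\chi\notin\{1,\bar1,\rho,\rhob\}$, which leaves exactly the five survivors $\chi\in\{0,*,\sigma,\sigmab,\tau\}$. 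For each of these the closure is one of $\cl{0}$, $\cl{*}$ (Example \ref{example-*}), $\cl{\sigma,\sigmab}$ (Corollary \ref{cor-sigma-finite}), or $\cl{\tau}$ (Corollary \ref{cor-tau-finite}), all computed in Chapter \ref{chapter-examples} to have finite \mis monoids, so $\monoid{M}_{\cl{\chi,\overline\chi}}$ is finite.

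The main obstacle is the reverse direction, specifically the rigorous justification that the four dangerous positions cannot slip into a closure accidentally. For $\rho$ and $\rhob$ the birthday bound settles the matter. For $1$ and $\bar1$ one must check that no disjunctive sum of closure elements has game tree $\{0\mid\cdot\}$ or $\{\cdot\mid0\}$; this holds because a sum of two nonzero positions inherits options on both sides unless a summand has an empty option set on that side, and a short structural comparison of game trees rules out literal equality with $1$ or $\bar1$. Organising this bookkeeping cleanly, rather than grinding through all twenty-five formal option combinations, is where the care is needed; once the five surviving positions are isolated, finiteness is immediate from the examples already computed.
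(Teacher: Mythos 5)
Your proof is correct and takes essentially the same route as the paper's: a finite enumeration of the binary positions of birthday at most two reduces everything to the monoids already computed in Chapter \ref{chapter-examples} ($\cl{0}$, $\cl{*}$, $\cl{\sigma,\sigmab}$, $\cl{\tau}$ finite; $\cl{1,\bar{1}}$, $\cl{\rho,\rhob}$ infinite), with the forward direction resting on Proposition \ref{proposition-distinguishable-subseteq} exactly as in Corollary \ref{cor-1-rho-infinite}. If anything, your write-up is slightly more explicit than the paper's, which only enumerates the birthday-two cases with $1,\bar{1}\notin\cl{\chi}$ and leaves the remaining forward implication to the corollary that follows.
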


\begin{proof}
	Suppose $\chi = 0$.  Then $|\monoid{M}_{\cl{0}}| = 1$.  

	Suppose $\chi$ is a binary position with birthday one.  Then $\chi$ is either $1$, $\bar{1}$, or $*$.  Of these, $\monoid{M}_{\cl{1, \bar{1}}}$ is infinite by Corollary \ref{cor-1,bar1-infinite}, while $\monoid{M}_{\cl{*}}$ has cardinality 2 by Example \ref{example-*}.  
	
	Suppose $\chi$ is a binary position with birthday two such that $1, \bar{1} \not \in \cl{\chi}$.  Then, up to symmetry, $\chi$ is one of the following:
		\unitlength 8pt
		\begin{center}
		\begin{graph}(16,4)(0,0)
		\graphnodesize{0.4}
		\fillednodestrue

		\roundnode{e1}(2,4)
		\roundnode{e2}(1,2)
		\roundnode{e3}(0,0)
		\roundnode{e4}(2,0)
		
		\edge{e1}{e2}
		\edge{e2}{e3}
		\edge{e2}{e4}
		
		\freetext(3,1){,}
		
		\roundnode{f1}(7,4)
		\roundnode{f2}(9,2)
		\roundnode{f3}(5,2)
		\roundnode{f4}(4,0)
		\roundnode{f5}(6,0)
		\roundnode{f6}(8,0)
		\roundnode{f7}(10,0)
		
		\edge{f1}{f3}
		\edge{f1}{f2}
		\edge{f3}{f4}
		\edge{f3}{f5}
		\edge{f2}{f6}
		\edge{f2}{f7}
		
		\freetext(11,1){,}
		
		\roundnode{g1}(14,4)
		\roundnode{g2}(13,2)
		\roundnode{g3}(12,0)
		\roundnode{g4}(14,0)
		\roundnode{g5}(15,2)
		
		\edge{g1}{g2}
		\edge{g2}{g3}
		\edge{g2}{g4}
		\edge{g1}{g5}
		
		\freetext(16,1){.}
		\end{graph}
		\end{center}		
	The first position is $\sigma$.  By Corollary \ref{cor-sigma-finite}, $\monoid{M}_{\cl{\sigma, \sigmab}}$ has only two elements, so is finite.
	
	The second position is $\tau$.  By Corollary \ref{cor-tau-finite}, $\monoid{M}_{\cl{\tau,\taub}}$ has only two elements, so is finite.

	The third position is $\rho$.  By Corollary \ref{cor-B-infinite}, $\monoid{M}_{\cl{\rho,\rhob}}$ is infinite.
\end{proof}

\begin{corollary}\label{cor-1-rho-infinite}
	Let $\xi$ be any position (not necessarily binary).  If $1, \bar{1}, \rho$, or $\rhob \in \cl{\xi}$, then $\cl{\xi, \xib}$ under indistinguishability is infinite.
\end{corollary}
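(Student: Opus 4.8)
The plan is to reduce the general statement to the two infinite closures already computed, $\cl{1,\overline{1}}$ and $\cl{\rho,\rhob}$, by exhibiting one of them as a subset of $\cl{\xi,\xib}$ and then invoking the monotonicity of distinguishability from Proposition \ref{proposition-distinguishable-subseteq}. The first and most structural step is to observe that $\cl{\xi,\xib}$ is closed under conjugation. The generating set $\{\xi,\xib\}$ is conjugation-invariant, since $\overline{\overline{\xi}}=\xi$; moreover conjugation commutes with both closure operations, because the conjugate of an option is an option of the conjugate (Definition \ref{def-conjugate}) and $\overline{\alpha+\beta}=\overline{\alpha}+\overline{\beta}$ (as already used in the proof of Proposition \ref{prop->-<-conjugate}). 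Since the closure is built by recursively taking options and then arbitrary disjunctive sums, it follows that whenever a position lies in $\cl{\xi,\xib}$, so does its conjugate.

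Next I would split into cases according to which of the four named positions lies in $\cl{\xi}\subseteq\cl{\xi,\xib}$. If $1\in\cl{\xi}$, then $\overline{1}\in\cl{\xi,\xib}$ by the conjugation-closure just established, so $\cl{\xi,\xib}$ is a closed set containing both $1$ and $\overline{1}$; as $\cl{1,\overline{1}}$ is the smallest such set, we get $\cl{1,\overline{1}}\subseteq\cl{\xi,\xib}$. The case $\overline{1}\in\cl{\xi}$ is identical with the roles swapped, and the cases $\rho\in\cl{\xi}$ and $\rhob\in\cl{\xi}$ yield in exactly the same fashion $\cl{\rho,\rhob}\subseteq\cl{\xi,\xib}$.

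Finally I would transfer infinitude across the inclusion. Corollary \ref{cor-a1-b1-distinguishable} supplies the infinite family $\{\,a1 : a\in\mathbb{Z}^{\ge 2}\,\}$ of pairwise distinguishable positions in $\cl{1,\overline{1}}$, and Corollary \ref{cor-B-infinite} supplies an infinite pairwise-distinguishable family in $\cl{\rho,\rhob}$ (for instance the positions $(0,0,a)$). By Proposition \ref{proposition-distinguishable-subseteq}, each such distinguishable pair remains distinguishable once the ambient closed set is enlarged, so in every case $\cl{\xi,\xib}$ contains an infinite set of pairwise distinguishable positions; hence its indistinguishability quotient, and with it the \mis monoid, is infinite. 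I expect the only genuinely delicate point to be the conjugation-closure of $\cl{\xi,\xib}$, after which the argument is pure bookkeeping. It is worth noting that this step never uses that $\xi$ is binary or of bounded birthday, which is exactly why the corollary strengthens Theorem \ref{theorem-4-elements-infinite} with no case analysis on the shape of $\xi$.
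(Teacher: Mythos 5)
Your proposal is correct and follows essentially the same route as the paper: both locate $\cl{1,\overline{1}}$ (resp.\ $\cl{\rho,\rhob}$) inside $\cl{\xi,\xib}$ and then transfer the infinite family of pairwise distinguishable positions via Proposition \ref{proposition-distinguishable-subseteq}. The only difference is that you explicitly justify why the conjugate of a member of $\cl{\xi}$ lies in $\cl{\xi,\xib}$, a step the paper's proof asserts without comment.
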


\begin{proof}
	Suppose, for example, $1 \in \cl{\xi}$.  Then $1, \bar{1} \in \cl{\xi, \xib}$, and $\cl{1, \bar{1}} \subseteq \cl{\xi, \xib}$.  By Corollary \ref{cor-a1-b1-distinguishable}, $a1$ and $b1$ are distinguishable in $\cl{1, \bar{1}}$ for all $a \not = b$.  By Proposition \ref{proposition-distinguishable-subseteq}, $a1$ and $b1$ remain distinguishable in $\cl{\xi, \xib}$.   Thus, $\cl{\xi,\xib}$ contains an infinite number of distinguishable elements, and so the \mis monoid of $\cl{\xi,\xib}$ is infinite.
	
	The same argument works for $\rho$ or $\rhob$ being elements of $\cl{\xi}$. 
\end{proof}

Thus, given certain knowledge about the closure of a position, one can begin to determine whether the closure of that position with its conjugate is infinite.  However, are there elements other than $1, \bar{1}$, $\rho$, and $\rhob$ which give the result?   Unfortunately, as will be shown shortly, the answer is yes, meaning that while Corollary \ref{cor-1-rho-infinite} is a sufficient condition for an infinite \mis monoid, it is not necessary;  there are other binary positions $\alpha$ such that $\alpha \in \cl{\xi}$ implies the \mis monoid of $\cl{\xi}$ is infinite.  One such $\alpha$ will be shown in Section \ref{sec-closure-of-xi_L}.  

\section{The Closure of $\L(\xi)$}\label{sec-closure-of-xi_L}

We first define the following:

\begin{definition}\label{def-L(xi)}
	Suppose $\xi$ is a position.  Then $\L(\xi) = \combgame{\{\xi\mid \cdot\}}$\index{$\L(\xi)$}.  That is, $\L(\xi)$ is the position where Left's only move is to $\xi$ while Right has no move.  Similarly, $\R(\xi) = \{ \cdot \mid \xi\}$.  
\end{definition}

We examine $\L(\xi)$ because it is a position which is relatively similar to $\xi$ and a position which is fairly easy to analyse if $\monoid{M}_{\cl{\xi}}$ has already been determined.  

\begin{example}\label{example-L*=sigma}
	$\L(*) = \sigma$ (Definition \ref{def-sigma}).  
	
		\begin{figure}[htb]
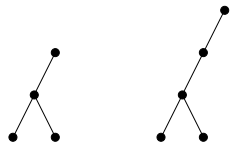

		\unitlength 8pt
		\begin{center}
		\begin{graph}(12,6)(-1,0)
		\graphnodesize{0.4}
		\fillednodestrue

		\roundnode{A}(1,4)
		\roundnode{B}(0,2)
		%\roundnode{C}(2,2)
		\roundnode{D}(-1,0)
		\roundnode{E}(1,0)

		%\edge{A}{C}
		\edge{A}{B}
		\edge{B}{D}
		\edge{B}{E}

		\roundnode{A1}(8,4)
		\roundnode{B1}(7,2)
		\roundnode{C1}(9,6)
		\roundnode{D1}(6,0)
		\roundnode{E1}(8,0)

		%\edge{A}{C}
		\edge{A1}{B1}
		\edge{B1}{D1}
		\edge{B1}{E1}
		\edge{A1}{C1}

		\end{graph}
		\caption{The game trees of $\sigma$ and $\L(\sigma)$.}
		\label{fig-gt-sigma-L}
		\end{center}	
		\end{figure}

		Modifying Proposition \ref{prop-n*+m-sigma+m-sigma_L} gives
			\[ o^-(n* + m \sigma) = \begin{cases}
			\Next &\text{if } n \equiv 0 \imod{2};\\
			\Prev &\text{if } n \equiv 1 \imod 2.
			\end{cases}\]
		Moreover, $\sigma \equiv 0 \imod{\cl{\sigma}}$, since, for arbitrary $n * + m \sigma$,
			\begin{align*}
				o^-((n * + m \sigma) + \sigma) &= o^-(n * 	+ (m+1) \sigma) \\&=\begin{cases}
					\Next &\text{if } n \equiv 0 \imod{2};\\
					\Prev &\text{if } n \equiv 1 \imod 2.
					\end{cases}\\
				&= o^-(n * + m \sigma).		
			\end{align*}
		Thus, via the mapping
			\begin{align*}
				0 &\mapsto 1;\\
				* &\mapsto a;\\
				\sigma &\mapsto 1;
			\end{align*}
		the following monoid is obtained
			\begin{align*}
			\monoid{M}_{\cl{\sigma}} &= \ideal{1,a \mid a^2=1} \\
			\Next &= \{1\} \\
			\Prev &= \{a\} \\
			\Left &= \emptyset \\
			\Right &= \emptyset,
			\end{align*}
		which is isomorphic to $\monoid{M}_{\cl{*}}$.   That is, $\monoid{M}_{\cl{*}} \cong \monoid{M}_{\cl{\L(*)}}$, and both are finite. 
\end{example}

Example \ref{example-L*=sigma} showed that with $\sigma$,  $\monoid{M}_{\cl{\sigma}}$ and $\monoid{M}_{\cl{\L(\sigma)}}$ are the same.  However, this may not always be the case.  In fact, the following Proposition shows that even if $\monoid{M}_{\cl{\xi}}$ is finite, $\monoid{M}_{\cl{\L(\xi)}}$ may be infinite.

\begin{proposition}\label{prop-sigma_L-infinite}
	 $\monoid{M}_{\cl{\L(\sigma)}}$ is infinite.   
\end{proposition}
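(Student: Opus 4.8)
The plan is to compute the misère outcome of every position in $\cl{\L(\sigma)}$ and then exhibit an explicit infinite family of pairwise distinguishable elements. First I would pin down the closure: since $\L(\sigma) = \combgame{\{\sigma \mid \cdot\}}$, taking options repeatedly produces $\sigma$, then $*$, then $0$, so every element of $\cl{\L(\sigma)}$ has the form $n * + m \sigma + k \L(\sigma)$ with $n,m,k \in \mathbb{Z}^{\ge 0}$. The decisive structural feature is the asymmetry of the available moves: Left may delete a $*$ (reaching $(n-1,m,k)$), unwind a $\sigma$ to $*$ (reaching $(n+1,m-1,k)$), or unwind an $\L(\sigma)$ to $\sigma$ (reaching $(n,m+1,k-1)$), whereas Right has \emph{no} move in a $\sigma$ or an $\L(\sigma)$ and can only delete a $*$ (reaching $(n-1,m,k)$).

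The heart of the argument is the following outcome formula, which I would prove by induction on birthday:
\[ o^-(n * + m \sigma + k \L(\sigma)) = \begin{cases} \Right & \text{if } n < k;\\ \Next & \text{if } n \ge k \text{ and } n \equiv k \imod 2;\\ \Prev & \text{if } n \ge k \text{ and } n \not\equiv k \imod 2. \end{cases}\]
Note that this specialises at $k=0$ to the outcome of $\cl{\sigma}$ recorded in Example \ref{example-L*=sigma}, and that the outcome is independent of $m$ (so in fact $\sigma \equiv 0 \imod{\cl{\L(\sigma)}}$). The induction splits into the three displayed cases; in each I would verify separately whether Left wins moving first and whether Right wins moving first, and read off the class accordingly ($\Next$ when both win, $\Prev$ when neither wins, and $\Left$ or $\Right$ in the mixed situations). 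Because Right's only move decrements $n$, his side of the analysis is short; the content lies in Left's three move types, and the case boundaries $n=k$ and $n=k-1$ are precisely where a Left move crosses between the $\Right$ region and the $\Next$/$\Prev$ region.

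With the formula in hand, infinitude is immediate. Consider the family $\{k \L(\sigma) : k \in \mathbb{Z}^{\ge 0}\}$. For $k_1 < k_2$, adding the position $k_1 *$ (which lies in $\cl{\L(\sigma)}$ since $* \in \cl{\L(\sigma)}$) gives $o^-(k_1 * + k_1 \L(\sigma)) = \Next$, because here $n = k = k_1$, while $o^-(k_1 * + k_2 \L(\sigma)) = \Right$, because $n = k_1 < k_2 = k$. Hence $k_1 *$ distinguishes $k_1 \L(\sigma)$ from $k_2 \L(\sigma)$, so the positions in the family are pairwise distinguishable. An infinite set of pairwise distinguishable elements forces $\monoid{M}_{\cl{\L(\sigma)}}$ to be infinite, exactly as in Corollary \ref{cor-1,bar1-infinite}.

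The main obstacle is the outcome formula itself: it is a three-parameter induction whose only subtle points are tracking how the parity condition $n \equiv k \imod 2$ transforms under each of Left's three moves, and handling the $n=0$ and $k=0$ boundary inputs, where a player with no move wins under the misère convention and which form the base of the induction. Everything after the formula is routine. It is worth remarking that this establishes $\L(\sigma)$ as a binary position outside the list $1, \bar{1}, \rho, \rhob$ whose presence forces an infinite \mis monoid, which is the promised witness that Corollary \ref{cor-1-rho-infinite} is sufficient but not necessary.
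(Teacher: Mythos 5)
Your proposal is correct and follows essentially the same route as the paper: the identical three-parameter outcome formula for $n\, * + m\, \sigma + k\, \L(\sigma)$, proved by induction on options, followed by the same distinguishing family $\{k\,\L(\sigma)\}$ separated by adding $k_1\, *$. The only cosmetic difference is your observation that the formula specialises at $k=0$ to the $\cl{\sigma}$ outcomes, which the paper does not remark on but which is consistent with its argument.
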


\begin{proof}
	We will show the following: For $c \ge 1$,
			\[ o^-(a  * + b  \sigma + c  \L(\sigma)) = \begin{cases}
				\Right &\text{if } a < c; \\
				\Next &\text{if } a \ge c,\text{ } a \equiv c \imod 2;\\
				\Prev &\text{if } a \ge c,\text{ } a \not \equiv c \imod 2.
			\end{cases}
			\]
	Assuming the outcome statement is true and we are given $c  \L(\sigma)$, $c^{\prime}  \L(\sigma)$ for $c, c^{\prime} \in \nat$ with $c \not = c^{\prime}$.  Moreover, without loss of generality, we may assume that $c < c^{\prime}$.  Then $o^-(c  * + c  \L(\sigma)) = \Next$ while $o^-(c  * + c^{\prime}  \L(\sigma)) = \Right$.  Therefore the two positions are distinguished by $c  *$ and so $\monoid{M}_{\cl{\L(\sigma)}}$ is infinite. 

	It remains to show the outcome statement.  We proceed by induction on the options.

	\begin{notation}
		Similar to Theorem \ref{theorem-B-oc}, let $(a,b,c)$ denote the position $a  * + b  \sigma + c  \L(\sigma)$.  
	\end{notation}
			
	Consider the position 0.  Then $a=b=c=0$, so the outcome statement gives $o^-((0,0,0)) = \Next$, which is true.
	
	Consider the position $(a,b,c)$ and suppose the outcome statement is true for all options.  Then $(a,b,c)$ must fall into one of the three cases given in the outcome statement above.
	\begin{enumerate}
		\item $a<c$:  Suppose Left is moving first.  She has three possible moves:
			\begin{enumerate}
				\item $(a-1, b, c)$:  Since $a<c$, we have $a-1 < c$.  By induction, $o^-((a-1, b, c)) = \Right$.
				
				\item $(a+1, b-1, c)$:  If $a+1 < c$, by induction we have $o^-((a+1,b-1,c)) = \Right$.  Otherwise $a+1 \ge c$.  Since $a < c$, this implies that $c= a+1$, and so, Left has moved to position $(c, b-1, c)$.  By induction $o^-((c,b-1,c)) = \Next$.
				
				\item $(a, b+1, c-1)$:  If $a < c-1$, by induction we have $o^-(a,b+1,c-1)) = \Right$.  Otherwise $a \ge c-1$.  Since $a< c$, this implies that $a = c-1$, and so, Left has moved to position $o^-((a,b+1,a))$, which is an $\Next$ position by induction.
			\end{enumerate}
		Therefore Left loses moving first.
		
		Suppose Right moves first.  Since $a<c$, this means that $c \ge 1$.  If $a=0$, $c \ge 1$, then Right has no moves available and so wins moving first.  Otherwise $a \ge 1$ also.  Then Right moving to $(a-1,b,c)$ is a winning move as $a<c$ implies $a-1 < c$.  Therefore Right wins moving first.
		
		Thus if $a<c$, $o^-((a,b,c)) = \Right$.
		
		\item $a \ge c$, $a \equiv c \imod 2$:  Suppose Left is moving first.  If $a=c=0$, either $b=0$, and so Left wins moving next, or $b \ge 1$.  Then Left moves to $(1,b-1,0)$, which is a $\Prev$ position by induction.  Otherwise $a \ge 1$.  Suppose Left moves to $(a-1,b,c)$.  Then, by induction, $o^-((a-1,b,c)) = \Prev$ unless $a-1<c$.  Suppose $a-1<c$.  Since $a \ge c$, this means $a = c$ and the initial position is $(a,b,a)$.  Since $a \ge 1$, Left can move to $(a, b+1, a-1)$, which is a $\Prev$ position by induction.  Therefore Left can win $(a,b,c)$ moving first.
		
		Suppose Right is moving first.  If $a=c=0$, then Right wins as he has no moves available.  Thus, we may assume $a \ge 1$.  Again, if Right moves to $(a-1,b,c)$, then this is a $\Prev$ position by induction unless $a-1 <c$, in which case it is an $\Right$ position.  Therefore Right can win $(a,b,c)$ moving first.
		
		Thus is $a \ge c$ and $a \equiv c \imod 2$, then $o^-((a,b,c)) = \Next$.
		
		\item $a \ge c$, $a \not \equiv c \imod 2$:  Suppose Left is moving first.  She has three possible moves:
			\begin{enumerate}
				\item $(a-1,b,c)$:  Since $a \not \equiv c \imod 2$, we have $a-1 \equiv c \imod 2$, and so, by induction, $o^-((a-1,b,c)) = \Next$ unless $a=c$, in which case $o^-((a-1,b,c)) = \Right$ by induction.
				
				\item $(a+1,b-1,c)$:  Since $a \ge c$, we have $a+1 \ge c$.  Since $a \not \equiv c \imod 2$, we have $a+1 \equiv c \imod 2$.  Therefore, by induction, $o^-((a+1,b,c)) = \Next$.
				
				\item $(a, b+1, c-1)$:  Since $a \ge c$, we have $a \ge c-1$.  Since $a \not \equiv c \imod 2$, we have $a \equiv c-1 \imod 2$.  Therefore, by induction, $o^-((a, b+1,c-1)) = \Next$.
			\end{enumerate}
		Therefore Left loses moving first.
		
		Suppose Right is moving first.  Unless $a=0$, Right has only one move available, to $(a-1,b,c)$.  Then, by induction, $o^-((a-1,b,c)) = \Next$ unless $a-1 < c$.  However, since $a \ge c$, this implies $a=c$, contradicting $a \not \equiv c \imod 2$.  Therefore $o^-((a-1,b,c)) = \Next$.  
		
		If $a=0$, then $c=0$, contradicting $a \not \equiv c \imod 2$.  Therefore Right loses moving first.
		
		Thus if $a \ge c$, $a \not \equiv c \imod 2$, we have $o^-((a,b,c)) = \Prev$.
	\end{enumerate}	
	
	This completes the induction.
\end{proof}

\begin{corollary}
	If $\L(\sigma)$ or  $\R(\sigma)$ is an element of $\cl{\xi}$ for some position $\xi$, then $\monoid{M}_{\cl{\xi}}$ is infinite.  
\end{corollary}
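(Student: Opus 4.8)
The plan is to imitate the proof of Corollary~\ref{cor-1-rho-infinite}: to show a \mis monoid is infinite it suffices to exhibit an infinite family of pairwise distinguishable positions sitting inside the relevant closure. Since $\cl{\xi}$ is closed and contains $\L(\sigma)$ (respectively $\R(\sigma)$), minimality of the closure gives $\cl{\L(\sigma)} \subseteq \cl{\xi}$ (respectively $\cl{\R(\sigma)} \subseteq \cl{\xi}$). Thus it is enough to produce the infinite distinguishable family inside the smaller closure and then transport it upward, because distinguishable positions remain distinguishable in any larger closed set by Proposition~\ref{proposition-distinguishable-subseteq}.

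First I would dispatch the case $\L(\sigma) \in \cl{\xi}$. Proposition~\ref{prop-sigma_L-infinite} already does the essential work: its proof shows that the positions $c\,\L(\sigma)$, for $c \in \nat$, are pairwise distinguishable over $\cl{\L(\sigma)}$, the distinguishing witnesses $c\,*$ all lying in $\cl{\L(\sigma)}$ since $* \in \cl{\L(\sigma)}$. Invoking Proposition~\ref{proposition-distinguishable-subseteq} with $\cl{\L(\sigma)} \subseteq \cl{\xi}$, these same positions stay pairwise distinguishable over $\cl{\xi}$, so $\cl{\xi}$ contains infinitely many distinguishable elements and $\monoid{M}_{\cl{\xi}}$ is infinite.

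For the case $\R(\sigma) \in \cl{\xi}$ the main obstacle is that one cannot simply quote Proposition~\ref{prop-sigma_L-infinite}. Conjugation (Definition~\ref{def-conjugate}) is an outcome-respecting involution — by Theorem~\ref{theorem-outcome-class-conj} it swaps $\Left$ and $\Right$ while fixing $\Next$ and $\Prev$ — and it commutes with disjunctive sum, hence it sends distinguishable pairs to distinguishable pairs and induces a bijection $\cl{\R(\sigma)} \to \cl{\overline{\R(\sigma)}} = \cl{\L(\sigmab)}$ that preserves (in)distinguishability. So $\monoid{M}_{\cl{\R(\sigma)}}$ is infinite if and only if $\monoid{M}_{\cl{\L(\sigmab)}}$ is. The difficulty is that $\sigma$ is not self-conjugate: since $\overline{\L(\sigma)} = \R(\sigmab)$, conjugating Proposition~\ref{prop-sigma_L-infinite} produces the statement for $\R(\sigmab)$, and the conjugate pair $\{\R(\sigma), \L(\sigmab)\}$ is disjoint from $\{\L(\sigma), \R(\sigmab)\}$. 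Consequently no amount of conjugation reduces the $\R(\sigma)$ case to the already-proved $\L(\sigma)$ case, and fresh computation is unavoidable.

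I would therefore redo the induction on options, mirroring Proposition~\ref{prop-sigma_L-infinite}: compute the outcome of the general element $a\,* + b\,\sigma + c\,\R(\sigma)$ of $\cl{\R(\sigma)}$, and from the resulting outcome formula read off that the positions $c\,\R(\sigma)$, $c \in \nat$, are pairwise distinguishable (the distinguishing witnesses being drawn from among the $c\,*$, by analogy with the $\L(\sigma)$ computation, which is the point that must actually be verified rather than quoted). Carrying out this reflected case analysis is where the real work lies. Once the infinite distinguishable family in $\cl{\R(\sigma)}$ is in hand, Proposition~\ref{proposition-distinguishable-subseteq} lifts distinguishability from $\cl{\R(\sigma)}$ to $\cl{\xi}$ exactly as in the $\L(\sigma)$ case, and $\monoid{M}_{\cl{\xi}}$ is again infinite.
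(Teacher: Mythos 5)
Your treatment of the $\L(\sigma)$ case is correct and is exactly what the paper intends: Proposition~\ref{prop-sigma_L-infinite} supplies the infinitely many pairwise distinguishable positions $c\,\L(\sigma)$ with witnesses $c\,*$ inside $\cl{\L(\sigma)} \subseteq \cl{\xi}$, and Proposition~\ref{proposition-distinguishable-subseteq} transports distinguishability upward, just as in Corollary~\ref{cor-1-rho-infinite}. Your observation that $\overline{\L(\sigma)} = \R(\sigmab) \neq \R(\sigma)$, so that the second half of the statement cannot be obtained by conjugating the first, is also correct, and it is sharper than the paper, which gives this corollary no proof and evidently intends exactly that conjugation argument.

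The gap is in your fallback plan for $\R(\sigma)$: you defer ``the real work'' of computing $o^-(a\,* + b\,\sigma + c\,\R(\sigma))$ and reading off an infinite distinguishable family, but that computation produces no such family. From $(a,b,c) = a\,* + b\,\sigma + c\,\R(\sigma)$ Left moves to $(a-1,b,c)$ or $(a+1,b-1,c)$ and Right moves to $(a-1,b,c)$ or $(a,b+1,c-1)$, and an induction on options in the style of Proposition~\ref{prop-sigma_L-infinite} gives
\[
o^-\bigl(a\,* + b\,\sigma + c\,\R(\sigma)\bigr) = \begin{cases} \Left &\text{if } a = b = 0 \text{ and } c \equiv 1 \imod 2;\\ \Next &\text{otherwise, if } a + c \equiv 0 \imod 2;\\ \Prev &\text{otherwise.}\end{cases}
\]
Since this depends only on the parities of $a+c$ and $c$ and on whether $a=b=0$, one reads off $2\,\R(\sigma) \equiv 0 \imod{\cl{\R(\sigma)}}$, so the family $c\,\R(\sigma)$ collapses to the two classes of $0$ and $\R(\sigma)$; in fact $\monoid{M}_{\cl{\R(\sigma)}}$ has exactly four elements. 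Taking $\xi = \R(\sigma)$ itself therefore satisfies the hypothesis while violating the conclusion: with the paper's literal Definition~\ref{def-L(xi)}, $\R(\sigma) = \combgame{\{\cdot\mid\sigma\}}$, the stated corollary is false, and no completion of your computation can rescue it. The position the corollary must mean is $\overline{\L(\sigma)} = \R(\sigmab) = \combgame{\{\cdot\mid\sigmab\}}$, for which the conjugation-plus-Proposition~\ref{proposition-distinguishable-subseteq} argument you describe (and rightly note is unavailable for $\R(\sigma)$) goes through verbatim. The correct fix is not more computation but replacing $\R(\sigma)$ by $\R(\sigmab)$ in the statement.
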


We can now extend our list of positions which give an infinite monoid.

\begin{corollary}\label{cor-cl-xi-infinite}
	If the following are elements of $\cl{\xi}$ for some $\xi$, then $\monoid{M}_{\cl{\xi}}$ is infinite:
		\begin{itemize}
			\item $1$ and $\overline{1}$,
			\item $\rho$ and $\overline{\rho}$,
			\item $\L(\sigma)$,
			\item $\R(\sigma)$.
		\end{itemize}
\end{corollary}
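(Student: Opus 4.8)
The plan is to handle the three kinds of hypotheses separately, in each case locating inside $\cl{\xi}$ a closed subset already known to carry infinitely many pairwise distinguishable positions, and then pushing that distinguishability up to $\cl{\xi}$ by Proposition \ref{proposition-distinguishable-subseteq}. Since $\cl{\xi}$ is closed under both addition and the taking of options, whenever the stated generators lie in $\cl{\xi}$ so does the whole closure they generate; this is the only structural fact needed to set each case up.

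First I would treat the pair $1, \bar{1}$. If both lie in $\cl{\xi}$ then $\cl{1, \bar{1}} \subseteq \cl{\xi}$, and Corollary \ref{cor-a1-b1-distinguishable} already gives the infinite family $\{a1 : a \ge 2\}$ of pairwise distinguishable elements of $\cl{1, \bar{1}}$. By Proposition \ref{proposition-distinguishable-subseteq} the distinguishing witnesses survive in the larger closed set $\cl{\xi}$, so $\monoid{M}_{\cl{\xi}}$ has infinitely many classes. The pair $\rho, \rhob$ runs identically, using $\cl{\rho, \rhob} \subseteq \cl{\xi}$ together with the infinite distinguishable family (for instance the multiples $a\rhob$) supplied by Corollary \ref{cor-B-infinite}. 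Note this is a touch stronger than Corollary \ref{cor-1-rho-infinite}: having both conjugates present in $\cl{\xi}$ lets us conclude infiniteness of $\cl{\xi}$ itself, with no detour through $\cl{\xi, \xib}$.

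For $\L(\sigma)$ the same template works directly from Proposition \ref{prop-sigma_L-infinite}: $\cl{\L(\sigma)} \subseteq \cl{\xi}$, and that proposition exhibits the multiples $c\L(\sigma)$ as pairwise distinguishable (each pair separated by a suitable $c*$), which Proposition \ref{proposition-distinguishable-subseteq} again preserves. In any event, the cases $\L(\sigma)$ and $\R(\sigma)$ are exactly the content of the Corollary stated immediately after Proposition \ref{prop-sigma_L-infinite}, so the cleanest write-up simply cites it.

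The one place deserving care is $\R(\sigma)$. It is tempting but wrong to obtain it from $\L(\sigma)$ by conjugation: a quick computation gives $\overline{\L(\sigma)} = \R(\sigmab)$, not $\R(\sigma)$, so Proposition \ref{prop-sigma_L-infinite} does not transfer to $\R(\sigma)$ through Theorem \ref{theorem-outcome-class-conj} alone, and thus $\R(\sigma)$ genuinely requires its own input. The economical route is to invoke the already-stated corollary covering it; a self-contained route would instead rerun the induction of Proposition \ref{prop-sigma_L-infinite} in mirror image, proving an outcome formula for $a* + b\sigma + c\R(\sigma)$ and reading off infinitely many distinguishable multiples $c\R(\sigma)$. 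Either way the final assembly is routine, so the substance of the corollary lies entirely in the earlier infiniteness results rather than in any new argument here.
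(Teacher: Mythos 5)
Your assembly is exactly what the paper intends: the corollary is stated without proof as a collection of Corollary \ref{cor-a1-b1-distinguishable}, Corollary \ref{cor-B-infinite}, Proposition \ref{prop-sigma_L-infinite}, and Proposition \ref{proposition-distinguishable-subseteq}, and your case-by-case argument reproduces that assembly correctly. Your observation that $\overline{\L(\sigma)} = \R(\sigmab) \neq \R(\sigma)$, so the $\R(\sigma)$ case cannot be obtained from Proposition \ref{prop-sigma_L-infinite} by conjugation alone, is accurate and pinpoints a step the paper itself leaves unproved; your proposed mirrored induction giving an outcome formula for $a\,{*} + b\,\sigma + c\,\R(\sigma)$ is the right way to close it.
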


We have the following open problem:

\begin{openproblem}\label{op-chi-finite}
	Classify those positions $\chi$ where $\monoid{M}_{\cl{\chi}}$ is a finite/infinite monoid.
\end{openproblem}

\section{The Closure of $\L(\tau^n)$}\label{section-cl{tau_L}}

Proposition \ref{prop-sigma_L-infinite} showed that there exist positions $\xi$ where $\monoid{M}_{\cl{\xi}}$ is finite while $\monoid{M}_{\cl{\L(\xi)}}$ is infinite.  This section gives a set of positions $\chi$ such that $\monoid{M}_{\cl{\L(\chi)}}$ is finite (note that if $\monoid{M}_{\cl{\L(\chi)}}$ is finite then $\monoid{M}_{\cl{\chi}}$ must also be finite).  

Recall from earlier that $\tau$ is the position in which both Right and Left have one move to $*$.  

\begin{definition}\label{def-tau^n}
	$\tau^n$ is the position in which both Left and Right have one move each to the position $\tau^{n-1}$ with $\tau^0 = *$. 
\end{definition}

\begin{example}\text{}
	\begin{align*}
		\tau^0 &= *,\\
		\tau^1 &= \combgame{\{*\mid*\}},\\
		\tau^2 &= \left\{\combgame{\{*\mid*\}}\mid\combgame{\{*\mid*\}}\right\}.
	\end{align*}
\end{example}

We can easily determine the outcome classes of arbitrary sums of $\tau^n$:

\begin{proposition}\label{prop-tau^i-oc}
	\[ o^-\left(\sum_{i=0}^n a_i  \tau^i\right) = \begin{cases}
	\Next &\text{if } \displaystyle	\left(\sum_{\substack{i=0,\\i \equiv 0 \imod 2}}^n a_i \right) \equiv 0 \imod 2;\vspace{0.2in}\\
	\Prev &\text{if } \displaystyle	\left( \sum_{\substack{i=0,\\i \equiv 0 \imod 2}}^n a_i \right) \equiv 1 \imod 2.
	\end{cases}\]
	That is, a position is a $\Prev$ position if the sum of the co-ordinates with even indices is odd.  Otherwise it is a $\Next$ position.
\end{proposition}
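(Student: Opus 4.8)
The plan is to establish the formula directly by induction on the options (equivalently, on birthday), in the same style as Propositions \ref{prop-n*+m-sigma+m-sigma_L} and \ref{prop-n*+m.tau}. Because each $\tau^i$ is impartial, every disjunctive sum $\sum_{i=0}^n a_i \tau^i$ is impartial as well, so its misère outcome lies in $\Next \cup \Prev$ and one need only decide which. Throughout I would write $S = \sum_{i \equiv 0 \imod{2}} a_i$ for the sum of the coefficients carrying an even index; the claim is then precisely that the position is $\Prev$ when $S$ is odd and $\Next$ when $S$ is even.

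The key step would be a single parity observation: \emph{every} legal move flips the parity of $S$. From $\sum_{i=0}^n a_i \tau^i$ a move is one of the following. A move in a copy of $\tau^0 = *$ sends it to $0$, lowering $a_0$ by one; since $0$ is even this decreases $S$ by one. A move in a copy of $\tau^i$ with $i \ge 1$ replaces it by $\tau^{i-1}$, lowering $a_i$ by one and raising $a_{i-1}$ by one; if $i$ is even then $a_i$ is counted in $S$ and $a_{i-1}$ is not, so $S$ decreases by one, whereas if $i$ is odd then $a_{i-1}$ is counted and $a_i$ is not, so $S$ increases by one. In every case $S$ changes by exactly $\pm 1$, so its parity reverses.

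Granting this invariant, the induction closes almost immediately. For the base case, the terminal position is $0$ (all $a_i = 0$), which has $S = 0$ even and satisfies $o^-(0) = \Next$, matching the formula. For the inductive step, assuming the formula for every option: if $S$ is odd then some even-index coefficient is positive, so the position is non-terminal and has at least one option, and by the parity observation every option has $S$ even, hence is $\Next$ by induction — a non-terminal impartial position all of whose options are $\Next$ is $\Prev$. If instead $S$ is even and the position is terminal it equals $0 \in \Next$, while if $S$ is even and the position is non-terminal then any option has $S$ odd and so is $\Prev$ by induction, and the existence of a $\Prev$ option forces the position into $\Next$. This yields exactly the stated dichotomy.

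I expect the only genuine difficulty to be conceptual rather than computational: recognising that it is $S$, the parity of the even-indexed coefficients, and not the total coefficient count or a Grundy-type value, that controls the outcome. Once that invariant is isolated, the fact that each move toggles $S$ makes the induction a formality, and in particular the usual misère subtlety — that the empty position counts as $\Next$ — enters only through the single base case $0$.
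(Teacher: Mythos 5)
Your proposal is correct and follows essentially the same route as the paper: induction on the options, anchored by the observation that every legal move flips the parity of the sum $S$ of the even-indexed coefficients, with the terminal position $0$ supplying the base case. If anything you are slightly more careful than the paper's own write-up, which only explicitly treats moves in $\tau^k$ for $k \ge 1$ and leaves the move $* \to 0$ implicit.
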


\begin{proof}
	We proceed by induction on the options.
	
	Suppose the position is 0, i.e.\ $a_i = 0$ for every $i$.  Hence
		\[ \sum_{\substack{i=0,\\i \equiv 0 \imod 2}}^n a_i \equiv 0 \imod 2,\]
	and so the proposition claims that $o^-(0) = \Next$.  This is indeed the case, and so the base case holds.
	
	Suppose now we have the position 
		\[\sum_{i=0}^n a_i \tau^i\]
	where there exists $k \in \{1,2,\ldots,n\}$ such that $a_k \not =0$ and that the outcome of all options of this position is as given in the statement of the proposition.   Take $a_k \not = 0$ and move  $a_k \tau^k$ to $\tau^{k-1} + (a_k-1) \tau^k$.   Making this move means that the sum of the even indexed indices changes parity, i.e.\ if it was 1 before, it is now 0, and vice versa.  Thus, by induction, the first player moves to a $\Prev$ position if the initial sum of the even numbered indices was 0 or to an $\Next$ position if the initial sum of the even numbered indices was 1.  This gives the desired result.
\end{proof}

Note that $\tau^n$ is an impartial position for any $n \in \mathbb{Z}^{\ge 0}$.  Because of this, we could have also shown the result for Proposition \ref{prop-tau^i-oc} using the traditional method for examining impartial \mis play games, namely Genus Theory (for a full explanation of Genus Theory see \cite{MTHESIS, WW2, ONAG}).  

We will now begin examining elements found in $\cl{\L(\tau^k)}$ for some $k \in \nat$.  

\begin{theorem}\label{theorem-2n+1-tauL}
	\[o^-\left(\sum_{i=0}^{2n+1} a_i  \tau^i + b  (\L(\tau^{2n+1}))\right) = \begin{cases}
	\Right &\text{if } a_i = 0 \text{ } \forall \, i, \text{ } b \equiv 1 \imod 2;\\
	\Next &\text{if } \displaystyle \left[ \sum_{\substack{i=0,\\i \equiv 0 \imod 2}}^{2n+1} a_i\right] + b \equiv 0 \imod 2;\vspace{0.2in}\\
	\Prev &\text{if } \displaystyle  \left[\sum_{\substack{i=0,\\i \equiv 0 \imod 2}}^{2n+1} a_i\right] + b \equiv 1 \imod 2 \text{ and } \exists \, a_i \not = 0;
	\end{cases}\]
\end{theorem}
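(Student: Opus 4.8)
The plan is to prove all three cases simultaneously by induction on the options, exactly in the style of Proposition~\ref{prop-sigma_L-infinite} and Theorem~\ref{theorem-B-oc}. Every position in $\cl{\L(\tau^{2n+1})}$ has the form $\sum_{i=0}^{2n+1} a_i \tau^i + b \L(\tau^{2n+1})$ with $a_i, b \in \mathbb{Z}^{\ge 0}$, since the only options ever produced are $\tau^{i-1}$ from $\tau^i$, $0$ from $\tau^0 = *$, and $\tau^{2n+1}$ from $\L(\tau^{2n+1})$; this closure property is what lets the induction hypothesis apply to every option. I would introduce the single parity invariant
\[ T \equiv \left(\sum_{\substack{i=0\\ i \equiv 0 \imod 2}}^{2n+1} a_i\right) + b \imod 2, \]
so that the three cases read: Case A ($\Right$) is ``$a_i = 0$ for all $i$ and $T \equiv 1$''; Case B ($\Next$) is ``$T \equiv 0$''; and Case C ($\Prev$) is ``$T \equiv 1$ and some $a_i \neq 0$''. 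These are exhaustive and mutually exclusive once one notes that when every $a_i = 0$ one has $T \equiv b$. Taking $b = 0$ recovers Proposition~\ref{prop-tau^i-oc}, a useful consistency check.

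The engine of the proof is the observation I would isolate as a lemma: \emph{every legal move, by either player, reverses the parity of $T$}. This is checked against the three move types. A move $\tau^i \to \tau^{i-1}$ (for $i \ge 1$) decrements $a_i$ and increments $a_{i-1}$, and exactly one of $i, i-1$ is even, so the even-index sum changes by $\pm 1$ and $T$ flips. A move $\tau^0 = * \to 0$ decrements $a_0$ (an even index), flipping $T$. Finally Left's move $\L(\tau^{2n+1}) \to \tau^{2n+1}$ decrements $b$ and increments $a_{2n+1}$; since $2n+1$ is \emph{odd}, the even-index sum is unchanged and only $b$ flips, so $T$ flips again. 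The parity of the top index is precisely what makes this uniform, which is why the theorem is stated for $\tau^{2n+1}$ rather than $\tau^{2n}$.

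With the lemma in hand the case analysis uses the elementary misère criteria (given that, by induction, every option lies in $\{\Next, \Prev, \Right\}$): the player to move wins iff they have no move at all, or — for Left — can move to a $\Prev$ position, or — for Right — can move to a $\Prev$ or a $\Right$ position. In Case C every move lands (by the lemma) in a position with $T \equiv 0$, hence in Case B $= \Next$; since some $a_i \neq 0$ both players do have a move, so both lose moving first and the position is $\Prev$. In Case B, Left always has either no move (the zero position, a misère win) or a move to a $\Prev$ position (convert an $\L(\tau^{2n+1})$, or play in some $\tau^i$), while Right either has no move (the all-zero, $b$ even subcase) or a move to a $\Prev$ or $\Right$ position; thus both win moving first and the outcome is $\Next$. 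In Case A the position is $b \L(\tau^{2n+1})$ with $b$ odd: Right to move has no move and wins by misère, whereas Left's only moves go to Case B $= \Next$, so Left to move loses — hence $\Right$.

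The step I expect to be the main obstacle is the bookkeeping around the move $* \to 0$ together with the asymmetry that Right cannot convert $\L(\tau^{2n+1})$. When the last remaining $*$ is removed, all $\tau$-components can vanish, so the resulting position is a pure pile $b \L(\tau^{2n+1})$ that may fall into Case A rather than Case B. I would handle this by checking carefully in Case B (Right moving first) that Right's forced $* \to 0$ move can land in the $\Right$ pile $b \L(\tau^{2n+1})$ with $b$ odd, which is still a \emph{win} for Right because a $\Right$ position is lost by whoever must move next — here Left. Verifying that these clearing moves always land in a position favourable to the mover, and that Right has a legal move exactly when some $a_i \neq 0$, is the only place where the three cases genuinely interact, and is where I would spend the most care.
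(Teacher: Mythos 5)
Your proposal is correct and follows essentially the same route as the paper's own proof: induction on the options together with the observation that every move flips the parity of the even-indexed sum plus $b$ (the $2n+1$ being odd is exactly what keeps the $\L(\tau^{2n+1}) \to \tau^{2n+1}$ move parity-reversing). Your explicit isolation of the parity-flip lemma and your careful handling of the clearing move $* \to 0$ landing in the all-zero pile $b\,\L(\tau^{2n+1})$ are, if anything, slightly more scrupulous than the paper's treatment of that edge case.
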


\begin{proof}
	Proceed by induction on the options of a position.  
	
	Consider 0.  Then $a_i = 0$ for all $i$ and $b=0$.  Therefore
		 \[ \left[\sum_{\substack{i=0,\\i \equiv 0 \imod 2}}^{2n+1} a_i\right] + b \equiv 0 \imod 2,\]
	and so the outcome statement gives $o^-(0) = \Next$, which is true.  
	
	Consider a position 
		\[\sum_{i=1}^{2n+1} a_i \tau^i + b (\L(\tau^{2n+1}))\]
	such that 
		\[\sum_{i=1}^{2n+1} a_i + b \not = 0\] 
	and the induction hypothesis holds for all options.  Then this position must fall into one of the three cases given in the theorem.
		\begin{enumerate}
			\item $a_i = 0$ for all $i$, $b \equiv 1 \imod 2$:  Suppose Left moves first.  She only has one available move, to $\tau^{2n+1} + (b-1)(\L(\tau^{2n+1}))$.  In this new position, the sum of the even numbered indices is 0.  Since $b \equiv 1 \imod 2$, we have $b-1 \equiv 0 \imod 2$.  Thus, by induction, 
				\[o^-(\tau^{2n+1} + (b-1)(\L(\tau^{2n+1})) = \Next.\]
			Therefore Left loses moving first.
			
			If Right moves first in $b (\L(\tau^{2n+1}))$, then Right has no moves available, so Right wins.
			
			Therefore $o^-(b(\L(\tau^{2n+1}))) = \Right$.
			
			\item $\displaystyle \left[ \sum_{\substack{i=0,\\i \equiv 0 \imod 2}}^{2n+1} a_i\right] + b \equiv 0 \imod 2$:  Firstly, suppose there exists $k \in \{1,2,\ldots,n\}$ such that $a_k \not = 0$.  Then the next player to move can move $a_k \tau^k$ to $\tau^{k-1} + (a_k-1) \tau^k$.  This changes the parity of the sum of $b$ and the even numbered indices from 0 to 1.  Thus, by induction, the next player has moved to a $\Prev$ position.  Therefore, the initial position is  $\Next$.
			
			Suppose now that $a_i = 0$ for all $i \in \{1,2,\ldots,n\}$.  Since $\displaystyle \left[ \sum_{\substack{i=0,\\i \equiv 0 \imod 2}}^{2n+1} a_i\right] + b \equiv 0 \imod 2$, this means that $b \equiv 0 \imod 2$.  Suppose Left moves first.  Left's only available move is to $\tau^{2n+1} + (b-1)(\L(\tau^{2n+1}))$.  In this new position, the sum of $b-1$ and the even numbered indices is equivalent to $1 \imod 2$.  By induction, this new position is a $\Prev$ position, so Left wins moving first.  If Right moves first in the initial position, Right has no available moves, so he wins.  Therefore the initial position is $\Next$.
			
			Thus, in both cases, we get that the initial position is an $\Next$ position.
			
			\item $\displaystyle \left[\sum_{\substack{i=0,\\i \equiv 0 \imod 2}}^{2n+1} a_i\right] + b \equiv 1 \imod 2$ and there exists an $a_k \not = 0$:   Take $k \in \{1,2,\ldots,n\}$ such that $a_k \not = 0$.  If the next player to move moves $a_k \tau^k$ to $\tau^{k-1} + (a_k-1) \tau^k$, then the parity of the sum of $b$ and the even numbered indices changes from 1 to 0, so, by induction, the new position is an $\Next$ position, and so the first player to move loses.
			
			Suppose instead that the first player to move plays in $b (\L(\tau^{2n+1}))$.  Right has no moves available, so we need only concern ourselves with Left.  Suppose Left moves to $\tau^{2n+1} + (b-1) (\L(\tau^{2n+1}))$.   Then the sum of $b-1$ and the even numbered indices in this new position is equivalent to $0 \imod 2$, and so, by induction, Left has moved to an $\Next$ position.  
			
			Thus, any initial move is bad for the first player, so the initial position is a $\Prev$ position.
		\end{enumerate}
	This completes the induction and the proof.
\end{proof}

We have a comparable theorem for when there is an even number of $\tau^i$'s:

\begin{theorem}\label{theorem-tau-2n}
		\[ o^-\left(\sum_{i=0}^{2n} a_i  \tau^i  + b  (\L(\tau^{2n}))\right) = \begin{cases}
		\Next &\text{if }\displaystyle  \left(\sum_{\substack{i=0,\\i \equiv 0 \imod 2}}^{2n} a_i \right) \equiv 0 \imod 2;\vspace{0.2in} \\
		\Prev &\text{if }\displaystyle \left( \sum_{\substack{i=0,\\i \equiv 0 \imod 2}}^{2n} a_i \right) \equiv 1 \imod 2.
		\end{cases}\]
\end{theorem}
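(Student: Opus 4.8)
The plan is to mimic the inductive argument used for Theorem~\ref{theorem-2n+1-tauL} and Proposition~\ref{prop-tau^i-oc}, proceeding by induction on the options of a position. Write a typical element of $\cl{\L(\tau^{2n})}$ as $\sum_{i=0}^{2n} a_i\tau^i + b\,\L(\tau^{2n})$ and set
\[ S = \sum_{\substack{i=0\\ i\equiv 0\imod 2}}^{2n} a_i, \]
so that the claim is simply that $o^-=\Next$ when $S$ is even and $o^-=\Prev$ when $S$ is odd. The engine of the proof is a single parity observation: \emph{every legal move flips the parity of $S$}. Indeed, moving $\tau^i\to\tau^{i-1}$ (or $\tau^0=*\to 0$) removes one token of index $i$ and adds one of index $i-1$, exactly one of which is even, so $S$ changes by $1$; and Left's move $\L(\tau^{2n})\to\tau^{2n}$ decreases $b$ (which does not enter $S$) while increasing $a_{2n}$, and since $2n$ is even this again changes $S$ by $1$. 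Note that Right has \emph{no} move in $\L(\tau^{2n})$, and that it is precisely the evenness of the index $2n$ that makes Left's $\L$-move affect $S$.

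With this observation in hand the two principal cases are immediate. If $S$ is odd then some even-indexed $a_i$ is positive, so both players can move, and every option has even $S'$ and hence outcome $\Next$ by induction; whichever move the first player makes, the second player is handed an $\Next$ position and wins, so the first player loses and the position is $\Prev$. If $S$ is even and some $a_i>0$, then both players again have a move, and since every move flips $S$ to odd, every option is a $\Prev$ position by induction; each player moving first can therefore move to a $\Prev$ position and win, so the position is $\Next$.

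The delicate case — and the one where the present theorem genuinely departs from Theorem~\ref{theorem-2n+1-tauL} — is $S$ even with $a_i=0$ for all $i$. If also $b=0$ the position is $0$, which is $\Next$. If $b>0$, then Right has no move at all, so Right moving first wins outright under the misère convention; meanwhile Left moving first can play $\L(\tau^{2n})\to\tau^{2n}+(b-1)\,\L(\tau^{2n})$, an option with $S'=1$ that is $\Prev$ by induction, so Left also wins moving first. Hence both players win moving first and the position is $\Next$. This is the crux of the argument, and the main obstacle to watch: in the odd-index theorem the analogous all-$\L$ position was $\Right$, because Left's forced move into $\tau^{2n+1}$ left $S$ unchanged and handed Left an $\Next$ position; here the even index $2n$ flips $S$, turning Left's reply into a winning move and collapsing the outcome back to $\Next$. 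This is also why no $\Left$ or $\Right$ class ever appears in the statement, which in turn keeps the inductive bookkeeping clean, since every option is guaranteed to be either $\Next$ or $\Prev$. Assembling the three cases completes the induction and the proof.
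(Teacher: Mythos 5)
Your proof is correct and follows essentially the same route the paper intends: the paper's own ``proof'' of this theorem is a one-line deferral to the argument for Theorem \ref{theorem-2n+1-tauL}, and your induction on options, organized around the observation that every legal move flips the parity of $S$, is precisely that argument instantiated for even index. You also correctly isolate the one genuine point of divergence — that when all $a_i=0$ and $b>0$ the position is $\Next$ rather than $\Right$, because Left's forced move into $\tau^{2n}$ now flips $S$ — which is exactly why $b$ and the $\Right$ class disappear from the statement.
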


\begin{proof}
	The proof of this theorem is virtually identical to that of Theorem \ref{theorem-2n+1-tauL}.  	
\end{proof}

It is interesting to note that in Theorem \ref{theorem-tau-2n}, the number of $\L(\tau^{2n})$'s does not in any way affect the outcome.  

Now that the outcomes of all positions in $\cl{\L(\tau^m)}$ have been determined, it is possible to examine the indistinguishability relations.

\begin{theorem}\label{theorem-indistinguishability-2n+1-tauL}
	The \mis monoid of $\cl{\L(\tau^{2n+1})}$ has four elements, which correspond to the equivalence classes of $0$, $*$, $* + *$, and $\L(\tau^{2n+1})$.
\end{theorem}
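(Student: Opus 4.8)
The plan is to extract from Theorem \ref{theorem-2n+1-tauL} the minimal data of a position that controls its misère outcome, and then to show that these data are preserved additively, so that indistinguishability over the closure is entirely governed by them. First I would observe that the chain of options $\L(\tau^{2n+1}) \to \tau^{2n+1} \to \tau^{2n} \to \cdots \to \tau^0 = * \to 0$ shows that every element of $\cl{\L(\tau^{2n+1})}$ has the form $\sum_{i=0}^{2n+1} a_i \tau^i + b\,\L(\tau^{2n+1})$ with $a_i, b \in \mathbb{Z}^{\ge 0}$. To such a position I would attach a pair $(t,h)$, where $t$ is the parity of $b$ plus the sum of the coefficients $a_i$ of even index, and $h = 1$ if some $a_i \neq 0$ while $h = 0$ otherwise. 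By Theorem \ref{theorem-2n+1-tauL} the outcome is $\Next$ when $t = 0$, it is $\Right$ when $t = 1$ and $h = 0$, and it is $\Prev$ when $t = 1$ and $h = 1$; in particular the bare outcome is already a function of $(t,h)$ alone.

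Next I would verify that $(t,h)$ is compatible with disjunctive sum: if $P$ and $\gamma$ have invariants $(t_P,h_P)$ and $(t_\gamma,h_\gamma)$, then $P + \gamma$ has invariant $(t_P \oplus t_\gamma,\; h_P \vee h_\gamma)$, since the even-indexed coefficient sums and the counts $b$ add, and $P+\gamma$ contains a $\tau$-term exactly when at least one summand does. Combining this with the previous paragraph, $o^-(P+\gamma)$ depends only on $(t_P,h_P)$ once $\gamma$ is fixed, so any two positions sharing the same pair $(t,h)$ are indistinguishable over $\cl{\L(\tau^{2n+1})}$. As $(t,h)$ ranges over the four values of $\mathbb{Z}_2 \times \{0,1\}$, there are at most four equivalence classes, with representatives $0 \mapsto (0,0)$, $* + * \mapsto (0,1)$, $\L(\tau^{2n+1}) \mapsto (1,0)$, and $* \mapsto (1,1)$.

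Finally I would confirm that these four representatives are pairwise distinguishable, giving exactly four classes. Their outcomes are $\Next$, $\Next$, $\Right$, and $\Prev$, so every pair other than $\{0,\,*+*\}$ is already separated by the empty position $0$. For the remaining pair I would use $\gamma = \L(\tau^{2n+1})$: it sends $0$ to $\L(\tau^{2n+1})$, which has $t=1$, $h=0$ and hence outcome $\Right$, while it sends $*+*$ to a position with $t=1$, $h=1$ and hence outcome $\Prev$, so $\L(\tau^{2n+1})$ distinguishes them. The only genuinely delicate point — and the step I expect to be the crux — is that the $\Right$ clause of Theorem \ref{theorem-2n+1-tauL} demands that \emph{all} $a_i$ vanish, not merely the even-indexed ones; this is precisely what prevents $*+*$ from collapsing into the class of $0$, in contrast with the earlier examples where $*+* \equiv 0 \imod{\cl{\cdot}}$, and care is needed to keep $h$ (a global emptiness indicator) distinct from the parity datum $t$ throughout the argument.
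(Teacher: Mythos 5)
Your proposal is correct and follows essentially the same route as the paper: both arguments rest entirely on the outcome formula of Theorem \ref{theorem-2n+1-tauL}, observe that the controlling data (the parity of $b$ plus the even-indexed coefficient sum, together with whether any $\tau$-term is present) behave additively under disjunctive sum, and then separate $0$ from $*+*$ by adding $\L(\tau^{2n+1})$, exactly as the paper does. Your packaging of these data as a single invariant $(t,h)$ valued in $\mathbb{Z}_2\times\{0,1\}$ is in fact slightly tighter than the paper's three case-by-case equivalence checks: it correctly places positions such as $2\,\L(\tau^{2n+1})$ (all $a_i=0$, $b$ even) in the class of $0$ rather than in the class of $*+*$, a point the paper's second claim glosses over, and it makes explicit why the ``$\exists\, a_i\neq 0$'' clause is what keeps $*+*$ from collapsing onto $0$.
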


\begin{proof}
	Firstly, we will show that all elements in $\cl{\L(\tau^{2n+1})}$ whose outcome classes are $\Right$ are all indistinguishable.  That is, the first thing we will show is 	
		\[\L(\tau^{2n+1}) \equiv m  \L(\tau^{2n+1}) \imod{{\cl{\L(\tau^{2n+1})}}} \text{ for } m \equiv 1 \imod 2.  \]
	Take an arbitrary element of the closure,
		\[ \sum_{i=0}^{2n+1} a_i  \tau^i + b  \L(\tau^{2n+1}) \]
	and consider
		\begin{align*}
			&o^-\left(\left[\sum_{i=0}^{2n+1} a_i  \tau^i + b  \L(\tau^{2n+1}) \right] + \L(\tau^{2n+1})\right)\\
			&= o^-\left(\sum_{i=0}^{2n+1} a_i  \tau^i + (b+1)  \L(\tau^{2n+1})\right) \\
			&= \begin{cases}
				\Right &\text{if } a_i = 0, \text{ } (b+1) \equiv 1 \imod 2;\\
				\Next &\text{if } \displaystyle \left[\sum_{\substack{i=0,\\i \equiv 0 \imod 2}}^{2n+1} a_i\right] + (b+1) \equiv 0 \imod 2;\vspace{0.2in}\\
				\Prev &\text{if } \displaystyle \left[\sum_{\substack{i=0,\\i \equiv 0 \imod 2}}^{2n+1} a_i\right] + (b+1) \equiv 1 \imod 2 \text{ and } \exists \, a_i \not = 0;
			\end{cases}\\
			&= \begin{cases}
				\Right &\text{if } a_i = 0, \text{ } (b+m) \equiv 1 \imod 2;\\
				\Next &\text{if } \displaystyle \left[\sum_{\substack{i=0,\\i \equiv 0 \imod 2}}^{2n+1} a_i\right] + (b+m) \equiv 0 \imod 2;\vspace{0.2in}\\
				\Prev &\text{if } \displaystyle \left[\sum_{\substack{i=0,\\i \equiv 0 \imod 2}}^{2n+1} a_i\right] + (b+m) \equiv 1 \imod 2 \text{ and } \exists \, a_i \not = 0;
			\end{cases}\\
			&= o^-\left(\sum_{i=0}^{2n+1} a_i  \tau^i + (b+m)  \L(\tau^{2n+1})\right) \\
			&= o^-\left(\left[\sum_{i=0}^{2n+1} a_i  \tau^i + b  \L(\tau^{2n+1}) \right] + m  \L(\tau^{2n+1})\right)
		\end{align*}
	by Theorem \ref{theorem-2n+1-tauL}.  
	
	We will now show that all non-zero $\Next$ positions are equivalent.  That is, 
		\[ * + * \equiv \sum_{i=0}^{2n+1} c_i  \tau^i + d  \L(\tau^{2n+1}) \imod{{\cl{\L(\tau^{2n+1})}}}\]
	where
		\[\left[\sum_{\substack{i=0,\\i \equiv 0 \imod 2}}^{2n+1} c_i \right] + d \equiv 0 \imod 2.  \]
	Take an arbitrary element of the closure,
		\[ \sum_{i=0}^{2n+1} a_i  \tau^i + b  \L(\tau^{2n+1})\]
	and consider 
		\[ o^-\left(\left[\sum_{i=0}^{2n+1} c_i  \tau^i + d  \L(\tau^{2n+1}) \right]+  \left[ \sum_{i=0}^{2n+1} a_i  \tau^i + b  \L(\tau^{2n+1}) \right] \right). \]
	By Theorem \ref{theorem-2n+1-tauL}, the outcome of the above position is determined by the parity of
		\[ \left[\sum_{\substack{i=0,\\i \equiv 0 \imod 2}}^{2n+1} (a_i + c_i) \right]+ (b+d), \]
	but
		\[\left[\sum_{\substack{i=0,\\i \equiv 0 \imod 2}}^{2n+1} c_i \right] + d \equiv 0 \imod 2\]
	so the outcome of the above position is determined by the parity 2 of
		\[ \left[\sum_{\substack{i=0,\\i \equiv 0 \imod 2}}^{2n+1} a_i\right] + b\]
	which is the same as the parity module 2 of
		\[ 2 + \left[\sum_{\substack{i=0,\\i \equiv 0 \imod 2}}^{2n+1} a_i\right] + b,\]
	which is the sum arising from the position
		\[ * + * + \left[\sum_{i=0}^{2n+1} a_i  \tau^i + b  \L(\tau^{2n+1})\right].\]
	Therefore the two positions are indistinguishable in $\cl{\L(\tau^{2n+1})}$.  
	
	We will now show that all $\Prev$ positions are equivalent.  That is,
		\[ * \equiv \sum_{i=0}^{2n+1} c_i  \tau^i + d  \L(\tau^{2n+1}) \imod{{\cl{\L(\tau^{2n+1})}}}\]
	where at least one $c_i \not = 0$ and
		\[ \left[\sum_{\substack{i=0,\\i \equiv 0 \imod 2}}^{2n+1} c_i\right] + d \equiv 1 \imod 2. \]
	Take an arbitrary element of the closure,
		\[ \sum_{i=0}^{2n+1} a_i  \tau^i + b  \L(\tau^{2n+1})\]
	and consider 
		\[ o^-\left(\left[\sum_{i=0}^{2n+1} c_i  \tau^i + d  \L(\tau^{2n+1}) \right]+  \left[ \sum_{i=0}^{2n+1} a_i  \tau^i + b  \L(\tau^{2n+1}) \right] \right). \]
	By Theorem \ref{theorem-2n+1-tauL}, the outcome of the above position is determined by the parity 2 of
		\[ \left[\sum_{\substack{i=0,\\i \equiv 0 \imod 2}}^{2n+1} (a_i + c_i) \right]+ (b+d), \]
	but
		\[\left[\sum_{\substack{i=0,\\i \equiv 0 \imod 2}}^{2n+1} c_i \right] + d \equiv 1 \imod 2\]
	so the outcome of the above position is determined by the parity 2 of
		\[ \left[\sum_{\substack{i=0,\\i \equiv 0 \imod 2}}^{2n+1} a_i\right] + b\]
	which is the same as the parity module 2 of
		\[ 1 + \left[\sum_{\substack{i=0,\\i \equiv 0 \imod 2}}^{2n+1} a_i\right] + b,\]
	which is the sum arising from the position
		\[ * + \left[\sum_{i=0}^{2n+1} a_i  \tau^i + b  \L(\tau^{2n+1})\right].\]
	Therefore the two positions are indistinguishable in $\cl{\L(\tau^{2n+1})}$.  
	
	Thus, so far up to indistinguishability, we have four elements $0$, $\L(\tau^{2n+1})$, $*$, and $* + *$.  It remains to show that these four elements are pairwise distinguishable.  By Theorem \ref{theorem-2n+1-tauL}, $o^-(\L(\tau^{2n+1})) = \Right$, while $o^-(0) = \Next$, $o^-(*) = \Prev$, and $o^-(* + *) = \Next$.  Therefore, the pairwise distinguishability follows trivially for all pairs except $0$ and $* + *$, which both have the same outcome class.  However, these two positions are distinguished by $\L(\tau^{2n+1})$ since $o^-(0 + \L(\tau^{2n+1})) = \Right$ while $o^-(2  * + \L(\tau^{2n+1})) = \Prev$ by Theorem \ref{theorem-2n+1-tauL}.  
	
	Therefore the \mis monoid of $\cl{\L(\tau^{2n+1})}$ has four elements which correspond to the equivalence classes of $0$, $\L(\tau^{2n+1})$, $*$, and $* + *$.
\end{proof}

We now explicitly write $\monoid{M}_{\cl{\L(\tau^{2n+1})}}$.  Via the mapping
	\begin{align*}
		0 &\mapsto 1;\\
		* &\mapsto a;\\
		\L(\tau^{2n+1}) &\mapsto t;
	\end{align*}
the following monoid is obtained
	\begin{align*}
	\monoid{M}_{\cl{\L(\tau^{2n+1})}} &= \ideal{1,a, a^2, t \mid t^{2n+1} = t, a^3=a, at=a^2, a^2t = a} \\
	\Next &= \{1, a^2\} \\
	\Prev &= \{a\} \\
	\Left &= \emptyset \\
	\Right &= \{t\},
	\end{align*}
with the additive notation in $\cl{\L(\tau^{2n+1})}$ becoming a multiplicative notation in $\monoid{M}_{\cl{\L(\tau^{2n+1})}}$.  

The \mis monoid for $\cl{\L(\tau^{2n})}$ is even simpler.

\begin{theorem}\label{theorem-tau^2n_L-monoid}
	The \mis monoid of $\cl{\L(\tau^{2n})}$ has two elements, which correspond to the equivalence classes of $0$ and $*$.
\end{theorem}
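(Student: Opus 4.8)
The plan is to read everything off the outcome formula already established in Theorem \ref{theorem-tau-2n}, which tells us that the \mis outcome of any element of $\cl{\L(\tau^{2n})}$ depends only on the parity of the sum of the coefficients carrying even indices, and in particular is completely insensitive both to the coefficient $b$ of $\L(\tau^{2n})$ and to the odd-indexed coefficients. Writing a generic element as $\gamma = \sum_{i=0}^{2n} a_i \tau^i + b \L(\tau^{2n})$, I set $s(\gamma) = \sum_{i \equiv 0 \imod 2} a_i$ and aim to prove two claims: first, that $\gamma$ is indistinguishable from $0$ when $s(\gamma)$ is even and from $*$ when $s(\gamma)$ is odd; and second, that $0$ and $*$ are themselves distinguishable. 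Together these give exactly two equivalence classes, as required. Note that $0$, $*$, $*+*$, and every $\L(\tau^{2n})$ all lie in the closure, so these are legitimate representatives and test positions.

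For the indistinguishability claim I would mimic the argument used for the odd case in Theorem \ref{theorem-indistinguishability-2n+1-tauL}, but it is shorter here because there is no $\Right$ portion to treat separately. Take an arbitrary test position $\delta = \sum_{i=0}^{2n} c_i \tau^i + d \L(\tau^{2n})$ in the closure. By Theorem \ref{theorem-tau-2n}, the outcome of $\gamma + \delta$ is governed by the parity of $s(\gamma) + s(\delta)$. If $s(\gamma)$ is even this parity equals that of $s(\delta) = s(0 + \delta)$, so $o^-(\gamma + \delta) = o^-(0 + \delta)$ for every $\delta$, giving $\gamma \equiv 0 \imod{\cl{\L(\tau^{2n})}}$. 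If $s(\gamma)$ is odd, then since $* = \tau^0$ contributes $1$ to the even-indexed sum, the parity of $s(\gamma) + s(\delta)$ agrees with that of $s(*) + s(\delta) = s(* + \delta)$, so $\gamma \equiv * \imod{\cl{\L(\tau^{2n})}}$. In particular $* + *$ and any $b \L(\tau^{2n})$ both reduce to $0$, so the generator $\L(\tau^{2n})$ collapses into the identity class.

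Finally, $0$ and $*$ are distinguished by the empty position $0 \in \cl{\L(\tau^{2n})}$, since $o^-(0) = \Next$ whereas $o^-(*) = \Prev$; hence the two classes are genuinely distinct and the \mis monoid has exactly two elements, which under the map $0 \mapsto 1$, $* \mapsto a$ is again $\ideal{1,a \mid a^2 = 1}$ with $\Next = \{1\}$, $\Prev = \{a\}$, and $\Left = \Right = \emptyset$. There is no real obstacle once Theorem \ref{theorem-tau-2n} is in hand; the only point worth flagging is the structural contrast with the odd case, where the coefficient of $\L(\tau^{2n+1})$ does influence outcomes and produces a genuine $\Right$ element, yielding four classes. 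Here $b$ drops out entirely, so $\L(\tau^{2n})$ is absorbed into the identity and the monoid degenerates to that of $\cl{*}$.
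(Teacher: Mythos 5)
Your proof is correct and follows essentially the same route as the paper, which simply defers to the argument of Theorem \ref{theorem-indistinguishability-2n+1-tauL}: both read the two equivalence classes directly off the outcome formula of Theorem \ref{theorem-tau-2n} and distinguish $0$ from $*$ by their outcomes. Your version is in fact a cleaner, fully explicit instantiation of what the paper only gestures at, with the parity invariant $s(\gamma)$ making the collapse of $b$ and the odd-indexed coefficients transparent.
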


\begin{proof}
	The proofs are similar to those given in Theorem \ref{theorem-indistinguishability-2n+1-tauL} for $* + *$ and the $\Next$ positions and $*$ and the $\Prev$ positions respectively.
\end{proof}

We now explicitly write $\monoid{M}_{\cl{\L(\tau^{2n})}}$.  Via the mapping
	\begin{align*}
		0 &\mapsto 1;\\
		* &\mapsto a;\\
		\L(\tau^{2n}) &\mapsto 1;
	\end{align*}
the following monoid is obtained
	\begin{align*}
	\monoid{M}_{\cl{\sigma}} &= \ideal{1,a \mid a^2=1} \\
	\Next &= \{1\} \\
	\Prev &= \{a\} \\
	\Left &= \emptyset \\
	\Right &= \emptyset,
	\end{align*}
with the additive notation in $\cl{\L(\tau^{2n})}$ becoming a multiplicative notation in $\monoid{M}_{\cl{\L(\tau^{2n})}}$.  Again, this is the same monoid as $\monoid{M}_{\cl{*}}$, a tetrapartite monoid whose underlying monoid structure is the same as $(\mathbb{Z}_2, \oplus)$.

Therefore, we have found an infinite set of positions ($\tau^m$) and operation ($\L(\tau^m)$) which always ensure that the \mis monoid of $\cl{\L(\tau^m)}$ is finite. 

\section{Conclusion}
While this chapter did find a list of positions which cause their indistinguishability monoid to be infinite (Corollary \ref{cor-cl-xi-infinite}), by no means did we manage to completely characterise when this occurs for partizan positions.  As the result is still lacking for impartial positions, this is hardly surprising.  Further investigation is required to more fully understand the relationship between a position and the cardinality of its \mis monoid.  

\chapter{Stars}\label{chapter-stars-0}

\section{Introduction}
One of the first results proven about \mis play games is that if $\xi$ is an impartial position , then $* + * \equiv 0 \imod{\cl{\xi}}$ \cite{NOTES}.  In Chapter \ref{chapter-examples}, we saw four examples where $\xi$ was a partizan position and this result still held.  However, this result is not true in general for partizan positions, as the following proposition shows.

\begin{proposition}\label{prop-1-*+*=0}
	Suppose $1, * \in \cl{\zeta}$.  Then $* + * \not \equiv 0 \imod{\cl{\zeta}}$.  Similarly, if $\bar{1}, * \in \cl{\zeta}$, the equivalence also does not hold.
\end{proposition}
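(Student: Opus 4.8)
The plan is to reduce the statement to the exhibition of a single distinguishing element. By Definition \ref{def-indis}, to prove $* + * \not\equiv 0 \imod{\cl{\zeta}}$ it suffices to produce one $\gamma \in \cl{\zeta}$ with $o^-((*+*) + \gamma) \neq o^-(0 + \gamma)$. Since $1 \in \cl{\zeta}$ by hypothesis, the natural candidate is $\gamma = 1$. Note first that $0$ itself fails to distinguish the two positions: a parity argument (or the observation that in $*+*$ the first player can move to $*$, a $\Prev$ position, and win) gives $o^-(*+*) = \Next = o^-(0)$, so a nontrivial $\gamma$ is genuinely needed.

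First I would record the two easy outcomes $o^-(0 + 1) = o^-(1) = \Right$ (Example \ref{example-o+-o-}) together with the auxiliary facts $o^-(*+*) = \Next$ and $o^-(*+1) = \Next$. The second auxiliary fact follows by listing the options of $*+1$: Left may move to $1$ or to $*$, while Right may move only to $1$. Since Left then has an option in $\Left \cup \Prev$ (namely $*$) and Right has an option in $\Right \cup \Prev$ (namely $1$), Table \ref{table-outcomes} returns $\Next$.

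The crux is computing $o^-(*+*+1)$, which I would do directly. Moving first, Left's options are $*+1$ and $*+*$, both $\Next$, while Right's only option is $*+1$, also $\Next$. Thus every move available to the first player hands the opponent a $\Next$ position, in which the opponent (now to move) wins; hence whoever moves first loses, and $o^-(*+*+1) = \Prev$. Comparing this with $o^-(0+1) = \Right$ shows that $1$ distinguishes $*+*$ and $0$, which establishes the first claim.

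For the conjugate case I would avoid repeating the computation and instead invoke Theorem \ref{theorem-outcome-class-conj}: since $\overline{*} = *$ we have $\overline{*+*+1} = *+*+\bar{1}$, and because conjugation sends $\Prev$ to $\Prev$, it follows that $o^-(*+*+\bar{1}) = \Prev$; as $o^-(\bar{1}) = \Left \neq \Prev$, the element $\bar{1} \in \cl{\zeta}$ distinguishes $*+*$ from $0$. The only real obstacle is bookkeeping in the outcome computations — correctly enumerating the options of $*+1$ and $*+*+1$ for both players and applying the outcome table without slips — but these are finite, routine checks.
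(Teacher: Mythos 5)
Your proof is correct and takes essentially the same route as the paper: both exhibit $1$ (resp.\ $\bar{1}$) as the element distinguishing $*+*$ from $0$. The only cosmetic differences are that you compute the full outcome $o^-(*+*+1)=\Prev$ where the paper merely checks $o^-(1+*+*)\neq\Right$ by giving Left's reply to Right's forced move, and that you dispatch the conjugate case via Theorem \ref{theorem-outcome-class-conj} rather than by symmetry.
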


\begin{proof}
	Suppose $1, * \in \cl{\zeta}$.  Then $o^-(1) = \Right$.  We will show that $o^-(1+* + *) \not = \Right$, and so the positions $0$ and $* + *$ are not equivalent.
	
	Consider $1+* + *$ with Right moving first.  Right's only move is to $1 + *$.  Left responds by moving to $*$, which is a $\Prev$ position.  Therefore $o^-(1 + * + *) \not = \Right$.  Thus $* + * \not \equiv 0 \imod{{\cl{\zeta}}}$.  
\end{proof}

What about the converse, i.e.\ if $1, \bar{1} \not \in \cl{\xi}$ but $* \in \cl{\xi}$, does this mean that $* + * \equiv 0 \imod{\cl{\xi}}$?  Unfortunately, no.

\begin{example}\label{example-sigma_L-*+*=0}
	Recall the position $\sigma$ from Section \ref{sec-sigma}.  Consider $\L(\sigma)$, whose game tree is given in Figure \ref{fig-sigma_L}. 

		\begin{figure}[htb]
		\unitlength 8pt
		\begin{center}
		\begin{graph}(4,6)(0,0)
		\graphnodesize{0.4}
		\fillednodestrue

		\roundnode{A}(2,4)
		\roundnode{B}(1,2)
		%\roundnode{C}(2,2)
		\roundnode{D}(0,0)
		\roundnode{E}(2,0)
		\roundnode{F}(3,6)

		%\edge{A}{C}
		\edge{A}{B}
		\edge{B}{D}
		\edge{B}{E}
		\edge{A}{F}
		
		\end{graph}
		\caption{The game tree of $\L(\sigma)$.}
		\label{fig-sigma_L}
		\end{center}	
		\end{figure}
		
	Then $* \in \cl{\L(\sigma)}$ and $1, \bar{1} \not \in \cl{\L(\sigma)}$.  
	
	Clearly, $o^-(\L(\sigma)) = \Right$.  Consider $\L(\sigma) + * + *$.  Suppose Right moves first.  He makes his only move, namely taking a $*$ and moving to $\L(\sigma) + *$.  Left responds by moving to $\sigma + *$, which, by Proposition \ref{prop-n*+m-sigma+m-sigma_L}, is a $\Prev$ position.  Thus Right moving first loses $\L(\sigma) + * + *$.  Therefore $* + * \not \equiv 0 \imod{\cl{\L(\sigma)}}$.  
\end{example}

We would like to find a position $\xi$ with $* + * \equiv 0 \imod{\cl{\xi}}$.  Doing such would give us a set of partizan positions which have similarities to impartial ones.  As the \mis monoid theory is more developed for impartial games, this may give us a start in applying some of the impartial theory to partizan positions. 

\section{All-Small}

While Proposition \ref{prop-1-*+*=0} gave a large set of positions which did not have our desired property, we will show that we do have the property for all-small games.  Recall our definition of all-small from Chapter \ref{chapter-intro}:

\begin{definition}
	A game is \textbf{all-small} if for every position in the game, Left can move if and only if Right can.  A position $\xi$ is all-small if Left can move in $\xi$ if and only if Right can, and every option of $\xi$ is all-small.
\end{definition}

We have the following theorem for all-small positions.

\begin{theorem}\label{theorem-as-*+*=0}
	Let $\xi$ be all-small and not equal to 0.  Then $* + * \equiv 0 \imod{\cl{\xi}}$.
\end{theorem}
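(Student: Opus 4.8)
The plan is to prove the following stronger statement by induction on birthday, from which the theorem is immediate: \emph{for every all-small position $\gamma$, $o^-(\gamma + * + *) = o^-(\gamma)$.} Granting this, I would first note that $*+*$ actually lies in $\cl{\xi}$: since $\xi \neq 0$ is all-small, taking options down to a nonzero position of minimal birthday in $\cl{\xi}$ yields a position all of whose options are $0$ (option-closure plus minimality), and being all-small and nonzero it can move for both players, so it is $\{0\mid 0\} = *$. Hence $* \in \cl{\xi}$ and $*+* \in \cl{\xi}$. Every element of $\cl{\xi}$ is a sum of options of $\xi$ and so is again all-small, so for each $\gamma \in \cl{\xi}$ the stronger statement gives $o^-((*+*) + \gamma) = o^-(\gamma) = o^-(0 + \gamma)$, which is exactly $*+* \equiv 0 \imod{\cl{\xi}}$.

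For the induction it is convenient to track the two bits that determine an outcome, namely who wins moving first. For a non-terminal position $\pi$, Left moving first wins (that is, $o^-(\pi) \in \Left \cup \Next$) iff Left has an option $\pi^L$ with $o^-(\pi^L) \in \Left \cup \Prev$, and symmetrically Right moving first wins iff Right has an option $\pi^R$ with $o^-(\pi^R) \in \Right \cup \Prev$. The base case $\gamma = 0$ is the direct check $o^-(*+*) = \Next = o^-(0)$. For the inductive step take $\gamma \neq 0$ all-small; here all-smallness forces $\gamma$ to possess both a Left and a Right option (a nonzero all-small position is terminal for neither player), so these recursions read off $o^-(\gamma)$ faithfully from the options of $\gamma$.

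Now decompose the moves in $\gamma + *+*$. Apart from the moves $\gamma^L + *+*$ and $\gamma^R + *+*$ inside $\gamma$ — whose outcomes equal $o^-(\gamma^L)$ and $o^-(\gamma^R)$ by the induction hypothesis — the only extra option for \emph{either} player is to collapse a star, reaching $\gamma + *$. Feeding this into the recursion shows that Left moving first wins $\gamma + *+*$ iff Left moving first wins $\gamma$ \emph{or} $o^-(\gamma + *) \in \Left \cup \Prev$, and dually for Right. Everything therefore reduces to a companion claim that the extra disjunct is harmless: $o^-(\gamma + *) \in \Left \cup \Prev \Rightarrow$ Left moving first wins $\gamma$ (together with its mirror). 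This is the heart of the matter, and it is a single-move observation rather than a further induction: $o^-(\gamma + *) \in \Left \cup \Prev$ says Right moving first loses $\gamma + *$, so every Right move there is losing — in particular the move collapsing the star to $\gamma$ with Left to move, which forces Left to win $\gamma$ moving first. The mirror claim is identical with the players exchanged. Combining, both first-player bits of $\gamma + *+*$ agree with those of $\gamma$, so $o^-(\gamma + *+*) = o^-(\gamma)$, closing the induction.

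The main obstacle — and the only place all-smallness is really used — is exactly the status of the star-collapse option $\gamma + *$. For a general $\gamma$ the argument breaks down, because adjoining stars can hand a player a move they did not previously have (precisely when $\gamma$ is terminal for that player, as happens for $\gamma = 1$), so the recursion for $\gamma + *+*$ no longer mirrors that of $\gamma$; in fact the statement is false for $\gamma = 1$. The all-small hypothesis is what rules this out, by guaranteeing that a nonzero $\gamma$ already offers both players a move, so collapsing a star is never the move that rescues an otherwise move-starved player.
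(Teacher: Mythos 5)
Your proof is correct, but it takes a genuinely different route from the one in the thesis. The thesis argues strategically, case by case over the four outcome classes: the player who wins $\nu$ plays her winning strategy in the $\nu$ component, answers a star with the other star, and invokes all-smallness only at the end of a line of play, to guarantee that when she runs out of moves in $\nu$ so does her opponent, leaving $*+*$ (an $\Next$ position) on the table. Your argument instead runs a birthday induction on the outcome recursion itself: the options of $\gamma+*+*$ are the options of $\gamma$ with $*+*$ attached (controlled by the induction hypothesis) together with the single extra option $\gamma+*$, and you dispose of that extra option with the one-move observation that $o^-(\gamma+*)\in\Left\cup\Prev$ already forces Left to win $\gamma$ moving first, because the star gives Right a move from $\gamma+*$ down to $\gamma$. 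This localizes the use of all-smallness to the single fact that a nonzero all-small $\gamma$ is terminal for neither player, so the option-based characterization of ``wins moving first'' applies to $\gamma$ as well as to $\gamma+*+*$; the thesis uses all-smallness more globally, to reason about the end of an entire play. A further point in your favour is that you verify $*\in\cl{\xi}$, so that $*+*\equiv 0 \imod{\cl{\xi}}$ is even a well-formed assertion about elements of the closed set, which the thesis takes for granted. Both proofs are sound; yours is more mechanical and avoids any appeal to following a strategy through a whole game, while the thesis's (especially in the footnoted form due to Wolfe) is shorter to state once one trusts the strategic bookkeeping.
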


Before we begin the proof, we note that this theorem follows the proof given for impartial positions in \cite{NOTES}.  

\begin{proof}\footnote{David Wolfe \cite{DAVID} suggests the following, shorter, proof for this theorem: If Alice has a winning strategy on $\nu$, Alice can use the
same strategy to win on $\nu+*+*$, delaying any move on $*+*$ until her opponent
moves on it.  The only time this fails is if she is out of moves on $\nu$, but
since the game is all-small, all that remains is $*+*$, from which she wins.
Since the winner is preserved, the outcome class of $\nu$ is the same as that of
$\nu+*+*$.}
	Take $\nu \in \cl{\xi}$.  This proof proceeds by showing that 
		\begin{align*}
			o^-(\nu) = \Right &\implies o^-(\nu+*+ *) = 
			\Right;\\
			o^-(\nu) = \Left &\implies o^-(\nu+*+ *) = \Left;\\
			o^-(\nu) = \Next &\implies o^-(\nu+*+ *) = \Next;\\
			o^-(\nu) = \Prev &\implies o^-(\nu+*+ *) = \Prev,
		\end{align*}
	and therefore $o^-(\nu) \in \mathcal{X} \iff o^-(\nu+*+*) \in \mathcal{X}$.  Thus $0 \equiv * + * \imod{\cl{\xi}}$ .
	
	Suppose $o^-(\nu) = \Right$ and consider $\nu + * + *$.   It will be shown that Right has winning strategy in $\nu + * + *$.   
	
	Suppose Right moves first.  A winning move for Right in $\nu + * + *$ is to play his winning move in $\nu$, to which there must be a Left response, since $o^-(\nu) = \Right$.  Left either plays one of her responses in $\nu^R$ or takes a $*$.  If Left plays a response, then Right plays again in $\nu^{RL}$, if such a Right moves exists.  If one does not exist, then the position has become $* + *$, as $\nu$ is an all-small position and if Right has no moves in $\nu^{RL}$, neither does Left.  Thus Right is playing next in an $\Next$ position, so Right wins.  If Left does not play a response in $\nu^R$ and, rather, takes a $*$, then Right takes the other $*$, leaving Left to go next in $\nu^R$, which is a winning position for Right moving second.  
	
	Now suppose Left moves first.  If she takes a $*$, then Right takes the other, leaving $\nu$, a Right win.  If Left plays in $\nu$, then Right responds with his winning move, unless no such move exists, i.e.\ Right is unable to play in $\nu^L$.  Then since $\nu$ is an all-small position, Left is also unable to play in $\nu^L$, and the position has become $* + *$ with Right moving next, so Right wins.  Thus, assume Right has a winning move in $\nu^L$, to which Left must have a response since $o^-(\nu) = \Right$.  Thus if Left plays in the $\nu$ component, Right responds unless there is no Right, and hence, no Left response available.
	
	Therefore, $o^-(\nu) = \Right$ implies $o^-(\nu + * + *) = \Right$.
	
	Similarly, if $o^-(\nu) = \Left$, then $o^-(\nu + * + *) = \Left$.
	
	Suppose $o^-(\nu) = \Next$.  Consider $o^-(\nu + * + *)$.  If $\nu = 0$, then the position $\nu + * + *$ becomes $* + *$, which is an $\Next$ position, so $o^-(0) = \Next$ and $o^-(0 + * + *) = \Next$, as required.  Otherwise, suppose $\nu$ is a non-zero position.  In $\nu + * + *$, Player One (P1) makes her/his winning move in $\nu$, to which Player Two (P2) must have a response.  P1 plays and responds in the $\nu$ component with her/his winning moves unless
		\begin{enumerate}
			\item\label{item-p2-cgstar} P2 takes a $*$; or
			\item\label{item-p1-no-nu} P1 has no moves in the $\nu$ component
		\end{enumerate}
	If \eqref{item-p2-cgstar}, then P1 takes the other $*$ and P2's next move must be in the $\nu$ component.  If P2 cannot move in the $\nu$ component, since $\nu$ is an all-small position, P1 also has no moves available in the $\nu$ component and would have made the last move within it, a contradiction since P1 makes her/his winning moves in the $\nu$ component.  
	
	If \eqref{item-p1-no-nu}, since $\nu$ is an all-small position, P2 also has no moves in the $\nu$ component and the position has become either 0 or $* + *$ with P1 making the next move.  Thus P1 wins.
	
	Therefore $o^-(\nu) = \Next$ implies $o^-(\nu + * + *) = \Next$.
	
	Similarly $o^-(\nu) = \Prev$ implies $o^-(\nu + * + *) = \Prev$.
\end{proof}

We can also extend Theorem \ref{theorem-as-*+*=0} to the following corollary.

\begin{corollary}\label{cor-upsilon-as-*+*=0}
	Let $\Upsilon$ be a set of positions closed under addition such that for any $\xi \in \Upsilon$, $\xi$ is an all-small position, and $* \in \Upsilon$.   Then $*+* \equiv 0 \imod{\cl{\Upsilon}}$.
\end{corollary}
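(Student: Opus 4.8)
The plan is to reduce the corollary to Theorem \ref{theorem-as-*+*=0} by noting that the proof of that theorem never uses any global property of the single generating position $\xi$: it uses only that the position $\nu$ to which $*+*$ is appended is all-small, together with all-smallness of its options. Consequently the genuinely new content here is not the strategy argument but the structural fact that \emph{every} position in $\cl{\Upsilon}$ is all-small. Once that is known, applying the outcome-preservation argument of Theorem \ref{theorem-as-*+*=0} to each $\gamma \in \cl{\Upsilon}$ yields the indistinguishability relation directly from Definition \ref{def-indis}.

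First I would establish that all-smallness is preserved under the two operations that build the closure. Preservation under taking options is immediate from the definition of all-small. Preservation under disjunctive sum I would prove by induction on birthday: if $\alpha$ and $\beta$ are all-small, then Left has a move in $\alpha + \beta$ exactly when she has a move in $\alpha$ or in $\beta$, which by all-smallness of the summands happens exactly when Right has a move in $\alpha$ or in $\beta$, i.e.\ exactly when Right has a move in $\alpha + \beta$; and every option of $\alpha + \beta$ is again a disjunctive sum of all-small positions of strictly smaller birthday, hence all-small by induction (with base case a summand equal to $0$). Since $\Upsilon$ consists of all-small positions and $\cl{\Upsilon}$ is obtained by iterating options and arbitrary disjunctive sums, it follows that every $\gamma \in \cl{\Upsilon}$ is all-small.

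Next, fix an arbitrary $\gamma \in \cl{\Upsilon}$. Because $\gamma$ is all-small, the strategy argument from the proof of Theorem \ref{theorem-as-*+*=0} applies verbatim with $\nu = \gamma$, giving
\[ o^-(\gamma) = o^-(\gamma + * + *). \]
Equivalently, one may invoke the shorter strategy-stealing argument of Wolfe recorded in the footnote to that theorem: a player with a winning strategy on $\gamma$ wins on $\gamma + * + *$ by postponing any play in $*+*$ until forced, which is safe precisely because all-smallness guarantees that when $\gamma$ is exhausted only $*+*$ remains. Note also that $*+* \in \cl{\Upsilon}$ since $* \in \Upsilon$ and $\Upsilon$ is closed under addition, and $0 \in \cl{\Upsilon}$ since the closure is option-closed, so both sides of the claimed relation lie in $\cl{\Upsilon}$.

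Finally, since $\gamma \in \cl{\Upsilon}$ was arbitrary, the equality $o^-((*+*) + \gamma) = o^-(0 + \gamma)$ holds for all $\gamma \in \cl{\Upsilon}$, which is exactly $* + * \equiv 0 \imod{\cl{\Upsilon}}$ by Definition \ref{def-indis}. I expect the only real obstacle to be the disjunctive-sum step of the all-smallness argument; everything afterward is bookkeeping or a direct appeal to the already-proven theorem. The care needed there is simply to run the birthday induction correctly and to treat the degenerate case where a summand is $0$.
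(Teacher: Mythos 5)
Your proposal is correct and follows essentially the same route as the paper, whose entire proof of this corollary is the observation that the argument of Theorem \ref{theorem-as-*+*=0} applies unchanged. The only addition you make is to spell out explicitly that every element of $\cl{\Upsilon}$ is all-small (closure of all-smallness under options and disjunctive sum), a fact the paper handles by citation in the remark immediately following the corollary.
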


\begin{proof}
	The proof is identical to that of Theorem \ref{theorem-as-*+*=0}.
\end{proof}

Since the sum of all-small positions is an all-small positions \cite{LIP}, we can take $\Upsilon$ in Corollary \ref{cor-upsilon-as-*+*=0} to be any set of all-small positions, including the set of all all-small positions.   Since every impartial position is all-small, we can now just say that the result is true for all-small positions, encompassing the previous result for impartial positions.

However, while being all-small is a necessary condition for this equivalence, it is not sufficient.  We saw this in Example \ref{example-L*=sigma}, which showed $* + * \equiv 0 \imod{\sigma, \sigmab}$ where $\sigma$ is not an all-small position. 

\section{Extending the $* + * \equiv 0$ Results to Non-All-Small Games}

Theorem \ref{theorem-as-*+*=0} and Corollary \ref{cor-upsilon-as-*+*=0} showed us that all all-small positions have the property that $* + * \equiv 0$.  We now try to examine which non-all-small positions also have this property.  We know such non-all-small positions exist; for example $* + * \equiv 0 \imod{\cl{\sigma}}$ by Example \ref{example-L*=sigma}.

This thesis' only attempt at determining what types of non-all-small positions have the desired property is to look at the $\L$ operation, first introduced in Section \ref{sec-closure-of-xi_L}.  Using $\sigma$, Example \ref{example-sigma_L-*+*=0} showed that if $\xi$ is a non-all-small position with the property that $* + * \equiv 0 \imod{\cl{\xi}}$, this does not imply that $* + * \equiv 0 \imod{\cl{\L(\xi)}}$.  This is also true if $\xi$ is  an all-small position, as demonstrated by the following example.

\begin{example}\label{example-eta}
	Let $\eta = \combgame{\{\combgame{\{0\mid\combgame{\{*\mid0\}}\}}\mid*\}}$.  The game tree of $\eta$ is the left-most position in Figure \ref{fig-gt-eta}.

		\begin{figure}[htb]
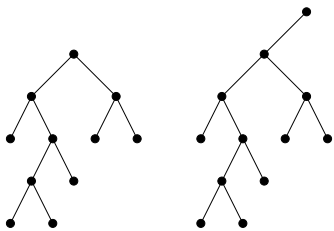

		\unitlength 8pt
		\begin{center}
		\begin{graph}(15,10)(0,0)
		\graphnodesize{0.4}
		\fillednodestrue

		\roundnode{a1}(0,0)
		\roundnode{a2}(2,0)
		
		\roundnode{b1}(1,2)
		\roundnode{b2}(3,2)
		
		\roundnode{c1}(0,4)
		\roundnode{c2}(2,4)
		\roundnode{c3}(4,4)
		\roundnode{c4}(6,4)
		
		\roundnode{d1}(1,6)
		\roundnode{d2}(5,6)
		
		\roundnode{e1}(3,8)
		
		\edge{a1}{b1}
		\edge{a2}{b1}
		\edge{b1}{c2}
		\edge{b2}{c2}
		\edge{c1}{d1}
		\edge{c2}{d1}
		\edge{c3}{d2}
		\edge{c4}{d2}
		\edge{d1}{e1}
		\edge{d2}{e1}

		\roundnode{A1}(9,0)
		\roundnode{A2}(11,0)
		
		\roundnode{B1}(10,2)
		\roundnode{B2}(12,2)
		
		\roundnode{C1}(9,4)
		\roundnode{C2}(11,4)
		\roundnode{C3}(13,4)
		\roundnode{C4}(15,4)
		
		\roundnode{D1}(10,6)
		\roundnode{D2}(14,6)
		
		\roundnode{E1}(12,8)
		
		\roundnode{F1}(14,10)
		
		\edge{A1}{B1}
		\edge{A2}{B1}
		\edge{B1}{C2}
		\edge{B2}{C2}
		\edge{C1}{D1}
		\edge{C2}{D1}
		\edge{C3}{D2}
		\edge{C4}{D2}
		\edge{D1}{E1}
		\edge{D2}{E1}
		\edge{E1}{F1}
	
		\end{graph}
		\caption{The game trees of $\eta$ and $\L(\eta)$, respectively.}
		\label{fig-gt-eta}
		\end{center}	
		\end{figure}	
	
		Clearly $\eta$ is an all-small position, $* \in \cl{\eta}$, and $o^-(\eta) = \Next$.   By Theorem \ref{theorem-as-*+*=0}, $* + * \equiv 0 \imod{\cl{\eta}}$.
		
		Consider $\L(\eta)$, which is the right-most position in Figure \ref{fig-gt-eta}.  Then $o^-(\L(\eta)) = \Right$, but claim that Right cannot win moving first in $\L(\eta) + * + *$.   Figure \ref{fig-right-loses-1-eta_L+*+*} gives Left's winning response when Right moves first.  
		
		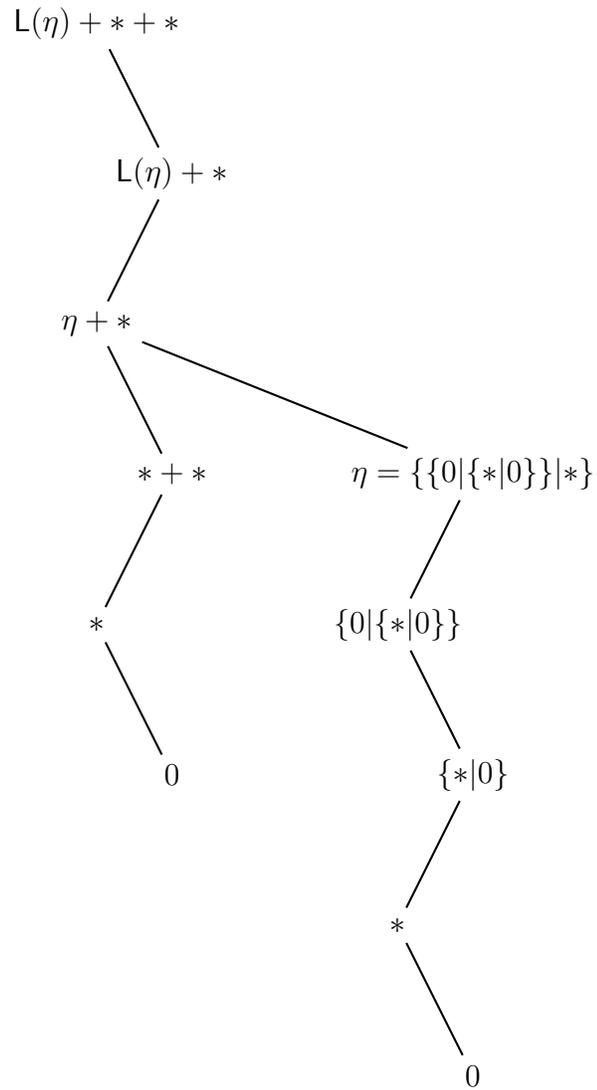
\begin{figure}[p]
		\begin{center}
		
		\begin{pspicture}\unit{0.5cm}(6,14)
		\put(1,14)
		{\cgtree
			{
				{\L(\eta) + * + *}
				(|{\L(\eta) + *}
				({\eta+*}
				(|{*+*}
				(* (|0)|) 
				++ {\eta=\combgame{\{\combgame{\{0|\combgame{\{*|0\}}\}}|*\}}}
				({\combgame{\{0|\combgame{\{*|0\}}\}}}
				(|{\combgame{\{*|0\}}}(*(|0)|))|))|))
			}		
		}
		\end{pspicture}

		\end{center}
		\caption{Right loses moving first in $\L(\eta) + * + *$.}
		\label{fig-right-loses-1-eta_L+*+*}
		\end{figure}		
		
		So $o^-(\L(\eta) + * + *) \not = o^-(\L(\eta))$.  Thus $* + * \not \equiv 0 \imod{\cl{\L(\eta)}}$.
\end{example}

\section{Conclusion}

In this chapter, we extended an important result from impartial positions to all-small positions, namely that if $\xi$ is an all-small position with $* \in \cl{\xi}$, then $* + * \equiv 0 \imod{\cl{\xi}}$.  We would like to find what other positions have this property.  As such, we conclude this chapter with the following opening problem:

\begin{openproblem}\label{op-*+*=0}
	Classify all positions $\xi$ with $* \in \cl{\xi}$ such that $* + * \equiv 0 \imod{\cl{\xi}}.$
\end{openproblem}

\chapter{Zeroes}\label{chapter-0}

\section{Introduction}

Suppose we have a position $\xi$.  Under the normal play convention, $\xi + \xib = 0$.  That is, if we had a disjunctive sum of positions
	\[ \alpha_1 + \alpha_2 + \cdots + \alpha_n + \xi + \xib,\]
we could replace $\xi + \xib$ by the position 0 without affecting the outcome of the sum.  This is partly due to the Tweedledum-Tweedledee strategy defined by Definition \ref{def-tweedle}.  We have seen similar results with the \mis monoids of partizan positions, namely in Corollary \ref{cor-B-*+*=0} with $\rho + \rhob \equiv 0 \imod{\cl{\rho,\rhob}}$.  However, as is common with \mis play games, this is not true in general.  We will show that this result is false for the following set of positions.

\begin{definition}\label{def-cgstarn}
	For $n \in \nat$, the position $*_n$ is defined recursively as follows:
		\begin{align*}
			*_n &= \combgame{\{0, *_1, *_2, \ldots, *_{n-1}\mid0, *_1, *_2, \ldots, *_{n-1}\}}.
		\end{align*}
	Generally, instead of $*_1$, we merely write $*$.
\end{definition}

We already made much use of the positions 0 and $*$, and we saw $*_2$ in Table \ref{table-normal-mis-outcomes} and in the definition of Genus (Definition \ref{def-genus}).  We call these positions \emph{nimbers} as they come from playing the game Nim (Definition \ref{def-genus}).

\begin{proposition}
	Consider $*_n$ for any $n \in \mathbb{Z}^{\ge 2}$.  Then $o^-(*_n + *_n) = \Prev$.  Thus, if $*_n \in \cl{\xi}$ for any $\xi$, $*_n + *_n \not \equiv 0 \imod{\cl{\xi}}$.
\end{proposition}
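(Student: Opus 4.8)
The plan is to identify $*_n + *_n$ as a two-heap misère Nim position and then read off its outcome from a complete description of two-heap Nim outcomes, which I would establish by induction. Since $*_a = \combgame{\{0, *_1, \ldots, *_{a-1}\mid 0, *_1, \ldots, *_{a-1}\}}$ is impartial, every option of $*_a$ is one of $*_0 = 0, *_1, \ldots, *_{a-1}$, so a move in $*_a + *_b$ either replaces the first summand by $*_c$ with $c < a$ or the second by $*_c$ with $c < b$; that is, $*_a + *_b$ is exactly the position arising from playing Nim on two heaps of sizes $a$ and $b$. Because these positions are impartial, each lies in $\Next \cup \Prev$, and (apart from the terminal position $0$, which is $\Next$) Table \ref{table-outcomes} shows such a position is $\Prev$ exactly when every option is $\Next$.

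I would then prove, by induction on $a+b$, that $o^-(*_a + *_b) = \Prev$ precisely when either $a = b \ge 2$ or $\{a,b\} = \{0,1\}$, and $o^-(*_a + *_b) = \Next$ otherwise. Working up to swapping the summands, the step splits into the natural cases: when $a = b \ge 2$ every option has the form $*_a + *_c$ with $c < a$, and the inductive hypothesis shows no such option meets the $\Prev$ criterion, so all options are $\Next$ and the position is $\Prev$; when $a < b$ outside the excluded small case one exhibits a single move to a $\Prev$ position — reduce $b$ to $a$ if $a \ge 2$, reduce $b$ to $1$ if $a = 0$, reduce $b$ to $0$ if $a = 1$ — witnessing $\Next$; and the base positions $0$, $*$, and $* + *$ are checked by hand. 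Specializing to $a = b = n$ with $n \ge 2$ yields $o^-(*_n + *_n) = \Prev$, the first assertion.

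The consequence is then immediate. If $*_n \in \cl{\xi}$ then, since a closed set is closed under addition and always contains $0$ (Definition \ref{definition-closed}), both $*_n + *_n$ and $0$ lie in $\cl{\xi}$. Taking $\gamma = 0$ in Definition \ref{def-indis}, we have $o^-((*_n + *_n) + 0) = \Prev \ne \Next = o^-(0 + 0)$, so $0$ distinguishes $*_n + *_n$ from $0$, whence $*_n + *_n \not\equiv 0 \imod{\cl{\xi}}$.

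The obstacle worth flagging is that the tempting symmetric strategy for the second player fails here, exactly as warned in Definition \ref{def-tweedle}: pure Tweedledum-Tweedledee mirroring drives the game down to $0 + 0$ with the first player unable to move, and under the misère convention that player then wins. The induction sidesteps this by never committing to mirroring, and the genuine care is in the low cases $a,b \in \{0,1\}$, where the parity of the outcome behaves oppositely to the generic $a = b \ge 2$ case; getting those boundary values right is what makes the inductive classification close.
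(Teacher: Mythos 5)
Your proof is correct. It differs from the paper's argument mainly in scope and organization: the paper fixes the diagonal and inducts on $n$, showing directly that from $*_n + *_n$ the second player can answer any first move $*_m + *_n$ by returning to $*$ (if $m \le 1$) or to $*_m + *_m$ (if $2 \le m < n$), the latter being $\Prev$ by induction; it also notes the one-line genus computation $\Genus{*_n + *_n} = 0^{02}$ as an alternative. You instead prove the complete outcome classification of all two-heap positions $*_a + *_b$ by induction on $a+b$, namely $\Prev$ exactly when $a = b \ge 2$ or $\{a,b\} = \{0,1\}$, and then specialize. The underlying moves are the same — your witnesses for the $\Next$ cases (equalize the heaps, or reduce to $*$) are precisely the responses the paper hands to the second player — but your version proves strictly more (the standard two-heap misère {\sc nim} table), at the cost of having to verify the off-diagonal and boundary cases $\{0,0\}$, $\{0,1\}$, $\{1,1\}$ explicitly, which is exactly where the misère parity flips and where you rightly flag the care needed. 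The deduction that $0$ distinguishes $*_n + *_n$ from $0$ matches the paper.
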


\begin{proof}
	For those familiar with genus \cite{MTHESIS, WW2, ONAG}, the genus of $*_n + *_n$ for $n \in \mathbb{Z}^{\ge 2}$ is $0^{02}$, and therefore $o^-(*_n + *_n) = \Prev$.
	
	For those unfamiliar with genus, it follows as a simple induction argument.  Suppose $n=2$ and consider $*_2 + *_2$ with Left moving first.  Figure \ref{fig-*2+*2-prev} shows how Right can force a win.  By symmetry, the same argument works for Left when Right plays first.    

		\begin{figure}[htb]
		\begin{center}

		\begin{pspicture}\unit{0.5cm}(6,6)
		\put(4,6)
		{\cgtree
			{
				{*_2 + *_2}({*_2}(|{*}(0|))+ {* + *_2}(|{*}(0|))|)
			}		
		}
		\end{pspicture}

		\end{center}
		\caption{$o^-(*_2 + *_2) = \Prev$.}
		\label{fig-*2+*2-prev}
		\end{figure}

	Now suppose true for all $2 \le k < n$ and consider $*_n + *_n$ with Left moving first.  Left can move to any of the following positions:
		\begin{itemize}
			\item $*_n$:  Right responds by moving to $*$, a $\Prev$ position.
			
			\item $* + *_n$:  Right responds by moving to $*$, a $\Prev$ position.
			
			\item $*_m + *_n$ for some $1 < m < n$:  Right responds by moving to $*_m + *_m$, which is a $\Prev$ position by induction.
		\end{itemize}
	
	Therefore $o^-(*_n + *_n) = \Prev$ for $n \in \mathbb{Z}^{\ge 2}$.  Since $o^-(0) = \Next$, the two cannot possibly be equivalent.
\end{proof}

Even restricting ourselves to binary all-small positions is not enough to guarantee the result, as the following example shows.

\begin{example}\label{example-theta-prev}
	Let $\theta = \combgame{\{\combgame{\{*\mid\rho\}}\mid\combgame{\{\rhob\mid*\}}\}}$.   Note that $\theta$ is symmetric and $\bar{\theta} = \theta$.  The game tree for $\theta$ is given in Figure \ref{fig-gt-theta}.

		\begin{figure}[htb]
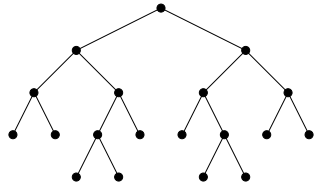

		\unitlength 8pt
		\begin{center}
		\begin{graph}(14,10)(0,0)
		\graphnodesize{0.4}
		\fillednodestrue

		\roundnode{a1}(3,0)
		\roundnode{a2}(5,0)
		\roundnode{a3}(9,0)
		\roundnode{a4}(11,0)
		
		\roundnode{b1}(0,2)
		\roundnode{b2}(2,2)
		\roundnode{b3}(4,2)
		\roundnode{b4}(6,2)
		\roundnode{b5}(8,2)
		\roundnode{b6}(10,2)
		\roundnode{b7}(12,2)
		\roundnode{b8}(14,2)

		\roundnode{c1}(1,4)
		\roundnode{c2}(5,4)
		\roundnode{c3}(9,4)
		\roundnode{c4}(13,4)
		
		\roundnode{d1}(3,6)
		\roundnode{d2}(11,6)
		
		\roundnode{e1}(7,8)
		
		\edge{b3}{a1}
		\edge{b3}{a2}
		\edge{c2}{b3}
		\edge{c2}{b4}
		\edge{b1}{c1}
		\edge{b2}{c1}
		\edge{d1}{c1}
		\edge{d1}{c2}
		\edge{c3}{b5}
		\edge{c3}{b6}
		\edge{c4}{b7}
		\edge{c4}{b8}
		\edge{b6}{a3}
		\edge{b6}{a4}
		\edge{d2}{c3}
		\edge{d2}{c4}
		\edge{e1}{d1}
		\edge{e1}{d2}
		
		\end{graph}
		\caption{The game tree of $\theta$.}
		\label{fig-gt-theta}
		\end{center}	
		\end{figure}

		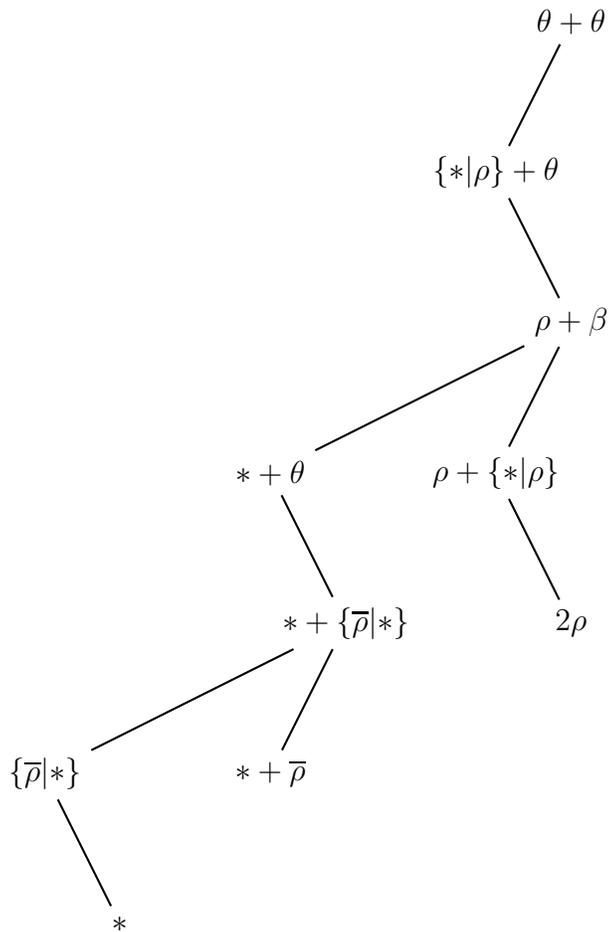
\begin{figure}[htb]
				\begin{center}

				\begin{pspicture}\unit{0.5cm}(6,12)
				\put(4,12)
				{\cgtree
					{
						{\theta + \theta}({\combgame{\{*|\rho\}}+\theta}(|{\rho + \beta}({\rho+\combgame{\{*|\rho\}}}(|{2 \rho})+{*+\theta}(|{*+\combgame{\{\rhob|*\}}}({*+\rhob}+{\combgame{\{\rhob|*\}}}(|*)|))|))|)
					}		
				}
				\end{pspicture}
				
				\end{center}
				\caption{Right can force a win in $\theta + \bar{\theta}$ with Left moving first.}
				\label{fig-beta-betab-prev}
		\end{figure}
		
		Suppose Left is moving first in $\theta + \theta$.  Then Figure \ref{fig-beta-betab-prev} shows how Right can force a win, recalling that $o^-(*) = \Prev$, and, by Theorem \ref{theorem-B-oc}, $o^-(* + \rhob) = \Next$, $o^-(2 \rho) = \Prev$.  
		
		Thus, Left loses moving first.  By symmetry, so does Right.  Thus $o^-(\theta +\theta) = \Prev$, and so $\theta + \theta \not \equiv 0 \imod{\cl{\theta}}$.
\end{example}

We would like to find positions $\xi$ where $\xi + \xib \equiv 0 \imod{\cl{\xi,\xib}}$.  As this is always true for normal play games, this would allow us to find certain positions which share behaviour between normal play and \mis play.  In this chapter, we indeed find a set of positions, which we call $\ab{3}$, where this is the case.  We also show that if $\xi$ is one of these $\ab{3}$ positions, then there is a Tweedledum-Tweedledee (Definition \ref{def-tweedle}) type strategy we can use in $\xi + \xib$.

\section{$\xi+\xib$ in $\ab{3}$ positions}

What can we say about $\xi + \xib$ in general?   In every example we have seen so far, $o^-(\xi + \xib) = \Next \cup \Prev$.  In fact, those are the only outcomes possible.

\begin{proposition}\label{prop-xi-xib-n-p}
	For any position $\xi$, $o^-(\xi + \xib) = \Next \cup \Prev$.
\end{proposition}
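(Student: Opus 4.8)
The plan is to show that $\xi + \xib$ is never in $\Left$ and never in $\Right$, which by the Fundamental Theorem of Combinatorial Games leaves only $\Next$ or $\Prev$. The natural tool is the Tweedledum-Tweedledee strategy from Definition \ref{def-tweedle} together with the observation about conjugate outcomes in Theorem \ref{theorem-outcome-class-conj}: whatever we prove for one of $\Left$, $\Right$ we get the other for free by symmetry, since $\overline{\xi + \xib} = \xib + \overline{\xib} = \xib + \xi = \xi + \xib$ (conjugation is an involution and disjunctive sum is commutative). Thus $\xi + \xib$ is its own conjugate, and so by Theorem \ref{theorem-outcome-class-conj}, if $o^-(\xi + \xib) = \Left$ then $o^-(\xi + \xib) = o^-(\overline{\xi + \xib}) = \Right$, a contradiction since the outcome classes are disjoint. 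Hence it suffices to rule out just one of the two, and the conjugate-symmetry does the rest.

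First I would rule out $\Left$ (equivalently $\Right$) by exhibiting a winning strategy for the \emph{second} player whenever one player has a winning strategy as the first player; more precisely, I would argue that the first player never has a guaranteed win, which already forbids $\Left$ and $\Right$ (each of which requires a win for a designated player regardless of who moves first). Suppose, for contradiction, that $o^-(\xi + \xib) = \Left$. Then Left wins moving first. Consider instead Right moving first: I claim Right can adopt the mirroring strategy, responding to each Left move in $\xi$ by the symmetric move in $\xib$ (and vice versa). The key point in \mis play is subtler than normal play, so rather than asserting the second player wins outright, I would show that the \emph{first} player moving into $\xi + \xib$ cannot be guaranteed a win, because the mirroring opponent can always answer, preserving a symmetric position of the form $\kappa + \overline{\kappa}$ after each of the responder's moves.

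The cleanest route is an induction on birthday. For the base case, $\xi = 0$ gives $0 + 0 = 0$ with $o^-(0) = \Next$, which is in $\Next \cup \Prev$. For the inductive step, assume the result for all positions of smaller birthday. Suppose Left moves first in $\xi + \xib$; her move is either some $\xi^L + \xib$ or $\xi + \overline{\xi^R}$ (recalling $\xib = \combgame{\{\overline{\xi^R}\mid\overline{\xi^L}\}}$). In either case Right mirrors: to $\xi^L + \xib$ he replies $\xi^L + \overline{\xi^L}$, and to $\xi + \overline{\xi^R}$ he replies $\xi^R + \overline{\xi^R}$. Each resulting position is again of the form $\kappa + \overline{\kappa}$ with $\kappa$ of strictly smaller birthday, so the induction hypothesis applies to it. The crucial verification is that Right always \emph{has} the mirroring response available — but this is immediate, since a left option $\xi^L$ of $\xi$ corresponds exactly to a right option $\overline{\xi^L}$ of $\xib$, guaranteeing the symmetric reply exists. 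This shows the first player cannot force the last move in her favour against mirroring, so neither $\Left$ nor $\Right$ can occur.

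The main obstacle I anticipate is handling the \mis terminal condition correctly: in \mis play the mirroring player does \emph{not} want to make the last move, so I must be careful that the argument establishes only the negative statement (``no guaranteed first-player win''), not a positive claim that mirroring wins outright. The delicate case is when the mirrored position reduces to $0$ (both components exhausted simultaneously): the player who just completed a mirror pair has handed the move to the opponent in a position with no moves, and I must check this is consistent with landing in $\Next \cup \Prev$ rather than accidentally forcing a $\Left$ or $\Right$ outcome. I expect this to resolve cleanly precisely because symmetry forces the position after each mirror to satisfy $o^-(\kappa + \overline{\kappa}) \in \Next \cup \Prev$ by induction, so the outcome is determined entirely by the parity/symmetry structure and never splits asymmetrically toward one player. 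Invoking Theorem \ref{theorem-outcome-class-conj} at the end via self-conjugacy is the tidiest way to collapse the two exclusions into one, so I would lead with that reduction and then supply the single induction.
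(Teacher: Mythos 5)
Your opening paragraph is already a complete and correct proof, and it is essentially the paper's own argument: $\xi + \xib$ is its own conjugate, so by Theorem \ref{theorem-outcome-class-conj} it cannot lie in $\Left$ or $\Right$, leaving $\Next \cup \Prev$. The mirroring induction you append afterwards is unnecessary, and its stated goal --- showing the first player never has a guaranteed win --- would actually establish the false stronger claim that $o^-(\xi + \xib) = \Prev$ always (compare $o^-(0+0) = \Next$), so it should be dropped rather than patched.
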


\begin{proof}
	Suppose Left moving first in $\xi + \xib$ has a winning move, either to $\xi^L + \xib$ or $\xi + \overline{\xi}{}^L$.  By symmetry, then if Right moves first in $\xi + \xib$, Right has a winning move with either $\xi + \overline{\xi}{}^R$ or $\xi^R + \xib$ respectively.  
	
	Similarly, Left moving second has a winning move.  This shows the result.
\end{proof}

We will now build a set of positions which have the property that $\xi + \xib \equiv 0 \imod{\cl{\xi,\xib}}$.  

\begin{definition}
	A position $\xi$ is called $\ab{n}$ if
		\begin{enumerate}
			\item $\xi$ is an all-small position (hence the $\mathrm{a}$),
			\item $\xi$ is binary (hence the $\mathrm{b}$), 
			\item each alternating path in the game tree of $\xi$ is of length $n$ or less (hence the $\mathit{n}$). 
		\end{enumerate}
\end{definition}

\begin{example}\text{}
	\begin{enumerate}
		\item The position $*$ is $\ab{1}$.
		\item The positions $\tau$ (Figure \ref{fig-gt-tau}), $\rho$ (Figure \ref{fig-gt-rho}), and $\rhob$ are $\ab{2}$.  
		\item The positions $\eta$ (Figure \ref{fig-gt-eta}) and $\theta$ (Figure \ref{fig-gt-theta}) are $\ab{4}$.   
	\end{enumerate}
\end{example}

Note that if $\xi$ is $\ab{n}$, then $\xi$ is $\ab{m}$ for all $m > n$.    

We are interested in $\ab{3}$ positions.  We will show that if $\xi \in$ $\ab{3}$, then $\xi + \xib \equiv 0 \imod{\cl{\xi, \xib}}$.  We will first show that $o^-(\xi + \xib) = \Next$.

\begin{proposition}\label{prop-xi+xib-next}
	Let $\xi$ be $\ab{3}$.  Then $o^-(\xi + \xib) = \Next$. 
\end{proposition}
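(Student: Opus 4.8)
The plan is to use Proposition \ref{prop-xi-xib-n-p}, which already tells us $o^-(\xi + \xib) \in \Next \cup \Prev$, and to rule out $\Prev$ by exhibiting a winning move for the first player. Since $\overline{\xi + \xib} = \xib + \xi = \xi + \xib$, the position is self-conjugate, so by Theorem \ref{theorem-outcome-class-conj} the two cases ``Left moves first'' and ``Right moves first'' are symmetric; it therefore suffices to produce a first move for Left after which she wins as the player who has just moved, i.e.\ a Left option of $\xi + \xib$ whose outcome lies in $\Prev \cup \Left$ (a single such option already contradicts $\Prev$, see Table \ref{table-outcomes}). I would prove this by induction on the birthday of $\xi$, the base case being $\xi = *$, where $\xi + \xib = * + *$ is an $\Next$ position.

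Write $\xi = \combgame{\{\xi^L \mid \xi^R\}}$; since $\xi$ is all-small, binary, and nonzero, the options $\xi^L$ and $\xi^R$ are uniquely determined all-small binary $\ab{3}$ positions (possibly $0$). The two candidate first moves for Left are into the $\xi$-component, reaching $\xi^L + \xib$, and into the $\xib$-component, reaching $\xi + \overline{\xi^R}$ (recall $\xib^L = \overline{\xi^R}$). The essential structural input from the hypothesis is shallowness: because every alternating path has length at most $3$, the path $\xi \to \xi^L \to \xi^{LR} \to \xi^{LRL}$ cannot be continued, so all-smallness forces $\xi^{LRL} = 0$ whenever it is reached, and symmetrically $\xi^{RLR} = 0$. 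This is exactly the feature that fails for the $\ab{4}$ counterexample $\theta$, whose game tree carries the alternating path of length $4$ exhibited in Example \ref{example-theta-prev}.

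The heart of the argument is to show that at least one of the two candidate moves reaches a position with outcome in $\Prev \cup \Left$. The key mechanism is that after Left plays $\xi \to \xi^L$, if Right answers in the conjugate component by $\xib \to \overline{\xi^L}$, the position becomes the conjugate pair $\xi^L + \overline{\xi^L}$ with Left to move; by the induction hypothesis applied to the lower-birthday $\ab{3}$ position $\xi^L$, this pair is an $\Next$ position, so Left (the mover) wins. Any other reply by Right keeps play inside a single component, where the shallowness $\xi^{LRL} = 0$ (and the analogous bound in $\xib$) means the line terminates within one or two further moves; I would check in each such line that Left can steer to $0$, $*$, $* + *$, or a conjugate pair, again invoking the induction hypothesis or the known outcomes of $*$ and $* + *$. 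The symmetric analysis handles the case where Left's winning move is instead into the $\xib$-component (this is the move that succeeds for $\rho$, where $\xi + \overline{\xi^R} = \rho$ is a $\Left$ position). Because the position is self-conjugate, establishing a winning Left reply automatically supplies the matching winning Right reply, completing the verification that $o^-(\xi + \xib) = \Next$.

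The main obstacle I anticipate is the bookkeeping in this case analysis: one must choose the correct component for Left's opening move according to the shapes of $\xi^L$ and $\xi^R$, and then verify for every Right response that Left, as second mover on the remaining material, is never forced to make the last move. Organizing the cases so that each Right reply either recreates a conjugate pair (handled by induction) or drops into the shallow, already-understood fragments $0$, $*$, and $* + *$ is where the $\ab{3}$ hypothesis does its real work, and keeping the \mis parity straight---the first player wins precisely when the total number of moves is even---is the delicate part.
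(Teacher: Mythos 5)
Your proposal is correct and follows essentially the same route as the paper's proof: induction on birthday, a case split on the shape of $\xi$'s game tree driven by the $\ab{3}$ bound forcing $\xi^{LRL}=0$ (resp.\ $\xi^{RLR}=0$), and the key inductive step that the conjugate pair $\xi^L + \overline{\xi^L} = \xi^L + \xib^R$ is $\Next$. The only cosmetic differences are that you shortcut the symmetric Right-moves-first case by invoking Proposition \ref{prop-xi-xib-n-p}, and you start the induction at $*$ rather than $0$ (the case $\xi=0$ is trivial but should still be stated).
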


\begin{proof}
	We proceed by induction on the birthday of $\xi$.  
	
	Suppose $\xi = 0$.  Then $o^-(0+0) = o^-(0) = \Next$, as required.
	
	Suppose true for all $\mu$ which are $\ab{3}$ and which have smaller birthday than $\xi$.  Consider $\xi$.
	
	Suppose $\xi^L = 0$.  Then $\xib^R = 0$.  Suppose Left is moving first in $\xi + \xib$.  Then Figure \ref{fig-xi+xib-xiL=0} shows Left's winning moves.
		
		\begin{figure}[htb]
		\begin{center}
		
		\begin{pspicture}\unit{0.5cm}(6,4)
		\put(2.5,4)
		{\cgtree
			{
				{\xi+\xib}({\xi^L+\xib=\xib}(|{\xib^R = 0})|)
			}		
		}
		\end{pspicture}
		\end{center}
		\caption{Left wins $\xi + \xib$ moving first if $\xi$ is $\ab{3}$ and $\xi^L = 0$.}
		\label{fig-xi+xib-xiL=0}
		\end{figure}
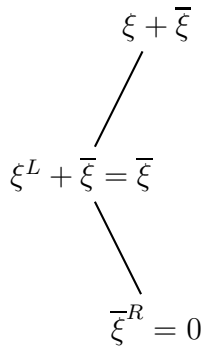

	Now suppose $\xi^{LRL} = 0$.  Then $\xib^{RLR} = 0$.  Figure \ref{fig-xi+xib-xiLRL=0} shows how Left moving first can win $\xi + \xib$, noting that $\overline{\xi^L} = \xib^R$, and that the birthday of  $\xi^L$ is strictly less than the birthday of $\xi$, so $o^-(\xi^L + \xib^R) = \Next$ by induction.  
		\begin{figure}[htb]
		\begin{center}

		\begin{pspicture}\unit{0.5cm}(6,12)
		\put(2.5,12)
		{\cgtree
			{
				{\xi+\xib}({\xi^L + \xib}(|{\xi^{LR} + \xib}({\xi^{LRL} + \xib=\xib}(|{\xib^R}({\xib^{RL}}(|{\xib^{RLR}=0})|))|)+{\xi^L + \xib^R})|)
			}		
		}
		\end{pspicture}
		\end{center}
		\caption{Left wins $\xi + \xib$ moving first if $\xi$ is $\ab{3}$ and $\xi^{LRL} = 0$.}
		\label{fig-xi+xib-xiLRL=0}
		\end{figure}
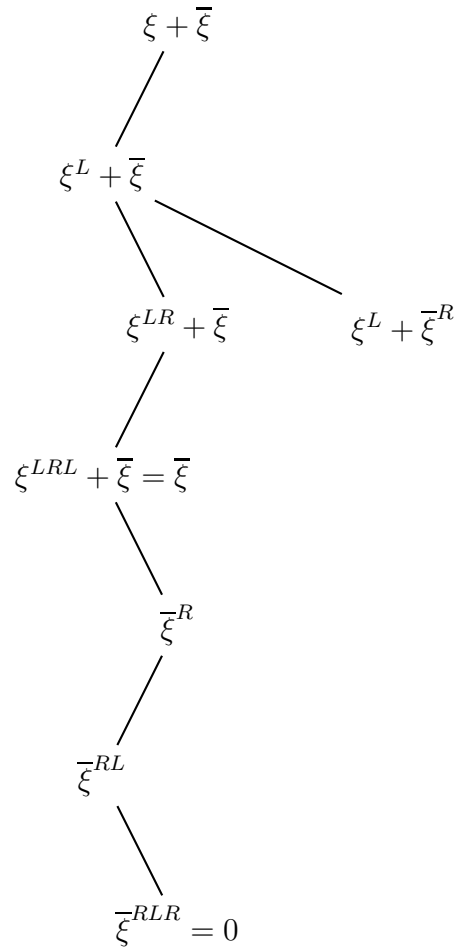

	Suppose that $\xi^{LR} = 0$.  Then $\xib^{RL} = 0$.  If $\xi^R = 0$ or $\xi^{RLR} = 0$, then repeat one of the above arguments to get that Left wins moving first in $\xi + \xib$.  Otherwise, suppose that $\xi^{RL} = 0$.  Figure \ref{fig-xi+xib-xiRL=0} shows how Left moving first can win $\xi + \xib$.  
	
		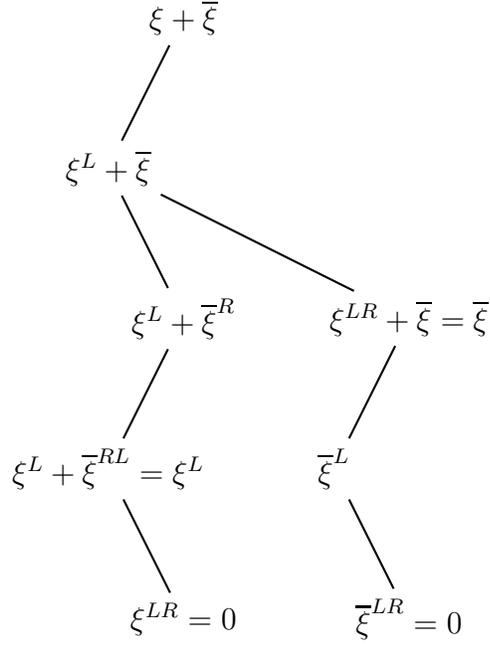
\begin{figure}[htb]
		\begin{center}

		\begin{pspicture}\unit{0.5cm}(6,8)
		\put(2.5,8)
		{\cgtree
			{
				{\xi+\xib}({\xi^L+\xib}(|{\xi^L + \xib^R}({\xi^L + \xib^{RL} = \xi^L}(|{\xi^{LR} = 0})|)+{\xi^{LR} + \xib=\xib}({\xib^L}(|{\xib^{LR}=0})|))|)
			}		
		}
		\end{pspicture}
		\end{center}
		\caption{Left wins $\xi + \xib$ moving first if $\xi$ is $\ab{3}$ and $\xi^{LR} = \xi^{RL} = 0$.}
		\label{fig-xi+xib-xiRL=0}
		\end{figure}
	
	A symmetric argument shows how Right wins moving first in $\xi + \xib$, and so the result holds.
\end{proof}

\begin{corollary}
	If $\xi$ is ab1 or ab2, then $o^-(\xi + \xib) = \Next$.
\end{corollary}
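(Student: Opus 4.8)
The plan is to reduce this statement directly to Proposition \ref{prop-xi+xib-next} by observing that every $\ab{1}$ or $\ab{2}$ position is already an $\ab{3}$ position. Recall the remark made just after the definition of $\ab{n}$: if $\xi$ is $\ab{n}$, then $\xi$ is $\ab{m}$ for every $m > n$. First I would make this explicit by unwinding the definition. The all-small condition and the binary condition do not mention $n$ at all, so they carry over unchanged. The only clause that depends on $n$ is the requirement that every alternating path in the game tree of $\xi$ have length $n$ or less, and this requirement is only weakened as $n$ grows: a path of length at most $1$ (or at most $2$) is in particular a path of length at most $3$. Hence $\ab{1} \subseteq \ab{2} \subseteq \ab{3}$ as classes of positions.

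Given this inclusion, the corollary is a single step: if $\xi$ is $\ab{1}$ or $\ab{2}$, then $\xi$ is $\ab{3}$, so Proposition \ref{prop-xi+xib-next} applies verbatim and yields $o^-(\xi + \xib) = \Next$. There is no genuine obstacle to overcome, and in particular no new induction or case analysis beyond the one already carried out in the proof of Proposition \ref{prop-xi+xib-next}; the only thing worth recording is the monotonicity of the $\ab{n}$ hierarchy in $n$, which is immediate from the definition. I would therefore present the proof as little more than a pointer to the $\ab{3}$ case, noting the set inclusion and invoking the proposition.
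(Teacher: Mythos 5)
Your proof is correct and takes essentially the same route as the paper: both reduce the claim to Proposition \ref{prop-xi+xib-next}. The paper's one-line proof points at the specific subcases of that proposition's proof (the $\xi^L = 0$ and $\xi^{LR} = 0$ cases), whereas you invoke the proposition's statement as a black box via the inclusion $\ab{1} \subseteq \ab{2} \subseteq \ab{3}$, which the paper itself records immediately after the definition; the difference is purely presentational.
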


\begin{proof}
	Use the proof of Proposition \ref{prop-xi+xib-next} when $\xi^L=0$ and when $\xi^{LR} =0$ respectively.
\end{proof}

The argument of Proposition \ref{prop-xi+xib-next} seem so nice that it may be tempting to try and extend it to $\ab{n}$ positions for $n \ge 4$.  However, by following the same method as in Example \ref{example-theta-prev}, we see that if $\xi$ is $\ab{4}$ with $\xi = \xib$ and $\xi^L = \overline{\xi^R}$, then $o^-(\xi + \xib) = \Prev$.  Since, $\ab{4} \subseteq \ab{n}$ for all $n > 4$, we see that this counterexample given in $\ab{4}$ means we cannot extend the result of Proposition \ref{prop-xi+xib-next}.

%%meghan

We will use Proposition \ref{prop-xi+xib-next} in the next result, namely:

\begin{theorem}\label{theorem-xi-xib=0}
	Suppose $\xi$ is $\ab{3}$.  Then $\xi + \xib \equiv 0 \imod{\cl{\xi,\xib}}$.
\end{theorem}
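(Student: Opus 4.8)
The plan is to reduce the claimed indistinguishability to an outcome-preservation statement and then establish it by a mirroring (Tweedledum--Tweedledee) argument in the $\xi + \xib$ component, in the spirit of the all-small proof of Theorem \ref{theorem-as-*+*=0}. Unwinding Definition \ref{def-indis}, to prove $\xi + \xib \equiv 0 \imod{\cl{\xi,\xib}}$ it suffices to show
\[ o^-(\gamma + \xi + \xib) = o^-(\gamma) \quad \text{for every } \gamma \in \cl{\xi,\xib}. \]
First I would record two structural facts. Since $\xi$ is all-small, so is $\xib$, and since sums and options of all-small positions are all-small, every $\gamma \in \cl{\xi,\xib}$ is all-small. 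Moreover every position $\mu$ reachable from $\xi$ (the position $\xi$ itself or any of its iterated options) is again $\ab{3}$ or lower, so by Proposition \ref{prop-xi+xib-next} together with the corollary immediately following it, $o^-(\mu + \overline{\mu}) = \Next$ for every such $\mu$.

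The heart of the argument is a strategy-copying proof that outcome is preserved, carried out one outcome class at a time. Suppose a given player $P$ wins $\gamma$ (in the relevant first- or second-player role). I claim $P$ wins $\gamma + \xi + \xib$ in the same role by the following recipe: $P$ follows the winning strategy in the $\gamma$ component and never initiates play in $\xi + \xib$; whenever the opponent plays in $\xi + \xib$, moving one summand of a balanced pair $\mu + \overline{\mu}$, $P$ replies with the conjugate move in the other summand, restoring a balanced position $\mu' + \overline{\mu'}$. Since the moves of $\overline{\mu}$ are exactly the conjugates of the moves of $\mu$, the mirror reply is always available, and since $\xi + \xib$ is only ever disturbed in conjugate pairs, it remains of the form $\mu + \overline{\mu}$ whenever the opponent is on move, and each such pair costs two plies and so preserves the turn-parity in the $\gamma$ component.

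The only way this recipe can break down is if $P$ is forced to move while having no move left in $\gamma$. Here the all-small hypothesis does the work: since $\gamma$ and all its followers are all-small, the $\gamma$ component is terminal for one player exactly when it is terminal for the other, so at the moment $P$ is stuck in $\gamma$ the entire remaining position is a balanced $\mu + \overline{\mu}$ with $P$ to move. By the structural fact above $o^-(\mu + \overline{\mu}) = \Next$, so $P$, being the player to move, wins there as well. Running this in each of the four outcome classes yields $o^-(\gamma) \in \mathcal{X} \iff o^-(\gamma + \xi + \xib) \in \mathcal{X}$, which is the desired equivalence.

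The step I expect to be the main obstacle is verifying that the mirroring recipe is legitimate at the \mis endgame, i.e.\ that the last-player-to-move-loses convention still favours $P$ rather than the opponent. The delicate point is the interaction between exhausting $\gamma$ and exhausting $\xi + \xib$: one must confirm that the turn-parity preserved by the conjugate pairs places $P$ on move at the balanced residue precisely when $P$'s line of play in pure $\gamma$ leaves $P$ facing the terminal position, and that the reduction of $o^-(\mu + \overline{\mu}) = \Next$ to Proposition \ref{prop-xi+xib-next} applies at every depth (using that options of an $\ab{3}$ position are $\ab{2}$ or lower). I would make this rigorous either by a turn-by-turn case analysis mirroring Theorem \ref{theorem-as-*+*=0}, or by an induction on the birthday of $\gamma$ that tracks the turn parity through the mirrored exchanges.
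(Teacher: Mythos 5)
Your proof is correct, and it rests on exactly the two ingredients the paper uses: the conjugate-mirroring reply to any move in the $\xi+\xib$ component, and Proposition \ref{prop-xi+xib-next} to dispose of the balanced residue. The difference is in the packaging. The paper runs a double induction (on the birthday of $\xi$ and on $\nu$): after a single mirrored exchange it immediately invokes the outer induction hypothesis $\xi^R+\overline{\xi^R}\equiv 0$ to absorb the balanced pair, so it never has to track an arbitrary interleaving of play, and Proposition \ref{prop-xi+xib-next} enters only as the base case $\nu=0$. You instead carry the balanced pair $\mu+\overline{\mu}$ through the whole game and invoke Proposition \ref{prop-xi+xib-next} only on the final residue once $\gamma$ is exhausted; this is the direct generalisation of Wolfe's footnote proof of Theorem \ref{theorem-as-*+*=0}, and it is more transparent as a strategy, at the price of exactly the turn-parity and endgame bookkeeping you flag (in particular you must note that under your invariant the opponent is never the one stuck at a dead position, since $\gamma$ all-small plus $P$ winning $\gamma$ forces the last $\gamma$-move onto the opponent). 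Two small points: your parenthetical that options of an $\ab{3}$ position are ``$\ab{2}$ or lower'' is an overstatement --- an alternating path inside $\xi^L$ beginning with a Left edge does not extend to one in $\xi$, so followers are in general only $\ab{3}$ --- but this is harmless, since heredity of $\ab{3}$ is all that Proposition \ref{prop-xi+xib-next} requires and is what the paper itself uses; and when formalising, your suggested induction should be on the options of $\gamma$ rather than literally on its birthday, matching the paper's inner induction.
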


\begin{proof}
	We proceed by induction on the birthday of $\xi$.  
	
	Suppose $\xi = 0$.  Then clearly $0 + 0 \equiv 0 \imod{\cl{0}}$.  
	
	Suppose true for all $\mu$ which are $\ab{3}$ and which have smaller birthdays than $\xi$.  Consider $\xi + \xib$ in $\cl{\xi, \xib}$. We  want $o^-(\nu) = o^-(\nu + \xi + \xib)$ for any $\nu \in \cl{\xi, \xib}$.  We proceed by induction on $\nu$.
	
	Suppose $\nu = 0$.  Then $o^-(0) = \Next$, and, by Proposition \ref{prop-xi+xib-next}, $o^-(\xi + \xib) = \Next$.  This shows the base case for the induction on $\nu$.
	
	Now suppose $o^-(\mu) = o^-(\mu + \xi + \xib)$ for all $\mu \in \cl{\xi, \xib}$ with lesser birthday than that of $\nu$. 
	
	Suppose Left wins moving first in $\nu$.  We claim that Left can win moving first in $\nu + \xi + \xib$ with the move $\nu^L + \xi + \xib$.  Since the birthday of $\nu^L$ is strictly less than the birthday of $\nu$, we have $o^-(\nu^L + \xi + \xib) = o^-(\nu^L)$.  Since Left wins moving first in $\nu$, this means $o^-(\nu) = \Prev \cup \Left$, so $o^-(\nu^L + \xi + \xib) = \Next \cup \Left$, so Left wins moving first in $\nu + \xi + \xib$.
	
	Suppose Left wins moving second in $\nu$.  Consider Right's three possible first moves in $\nu + \xi + \xib$.  He can move to either $\nu^R + \xi + \xib$, $\nu + \xi^R + \xib$, or $\nu + \xi + \xib^R$. Suppose Right makes the first move to $\nu^R + \xi + \xib$.  Since Left wins moving second in $\nu$, this gives $o^-(\nu^R) = \Next \cup \Left$.  Since the birthday of $\nu^R$ is strictly less than the birthday of $\nu$, by induction, $o^-(\nu^R + \xi + \xib) = \Next \cup \Left$, so Left wins $\nu + \xi + \xib$ moving second if Right's first move is to $\nu^R + \xi + \xib$.

	Suppose Right makes the first move to $\nu + \xi^R + \xib$.  Left responds by moving to $\nu + \xi^R + \xib^L$.  Since $\overline{\xi^R} = \xib^L$ and $\xi^R$ is $\ab{3}$, by induction, $o^-(\nu + \xi^R + \xib^L) = o^-(\nu)$.  Since Left wins moving second in $\nu$, by induction, Left wins moving second in $\nu + \xi^R + \xi$.   Similarly, if Right's first move is to $\nu+ \xi + \xib^R$, then Left will also win.	
		
	Therefore, if Left moving first (or second) wins $\nu$, then Left moving first (or second) wins $\nu + \xi + \xib$.  A symmetric argument works for Right.  Similarly, if Left (or Right) loses moving first (or second) in $\nu$, Left (or Right) loses moving first (or second) in $\nu + \xi + \xib$.  
	
	Therefore $o^-(\nu) = o^-(\nu + \xi + \xib)$, and so $\xi + \xib \equiv 0 \imod{\cl{\xi,\xib}}$.  
\end{proof}

\begin{definition}
	The set $\cl{\ab{3}}$ is the smallest closed set containing all positions which are $\ab{3}$.
\end{definition}

\begin{corollary}\label{cor-xi-xib-0}
	Suppose $\xi$ is $\ab{3}$.  Then $\xi + \xib \equiv 0 \imod{\cl{\ab{3}}}$.
\end{corollary}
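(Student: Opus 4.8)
The plan is to prove the Corollary directly by a single induction that already incorporates the larger test set, rather than attempting to deduce it from Theorem \ref{theorem-xi-xib=0}. The reason is that indistinguishability over a closed set need not survive an enlargement of the set (the remark following Proposition \ref{proposition-distinguishable-subseteq}), so the relation $\xi + \xib \equiv 0 \imod{\cl{\xi,\xib}}$ cannot simply be lifted to $\cl{\ab{3}}$. Instead I would prove, by induction on the birthday of $\xi$ over all $\ab{3}$ positions simultaneously, the statement $P(\xi)$: for every $\nu \in \cl{\ab{3}}$ one has $o^-(\nu + \xi + \xib) = o^-(\nu)$. This is precisely the assertion $\xi + \xib \equiv 0 \imod{\cl{\ab{3}}}$.

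The argument for $P(\xi)$ would follow the proof of Theorem \ref{theorem-xi-xib=0} almost verbatim, via a nested induction on the birthday of $\nu$. The base cases are $\xi = 0$ (where $\xi + \xib = 0$) and $\nu = 0$ (handled by Proposition \ref{prop-xi+xib-next}, giving $o^-(\xi+\xib) = \Next = o^-(0)$). In the inductive step I would split on who wins $\nu$ and from which position. If the first player has a winning opening move in $\nu$, she copies it to $\nu^L + \xi + \xib$ (resp.\ $\nu^R + \xi + \xib$) and appeals to the inner hypothesis on $\nu^L$ (resp.\ $\nu^R$). If the first player wins moving second, I would examine Right's three replies in $\nu + \xi + \xib$, namely $\nu^R + \xi + \xib$, $\nu + \xi^R + \xib$, and $\nu + \xi + \xib^R$, answering the first by the inner hypothesis on $\nu^R$ and the latter two by having Left respond with the symmetric move in the paired component, reaching a position $\nu + \mu + \bar{\mu}$ with $\mu$ an option of $\xi$ (namely $\mu = \xi^R$ or $\mu = \xi^L$); the winner is then read off from $o^-(\nu + \mu + \bar{\mu}) = o^-(\nu)$.

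The crucial point, and the one place where enlarging the test set does real work, is this mirroring step: to conclude $o^-(\nu + \mu + \bar{\mu}) = o^-(\nu)$ I must apply the outer hypothesis $P(\mu)$ at this very $\nu$, and $P(\mu)$ quantifies over all test positions in $\cl{\ab{3}}$, which includes $\nu$. Under the weaker relation $\equiv 0 \imod{\cl{\mu,\bar{\mu}}}$ of the Theorem this inference would be unavailable, since a general $\nu \in \cl{\ab{3}}$ need not lie in $\cl{\mu,\bar{\mu}}$. The remaining obligations are bookkeeping: that every position produced stays in $\cl{\ab{3}}$ and is $\ab{3}$ where required. Because $\cl{\ab{3}}$ is a closed set it is option-closed, so $\nu^L,\nu^R \in \cl{\ab{3}}$, and addition-closed, so $\nu + \mu + \bar{\mu} \in \cl{\ab{3}}$ once $\mu,\bar{\mu} \in \cl{\ab{3}}$. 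For the latter I would record two stability facts: (i) an option of an $\ab{3}$ position is again $\ab{3}$ (all-smallness passes to options by definition, subtrees of binary trees are binary, and alternating-path lengths cannot increase), and (ii) the conjugate of an $\ab{3}$ position is $\ab{3}$ (conjugation is symmetric in Left and Right and preserves both tree shape and path lengths), so each $\mu$ and $\bar{\mu}$ is an $\ab{3}$ position of birthday strictly below that of $\xi$.

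The main obstacle is not any single calculation but making the two interleaved inductions — outer on the birthday of $\xi$, inner on the birthday of $\nu$ — and the exact role of the enlarged set fully explicit, so that the appeal to $P(\mu)$ at an arbitrary $\nu \in \cl{\ab{3}}$ is manifestly legitimate. Once Proposition \ref{prop-xi+xib-next} and the two stability facts are in hand, the case analysis and the outcome-class reasoning are routine.
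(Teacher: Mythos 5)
Your proposal is correct and follows essentially the same route as the paper, whose proof of this corollary is literally the one-line instruction to rerun the proof of Theorem \ref{theorem-xi-xib=0} with the test position drawn from $\cl{\ab{3}}$ rather than $\cl{\xi,\xib}$. Your observation that the mirroring step requires invoking the outer induction hypothesis for $\xi^L$ or $\xi^R$ at an arbitrary $\nu \in \cl{\ab{3}}$ — so that the statement quantified over the larger closed set is the right one to carry through the induction — is a worthwhile clarification that the paper leaves implicit.
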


\begin{proof}
	The proof is the same as Theorem \ref{theorem-xi-xib=0} with taking $\mu \in \cl{\ab{3}}$ rather than $\mu \in \cl{\xi, \xib}$.  
\end{proof}

\begin{corollary}
	Suppose $\xi$ is $\ab{3}$ and $\xi \not = 0$.  Then $* \in \cl{\xi}$ and $* + * \equiv 0 \imod{\cl{\xi}}$.
\end{corollary}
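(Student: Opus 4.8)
The statement splits into two claims: that $* \in \cl{\xi}$, and that $* + * \equiv 0 \imod{\cl{\xi}}$. The plan is to dispatch the membership claim by a direct descent through the game tree, and then obtain the indistinguishability relation essentially for free from the all-small machinery already established. Observe first that $0 \in \cl{\xi}$ always, and that once $* \in \cl{\xi}$ is known, closure under addition gives $* + * \in \cl{\xi}$; so both positions appearing in the relation genuinely lie in the set, and the membership claim is exactly what makes the second claim well posed.

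First I would prove the auxiliary fact that every all-small position $\mu \neq 0$ satisfies $* \in \cl{\mu}$, by induction on the birthday of $\mu$. Since $\mu$ is all-small and nonzero, both players can move. If $\mu$ has birthday $1$, then every option has birthday $0$ and hence equals $0$; as the left and right option sets are each a nonempty subset of $\{0\}$, they both equal $\{0\}$, so $\mu = \combgame{\{0 \mid 0\}} = *$. If $\mu$ has birthday at least $2$, it has an option $\mu'$ whose birthday is at least $1$, hence nonzero; this $\mu'$ is again all-small (every option of an all-small position is all-small, by the definition) and has strictly smaller birthday, so by induction $* \in \cl{\mu'} \subseteq \cl{\mu}$. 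Applying this to $\mu = \xi$, which is all-small by the first defining clause of $\ab{3}$ and nonzero by hypothesis, yields $* \in \cl{\xi}$.

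For the second claim I would simply invoke Theorem \ref{theorem-as-*+*=0}: since $\xi$ is $\ab{3}$ it is in particular all-small, and $\xi \neq 0$, so that theorem gives $* + * \equiv 0 \imod{\cl{\xi}}$ directly. Thus neither the binary condition nor the length-$3$ alternating-path condition is actually used here; the corollary is really a specialization of the all-small result, with the descent argument supplying the only extra ingredient needed to make the conclusion meaningful.

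The only step requiring genuine care is the descent. The subtlety is ensuring the chain of chosen options remains inside the nonzero all-small positions and terminates precisely at $*$ rather than at some other birthday-$1$ position; this is exactly where the two observations — that options of all-small positions are themselves all-small, and that the unique all-small position of birthday $1$ is $*$ — do the work. These points are routine but should be stated explicitly to keep the induction honest.
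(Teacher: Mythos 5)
Your proposal is correct. The paper states this corollary without supplying any proof, and your argument is precisely the intended justification: the second claim is an immediate specialization of Theorem \ref{theorem-as-*+*=0} (an $\ab{3}$ position is in particular all-small), while the membership claim $* \in \cl{\xi}$ follows from your birthday induction, whose two key observations — that options of all-small positions are all-small and that $*$ is the unique all-small position of birthday one — are exactly the right ones. Your remark that the binary and length-$3$ conditions play no role here is also accurate; the corollary uses only the all-small hypothesis.
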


\begin{corollary}\label{cor-sum-xi-xib-0}
	Suppose $\xi_i$ is $\ab{3}$ for $i \in \{1,2,\ldots,n\}$.  Then
		\[ \sum_{i=1}^n \xi_i + \overline{\sum_{i=1}^n \xi_i} \equiv 0 \imod{\cl{\ab{3}}}.\]
\end{corollary}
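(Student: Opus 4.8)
The plan is to reduce the statement to Corollary \ref{cor-xi-xib-0} together with the fact that indistinguishability over a closed set is a congruence with respect to disjunctive sum. The single structural observation that drives everything is that conjugation is additive, i.e.\ $\overline{\alpha + \beta} = \overline{\alpha} + \overline{\beta}$ for any positions $\alpha, \beta$. Granting this, write $\Sigma = \sum_{i=1}^n \xi_i$; then $\overline{\Sigma} = \sum_{i=1}^n \overline{\xi_i}$, and by commutativity and associativity of disjunctive sum we may regroup
	\[ \Sigma + \overline{\Sigma} = \sum_{i=1}^n \xi_i + \sum_{i=1}^n \overline{\xi_i} = \sum_{i=1}^n \left(\xi_i + \overline{\xi_i}\right). \]
The problem thus becomes showing that a disjunctive sum of positions, each individually indistinguishable from $0$ over $\cl{\ab{3}}$, is itself indistinguishable from $0$ over $\cl{\ab{3}}$.

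First I would dispatch the additivity of conjugation by a short induction on birthday, using Definition \ref{def-conjugate} and the option formula for disjunctive sum: the left options of $\overline{\alpha + \beta}$ are the conjugates of the right options of $\alpha + \beta$, namely the conjugates of $\alpha^R + \beta$ and $\alpha + \beta^R$, which by induction equal $\overline{\alpha^R} + \overline{\beta}$ and $\overline{\alpha} + \overline{\beta^R}$; these are exactly the left options of $\overline{\alpha} + \overline{\beta}$, and symmetrically on the right. Next I would note that conjugation preserves the $\ab{3}$ property: swapping Left and Right options preserves all-smallness, preserves the binary condition on the game tree, and leaves the lengths of alternating paths unchanged. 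Hence each $\overline{\xi_i}$ is $\ab{3}$, every $\xi_i + \overline{\xi_i}$ lies in $\cl{\ab{3}}$, and each satisfies $\xi_i + \overline{\xi_i} \equiv 0 \imod{\cl{\ab{3}}}$ by Corollary \ref{cor-xi-xib-0}.

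The final step is an induction on $n$ exploiting that $\stackrel{\cl{\ab{3}}}{\equiv}$ is a congruence. The base case $n=1$ is precisely Corollary \ref{cor-xi-xib-0}. For the inductive step, set $A = \sum_{i=1}^{n-1}(\xi_i + \overline{\xi_i})$ and $B = \xi_n + \overline{\xi_n}$; by the inductive hypothesis $A \equiv 0$ and by Corollary \ref{cor-xi-xib-0} $B \equiv 0$, all over $\cl{\ab{3}}$. Applying the congruence property twice gives
	\[ A + B \equiv 0 + B = B \equiv 0 \imod{\cl{\ab{3}}}, \]
which is the claim for $n$ after re-grouping as in the first display.

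I do not expect a genuine obstacle here, since the real combinatorial work is already contained in Proposition \ref{prop-xi+xib-next} and Theorem \ref{theorem-xi-xib=0}. The only points demanding care are the two structural facts that feed the reduction: the additivity of conjugation and the closure of the $\ab{3}$ property under conjugation, the latter being what guarantees that every intermediate position stays inside $\cl{\ab{3}}$, so that the congruence relation may be applied at each stage. If one wished to avoid the explicit induction on $n$, one could instead invoke directly that the set of classes equal to $0 + \cl{\ab{3}}$ is closed under the inherited (commutative, associative) addition, being the identity element of the \mis monoid $\monoid{M}_{\cl{\ab{3}}}$.
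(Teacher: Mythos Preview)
Your proposal is correct and follows essentially the same route as the paper: use additivity of conjugation to rewrite $\sum \xi_i + \overline{\sum \xi_i}$ as $\sum_i(\xi_i + \overline{\xi_i})$, then peel off the summands one at a time using Corollary~\ref{cor-xi-xib-0} and the fact that indistinguishability is a congruence. The paper's version is terser (it simply ``recalls'' the additivity of conjugation and does not separately note that $\overline{\xi_i}$ is $\ab{3}$), but the argument is the same.
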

	
\begin{proof}
	Recall that 
		\[ \overline{\sum_{i=1}^n \xi_i}= \sum_{i=1}^n \overline{\xi_i}, \]
	and so
		\begin{align*}
			\sum_{i=1}^n \xi_i + \overline{\sum_{i=1}^n \xi_i}  &= \sum_{i=1}^n \xi_i + \sum_{i=1}^n \overline{\xi_i} \\
			&= \sum_{i=1}^n (\xi_i + \overline{\xi_i}).
		\end{align*}
	By Corollary \ref{cor-xi-xib-0}, $\xi_1 + \overline{\xi_1} \equiv 0 \imod{\cl{\ab{3}}}$.  Thus
		\[  \sum_{i=1}^n (\xi_i + \overline{\xi_i}) \equiv \sum_{i=2}^n (\xi_i + \overline{\xi_i}) \imod{\cl{\ab{3}}}.\]
	Continuing as such, we get that 
		\[ \sum_{i=1}^n (\xi_i + \overline{\xi_i}) \equiv  0 \imod{\cl{\ab{3}}}.\]
\end{proof}

Theorem \ref{theorem-xi-xib=0} and Corollary \ref{cor-sum-xi-xib-0} are important results in bridging the gap between normal play and \mis play.  Played under the normal play convention, $\xi + \xib = 0$ for any $\xi$ \cite{LIP, WW1}.  While Theorem \ref{theorem-xi-xib=0} and Corollary \ref{cor-sum-xi-xib-0} do restrict our examination to $\ab{3}$ positions and to the closure of such positions, it does give an infinite set of positions which share some of the properties associated with normal play games.  Not only that, but these results give a useful tool in the analysis of $\ab{3}$ positions.   One may recall the lengthy calculations required in Chapter \ref{chapter-examples} to calculate $\monoid{M}_{\cl{\rho, \rhob}}$.  With these new results, we can eliminate much of the preliminary work required to determine the outcome classes and indistinguishability relations.  

How much further can Theorem \ref{theorem-xi-xib=0} be extended?  Example \ref{example-theta-prev} showed that there are $\ab{4}$ positions $\xi$ such that $\xi + \xib \not \equiv 0 \imod{\cl{\xi, \xib}}$, thus the result cannot be extended immediately to $\ab{4}$ and above.

We conclude this subsection with one further set of $\ab{3}$ positions which is equivalent to 0.

\begin{proposition}
	Let $\xi$ be $\ab{3}$ such that $\xi^{LR} = \xi^{RL} = 0$.  Then $\xi \equiv 0 \imod{\cl{\ab{3}}}$.  
\end{proposition}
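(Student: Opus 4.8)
The plan is to prove the stronger statement that $o^-(\nu + \xi) = o^-(\nu)$ for every $\nu \in \cl{\ab{3}}$, which is exactly the assertion $\xi \equiv 0 \imod{\cl{\ab{3}}}$, by induction on the birthday of $\nu$. First I would unpack the hypotheses. Since $\xi$ is all-small and binary, from any non-terminal position each player has exactly one move, so $\xi = \combgame{\{\xi^L \mid \xi^R\}}$ with $\xi^L$ and $\xi^R$ single positions, and likewise $\xi^L = \combgame{\{\xi^{LL}\mid\xi^{LR}\}}$ and $\xi^R = \combgame{\{\xi^{RL}\mid\xi^{RR}\}}$. The hypothesis $\xi^{LR} = \xi^{RL} = 0$ then forces $\xi^L = \combgame{\{\xi^{LL}\mid 0\}}$ and $\xi^R = \combgame{\{0 \mid \xi^{RR}\}}$. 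The two structural facts that drive everything are: in $\nu + \xi^L$ Right has a move to $\nu$ (the collapse $\xi^L \to \xi^{LR} = 0$), and in $\nu + \xi^R$ Left has a move to $\nu$ (the collapse $\xi^R \to \xi^{RL} = 0$). Notice that the precise structure of $\xi^{LL}$ and $\xi^{RR}$, and indeed the ``$3$'' in $\ab{3}$, never enter: all the work is done by $\xi^{LR} = \xi^{RL} = 0$ together with all-smallness and the binary condition.

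For the base case $\nu = 0$ I must check $o^-(\xi) = \Next$. Moving first, Left plays $\xi \to \xi^L$; Right's only reply is the forced $\xi^L \to \xi^{LR} = 0$, after which Left has no move and so wins under the \mis convention. The conjugate argument shows Right also wins moving first, so $o^-(\xi) = \Next = o^-(0)$. For the inductive step I would fix $\nu$, assume $o^-(\mu + \xi) = o^-(\mu)$ for all $\mu \in \cl{\ab{3}}$ of smaller birthday (so the options $\nu^L, \nu^R$ qualify), and split into the four possibilities for $o^-(\nu)$. The moves from $\nu + \xi$ are $\nu^L + \xi$ and $\nu + \xi^L$ for Left, and $\nu^R + \xi$ and $\nu + \xi^R$ for Right. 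On the $\nu^L + \xi$ and $\nu^R + \xi$ options the induction hypothesis replaces the outcome by $o^-(\nu^L)$ and $o^-(\nu^R)$, so the winner's $\nu$-strategy transfers verbatim; on the $\nu + \xi^L$ and $\nu + \xi^R$ options the responding player uses the collapse move above to return to $\nu$ with the turn handed to the appropriate player. Running this through each of $o^-(\nu) \in \{\Left, \Right, \Next, \Prev\}$ (for instance, when $o^-(\nu) = \Prev$ every first move of $\nu + \xi$ is answered either by the induction hypothesis or by a collapse back to $\nu$ with the original mover again on turn, so the first player still loses) yields $o^-(\nu + \xi) = o^-(\nu)$ in every case.

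The main obstacle will be the bookkeeping in the four-way case analysis: in each outcome class and for each choice of who moves first, I must verify that after a collapse the turn passes to the player for whom the reduced position $\nu$ is a loss, and that the \mis terminal conventions are respected when a component empties out. This is the same delicate parity argument that appears in Theorem \ref{theorem-as-*+*=0} and Theorem \ref{theorem-xi-xib=0}, and I would organise it the same way — by exhibiting an explicit winning move for the player who should win and then checking that all of the opponent's replies are covered — rather than by invoking the outcome table, since the collapse responses read most transparently as strategies. Once the four cases are discharged the induction closes, and $\xi \equiv 0 \imod{\cl{\ab{3}}}$ follows.
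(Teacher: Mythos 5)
Your proposal is correct and follows essentially the same route as the paper's proof: induction on the birthday of $\nu$, using the induction hypothesis to handle first moves in the $\nu$ component and the immediate collapse $\xi^L \to \xi^{LR} = 0$ (resp.\ $\xi^R \to \xi^{RL} = 0$) to answer any first move in the $\xi$ component, returning the position to $\nu$ with the turn passed back to the original mover. The paper organises the case analysis by ``Left wins moving first/second'' rather than by the four outcome classes, but this is only a difference of bookkeeping, not of substance.
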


\begin{proof}
	Let $\alpha \in \cl{\ab{3}}$.  We want that $o^-(\xi + \alpha) = o^-(\alpha)$.  We proceed by induction on the birthday of $\alpha$.
	
	If $\alpha = 0$, then $o^-(\xi) = o^-(0 + \xi) = \Next$.  Thus the base case holds.
	
	Now suppose true for all positions in $\cl{\ab{3}}$ which have birthday strictly less than that of $\alpha$.  
	
	We proceed in the proof similarly to that of Theorem \ref{theorem-xi-xib=0}. 
	Suppose Left wins moving first in $\alpha$, where $\alpha^L$ is the winning move for Left.  By induction, $o^-(\alpha^L) = o^-(\alpha^L + \xi)$, so this is a winning move for Left in $\alpha + \xi$.
	
	Suppose Left wins moving second in $\alpha$.  If Right's first move in $\alpha + \xi$ is to $\alpha^R + \xi$, by induction, $o^-(\alpha^R + \xi) = o^-(\alpha^R)$, and Left wins moving next in $\alpha^R$, so Left wins moving next in $\alpha^R + \xi$.  If Right's first move is to $\alpha + \xi^R$, then Left can respond with $\alpha + \xi^{RL} = \alpha$, with Right moving next.  However, Left wins $\alpha$ with Right moving first, so Left will win $\alpha + \xi$ with Right's initial first move to $\alpha + \xi^R$.
	
	Therefore, if Left moving first (or second) wins $\alpha$, then Left moving first (or second) wins $\alpha + \xi$.  A symmetric argument works for Right.  Similarly, if Left (or Right) loses moving first (or second) in $\alpha$, Left (or Right) loses moving first (or second) in $\alpha + \xi$.  Thus $\xi \equiv 0 \imod{\cl{\ab{3}}}$.  
\end{proof}

\section{Tweedledum-Tweedledee on $\ab{3}$}\label{sec-results-ab3}

Recall the Tweedledum-Tweedledee strategy from Definition \ref{def-tweedle}: Suppose Left and Right are playing in the position $\xi + \xib$ with Left moving first.  Either Left has no move available, or, without loss of generality, she can move to $\xi^L + \xib$.  Right can respond by moving to $\xi^L + \xib^R$.  However, $\xib^R = \overline{\xi^L}$, so Right has moved to $\xi^L + \overline{\xi^L}$.  Now either Left has no move available, or Left has a move available, to which Right can respond by making the mirror-image move in the opposite component.  This strategy ensures that the second player makes the final move.  In normal play, it is a winning strategy for the second player in the position $\xi + \xib$.   In \mis play, it is a particularly poor strategy for the second player, as not only will it guarantee his loss,  we cannot even say whether the outcome of $\xi + \overline{\xi}$ is $\Next$ or $\Prev$ for arbitrary $\xi$ (Proposition \ref{prop-xi-xib-n-p}).   However, as Proposition \ref{prop-xi+xib-next} showed, if $\xi \in \cl{\ab{3}}$, then $o^-(\xi + \overline{\xi}) = \Next$.  Using this fact, we are able to construct a strategy for $\xi + \xib$ if $\xi$ is $\ab{3}$ which mimics Tweedledum-Tweedledee strategy for normal play. It is important to note that this strategy gives a win for the next player to move, not the previous player as the Tweedledum-Tweedledee strategy for normal play does.

\begin{construction}[A Tweedledum-Tweedledee type strategy for $\cl{\ab{3}}$]
	Take an arbitrary element of $\cl{\ab{3}}$, say $\displaystyle \sum_{i=1}^n \xi_i$ where each $\xi_i$ is $\ab{3}$.  Then we are looking to construct a winning strategy for Left moving first in
		\[ \sum_{i=1}^n \xi_i + \overline{\sum_{i=1}^n \xi_i} = \sum_{i=1}^n (\xi_i + \overline{\xi_i}).\]
	
	The overview of the algorithm is as such:
		\begin{enumerate}
			\item Consider the sum as follows:
				\[ \xi_1 + \overline{\xi_1} + \sum_{i=2}^n \xi_i + \overline{\sum_{i=2}^n \xi_i}\]
				
			\item Left's first move is in $\xi_1 + \overline{\xi_1}$.
			
			\item If Right responds by playing in $(\xi_1 + \overline{\xi_1})^L$, then so does Left.
			
			\item Otherwise, Right plays in 
				\[\sum_{i=2}^n \xi_i + \overline{\sum_{i=2}^n \xi_i},\]
			to which Left responds by playing the Tweedledum-Tweedledee strategy in the sum.
		\end{enumerate}
	
	The details of how this algorithm works now follows.
	
	We divide up the sum as given in the overview to
		\[ \xi_1 + \overline{\xi_1} + \sum_{i=2}^n \xi_i + \overline{\sum_{i=2}^n \xi_i}.\]
	Since $\xi_i$ is $\ab{3}$ for each $i \in \{1,2,\ldots,n\}$, one of the following must be true: 
		\begin{enumerate}
			\item there exists an $i$ such that $\xi_i^L = 0$; or
			\item there does not exist an $i$ such that $\xi_i^L = 0$ but there does exist an $i$ such that $\xi_i^{LRL} = 0$; or
			\item for every $i \in \{1,2,\ldots,n\}$, $\xi_i^{LR} = \overline{\xi_i}{}^{RL} = 0$.
		\end{enumerate}
	
	We will consider each of these cases separately.
	
	\begin{enumerate}
		\item Suppose there exists a $\xi_i$ such that $\xi_i^L = 0$. Reorder the sum so that $i=1$, i.e.\ $\xi_1^L = 0$.
		
		We claim that Left's moving first to 
		\[\xi_1^L + \overline{\xi_1} + \displaystyle \sum_{i=2}^n (\xi_i + \overline{\xi_i}) = 0 + \overline{\xi_1} + \displaystyle \sum_{i=2}^n (\xi_i + \overline{\xi_i})\]
		is a winning move for Left.  
	
		Suppose $n=1$.  Then our initial sum is merely $\xi_1 + \overline{\xi_1}$.  If Left moves to 
			\[\xi_1^L + \overline{\xi_1} = \overline{\xi_1},\]
		Right has no choice but to respond to $\overline{\xi_1}{}^R = \overline{\xi_1^L} = 0$, and so Right loses $\xi_1 + \overline{\xi_1}$ moving second.
		
		Thus suppose $n > 2$.  If Right moves to 
			\[ \overline{\xi_1}{}^R + \displaystyle \sum_{i=2}^n (\xi_i + \overline{\xi_i}) = 0 + \displaystyle \sum_{i=2}^n (\xi_i + \overline{\xi_i}),\]
		then the algorithm begins again with Left moving first.  Otherwise Right moves in the sum.  Left plays Tweedledum-Tweedledee in the sum until Right moves outside the sum and moves $\overline{\xi_1}$ to $\overline{\xi_1}{}^R = 0$.  Then either the entire position has become 0 or the position is now 
			\[ \sum_{i=1}^m (\xi_i^{\prime} + \overline{\xi_i^{\prime}}),\]
		for some $\xi_i^{\prime}$ which are $\ab{3}$, and the algorithm begins again with Left moving first.
	
		\item Suppose there does not exist $i \in \{1,2,\ldots,n\}$ such that $\xi_i^L = 0$ but there does exist an $i$ such that  $\xi_i^{LRL} = \overline{\xi_i}{}^{RLR} = 0$.  As in the preceding case, reorder our sum so that $i=1$, i.e.\ $\xi_1^{LRL} =0$.  
		
		Then Left's first move to 
			\[\xi_1^L + \overline{\xi_1} + \displaystyle \sum_{i=2}^n (\xi_i + \overline{\xi_i}) = 0 + \overline{\xi_1} + \displaystyle \sum_{i=2}^n (\xi_i + \overline{\xi_i})\]
		is a winning move for Left.   If Right plays in the sum, then Left plays Tweedledum-Tweedledee in the sum until Right moves outside the sum.
		
		Suppose Right has made his first move outside the sum and he moves to 
			\[\xi_1^{LR} + \overline{\xi_1} + \sum_{i=1}^m (\xi_i^{\prime} + \overline{\xi_i^{\prime}}),\]
		then Left responds by playing to
			\[\xi_1^{LRL} + \overline{\xi_1} + \sum_{i=1}^m (\xi_i^{\prime} + \overline{\xi_i^{\prime}}) = 0 + \overline{\xi_1} + \sum_{i=1}^m (\xi_i^{\prime} + \overline{\xi_i^{\prime}}). \]	
		If Right returns to playing in the sum, then Left returns to playing Tweedledum-Tweedledee in the sum until Right moves outside the sum.  Right's moving outside the sum takes the position to
			\[ \overline{\xi_1}{}^R + \sum_{i=1}^u (\xi_i^{\prime\prime} + \overline{\xi_i^{\prime\prime}}).\]
		Left responds by moving to
			\[ \overline{\xi_1}{}^{RL} + \sum_{i=1}^u (\xi_i^{\prime\prime} + \overline{\xi_i^{\prime\prime}}).\]
		Either Right then moves to
			\[ \overline{\xi_1}{}^{RLR} + \sum_{i=1}^u (\xi_i^{\prime\prime} + \overline{\xi_i^{\prime\prime}}) = 0 + \sum_{i=1}^u (\xi_i^{\prime\prime} + \overline{\xi_i^{\prime\prime}}), \]	
		and the algorithm begins again with Left moving first, or Right plays in the sum, with Left responding via Tweedledum-Tweedledee until Right moves in $\overline{\xi_1}{}^{RL}$, taking it to $\overline{\xi_1}{}^{RLR} = 0$.  At this point, either the entire position is 0 with Left to move next, or the position is
			\[ \sum_{i=1}^v (\xi_i^{\prime\prime\prime} + \overline{\xi_i^{\prime\prime\prime}}), \]
		and the algorithm begins again.
	
		Otherwise, we suppose Right's first move outside the sum is to
			\[ \xi_1^L + \overline{\xi_1}{}^R + \sum_{i=1}^m (\xi_i^{\prime} + \overline{\xi_i^{\prime}}).\]
		Since $\overline{\xi_1}{}^R = \overline{\xi_1^L}$, we can start the algorithm over again with Left moving first.   
	
		\item Now suppose $\xi_i^{LR}= \xi_i^{RL} = 0$ for all $i \in \{1,2,\ldots,n\}$.  Again, we claim that Left's first move to 
			\[\xi_1^L + \overline{\xi_1} + \sum_{i=2}^n (\xi_i + \overline{\xi_i}) \]
		is a winning move for Left.   If Right plays in the sum, then Left plays Tweedledum-Tweedledee in the sum until Right moves outside the sum.  
		
		Suppose Right has made his first move outside the sum and he moves to 
			\[ \xi_1^{LR} + \overline{\xi_1} +\sum_{i=1}^m (\xi_i^{\prime} + \overline{\xi_i^{\prime}}) =0 +  \overline{\xi_1} +\sum_{i=1}^m (\xi_i^{\prime} + \overline{\xi_i^{\prime}}). \]
		Left responds by moving to
			\[ \overline{\xi_i}{}^L +\sum_{i=1}^m (\xi_i^{\prime} + \overline{\xi_i^{\prime}}). \]
		If Right returns to playing in the sum, then Left returns to playing Tweedledum-Tweedledee in the sum until Right moves outside the sum.  Right moving outside the sum has one option, namely to move to 
			\[ \overline{\xi_1}{}^{LR} + \sum_{i=1}^u (\xi_i^{\prime\prime} + \overline{\xi_i^{\prime\prime}}) = 0 + \sum_{i=1}^u (\xi_i^{\prime\prime} + \overline{\xi_i^{\prime\prime}}), \]	
		and either this position is 0 with Left moving next or the algorithm begins again with Left moving first.
	
	Suppose Right's first move outside the sum is to
		\[ \xi_1^L + \overline{\xi_1}{}^R + \sum_{i=1}^m (\xi_i^{\prime} + \overline{\xi_i^{\prime}}). \]
	Since $\overline{\xi_1}{}^R = \overline{\xi_1^L}$, we can start the algorithm over again with Left moving first.
	
	\end{enumerate}
	
	Thus, we have developed a strategy for Left moving first
		\[ \sum_{i=1}^n \xi_i + \overline{\sum_{i=1}^n \xi_i} = \sum_{i=1}^n (\xi_i + \overline{\xi_i})\]
	that is similar to the Tweedledum-Tweedledee strategy for normal play positions.  For Right, simply reverse the Lefts, Rights, $^L$'s, and $^R$'s in above argument.
\end{construction}

\section{Conclusion}
The discovery of a set of partizan positions with the property that $\xi + \xib \equiv 0 \imod{\cl{\xi,\xib}}$ is intriguing, as it gives a set of \mis play positions with a strong similarity to its normal play counterparts.  This similarity suggests the following open problem:

\begin{openproblem}\label{op-ab3}
	Investigate whether $\ab{3}$ positions share any other normal play properties.
\end{openproblem}

\chapter{A Brief Categorical Interlude}\label{chapter-cat}

\section{Introduction}
In this chapter, we will discuss the possibility of forming a category of \mis play games. In 1977, Joyal \cite{CAT} formed a category of games for games played under the normal play convention using disjunctive sum.  His category of normal play games used constructions that are well-used and understood in game theory, such as the Tweedledum-Tweedledee strategy.  If a similar category of \mis play games exists, this would allow us to again draw parallels between normal play and \mis play, as well as giving us the entirety of category theory with which to analyse the structure of \mis play games.

\begin{definition}
	A \textbf{category} $\mathscr{C}$ consists of a set of objects $\text{Ob}(\mathscr{C})$ with, for any two objects $A$, $B \in \text{Ob}(\mathscr{C})$, a set of arrows, denoted by $\text{Hom}(A,B)$, together with the following:
		\begin{itemize}
			\item If $f \in \text{Hom}(A,B)$ and $g \in \text{Hom}(B,C)$, then there is an arrow $g \circ f \in \text{Hom}(A,C)$ (this arrow is called the \textbf{composite of $f$ and $g$});
			
			\item For each object $A$, there exists $1_A \in \text{Hom}(A,A)$ (this arrow is called the \textbf{identity arrow on $A$}),
		\end{itemize}
	such that the following are satisfied:
		\begin{itemize}
			\item Composition is associative, i.e.\ for $f \in \text{Hom}(A,B)$, $g \in \text{Hom}(B,C)$, and $h \in \text{Hom}(C,D)$, we have
				\[ f \circ (g \circ h) = (f \circ g) \circ h.\]
			
			\item For $f \in \text{Hom}(A,B)$, 
				\[ f \circ 1_A = f = 1_B \circ f.\]
		\end{itemize}
\end{definition}

For those wishing to We know more about category theory, a classic reference is \cite{WORK};  a more modern reference is \cite{AWODEY}.

We would like to make a category of games, where the objects of the category are positions of games (not necessarily positions from the same game) and the arrows are some way of relating two positions.  There are two easy ways to make a category out of a set of positions.  The arrows are, respectively,
	\begin{enumerate}
		\item $\alpha \to \beta$ if $\alpha = \beta$; or
		\item $\alpha \to \beta$ if $\alpha \le \beta$, where $\le$ is as in Definition \ref{def-<=}.
	\end{enumerate}
However, neither of these categories are particularly interesting, or yield much new information in how to relate positions.  Essentially, all either is doing is replacing $=$ or $\le$ respectively with arrows $\rightarrow$.  Joyal \cite{CAT} formed a much more interesting category of games using the following:
	\begin{itemize}
		\item Objects: positions of games;
		\item Arrows:  An arrow $\alpha \to \beta$ is a winning strategy for Left moving second in the position $\beta + \overline{\alpha}$.  Thus if Left can win moving second in the position $\beta + \overline{\alpha}$, then at least one arrow exists between $\alpha$ and $\beta$ (there could be more than one if more than one strategy for winning exists).  
	\end{itemize}
Under this construction, the identity arrow $\alpha  \stackrel{\text{id}_{\alpha}} 
{\longrightarrow}  \alpha$, i.e.\ the winning strategy in the game $\alpha + \overline{\alpha}$,  is the Tweedledum-Tweedledee strategy, and there exists a manner of composition such that if $\alpha \stackrel{f}{\to} \beta$ and $\beta \stackrel{g}{\to} \gamma$, then an inherited strategy $g \circ f$ exists with $\alpha \stackrel{g \circ f}{\longrightarrow} \gamma$.  This inherited strategy arises from examining $\alpha + \beta + \overline{\beta} +\gamma$, and noting, since we are playing under the normal play convention, $\beta + \overline{\beta} = 0$. Then, we can combine elements of the strategy $f$ and $g$ to a new strategy, which we call $g \circ f$, so that Left moving second wins $\gamma + \overline{\alpha}$.  

We will call a category in which the objects are positions of games and the arrows between positions are based on a strategy for winning some combination of the positions under some sum a \textbf{Joyal-style category}.  This chapter examines considering positions under a variety of sums to see whether Joyal-style categories can be formed.  Unfortunately, for the sums considered, this is not the case and this failure arises from the inability to build an identity arrow.  

Another option which is briefly considered is to change what constitutes an arrow between positions.  Joyal's definition uses that an arrow $\alpha \to \beta$ exists if there is a winning strategy for Left moving second in $\beta + \overline{\alpha}$.  We need not be fixated on this, or slight variations on it (such as Left wins moving first) as our arrow condition.  Section \ref{sec-adjoint-ds} discusses the attempt of trying to build a category using disjunctive sum, and, rather than using $\overline{\alpha}$, we use $\alpha^{\circ}$, the adjoint operation defined by Siegel in \cite{MCF}.  Unfortunately, this also fails.

Finally, we restrict ourselves just to examining positions which are $\ab{3}$, as Section \ref{sec-results-ab3} showed that $\ab{3}$ has a Tweedledum-Tweedledee type strategy and the Tweedledum-Tweedledee strategy is used to form Joyal's normal play category.  Unfortunately, again, we are unable to construct an identity arrow.

This chapter concludes by suggesting another construction, a taxon, which may yield more favourable results, as the conditions for identity are weakened somewhat.

\section{A Variety of Sums}\label{sec-sums}

We begin by defining a variety of sums.

\begin{definition}\label{def-sums}
	Let $\alpha$ and $\beta$ be two positions.  
		\begin{enumerate}
			\item The \textbf{disjunctive sum of $\alpha$ and $\beta$}, denoted by $\alpha + \beta$, is the position in which a player moves in either $\alpha$ or $\beta$.  This is the sum most commonly used in combinatorial game theory.
			
			\item The \textbf{AND of $\alpha$ and $\beta$}, denoted by $\alpha \wedge \beta$, is the position played as such:  a player must play in both $\alpha$ and $\beta$.  If there is no move available for the player in either $\alpha$ or $\beta$ (or both), then play ends, even if the other player still has moves available in $\alpha$ or $\beta$ (or both).
			
			\item The \textbf{DisAND of $\alpha$ and $\beta$}, denoted by $\alpha \dand \beta$, is the position played as such:  a player plays in both $\alpha$ and $\beta$.  If on a player's turn, that player has no move available in, say, $\alpha$, then the position is $\alpha$ is discarded from the sum, even if the other player still has available moves in $\alpha$, and play continues in $\beta$.
			
			The name for this sum comes from the fact that we are \emph{discarding} (Dis) positions, but essentially still playing by the AND rules.  
			
			\item The \textbf{OR of $\alpha$ and $\beta$}, denoted by $\alpha \vee \beta$, is the position played as such:  a player must play in either $\alpha$ or $\beta$, or both.  If there is no move available for the player in either $\alpha$ or $\beta$ (or both), then play ends even if the other player still has moves available in $\alpha$ or $\beta$ (or both).
			
			\item The \textbf{DisOR of $\alpha$ and $\beta$}, denoted by $\alpha \dor \beta$, is the position played as such:  a player plays in either $\alpha$ or $\beta$, or both.  If there is no move available for the player in, say, $\alpha$, then the position is $\alpha$ is discarded from the sum, even if the other player still has available moves in $\alpha$, and play continues in $\beta$.
			
			The name for this sum comes from the fact that we are \emph{discarding} (Dis) positions, but essentially still playing by the OR rules. 

			\item The \textbf{sequential join of $\alpha$ and $\beta$}, denoted by $\alpha \then \beta$, is the position played as such:  all play is in $\alpha$ until the next player has no moves available in $\alpha$.  Then $\alpha$ is discarded, the next player plays in $\beta$ and all further moves are played in $\beta$.
			
			\item The \textbf{ordinal sum of $\alpha$ and $\beta$}, denoted by $\alpha : \beta$, is defined recursively as the position with options 
				\[ \alpha:\beta = \combgame{\{\alpha^L, \alpha:\beta^L\mid\alpha^R, \alpha:\beta^R\}}\]
			with $0:0 =0$.  Essentially, once a player plays in $\alpha$, the $\beta$ component is discarded.
			
		\end{enumerate}
\end{definition}

We will show that it is impossible to construct an identity arrow irrespective of which of the above sums we use.

\begin{proposition}\label{prop-sum-0-N}
	Fix $\square$ to be one of the six sums listed above.  Then $o^-(0 \square 0) = \Next$ for whatever we choose $\square$ to be.  
\end{proposition}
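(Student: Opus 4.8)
The plan is to observe that the outcome of $0 \square 0$ is forced entirely by a single structural fact, namely that the position $0 = \combgame{\{\cdot\mid\cdot\}}$ offers no move to either player, combined with the misère winning rule that a player facing no legal move on their turn is the winner. First I would recall from Example \ref{example-o+-o-} that $o^-(0) = \Next$, and note that by definition $0$ has neither a left nor a right option.

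Next I would dispose of the ordinal sum at once: Definition \ref{def-sums} sets $0:0 = 0$, so $o^-(0:0) = o^-(0) = \Next$ with nothing further to check. The remaining sums I would then treat by a single uniform argument. In each of the definitions of $+$, $\wedge$, $\dand$, $\vee$, $\dor$, and $\then$, a legal move in $\alpha \square \beta$ is (or begins with) an actual move inside one of the component positions. Since both components of $0 \square 0$ equal $0$, and $0$ has no option for either player, the player to move first has no legal move available whatsoever. Under the misère convention a player who cannot move on their turn wins, so the first player wins and $o^-(0 \square 0) = \Next$.

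The only case meriting a moment's care is the sequential join $0 \then 0$, where the rules first route all play into the left component $\alpha = 0$, discarding $\alpha$ and transferring play to $\beta$ precisely when the mover has no move in $\alpha$. I would point out that this transfer is not itself a move: the first player still has no move in $\alpha = 0$, and after the discard no move in $\beta = 0$ either, so they again win and the outcome is $\Next$.

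There is no genuine obstacle in this proposition; it reduces to reading each sum's definition against the misère rule, and the single subtlety worth stating explicitly is that in the sequential join the passage from the empty left component to the empty right component supplies no move to the first player. Consequently $o^-(0 \square 0) = \Next$ for every sum $\square$ in Definition \ref{def-sums}.
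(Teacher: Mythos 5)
Your proof is correct and takes essentially the same approach as the paper: the ordinal sum is handled by the definitional identity $0:0=0$, and every other sum is settled by observing that since $0$ offers no option to either player, the first player in $0\,\square\,0$ has no legal move and therefore wins under the mis\`ere convention. The only difference is presentational—you compress into one uniform argument what the paper spells out case by case—and your explicit remark that the discard/transfer step in the sequential join is not itself a move is a worthwhile clarification.
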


\begin{proof}
	Each number in the list corresponds to the appropriate sum in Definition \ref{def-sums}.
	
	\begin{enumerate}
		\item We have seen many times before that $o^-(0+0) = \Next$.
		
		\item There are no moves available in 0, so there are no moves available in $0 \wedge 0$, so $o^-(0 \wedge 0) = \Next$.
		
		\item Consider $0 \dand 0$.  For the next player to go, there is no move available in 0, so it is discarded and we have the game 0.  Since there is no move available in 0, it is also discarded, leaving us with nothing in which to move.  So the first player has no moves available, meaning $o^-(0 \dand 0) = \Next$.
		
		\item Consider $0 \vee 0$.  There is no move available for the next player in either 0 or 0, so play ends.  That is, $o^-(0 \vee 0) = \Next$.
		
		\item Consider $0 \dor 0$.  Much like $0 \dand 0$, this we have $o^-(0 \dor 0) = \Next$.
		
		\item Consider $0 \then 0$.  The first player has no moves in the first 0, so we move to the second 0, where there are also no moves available.  Hence $o^-(0 \then 0) = \Next$.
		
		\item Consider $0:0$.  However, we define $0:0 = 0$, and we know $o^-(0) = \Next$. \qedhere
	\end{enumerate}
\end{proof}
	
If we build a structure with 
	\begin{itemize}
		\item Objects: positions of games;
		\item Arrows:  An arrow $\alpha \to \beta$ is a winning strategy for Left moving second in the position $\beta \square \overline{\alpha}$,
	\end{itemize}	
then there does not exist an arrow $0 \to 0$, as Proposition \ref{prop-sum-0-N} showed us that $o^-(0 \square 0) = \Next$, so Left cannot win moving second in $0 \square 0$.  As such, we will be unable to build an identity arrow for all positions.  Therefore, instead we must have a structure with the following properties:
	\begin{itemize}
		\item Objects: positions of games;
		\item Arrows:  An arrow $\alpha \to \beta$ is a winning strategy for Left moving first in the position $\beta \square \overline{\alpha}$,
	\end{itemize}	
However, for each $\square$, we can now find an $\alpha$ such that $\alpha \to \alpha$ does not exist where each number in the list corresponds to the appropriate sum in Definition \ref{def-sums}:
	\begin{enumerate}
		\item Let $\alpha = *_2 + *_2$.  We know that $o^-(*_2 + *_2) = \Prev$, so Left moving first cannot win $*_2 + *_2$.  
		
		\item Let $\alpha = *_2 + *_2$.  Then Left moving first cannot win the position $(*_2 + *_2) \wedge (*_2 + *_2)$.  As a first move, Left has four possible first moves:
			\begin{enumerate}
				\item $*_2 \wedge *_2$,
				\item $(* + *_2) \wedge (*_2)$,
				\item $*_2 \wedge (* + *_2)$.
				\item $(* + *_2) \wedge (* + *_2)$.
			\end{enumerate}
		However, in each of these cases, Right can move to $* \wedge *$.  Left's only available moves are to $0 \wedge 0$, $* \wedge 0$, or $0 \wedge *$, all of which have no moves available to Right.  Therefore, Left moving first cannot win $(*_2 + *_2) \wedge (*_2 + *_2)$.

		\item Let $\alpha = *_2$.  Then Left cannot win moving first in $*_2 \dand *_2$.  As a first move, Left has four possible moves:
			\begin{enumerate}
				\item $0 \dand *_2$,
				\item $*_2 \dand 0$,
				\item $* \dand *_2$,
				\item $*_2 \dand *$.
			\end{enumerate}
		In the first two cases, Right discards the 0 and is playing next in $*_2$, an $\Next$ position.  In the third and fourth cases, Right can move to $0 \dand *$ and $* \dand 0$ respectively.  Left will discard the 0 in both of these positions, and is playing next in $*$, a $\Prev$ position.  Therefore, Left moving first cannot win $*_2 \dand *_2$.

		\item Let $\alpha = *$.  Then Left moving first cannot win in $* \vee *$.  Left moving first can move to one of three positions:
			\begin{enumerate}
				\item $0 \vee *$,
				\item $* \vee 0$, 
				\item $0 \vee 0$.
			\end{enumerate}
		In all three cases, Right has no available moves.  Therefore Left cannot win $* \vee *$ moving first. 

		\item Let $\alpha = (* + *)$.  Then Left moving first cannot win $(*+ *) \dor (* + *)$.  Left moving first can move to one of three possible positions:
			\begin{itemize}
				\item $* \dor *$,
				\item $* \dor (* + *)$,
				\item $(* + *) \dor *$.
			\end{itemize}
		In all three cases, Right can then move to either $0 \dor *$ or $* \dor 0$.  Left discards the 0, and is playing next in $*$, a $\Prev$ position.  Therefore Left moving first cannot win $(* + *) \dor (* + *)$.

		\item  Let $\alpha = *_2 + *_2$.  Then Left moving first cannot win the position $(*_2 + *_2) \then (*_2 + *_2)$, as shown in Figure \ref{fig-*2+*2-then-*2+*2} recalling $o^-(*_2 + *_2) = \Prev$.  Therefore Left moving first cannot win $(*_2 +*_2) \then (*_2 + *_2)$.  

		\item Let $\alpha = *_2 + *_2$.  Then, similarly to the previous case, Left moving first cannot win $(*_2 +*_2) : (*_2 + *_2)$, as shown in Figure \ref{fig-*2+*2-:-*2+*2}.

		\begin{figure}[p]
		\begin{center}
		
		\begin{pspicture}\unit{0.5cm}(6,8)
		\put(4,8)
		{\cgtree
			{
				{(*_2 + *_2) \then (*_2 + *_2)}({(* + *_2)\then(*_2 + *_2)}(|{(*+*)\then(*_2+*_2)}({*\then(*_2 + *_2)}(|{*_2 + *_2})|))++{*_2\then(*_2+*_2)}(|{*_2 + *_2})|)
			}		
		}
		\end{pspicture}
		
		\end{center}		
		\caption{Left loses moving first in $(*_2 + *_2) \then (*_2 + *_2)$, recalling $o^-(*_2 + *_2) = \Prev$.}
		\label{fig-*2+*2-then-*2+*2}
		\end{figure}
		
		\begin{figure}[p]
		\begin{center}
		
		\begin{pspicture}\unit{0.5cm}(9,8)
		\put(9,8)
				{\cgtree
					{
						{(*_2 + *_2) : (*_2 + *_2)}({(*_2 + *_2):(* + *_2)}(|{(*_2+*_2):(*+*)}({(*_2+*_2):*}(|{*_2 + *_2})|))++{(*_2 + *_2):*_2}(|{*_2 + *_2})++{*_2}+{* + *_2}|)
					}		
				}
		\end{pspicture}

		\end{center}
		\caption{Left loses moving first in $(*_2 + *_2) : (*_2 + *_2)$, recalling $o^-(*_2 + *_2) = \Prev$ and $o^-(*_2) = o^-(* + *_2) = \Next$.}
		\label{fig-*2+*2-:-*2+*2}
		\end{figure}
	\end{enumerate}
Thus, for every sum, we have found an $\alpha$ such that $\alpha \to \alpha$ does not exist.  Since all our counterexamples were impartial positions, we cannot modify our requirements to having Right winning moving first or second, or the positions being  $\Next$ or $\Prev$,  or to restrict ourselves to impartial games, or to change our arrow existence requirements from $\beta \square \overline{\alpha}$ to $\beta \square \alpha$.  Therefore, using these sums, we cannot possibly construct an identity arrow using the same method as Joyal.

\section{Adjoints and Disjunctive Sum}\label{sec-adjoint-ds}
In Section \ref{sec-sums}, we attempted to construct a Joyal-style category using a variety of sums, but keeping the idea of negative, namely $\overline{\alpha}$, constant.  In this section, we choose our sum to remain constant, using disjunctive sum.  However, we will use a different form of negative, the \textbf{adjoint} of a position, as first defined by Siegel in \cite{MCF}.

Recall:

\begin{definition}
	The \textbf{adjoint of $\alpha$}, denoted by $\alpha^{\circ}$, is given by:
		\[ \alpha^{\circ} = \begin{cases}
		* & \text{if } \alpha = 0;\\
		\combgame{\{(\alpha^R)^{\circ}\mid 0\}} &\text{if } \alpha \not = 0 \text{ and $\alpha$ is a Left end};\\
		\combgame{\{0\mid(\alpha^L)^{\circ}\}} &\text{if } \alpha \not = 0 \text{ and $\alpha$ is a Right end};\\
		\combgame{\{(\alpha^R)^{\circ}\mid(\alpha^L)^{\circ}\}} &\text{else},
		\end{cases}\]
	where $\alpha$ is a \textbf{Left (Right) end} if $\alpha$ has no Left (Right) option.
\end{definition}

In \cite{MCF}, Siegel uses the adjoints in determining the canonical forms of partizan \mis play positions.  To familiarize ourselves with adjoints, we will now calculate some examples.

\begin{example}\label{example-adjoints}
	Using our examples from Chapter \ref{chapter-examples}, we calculate their adjoints:
		\begin{align*}
			0^{\circ} &= *, \\
			*^{\circ} &= \combgame{\{*\mid*\}} = \tau,\\
			1^{\circ} &= \combgame{\{0\mid*\}} = \rhob,\\
			\sigma^{\circ} &= \combgame{\{0\mid\combgame{\{*\mid*\}}\}},\\
			\tau^{\circ} &= \combgame{\{\combgame{\{*\mid*\}}\mid\combgame{\{*\mid*\}}\}},\\
			\rho^{\circ}&= \combgame{\{*\mid\combgame{\{*\mid*\}}\}}.
		\end{align*}
\end{example}

In \cite{MCF}, Siegel shows that $o^-(\alpha + \alpha^{\circ}) = \Prev$.  Thus, supposing we build our structure as:
	\begin{itemize}
		\item Objects: positions of games;
		\item Arrows:  An arrow $\alpha \to \beta$ is a winning strategy for Left moving second in the position $\beta + \alpha^{\circ}$,
	\end{itemize}
we see that for any position $\alpha$, there exists an arrow $\alpha \to \alpha$.  Therefore, the possibility of building an identity exists.  However, we do not have composition, as the following example shows:

\begin{example}
	We claim there exists $* \to 1$ and $1 \to \rhob$, but there does not exist $* \to \rhob$.  
	
	Firstly, look at $* \to 1$.  Such an arrow exists if Left wins moving second in $1 + *^{\circ}$.  Figure \ref{fig-1+*^circ} shows that $o^-(1 + *^{\circ}) = \Prev$, so indeed, such an arrow must exist.   
		\begin{figure}[htb]
		\begin{center}

		\begin{pspicture}\unit{0.5cm}(6,6)
		\put(3,6)
		{\cgtree
			{
				{1 + *^{\circ}=\combgame{\{0|\cdot\}}+\combgame{\{*|*\}}}({\combgame{\{0|\cdot\}}+*}(|{\combgame{\{0|\cdot\}}+0}(0|))+{0+\combgame{\{*|*\}}}(|{*}(0|))|+++{\combgame{\{0|\cdot\}}+*}({0+*}(|0)|))
			}		
		}
		\end{pspicture}
		\end{center}
		\caption{$o^-(1 + *^{\circ}) = \Prev$.}
		\label{fig-1+*^circ}
		\end{figure}
	
	Now examine $1 \to \rhob$.  Such an arrow exists if Left wins moving second in $\rhob + 1^{\circ}$.  Example \ref{example-adjoints} gives that $1^{\circ} = \rhob$, and Theorem \ref{theorem-B-oc} gives $o^-(2 \rhob) = \Prev$.  Therefore Left does indeed win moving second in $\rhob + 1^{\circ}$, and so $1 \to \rhob$ exists.
	
	However, there does not exist an arrow $* \to \rhob$.  Consider $\rhob + *^{\circ} = \rhob + \tau$.  If Right's first move is in the $\rhob$ component, moving the position to $* + \tau$, then Proposition \ref{prop-n*+m.tau} gives $o^-(* + \tau) = \Prev$, meaning Right can move to a previous position.  Therefore Right can win $\rhob + *^{\circ}$ moving first (in fact, $o^-(\rhob + *^{\circ}) = \Next$), and so there is no arrow between $*$ and $\rhob$, so composition fails.    
\end{example}

The above example also shows that even if we change our arrow existence condition to either:
	\begin{itemize}
		\item an arrow $\alpha \to \beta$ is a winning strategy for Right moving second in the position $\beta + \alpha^{\circ}$, or
		\item an arrow $\alpha \to \beta$ is a winning strategy for the second player to move in $\beta + \alpha^{\circ}$,
	\end{itemize}
composition still fails.  Therefore, we cannot hope to construct a category in the style of Joyal using disjunctive sum as our sum and the adjoint as our ``negative".

\section{Restricting ourselves to $\ab{3}$}
Since $\cl{\ab{3}}$ shares some similarities with normal play games, most notably that $\xi + \xib \equiv 0 \imod{\cl{\ab{3}}}$, can we form a Joyal-style game category?  As stated at the beginning of the chapter, we cannot.  The following counterexamples show how no possibility of a Joyal-style category can exist.

\begin{example}\label{example-ab3-no-cat-1}
	Firstly, take that an arrow $\alpha \to \beta$ is a winning strategy for Left moving first in the position $\beta + \overline{\alpha}$.  Then, the possibility of an identity arrow exists, since $o^-(\alpha + \overline{\alpha}) = \Next$, so there exist arrows $\alpha \to \alpha$.  However, we are not guaranteed composition.  
	
	Suppose $\alpha = *$, $\beta = \rhob$, and $\gamma = 0$.  By Theorem \ref{theorem-B-oc}, $o^-(\beta + \overline{\alpha}) = \Next$ and $o^-(\gamma + \overline{\beta}) = \Left$, so there exist arrows $\alpha \stackrel{f}{\to} \beta$ and $\beta \stackrel{g}{\to} \gamma$.  However, since $o^-(\gamma + \overline{\alpha}) = \Prev$, there are no arrows between $\alpha$ and $\gamma$, so no composition $g \circ f$ can exist.
\end{example}

\begin{example}\label{example-ab3-no-cat-2}
	Take that an arrow $\alpha \to \beta$ is a winning strategy for Right moving first in the position $\beta + \overline{\alpha}$.  Then the conjugate of the counterexample given in Example \ref{example-ab3-no-cat-1} shows that composition may not exist.
\end{example}

\begin{example}
	Suppose we assume that an arrow $\alpha \to \beta$ is a winning strategy for the next player to win in the position $\beta + \overline{\alpha}$

	Let $\xi_1 = *$, $\xi_2 = \combgame{\{\cdot\mid\rho\}}$, and $\xi_3 = \combgame{\{* \mid \rho\}}$. The game trees of $\xi_1$, $\xi_2$, and $\xi_3$ are given in Figure \ref{fig-gt-a-b-g}.  Note that $o^-(\xi_1) = o^-(\xi_2) = \Prev$, and $o^-(\xi_3) = \Left$.  
	
		\begin{figure}[p]
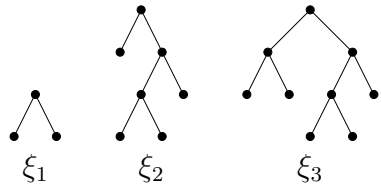

		\unitlength 8pt
		\begin{center}
		\begin{graph}(17,7.5)(0,-1.5)
		\graphnodesize{0.4}
		\fillednodestrue

		\roundnode{a1}(0,0)
		\roundnode{a2}(1,2)
		\roundnode{a3}(2,0)
		
		\edge{a1}{a2}
		\edge{a2}{a3}

		\freetext(1,-1.5){$\xi_1$}

		\roundnode{b1}(5,4)
		\roundnode{b2}(6,6)
		\roundnode{b3}(7,4)
		\roundnode{b4}(6,2)
		\roundnode{b5}(8,2)
		\roundnode{b6}(5,0)
		\roundnode{b7}(7,0)
		
		\edge{b1}{b2}
		\edge{b2}{b3}
		\edge{b3}{b4}
		\edge{b3}{b5}
		\edge{b4}{b6}
		\edge{b4}{b7}
		
		\freetext(6.5,-1.5){$\xi_2$}

		\roundnode{g1}(11,2)
		\roundnode{g2}(12,4)
		\roundnode{g3}(13,2)
		\roundnode{g4}(14,6)
		\roundnode{g5}(16,4)
		\roundnode{g6}(15,2)
		\roundnode{g7}(17,2)
		\roundnode{g8}(14,0)
		\roundnode{g9}(16,0)
		
		\edge{g1}{g2}
		\edge{g2}{g3}
		\edge{g2}{g4}
		\edge{g4}{g5}
		\edge{g5}{g6}
		\edge{g5}{g7}
		\edge{g6}{g8}
		\edge{g6}{g9}
		
		\freetext(14, -1.5){$\xi_3$}
		
		\end{graph}
		\caption{The game trees of $\xi_1$, $\xi_2$, and $\xi_3$.}
		\label{fig-gt-a-b-g}
		\end{center}	
		\end{figure}
		
		We have that $o^-(\xi_2 + \overline{\xi_1}) = \Next$;  both Left and Right's initially moving to $\xi_2$, a $\Prev$ position, gives the result.  Therefore, there exists an arrow $\xi_1 \stackrel{f}{\to} \xi_2$.
		
		Figure \ref{fig-gamma+betab-in-N} shows that $\xi_3 + \overline{\xi_2} = \combgame{\{* \mid \rho\}} + \combgame{\{\rhob\mid\cdot\}}$ is also an $\Next$ position, recalling that $o^-(\rho + \rhob) = \Next$, $o^-(\rho) = \Left$, and $o^-(\rhob) = \Right$.  Therefore, there exists an arrow $\xi_2 \to \xi_3$.  
		
		\begin{figure}[p]
		\begin{center}
		\begin{pspicture}\unit{0.5cm}(6,8)
		\put(3,8)
		{\cgtree
			{
				{\xi_3 + \overline{\xi_2} = \combgame{\{* | \rho\}} + \combgame{\{\rhob|\cdot\}}}
				(+++{\combgame{\{* | \rho\}}+\rhob}(|{\combgame{\{*|\rho\}}+*}({\combgame{\{*|\rho\}}}(|\rho)|){\rho+\rhob})|+++{\rho+\combgame{\{\rhob|\cdot\}}}({*+\combgame{\{\rhob|\cdot\}}}(|{\combgame{\{\rhob|\cdot\}}}(\rhob|)){\rho+\rhob}|))
			}		
		}
		\end{pspicture}
		\end{center}
		\caption{$o^-(\xi_3 + \overline{\xi_2}) = \Next$.}
		\label{fig-gamma+betab-in-N}
		\end{figure}

		We now examine the position $\xi_3 + \overline{\xi_1} = \combgame{\{*\mid \rhob\}} + *$.  Figure \ref{fig-gamma-alphab-Rl} shows that Right loses moving first, recalling that $o^-(\rhob + *) = \Next$ and $o^-(\xi_3) = \Left$.  Therefore, there exist no arrows between $\xi_1$ and $\xi_3$.  Thus, while there exist $\xi_1 \stackrel{f}{\to} \xi_2$ and $\xi_2 \stackrel{g}{\to} \xi_3$, there can be no arrows between $\xi_1$ and $\xi_3$ and so $g \circ f$ cannot exist.  

		\begin{figure}[p]
		\begin{center}
		\begin{pspicture}\unit{0.5cm}(6,2)
		\put(1,2)
		{\cgtree
			{
				{\xi_3 + \overline{\xi_1} = \combgame{\{*| \rhob\}} + *}(|{\rhob + *}+{\combgame{\{*|\rhob\}}})
			}		
		}
		\end{pspicture}
		\end{center}
		\caption{Right moving first loses $o^-(\xi_3 + \overline{\xi_1})$.}
		\label{fig-gamma-alphab-Rl}
		\end{figure}		
\end{example}

Note that since an identity arrow must exist if we wish to form a category, these three examples give the only possibilities for what arrows between positions can be since Proposition \ref{prop-xi+xib-next} gives $o^-(\alpha + \overline{\alpha}) = \Next$ for any $\alpha \in \cl{\ab{3}}$.

\section{Conclusion}
The quest to find an appropriate sum and negative which would yield a Joyal-style category is worth pursuing.  Much as the result that normal play games played under the disjunctive sum form a group, the discovery as to what sum and negative would yield a Joyal-style category would be extraordinarily beneficial.  It would allow us to take structure and theory already well-understood on categories and apply them to \mis play combinatorial games.   However, it may be that either the sum or the negative which would yield a Joyal-style category is so convoluted that it becomes relatively useless in practise.

Interestingly, in Section \ref{sec-adjoint-ds}, in trying to find an example to show how composition fails, the author was unable to find an example in which $\alpha \to \beta$ and $\beta \to \gamma$ exist, while $\alpha \to \gamma$ does not exist with $\alpha$, $\beta$, and $\gamma$ all being impartial positions.  Thus, a possible conjecture is as follows:
	\begin{conjecture}\label{conjecture-arrow-impartial}
		If $\alpha$, $\beta$, and $\gamma$ are impartial positions and $\alpha \to \beta$ and $\beta \to \gamma$ exist, then there exists an arrow $\alpha \to \gamma$ where an arrow $\delta \to \varepsilon$ exists if Left moving second can win $\varepsilon + \delta^{\circ}$ (or something similar).  
	\end{conjecture}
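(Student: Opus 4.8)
The plan is to first exploit impartiality to convert the categorical statement into a pure outcome statement, and then to attempt a Joyal-style combination of the two given strategies. Because $\alpha,\beta,\gamma$ are impartial, every sum occurring below is impartial, so its misère outcome lies in $\Next \cup \Prev$ and is fixed by conjugation. Hence ``Left wins moving second'' means exactly ``the position is a $\Prev$ position,'' and the arrow $\delta \to \varepsilon$ exists precisely when $o^-(\varepsilon + \delta^{\circ}) = \Prev$. I would record this reduction as a first lemma, so that the conjecture becomes the transitivity statement: if $o^-(\beta + \alpha^{\circ}) = \Prev$ and $o^-(\gamma + \beta^{\circ}) = \Prev$, then $o^-(\gamma + \alpha^{\circ}) = \Prev$.

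Next I would mimic Joyal's composition. The second player in $\gamma + \alpha^{\circ}$ introduces a phantom summand $\beta^{\circ} + \beta$ and pretends to play the four-component position $\gamma + \beta^{\circ} + \beta + \alpha^{\circ}$, running the given $\Prev$-strategy $S_1$ on the pair $\{\gamma, \beta^{\circ}\}$ and the given $\Prev$-strategy $S_2$ on the pair $\{\beta, \alpha^{\circ}\}$. An opponent move in a real component is fed to whichever of $S_1, S_2$ owns it; whenever a strategy answers inside a phantom component, that phantom move must be relayed to the other strategy and answered there, and the argument succeeds only if these relayed chains terminate leaving the opponent stranded first. For normal play the engine that makes this run is that $\beta$ and its conjugate $\overline{\beta}$ cancel via Tweedledum-Tweedledee, so phantom moves occur in mirrored pairs; here the replacement fact is Siegel's $o^-(\beta + \beta^{\circ}) = \Prev$.

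The main obstacle is exactly this relay step. The adjoint $\beta^{\circ}$ is \emph{not} the conjugate $\overline{\beta}$, and its options are not in option-by-option correspondence with those of $\beta$; a move in the phantom $\beta^{\circ}$ therefore has no canonical mirror in the phantom $\beta$, so the clean Tweedledum-Tweedledee bookkeeping is unavailable. What one really needs is a misère cancellation principle for the adjoint restricted to impartial games, namely that $o^-(\gamma + \beta^{\circ} + \beta + \alpha^{\circ}) = \Prev$ forces $o^-(\gamma + \alpha^{\circ}) = \Prev$. Such cancellation fails in general misère play (this is precisely why $\beta + \beta^{\circ} \not\equiv 0$), so the proof must extract it from the extra structure of impartial positions, for instance through genus theory or tameness, or through a direct induction on the birthdays of $\alpha, \beta, \gamma$ that tracks the outcomes of all options simultaneously. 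I expect this cancellation lemma to be the entire difficulty, and its delicacy is presumably why the statement is posed only as a conjecture.

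Finally, as a parallel route I would test whether, for impartial positions, the arrow relation coincides with indistinguishability over the universe of all impartial games: if $o^-(\varepsilon + \delta^{\circ}) = \Prev$ could be shown equivalent to $\delta \equiv \varepsilon$ in that universal misère quotient, then transitivity would be immediate, since indistinguishability is an equivalence relation. Proving that equivalence again hinges on the adjoint behaving like an inverse on impartial games, so it meets the same obstacle from the other side; comparing the two approaches against the impartial examples computed in Chapter \ref{chapter-examples} would be the natural first sanity check.
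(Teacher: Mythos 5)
There is no proof of this statement in the paper to compare against: it is posed explicitly as Conjecture~\ref{conjecture-arrow-impartial}, and the concluding chapter lists ``Prove Conjecture~\ref{conjecture-arrow-impartial}'' among the open problems. Your proposal does not close that gap. The opening reduction is sound --- the adjoint of an impartial position is again impartial, so every sum in sight is impartial, its \mis outcome lies in $\Next \cup \Prev$, and ``Left wins moving second'' becomes $o^-(\varepsilon + \delta^{\circ}) = \Prev$; restating the conjecture as transitivity of that relation is a fair reformulation. But the Joyal-style phantom argument that follows is not a proof. As you yourself observe, the relay step needs the cancellation principle that $o^-(\gamma + \beta^{\circ} + \beta + \alpha^{\circ}) = \Prev$ forces $o^-(\gamma + \alpha^{\circ}) = \Prev$, and you leave that principle unproven. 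That lemma is not an auxiliary technicality: in \mis play $\beta + \beta^{\circ}$ is a $\Prev$ position but is \emph{not} equivalent to $0$, so deleting it from a sum can change the outcome, and establishing that it cannot do so in this particular configuration is essentially the entire content of the conjecture. Reducing the conjecture to an equivalent unproven statement and then remarking that its delicacy is presumably why the statement is only a conjecture is an accurate diagnosis of the difficulty, but it establishes nothing.

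Two further points. First, the phantom bookkeeping is broken even before the cancellation issue arises: the options of $\beta^{\circ}$ are adjoints of options of $\beta$, not conjugates, so when $S_1$ answers inside the phantom $\beta^{\circ}$ there is no canonical corresponding move in the phantom $\beta$ for $S_2$ to process, and you supply no dictionary that would let the two strategies communicate --- the relayed chains you describe are not well defined. Second, your alternative route (identifying arrow-existence with indistinguishability over all impartial positions) would indeed yield transitivity for free, but you give no reason beyond Siegel's reflexivity result $o^-(\alpha + \alpha^{\circ}) = \Prev$ to believe the identification holds; a single outcome condition $o^-(\varepsilon + \delta^{\circ}) = \Prev$ is prima facie much weaker than agreement of $o^-(\delta + \gamma)$ and $o^-(\varepsilon + \gamma)$ for all impartial $\gamma$, so this route also just trades the conjecture for an equally unproven claim. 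If you want to make progress, the honest target is the cancellation lemma for impartial positions, attacked via genus/tameness or a double induction on birthdays; as written, the proposal contributes only the (correct) reformulation in its first paragraph.
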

Of course, this is still far from the final desired result.  The existence of an arrow does not mean that we have composition;  for a Joyal-style category, we would need that the arrow, i.e.\ the strategy, from $\alpha$ to $\gamma$ be influenced in an ``obvious" way, whatever that may be, by the arrows $\alpha \to \beta$ and $\beta \to \gamma$.  An identity arrow must be established, as well as all the other rules which apply to arrows in a category.  However, it may be a starting point to restrict ourselves solely to impartial games and then extend our results to partizan positions, much as was done in normal play games, with the results on {\sc nim} and the Sprague-Grundy theory for impartial games arising before the results on partizan games were fully explored.  

Another possibility is to try and construct a \emph{taxon} rather than a category, the idea of which originally appeared in \cite{TAXON}.  The idea of a taxon is to have a category without a formal identity, but which still retains some identity-like properties.  The definition is as follows:

\begin{definition}
	A \textbf{taxon} $\mathscr{T}$ consists of a set of objects $\text{Ob}(\mathscr{T})$ with, for any two objects $A$, $B \in \text{Ob}(\mathscr{T})$, a set of arrows $\text{Hom}(A,B)$, together with the following:
		\begin{itemize}
			\item If $f \in \text{Hom}(A,B)$ and $g \in \text{Hom}(B,C)$, then there is an arrow $g \circ f \in \text{Hom}(A,C)$ (this arrow is called the \textbf{composite of $f$ and $g$});
			
			\item  For each $f \in \text{Hom}(A,B)$, there exists a non-empty set of factorisations $(E, a, b)$ where $E \in \text{Ob}(\mathscr{T})$, $a \in \text{Hom}(A,E)$, and $b \in \text{Hom}(E,B)$ such that the following diagram commutes:
			
			\unitlength 12pt
			\begin{center}
			\begin{graph}(9,4)(0,0)
			\graphlinecolour{1}
			\graphlinewidth{.01}
			\grapharrowlength{.5}
			\newcommand{\arr}[3]{%
			\edge{#1}{#2}[\graphlinewidth{.1}]
			\diredge{#1}{#2}[\graphlinecolour{0}]
			\edgetext{#1}{#2}{\footnotesize #3}}

			\textnode{A}(0,0){$A$}
			\textnode{B}(9,0){$B$}
			\textnode{E}(4.5,4){$E$}
			
			\arr{A}{B}{$f$}
			\arr{A}{E}{$a$}
			\arr{E}{B}{$b$}

			\end{graph}			
			\end{center}
		\end{itemize}
	such that the following are satisfied:
		\begin{itemize}
			\item Composition is associative, i.e.\ for $f \in \text{Hom}(A,B)$, $g \in \text{Hom}(B,C)$, and $h \in \text{Hom}(C,D)$, we have
				\[ f \circ (g \circ h) = (f \circ g) \circ h;\]
			
			\item If you have two factorisations of $f \in \text{Hom}(A,B)$, say

			\unitlength 12pt
			\begin{center}
			\begin{graph}(9,4)(0,0)
			\graphlinecolour{1}
			\graphlinewidth{.01}
			\grapharrowlength{.5}
			\newcommand{\arr}[3]{%
			\edge{#1}{#2}[\graphlinewidth{.1}]
			\diredge{#1}{#2}[\graphlinecolour{0}]
			\edgetext{#1}{#2}{\footnotesize #3}}

			\textnode{A}(0,0){$A$}
			\textnode{B}(9,0){$B$}
			\textnode{E}(4.5,4){$E_1$}
			
			\arr{A}{B}{$f$}
			\arr{A}{E}{$a_1$}
			\arr{E}{B}{$b_1$}

			\end{graph}			
			\end{center}
			
			and

			\unitlength 12pt
			\begin{center}
			\begin{graph}(9,4)(0,0)
			\graphlinecolour{1}
			\graphlinewidth{.01}
			\grapharrowlength{.5}
			\newcommand{\arr}[3]{%
			\edge{#1}{#2}[\graphlinewidth{.1}]
			\diredge{#1}{#2}[\graphlinecolour{0}]
			\edgetext{#1}{#2}{\footnotesize #3}}

			\textnode{A}(0,0){$A$}
			\textnode{B}(9,0){$B$}
			\textnode{E}(4.5,4){$E_2$}
			
			\arr{A}{B}{$f$}
			\arr{A}{E}{$a_2$}
			\arr{E}{B}{$b_2$}

			\end{graph}			
			\end{center}			
			
			then there exists a unique arrow $g \in \text{Hom}(E_1, E_2)$ such that the following diagram commutes.

			\unitlength 12pt
			\begin{center}
			\begin{graph}(9,8)(0,-4)
			\graphlinecolour{1}
			\graphlinewidth{.01}
			\grapharrowlength{.5}
			
			\newcommand{\arr}[3]{%
			\edge{#1}{#2}[\graphlinewidth{.1}]
			\diredge{#1}{#2}[\graphlinecolour{0}]
			\edgetext{#1}{#2}{\footnotesize #3}}
			
			\newcommand{\Uarr}[3]{%
			\edge{#1}{#2}[\graphlinewidth{.1}]
			\diredge{#1}{#2}[\graphlinecolour{0} \graphlinedash{3 1}]
			\edgetext{#1}{#2}{\footnotesize #3}}

			\textnode{A}(0,0){$A$}
			\textnode{B}(9,0){$B$}
			\textnode{E1}(4.5,4){$E_1$}
			\textnode{E2}(4.5,-4){$E_2$}
			
			%\arr{A}{B}{$f$}
			\arr{A}{E1}{$a_1$}
			\arr{E1}{B}{$b_1$}
			\arr{A}{E2}{$a_2$}
			\arr{E2}{B}{$b_2$}
			
			\Uarr{E1}{E2}{$\exists \, ! \, g$}

			\end{graph}			
			\end{center}

		\end{itemize}
\end{definition}

One can easily build a taxon out of a category by constructing factorisations from the identity arrows.  Thus Joyal's normal play category is also a taxon.  However, in using a taxon rather than a category for \mis play positions, we may be able to avoid the problem we had in which needing an identity arrow from the position $0$ to $0$ forced what our arrow/strategy definition had to be. 

\chapter{Isomorphic Monoids}\label{chapter-congruent}

\section{Introduction}

Having come through our categorical interlude unscathed, we return our attention to \mis monoids.  We now examine the idea of isomorphic \mis monoids and results which arise from this.

\begin{definition}\label{def-iso-monoids}
	Two \mis monoids $\monoid{M}_{1}$ and $\monoid{M}_{2}$ are \textbf{isomorphic}, written as $\monoid{M}_{1} \cong \monoid{M}_{2}$, if there exists function $\varphi: \monoid{M}_{1} \to \monoid{M}_{2}$ such that
		\begin{enumerate}
			\item $\varphi$ is a \emph{tetrapartite monoid homomorphism}.  That is, 
				\begin{itemize}
					\item $\varphi(1) = 1$, 
					\item for $a,b \in \monoid{M}_{1}$, $\varphi(ab) = \varphi(a) \varphi(b)$;
					\item the outcome tetrapartitions (Definition \ref{def-outcome-tetra}) of $\monoid{M}_{1}$ and $\monoid{M}_{2}$ agree.  That is, for any $m \in \monoid{M}_{1}$, $o^-( m) = \mathcal{X} \iff o^-(\varphi(m)) = \mathcal{X}$.  
				\end{itemize}
			\item $\varphi^{-1}: \monoid{M}_{2} \to \monoid{M}_{1}$ exists and is also a tetrapartite monoid homomorphism.
		\end{enumerate}
\end{definition}

When determining whether two \mis monoids are isomorphic, it is tempting to assume that if we determine that they are isomorphic as monoids, i.e.\ satisfy the all the conditions in Definition \ref{def-iso-monoids} except for the outcome tetrapartitions agreeing, then this must force the outcome tetrapartitions to agree.  This is not the case, as the following example shows.

\begin{example}
	Consider $\monoid{M}_{\cl{1}}$ and $\monoid{M}_{\cl{\overline{1}}}$.  We saw in Example \ref{example-M-1} that
		\begin{align*}
		\monoid{M}_{\cl{1}} &= \ideal{1,a \mid a^2 = a} \\
		\Next &= \{1\} \\
		\Prev &= \emptyset \\
		\Left &= \emptyset \\
		\Right &= \{a\}.
	\end{align*}
	
	Similarly,
		\begin{align*}
		\monoid{M}_{\cl{\overline{1}}} &= \ideal{1,b \mid b^2 = b} \\
		\Next &= \{1\} \\
		\Prev &= \emptyset \\
		\Left &= \{b\} \\
		\Right &= \emptyset.
	\end{align*}	
	
	Under the map
		\begin{center}
		\begin{tabular}{cccc}
		$\varphi:$&$\monoid{M}_{\cl{1}}$&$\rightarrow$&$\monoid{M}_{\cl{\overline{1}}}$\\
		      &	$1$&	$\mapsto$&	$1$ \\
		      &	$a$&	$\mapsto$&	$b$ \\
		\end{tabular}
		\end{center}
	we see that the two monoids are isomorphic as monoids.  However, Left would be loathe to play in $\cl{1}$ rather than $\cl{\overline{1}}$, as the outcome tetrapartitions do not agree.  Thus, $\monoid{M}_{\cl{1}}$ and $\monoid{M}_{\cl{\overline{1}}}$ are not isomorphic as \mis monoids.
\end{example}

In Chapter \ref{chapter-examples}, we saw that $\monoid{M}_{\cl{*}} \cong \monoid{M}_{\cl{\sigma, \sigmab}}$.  In Chapter \ref{chapter-cardinality-left}, we extended this result to show $\monoid{M}_{\cl{*}} \cong \monoid{M}_{\cl{\L(\tau^{2n})}}$ (Theorem \ref{theorem-tau^2n_L-monoid}), showing that there are partizan positions whose \mis monoid is the same as that of $*$'s.  This chapter examines the phenomenon, and, in doing such, gives the two most important results of this thesis, namely necessary and sufficient conditions on a set of positions $\Upsilon$ such that $\monoid{M}_{\cl{\Upsilon}} \cong \monoid{M}_{\cl{*}}$ (Theorem \ref{theorem-P-iso-*}) and a construction theorem which builds all positions $\xi$ such that $\monoid{M}_{\cl{\xi}} \cong \monoid{M}_{\cl{*}}$ (Theorem \ref{theorem-xi-^-L-*} and Theorem \ref{theorem-*-built}).  The proofs for these two results are quite detailed.  As such, for the reader who simply wishes to make use of them, we place their statements here.

\noindent \textbf{Theorem 7.3.6.} \emph{Let $\Upsilon$ be a set of positions.  Then $\monoid{M}_{\cl{\Upsilon}} \cong \monoid{M}_{\cl{*}}$ if and only if the following are all satisfied:
		\begin{enumerate}
			\item  $\Upsilon$ contains a position other than 0;
			\item for $\xi \in \cl{\Upsilon}$, $o^-(\xi) = \Next \cup \Prev$;
			\item for $\xi \in \cl{\Upsilon}$, if $o^-(\xi) = \Next$, then $o^-(\xi^L) = \Prev$ for every Left option of $\xi$ and $o^-(\xi^R) = \Prev$ for every Right option of $\xi$.  That is, from an $\Next$ position, a player can never move to another $\Next$ position.		
		\end{enumerate}}

\noindent \textbf{Theorem 7.3.9} 
	\emph{
	Let $\xi_1, \xi_2, \ldots, \xi_n$, $\kappa_1, \kappa_2, \ldots, \kappa_n$ be positions such that $\monoid{M}_{\cl{\xi_i}} \cong \monoid{M}_{\cl{\kappa_j}} \cong \monoid{M}_{\cl{*}}$ for all $i \in \{1,2,\ldots,n\}$, $j \in \{1,2,\ldots,m\}$.  Then
	\begin{enumerate}
		\item If $o^-(\xi_i) = o^-(\kappa_j)$ for all $i \in \{1,2,\ldots,n\}$, $j \in \{1,2,\ldots,m\}$, then 
				\[\monoid{M}_{\cl{\combgame{\{\xi_1, \xi_2, \ldots, \xi_n\mid\kappa_1, \kappa_2, \ldots, \kappa_m\}}}} \cong \monoid{M}_{\cl{*}}.\]
		\item If $o^-(\xi_i) = \Prev$ for all $i \in \{1,2,\ldots,n\}$, then $
			\monoid{M}_{\cl{\combgame{\{\xi_1, \xi_2, \ldots, \xi_n\mid\cdot\}}}} \cong \monoid{M}_{\cl{*}}$. 
		\item If $o^-(\xi_i) =o^-(\kappa_j) = \Next$ for all $i \in \{1,2,\ldots,n\}$, $j \in \{1,2,\ldots,m\}$, then the following hold:
				\begin{enumerate}
					\item $\monoid{M}_{\cl{\combgame{\{\xi_1, \xi_2, \ldots, \xi_n \mid0\}}}} \cong  \monoid{M}_{\cl{*}}$;
					\item $\monoid{M}_{\cl{\combgame{\{\xi_1, \xi_2, \ldots, \xi_n\mid\kappa_1, \kappa_2, \ldots, \kappa_m, 0\}}}} \cong  \monoid{M}_{\cl{*}}$;
					\item $\monoid{M}_{\cl{\combgame{\{\xi_1, \xi_2, \ldots, \xi_n, 0\mid \kappa_1, \kappa_2, \ldots, \kappa_m, 0\}}}} \cong  \monoid{M}_{\cl{*}}$.
				\end{enumerate}
		\item For each $\xi_i$, $\kappa_j$, $\monoid{M}_{\cl{\overline{\xi_i}}} \cong \monoid{M}_{\cl{\overline{\kappa_j}}} \cong \monoid{M}_{\cl{*}}$.
	\end{enumerate}}	

Theorem \ref{theorem-*-built} gives that if $\xi$ is a position with $\monoid{M}_{\cl{\xi}} \cong \monoid{M}_{\cl{*}}$, then $\xi$ was formed by iteratively applying Theorem \ref{theorem-xi-^-L-*} to $*$.

%%%meghan

\section{To the Left: $\L(\xi)$}

Before we begin our investigation into $\monoid{M}_{\cl{*}}$, we have the following result.  Recall from Chapter \ref{chapter-cardinality-left}, $\L(\xi)$ is the position $\combgame{\{\xi\mid\cdot\}}$.  Proposition \ref{prop-xi-as-xiL-cong-0}  gives a condition on $\xi$ such that $\monoid{M}_{\cl{\xi}} \cong \monoid{M}_{\cl{\L(\xi)}}$.

\begin{proposition}\label{prop-xi-as-xiL-cong-0}
	Suppose $\xi$ is an all-small position and Right's only available move from $\xi$ is to $0$.  Then $\L(\xi) \equiv 0 \imod{\cl{\L(\xi)}}$. 
\end{proposition}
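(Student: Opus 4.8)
The plan is to unwind the definition of indistinguishability: I must show that $o^-(\mu + \L(\xi)) = o^-(\mu)$ for every $\mu \in \cl{\L(\xi)}$, and I will do this by producing, for whichever player $P$ wins $\mu$ (with a fixed choice of who moves first), a winning strategy for $P$ in $\mu + \L(\xi)$ with the same player moving first. Since the winner of a position is unique, establishing this for \emph{both} openings forces the two outcome classes to coincide. Two structural observations drive everything. First, $\L(\xi) = \combgame{\{\xi\mid\cdot\}}$ is a Left end, so every Right move in $\mu + \L(\xi)$ is a Right move in the $\mu$ component. Second, each element of $\cl{\L(\xi)}$ can be written as $k\,\L(\xi) + \zeta$ with $k \in \nat$ and $\zeta \in \cl{\xi}$ all-small; consequently a position of $\cl{\L(\xi)}$ with no Left option must be $0$, since any $\L(\xi)$ summand gives Left the move to $\xi$, and an all-small summand with no Left move is already $0$.

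If Right wins $\mu$, I would have Right play his winning $\mu$-strategy and handle $\L(\xi)$ by a collapsing response: whenever Left opens the component by moving $\L(\xi) \to \xi$, Right immediately answers $\xi \to 0$, which is legal precisely because Right's only move from $\xi$ is to $0$. Such an exchange costs one move each, returns the board to its previous $\mu$-state with Left to move, and so never perturbs Right's $\mu$-strategy; moreover Right is never \emph{obliged} to move inside the component, having no move there other than this response. Hence the $\mu$-subgame runs exactly as it would in isolation and ends with Right to move and stuck in the all-small part, at which point Right is also unable to move in any surviving $\L(\xi)$ copy, so Right is stuck and wins.

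If instead Left wins $\mu$, the subtlety is that in \mis play Left cannot simply ignore the Left end $\L(\xi)$: her winning $\mu$-strategy ends with her stuck, and a dormant $\L(\xi)$ would force her to keep moving. The fix is a reserve strategy. Left plays her winning $\mu$-strategy on $\mu$ while holding the extra copy of $\L(\xi)$ in reserve; since Left is stuck in $\cl{\L(\xi)}$ only at $0$, this drives the $\mu$ component down to $0$ with Left to move, at which instant the entire board is the single reserved $\L(\xi)$. Left then opens it, $\L(\xi)\to\xi$, Right is forced to answer $\xi \to 0$ (his only move), and Left faces $0$ with no move, so Left is stuck and wins. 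The conjugate argument settles the mirror cases, so the winner is preserved for either opening player, giving $\L(\xi) \equiv 0 \imod{\cl{\L(\xi)}}$.

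The hard part, and the only place where both hypotheses are genuinely used together, is exactly this endgame bookkeeping for \mis play: the fact that a Left end cannot be discarded by its owner. All-smallness of $\xi$ is what guarantees that the auxiliary component is truly exhausted (reduced to $0$) rather than leaving the opponent a stray move, in the same spirit as the delay argument of Theorem \ref{theorem-as-*+*=0}, while the condition $\xi^R = \{0\}$ is what lets the non-owner collapse $\xi$ in one forced move. I expect the routine verification to be that the open/collapse exchanges never disturb the parity of the underlying $\mu$-subgame; the delicate point is confirming that Left's reserved copy is reached precisely when the rest of the board is $0$, which is exactly where the normal form $k\,\L(\xi)+\zeta$ and the all-small structure do the work.
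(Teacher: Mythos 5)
Your proof is correct and takes essentially the same approach as the paper's: Right collapses any opened copy of $\L(\xi)$ with his forced move $\xi \to 0$ so the parity of the $\mu$-game is undisturbed, while Left banks the Left end until her all-small winning line leaves her stuck at $0$, then spends it as the final exchange. The only difference is organizational — the paper first proves the claim for $\mu \in \cl{\xi}$ and then inducts on the number of $\L(\xi)$ summands, whereas you treat arbitrary $\mu \in \cl{\L(\xi)}$ in one pass via the normal form $k\,\L(\xi) + \zeta$ and the observation that a Left-stuck position of $\cl{\L(\xi)}$ must be $0$.
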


\begin{proof}
	To begin, we will take $\mu \in \cl{\xi}$ and show that $o^-(\mu) = o^-(\mu + \L(\xi))$.  Since $\xi$ is an all-small position, this means that any option of $\xi$ is an all-small position.  As well, since the sum of all-small positions is itself an all-small position, this means that $\mu$ is an all-small position.
		
	Let us first suppose that Left moving first (or second) can win $\mu$.  Suppose Left is moving first (or second) in $\mu + \L(\xi)$.  Right cannot make any moves in $\L(\xi)$ until Left does, and so Left restricts all her moves to the $\mu$ component, playing her winning strategy there.  Thus Right makes the last move in the $\mu$ component, and, since $\mu$ is an all-small position, neither Right nor Left have any further moves available in this component.  Then Left is moving first in $\L(\xi)$, which she moves to $\xi$.  Right responds with his only move to 0, and so Left wins.  
	
	Now suppose Right moving first (or second) can win $\mu$.  Right plays his winning strategy in $\mu$.  If Left never plays in $\L(\xi)$, then since Right can win moving first (or second) in $\mu$, this means that Left made the last move in $\mu$, leaving Right to make the first move in $\L(\xi)$.  But Right has no move in $\L(\xi)$, so Right wins.  Otherwise, Left plays in $\L(\xi)$ before $\mu$ is finished.  So assume that Left has just made her move in $\L(\xi)$ to $\xi$.  Right responds in $\xi$ immediately by moving $\xi$ to 0, and then play resumes in the $\mu$ component, with Left moving next.  Since Right made the previous move in the $\mu$ component and is playing with his winning strategy, this ensures that Right can win the $\mu$ component.
	
	Thus, when we add $\L(\xi)$ to the set $\cl{\xi}$, its inclusion does not affect the outcomes of any of the elements of $\cl{\xi}$.
	
	Now consider the position $\mu + \L(\xi) + \L(\xi)$.  We can apply the same arguments as above to show that $o^-(\mu + \L(\xi) + \L(\xi)) = o^-(\mu)$.  An inductive argument gives us that $o^-(\mu + k\L(\xi)) = o^-(\mu)$ for all $k \in \mathbb{Z}^{\ge 0}$.  
	
	Take an arbitrary element of $\cl{\L(\xi)}$, $\mu + k\L(\xi)$ for $\mu \in \cl{\xi}$ and $k \in \mathbb{Z}^{\ge 0}$.  Then,
		\[ o^-(\mu + k\L(\xi) + \L(\xi)) = o^-(\mu + \L(\xi)).\]
	Therefore $\L(\xi) \equiv 0 \imod{\cl{\L(\xi)}}$.
\end{proof}

The true strength of Proposition \ref{prop-xi-as-xiL-cong-0} lies in the following corollary.

\begin{corollary}
	Suppose $\xi$ is an all-small position and Right's only available move from $\xi$ is to $0$.  Then $\monoid{M}_{\cl{\xi}} \cong \monoid{M}_{\cl{\L(\xi)}}$.  
\end{corollary}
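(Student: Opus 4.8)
The plan is to deduce the isomorphism directly from Proposition~\ref{prop-xi-as-xiL-cong-0}, which already supplies the one essential fact, namely $\L(\xi) \equiv 0 \imod{\cl{\L(\xi)}}$. First I would record the structure of the larger closure: since the only option of $\L(\xi)$ is the Left option $\xi$, option-closure forces $\xi \in \cl{\L(\xi)}$ and hence $\cl{\xi} \subseteq \cl{\L(\xi)}$; moreover every position of $\cl{\L(\xi)}$ can be written as a disjunctive sum $\mu + k\,\L(\xi)$ with $\mu \in \cl{\xi}$ and $k \in \mathbb{Z}^{\ge 0}$, because taking options never leaves $\cl{\xi}$ once the single copy of $\L(\xi)$ in a summand has been played. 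This reduces every computation over $\cl{\L(\xi)}$ to a computation over $\cl{\xi}$ with some number of spare copies of $\L(\xi)$ attached.

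Next I would define the candidate map $\varphi \colon \monoid{M}_{\cl{\xi}} \to \monoid{M}_{\cl{\L(\xi)}}$ sending the class of $\mu$ over $\cl{\xi}$ to the class of the same position $\mu$ over $\cl{\L(\xi)}$, with $\varphi^{-1}$ sending the class of $\mu + k\,\L(\xi)$ to the class of $\mu$ over $\cl{\xi}$. The crux is to prove that the two indistinguishability relations agree on $\cl{\xi}$: for $\mu, \mu' \in \cl{\xi}$, $\mu \equiv \mu' \imod{\cl{\xi}}$ if and only if $\mu \equiv \mu' \imod{\cl{\L(\xi)}}$. One direction is free: since $\cl{\xi} \subseteq \cl{\L(\xi)}$, Proposition~\ref{proposition-distinguishable-subseteq} gives that anything distinguishing $\mu$ from $\mu'$ over $\cl{\xi}$ still distinguishes them over $\cl{\L(\xi)}$, so indistinguishability over the larger set descends to the smaller one. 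For the converse I would take an arbitrary $\gamma = \nu + k\,\L(\xi) \in \cl{\L(\xi)}$ and use Proposition~\ref{prop-xi-as-xiL-cong-0} (via $\L(\xi)\equiv 0$, together with the fact that $\equiv$ is a congruence for disjunctive sum) to collapse the spare copies, obtaining $o^-(\mu + \gamma) = o^-(\mu + \nu)$ and $o^-(\mu' + \gamma) = o^-(\mu' + \nu)$, where now $\mu + \nu, \mu' + \nu \in \cl{\xi}$. Since $\mu \equiv \mu' \imod{\cl{\xi}}$ and $\nu \in \cl{\xi}$, these two outcomes coincide, establishing indistinguishability over $\cl{\L(\xi)}$.

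With the relations shown to agree, the remaining verifications are routine. The same congruence $\L(\xi)\equiv 0$ shows that every class of $\monoid{M}_{\cl{\L(\xi)}}$ contains a representative $\mu \in \cl{\xi}$, so $\varphi$ is onto; the agreement of relations makes $\varphi$ and $\varphi^{-1}$ mutually inverse bijections. Because the monoid operation in each \mis monoid is inherited from disjunctive sum, $\varphi$ respects products and sends the identity class of $0$ to the identity class, and because the outcome of any class is just the outcome of a representative position, the outcome tetrapartitions of the two monoids match. Hence $\varphi$ is a tetrapartite monoid isomorphism in the sense of Definition~\ref{def-iso-monoids}.

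The main obstacle is entirely contained in the converse half of the claim that the two indistinguishability relations coincide. The thesis warns explicitly that enlarging a closed set can both merge and split equivalence classes, so this step cannot be taken for granted; it is exactly here that the hypotheses on $\xi$ (all-small, with Right's only move to $0$) do their work, entering through Proposition~\ref{prop-xi-as-xiL-cong-0} to guarantee that the single new generator $\L(\xi)$ is indistinguishable from $0$ and therefore introduces no new distinguishing positions. Everything else is bookkeeping on representatives.
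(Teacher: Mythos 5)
Your proposal is correct and follows essentially the same route as the thesis: the paper's own proof simply cites Proposition~\ref{prop-xi-as-xiL-cong-0} to conclude that adjoining $\L(\xi)$ introduces no new distinguishability relations, and your argument is a careful elaboration of exactly that step (decomposing elements of $\cl{\L(\xi)}$ as $\mu + k\,\L(\xi)$, collapsing the spare copies via $\L(\xi)\equiv 0$, and using Proposition~\ref{proposition-distinguishable-subseteq} for the easy direction). The extra bookkeeping you supply is all sound.
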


\begin{proof}
	By Proposition \ref{prop-xi-as-xiL-cong-0}, $\L(\xi) \equiv 0 \imod{\cl{\L(\xi)}}$.  Thus, adjoining $\L(\xi)$ to $\cl{\xi}$ and taking the closure of the new set does not yield any new distinguishability relations.  Therefore the two monoids are isomorphic.  
\end{proof}

We have already seen this result in practise, namely with $*$ and $\sigma = \L(*)$ and the result that $\monoid{M}_{\cl{*}} \cong \monoid{M}_{\cl{\sigma}}$ (Example \ref{example-L*=sigma}).  However, we know that this is not always true; for example $\monoid{M}_{\cl{\sigma}} \not \equiv \monoid{M}_{\cl{\L(\sigma)}}$ (Proposition \ref{prop-sigma_L-infinite}).

The following open problem arises:

\begin{openproblem}\label{op-L-monoid}
	Classify all positions $\xi$ such that $\monoid{M}_{\cl{\xi}} \cong \monoid{M}_{\cl{\L(\xi)}}$. 
\end{openproblem}

\section{Isomorphic to $\monoid{M}_{\cl{*}}$}\label{sec-iso-*}

We now concern ourselves with the question of being isomorphic to $\monoid{M}_{\cl{*}}$.  The reasoning for this is simple; $*$ behaves nicely under the \mis play convention.  If a position has identical monoid properties to $*$, it will also behave nicely.

For this section, we will label the elements $\monoid{M}_{\cl{*}}$ as follows:

\begin{notation}\label{notation-*}
	\begin{align*}
	\monoid{M}_{\cl{*}} &= \ideal{1,a \mid a^2=1} \\
	\Next &= \{1\} \\
	\Prev &= \{a\} \\
	\Left &= \emptyset \\
	\Right &= \emptyset.
	\end{align*}
\end{notation}

Our first goal is not to try to enumerate all sets of positions $\Upsilon$ with $\monoid{M}_{\cl{\Upsilon}} \cong \monoid{M}_{\cl{*}}$; rather we will first clarify what was meant in the preceding paragraph by \emph{behaving nicely}.

\begin{proposition}\label{proposition-P-*-monoids}
	Suppose $\Upsilon$ is a set of positions such that $\monoid{M}_{\cl{\Upsilon}} \cong \monoid{M}_{\cl{*}}$.  Then the following are true:
		\begin{enumerate}
			\item\label{item-xi-np} If $\xi \in \cl{\Upsilon}$, then $o^-(\xi) = \Next \cup \Prev$.
			
			\item $1, \overline{1} \not \in \cl{\Upsilon}$ while $* \in \cl{\Upsilon}$.  
			
			\item\label{item-proposition-P-*-monoids-3} If $\xi \in \cl{\Upsilon}$ with $o^-(\xi) = \Next$, then $o^-(\xi^L) = \Prev$ for every Left option of $\xi$ and $o^-(\xi^R) = \Prev$ for every Right option of $\xi$. That is, from an $\Next$ position, a player can never move to another $\Next$ position.
		\end{enumerate}
\end{proposition}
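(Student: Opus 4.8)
The plan is to exploit the two features that pin down $\monoid{M}_{\cl{*}}$ as a tetrapartite monoid (Notation \ref{notation-*}): it has exactly two elements, and both its $\Left$ and $\Right$ outcome portions are empty. I would first establish statement \eqref{item-xi-np}. Because the isomorphism $\varphi$ is a tetrapartite monoid homomorphism whose inverse is also tetrapartite, it carries the $\Left$ (respectively $\Right$) portion of $\monoid{M}_{\cl{\Upsilon}}$ bijectively onto the $\Left$ (respectively $\Right$) portion of $\monoid{M}_{\cl{*}}$, both of which are empty. Hence the $\Left$ and $\Right$ portions of $\monoid{M}_{\cl{\Upsilon}}$ are empty as well, so every element of $\monoid{M}_{\cl{\Upsilon}}$ lies in $\Next$ or $\Prev$. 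For $\xi \in \cl{\Upsilon}$, Definition \ref{def-outcome-tetra} gives $o^-(\xi) = o^-(\xi + \cl{\Upsilon})$, and the right-hand side is $\Next$ or $\Prev$, proving \eqref{item-xi-np}.

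Next I would prove \eqref{item-proposition-P-*-monoids-3}, which requires only \eqref{item-xi-np} together with two observations: since $\monoid{M}_{\cl{\Upsilon}} \cong \monoid{M}_{\cl{*}}$, the $\Prev$ portion of $\monoid{M}_{\cl{\Upsilon}}$ is nonempty, so I may fix a position $\beta \in \cl{\Upsilon}$ with $o^-(\beta) = \Prev$; and the $\Next$ portion of $\monoid{M}_{\cl{\Upsilon}}$ consists of the single class of $0$ (as $\varphi$ bijects it onto $\{1\}$), so every $\Next$ position in $\cl{\Upsilon}$ is indistinguishable from $0$. Now take $\xi \in \cl{\Upsilon}$ with $o^-(\xi) = \Next$ and suppose, for contradiction, that some option, say a Left option $\xi^L$, also satisfies $o^-(\xi^L) = \Next$; by \eqref{item-xi-np} the only alternative would be $\Prev$. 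Then $\xi \equiv \xi^L \equiv 0 \imod{\cl{\Upsilon}}$, so $o^-(\xi + \beta) = o^-(\xi^L + \beta) = o^-(\beta) = \Prev$. But in $\xi + \beta$ the first player can move to $\xi^L + \beta$, a $\Prev$ position in which the opponent now moves first and loses; hence the first player wins $\xi + \beta$, contradicting $o^-(\xi + \beta) = \Prev$. The identical argument handles Right options, giving \eqref{item-proposition-P-*-monoids-3}.

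Finally I would deduce the remaining item. Since $o^-(1) = \Right$ (Example \ref{example-o+-o-}) and $o^-(\overline{1}) = \Left$ by Theorem \ref{theorem-outcome-class-conj}, statement \eqref{item-xi-np} immediately forces $1, \overline{1} \notin \cl{\Upsilon}$. For $* \in \cl{\Upsilon}$, I would choose $\beta \in \cl{\Upsilon}$ to be a $\Prev$ position of \emph{minimal} birthday, which exists by nonemptiness of the $\Prev$ portion. Because $o^-(0) = \Next$ we have $\beta \neq 0$, and a $\Prev$ position must admit both a Left and a Right move (otherwise the player lacking a move wins moving first, contradicting that the first player loses). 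For any Left option $\beta^L$, the fact that no option of a $\Prev$ position is $\Prev$ rules out $o^-(\beta^L) = \Prev$, so $o^-(\beta^L) = \Next$ by \eqref{item-xi-np}; if $\beta^L \neq 0$, then by \eqref{item-proposition-P-*-monoids-3} it would have an option of outcome $\Prev$ lying in $\cl{\Upsilon}$ with strictly smaller birthday than $\beta$, contradicting minimality. Hence every Left option of $\beta$ equals $0$, and symmetrically every Right option equals $0$, so $\beta = \combgame{\{0\mid0\}} = *$. The main obstacle is precisely this last step: the birthday-minimality argument that must simultaneously invoke \eqref{item-proposition-P-*-monoids-3} and the structural fact that no option of a $\Prev$ position is again $\Prev$, while carefully verifying that a $\Prev$ position really does have moves for both players.
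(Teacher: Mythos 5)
Your proof is correct, but for parts (2) and (3) it takes a genuinely different route from the paper's. For \eqref{item-proposition-P-*-monoids-3}, the paper picks a minimal-birthday counterexample $\xi$ and exhibits an explicit winning line for Left in $* + \xi$ (using minimality to force $o^-(\xi^{LR}) = \Prev$), concluding that $*$ distinguishes $\xi$ from $0$ and hence that the $\Next$ portion is too large. You instead observe that the $\Next$ portion is a single class (that of $0$), so $\xi \equiv \xi^L \equiv 0 \imod{\cl{\Upsilon}}$, add an arbitrary $\Prev$ position $\beta$, and contradict the elementary fact that a $\Prev$ position has no $\Prev$ option. Your version is shorter, needs no minimal counterexample or case analysis of the opponent's replies, and — since any $\Prev$ witness $\beta$ works — does not presuppose $* \in \cl{\Upsilon}$, which is what lets you legitimately reorder and prove (3) before (2). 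For $* \in \cl{\Upsilon}$, the paper argues more economically that a closed set properly containing $\{0\}$ must contain a birthday-one position, and only $*$ survives the exclusion of $1$ and $\overline{1}$; your minimal-birthday-$\Prev$-position argument is longer but correct, and it has the minor virtue of identifying $*$ intrinsically as the least $\Prev$ position in $\cl{\Upsilon}$ rather than by elimination. Both of your deviations are sound; the dependence order $(1) \Rightarrow (3) \Rightarrow (2)$ is acyclic, so nothing is circular.
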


\begin{proof}\text{}
	\begin{enumerate}
		\item The outcome tetrapartition of $\monoid{M}_{\cl{*}}$ has $\Left = \emptyset$ and $\Right = \emptyset$.  Since $\monoid{M}_{\cl{\Upsilon}} \cong \monoid{M}_{\cl{*}}$, their outcome tetrapartitions must agree.  That is, in $\monoid{M}_{\cl{\Upsilon}}$, $\Left = \emptyset$ and $\Right = \emptyset$.  Therefore, all elements $\xi \in \cl{\Upsilon}$ are either $\Next$ or $\Prev$ positions.  
		
		\item We have just shown that no elements in $\cl{\Upsilon}$ are  $\Left$ or $\Right$.  Since $o^-(1) = \Right$ and $o^-(\overline{1}) = \Left$, this means $1, \overline{1} \not \in \cl{\Upsilon}$.  
		
		Suppose $\Upsilon = 0$.  Then there are no $\Prev$ positions in $\monoid{M}_{\cl{\Upsilon}}$, and so the outcome tetrapartitions of $\monoid{M}_{\cl{\Upsilon}}$ and $\monoid{M}_{\cl{*}}$ do not agree.  Since $\Upsilon$ is closed, $0 \in \Upsilon$, but we have just shown that $\{0\} \subset \Upsilon$.  Therefore $\Upsilon$ must contain an element of birthday 1.  Since $\Upsilon$ contains neither $1$ nor $\overline{1}$, $*$ must be an element of $\Upsilon$.
		
		\item Suppose that there exists a position $\xi \in \Upsilon$ such that $o^-(\xi) = \Next$ and there exists a Left option such that $o^-(\xi^L) = \Next$.  Moreover, suppose $\xi$ is the least element (in terms of birthday) with this property (for either Left or Right).  We will show that $*$ distinguishes $\xi$ and 0, so the number of elements in the $\Next$ portion of $\monoid{M}_{\cl{\Upsilon}}$ is at least two, while the number of elements in the $\Next$ portion of $\monoid{M}_{\cl{*}}$ is one.  Thus the two monoids cannot be isomorphic.
		
		We know that $o^-(*) = \Prev$, but we claim that Left moving first can win $o^-(* + \xi)$.  Below shows how Left can win moving first, recalling that $o^-(\xi^L) = \Next$ and, since $\xi$ is the least element with the desired property, $o^-(\xi^{LR}) = \Prev$ for all Right options of $\xi^L$.  
			
			\begin{figure}[htb]
			\begin{center}
			\begin{pspicture}\unit{0.5cm}(6,6)
			\put(1,6)
			{\cgtree
				{
					{*+\xi}({* + \xi^L}(|{\xi^L}+{*+\xi^{LR}}({\xi^{LR}}|))|)
				}		
			}
			\end{pspicture}
			\end{center}
			\caption{Left can win $* + \xi$ moving first.}
			\label{fig-*+xi-Left-win}
			\end{figure}
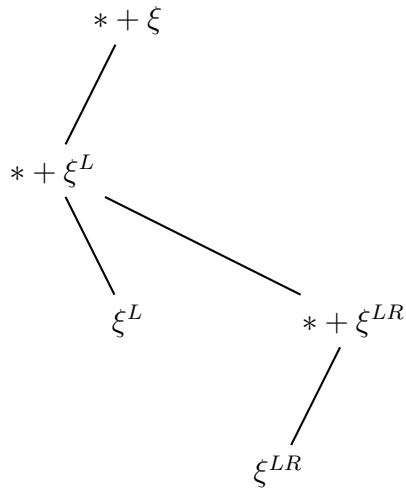	
			
		Therefore $o^-(* + \xi) \not = \Prev$, and so $0 \not \equiv \xi \imod{\cl{\Upsilon}}$.\qedhere
	\end{enumerate}
\end{proof}

Our regular complaint for \mis play games is that the sum of any two positions may be any outcome irrespective of the outcomes of the positions themselves \cite{MESDAL}.  However, when we have sets of positions $\Upsilon_1$ and $\Upsilon_2$ with 
	\[ \monoid{M}_{\cl{\Upsilon_1}} \cong \monoid{M}_{\cl{\Upsilon_2}} \cong \monoid{M}_{\cl{*}},\]
we are able to determine the outcome of sums of positions where the summands come from both $\Upsilon_1$ and $\Upsilon_2$. 

\begin{theorem}\label{theorem-p1-p2-*}
	Suppose $\Upsilon_1$, $\Upsilon_2$ are sets of positions such that 
		\[ \monoid{M}_{\cl{\Upsilon_1}} \cong \monoid{M}_{\cl{\Upsilon_2}} \cong \monoid{M}_{\cl{*}},\]
	where 
		\begin{align*}
		&\mathscr{Q}_1: \cl{\Upsilon_1} \to \monoid{M}_{\cl{\Upsilon_1}},\\ 
		&\mathscr{Q}_2: \cl{\Upsilon_2} \to \monoid{M}_{\cl{\Upsilon_2}}
		\end{align*}
	are the canonical quotient maps from the closures to their respective monoids and
		\begin{align*}
			&\varphi_1:\monoid{M}_{\cl{\Upsilon_1}} \to \monoid{M}_{\cl{*}}, \\
			&\varphi_2:\monoid{M}_{\cl{\Upsilon_2}} \to \monoid{M}_{\cl{*}}
		\end{align*}
	are the \mis monoid isomorphisms.  Then, for $\xi_1 \in \cl{\Upsilon_1}$, $\xi_2 \in \cl{\Upsilon_2}$, 
		\begin{enumerate}
			\item $o^-(\xi_1 + \xi_2) = \Next \cup \Prev$;
			\item $o^-(\xi_1 + \xi_2) = o^-([\varphi_1 \circ \mathscr{Q}_1(\xi_1)] \cdot [\varphi_2 \circ \mathscr{Q}_2(\xi_2)])$,
		\end{enumerate}
	where we are explicitly using $\cdot$ to denote the binary operation in $\monoid{M}_{\cl{*}}$.
		
\end{theorem}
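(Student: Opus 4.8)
The plan is to reduce everything to the single isomorphic monoid $\monoid{M}_{\cl{*}}$ by exploiting the fact that each $\xi_i$ is, up to indistinguishability in its own closure, equivalent to either $0$ or $*$. First I would observe that since $\monoid{M}_{\cl{\Upsilon_1}} \cong \monoid{M}_{\cl{*}}$ has exactly two elements $1$ and $a$, the composite $\varphi_1 \circ \mathscr{Q}_1$ sends $\xi_1$ to either $1$ or $a$; moreover, because $\varphi_1$ preserves the outcome tetrapartition (Definition \ref{def-iso-monoids}), we have $\varphi_1 \circ \mathscr{Q}_1(\xi_1) = 1$ exactly when $o^-(\xi_1) = \Next$ and $= a$ exactly when $o^-(\xi_1) = \Prev$. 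The same holds for $\xi_2$ via $\varphi_2 \circ \mathscr{Q}_2$. Once this is established, part (2) of the theorem reduces to computing the product in $\monoid{M}_{\cl{*}}$, where $1$ is the identity and $a^2 = 1$, so the right-hand side is $\Next$ precisely when $o^-(\xi_1) = o^-(\xi_2)$ and $\Prev$ otherwise.

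The technical heart, then, is to prove the following outcome-additivity claim directly on positions: for $\xi_1 \in \cl{\Upsilon_1}$ and $\xi_2 \in \cl{\Upsilon_2}$,
\begin{align*}
	o^-(\xi_1 + \xi_2) &= \Next \text{ if } o^-(\xi_1) = o^-(\xi_2),\\
	o^-(\xi_1 + \xi_2) &= \Prev \text{ if } o^-(\xi_1) \not = o^-(\xi_2).
\end{align*}
Granting this, both (1) and (2) follow at once. To prove the claim I would induct on the combined birthday of $\xi_1 + \xi_2$. The crucial structural input is Proposition \ref{proposition-P-*-monoids}: every position in either closure has outcome $\Next \cup \Prev$ (part \eqref{item-xi-np}), and from an $\Next$ position every option is a $\Prev$ position (part \eqref{item-proposition-P-*-monoids-3}). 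I would use this to control what a first player can reach. If $o^-(\xi_1) = o^-(\xi_2) = \Prev$, I must show the first mover loses $\xi_1 + \xi_2$: any move lands in, say, $\xi_1^L + \xi_2$ where $\xi_1^L$ is an option of a $\Prev$ position, hence $o^-(\xi_1^L) = \Next \cup \Prev$; the responder then mirrors to restore an equal-outcome pair and invokes induction. The most delicate bookkeeping is the case $o^-(\xi_1) = o^-(\xi_2) = \Next$, where I want $o^-(\xi_1 + \xi_2) = \Next$: here the first mover moves to an option which is necessarily $\Prev$ (by part \eqref{item-proposition-P-*-monoids-3}), producing a $\Next/\Prev$ unequal pair whose outcome is $\Prev$ by induction, giving the mover the win.

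The main obstacle I anticipate is handling the mixed case $o^-(\xi_1) = \Next$, $o^-(\xi_2) = \Prev$ (and its symmetric partner), where I must show $o^-(\xi_1 + \xi_2) = \Prev$, i.e.\ the first mover loses. A move in the $\Prev$-component $\xi_2$ goes to some $\xi_2'$ with unknown outcome, and a move in the $\Next$-component $\xi_1$ goes to a forced-$\Prev$ option; in each subcase I need the opponent to have a response re-establishing an unequal $\Next/\Prev$ pair. The subtlety is that the induction hypothesis only speaks of products of positions drawn from $\cl{\Upsilon_1}$ and $\cl{\Upsilon_2}$ respectively, so I must be careful that every intermediate position keeps its two summands in the correct closures (which it does, since closures are option-closed by Definition \ref{definition-closed}). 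I expect the cleanest route is to phrase the induction so that the responder's strategy always answers a move in one component by a move in the same or the other component that returns the pair to the canonical equal/unequal dichotomy, leaning throughout on Proposition \ref{proposition-P-*-monoids}\eqref{item-proposition-P-*-monoids-3} to rule out an opponent escaping from $\Next$ to $\Next$.
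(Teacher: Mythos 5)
Your overall strategy is sound and genuinely different from the paper's: the paper proves this theorem by contradiction, assuming $o^-(\xi_1+\xi_2)=\Left$ and grinding through monoid algebra with $\varphi_i\circ\mathscr{Q}_i$ to derive contradictions case by case, whereas you propose to prove the outcome-additivity statement ($\Next$ if the component outcomes agree, $\Prev$ if they differ) directly by induction on options, which would deliver parts (1) and (2) simultaneously and more cleanly. Your reduction of part (2) to this claim is correct, as is your identification of Proposition \ref{proposition-P-*-monoids} parts \eqref{item-xi-np} and \eqref{item-proposition-P-*-monoids-3} as the structural inputs.

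However, your case analysis contains a genuine error: in the case $o^-(\xi_1)=o^-(\xi_2)=\Prev$ you say you ``must show the first mover loses,'' but this contradicts the claim you correctly stated two sentences earlier. Since $a\cdot a = 1$ and $1$ lies in the $\Next$ portion of $\monoid{M}_{\cl{*}}$, the sum of two $\Prev$ positions must be $\Next$, i.e.\ the first mover \emph{wins} (compare $*+*$). The second-player mirroring strategy you sketch is therefore aimed at the wrong conclusion and proves nothing; the correct argument is that the first mover always has a move (a $\Prev$ position in these closures must have options for both players, since a player with no move wins immediately under \mis play, forcing the outcome into $\Next\cup\Left\cup\Right$), and any move, say to $\xi_1^L+\xi_2$, lands on an option of a $\Prev$ position, which cannot be $\Prev$ and hence by Proposition \ref{proposition-P-*-monoids}\eqref{item-xi-np} is $\Next$; the resulting $\Next$/$\Prev$ pair is $\Prev$ by induction, so this is a winning first move. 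Conversely, the mixed case you flag as the delicate one is actually routine for the same reason: a move in the $\Prev$ component does \emph{not} go to a position of ``unknown outcome'' --- it is forced to be $\Next$ --- so every option of the sum is a $\Prev+\Prev$ or $\Next+\Next$ pair, hence $\Next$ by induction, and the sum is $\Prev$. With the $\Prev$/$\Prev$ case corrected and the mixed case tightened as above, your induction closes and the argument is complete.
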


\begin{proof}
	We proceed by induction on the options of $\xi_1 + \xi_2$.  If $\xi_1 + \xi_2 = 0$, then both $\xi_1$ and $\xi_2$ are 0, and the result holds.
	
	Now suppose the results of Theorem \ref{theorem-p1-p2-*} are true for all options of $\xi_1 + \xi_2$ and consider $\xi_1 + \xi_2$.  We will firstly show that $o^-(\xi_1 + \xi_2) = \Next \cup \Prev$.
	
	Suppose, contrary to what we must show, that $o^-(\xi_1 + \xi_2) = \Left$.  Then there exists $(\xi_1 + \xi_2)^L$ such that $o^-((\xi_1 + \xi_2)^L) = \Prev \cup \Left$, while for every $(\xi_1 + \xi_2)^R$, $o^-((\xi_1 + \xi_2)^R) = \Next \cup \Left$.
	Since these are both options of $\xi_1 + \xi_2$, induction gives that these cannot be elements of $\Left$.  Thus, there exists $(\xi_1 + \xi_2)^L$ such that $o^-((\xi_1 + \xi_2)^L) = \Prev$, while for all $(\xi_1 + \xi_2)^R$, $o^-((\xi_1 + \xi_2)^R) = \Next$.
	
	Take our position $(\xi_1 + \xi_2)^L$ such that $o^-((\xi_1 + \xi_2)^L) = \Prev$.   Then, either $o^-(\xi_1^L + \xi_2) = \Prev$ or $o^-(\xi_1 + \xi_2^L) = \Prev$.  Suppose, without loss of generality, that $o^-(\xi_1^L + \xi_2) = \Prev$, and fix one such $\xi_1^L$. 
	
	We will now assume that $\xi_1^R$ exists.  Since $o^-((\xi_1 + \xi_2)^R) = \Next$ for every Right option, we have $o^-(\xi_1^R + \xi_2) = \Next$.  Fix one such $\xi_1^R+\xi_2$.  By induction
		\begin{align*}
			o^-(\xi_1^L + \xi_2)& = o^-([\varphi_1 \circ \mathscr{Q}_1(\xi_1^L)] \cdot [\varphi_2 \circ \mathscr{Q}_2(\xi_2)]) \\
			o^-(\xi_1^R + \xi_2) &= o^-([\varphi_1 \circ \mathscr{Q}_1(\xi_1^R)] \cdot [\varphi_2 \circ \mathscr{Q}_2(\xi_2)]). 
		\end{align*}
	Therefore
		\begin{align*}
			o^-([\varphi_1 \circ \mathscr{Q}_1(\xi_1^L)] \cdot [\varphi_2 \circ \mathscr{Q}_2(\xi_2)]) &= \Prev\\
			o^-([\varphi_1 \circ \mathscr{Q}_1(\xi_1^R)] \cdot [\varphi_2 \circ \mathscr{Q}_2(\xi_2)]) &=\Next.
		\end{align*}
	Both $[\varphi_1 \circ \mathscr{Q}_1(\xi_1^L)] \cdot [\varphi_2 \circ \mathscr{Q}_2(\xi_2)]$ and $[\varphi_1 \circ \mathscr{Q}_1(\xi_1^R)] \cdot [\varphi_2 \circ \mathscr{Q}_2(\xi_2)]$ are elements of $\monoid{M}_{\cl{*}}$, which only has two elements, $a$ with $o^-(a) = \Prev$ and $1$ with $o^-(1) = \Next$.  Since 
		\[ \monoid{M}_{\cl{\Upsilon_1}} \cong \monoid{M}_{\cl{\Upsilon_2}} \cong \monoid{M}_{\cl{*}},\]
	combining all these facts gives		
		\begin{align*}
			a &= [\varphi_1 \circ \mathscr{Q}_1(\xi_1^L)] \cdot [\varphi_2 \circ \mathscr{Q}_2(\xi_2)] \\
			1 &= [\varphi_1 \circ \mathscr{Q}_1(\xi_1^R)] \cdot [\varphi_2 \circ \mathscr{Q}_2(\xi_2)].
		\end{align*}
	Similarly, since $\varphi_2 \circ \mathscr{Q}_2(\xi_2)$ is an element of $\monoid{M}_{\cl{*}}$ either $\varphi_2 \circ \mathscr{Q}_2(\xi_2) = 1$ or $\varphi_2 \circ \mathscr{Q}_2(\xi_2) = a$.  Suppose the first.  Then
		\[ a = [\varphi_1 \circ \mathscr{Q}_1(\xi_1^L)] \cdot 1 \implies a = \varphi_1 \circ \mathscr{Q}_1(\xi_1^L) \implies o^-(\xi_1^L) = \Prev,\]
	and
		\[1 = [\varphi_1 \circ \mathscr{Q}_1(\xi_1^R)] \cdot 1 \implies 1 = \varphi_1 \circ \mathscr{Q}_1(\xi_1^R) \implies o^-(\xi_1^R) = \Next. \]
	Since $\xi_1^R$ was an arbitrary Right option of $\xi_1$, the above means that $o^-(\xi_1) = \Left$, which contradicts Proposition \ref{proposition-P-*-monoids}\eqref{item-xi-np}.  Therefore, we must assume that $\varphi_2 \circ \mathscr{Q}_2(\xi_2) = a$.  Then
		\[ a = [\varphi_1 \circ \mathscr{Q}_1(\xi_1^L)] \cdot a \implies 1 = \varphi_1 \circ \mathscr{Q}_1(\xi_1^L) \implies o^-(\xi_1^L) = \Next,\]
	and
		\[1 = [\varphi_1 \circ \mathscr{Q}_1(\xi_1^R)] \cdot a \implies a = \varphi_1 \circ \mathscr{Q}_1(\xi_1^R) \implies o^-(\xi_1^R) = \Prev. \]
	Since $o^-(\xi_1^L) = \Next$, Theorem \ref{proposition-P-*-monoids}\eqref{item-proposition-P-*-monoids-3} gives that $o^-(\xi) = \Prev$, otherwise we would have an $\Next$ position moving to an $\Next$ position.  But $o^-(\xi_1^R) = \Prev$, so we have a $\Prev$ position moving to a $\Prev$ position, which is a contradiction.  
	
	Therefore neither $\varphi_2 \circ \mathscr{Q}_2(\xi_2) = 1$ nor $\varphi_2 \circ \mathscr{Q}_2(\xi_2) = a$ holds, but one of the two statements must be true, giving us a contradiction.  We arrived at this contradiction by assuming that $\xi_1^R$ exists.  Therefore, Right must not have any moves available in $\xi_1$.  Thus, if Right has a move available in $\xi_1 + \xi_2$ it must be to $\xi_1 + \xi_2^R$.   
	
	Fix a $\xi_2^R$ and consider $\xi_1^L + \xi_2$ and $\xi_1 + \xi_2^R$.  By induction,
		\begin{align*}
			o^-(\xi_1^L + \xi_2) &= o^-([\varphi_1 \circ \mathscr{Q}_1(\xi_1^L)] \cdot [\varphi_2 \circ \mathscr{Q}_2(\xi_2)]) \\
			o^-(\xi_1 + \xi_2^R) &= o^-([\varphi_1 \circ \mathscr{Q}_1(\xi_1)] \cdot [\varphi_2 \circ \mathscr{Q}_2(\xi_2^R)]).
		\end{align*}
	Similarly to when we assumed that $\xi_1^R$ existed, we have 		\begin{align*}
			a &= [\varphi_1 \circ \mathscr{Q}_1(\xi_1^L)] \cdot [\varphi_2 \circ \mathscr{Q}_2(\xi_2)] \\
			1 &= [\varphi_1 \circ \mathscr{Q}_1(\xi_1)] \cdot [\varphi_2 \circ \mathscr{Q}_2(\xi_2^R)].
		\end{align*}
	
	Since Right has no moves available in $\xi_1$, Right wins $\xi_1$ moving first.  By Proposition \ref{proposition-P-*-monoids}\eqref{item-xi-np}, this means $o^-(\xi_1) = \Next$.  By Proposition \ref{proposition-P-*-monoids}\eqref{item-proposition-P-*-monoids-3}, $o^-(\xi_1^L) = \Prev$.  Therefore
		\begin{align*}
			\varphi_1 \circ \mathscr{Q}_1(\xi_1) &= 1,\\
			\varphi_1 \circ \mathscr{Q}_1(\xi_1^L) &= a.
		\end{align*}
	Then
		\[ a = a \cdot [\varphi_2 \circ \mathscr{Q}_2(\xi_2)] \implies 1 = \varphi_2 \circ \mathscr{Q}_2(\xi_2) \implies o^-(\xi_2) = \Next,\]
	and
		\[1 = 1 \cdot [\varphi_2 \circ \mathscr{Q}_2(\xi_2^R)] \implies 1 = \varphi_2 \circ \mathscr{Q}_2(\xi_2^R) \implies o^-(\xi_2^R) = \Next. \]
	Thus we have an $\Next$ position moving to an $\Next$ position, which contradicts Proposition \ref{proposition-P-*-monoids}\eqref{item-proposition-P-*-monoids-3}.  
	
	Thus Right must not have a move available in either $\xi_1$ or $\xi_2$, and so Right has no move available in their sum.  Therefore Right can win moving first in $\xi_1 + \xi_2$, which contradicts our assumption that $o^-(\xi_1 + \xi_2) = \Left$.  A similar argument shows that $o^-(\xi_1 + \xi_2) \not = \Right$, leaving the result that $o^-(\xi_1 + \xi_2) = \Next \cup \Prev$.  
  	
	We will now show 
		\[ o^-(\xi_1 + \xi_2) = o^-([\varphi_1 \circ \mathscr{Q}_1(\xi_1)] \cdot [\varphi_2 \circ \mathscr{Q}_2(\xi_2)]).\]
	We have that
		\[ \varphi_1 \circ \mathscr{Q}_1(\xi_1), \varphi_2 \circ \mathscr{Q}_2(\xi_2)) \in \{1,a\}\]
	and so there are four possibilities for 
		\[ (\varphi_1 \circ \mathscr{Q}_1(\xi_1), \varphi_2 \circ \mathscr{Q}_2(\xi_2))),\]
	namely 
		\[ (1,1), \text{ } (1,a), \text{ } (a,1), \text{ and } (a,a).\]
	We will examine each of these four possibilities separately.  
	
	\begin{enumerate}
	\item
	Suppose
		\begin{align*}
			\varphi_1 \circ \mathscr{Q}_1(\xi_1) &= 1\\
			\varphi_2 \circ \mathscr{Q}_2(\xi_2) &= 1.
		\end{align*}
	Then 
		\[ [\varphi_1 \circ \mathscr{Q}_1(\xi_1)] \cdot [\varphi_2 \circ \mathscr{Q}_2(\xi_2)] = 1 \cdot 1 = 1, \]
	and so
		\[o^-([\varphi_1 \circ \mathscr{Q}_1(\xi_1)] \cdot [\varphi_2 \circ \mathscr{Q}_2(\xi_2)]) = \Next.\]  
	We want $o^-(\xi_1 + \xi_2) = \Next$.  Suppose, contrary to what we must show, that $o^-(\xi_1 + \xi_2) = \Prev$.  Suppose Left has no move in $\xi_1 + \xi_2$.  Then Left wins $\xi_1 + \xi_2$ moving first, contradicting that $o^-(\xi_1 + \xi_2) = \Prev$.  Thus Left must have an available move in $\xi_1 + \xi_2$, and, without loss of generality, we will assume that Left can move to $\xi_1^L + \xi_2$.  Since this is a losing move for Left, $o^-(\xi_1^L + \xi_2) = \Next \cup \Right$.  By induction, $o^-(\xi_1^L + \xi_2) = \Next$ and 
		\[o^-(\xi_1^L + \xi_2) = o^-([\varphi_1 \circ \mathscr{Q}_1(\xi_1^L)] \cdot [\varphi_2 \circ \mathscr{Q}_2(\varphi_2)]).\]
	So
		\[ o^-([\varphi_1 \circ \mathscr{Q}_1(\xi_1^L)] \cdot [\varphi_2 \circ \mathscr{Q}_2(\varphi_2)]) = \Next,\]
	giving,
		\[ 1 = [\varphi_1 \circ \mathscr{Q}_1(\xi_1^L)] \cdot [\varphi_2 \circ \mathscr{Q}_2(\xi_2)].\]
	We assumed that $\varphi_2 \circ \mathscr{Q}_2(\xi_2) = 1$, giving us
		\[ 1 = [\varphi_1 \circ \mathscr{Q}_1(\xi_1^L)] \cdot 1 \implies 1 = \varphi_1 \circ \mathscr{Q}_1(\xi_1^L)\implies o^-(\xi_1^L) = \Next.\]
	Since $\xi_1^L$ was arbitrary Left move, this is true for all $\xi_1^L$.  Therefore $o^-(\xi_1) = \Right \cup \Prev$.  By Theorem \ref{proposition-P-*-monoids}\eqref{item-xi-np}, $o^-(\xi_1) = \Prev$, but $\varphi_1 \circ \mathscr{Q}_1(\xi_1) = 1$, which implies $o^-(\xi) = \Next$, a contradiction.  Therefore $o^-(\xi_1 + \xi_2) = \Next$.
	
	\item 
	Suppose
		\begin{align*}
			\varphi_1 \circ \mathscr{Q}_1(\xi_1) &=1 \\
			\varphi_2 \circ \mathscr{Q}_2(\xi_2) &=a.
		\end{align*}
	Then
		\[[\varphi_1 \circ \mathscr{Q}_1(\xi_1)] \cdot [\varphi_2 \circ \mathscr{Q}_2(\xi_2)] = 1 \cdot a = a,\]
	and so
		\[ o^-([\varphi_1 \circ \mathscr{Q}_1(\xi_1)] \cdot [\varphi_2 \circ \mathscr{Q}_2(\xi_2)]) = \Prev.\]
	We want $o^-(\xi_1 + \xi_2) = \Prev$.  Suppose, contrary to what we must show, that $o^-(\xi_1 + \xi_2) = \Next$.  Then  Left can win $\xi_1 + \xi_2$ moving first.  That is, there exists $\xi_1^L$, $\xi_2^L$ such that either $\xi_1^L + \xi_2$ or $\xi_1 + \xi_2^L$ is a winning move for Left.  That is, by induction, either $o^-(\xi_1^L + \xi_2) = \Prev$ or $o^-(\xi_1 + \xi_2^L) = \Prev$.
	
	Suppose the former.  By induction
		\[ o^-([\xi_1^L + \xi_2) = o^-(\varphi_1 \circ \mathscr{Q}_1(\xi_1^L)] \cdot [\varphi_2 \circ \mathscr{Q}_2(\xi_2)]).\]
	So
		\[ o^-([\varphi_1 \circ \mathscr{Q}_1(\xi_1^L)] \cdot [\varphi_2 \circ \mathscr{Q}_2(\xi_2)]) = \Prev\]
	which implies 
		\[ a = [\varphi_1 \circ \mathscr{Q}_1(\xi_1^L)] \cdot [\varphi_2 \circ \mathscr{Q}_2(\xi_2)].\]
	We assumed $\varphi_2 \circ \mathscr{Q}_2(\xi_2) = a$, giving us
		\[ a = [\varphi_1 \circ \mathscr{Q}_1(\xi_1^L)] \cdot a \implies 1 = \varphi_1 \circ \mathscr{Q}_1(\xi_1^L) \implies o^-(\xi_1^L) = \Next.\]
	But $\varphi_1 \circ \mathscr{Q}_1(\xi_1) = 1$, so $o^-(\xi_1) = \Next$, contradicting Proposition \ref{proposition-P-*-monoids}\eqref{item-proposition-P-*-monoids-3}.  
	
	Therefore Left's winning move must be to $\xi_1 + \xi_2^L$ with $o^-(\xi_1 + \xi_2^L) = \Prev$. By induction,
		\[ o^-(\xi_1 + \xi_2^L) = o^-([\varphi_1 \circ \mathscr{Q}_1(\xi_1)] \cdot [\varphi_2 \circ \mathscr{Q}_2(\xi_2^L)]).\]
	So
		\[ o^-([\varphi_1 \circ \mathscr{Q}_1(\xi_1)] \cdot [\varphi_2 \circ \mathscr{Q}_2(\xi_2^L)]) = \Prev,\]
	giving,
		\[ a = [\varphi_1 \circ \mathscr{Q}_1(\xi_1)] \cdot [\varphi_2 \circ \mathscr{Q}_2(\xi_2^L)].\]
	We assumed $\varphi_1 \circ \mathscr{Q}_1(\xi_1) =1$, giving us
		\[ a = 1 \cdot  [\varphi_2 \circ \mathscr{Q}_2(\xi_2^L)] \implies a =  \varphi_2 \circ \mathscr{Q}_2(\xi_2^L) \implies o^-(\xi_2^L) = \Prev.\]
	But $\varphi_2 \circ \mathscr{Q}_2(\xi_2) = a$, giving $o^-(\xi_2) = \Prev$.  Therefore, we have a $\Prev$ position moving to a $\Prev$ position, a contradiction.
	
	Thus, there does not exist either $\xi_1^L$ or $\xi_2^L$ such that $\xi_1^L + \xi_2$ or $\xi_1 + \xi_2^L$ is a winning move for Left.  Thus $o^-(\xi_1 + \xi_2) = \Right \cup \Prev$.  However, we already showed that $o^-(\xi_1 + \xi_2) = \Next \cup \Prev$, which means $o^-(\xi_1 + \xi_2) = \Prev$, as required.

	\item	Suppose
		\begin{align*}
			\varphi_1 \circ \mathscr{Q}_1(\xi_1) &=a \\
			\varphi_2 \circ \mathscr{Q}_2(\xi_2) &=1.
		\end{align*}
	An argument similar to the previous case works to show the desired result.

	\item	
	Suppose
		\begin{align*}
			\varphi_1 \circ \mathscr{Q}_1(\xi_1) &= a\\
			\varphi_2 \circ \mathscr{Q}_2(\xi_2) &= a.
		\end{align*}
	Then
		\[ [\varphi_1 \circ \mathscr{Q}_1(\xi_1)] \cdot [\varphi_2 \circ \mathscr{Q}_2(\xi_2)] = a \cdot a = 1, \]
	and so
		\[ o^-([\varphi_1 \circ \mathscr{Q}_1(\xi_1)] \cdot [\varphi_2 \circ \mathscr{Q}_2(\xi_2)]) = \Next.\]
	We want $o^-(\xi_1 + \xi_2) = \Next$.  Suppose, contrary to what we must show, that $o^-(\xi_1 + \xi_2) = \Prev$.    Then Left cannot win $\xi_1 + \xi_2$ moving first.  Left must have a move in $\xi_1 + \xi_2$, otherwise she would win moving first.  Assume, without loss of generality, that Left can move to $\xi_1^L + \xi_2$.  By induction, $o^-(\xi_1^L + \xi_2) = \Next$, and
		\[ o^-(\xi_1^L + \xi_2) = o^-([\varphi_1 \circ \mathscr{Q}_1(\xi_1^L)] \cdot [\varphi_2 \circ \mathscr{Q}_2(\xi_2)]).\]
	That is,
		\[  o^-([\varphi_1 \circ \mathscr{Q}_1(\xi_1^L)] \cdot [\varphi_2 \circ \mathscr{Q}_2(\xi_2)]) = \Next, \]
	and so,
		\[ 1 = [\varphi_1 \circ \mathscr{Q}_1(\xi_1^L)] \cdot [\varphi_2 \circ \mathscr{Q}_2(\xi_2)].\]
	We assumed $\varphi_2 \circ \mathscr{Q}_2(\xi_2) = a$, giving us
		\[ 1 = [\varphi_1 \circ \mathscr{Q}_1(\xi_1^L)] \cdot a \implies a = \varphi_1 \circ \mathscr{Q}_1(\xi_1^L) \implies o^-(\xi_1^L) = \Prev.\]
	But $\varphi_1 \circ \mathscr{Q}_1(\xi_1) = a$, so $o^-(\xi_1) = \Prev$, giving us a $\Prev$ position which moves to a $\Prev$ position, a contradiction.  Therefore $o^-(\xi_1 + \xi_2) = \Next$, as required.
	\end{enumerate}

	This shows 
		\[ o^-(\xi_1 + \xi_2) = o^-([\varphi_1 \circ \mathscr{Q}_1(\xi_1)] \cdot [\varphi_2 \circ \mathscr{Q}_2(\xi_2)])\]
	and completes the proof.
\end{proof}

Theorem \ref{theorem-p1-p2-*} says that to find the outcome of two positions from differing sets with monoids isomorphic to $\monoid{M}_{\cl{*}}$, all we need do is determine to what each position is equivalent in $\monoid{M}_{\cl{*}}$, multiply these elements, reduce in $\monoid{M}_{\cl{*}}$, and take the outcome result.  

The importance of this result can not be stressed enough.  For the first time, we are able to take elements from two different sets whose behaviour is well-understood and determine their outcome without having to compute the monoid of the closure of these positions explicitly.  Moreover, even while this result currently is only proven for monoids which are isomorphic to $\monoid{M}_{\cl{*}}$, we have found partizan positions whose monoids are isomorphic to $\monoid{M}_{\cl{*}}$, such as $\sigma$ (Example \ref{example-L*=sigma}) and $\L(\tau^{2n})$ (Theorem \ref{theorem-tau^2n_L-monoid}).  Thus, we have partizan \mis play positions which behave in exactly the same way as the simplest, non-trivial impartial position.  

An example of using Theorem \ref{theorem-p1-p2-*} now follows:

\begin{example}
	Consider the positions $8 \sigma$ and $3 \L(\tau^{4})$.  By Example \ref{example-L*=sigma} and Theorem \ref{theorem-tau^2n_L-monoid}, we know 
		\[ \monoid{M}_{\cl{\sigma}} \cong \monoid{M}_{\cl{\L(\tau^{2n})}} \cong \monoid{M}_{\cl{*}},\]
	and so we can apply Theorem \ref{theorem-p1-p2-*} to determine $o^-(8 \sigma + 3 \L(\tau^{4}))$.    In $\monoid{M}_{\cl{*}}$, both $\sigma$ and $\L(\tau^4)$ are mapped to 1, and so 
		\begin{align*}
			o^-(8 \sigma + 3 \L(\tau^4)) 
			&= o^-(1^8 \cdot 1^3) \\
			&= o^-(1) \\
			&= \Next
		\end{align*}
	Therefore $8 \sigma + 3 \L(\tau^4)$ is an $\Next$ position.
\end{example}

Our next concern is whether we can determine for a set of positions $\Upsilon$ if $\monoid{M}_{\cl{\Upsilon}} \cong \monoid{M}_{\cl{*}}$ without explicitly calculating the monoid.  We can, as shown by the following proposition:

\begin{proposition}\label{prop-p-is-*}
	Suppose $\Upsilon$ is a set of positions such that
		\begin{enumerate}
			\item $\Upsilon$ contains a position other than 0;
			\item\label{item-n-cup-prev} for $\xi \in \cl{\Upsilon}$, $o^-(\xi) = \Next \cup \Prev$;
			\item\label{item-n-to-n-no} for $\xi \in \cl{\Upsilon}$, if $o^-(\xi) = \Next$, then $o^-(\xi^L) = \Prev$ for every Left option of $\xi$ and $o^-(\xi^R) = \Prev$ for every Right option of $\xi$.  That is, from an $\Next$ position, a player can never move to another $\Next$ position.
		\end{enumerate}
	Then $\monoid{M}_{\cl{\Upsilon}} \cong \monoid{M}_{\cl{*}}$.
\end{proposition}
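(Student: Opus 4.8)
The plan is to show that the indistinguishability quotient of $\cl{\Upsilon}$ has exactly two classes, carrying precisely the tetrapartition of $\monoid{M}_{\cl{*}}$ recorded in Notation \ref{notation-*}. Hypothesis \eqref{item-n-cup-prev} immediately forces the $\Left$ and $\Right$ outcome portions of $\monoid{M}_{\cl{\Upsilon}}$ to be empty, matching $\monoid{M}_{\cl{*}}$, so the whole problem reduces to proving that outcomes add according to the $\mathbb{Z}_2$ rule. Concretely, the heart of the argument is the lemma that for all $\xi, \kappa \in \cl{\Upsilon}$,
\[ o^-(\xi + \kappa) = \begin{cases} \Next & \text{if } o^-(\xi) = o^-(\kappa);\\ \Prev & \text{if } o^-(\xi) \neq o^-(\kappa). \end{cases} \]
I cannot simply invoke Theorem \ref{theorem-p1-p2-*} here, since that result already presupposes the isomorphism with $\monoid{M}_{\cl{*}}$; instead I would establish the lemma directly from the three hypotheses.

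The proof of the lemma would proceed by induction on the birthday of $\xi + \kappa$, with base case $\xi = \kappa = 0$. The key simplification is that, because $\cl{\Upsilon}$ is closed under addition, $\xi + \kappa$ again lies in $\cl{\Upsilon}$, so hypothesis \eqref{item-n-cup-prev} gives $o^-(\xi + \kappa) \in \{\Next, \Prev\}$ \emph{for free}; within this restricted range it suffices to decide whether the first player wins, and this is independent of which player moves. I would also exploit the perfect alternation available inside $\cl{\Upsilon}$: by hypothesis \eqref{item-n-to-n-no} every option of an $\Next$ position is $\Prev$, while by the standard fact that no option of a $\Prev$ position is itself $\Prev$ (noted in Section \ref{sec-bg}), every option of a $\Prev$ position is $\Next$. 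The inductive step then splits into the case $o^-(\xi) = o^-(\kappa)$ (exhibiting a first-player move onto a differing pair, which is $\Prev$ by induction, hence a winning move) and the case $o^-(\xi) \neq o^-(\kappa)$ (checking that every first-player move lands on a matching pair, which is $\Next$ by induction, hence a loss); in each subcase the alternation fact pins down the outcomes of the relevant options, and the degenerate possibility that a summand is $0$ or that an $\Next$ summand offers no move to one player is absorbed by the misère convention that a player with no move wins.

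With the lemma in hand the remaining steps are routine. The lemma shows that any two positions of the same outcome are indistinguishable over $\cl{\Upsilon}$, whereas an $\Next$ position and a $\Prev$ position are distinguished by $0$; hence there are at most two classes. Both classes are nonempty: $0 \in \cl{\Upsilon}$ is an $\Next$ position, and a $\Prev$ position exists because the nonzero element supplied by the first hypothesis is, by the alternation structure, either $\Prev$ itself or possesses a $\Prev$ option lying in $\cl{\Upsilon}$. Sending the $\Next$ class to $1$ and the $\Prev$ class to $a$ then defines a tetrapartite bijection $\varphi : \monoid{M}_{\cl{\Upsilon}} \to \monoid{M}_{\cl{*}}$; the lemma makes $\varphi$ a monoid homomorphism, the relation $a^2 = 1$ corresponding exactly to $o^-(\xi + \kappa) = \Next$ for $\xi, \kappa$ both $\Prev$, and the inverse is a homomorphism by symmetry. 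I expect the main obstacle to be the bookkeeping in the inductive step of the lemma, namely tracking which components currently admit moves for which player, although hypothesis \eqref{item-n-cup-prev} spares me the lengthy exclusion of $\Left$ and $\Right$ that complicates the proof of Theorem \ref{theorem-p1-p2-*}.
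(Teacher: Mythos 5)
Your proof is correct, and it runs on the same two engines as the paper's own argument: hypothesis \eqref{item-n-cup-prev} confines every position of $\cl{\Upsilon}$ (in particular every sum) to $\Next \cup \Prev$, and hypothesis \eqref{item-n-to-n-no} together with the standard fact that no option of a $\Prev$ position is $\Prev$ gives perfect alternation of outcomes along options. Where you differ is in the packaging. The paper first argues that $* \in \cl{\Upsilon}$ and then shows, for each $\xi \in \cl{\Upsilon}$, that $\xi \equiv 0$ or $\xi \equiv * \imod{\cl{\Upsilon}}$ according to its outcome, via a nested induction (on the options of $\xi$, and inside that on the options of the auxiliary position $\nu$). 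You instead prove the symmetric outcome-addition lemma, that $o^-(\xi+\kappa) = \Next$ if and only if $o^-(\xi) = o^-(\kappa)$, by a single induction on the birthday of $\xi+\kappa$, and read off both the indistinguishability classes and the $\mathbb{Z}_2$ multiplication from it. Your version is somewhat leaner: one induction instead of two, no need to single out $*$ (any $\Prev$ position, which exists by your option-closure argument applied to the nonzero element of $\Upsilon$, represents the class $a$), and the relation $a^2 = 1$ is literally the $\Prev$-plus-$\Prev$ case of your lemma rather than a separate verification. The paper's formulation has the mild advantage of exhibiting the explicit canonical representatives $0$ and $*$ that are reused elsewhere in the chapter. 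Your handling of the degenerate cases, namely that a first player with no move wins under the \mis convention and the outcome restriction then forces $\Next$, is exactly what makes the case analysis close, so there is no gap there.
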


\begin{proof}
	Since $\Upsilon \not = \{0\}$, this means $|\cl{\Upsilon}| \ge 2$.  Thus, there must be an element of birthday 1 in $\cl{\Upsilon}$.  Since $o^-(1), o^-(\overline{1}) \not = \Next \cup \Prev$, neither of these positions are in $\cl{\Upsilon}$.  The only remaining birthday one position is $*$, so $* \in \cl{\Upsilon}$.  
	
	We claim that for $\xi \in \cl{\Upsilon}$,
		\begin{align*}
			0 &\equiv \xi \imod{\cl{\Upsilon}} \text{ if } o^-(\xi) = \Next,\\
			* &\equiv \xi \imod{\cl{\Upsilon}} \text{ if } o^-(\xi) = \Prev.
		\end{align*}
	We proceed by induction on the options of $\xi$.  If $\xi =0$ or $*$, then the result is clearly true. 
	
	Consider $\xi$ and suppose the claim is true for all options of $\xi$.  Fix $\nu \in \cl{\Upsilon}$.  We want
		\begin{align*}
			o^-(\nu) &= o^-(\nu + \xi) \text{ if } o^-(\xi) = \Next,\\
			o^-(\nu + *) &= o^-(\nu + \xi) \text{ if } o^-(\xi) = \Prev.
		\end{align*}
	We proceed by induction on the options of $\nu$.  If $\nu = 0$, then the result holds and shows the base case.  Now consider $\nu$ and suppose the above is true for all the options of $\nu$.  We break our proof into two cases depending on whether $o^-(\xi) = \Next$ or $o^-(\xi) = \Prev$.
	
	\begin{enumerate}
	\item 
	Suppose $o^-(\xi) = \Next$.
		\begin{enumerate}
			\item Suppose $o^-(\nu) = \Next$.  We want $o^-(\nu + \xi) = \Next$.  Consider Left moving first in $\nu + \xi$.  Since $o^-(\nu) = \Next$, either $\nu = 0$, which is dealt with in the base case, or there exists $\nu^L$ such that $o^-(\nu^L) = \Prev$.  Then, by induction on $\nu$, $o^-(\nu^L) = o^-(\nu^L + \xi)$.  Therefore $o^-(\nu^L + \xi) = \Prev$, so Left can win $\nu + \xi$ moving first.  Similarly, Right can win $\nu + \xi$ moving first.  Therefore $o^-(\nu + \xi) = \Next$.  
			
			\item Suppose $o^-(\nu) = \Prev$.  We want $o^-(\nu + \xi) = \Prev$.  
			
			Suppose Left has no moves in $\nu + \xi$.  Then Left has no moves in $\nu$, and so Left wins moving first in $\nu$.  But, by Condition \eqref{item-n-cup-prev} in the statement of the proposition, $o^-(\nu) = \Next \cup \Prev$.  This means $o^-(\nu) = \Next$, contradicting our assumption that $o^-(\nu) = \Prev$.  Therefore Left must have an opening move in $\nu + \xi$.  
			
			Consider Left moving first in $\nu + \xi$.  She has two possibilities:
				\begin{enumerate}
					\item $\nu^L + \xi$:  By induction on $\nu$, $o^-(\nu^L + \xi) = o^-(\nu^L)$.  Since $o^-(\nu) = \Prev$, we have $o^-(\nu^L) = \Next$, so $o^-(\nu^L + \xi) = \Next$, meaning this is a bad opening move for Left.
					
					\item $\nu + \xi^L$:  Since $o^-(\xi) = \Next$, Condition \eqref{item-n-to-n-no} in the statement of the Proposition says that $o^-(\xi^L) = \Prev$.  By induction on $\xi$, $o^-(\nu + \xi^L) = o^-(\nu + *)$.  Since $o^-(\nu) = \Prev$, Left or Right moving first in $\nu + *$ can move to $\nu$, so $o^-(\nu + *) = \Next$, meaning $o^-(\nu + \xi^L) = \Next$, and, again, this is a bad opening move for Left.
				\end{enumerate}
			Therefore Left has no good opening moves in $\nu + \xi$.  Similarly, neither does Right.  Therefore $o^-(\nu + \xi) = \Prev$.
		\end{enumerate}
	
	\item Suppose $o^-(\xi) = \Prev$.  
		\begin{enumerate}
			\item Suppose $o^-(\nu + *) = \Next$.  We want $o^-(\nu + \xi) = \Next$.  Since $o^-(\xi) = \Prev$, we have that $\xi^L$ exists.  We will show that Left moving in $\nu + \xi$ to $\nu + \xi^L$ is a winning move.  Since $o^-(\xi) = \Prev$, we have $o^-(\xi^L) = \Next$.  By induction on $\xi$, $o^-(\nu) = o^-(\nu + \xi^L)$.  Therefore $o^-(\nu + \xi^L) = \Prev$, and so Left wins moving first in $\nu + \xi$.  Similarly, Right can win moving first in $\nu + \xi$.  Therefore $o^-(\nu + \xi) = \Next$.
			
			\item Suppose $o^-(\nu + *) = \Prev$.  We want $o^-(\nu + \xi) = \Prev$.  Since $o^-(\nu + *) = \Prev$, we have $o^-(\nu) = \Next$, otherwise Left or Right moving first in $\nu + *$ would move to $\nu$ and win, a contradiction.  
			
			Consider Left moving first in $\nu + \xi$.  Since Left has a move in $\xi$, Left has a move in $\nu + \xi$.  Either Left moves to
				\begin{enumerate}
					\item $\nu^L + \xi$:  Since $o^-(\nu) = \Next$, condition \eqref{item-n-to-n-no} in the Proposition gives that $o^-(\nu^L) = \Prev$.  By induction on $\nu$, we have $o^-(\nu^L + \xi) = o^-(\nu^L + *)$.  Then $o^-(\nu^L + *) = \Next$, as both Left and Right can move to $\nu^L$.  Therefore $o^-(\nu^L + \xi) = \Next$ and is a bad opening move for Left.
					
					\item $\nu + \xi^L$:  Since $o^-(\xi) = \Prev$, we have that $o^-(\xi^L) = \Next$.  By induction on $\xi$, $o^-(\nu) = o^-(\nu + \xi^L)$.  But $o^-(\nu) = \Next$, so $o^-(\nu + \xi^L) = \Next$, making it a bad opening move for Left.
				\end{enumerate}
			Therefore Left has no good opening move in $\nu + \xi$.  Similarly, neither does Right.  Therefore $o^-(\nu + \xi) = \Prev$.
		\end{enumerate}
	\end{enumerate}
	
	Therefore 
		\begin{align*}
			o^-(\nu) &= o^-(\nu + \xi) \text{ if } o^-(\xi) = \Next,\\
			o^-(\nu + *) &= o^-(\nu + \xi) \text{ if } o^-(\xi) = \Prev,
		\end{align*}	
	and so
		\begin{align*}
			0 &\equiv \xi \imod{\cl{\Upsilon}} \text{ if } o^-(\xi) = \Next,\\
			* &\equiv \xi \imod{\cl{\Upsilon}} \text{ if } o^-(\xi) = \Prev.
		\end{align*}
	Since $o^-(* + *) = \Next$, we also have
		\[ * + * \equiv 0 \imod{\cl{\Upsilon}}.\]
	Taking these results, we calculate the following \mis monoid:  via the map
		\begin{align*}
			\xi &\mapsto \begin{cases}
			1 &\text{if } o^-(\xi) = \Next,\\
			a &\text{if } o^-(\xi) = \Prev, 
			\end{cases}
		\end{align*}
	the following monoid is obtained
		\begin{align*}
		\monoid{M}_{\cl{\Upsilon}} &= \ideal{1,a \mid a^2 = 1} \\
		\Next &= \{1\} \\
		\Prev &= \{a\} \\
		\Left &= \emptyset \\
		\Right &= \emptyset.
		\end{align*}		
		
	It is clear to see that $\monoid{M}_{\cl{\Upsilon}} \cong \monoid{M}_{\cl{*}}$, as required.
\end{proof}

Combining Propositions \ref{proposition-P-*-monoids} and \ref{prop-p-is-*} we obtain the following corollary, which is important enough to be reclassified as a theorem in its own right.  

\begin{theorem}\label{theorem-P-iso-*}
	Let $\Upsilon$ be a set of positions.  Then $\monoid{M}_{\cl{\Upsilon}} \cong \monoid{M}_{\cl{*}}$ if and only if the following are all satisfied:
		\begin{enumerate}
			\item\label{item-P-iso-*-1}  $\Upsilon$ contains a position other than 0;
			\item\label{item-P-iso-*-2} for $\xi \in \cl{\Upsilon}$, $o^-(\xi) = \Next \cup \Prev$;
			\item\label{item-P-iso-*-3}  for $\xi \in \cl{\Upsilon}$, if $o^-(\xi) = \Next$, then $o^-(\xi^L) = \Prev$ for every Left option of $\xi$ and $o^-(\xi^R) = \Prev$ for every Right option of $\xi$.  That is, from an $\Next$ position, a player can never move to another $\Next$ position.		
		\end{enumerate}
\end{theorem}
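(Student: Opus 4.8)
The plan is to recognize that this theorem is precisely the conjunction of the two preceding propositions, so the proof amounts to assembling them rather than establishing anything new. Since the statement is a biconditional, I would split it into the necessity direction (isomorphism implies conditions \eqref{item-P-iso-*-1}--\eqref{item-P-iso-*-3}) and the sufficiency direction (conditions \eqref{item-P-iso-*-1}--\eqref{item-P-iso-*-3} imply isomorphism), and dispatch each by citation.

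For necessity, I would assume $\monoid{M}_{\cl{\Upsilon}} \cong \monoid{M}_{\cl{*}}$ and appeal to Proposition \ref{proposition-P-*-monoids}. Its part \eqref{item-xi-np} is verbatim condition \eqref{item-P-iso-*-2} of the theorem, and its part \eqref{item-proposition-P-*-monoids-3} is verbatim condition \eqref{item-P-iso-*-3}. The only point requiring comment is condition \eqref{item-P-iso-*-1}: Proposition \ref{proposition-P-*-monoids}(2) gives $* \in \cl{\Upsilon}$, and since the closure of $\{0\}$ (and of any subset of $\{0\}$) is again contained in $\{0\}$, which has no position of birthday one, the membership $* \in \cl{\Upsilon}$ forces $\Upsilon$ to contain a position other than $0$. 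This is the single line of genuine, if slight, reasoning in the whole argument.

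For sufficiency, I would assume conditions \eqref{item-P-iso-*-1}--\eqref{item-P-iso-*-3} and simply observe that these are exactly the three hypotheses of Proposition \ref{prop-p-is-*}, whose conclusion is $\monoid{M}_{\cl{\Upsilon}} \cong \monoid{M}_{\cl{*}}$. No additional work is needed, and the two directions together give the biconditional.

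The main obstacle is not at the level of this theorem at all, since the combination is immediate; it lives entirely inside the two propositions being cited. The substantive content is the nested induction in Proposition \ref{prop-p-is-*} showing that over $\cl{\Upsilon}$ every $\Next$ position is equivalent to $0$ and every $\Prev$ position is equivalent to $*$, together with the outcome-class bookkeeping in Proposition \ref{proposition-P-*-monoids} that rules out the $\Left$ and $\Right$ portions. As both of these are already proved in the excerpt, the remaining task here is only to confirm that the hypotheses line up and that condition \eqref{item-P-iso-*-1} is recovered from the presence of $*$ in the closure.
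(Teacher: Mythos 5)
Your proposal matches the paper exactly: the theorem is stated there as the immediate combination of Propositions \ref{proposition-P-*-monoids} and \ref{prop-p-is-*}, with necessity and sufficiency dispatched by citation just as you do. Your extra remark recovering condition \eqref{item-P-iso-*-1} from $* \in \cl{\Upsilon}$ is correct and is in fact already embedded in the paper's proof of Proposition \ref{proposition-P-*-monoids}(2).
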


Using Theorem \ref{theorem-P-iso-*}, we are able to determine the monoids of certain sets of positions without any explicit monoid calculations.  Then, once we have found two sets of positions whose monoids are isomorphic to $\monoid{M}_{\cl{*}}$, we can apply Theorem \ref{theorem-p1-p2-*} to determine the outcome of arbitrary sums of positions from both sets.  This is exceptionally good news.  However, the checking the conditions involved to show $\monoid{M}_{\cl{\Upsilon}} \cong \monoid{M}_{\cl{*}}$, especially \eqref{item-P-iso-*-2} and \eqref{item-P-iso-*-3}, are almost as overwhelming as directly calculating the $\monoid{M}_{\cl{\Upsilon}}$ itself.  One of the uses of Theorem \ref{theorem-P-iso-*} is showing when monoids are not isomorphic to $\monoid{M}_{\cl{*}}$, usually by showing that condition \eqref{item-P-iso-*-3} does not hold.  Another use of Theorem \ref{theorem-P-iso-*} is that we use it to extend positions $\xi$ whose monoids are isomorphic to $\monoid{M}_{\cl{*}}$ without changing the \mis monoid, as the next few results will demonstrate.

Before we continue, we need to define the following:

\begin{definition}
	Given the following:
		\begin{itemize}
			\item $\xi$ is a position such that $\monoid{M}_{\cl{\xi}} \cong \monoid{M}_{\cl{*}}$, with  $\varphi : \monoid{M}_{\cl{\xi}} \to \monoid{M}_{\cl{*}}$ the \mis monoid isomorphism between the two \mis monoids;
			\item $\mathscr{Q} : \cl{\xi} \to \monoid{M}_{\cl{\xi}}$ is the canonical quotient map from $\cl{\xi}$ to $\monoid{M}_{\cl{\xi}}$;
			\item $\psi \in \cl{\xi}$.
		\end{itemize}
	Then we say $\psi \sim *$ if $\varphi \circ \mathscr{Q}(\psi) = a$.  We say $\psi \sim 0$ if $\varphi \circ \mathscr{Q}(\psi) = 1$, where $a$ and $1$ are the elements of $\monoid{M}_{\cl{*}}$ as given in Notation \ref{notation-*}.  
\end{definition}

We will now use Theorem \ref{theorem-P-iso-*} to build monoids which are isomorphic to $\monoid{M}_{\cl{*}}$.  We start with the following lemma, which will become the base case in a larger result we wish to prove.

\begin{lemma}\label{lemma-xi-^-L-*}
	Let $\xi$, $\kappa$ be positions such that $\monoid{M}_{\cl{\xi}} \cong \monoid{M}_{\cl{\kappa}} \cong \monoid{M}_{\cl{*}}$.  Then
		\begin{enumerate}
			\item\label{item-xi-^-L-1} If $o^-(\xi) = o^-(\kappa)$, then $\monoid{M}_{\cl{\combgame{\{\xi\mid\kappa\}}}} \cong \monoid{M}_{\cl{*}}$.
			
			\item\label{item-xi-^-L-2} If $o^-(\xi) = \Prev$, then $\monoid{M}_{\cl{\L(\xi)}} \cong \monoid{M}_{\cl{*}}$.  
			
			\item\label{item-xi-^-L-zero} If $o^-(\xi) =o^-(\kappa) = \Next$, then the following hold:
				\begin{enumerate}
					\item $\monoid{M}_{\cl{\combgame{\{\xi\mid0\}}}} \cong  \monoid{M}_{\cl{*}}$;
					\item $\monoid{M}_{\cl{\combgame{\{\xi\mid\kappa, 0\}}}} \cong  \monoid{M}_{\cl{*}}$;
					\item $\monoid{M}_{\cl{\combgame{\{\xi, 0\mid \kappa, 0\}}}} \cong  \monoid{M}_{\cl{*}}$.
				\end{enumerate}
		\end{enumerate}
\end{lemma}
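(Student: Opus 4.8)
\textbf{Proof plan for Lemma~\ref{lemma-xi-^-L-*}.}
The plan is to verify in each case that the newly constructed position $\mu$ satisfies the three conditions of Theorem~\ref{theorem-P-iso-*}, from which $\monoid{M}_{\cl{\mu}} \cong \monoid{M}_{\cl{*}}$ follows immediately. This is the natural strategy because Theorem~\ref{theorem-P-iso-*} reduces the isomorphism question to purely local outcome conditions, so I never have to compute a monoid directly. Condition~\eqref{item-P-iso-*-1} is trivial in every case, since each constructed position is nonzero, so the real work lies in establishing conditions~\eqref{item-P-iso-*-2} and~\eqref{item-P-iso-*-3} for every element of the closure $\cl{\mu}$, not just for $\mu$ itself.

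First I would observe a structural fact that makes the closure manageable: since $\monoid{M}_{\cl{\xi}} \cong \monoid{M}_{\cl{\kappa}} \cong \monoid{M}_{\cl{*}}$, both $\cl{\xi}$ and $\cl{\kappa}$ already satisfy conditions~\eqref{item-P-iso-*-2} and~\eqref{item-P-iso-*-3} by the forward direction of Theorem~\ref{theorem-P-iso-*} (i.e.\ Proposition~\ref{proposition-P-*-monoids}). The closure $\cl{\mu}$ consists of disjunctive sums built from $\mu$ together with all positions in $\cl{\xi} \cup \cl{\kappa}$ (since the options of $\mu$ lie in those closures). For an element of $\cl{\mu}$ that does not involve $\mu$ as a summand, Theorem~\ref{theorem-p1-p2-*} applies directly: any sum of positions drawn from $\cl{\xi}$ and $\cl{\kappa}$ is $\Next \cup \Prev$, and its outcome is computed in $\monoid{M}_{\cl{*}}$, so such sums automatically obey~\eqref{item-P-iso-*-2}, and~\eqref{item-P-iso-*-3} follows because in $\monoid{M}_{\cl{*}}$ a move changes the monoid element (hence the outcome) from $\Next$ to $\Prev$. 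So I would isolate the only genuinely new positions: those of the form $\mu + \gamma$ where $\gamma \in \cl{\mu}$.

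The core computation, which I would carry out by induction on birthday, is to check conditions~\eqref{item-P-iso-*-2} and~\eqref{item-P-iso-*-3} for $\mu + \gamma$. For case~\eqref{item-xi-^-L-1}, with $\mu = \combgame{\{\xi \mid \kappa\}}$ and $o^-(\xi) = o^-(\kappa)$, the options of $\mu$ are $\xi$ and $\kappa$, which share an outcome; I would argue that $o^-(\mu) = \Prev$ if that common outcome is $\Next$, and $o^-(\mu) = \Next$ if it is $\Prev$, and that $\mu \sim *$ or $\mu \sim 0$ accordingly, so that $\mu$ behaves like a single element of $\monoid{M}_{\cl{*}}$. Verifying~\eqref{item-P-iso-*-3} amounts to showing no $\Next$ position moves to another $\Next$ position, using the fact that the options of $\mu$ are both $\Prev$ (in the $o^-(\xi)=\Next$ subcase) and invoking Theorem~\ref{theorem-p1-p2-*} to control the outcomes of $\xi + \gamma$ and $\kappa + \gamma$. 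Case~\eqref{item-xi-^-L-2} is the specialization where $\kappa$ is absent and Right has no move, so $\mu = \L(\xi)$ with $o^-(\xi) = \Prev$; here $o^-(\mu) = \Next$ since Left wins moving first (to $\xi$, a $\Prev$ position) and Right wins moving first (no move), and I would check that $\mu \sim 0$. Case~\eqref{item-xi-^-L-zero} handles the three $\Next$-option constructions with a $0$ option; the presence of the $0$ option means a player can move directly to $0$, and I would check that this does not violate~\eqref{item-P-iso-*-3} precisely because the $\xi, \kappa$ options are $\Next$ and $0$ is $\Next$ but $\mu$ itself turns out to be $\Prev$, so moving from $\mu$ (a $\Prev$ position) to a $\Next$ option is permissible.

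The main obstacle I anticipate is the bookkeeping in condition~\eqref{item-P-iso-*-3} for the mixed sums $\mu + \gamma$: I must ensure that whenever $o^-(\mu + \gamma) = \Next$, \emph{every} option---both those moving in the $\mu$ component and those moving in $\gamma$---lands in $\Prev$. The moves within $\gamma$ are controlled by the inductive hypothesis together with Theorem~\ref{theorem-p1-p2-*}, but the moves within the $\mu$ component (to $\xi + \gamma$, $\kappa + \gamma$, or $0 + \gamma$) require me to combine the known element $\mu \sim *$ or $\mu \sim 0$ with the outcome of $\gamma$ and read off the product in $\monoid{M}_{\cl{*}}$, then confirm the resulting option is indeed $\Prev$. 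The delicate subcase is~\eqref{item-xi-^-L-zero}(c), the symmetric construction $\combgame{\{\xi, 0 \mid \kappa, 0\}}$, where both players have a direct move to $0$; there I must be especially careful that $\mu$ is genuinely a $\Prev$ position so that its $\Next$-valued options (namely the moves to $\xi + \gamma$, $\kappa + \gamma$, and $0 + \gamma$ when these are $\Next$) do not constitute a forbidden $\Next$-to-$\Next$ transition. Once these outcome computations are organized case-by-case, each reduces to a finite check in the two-element monoid $\monoid{M}_{\cl{*}}$, and the lemma follows.
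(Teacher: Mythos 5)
Your plan is correct and follows essentially the same route as the paper: the paper also proves each case by verifying conditions (2) and (3) of Theorem \ref{theorem-P-iso-*} for an arbitrary element $t\,\combgame{\{\xi\mid\kappa\}} + \sum a_i\chi_i + \sum b_j\lambda_j$ of the closure, handling the $t=0$ case via Theorem \ref{theorem-p1-p2-*} and the rest by a double induction (on the number of copies of the new position and on the options of the remaining sum), with the two-element structure of $\monoid{M}_{\cl{*}}$ and the equivalence $\xi\sim\kappa$ doing the work at each step. The only presentational difference is that the paper makes your appeal to ``$\mu\sim 0$ or $\mu\sim *$'' non-circular by explicitly forming the option-closed set $\mathscr{S}$ of positions with fewer copies of $\mu$ and invoking the induction hypothesis to give $\monoid{M}_{\mathscr{S}}\cong\monoid{M}_{\cl{*}}$, which is exactly the bookkeeping your last paragraph anticipates.
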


\begin{proof}\text{}
	\begin{enumerate}
		\item 	Consider an arbitrary position of $\cl{\combgame{\{\xi\mid\kappa\}}}$,
		\[t \combgame{\{\xi\mid\kappa\}} + \sum_{i=1}^n a_i \chi_i + \sum_{j=1}^m b_j \lambda_j,\]
	where $\chi_i \in \cl{\xi}$ for all $i$ and $\lambda_j \in \cl{\kappa}$ for all $j$.  Since $\cl{\xi} \subset \cl{\combgame{\{\xi\mid\kappa\}}}$, we know that $0 \subset \cl{\xi} \subset \cl{\combgame{\{\xi\mid\kappa\}}}$.  Thus \eqref{item-P-iso-*-1} of Theorem \ref{theorem-P-iso-*} is satisfied.  It remains to show  \eqref{item-P-iso-*-2} and  \eqref{item-P-iso-*-3} of Theorem \ref{theorem-P-iso-*}.  
	
	We proceed by induction on $t$.  When $t=0$, the position becomes 
		\[ \sum_{i=1}^n a_i \chi_i + \sum_{j=1}^m b_j \lambda_j.\]
	Since $\monoid{M}_{\cl{\xi}} \cong \monoid{M}_{\cl{\kappa}} \cong \monoid{M}_{\cl{*}}$, we have
		\begin{align*}
			\sum_{i=1}^n a_i \chi_i &\sim t_1 \\
			\sum_{j=1}^m b_j \lambda_j &\sim t_2,
		\end{align*}
	for $t_1, t_2 \in \cl{*}$.  By Theorem \ref{theorem-p1-p2-*},
		\[ \sum_{i=1}^n a_i \chi_i + \sum_{j=1}^m b_j \lambda_j \sim t_1 + t_2,\]
	and $o^-(t_1 + t_2) = \Next \cup \Prev$.  We have thus satisfied condition \eqref{item-P-iso-*-2} of Theorem \ref{theorem-P-iso-*}.  
	
	We will now show that condition \eqref{item-P-iso-*-3} of Theorem \ref{theorem-P-iso-*} is satisfied.  Suppose
		\[ o^-\left(\sum_{i=1}^n a_i \chi_i + \sum_{j=1}^m b_j \lambda_j\right) = \Next.\]
	If
		\[ \sum_{i=1}^n a_i \chi_i + \sum_{j=1}^m b_j \lambda_j = 0,\]
	then neither Left nor Right has a move, so we cannot move from this $\Next$ position to another $\Next$ position.  Otherwise, one of Left or Right has a move.  Suppose, without loss of generality, that Left can move to
		\[ \left(\sum_{i=1}^n a_i \chi_i\right)^L + \sum_{j=1}^m b_j \lambda_j.\]
	We break our analysis into two cases:
		\begin{enumerate}
			\item $\displaystyle o^-\left(\sum_{i=1}^n a_i \chi_i\right) = \Next$:  Since 
				\[ \sum_{i=1}^n a_i \chi_i \in \cl{\xi}\]
			and $\monoid{M}_{\cl{\xi}} \cong \monoid{M}_{\cl{*}}$, Theorem \ref{theorem-P-iso-*} gives	
				\[ o^-\left(\left(\sum_{i=1}^n a_i \chi_i\right)^L\right) = \Prev,\]
			with
				\[ \left(\sum_{i=1}^n a_i \chi_i\right)^L \sim *.\]
			Since
				\[ o^-\left(\sum_{i=1}^n a_i \chi_i + \sum_{j=1}^m b_j \lambda_j\right) = \Next,\]
			we have
				\[ \sum_{i=1}^n a_i \chi_i + \sum_{j=1}^m b_j \lambda_j \sim 0,\]
			and since
				\[ o^-\left(\sum_{i=1}^n a_i \chi_i\right) = \Next, \]
			we have
				\[  \sum_{i=1}^n a_i \chi_i \sim 0.\]
			Therefore
				\[ \sum_{j=1}^m b_j \lambda_j \sim 0.\]
			
			Combining these two facts gives
				\[ \left(\sum_{i=1}^n a_i \chi_i\right)^L + \sum_{j=1}^m b_j \lambda_j \sim * + 0\]
			so
				\[ o^-\left(\left(\sum_{i=1}^n a_i \chi_i\right)^L + \sum_{j=1}^m b_j \lambda_j\right) = \Prev.\]
			
			\item $\displaystyle o^-\left(\sum_{i=1}^n a_i \chi_i\right) = \Prev$:  Since 
				\[ \sum_{i=1}^n a_i \chi_i \in \cl{\xi}\]
			and $\monoid{M}_{\cl{\xi}} \cong \monoid{M}_{\cl{*}}$, Theorem \ref{theorem-P-iso-*} gives
				\[ o^-\left(\left(\sum_{i=1}^n a_i \chi_i\right)^L\right) = \Next.\]
			Similarly to the arguments given in the previous case, we have
				\[ \sum_{j=1}^m b_j \lambda_j \sim *,\]
			and so
				\[ \left(\sum_{i=1}^n a_i \chi_i \right)^L + \sum_{j=1}^m b_j \lambda_j \sim 0 + *,\]
			so
				\[ o^-\left(\left(\sum_{i=1}^n a_i \chi_i \right)^L + \sum_{j=1}^m b_j \lambda_j\right) = \Prev.\]
		\end{enumerate}
	Therefore \eqref{item-P-iso-*-3} of Theorem \ref{theorem-P-iso-*} is satisfied, and the base case is shown.
	
	Suppose true for all positions with $t < u$ and consider 
		\[ u \combgame{\{\xi\mid\kappa\}} + \sum_{i=1}^n a_i \chi_i + \sum_{j=1}^m b_j \lambda_j.\]
	We proceed by induction on the options of 
		\[ \sum_{i=1}^n a_i \chi_i + \sum_{j=1}^m b_j \lambda_j.\]
	When 
		\[\sum_{i=1}^n a_i \chi_i + \sum_{j=1}^m b_j \lambda_j = 0,\] 
	we have $u \combgame{\{\xi\mid\kappa\}}$.  Firstly, we want $o^-(u \combgame{\{\xi\mid\kappa\}}) = \Next \cup \Prev$.  Left moving first has only one move, to the position $(u-1) \combgame{\{\xi\mid\kappa\}} + \xi$. Similarly Right moving first has only one move, to the position $(u-1) \combgame{\{\xi\mid\kappa\}}+\kappa$.   By induction, 
		\[o^-((u-1) \combgame{\{\xi\mid\kappa\}} + \xi) = o^-((u-1) \combgame{\{\xi\mid\kappa\}}+\kappa) = \Next \cup \Prev.\]
	We want that both positions have the same outcome.  We consider two cases. 
		\begin{enumerate}
			\item $\displaystyle o^-((u-1) \combgame{\{\xi\mid\kappa\}}+\kappa) = \Next$: Since this falls under the induction hypothesis, this means that 
				\[o^-((u-2) \combgame{\{\xi\mid\kappa\}} + \xi + \kappa) = \Prev,\]
			since Left can move from 
				\[(u-1) \combgame{\{\xi\mid\kappa\}}+\kappa\]
			to this position.  But Right moving from 
				\[(u-1) \combgame{\{\xi\mid\kappa\}} + \xi\]
			can move to 
				\[(u-2) \combgame{\{\xi\mid\kappa\}} + \xi + \kappa,\]
			so Right can win 
				\[(u-1) \combgame{\{\xi\mid\kappa\}} + \xi\]
			moving first.  Since 
				\[o^-((u-1) \combgame{\{\xi\mid\kappa\}} + \xi) = \Next \cup \Prev,\]
			this gives that 
				\[o^-((u-1) \combgame{\{\xi\mid\kappa\}} + \xi) = \Next,\]
			and so the outcomes agree.
	
			\item $\displaystyle o^-((u-1) \combgame{\{\xi\mid\kappa\}}+\kappa) = \Prev$: Then 
				\[o^-((u-2) \combgame{\{\xi\mid\kappa\}} + \xi + \kappa) 	= \Next,\]
			since Left can move to this position.  But Right can move to this position from
				\[(u-1) \combgame{\{\xi\mid\kappa\}} + \xi,\]
			and both these positions falling under the induction hypothesis means that 
				\[o^-((u-1) \combgame{\{\xi\mid\kappa\}} + \xi) = \Prev,\]
			otherwise we would have an $\Next$ position where Right could move to another $\Next$ position, contradicting \eqref{item-P-iso-*-3} of Theorem \ref{theorem-P-iso-*}.  
		\end{enumerate}	
	
	Therefore from $u \combgame{\{\xi\mid\kappa\}}$, either Left and Right both move to an $\Next$ or to a $\Prev$ position.  Thus 
		\[o^-(u\combgame{\{\xi\mid\kappa\}}) = \Next \cup \Prev.\]
	If $o^-(u \combgame{\{\xi\mid\kappa\}}) = \Next$, then as Left and Right each have exactly one possible move, neither can move from this $\Next$ position to another $\Next$ position.  Therefore \eqref{item-P-iso-*-2} and \eqref{item-P-iso-*-3} of Theorem \ref{theorem-P-iso-*} are satisfied.  
	
	Now suppose that \eqref{item-P-iso-*-2} and \eqref{item-P-iso-*-3} of Theorem \ref{theorem-P-iso-*} are true for all 
		\[u \combgame{\{\xi\mid\kappa\}} + \Omega\]
	where $\Omega$ is an option of 
		\[\sum_{i=1}^n a_i \chi_i + \sum_{j=1}^m b_j \lambda_j\]
	and consider the position 
		\[ u \combgame{\{\xi\mid\kappa\}} + \sum_{i=1}^n a_i \chi_i + \sum_{j=1}^m b_j \lambda_j.\]
	
	Consider the set 
		\[ \mathscr{S} = \{ m \combgame{\{\xi\mid\kappa\}} + \Omega \mid m \in \{0,1,\ldots,u-1\}, \Omega \in \cl{\xi} + \cl{\kappa}\}.\]
	By the induction hypothesis, $\mathscr{S}$ satisfies all the conditions of Theorem \ref{theorem-P-iso-*}.  Moreover $\mathscr{S}$ is option closed, although not closed under addition (see Definition \ref{definition-closed}).  But, we can build the monoid of this restricted set, obtaining  $\monoid{M}_{\mathscr{S}} \cong \monoid{M}_{\cl{*}}$.  That is, $\monoid{M}_{\mathscr{S}}$ has two elements $s_1$ and $s_2$ such that $s_1 \sim 0$ and $s_2 \sim *$.  
	
	We will first show that  \eqref{item-P-iso-*-2} of Theorem \ref{theorem-P-iso-*} is satisfied.  That is, we will show
		\[ o^-\left(u \combgame{\{\xi\mid\kappa\}} + \sum_{i=1}^n a_i \chi_i + \sum_{j=1}^m b_j \lambda_j\right) = \Next \cup \Prev. \]
	To do this, we examine the position 
		\[(u-1) \combgame{\{\xi\mid\kappa\}} + \xi+ \sum_{i=1}^n a_i \chi_i + \sum_{j=1}^m b_j \lambda_j.\]
	This position falls under the induction hypothesis, and so 			
		\[o^-\left((u-1) \combgame{\{\xi\mid\kappa\}} + \xi+ \sum_{i=1}^n a_i \chi_i + \sum_{j=1}^m b_j \lambda_j\right) = \Next \cup \Prev.\]
	We consider these two cases separately.   
	
		\begin{enumerate}
			\item $o^-\left(\displaystyle (u-1) \combgame{\{\xi\mid\kappa\}} + \xi+ \sum_{i=1}^n a_i \chi_i + \sum_{j=1}^m b_j \lambda_j\right) = \Next$:  Suppose Left is moving first in 
				\[u \combgame{\{\xi\mid\kappa\}} + \sum_{i=1}^n a_i \chi_i + \sum_{j=1}^m b_j \lambda_j.\]
			Left moves to either
				\begin{enumerate}
					\item $\displaystyle (u-1) \combgame{\{\xi\mid\kappa\}} + \xi+ \sum_{i=1}^n a_i \chi_i + \sum_{j=1}^m b_j \lambda_j$, or
					\item $\displaystyle u \combgame{\{\xi\mid\kappa\}} + \left(\sum_{i=1}^n a_i \chi_i + \sum_{j=1}^m b_j \lambda_j\right)^L$.
				\end{enumerate}
			Clearly (i) is a bad move, as Left moves to an $\Next$ position.  Thus, suppose Left moves to 
				\[u \combgame{\{\xi\mid\kappa\}} + \left(\sum_{i=1}^n a_i \chi_i + \sum_{j=1}^m b_j \lambda_j\right)^L.\]
			This position falls under the induction hypothesis, and so 		
				\[o^-\left(u \combgame{\{\xi\mid\kappa\}} + \left(\sum_{i=1}^n a_i \chi_i + \sum_{j=1}^m b_j \lambda_j\right)^L\right) = \Next \cup \Prev.\]
			Suppose 
				\[o^-\left(u \combgame{\{\xi\mid\kappa\}} + \left(\sum_{i=1}^n a_i \chi_i + \sum_{j=1}^m b_j \lambda_j\right)^L\right) = \Prev.\]
			Right moving first in 
				\[u \combgame{\{\xi\mid\kappa\}} + \left(\sum_{i=1}^n a_i \chi_i + \sum_{j=1}^m b_j \lambda_j\right)^L\]
			can move to 
				\[ (u-1) \combgame{\{\xi\mid\kappa\}} + \kappa+ \left(\sum_{i=1}^n a_i \chi_i + \sum_{j=1}^m b_j \lambda_j\right)^L,\]
			whose outcome must be $\Next$.  Left can move to this position from the position
				\[ (u-1) \combgame{\{\xi\mid\kappa\}} + \kappa+ \sum_{i=1}^n a_i \chi_i + \sum_{j=1}^m b_j \lambda_j.\]
			By the induction hypothesis, we then have
				\[ o^-\left((u-1) \combgame{\{\xi\mid\kappa\}} + \kappa+ \sum_{i=1}^n a_i \chi_i + \sum_{j=1}^m b_j \lambda_j\right) = \Prev,\]
			otherwise we would have an $\Next$ position moving to an $\Next$ position.  Therefore, we have
				\begin{align*}
					o^-\left((u-1) \combgame{\{\xi\mid\kappa\}} + \kappa+ \sum_{i=1}^n a_i \chi_i + \sum_{j=1}^m b_j \lambda_j\right) &= \Prev\\
					o^-\left(\displaystyle (u-1) \combgame{\{\xi\mid\kappa\}} + \xi+ \sum_{i=1}^n a_i \chi_i + \sum_{j=1}^m b_j \lambda_j\right) &= \Next,
				\end{align*}
			and both of these positions are in $\monoid{M}_{\mathscr{S}}$.  Thus, we have the following
				\begin{align*}
					(u-1) \combgame{\{\xi\mid\kappa\}} + \kappa+ \sum_{i=1}^n a_i \chi_i + \sum_{j=1}^m b_j \lambda_j &\sim *, \\
					(u-1) \combgame{\{\xi\mid\kappa\}} + \xi+ \sum_{i=1}^n a_i \chi_i + \sum_{j=1}^m b_j \lambda_j&\sim 0
				\end{align*}
			and since $\monoid{M}_{\mathscr{S}} \cong \monoid{M}_{\cl{*}}$, this means $\xi \not \sim \kappa$, so $o^-(\xi) \not = o^-(\kappa)$, a contradiction.  Therefore
				\[ \displaystyle o^-\left(u \combgame{\{\xi\mid\kappa\}} + \left(\sum_{i=1}^n a_i \chi_i + \sum_{j=1}^m b_j \lambda_j\right)^L\right) = \Next.\]
			Thus, Left can only move to $\Next$ positions, and therefore
				\[ o^-\left(u \combgame{\{\xi\mid\kappa\}} + \sum_{i=1}^n a_i \chi_i + \sum_{j=1}^m b_j \lambda_j\right) = \Right \cup \Prev.\]
						
			Consider Right moving first in 
				\[ u \combgame{\{\xi\mid\kappa\}} + \sum_{i=1}^n a_i \chi_i +\sum_{j=1}^m b_j \lambda_j.\]
			Right has two possible moves:
				\begin{enumerate}
					\item $\displaystyle (u-1) \combgame{\{\xi\mid\kappa\}} + \kappa + \sum_{i=1}^n a_i \chi_i + \sum_{j=1}^m b_j \lambda_j$, or
					\item $\displaystyle u \combgame{\{\xi\mid\kappa\}} + \left(\sum_{i=1}^n a_i \chi_i + \sum_{j=1}^m b_j \lambda_j \right)^R$.
				\end{enumerate}
			Suppose Right moves to the position given in (i).   In $\monoid{M}_{\mathscr{S}}$, we have
				\begin{align*}
					(u-1) \combgame{\{\xi\mid\kappa\}} + \xi + \sum_{i=1}^n a_i \chi_i +\sum_{j=1}^m b_j \lambda_j &\sim 0.
				\end{align*}
			In our statement of this part of the theorem, we assumed
				\[ \kappa \sim \xi,\]
			this implies
				\[ (u-1) \combgame{\{\xi\mid\kappa\}} + \kappa +\sum_{i=1}^n a_i \chi_i +\sum_{j=1}^m b_j \lambda_j \sim 0, \]
			so
				\[ o^-\left((u-1) \combgame{\{\xi\mid\kappa\}} + \kappa +\sum_{i=1}^n a_i \chi_i +\sum_{j=1}^m b_j \lambda_j\right) = \Next.\]
			Thus, this would be a bad opening move for Right.
			
			Suppose Right moves to the position given in (ii).  By induction,
				\[ o^-\left(u \combgame{\{\xi\mid\kappa\}} + \left(\sum_{i=1}^n a_i \chi_i + \sum_{j=1}^m b_j \lambda_j \right)^R\right) = \Next \cup \Prev.\]
			Suppose	
				\[ o^-\left(u \combgame{\{\xi\mid\kappa\}} + \left(\sum_{i=1}^n a_i \chi_i + \sum_{j=1}^m b_j \lambda_j \right)^R\right) = \Prev.\]
			From this position, Left can move to
				\[ (u-1) \combgame{\{\xi\mid\kappa\}} + \xi + \left(\sum_{i=1}^n a_i \chi _i  + \sum_{j=1}^m b_j \lambda_j\right)^R,\]
			which must be an $\Next$ position.  But Right can also move to this position from
				\[ (u-1) \combgame{\{\xi\mid\kappa\}} + \xi + \sum_{i=1}^n a_i \chi_i + \sum_{j=1}^m b_j \lambda_j,\]
			which we have assumed to be an $\Next$ position.  Since all these positions fall under the induction hypothesis and we cannot have an $\Next$ position moving to an $\Next$ position, we have a contradiction.  Therefore
				\[ o^-\left(u \combgame{\{\xi\mid\kappa\}} + \left(\sum_{i=1}^n a_i \chi_i + \sum_{j=1}^m b_j \lambda_j \right)^R\right) = \Next.\]
			Thus Right has no winning first move from the position
				\[ u \combgame{\{\xi\mid\kappa\}} + \sum_{i=1}^n a_i \chi_i + \sum_{j=1}^m b_j \lambda_j.\]
			
			Therefore
				\[ o^-\left(u \combgame{\{\xi\mid\kappa\}} + \sum_{i=1}^n a_i \chi_i + \sum_{j=1}^m b_j \lambda_j\right) = \Prev.\]

			\item $o^-\left(\displaystyle (u-1) \combgame{\{\xi\mid\kappa\}} + \xi+ \sum_{i=1}^n a_i \chi_i + \sum_{j=1}^m b_j \lambda_j \right) = \Prev$:  Suppose Left moves first in
				\[ u\combgame{\{\xi\mid\kappa\}} + \sum_{i=1}^n a_i \chi_i + \sum_{j=1}^m b_j \lambda_j.\]
			Left can move to
				\[ (u-1) \combgame{\{\xi\mid\kappa\}}+\xi + \sum_{i=1}^n a_i \chi_i + \sum_{j=1}^m b_j \lambda_j,\]
			a $\Prev$ position by assumption.  Therefore
				\[ o^-\left(u\combgame{\{\xi\mid\kappa\}} + \sum_{i=1}^n a_i \chi_i + \sum_{j=1}^m b_j \lambda_j\right) = \Left \cup \Next.\]
				
			Now consider Right moving first in
				\[u\combgame{\{\xi\mid\kappa\}} + \sum_{i=1}^n a_i \chi_i + \sum_{j=1}^m b_j \lambda_j.\]
			Right can move to
				\[ (u-1) \combgame{\{\xi\mid\kappa\}} +\kappa + \sum_{i=1}^n a_i \chi_i + \sum_{j=1}^m b_j \lambda_j,\]
			which is an element of $\mathscr{S}$.  We have
				\begin{align*}
					(u-1) \combgame{\{\xi\mid\kappa\}} + \xi+ \sum_{i=1}^n a_i \chi_i + \sum_{j=1}^m b_j \lambda_j&\sim *.
				\end{align*}
			Since 
				\[\xi \sim \kappa,\]
			we then have
				\[ (u-1) \combgame{\{\xi\mid\kappa\}} + \kappa + \sum_{i=1}^n a_i \chi_i + \sum_{j=1}^m b_j \lambda_j \sim *. \] 
			Thus
				\[ o^-\left(((u-1) \combgame{\{\xi\mid\kappa\}} + \kappa + \sum_{i=1}^n a_i \chi_i + \sum_{j=1}^m b_j \lambda_j\right) = \Prev.\]
			
			Therefore 
				\[ o^-\left(u \combgame{\{\xi\mid\kappa\}} + \sum_{i=1}^n a_i \chi_i + \sum_{j=1}^m b_j \lambda_j\right) = \Next.\]
		\end{enumerate}

		So, we have shown
			\[ o^-\left(u \combgame{\{\xi\mid\kappa\}} + \sum_{i=1}^n a_i \chi_i + \sum_{j=1}^m b_j \lambda_j\right) = \Next \cup \Prev,\]
		and
			\begin{eqnarray}
				  \label{eqn-7.1}\lefteqn{o^-\left(u \combgame{\{\xi\mid\kappa\}} + \sum_{i=1}^n a_i \chi_i + \sum_{j=1}^m b_j \lambda_j \right) = \Next \iff } \\
				  \nonumber & & o^-\left((u-1) \combgame{\{\xi\mid\kappa\}} + \xi + \sum_{i=1}^n a_i \chi_i + \sum_{j=1}^m b_j \lambda_j \right) = \Prev, \\
		 		 \lefteqn{o^-\left(u \combgame{\{\xi\mid\kappa\}} + \sum_{i=1}^n a_i \chi_i + \sum_{j=1}^m b_j \lambda_j \right) = \Prev \iff} \\
				 \nonumber & & o^-\left((u-1) \combgame{\{\xi\mid\kappa\}} + \xi + \sum_{i=1}^n a_i \chi_i + \sum_{j=1}^m b_j \lambda_j \right) = \Next.
			\end{eqnarray}
		
		It remains to show that (3) of Theorem \ref{theorem-P-iso-*} is satisfied, i.e.\ if
			\[ o^-\left(u \combgame{\{\xi\mid\kappa\}} + \sum_{i=1}^n a_i \chi_i + \sum_{j=1}^m b_j \lambda_j\right) = \Next,\]
		then the positions
			\[ \left(u \combgame{\{\xi\mid\kappa\}} + \sum_{i=1}^n a_i \chi_i + \sum_{j=1}^m b_j \lambda_j\right)^L, \left(u \combgame{\{\xi\mid\kappa\}} + \sum_{i=1}^n a_i \chi_i + \sum_{j=1}^m b_j \lambda_j\right)^R\]
		are $\Prev$ positions.
		
		Thus, suppose
			\[ o^-\left(u \combgame{\{\xi\mid\kappa\}} + \sum_{i=1}^n a_i \chi_i + \sum_{j=1}^m b_j \lambda_j \right) = \Next.\]
		If Left moves first in
			\[ u \combgame{\{\xi\mid\kappa\}} + \sum_{i=1}^n a_i \chi_i + \sum_{j=1}^m b_j \lambda_j.\]
		she has two possible first moves:
			\begin{enumerate}
				\item $\displaystyle (u-1) \combgame{\{\xi\mid\kappa\}} + \xi + \sum_{i=1}^n a_i \chi_i + \sum_{j=1}^m b_j \lambda_j$, or
				\item $\displaystyle u \combgame{\{\xi\mid\kappa\}} + \left(\sum_{i=1}^n a_i \chi_i + \sum_{j=1}^m b_j \lambda_j \right)^L$.
			\end{enumerate}
		By Equation \eqref{eqn-7.1} above, since
			\[ o^-\left( u \combgame{\{\xi\mid\kappa\}} + \sum_{i=1}^n a_i \chi_i + \sum_{j=1}^m b_j \lambda_j\right) = \Next,\]
		we have
			\[ o^-\left((u-1) \combgame{\{\xi\mid\kappa\}} + \xi + \sum_{i=1}^n a_i \chi_i + \sum_{j=1}^m b_j \lambda_j \right) = \Prev.\]
		Thus, consider Left's other move, and suppose
			\[ o^-\left(u \combgame{\{\xi\mid\kappa\}} + \left(\sum_{i=1}^n a_i \chi_i + \sum_{j=1}^m b_j \lambda_j \right)^L\right) = \Next. \]
		This position falls under the induction hypothesis, and so
			\[ o^-\left((u-1) \combgame{\{\xi\mid\kappa\}} + \kappa + \left(\sum_{i=1}^n a_i \chi_i + \sum_{j=1}^m b_j \lambda_j\right)^L\right) = \Prev. \]
		Left can move to this position from the position
			\[ (u-1) \combgame{\{\xi\mid\kappa\}} + \kappa  + \sum_{i=1}^n a_i \chi_i + \sum_{j=1}^m b_j \lambda_j,\]
		and so
			\begin{align}
				\label{eqn-7.3} o^-\left((u-1) \combgame{\{\xi\mid\kappa\}} + \kappa  + \sum_{i=1}^n a_i \chi_i + \sum_{j=1}^m b_j \lambda_j\right) = \Next,
			\end{align}
		which gives us that
			\[ (u-1) \combgame{\{\xi\mid\kappa\}} + \kappa  + \sum_{i=1}^n a_i \chi_i + \sum_{j=1}^m b_j \lambda_j \sim 0.\]
		However, by assumption, we have
			\[ \xi \sim \kappa,\]
		and Equation \eqref{eqn-7.1} gives us
			\[ (u-1) \combgame{\{\xi\mid\kappa\}} + \xi + \sum_{i=1}^n a_i \chi_i + \sum_{j=1}^m b_j \lambda_j \sim *.\]
		Combining these three equations gives us a contradiction.  Therefore
			\[ o^-\left(u \combgame{\{\xi\mid\kappa\}} + \left(\sum_{i=1}^n a_i \chi_i + \sum_{j=1}^m b_j \lambda_j \right)^L\right) = \Prev. \]
		
		If Right moves first in
			\[ u \combgame{\{\xi\mid\kappa\}} + \sum_{i=1}^n a_i \chi_i + \sum_{j=1}^m b_j \lambda_j,\]
		he has two possible first moves:
			\begin{enumerate}
				\item $\displaystyle (u-1) \combgame{\{\xi\mid\kappa\}} + \kappa + \sum_{i=1}^n a_i \chi_i + \sum_{j=1}^m b_j \lambda_j$, or
				\item $\displaystyle u \combgame{\{\xi\mid\kappa\}} + \left(\sum_{i=1}^n a_i \chi_i + \sum_{j=1}^m b_j \lambda_j \right)^R$.
			\end{enumerate}
		Suppose Right moves to
			\[ (u-1) \combgame{\{\xi\mid\kappa\}} + \kappa + \sum_{i=1}^n a_i \chi_i + \sum_{j=1}^m b_j \lambda_j,\]
		and, moreover, suppose
			\[ o^-\left((u-1) \combgame{\{\xi\mid\kappa\}} + \kappa + \sum_{i=1}^n a_i \chi_i + \sum_{j=1}^m b_j \lambda_j\right) = \Next.\]
		However, this is the same as Equation \eqref{eqn-7.3}, which was  shown to result in a contradiction if we assume Equation \eqref{eqn-7.3} to be true.  Therefore
			\[ o^-\left((u-1) \combgame{\{\xi\mid\kappa\}} + \kappa + \sum_{i=1}^n a_i \chi_i + \sum_{j=1}^m b_j \lambda_j\right) = \Prev.\]
		
		Now suppose Right moves to
			\[ u \combgame{\{\xi\mid\kappa\}} + \left(\sum_{i=1}^n a_i \chi_i + \sum_{j=1}^m b_j \lambda_j \right)^R,\]
		and suppose
			\[ o^-\left(u \combgame{\{\xi\mid\kappa\}} + \left(\sum_{i=1}^n a_i \chi_i + \sum_{j=1}^m b_j \lambda_j \right)^R\right) = \Next.\]
		Since this position falls under the induction hypothesis, this means
			\[ o^-\left((u-1) \combgame{\{\xi\mid\kappa\}} +\kappa + \left(\sum_{i=1}^n a_i \chi_i + \sum_{j=1}^m b_j \lambda_j\right)^R\right) = \Prev.\]
		But Right can move to this position from 
			\[ (u-1) \combgame{\{\xi\mid\kappa\}} + \kappa + \sum_{i=1}^n a_i\chi_i + \sum_{j=1}^m b_j \lambda_j,\]
		and so
			\[ o^-\left((u-1) \combgame{\{\xi\mid\kappa\}} + \kappa + \sum_{i=1}^n a_i\chi_i + \sum_{j=1}^m b_j \lambda_j\right) = \Next,\]
		However, this is the same as Equation \eqref{eqn-7.3}, which was  shown to result in a contradiction if we assume Equation \eqref{eqn-7.3} to be true.  Therefore
			\[ o^-\left(u \combgame{\{\xi\mid\kappa\}} + \left(\sum_{i=1}^n a_i \chi_i + \sum_{j=1}^m b_j \lambda_j \right)^R\right) = \Prev.\]
		
		Therefore if 
			\[ o^-\left(u \combgame{\{\xi\mid\kappa\}} + \sum_{i=1}^n a_i \chi_i + \sum_{j=1}^m b_j \lambda_j \right) = \Next,\]
		then the positions
			\[ \left(u \combgame{\{\xi\mid\kappa\}} + \sum_{i=1}^n a_i \chi_i + \sum_{j=1}^m b_j \lambda_j\right)^L, \left(u \combgame{\{\xi\mid\kappa\}} + \sum_{i=1}^n a_i \chi_i + \sum_{j=1}^m b_j \lambda_j\right)^R\]
		are $\Prev$ positions.	
		
			\item
		While this proof is similar to the previous case, there are a few subtle differences.  As such, it is presented here for completeness.
		
		Consider an arbitrary position of $\cl{\L(\xi)}$,
			\[ t \L(\xi) + \sum_{i=1}^n a_i \chi_i,\]
		where 
			\[ \chi_i \in \cl{\xi}.\]
		We want that this position satisfies conditions \eqref{item-P-iso-*-1}, \eqref{item-P-iso-*-2} and  \eqref{item-P-iso-*-3} of Theorem \ref{theorem-P-iso-*}.   We know that $0 \subset \cl{\xi} \subset \cl{\L(\xi)}$.  Thus \eqref{item-P-iso-*-1} of Theorem \ref{theorem-P-iso-*} is satisfied.  It remains to show the other two.  We proceed by induction on $t$.
		
		When $t=0$, the position becomes
			\[ \sum_{i=1}^n a_i \chi_i,\]
		which satisfies conditions  \eqref{item-P-iso-*-2} and  \eqref{item-P-iso-*-3} of Theorem \ref{theorem-P-iso-*} as this position is an element of $\cl{\xi}$.  
		
		Suppose true for all positions with $t < u$ and consider the position
			\[ u \L(\xi) + \sum_{i=1}^n a_i \chi_i.\]
		
		Define the following set:
			\[ \mathscr{S} = \{ m \L(\xi) + \Omega \mid m \in \{0,1,\ldots,u-1\}, \Omega \in \cl{\xi}\}.\]
		As in the proof of the first part of this Lemma (Lemma \ref{lemma-xi-^-L-*} \eqref{item-xi-^-L-1}), we can see that
			\[ \monoid{M}_{\mathscr{S}} \cong \monoid{M}_{\cl{*}}.\]
		
		We proceed by induction on the options of 
			\[ \sum_{i=1}^n a_i \chi_i.\]
		When 
			\[\sum_{i=1}^n a_i \chi_i = 0,\]
		we have $u \L(\xi)$.  We claim that $o^-(u \L(\xi)) = \Next$.  
		We proceed by induction on the number of copies of $\L(\xi)$.  If there are no copies, we have the position $0$, whose outcome is $\Next$.  Now assume true for all $w < u$ and consider the position $u \L(\xi)$.  Right moving first has no available moves, so $o^-(u \L(\xi)) = \Next \cup \Right$.  Left moving first has one available move, the move to
			\[ (u-1) \L(\xi) + \xi,\]
		which is an element of $\mathscr{S}$.  By induction,
			\[ o^-(u-1) \L(\xi)) = \Next,\]
		so
			\[ (u-1) \L(\xi) \sim 0,\]
		and, by assumption,
			\[ o^-(\xi) = \Prev,\]
		so
			\[ \xi \sim *.\]
		Therefore
			\[ (u-1) \L(\xi) + \xi \sim *,\]
		so	
			\[ o^-(u-1) \L(\xi) + \xi) = \Prev.\]
		Therefore
			\begin{align}
			\label{eqn-7.4}	o^-(u \L(\xi)) = \Next,
			\end{align}
		as required, and we have satisfied \eqref{item-P-iso-*-2} of Theorem \ref{theorem-P-iso-*}.  We have also shown that \eqref{item-P-iso-*-3} of Theorem \ref{theorem-P-iso-*} is also satisfied as Left has only one move from $u \L(\xi)$ and it was just shown to be a move to a $\Prev$ position.
		
		Now suppose \eqref{item-P-iso-*-2} and  \eqref{item-P-iso-*-3} of Theorem \ref{theorem-P-iso-*} are true for all positions
			\[ u \L(\xi) + \Omega\]
		where $\Omega$ is an option of 
			\[ \sum_{i=1}^n a_i \chi_i\]
		and consider the position
			\[ u \L(\xi) + \sum_{i=1}^n a_i \chi_i.\]
		We want to show \eqref{item-P-iso-*-2} of Theorem \ref{theorem-P-iso-*}, i.e.\
			\[ o^-\left(u \L(\xi) + \sum_{i=1}^n a_i \chi_i\right) = \Next \cup \Prev.\]
		We do this by first examining
			\[ (u-1) \L(\xi) + \xi + \sum_{i=1}^n a_i \chi_i.\]
		This position falls under the induction hypothesis, so
			\[ o^-\left((u-1) \L(\xi) + \xi + \sum_{i=1}^n a_i \chi_i\right) = \Next \cup \Prev.  \]
		We will consider the two outcomes separately.
		
		\begin{enumerate}
			\item $\displaystyle o^-\left((u-1) \L(\xi) + \xi + \sum_{i=1}^n a_i \chi_i\right) = \Next$:  Suppose Left is moving first in
				\[ u \L(\xi) + \sum_{i=1}^n a_i \chi_i.\]
			She will move to either
				\begin{enumerate}
					\item $\displaystyle (u-1) \L(\xi) + \xi + \sum_{i=1}^n a_i \chi_i$, or
					
					\item $\displaystyle u\L(\xi) + \left(\sum_{i=1}^n a_i \chi_i\right)^L$.
				\end{enumerate}
			However, the outcome of (i) is assumed to be $\Next$.  Thus, suppose Left moves to
				\[ u\L(\xi) + \left(\sum_{i=1}^n a_i \chi_i\right)^L.\]
			As this position falls under the induction hypothesis, we have
				\[ o^-\left(u\L(\xi) + \left(\sum_{i=1}^n a_i \chi_i\right)^L\right) = \Next \cup \Prev.\]
			Suppose
				\[  o^-\left(u\L(\xi) + \left(\sum_{i=1}^n a_i \chi_i\right)^L\right) = \Prev.\]
			If Right had no move in 
				\[ u\L(\xi) + \left(\sum_{i=1}^n a_i \chi_i\right)^L,\]
			then this would be an $\Next$ position, so Right must have a move.  Since Right has no move in $u \L(\xi)$, the position
				\[ u\L(\xi) + \left(\sum_{i=1}^n a_i \chi_i\right)^{LR}\]
			must exist, and, since
				\[ o^-\left(u\L(\xi) + \left(\sum_{i=1}^n a_i \chi_i\right)^L\right) = \Prev,\]
			we have
				\[ o^-\left(u\L(\xi) + \left(\sum_{i=1}^n a_i \chi_i\right)^{LR}\right) = \Next.\]
			By the induction hypothesis,
				\[ o^-\left((u-1)\L(\xi) + \xi + \left(\sum_{i=1}^n a_i \chi_i\right)^{LR}\right) = \Prev,\]
			so
				\[ o^-\left((u-1)\L(\xi) + \xi + \left(\sum_{i=1}^n a_i \chi_i\right)^L\right) = \Next\]
			and
				\[ o^-\left((u-1)\L(\xi) + \xi +  \sum_{i=1}^n a_i \chi_i\right) = \Prev,\]
			a contradiction.  Therefore
				\[ o^-\left(u\L(\xi) + \left(\sum_{i=1}^n a_i \chi_i\right)^L\right) = \Next\]
			and
				\[ o^-\left(u \L(\xi) + \sum_{i=1}^n a_i \chi_i \right) = \Right \cup \Prev.\]
			
			Suppose now that Right moves first in
				\[ u \L(\xi) + \sum_{i=1}^n a_i \chi_i.\]
			Suppose Right has no move available.   This implies that Right cannot move in 
				\[ \sum_{i=1}^n a_i \chi_i.\]
			By induction,
				\[ o^-\left(\sum_{i=1}^n a_i \chi_i\right) = \Next \cup \Prev,\]
			so, combining these two results. we get
				\[ o^-\left(\sum_{i=1}^n a_i \chi_i\right) = \Next.\]
			Earlier (Equation \eqref{eqn-7.4}), we showed
				\[ o^-((u-1) \L(\xi)) = \Next.\]
			Then we have the following three $\sim$s:
				\begin{align*}
					\sum_{i=1}^n a_i \chi_i &\sim 0,\\
					(u-1) \L(\xi) &\sim 0,\\
					\xi &\sim *.
				\end{align*}
			Combining these gives 
				\[ (u-1) \L(\xi) + \xi + \sum_{i=1}^n a_i \chi_i \sim *,\]
			implying
				\[ o^-\left((u-1) \L(\xi) + \xi + \sum_{i=1}^n a_i \chi_i\right) = \Prev,\]
			a contradiction.  Therefore Right must have a move available in
				\[ u \L(\xi) + \sum_{i=1}^n a_i \chi_i,\]
			i.e.\
				\[\left(\sum_{i=1}^n a_i \chi_i\right)^R\]
			exists and Right moving first in
				\[ u \L(\xi) + \sum_{i=1}^n a_i \chi_i\]
			moves to
				\[ u \L(\xi) + \left(\sum_{i=1}^n a_i \chi_i\right)^R.\]
			This position falls under the induction hypothesis, so
				\[ o^-\left(u \L(\xi) + \left(\sum_{i=1}^n a_i \chi_i\right)^R\right) = \Next \cup \Prev.\]
			Suppose
				\[ o^-\left(u \L(\xi) + \left(\sum_{i=1}^n a_i \chi_i\right)^R\right) = \Prev.\]
			Then
				\[ o^-\left((u-1) \L(\xi) + \xi + \left(\sum_{i=1}^n a_i \chi_i\right)^R \right) = \Next, \]
			but Right can move to this position from
				\[ (u-1) \L(\xi) + \xi + \sum_{i=1}^n a_i \chi_i,\]
			which we assumed to be an $\Next$ position.  Since both these positions fall under the induction hypothesis, we have an $\Next$ position moving to an $\Next$ position, contradicting \eqref{item-P-iso-*-3} of Theorem \ref{theorem-P-iso-*}.  Therefore
				\[ o^-\left(u\L(\xi) + \left(\sum_{i=1}^n a_i \chi_i\right)^R\right) = \Next.\]
			
			Thus, from our initial position of
				\[ u \L(\xi) + \sum_{i=1}^n a_i \chi_i, \]
			both Left and Right lose moving first.  Therefore
				\[ o^-\left(u \L(\xi) + \sum_{i=1}^n a_i \chi_i\right) = \Prev.\]
			
			\item $\displaystyle o^-\left((u-1) \L(\xi) + \xi + \sum_{i=1}^n a_i \chi_i\right) = \Prev$:  If Left is moving first in
				\[ u\L(\xi) + \sum_{i=1}^n a_i \chi_i,\]
			she can move to
				\[ (u-1) \L(\xi) + \xi + \sum_{i=1}^n a_i \chi_i,\]
			which we assumed to be a $\Prev$ position.  Therefore
				\[ o^-\left(u\L(\xi) + \sum_{i=1}^n a_i \chi_i\right) = \Left \cup \Next.\]
				
			Consider Right moving first in
				\[ u\L(\xi) + \sum_{i=1}^n a_i \chi_i.\]
			If Right has no move available, then Right wins moving first.  Otherwise, Right has a move available.  Since he is unable to move in $u\L(\xi)$, Right moves to
				\[ u\L(\xi) + \left(\sum_{i=1}^n a_i \chi_i\right)^R,\]
			which falls under the induction hypothesis.  Thus
				\[ o^-\left( u\L(\xi) + \left(\sum_{i=1}^n a_i \chi_i\right)^R\right) = \Next \cup \Prev.\]
			Suppose
				\[ o^-\left( u\L(\xi) + \left(\sum_{i=1}^n a_i \chi_i\right)^R\right) = \Next.\]
			Then, by induction
				\[ o^-\left( (u-1)\L(\xi)+ \xi + \left(\sum_{i=1}^n a_i \chi_i\right)^R\right) = \Prev.\]
			But, by assumption
				\[ o^-\left((u-1) \L(\xi) + \xi + \sum_{i=1}^n a_i \chi_i\right) = \Prev, \]
			so we have a $\Prev$ position with a move to a $\Prev$ position, contradiction.  Therefore
				\[ o^-\left( u\L(\xi) + \left(\sum_{i=1}^n a_i \chi_i\right)^R\right) = \Prev,\]
			and so
				\[ o^-\left(u \L(\xi) + \sum_{i=1}^n a_i \chi_i \right) = \Next.\]
		\end{enumerate}
		
		Thus \eqref{item-P-iso-*-2} of Theorem \ref{theorem-P-iso-*} is satisfied with
			\begin{align}
				  \label{eqn-7.5}o^-\left(u \L(\xi) + \sum_{i=1}^n a_i \chi_i  \right) = \Next &\iff  o^-\left((u-1) \L(\xi) + \xi + \sum_{i=1}^n a_i \chi_i\right) = \Prev, \\
		 		 \label{eqn-7.6}o^-\left(u \L(\xi) + \sum_{i=1}^n a_i \chi_i \right) = \Prev &\iff 
				 o^-\left((u-1) \L(\xi) + \xi + \sum_{i=1}^n a_i \chi_i \right) = \Next.
			\end{align}		
		
		It remains to show \eqref{item-P-iso-*-3} of Theorem \ref{theorem-P-iso-*}.
		
		Suppose
			\[ o^-\left(u \L(\xi) + \sum_{i=1}^n a_i \chi_i \right) = \Next.\]
		We want that all the options are $\Prev$ positions.
		
		Suppose Left is moving first.  She has two possible moves:
			\begin{enumerate}
				\item $\displaystyle (u-1) \L(\xi) + \xi + \sum_{i=1}^n a_i \chi_i$:  By Equation \eqref{eqn-7.5} above, this position is a $\Prev$ position.
				
				\item $\displaystyle u \L(\xi) + \left(\sum_{i=1}^n a_i \chi_i \right)^L$:   This falls under the induction hypothesis, so
					\[ o^-\left(u \L(\xi) + \left(\sum_{i=1}^n a_i \chi_i \right)^L\right) = \Next \cup \Prev.\]
				Suppose
					\[ o^-\left(u \L(\xi) + \left(\sum_{i=1}^n a_i \chi_i \right)^L\right) = \Next.\]
				Then, by induction
					\[ o^-\left((u-1) \L(\xi) + \xi + \left(\sum_{i=1}^n a_i \chi_i\right)^L\right) = \Prev,\]
				and, by Equation \eqref{eqn-7.5},
					\[ o^-\left((u-1) \L(\xi) + \xi + \sum_{i=1}^n a_i \chi_i\right) = \Prev,\]
				giving a $\Prev$ position moving to a $\Prev$ position, a contradiction.  Therefore
					\[ o^-\left(u \L(\xi) + \left(\sum_{i=1}^n a_i \chi_i \right)^L\right) = \Prev.\]
			\end{enumerate}
		
		Suppose Right is moving first in
			\[ u\L(\xi) + \sum_{i=1}^n a_i \chi_i.\]
		Right has only one possible move, to
			\[ u\L(\xi) + \left(\sum_{i=1}^n a_i \chi_i\right)^R.\]
		By induction
			\[ o^-\left(u\L(\xi) + \left(\sum_{i=1}^n a_i \chi_i\right)^R\right) = \Next \cup \Prev. \]
		Suppose
			\[ o^-\left(u\L(\xi) + \left(\sum_{i=1}^n a_i \chi_i\right)^R\right) = \Next. \]
		Then, by induction,
			\[ o^-\left((u-1) \L(\xi) + \xi + \left(\sum_{i=1}^n a_i \chi_i \right)^R \right) = \Prev,\]
		and, by Equation \eqref{eqn-7.5},
			\[ o^-\left((u-1) \L(\xi) + \xi + \sum_{i=1}^n a_i \chi_i\right) = \Prev,\]
		giving a $\Prev$ position moving to a $\Prev$ position, a contradiction.  Therefore
			\[ o^-\left(u\L(\xi) + \left(\sum_{i=1}^n a_i \chi_i\right)^R\right) = \Prev. \]
			
		Therefore, we do not have an $\Next$ position moving to an $\Next$ position, so \eqref{item-P-iso-*-3} of Theorem \ref{theorem-P-iso-*} is satisfied.
		
		\item The proofs for these three assertions follow an almost identical pattern to the proof of \eqref{item-xi-^-L-1}. \qedhere
	\end{enumerate}
\end{proof}

\begin{theorem}\label{theorem-xi-^-L-*}
	Let $\xi_1, \xi_2, \ldots, \xi_n$, $\kappa_1, \kappa_2, \ldots, \kappa_n$ be positions such that $\monoid{M}_{\cl{\xi_i}} \cong \monoid{M}_{\cl{\kappa_j}} \cong \monoid{M}_{\cl{*}}$ for all $i \in \{1,2,\ldots,n\}$, $j \in \{1,2,\ldots,m\}$.  Then
		\begin{enumerate}
			\item\label{item-xi-^-L-1-cor} If $o^-(\xi_i) = o^-(\kappa_j)$ for all $i \in \{1,2,\ldots,n\}$, $j \in \{1,2,\ldots,m\}$, then 
				\[\monoid{M}_{\cl{\combgame{\{\xi_1, \xi_2, \ldots, \xi_n\mid\kappa_1, \kappa_2, \ldots, \kappa_m\}}}} \cong \monoid{M}_{\cl{*}}.\]
			
			\item\label{item-xi-^-L-2-cor} If $o^-(\xi_i) = \Prev$ for all $i \in \{1,2,\ldots,n\}$, then $
			\monoid{M}_{\cl{\combgame{\{\xi_1, \xi_2, \ldots, \xi_n\mid\cdot\}}}} \cong \monoid{M}_{\cl{*}}$.  

			\item\label{item-xi-^-L-zero-cor} If $o^-(\xi_i) =o^-(\kappa_j) = \Next$ for all $i \in \{1,2,\ldots,n\}$, $j \in \{1,2,\ldots,m\}$, then the following hold:
				\begin{enumerate}
					\item $\monoid{M}_{\cl{\combgame{\{\xi_1, \xi_2, \ldots, \xi_n \mid0\}}}} \cong  \monoid{M}_{\cl{*}}$;
					\item $\monoid{M}_{\cl{\combgame{\{\xi_1, \xi_2, \ldots, \xi_n\mid\kappa_1, \kappa_2, \ldots, \kappa_m, 0\}}}} \cong  \monoid{M}_{\cl{*}}$;
					\item $\monoid{M}_{\cl{\combgame{\{\xi_1, \xi_2, \ldots, \xi_n, 0\mid \kappa_1, \kappa_2, \ldots, \kappa_m, 0\}}}} \cong  \monoid{M}_{\cl{*}}$.
				\end{enumerate}

			\item\label{item-xi-^-L-cong} For each $\xi_i$, $\kappa_j$, $\monoid{M}_{\cl{\overline{\xi_i}}} \cong \monoid{M}_{\cl{\overline{\kappa_j}}} \cong \monoid{M}_{\cl{*}}$.
		\end{enumerate}
\end{theorem}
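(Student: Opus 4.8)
The plan is to deduce all four parts from the characterization in Theorem~\ref{theorem-P-iso-*}, using the single-option Lemma~\ref{lemma-xi-^-L-*} and the sum formula of Theorem~\ref{theorem-p1-p2-*} as the workhorses. The first step is to record a base fact: writing $\Lambda = \cl{\xi_1,\ldots,\xi_n,\kappa_1,\ldots,\kappa_m}$, I would show $\monoid{M}_{\Lambda} \cong \monoid{M}_{\cl{*}}$ by induction on the number of generators, at each stage applying Theorem~\ref{theorem-p1-p2-*} to split one generator's closure off from the closure of the rest. This gives condition~\eqref{item-P-iso-*-2} of Theorem~\ref{theorem-P-iso-*} immediately, and condition~\eqref{item-P-iso-*-3} follows from the observation that every element $\omega$ of $\Lambda$ is a disjunctive sum of pieces each lying in some $\cl{\xi_i}$ or $\cl{\kappa_j}$, so any single option of $\omega$ moves in exactly one piece and, by condition~\eqref{item-P-iso-*-3} for that piece together with the general fact that no $\Prev$ position has a $\Prev$ option, flips the $\sim$-class of that piece and hence of $\omega$. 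Thus in $\Lambda$ every option reverses the $\sim$-class, which is exactly the leverage the later inductions need.

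For parts~\eqref{item-xi-^-L-1-cor}, \eqref{item-xi-^-L-2-cor}, and~\eqref{item-xi-^-L-zero-cor} I would then run the same double induction as in Lemma~\ref{lemma-xi-^-L-*}: writing $\zeta$ for the newly built position, an arbitrary element of $\cl{\zeta}$ has the form $t\zeta + \omega$ with $\omega\in\Lambda$, and I induct first on $t$ and then on the options of $\omega$, verifying conditions~\eqref{item-P-iso-*-2} and~\eqref{item-P-iso-*-3} of Theorem~\ref{theorem-P-iso-*} exactly as there. The only new ingredient is that $\zeta$ now has several Left options $\xi_1,\ldots,\xi_n$ and several Right options $\kappa_1,\ldots,\kappa_m$; but the hypothesis that all $\xi_i$ share a common outcome forces them into a single $\sim$-class, there being a unique element of each outcome in $\monoid{M}_{\cl{*}}$, and likewise for the $\kappa_j$. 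In part~\eqref{item-xi-^-L-1-cor} the extra hypothesis $o^-(\xi_i)=o^-(\kappa_j)$ puts every option of $\zeta$ into the same class, so a move of $\zeta$ to any $\xi_i$ or any $\kappa_j$ lands in a fixed class and $\zeta$ behaves identically to the single game $\combgame{\{\xi\mid\kappa\}}$ of Lemma~\ref{lemma-xi-^-L-*}\eqref{item-xi-^-L-1}. Parts~\eqref{item-xi-^-L-2-cor} and~\eqref{item-xi-^-L-zero-cor} are the multi-option analogues of Lemma~\ref{lemma-xi-^-L-*}\eqref{item-xi-^-L-2} and~\eqref{item-xi-^-L-zero}, with the $\Prev$-option case (no Right option) and the $\Next$-with-$0$-option cases handled by the same substitution of a set of options for a single option.

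Part~\eqref{item-xi-^-L-cong} I would dispatch by conjugation rather than by repeating the induction. Since conjugation distributes over disjunctive sum and sends options to options, $\cl{\overline{\xi_i}} = \{\,\overline{\mu} : \mu\in\cl{\xi_i}\,\}$, and Theorem~\ref{theorem-outcome-class-conj} tells us conjugation preserves $\Next$ and $\Prev$ and swaps $\Left$ with $\Right$. Hence if $\cl{\xi_i}$ satisfies the three conditions of Theorem~\ref{theorem-P-iso-*} then so does $\cl{\overline{\xi_i}}$: condition~\eqref{item-P-iso-*-2} is immediate, and for condition~\eqref{item-P-iso-*-3} an $\Next$ option $\overline{\xi'}$ of an $\Next$ position $\overline{\xi}$ corresponds to an option $\xi'$ of the $\Next$ position $\xi$, which is $\Prev$ by condition~\eqref{item-P-iso-*-3} for $\cl{\xi_i}$, so $\overline{\xi'}$ is $\Prev$. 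As $\xi_i\neq 0$ forces $\overline{\xi_i}\neq 0$, condition~\eqref{item-P-iso-*-1} holds and Theorem~\ref{theorem-P-iso-*} gives $\monoid{M}_{\cl{\overline{\xi_i}}}\cong\monoid{M}_{\cl{*}}$; the same argument applies to each $\kappa_j$.

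The main obstacle is bookkeeping rather than conceptual: the inductions for parts~\eqref{item-xi-^-L-1-cor}--\eqref{item-xi-^-L-zero-cor} reproduce the long case analysis of Lemma~\ref{lemma-xi-^-L-*}, and care is needed to confirm that replacing a single option by a set of options never breaks the argument. The one point that genuinely must be checked, and which I would isolate as a short preliminary claim, is that the \emph{shared-class} property of the option sets, namely all $\xi_i$ in one class and all $\kappa_j$ in one class, is precisely the property the single-option proof used of $\xi$ and $\kappa$; once this is in hand, every inequality and contradiction in Lemma~\ref{lemma-xi-^-L-*} transcribes verbatim. Accordingly I would present part~\eqref{item-xi-^-L-1-cor} in full to fix the pattern and then note, as the source does for the lemma, that the remaining cases follow an almost identical pattern.
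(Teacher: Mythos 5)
Your proposal is correct and follows essentially the same route as the paper: the paper likewise reduces parts (1)--(3) to Lemma~\ref{lemma-xi-^-L-*} (phrased as induction on $n$ and $m$ with the lemma as base case, which amounts to the same generalization from single options to option sets that you carry out), and disposes of part (4) by exactly your conjugation argument through Theorem~\ref{theorem-P-iso-*} and Theorem~\ref{theorem-outcome-class-conj}. Your preliminary observation that all $\xi_i$ (resp.\ all $\kappa_j$) fall into a single $\sim$-class is the right point to isolate, and is implicitly what makes the paper's terse "induction on $n$ and $m$" work.
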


\begin{proof}
	To prove the first three assertions of this corollary, use induction on $n$ and $m$ using Lemma \ref{lemma-xi-^-L-*} as the base case. 
	
	The fourth follows from Theorem \ref{theorem-P-iso-*} as every position $\chi \in \cl{\overline{\xi_i}}$ is $\Next$ or $\Prev$ and since there are no moves from $\Next$ positions to $\Next$ positions in $\cl{\xi_i}$, the same holds for those positions in $\cl{\overline{\xi_i}}$.
\end{proof}

Theorem \ref{theorem-xi-^-L-*} is an exceeding useful result.  Using it, we can now easily construct positions whose monoids are isomorphic to that of $\monoid{M}_{\cl{*}}$, as the following example demonstrates.

\begin{example}\label{example-*-iso}
	In this example, we will use Theorem \ref{theorem-xi-^-L-*} to construct some positions of birthday three or less whose monoids are isomorphic to that of $*$'s.  
	
	Four positions are born by day 1 \cite{MCF}.  These are $0$, $1$, $\overline{1}$, and $*$.  Clearly, $*$ is the only position from this list whose monoid is isomorphic to that of $*$.  Thus, by day 1, we have only one position, $*$.
	
	There are 256 positions born by day 2 \cite{MCF}.  We now wish to construct positions born on day 2 with monoids isomorphic to $\monoid{M}_{\cl{*}}$ from the position $*$.  Since $o^-(*) = \Prev$, Theorem \ref{theorem-xi-^-L-*}(\ref{item-xi-^-L-2-cor}) gives that 
		\[\monoid{M}_{\cl{\L(*)}} \cong \monoid{M}_{\cl{*}}.\]
	In Chapter \ref{chapter-examples}, we gave $\L(*)$ a name, $\sigma$.  Thus
		\[ \monoid{M}_{\cl{\sigma}} \cong \monoid{M}_{\cl{*}}.\]
	Using Theorem \ref{theorem-xi-^-L-*}(\ref{item-xi-^-L-cong}), we get
		\[ \monoid{M}_{\cl{\sigmab}} \cong \monoid{M}_{\cl{*}}.\]
	Using Theorem \ref{theorem-xi-^-L-*}(\ref{item-xi-^-L-1-cor}), we get
		\[ \monoid{M}_{\cl{\combgame{\{*\mid*\}}}} \cong \monoid{M}_{\cl{*}}.\]
	Again, in Chapter \ref{chapter-examples}, we gave $\combgame{\{*\mid*\}}$ a name, $\tau$.  Thus
		\[ \monoid{M}_{\cl{\tau}} \cong \monoid{M}_{\cl{*}}.\]
	There are no other operations we can perform from Theorem \ref{theorem-xi-^-L-*} on $*$.  Thus, using Theorem \ref{theorem-xi-^-L-*}, we have found three positions born on day 2 whose monoids are isomorphic to $\monoid{M}_{\cl{*}}$: $\sigma$, $\sigmab$, and $\tau$.  
	
	We now want to take $*$, $\sigma$, $\sigmab$, and $\tau$ and build new positions with monoids isomorphic to $\monoid{M}_{\cl{*}}$.  Since $o^-(*) = \Prev$ while $o^-(\sigma) = o^-(\sigmab) = o^-(\tau) = \Next$, we cannot build a position with monoid isomorphic to $\monoid{M}_{\cl{*}}$ which has $*$ and at least one element of $\{\sigma, \sigmab, \tau\}$ as it would either be a position with outcome neither $\Next$ nor $\Prev$, or have an $\Next$ position moving to an $\Next$ position.  As we just built all positions which come from applying Theorem \ref{theorem-xi-^-L-*} directly upon $*$, this means that the positions we construct born on day 3 will be made from $\sigma$, $\sigmab$, and $\tau$.  Using Theorem \ref{theorem-xi-^-L-*}, there are 224 positions built directly from $\sigma$, $\sigmab$, $\tau$, and 0 which are listed in Table \ref{table-day-3-*}.  Since $\overline{0} = 0$, $\taub = \tau$ and we are using both $\sigma$ and $\sigmab$, for a position in the list of 224 positions, its conjugate is also in the list, meaning we needn't worry about positions arising from applying Theorem \ref{theorem-xi-^-L-*}(\ref{item-xi-^-L-cong}).

	\begin{table}[htb]
	\begin{center}
	\begin{scriptsize}
		\begin{tabular}{llllll}

$\combgame{\{0\mid\tau\}}$&
$\combgame{\{0\mid\sigmab\}}$&
$\combgame{\{0\mid\sigma\}}$&
$\combgame{\{0\mid\tau,0\}}$&
$\combgame{\{0\mid\sigmab,0\}}$&
$\combgame{\{0\mid\sigma,0\}}$
\\

\rowcolor[gray]{.8} 
$\combgame{\{0\mid\sigmab,\tau\}}$&
$\combgame{\{0\mid\sigma,\tau\}}$&
$\combgame{\{0\mid\sigma,\sigmab\}}$&
$\combgame{\{0\mid\sigmab,\tau,0\}}$&
$\combgame{\{0\mid\sigma,\tau,0\}}$&
$\combgame{\{0\mid\sigma,\sigmab,0\}}$
\\

$\combgame{\{0\mid\sigma,\sigmab,\tau\}}$&
$\combgame{\{0\mid\sigma,\sigmab,\tau,0\}}$&
$\combgame{\{\tau\mid0\}}$&
$\combgame{\{\tau\mid\tau\}}$&
$\combgame{\{\tau\mid\sigmab\}}$&
$\combgame{\{\tau\mid\sigma\}}$
\\

\rowcolor[gray]{.8}
$\combgame{\{\tau\mid\tau,0\}}$&
$\combgame{\{\tau\mid\sigmab,0\}}$&
$\combgame{\{\tau\mid\sigma,0\}}$&
$\combgame{\{\tau\mid\sigmab,\tau\}}$&
$\combgame{\{\tau\mid\sigma,\tau\}}$&
$\combgame{\{\tau\mid\sigma,\sigmab\}}$
\\

$\combgame{\{\tau\mid\sigmab,\tau,0\}}$&
$\combgame{\{\tau\mid\sigma,\tau,0\}}$&
$\combgame{\{\tau\mid\sigma,\sigmab,0\}}$&
$\combgame{\{\tau\mid\sigma,\sigmab,\tau\}}$&
$\combgame{\{\tau\mid\sigma,\sigmab,\tau,0\}}$&
$\combgame{\{\sigmab\mid0\}}$
\\

\rowcolor[gray]{.8}
$\combgame{\{\sigmab\mid\tau\}}$&
$\combgame{\{\sigmab\mid\sigmab\}}$&
$\combgame{\{\sigmab\mid\sigma\}}$&
$\combgame{\{\sigmab\mid\tau,0\}}$&
$\combgame{\{\sigmab\mid\sigmab,0\}}$&
$\combgame{\{\sigmab\mid\sigma,0\}}$
\\

$\combgame{\{\sigmab\mid\sigmab,\tau\}}$&
$\combgame{\{\sigmab\mid\sigma,\tau\}}$&
$\combgame{\{\sigmab\mid\sigma,\sigmab\}}$&
$\combgame{\{\sigmab\mid\sigmab,\tau,0\}}$&
$\combgame{\{\sigmab\mid\sigma,\tau,0\}}$&
$\combgame{\{\sigmab\mid\sigma,\sigmab,0\}}$
\\

\rowcolor[gray]{.8}
$\combgame{\{\sigmab\mid\sigma,\sigmab,\tau\}}$&
$\combgame{\{\sigmab\mid\sigma,\sigmab,\tau,0\}}$&
$\combgame{\{\sigma\mid0\}}$&
$\combgame{\{\sigma\mid\tau\}}$&
$\combgame{\{\sigma\mid\sigmab\}}$&
$\combgame{\{\sigma\mid\sigma\}}$
\\

$\combgame{\{\sigma\mid\tau,0\}}$&
$\combgame{\{\sigma\mid\sigmab,0\}}$&
$\combgame{\{\sigma\mid\sigma,0\}}$&
$\combgame{\{\sigma\mid\sigmab,\tau\}}$&
$\combgame{\{\sigma\mid\sigma,\tau\}}$&
$\combgame{\{\sigma\mid\sigma,\sigmab\}}$
\\

\rowcolor[gray]{.8}
$\combgame{\{\sigma\mid\sigmab,\tau,0\}}$&
$\combgame{\{\sigma\mid\sigma,\tau,0\}}$&
$\combgame{\{\sigma\mid\sigma,\sigmab,0\}}$&
$\combgame{\{\sigma\mid\sigma,\sigmab,\tau\}}$&
$\combgame{\{\sigma\mid\sigma,\sigmab,\tau,0\}}$&
$\combgame{\{\tau,0\mid0\}}$
\\

$\combgame{\{\tau,0\mid\tau\}}$&
$\combgame{\{\tau,0\mid\sigmab\}}$&
$\combgame{\{\tau,0\mid\sigma\}}$&
$\combgame{\{\tau,0\mid\tau,0\}}$&
$\combgame{\{\tau,0\mid\sigmab,0\}}$&
$\combgame{\{\tau,0\mid\sigma,0\}}$
\\

\rowcolor[gray]{.8}
$\combgame{\{\tau,0\mid\sigmab,\tau\}}$&
$\combgame{\{\tau,0\mid\sigma,\tau\}}$&
$\combgame{\{\tau,0\mid\sigma,\sigmab\}}$&
$\combgame{\{\tau,0\mid\sigmab,\tau,0\}}$&
$\combgame{\{\tau,0\mid\sigma,\tau,0\}}$&
$\combgame{\{\tau,0\mid\sigma,\sigmab,0\}}$
\\

$\combgame{\{\tau,0\mid\sigma,\sigmab,\tau\}}$&
$\combgame{\{\tau,0\mid\sigma,\sigmab,\tau,0\}}$&
$\combgame{\{\sigmab,0\mid0\}}$&
$\combgame{\{\sigmab,0\mid\tau\}}$&
$\combgame{\{\sigmab,0\mid\sigmab\}}$&
$\combgame{\{\sigmab,0\mid\sigma\}}$
\\

\rowcolor[gray]{.8}
$\combgame{\{\sigmab,0\mid\tau,0\}}$&
$\combgame{\{\sigmab,0\mid\sigmab,0\}}$&
$\combgame{\{\sigmab,0\mid\sigma,0\}}$&
$\combgame{\{\sigmab,0\mid\sigmab,\tau\}}$&
$\combgame{\{\sigmab,0\mid\sigma,\tau\}}$&
$\combgame{\{\sigmab,0\mid\sigma,\sigmab\}}$
\\

$\combgame{\{\sigmab,0\mid\sigmab,\tau,0\}}$&
$\combgame{\{\sigmab,0\mid\sigma,\tau,0\}}$&
$\combgame{\{\sigmab,0\mid\sigma,\sigmab,0\}}$&
$\combgame{\{\sigmab,0\mid\sigma,\sigmab,\tau\}}$&
$\combgame{\{\sigmab,0\mid\sigma,\sigmab,\tau,0\}}$&
$\combgame{\{\sigma,0\mid0\}}$
\\

\rowcolor[gray]{.8}
$\combgame{\{\sigma,0\mid\tau\}}$&
$\combgame{\{\sigma,0\mid\sigmab\}}$&
$\combgame{\{\sigma,0\mid\sigma\}}$&
$\combgame{\{\sigma,0\mid\tau,0\}}$&
$\combgame{\{\sigma,0\mid\sigmab,0\}}$&
$\combgame{\{\sigma,0\mid\sigma,0\}}$
\\

$\combgame{\{\sigma,0\mid\sigmab,\tau\}}$&
$\combgame{\{\sigma,0\mid\sigma,\tau\}}$&
$\combgame{\{\sigma,0\mid\sigma,\sigmab\}}$&
$\combgame{\{\sigma,0\mid\sigmab,\tau,0\}}$&
$\combgame{\{\sigma,0\mid\sigma,\tau,0\}}$&
$\combgame{\{\sigma,0\mid\sigma,\sigmab,0\}}$
\\

\rowcolor[gray]{.8}
$\combgame{\{\sigma,0\mid\sigma,\sigmab,\tau\}}$&
$\combgame{\{\sigma,0\mid\sigma,\sigmab,\tau,0\}}$&
$\combgame{\{\sigmab,\tau\mid0\}}$&
$\combgame{\{\sigmab,\tau\mid\tau\}}$&
$\combgame{\{\sigmab,\tau\mid\sigmab\}}$&
$\combgame{\{\sigmab,\tau\mid\sigma\}}$
\\

$\combgame{\{\sigmab,\tau\mid\tau,0\}}$&
$\combgame{\{\sigmab,\tau\mid\sigmab,0\}}$&
$\combgame{\{\sigmab,\tau\mid\sigma,0\}}$&
$\combgame{\{\sigmab,\tau\mid\sigmab,\tau\}}$&
$\combgame{\{\sigmab,\tau\mid\sigma,\tau\}}$&
$\combgame{\{\sigmab,\tau\mid\sigma,\sigmab\}}$
\\

\rowcolor[gray]{.8}
$\combgame{\{\sigmab,\tau\mid\sigmab,\tau,0\}}$&
$\combgame{\{\sigmab,\tau\mid\sigma,\tau,0\}}$&
$\combgame{\{\sigmab,\tau\mid\sigma,\sigmab,0\}}$&
$\combgame{\{\sigmab,\tau\mid\sigma,\sigmab,\tau\}}$&
$\combgame{\{\sigmab,\tau\mid\sigma,\sigmab,\tau,0\}}$&
$\combgame{\{\sigma,\tau\mid0\}}$
\\

$\combgame{\{\sigma,\tau\mid\tau\}}$&
$\combgame{\{\sigma,\tau\mid\sigmab\}}$&
$\combgame{\{\sigma,\tau\mid\sigma\}}$&
$\combgame{\{\sigma,\tau\mid\tau,0\}}$&
$\combgame{\{\sigma,\tau\mid\sigmab,0\}}$&
$\combgame{\{\sigma,\tau\mid\sigma,0\}}$
\\

\rowcolor[gray]{.8}
$\combgame{\{\sigma,\tau\mid\sigmab,\tau\}}$&
$\combgame{\{\sigma,\tau\mid\sigma,\tau\}}$&
$\combgame{\{\sigma,\tau\mid\sigma,\sigmab\}}$&
$\combgame{\{\sigma,\tau\mid\sigmab,\tau,0\}}$&
$\combgame{\{\sigma,\tau\mid\sigma,\tau,0\}}$&
$\combgame{\{\sigma,\tau\mid\sigma,\sigmab,0\}}$
\\

$\combgame{\{\sigma,\tau\mid\sigma,\sigmab,\tau\}}$&
$\combgame{\{\sigma,\tau\mid\sigma,\sigmab,\tau,0\}}$&
$\combgame{\{\sigma,\sigmab\mid0\}}$&
$\combgame{\{\sigma,\sigmab\mid\tau\}}$&
$\combgame{\{\sigma,\sigmab\mid\sigmab\}}$&
$\combgame{\{\sigma,\sigmab\mid\sigma\}}$
\\

\rowcolor[gray]{.8}
$\combgame{\{\sigma,\sigmab\mid\tau,0\}}$&
$\combgame{\{\sigma,\sigmab\mid\sigmab,0\}}$&
$\combgame{\{\sigma,\sigmab\mid\sigma,0\}}$&
$\combgame{\{\sigma,\sigmab\mid\sigmab,\tau\}}$&
$\combgame{\{\sigma,\sigmab\mid\sigma,\tau\}}$&
$\combgame{\{\sigma,\sigmab\mid\sigma,\sigmab\}}$
\\

$\combgame{\{\sigma,\sigmab\mid\sigmab,\tau,0\}}$&
$\combgame{\{\sigma,\sigmab\mid\sigma,\tau,0\}}$&
$\combgame{\{\sigma,\sigmab\mid\sigma,\sigmab,0\}}$&
$\combgame{\{\sigma,\sigmab\mid\sigma,\sigmab,\tau\}}$&
$\combgame{\{\sigma,\sigmab\mid\sigma,\sigmab,\tau,0\}}$&
$\combgame{\{\sigmab,\tau,0\mid0\}}$
\\

\rowcolor[gray]{.8}
$\combgame{\{\sigmab,\tau,0\mid\tau\}}$&
$\combgame{\{\sigmab,\tau,0\mid\sigmab\}}$&
$\combgame{\{\sigmab,\tau,0\mid\sigma\}}$&
$\combgame{\{\sigmab,\tau,0\mid\tau,0\}}$&
$\combgame{\{\sigmab,\tau,0\mid\sigmab,0\}}$&
$\combgame{\{\sigmab,\tau,0\mid\sigma,0\}}$
\\

$\combgame{\{\sigmab,\tau,0\mid\sigmab,\tau\}}$&
$\combgame{\{\sigmab,\tau,0\mid\sigma,\tau\}}$&
$\combgame{\{\sigmab,\tau,0\mid\sigma,\sigmab\}}$&
$\combgame{\{\sigmab,\tau,0\mid\sigmab,\tau,0\}}$&
$\combgame{\{\sigmab,\tau,0\mid\sigma,\tau,0\}}$&
$\combgame{\{\sigmab,\tau,0\mid\sigma,\sigmab,0\}}$
\\

\rowcolor[gray]{.8}
$\combgame{\{\sigmab,\tau,0\mid\sigma,\sigmab,\tau\}}$&
$\combgame{\{\sigmab,\tau,0\mid\sigma,\sigmab,\tau,0\}}$&
$\combgame{\{\sigma,\tau,0\mid0\}}$&
$\combgame{\{\sigma,\tau,0\mid\tau\}}$&
$\combgame{\{\sigma,\tau,0\mid\sigmab\}}$&
$\combgame{\{\sigma,\tau,0\mid\sigma\}}$
\\

$\combgame{\{\sigma,\tau,0\mid\tau,0\}}$&
$\combgame{\{\sigma,\tau,0\mid\sigmab,0\}}$&
$\combgame{\{\sigma,\tau,0\mid\sigma,0\}}$&
$\combgame{\{\sigma,\tau,0\mid\sigmab,\tau\}}$&
$\combgame{\{\sigma,\tau,0\mid\sigma,\tau\}}$&
$\combgame{\{\sigma,\tau,0\mid\sigma,\sigmab\}}$
\\

\rowcolor[gray]{.8}
$\combgame{\{\sigma,\tau,0\mid\sigmab,\tau,0\}}$&
$\combgame{\{\sigma,\tau,0\mid\sigma,\tau,0\}}$&
$\combgame{\{\sigma,\tau,0\mid\sigma,\sigmab,0\}}$&
$\combgame{\{\sigma,\tau,0\mid\sigma,\sigmab,\tau\}}$&
$\combgame{\{\sigma,\tau,0\mid\sigma,\sigmab,\tau,0\}}$&
$\combgame{\{\sigma,\sigmab,0\mid0\}}$
\\

$\combgame{\{\sigma,\sigmab,0\mid\tau\}}$&
$\combgame{\{\sigma,\sigmab,0\mid\sigmab\}}$&
$\combgame{\{\sigma,\sigmab,0\mid\sigma\}}$&
$\combgame{\{\sigma,\sigmab,0\mid\tau,0\}}$&
$\combgame{\{\sigma,\sigmab,0\mid\sigmab,0\}}$&
$\combgame{\{\sigma,\sigmab,0\mid\sigma,0\}}$
\\

\rowcolor[gray]{.8}
$\combgame{\{\sigma,\sigmab,0\mid\sigmab,\tau\}}$&
$\combgame{\{\sigma,\sigmab,0\mid\sigma,\tau\}}$&
$\combgame{\{\sigma,\sigmab,0\mid\sigma,\sigmab\}}$&
$\combgame{\{\sigma,\sigmab,0\mid\sigmab,\tau,0\}}$&
$\combgame{\{\sigma,\sigmab,0\mid\sigma,\tau,0\}}$&
$\combgame{\{\sigma,\sigmab,0\mid\sigma,\sigmab,0\}}$
\\

$\combgame{\{\sigma,\sigmab,0\mid\sigma,\sigmab,\tau\}}$&
$\combgame{\{\sigma,\sigmab,0\mid\sigma,\sigmab,\tau,0\}}$&
$\combgame{\{\sigma,\sigmab,\tau\mid0\}}$&
$\combgame{\{\sigma,\sigmab,\tau\mid\tau\}}$&
$\combgame{\{\sigma,\sigmab,\tau\mid\sigmab\}}$&
$\combgame{\{\sigma,\sigmab,\tau\mid\sigma\}}$
\\

\rowcolor[gray]{.8}
$\combgame{\{\sigma,\sigmab,\tau\mid\tau,0\}}$&
$\combgame{\{\sigma,\sigmab,\tau\mid\sigmab,0\}}$&
$\combgame{\{\sigma,\sigmab,\tau\mid\sigma,0\}}$&
$\combgame{\{\sigma,\sigmab,\tau\mid\sigmab,\tau\}}$&
$\combgame{\{\sigma,\sigmab,\tau\mid\sigma,\tau\}}$&
$\combgame{\{\sigma,\sigmab,\tau\mid\sigma,\sigmab\}}$
\\

$\combgame{\{\sigma,\sigmab,\tau\mid\sigmab,\tau,0\}}$&
$\combgame{\{\sigma,\sigmab,\tau\mid\sigma,\tau,0\}}$&
$\combgame{\{\sigma,\sigmab,\tau\mid\sigma,\sigmab,0\}}$&
$\combgame{\{\sigma,\sigmab,\tau\mid\sigma,\sigmab,\tau\}}$&
$\combgame{\{\sigma,\sigmab,\tau\mid\sigma,\sigmab,\tau,0\}}$&
$\combgame{\{\sigma,\sigmab,\tau,0\mid0\}}$
\\

\rowcolor[gray]{.8}
$\combgame{\{\sigma,\sigmab,\tau,0\mid\tau\}}$&
$\combgame{\{\sigma,\sigmab,\tau,0\mid\sigmab\}}$&
$\combgame{\{\sigma,\sigmab,\tau,0\mid\sigma\}}$&
$\combgame{\{\sigma,\sigmab,\tau,0\mid\tau,0\}}$&
$\combgame{\{\sigma,\sigmab,\tau,0\mid\sigmab,0\}}$&
$\combgame{\{\sigma,\sigmab,\tau,0\mid\sigma,0\}}$
\\

$\combgame{\{\sigma,\sigmab,\tau,0\mid\sigmab,\tau\}}$&
$\combgame{\{\sigma,\sigmab,\tau,0\mid\sigma,\tau\}}$&
$\combgame{\{\sigma,\sigmab,\tau,0\mid\sigma,\sigmab\}}$&
$\combgame{\{\sigma,\sigmab,\tau,0\mid\sigmab,\tau,0\}}$&
$\combgame{\{\sigma,\sigmab,\tau,0\mid\sigma,\tau,0\}}$&
$\combgame{\{\sigma,\sigmab,\tau,0\mid\sigma,\sigmab,0\}}$
\\

\rowcolor[gray]{.8}
$\combgame{\{\sigma,\sigmab,\tau,0\mid\sigma,\sigmab,\tau\}}$&
$\combgame{\{\sigma,\sigmab,\tau,0\mid\sigma,\sigmab,\tau,0\}}$

		\end{tabular}	
	\caption{224 positions born on day 3 with monoids isomorphic to $\monoid{M}_{\cl{*}}$.}
	\label{table-day-3-*}
	\end{scriptsize}
	\end{center}
	\end{table}

		\begin{figure}[htb]	
		\begin{center}
		\unitlength 8pt
		\begin{graph}(40,7.5)(-20,-1.5)
		\graphnodesize{0.4}
		\fillednodestrue

		\roundnode{A}(-19,2)
		\roundnode{B}(-20,0)
		\roundnode{C}(-18,0)
		
		\edge{A}{C}
		\edge{A}{B}
		\freetext(-19,-1.5){$*$}

		\roundnode{A3}(-9,4)
		\roundnode{B3}(-10,2)
		%\roundnode{C}(2,2)
		\roundnode{D3}(-11,0)
		\roundnode{E3}(-9,0)

		%\edge{A}{C}
		\edge{A3}{B3}
		\edge{B3}{D3}
		\edge{B3}{E3}
		\freetext(-10,-1.5){$\sigma$}

	\roundnode{49a}(10,6)
		\roundnode{49At}(1,4)
		\roundnode{49Bt}(0,2)
		\roundnode{49Ct}(2,2)
		\roundnode{49Dt}(-.5,0)
		\roundnode{49Et}(.5,0)
		\roundnode{49Ft}(1.5,0)
		\roundnode{49Gt}(2.5,0)

		\edge{49At}{49Ct}
		\edge{49At}{49Bt}
		\edge{49Bt}{49Dt}
		\edge{49Bt}{49Et}
		\edge{49Ct}{49Ft}
		\edge{49Ct}{49Gt}					

		\roundnode{49As}(5,4)
		\roundnode{49Bs}(4,2)
		%\roundnode{49Cs}(6,2)
		\roundnode{49Ds}(3.5,0)
		\roundnode{49Es}(4.5,0)
		%\roundnode{49Fs}(5.5,0)
		%\roundnode{49Gs}(6.5,0)

		%\edge{49As}{49Cs}
		\edge{49As}{49Bs}
		\edge{49Bs}{49Ds}
		\edge{49Bs}{49Es}
		%\edge{49Cs}{49Fs}
		%\edge{49Cs}{49Gs}	

		\roundnode{49Ab}(7,4)
		%\roundnode{49Bb}(7,2)
		\roundnode{49Cb}(8,2)
		%\roundnode{49Db}(6.5,0)
		%\roundnode{49Eb}(7.5,0)
		\roundnode{49Fb}(7.5,0)
		\roundnode{49Gb}(8.5,0)

		\edge{49Ab}{49Cb}
		%\edge{49Ab}{49Bb}
		%\edge{49Bb}{49Db}
		%\edge{49Bb}{49Eb}
		\edge{49Cb}{49Fb}
		\edge{49Cb}{49Gb}	

		\roundnode{49AT}(19,4)
		\roundnode{49BT}(20,2)
		\roundnode{49CT}(18,2)
		\roundnode{49DT}(20.5,0)
		\roundnode{49ET}(19.5,0)
		\roundnode{49FT}(18.5,0)
		\roundnode{49GT}(17.5,0)

		\edge{49AT}{49CT}
		\edge{49AT}{49BT}
		\edge{49BT}{49DT}
		\edge{49BT}{49ET}
		\edge{49CT}{49FT}
		\edge{49CT}{49GT}					

		\roundnode{49AS}(15,4)
		\roundnode{49BS}(16,2)
		%\roundnode{49Cs}(6,2)
		\roundnode{49DS}(16.5,0)
		\roundnode{49ES}(15.5,0)
		%\roundnode{49Fs}(5.5,0)
		%\roundnode{49Gs}(6.5,0)

		%\edge{49As}{49Cs}
		\edge{49AS}{49BS}
		\edge{49BS}{49DS}
		\edge{49BS}{49ES}
		%\edge{49Cs}{49Fs}
		%\edge{49Cs}{49Gs}	

		\roundnode{49AB}(13,4)
		%\roundnode{49Bb}(7,2)
		%\roundnode{49CB}(12,2)
		%\roundnode{49Db}(6.5,0)
		%\roundnode{49Eb}(7.5,0)
		%\roundnode{49FB}(12.5,0)
		%\roundnode{49GB}(11.5,0)

		%\edge{49AB}{49CB}
		%\edge{49Ab}{49Bb}
		%\edge{49Bb}{49Db}
		%\edge{49Bb}{49Eb}
		%\edge{49CB}{49FB}
		%\edge{49CB}{49GB}

	\edge{49a}{49At}
	\edge{49a}{49As}
	\edge{49a}{49Ab}
	\edge{49a}{49AT}
	\edge{49a}{49AS}
	\edge{49a}{49AB}
	\freetext(10,-1.5){$\combgame{\{\tau, \sigma, \sigmab \mid 0, \sigmab, \tau\}}$}
		
		\end{graph}
		\end{center}
		\caption{Some positions with monoids isomorphic to $\monoid{M}_{\cl{*}}$.}
		\label{fig-day-3-*}
		\end{figure}	
	
That is, for example, the three positions in Figure \ref{fig-day-3-*} all have monoids isomorphic to $\monoid{M}_{\cl{*}}$.
\end{example}

As shown in Example \ref{example-*-iso}, by iteratively applying Theorem \ref{theorem-xi-^-L-*} to $*$, we obtain a large variety of positions with monoids isomorphic to $\monoid{M}_{\cl{*}}$.  We wish to differentiate between positions which are built via iteratively applying Theorem \ref{theorem-xi-^-L-*} to $*$ versus those which were not.

\begin{definition}
	For a position $\xi$ with $\monoid{M}_{\cl{\xi}} \cong \monoid{M}_{\cl{*}}$, we say that a $\xi$ is \textbf{$*$-built} if either
		\begin{itemize}
			\item $\xi = *$, or
			\item $\xi$ was obtained by iteratively applying Theorem \ref{theorem-xi-^-L-*} to $*$.
		\end{itemize}
\end{definition}

From this definition, an obvious question arises: are there positions $\xi$ with $\monoid{M}_{\cl{\xi}} \cong \monoid{M}_{\cl{*}}$ which are not $*$-built?  The answer is no.

\begin{theorem}\label{theorem-*-built}
	All positions $\xi$ with $\monoid{M}_{\cl{\xi}} \cong \monoid{M}_{\cl{*}}$ are $*$-built.
\end{theorem}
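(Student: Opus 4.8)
The plan is to induct on the birthday of $\xi$. For the base case, among the positions of birthday at most $1$ — namely $0$, $1$, $\overline{1}$, and $*$ — only $*$ has $\monoid{M}_{\cl{\xi}} \cong \monoid{M}_{\cl{*}}$ ($0$ has a trivial monoid, while $1$ and $\overline{1}$ fail condition \eqref{item-P-iso-*-2} of Theorem \ref{theorem-P-iso-*} since $o^-(1) = \Right$ and $o^-(\overline{1}) = \Left$), and $*$ is $*$-built by definition. For the inductive step I would assume every position of smaller birthday with monoid isomorphic to $\monoid{M}_{\cl{*}}$ is $*$-built, and take $\xi$ with $\monoid{M}_{\cl{\xi}} \cong \monoid{M}_{\cl{*}}$. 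First I would read off the outcome structure of the options of $\xi$ from Theorem \ref{theorem-P-iso-*}. Since $\cl{\xi}$ is option-closed, every option of $\xi$ lies in $\cl{\xi}$, so each is in $\Next \cup \Prev$ by \eqref{item-P-iso-*-2}. Combining the two facts that an $\Next$ position has only $\Prev$ options (condition \eqref{item-P-iso-*-3}) and that a $\Prev$ position has only $\Next$ options (no $\Prev$ position has a $\Prev$ option, as noted in Section \ref{sec-bg}), I obtain: if $o^-(\xi) = \Next$ then every option is $\Prev$ (in particular none is $0$, as $o^-(0) = \Next$), and if $o^-(\xi) = \Prev$ then every option is $\Next$ (and may equal $0$). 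A short misère argument shows a $\Prev$ position has both a Left and a Right option, while an $\Next$ position with this monoid is nonzero and so has at least one option.

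Next I would pass to the options. For any nonzero option $\zeta$ of $\xi$ we have $\cl{\zeta} \subseteq \cl{\xi}$, so conditions \eqref{item-P-iso-*-2} and \eqref{item-P-iso-*-3} of Theorem \ref{theorem-P-iso-*} are inherited by $\cl{\zeta}$, and $\cl{\zeta}$ contains the nonzero position $\zeta$; hence $\monoid{M}_{\cl{\zeta}} \cong \monoid{M}_{\cl{*}}$ by Theorem \ref{theorem-P-iso-*}. As $\zeta$ has strictly smaller birthday than $\xi$, the induction hypothesis gives that every nonzero option of $\xi$ is $*$-built.

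Finally I would match $\xi = \combgame{\{\xi^L \mid \xi^R\}}$ to a single application of Theorem \ref{theorem-xi-^-L-*} to these ($*$-built) options, together with $0$ where it occurs. The case analysis splits on $o^-(\xi)$. When $o^-(\xi) = \Next$, all options are nonzero $\Prev$ positions of equal outcome, so $\xi$ is an instance of Theorem \ref{theorem-xi-^-L-*}\eqref{item-xi-^-L-1-cor} if both ends are nonempty, and of \eqref{item-xi-^-L-2-cor} or its conjugate (via \eqref{item-xi-^-L-cong}) if $\xi$ is a Right or Left end. When $o^-(\xi) = \Prev$, all options are $\Next$: if none is $0$ I again apply \eqref{item-xi-^-L-1-cor}, and if some option is $0$ I use the forms of \eqref{item-xi-^-L-zero-cor}, invoking conjugation through \eqref{item-xi-^-L-cong} to place $0$ on the Left rather than the Right. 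In every case $\xi$ is literally the output of the relevant construction applied to $*$-built inputs, hence $*$-built, completing the induction.

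The main obstacle I anticipate is the bookkeeping in this last step: verifying that the forms in Theorem \ref{theorem-xi-^-L-*}\eqref{item-xi-^-L-zero-cor}, together with conjugation, genuinely exhaust every admissible placement of $0$ among the options. This includes degenerate configurations such as $\combgame{\{0 \mid 0, \kappa_1, \ldots, \kappa_m\}}$, which must be read as \eqref{item-xi-^-L-zero-cor} with an empty list of nonzero Left options, and boundary cases where a side is $0$ alone, recovering $* = \combgame{\{0 \mid 0\}}$ as the base construction. The remaining ingredients — the outcome deductions and the inheritance of the monoid by options — are routine given Theorem \ref{theorem-P-iso-*}.
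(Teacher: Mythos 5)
Your proposal is correct and follows essentially the same route as the paper's own proof: the paper argues via a minimal-birthday counterexample rather than explicit induction, but the substance is identical — options inherit the conditions of Theorem \ref{theorem-P-iso-*} and hence are $*$-built by minimality/induction, and the same case analysis on $o^-(\xi)$, the presence of $0$ among the options, and empty option sets matches $\xi$ to the cases of Theorem \ref{theorem-xi-^-L-*} (with conjugation via \eqref{item-xi-^-L-cong} where needed). Your explicit attention to the degenerate placements of $0$ is, if anything, slightly more careful than the paper's treatment.
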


\begin{proof}
	Suppose, contrary to what we must show, that there exist positions $\xi$ with $\monoid{M}_{\cl{\xi}} \cong \monoid{M}_{\cl{*}}$, but $\xi$ is not $*$-built.  Take such a $\xi$ of minimal birthday.
	
	Suppose that all the options of $\xi$ are 0, that is $\xi = \combgame{\{0\mid0\}} = *$, a contradiction.  Thus, suppose that $\xi$ has a non-zero option, say $\xi^L$.  Then $\monoid{M}_{\cl{\xi^L}} \cong \monoid{M}_{\cl{*}}$ since Theorem \ref{theorem-P-iso-*} is satisfied for $\cl{\xi^L}$ because it is satisfied for $\cl{\xi}$.  That is, all non-zero options $\xi^L$ and $\xi^R$ of $\xi$ have monoids which are isomorphic to $\monoid{M}_{\cl{*}}$.  Of these non-zero options, all must be $*$-built, as they all have birthday strictly less than that of $\xi$.
	
	Suppose $\xi$ has a move to 0.  Since $o^-(0) = \Next$, Theorem \ref{theorem-P-iso-*} gives that $o^-(\xi) = \Prev$.  Thus both Left and Right must each have at least one move available, all moves are to options whose outcomes are $\Next$, and all these options are $*$-built.  Thus $\xi$ was constructed by applying Theorem \ref{theorem-xi-^-L-*}\eqref{item-xi-^-L-zero-cor} to its options, contradicting our assumption that $\xi$ is not $*$-built.
	
	Suppose now that $\xi$ has no move to 0.  Suppose also that $o^-(\xi) = \Prev$.  Thus both Left and Right must each have at least one move available, all moves are to options whose outcomes are $\Next$, and all these options are $*$-built.  Thus $\xi$ was constructed by applying Theorem \ref{theorem-xi-^-L-*}\eqref{item-xi-^-L-1-cor} to its options, contradicting our assumption that $\xi$ is not $*$-built.
	
	Suppose now that $o^-(\xi) = \Next$.   Moreover, suppose that Right has no move available.  Then Left must have a move available otherwise $\xi = 0$, and $\monoid{M}_{\cl{0}} \not \cong \monoid{M}_{\cl{*}}$.  By Theorem \ref{theorem-P-iso-*}, all of Left's options must be $\Prev$, and all of these options are $*$-built.  Thus $\xi$ was constructed by applying Theorem \ref{theorem-xi-^-L-*}\eqref{item-xi-^-L-2-cor} to its options, contradicting our assumption that $\xi$ is not $*$-built.  Similarly, if Left has no move available, but Right does, the $\xi$ was constructed by applying Theorem \ref{theorem-xi-^-L-*}\eqref{item-xi-^-L-2-cor} and Theorem \ref{theorem-xi-^-L-*}\eqref{item-xi-^-L-cong} to its options.
	
	Finally, suppose that $o^-(\xi) = \Next$ and that both Right and Left have moves available.  By Theorem \ref{theorem-P-iso-*}, all these options are  $\Prev$, and all of these options are $*$-built.  Thus $\xi$ was constructed by applying Theorem \ref{theorem-xi-^-L-*}\eqref{item-xi-^-L-1-cor} to its options, contradicting our assumption that $\xi$ is not $*$-built.
	
	Therefore no such $\xi$ can exist.  That is, all $\xi$ with $\monoid{M}_{\cl{\xi}} \cong \monoid{M}_{\cl{*}}$ are $*$-built.
\end{proof}

We have now fully classified positions with monoid isomorphic to $\monoid{M}_{\cl{*}}$, the first such classification result in all partizan \mis play game theory.

Now that we have positions $\xi$ with $\monoid{M}_{\cl{\xi}} \cong \monoid{M}_{\cl{*}}$, how can we use this result?  Ideally, much as in normal play, we would like $\xi$ and $*$ to be interchangeable, i.e.\ if $*$ is an option of some position, then we can replace $*$ by $\xi$ if $\monoid{M}_{\cl{\xi}} \cong \monoid{M}_{\cl{*}}$.  However, this is not true, as the following example shows.

\begin{example}\label{example-replace-*}
	Recall the position $\rho = \combgame{\{*\mid0\}}$ introduced in Section \ref{section-rho}.  In Section \ref{section-rho}, we showed that $\monoid{M}_{\cl{\rho}}$ is as follows:
		\begin{align*}
			\monoid{M}_{\cl{\rho}} &= \ideal{1,a,p \mid a^2 = 1, p^4 = p^5 = ap^4} \\
			\Next &= \{1, ap, ap^2, ap^3 \} \\
			\Prev &= \{a, p^2\} \\
			\Left &= \{p\} \\
			\Right &= \{p^3, p^4\}.
		\end{align*}
	Clearly, $\monoid{M}_{\cl{\rho}} \not \cong \monoid{M}_{\cl{*}}$.
	
	We have seen that $\monoid{M}_{\cl{\sigma}} \cong \monoid{M}_{\cl{*}}$ (Section \ref{sec-sigma} and Example \ref{example-*-iso}).  However, if we replace $*$ by $\sigma$ in $\rho = \combgame{\{*\mid0\}}$, we get the position $\combgame{\{\sigma\mid0\}}$.  However, Theorem \ref{theorem-xi-^-L-*} gives that
	$\monoid{M}_{\cl{\combgame{\{\sigma\mid0\}}}} \cong \monoid{M}_{\cl{*}}$. 
\end{example}

Therefore, we cannot exchange $*$ and $\xi$ and guarantee that our resultant monoid remain the same.  However, in Example \ref{example-replace-*}, we tried to replace $*$, a $\Prev$ position, with $\sigma$, an $\Next$ position.  It seems more likely that we can replace $*$ with $\xi$ if $o^-(\xi) = \Prev$.   As such, we have the following conjecture.

\begin{conjecture}\label{conjecture-replace-*}
	If $\xi$ is a position with $o^-(\xi) = \Prev$ and $\monoid{M}_{\cl{\xi}} \cong \monoid{M}_{\cl{*}}$, then we can replace $*$ by $\xi$ in any position which has $*$ as an option without changing the resultant \mis monoid.
\end{conjecture}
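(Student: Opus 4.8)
The plan is to show that the single substitution $* \mapsto \xi$ leaves every outcome unchanged once the two positions involved are placed in one common closed set, and then to read the monoid isomorphism off from this. Write $\zeta$ for a position having $*$ as an option and $\zeta'$ for the result of replacing that one occurrence of $*$ by $\xi$, so that the claim becomes $\monoid{M}_{\cl{\zeta}} \cong \monoid{M}_{\cl{\zeta'}}$; by conjugating if necessary (conjugation preserves the property $\monoid{M}_{\cl{\xi}} \cong \monoid{M}_{\cl{*}}$ by Theorem \ref{theorem-xi-^-L-*}\eqref{item-xi-^-L-cong}) I may assume $* \in \zeta^L$. Put $\Gamma = \cl{\zeta, \zeta'}$. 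First I would record two facts that cost nothing: since $o^-(\xi) = \Prev$ and $\monoid{M}_{\cl{\xi}} \cong \monoid{M}_{\cl{*}}$, Proposition \ref{proposition-P-*-monoids} gives $* \in \cl{\xi}$, and both $\xi$ and $*$ lie in the $\Prev$ portion and so map to the same element $a$ of $\monoid{M}_{\cl{*}}$; Theorem \ref{theorem-p1-p2-*} then yields $o^-(\xi + \gamma) = o^-(* + \gamma)$ for every $\gamma \in \cl{\xi}$, that is, $\xi \equiv * \imod{\cl{\xi}}$.

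Next, granting for the moment the key relation $\xi \equiv * \imod{\Gamma}$, I would deduce the substitution statement $o^-(\zeta + \mu) = o^-(\zeta' + \mu)$ for all $\mu \in \Gamma$ by induction on the birthday of $\mu$ together with that of $\zeta$, comparing the two positions through Table \ref{table-outcomes}. The Left and Right options of $\zeta + \mu$ and of $\zeta' + \mu$ fall into three families: moves inside $\mu$, handled by the induction hypothesis on $\mu$; moves to a shared option $\kappa + \mu$ with $\kappa \neq *$, which give literally the same position in both games; and the one distinguished move, to $* + \mu$ from $\zeta$ versus $\xi + \mu$ from $\zeta'$. The assumed $\xi \equiv * \imod{\Gamma}$ forces $o^-(* + \mu) = o^-(\xi + \mu)$, so each player is presented with identical collections of option-outcomes and Table \ref{table-outcomes} returns the same class. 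Hence $\zeta \equiv \zeta' \imod{\Gamma}$.

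The hard part will be establishing $\xi \equiv * \imod{\Gamma}$ itself. The difficulty is precisely that indistinguishability, unlike distinguishability (Proposition \ref{proposition-distinguishable-subseteq}), need not survive enlargement from $\cl{\xi}$ to $\Gamma$; moreover $\Gamma$ contains the genuinely non-star-like positions $\zeta$ and $\zeta'$, so Theorem \ref{theorem-p1-p2-*} is unavailable for an arbitrary $\gamma \in \Gamma$. I would attempt a single strong induction proving simultaneously (A) $o^-(\xi + \gamma) = o^-(* + \gamma)$ for all $\gamma \in \Gamma$, and (B) the substitution statement of the preceding paragraph, so that (B) may be invoked on sub-positions while proving (A). The obstruction inside (A) is that $\xi$ and $*$ have different game trees: a star-component move from $* + \gamma$ reaches $\gamma$, whereas from $\xi + \gamma$ it reaches $\xi^{L} + \gamma$ or $\xi^{R} + \gamma$, so the option sets do not match termwise. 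I would bridge this with a strategy-stealing argument exploiting $o^-(\xi) = \Prev$ and $\xi \equiv * \imod{\cl{\xi}}$: any opponent move made inside the $\xi$ component can be answered inside $\xi$ so as to return to a position equivalent over $\Gamma$, by the inductive hypothesis, to the one reached after the analogous play in $* + \gamma$, which lets one transport a winning strategy for a given player between $* + \gamma$ and $\xi + \gamma$. Converting these strategies faithfully is the crux of the whole argument and the step I expect to demand the most care.

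Finally, once $\zeta \equiv \zeta' \imod{\Gamma}$ and $\xi \equiv * \imod{\Gamma}$ are both in hand, I would assemble the isomorphism $\monoid{M}_{\cl{\zeta}} \cong \monoid{M}_{\cl{\zeta'}}$ as the map sending the class of $*$ to that of $\xi$ and the class of $\zeta$ to that of $\zeta'$ while fixing the classes of all commonly occurring positions; the two simultaneous relations guarantee it is well defined, multiplicative, and outcome-tetrapartition preserving, with inverse given by the reverse substitution (legitimate because $o^-(\xi) = \Prev = o^-(*)$ makes the hypotheses symmetric in $\xi$ and $*$). I would close by observing that the $\Prev$ hypothesis is essential: in Example \ref{example-replace-*}, where the $\Next$ position $\sigma$ replaces $*$, the distinguished option pair $* + \mu$ versus $\sigma + \mu$ need not share an outcome, so step (A) fails outright, which is exactly why the conjecture restricts to $o^-(\xi) = \Prev$.
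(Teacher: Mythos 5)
First, note that the paper does not prove this statement: it is stated as Conjecture \ref{conjecture-replace-*}, and proving it is explicitly listed among the open problems in Chapter \ref{chapter-conclusion}. So there is no proof in the paper to compare against, and what matters is whether your argument actually closes the conjecture. It does not. The decisive step, establishing $\xi \equiv * \imod{\Gamma}$ for $\Gamma = \cl{\zeta,\zeta'}$, is exactly where the entire difficulty of the conjecture is concentrated, and you defer it: you describe an intended strategy-stealing argument and then write that converting the strategies faithfully ``is the crux of the whole argument and the step I expect to demand the most care.'' That is an acknowledgement of a gap, not a proof. The obstruction is real: $\Gamma$ contains $\zeta$, $\zeta'$, and arbitrary sums involving them, none of which lie in $\cl{\xi}$, so Theorem \ref{theorem-p1-p2-*} is unavailable and, as the paper itself warns after Proposition \ref{proposition-distinguishable-subseteq}, indistinguishability over $\cl{\xi}$ gives no information over the larger set $\Gamma$. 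Moreover your proposed response scheme is not even the right shape: when the opponent moves the $\xi$-component of $\xi+\gamma$ to $\xi^L$, the matching move in $*+\gamma$ is to $0+\gamma$, so what you actually need is $\xi^L \equiv 0 \imod{\Gamma}$ (and dually $\xi^R \equiv 0$, $\xi^{LR}\equiv *$, and so on down the tree), i.e.\ a simultaneous induction showing every $\Next$ position of $\cl{\xi}$ is $\Gamma$-equivalent to $0$ and every $\Prev$ position to $*$, interleaved with the substitution statement for every position of $\cl{\zeta}$, not just for $\zeta$ itself. None of this is carried out.

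There is a second, smaller gap at the end. Even granting both congruences over $\Gamma$, the conjecture asserts $\monoid{M}_{\cl{\zeta}} \cong \monoid{M}_{\cl{\zeta'}}$, where the two monoids are computed over their own separate closures with different distinguishing elements available. Passing from $\Gamma$-congruences to a well-defined tetrapartite isomorphism between the two quotients requires the full family of relations $\alpha \equiv \alpha' \imod{\Gamma}$ for every $\alpha \in \cl{\zeta}$ (so that $o^-(\alpha+\gamma) = o^-(\alpha'+\gamma')$ can be chained through $\Gamma$); you establish, in outline, only the single relation $\zeta \equiv \zeta' \imod{\Gamma}$ and then assert that the map ``fixing the classes of all commonly occurring positions'' works. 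Your framing of the problem is reasonable, and your closing observation about why the $\Prev$ hypothesis is needed (Example \ref{example-replace-*}) is correct, but the conjecture remains open after this attempt.
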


If this conjecture is true, then this gives us a tool for building positions with isomorphic monoids, as all we need do is find positions with $*$ in its options and replace it by an appropriate $\xi$.  For example, we could construct positions with monoids isomorphic to $\monoid{M}_{\cl{\rho}}$, or monoids isomorphic to $\monoid{M}_{\cl{*_2}}$ (Table \ref{table-normal-mis-outcomes}).  It may even be possible to extend this result further, i.e.\ replacing $\rho = \combgame{\{*\mid0\}}$ by $\combgame{\{\xi\mid0\}}$ for a suitable $\xi$ in positions with $\rho$ as an option.  This is definitely an area which merits further investigation.

\section{Conclusion}

In Section \ref{sec-iso-*}, much work was spent on positions with monoids isomorphic to $\monoid{M}_{\cl{*}}$.   The proofs in this section, especially that of Theorem \ref{theorem-p1-p2-*}, relied heavily on the straightforward structure of $\monoid{M}_{\cl{*}}$, that is, that $\monoid{M}_{\cl{*}}$ is isomorphic to $\mathbb{Z}_2$ as monoids.  While we did obtain a complete classification for positions with monoids isomorphic to $\monoid{M}_{\cl{*}}$, we would ideally like to extend these results for positions other than $*$.  For example, given two closed sets of positions $S_1$ and $S_2$, with \mis monoids $\monoid{M}_{S_1}$ and $\monoid{M}_{S_2}$ respectively, if
	\[ \monoid{M}_{S_1} \cong \monoid{M}_{S_2},\]
is it then true that
	\[ \monoid{M}_{S_1+S_2} \cong  \monoid{M}_{S_1},\]
where
	\[ S_1 + S_2 = \{s_1 + s_2 \mid s_1 \in S_1, s_2 \in S_2\}?\]
At first glance, it seems obvious that this must be true; since the positions have isomorphic monoids, they must behave in some similar fashion, and so the sums of positions from these two sets must also behave in the same similar fashion.  But, at the same time, how do we guarantee that some option of $s$ which is dominated when restricting ourselves to positions in one set is still dominated when we consider $s$ as an element of the sum of sets?  At this point, we are required to prove or give a counterexample.  However, even to find positions with isomorphic monoids which are neither isomorphic to $\monoid{M}_{\cl{*}}$ nor have $S_1$ and $S_2$ related in obvious ways (such as $S_1 \subset S_2$, so $S_1 + S_2 = S_2$) poses a major challenge.  An obvious approach to first try is to examine the large number of impartial examples calculated by Plambeck and Siegel and try to see there if a counterexample can be found.  If not, then more work must be done to see if the result can be proven.  At the very least, we have started the work by characterizing the $*$ positions.

\chapter{Two Examples of Partizan Heap-based \Mis Monoids}\label{chapter-heap}

\section{Introduction}

The first investigations into \mis monoids were to calculate the \mis monoids of certain impartial heap-based games \cite{TAMING}, with the general theory growing out of those initial explorations.  As such, it seems fitting that this thesis concludes with the calculation of \mis monoids for two partizan heap-based games.  Of these two examples, one yields a finite monoid, while the other yields an infinite one. To calculate these monoids, we will use the method designed by Mike Weimerskirch \cite{MIKE}.  While Weimerskirch designed his algorithm for impartial heap-based games, nothing in his algorithm is specific to impartial games.  As such, his algorithm works for partizan heap-based games as well.

As in \cite{MIKE}, we use $(x_1, x_2, \ldots, x_n)$ to denote a position, where $x_i$ is the number of heaps of size $i$.  If there are no heaps of size $k$ or higher, then we truncate the string, i.e.\ the position $(x_1, x_2)$ means there are $x_1$ heaps of size one, $x_2$ heaps of size two, and zero heaps of size three or higher.

The method of \cite{MIKE} is essentially performing induction on the outcome tables.  We find (if it exists) the periodicity of outcomes for positions in $\cl{h_1, \ldots, h_n}$.  We then add in additional heaps of size $n+1$ and search for periodicity results (if they exist) with the addition of these larger heaps.  These periodicity results determine a candidate \mis monoid.  The indistinguishability of elements is then checked to see if additional relations are required in the monoid.  If so, these relations are added and the \mis monoid is obtained.  If no additional relations are found, then the candidate \mis monoid is indeed the \mis monoid.  

In applying the algorithm, we make use of Theorem 2 of \cite{MIKE}, which is the formal statement of the preceding paragraph.  Before we can state this theorem, we need a definition.

\begin{definition}
	We say that a position $a=(a_1, a_2, \ldots, a_n)$ precedes a position $b=(b_1, b_2, \ldots, b_n)$ in the \textbf{colexicographic order} if the rightmost coordinate in which $a$ and $b$ differ is smaller in $a$.  
\end{definition}

We can now state Theorem 2 of \cite{MIKE}.

\begin{theorem}[Theorem 2 of \cite{MIKE}]\label{theorem-2-mike}
	Fix a heap-based game played under \mis play and fix values for $i$, $y_{i+1}$, $y_{i+2}$, $\ldots$, $y_n$.  Suppose for some $r_i$, $d_i$,
		\begin{enumerate}
			\item\label{item-1-mike} the outcomes for positions of the form
				\[ g = (x_1, x_2, \ldots, x_{i-1}, r_i, y_{i+1}, y_{i+2}, \ldots, y_n)\]
			agree with the outcomes of
				\[ g^* = (x_1, x_2, \ldots, x_{i-1}, r_i+d_i, y_{i+1}, y_{i+2}, \ldots, y_n)\]
			for all $x_1$, $x_2$, $\ldots$, $x_{i-1}$.
			
			\item\label{item-2-mike} In addition, suppose that the outcomes for positions of the form
				\[ k = (x_1, x_2, \ldots, x_{i-1}, r_i+u, x_{i+1}, x_{i+2}, \ldots, x_n)\]
			agree with the outcomes of
				\[ k^* = (x_1, x_2, \ldots, x_{i-1}, r_i+d_i+u, x_{i+1}, x_{i+2}, \ldots, x_n)\]
			for all $x_1$, $x_2$, $\ldots$, $x_{i-1}$, for $(x_{i+1}, x_{i+2}, \ldots, x_n)$ preceding $(y_{i+1}, y_{i+2}, \ldots, y_n)$ in the colexicographic ordering and for all $u \ge 0$.
		\end{enumerate}
	Then $o^-(g+vh_i) = o^-(g^* + v h_i)$ for all $v$, $x_1$, $x_2$, $\ldots$, $x_{i-1} \ge 0$, where $g + vh_i$ is the position
		\[ g+ vh_i = (x_1, x_2, \ldots, x_{i-1}, r_i+v, y_{i+1}, y_{i+2}, \ldots, y_n) \]
	and $g^* + v h_i$ is the position
		\[ g^* + vh_i = (x_1, x_2, \ldots, x_{i-1}, r_i+d_i+v, y_{i+1}, y_{i+2}, \ldots, y_n).\]
\end{theorem}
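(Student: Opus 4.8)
The plan is to fix the tail $(y_{i+1}, \ldots, y_n)$ and prove the conclusion by a nested induction: an outer induction on $v$, and, for each fixed $v$, an inner induction on the birthday of the position $g + v h_i$ (equivalently, on the total number of tokens residing in heaps of size at most $i$). The key structural observation is that hypothesis \eqref{item-2-mike} is not really an extra assumption to be consumed but rather plays the role of an already-established induction hypothesis across the colexicographic order: setting $u = v$ in \eqref{item-2-mike} is precisely the conclusion we want, but only for tails strictly preceding $(y_{i+1}, \ldots, y_n)$. Thus I would never need to induct on the tail myself; \eqref{item-2-mike} supplies every case in which a move disturbs a heap larger than $i$.

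Writing $g = (x_1, \ldots, x_{i-1}, r_i, y_{i+1}, \ldots, y_n)$ and $g^* = (x_1, \ldots, x_{i-1}, r_i + d_i, y_{i+1}, \ldots, y_n)$, the positions $g + v h_i$ and $g^* + v h_i$ differ only in their size-$i$ coordinate, which is $r_i + v$ versus $r_i + d_i + v$. The base case $v = 0$ is exactly hypothesis \eqref{item-1-mike}. For the inductive step I would establish an outcome-preserving, player-by-player correspondence between the options of $g + v h_i$ and those of $g^* + v h_i$, and then invoke Table \ref{table-outcomes}, which determines an outcome class solely from the outcome classes occurring among a position's $\Left$-options and among its $\Right$-options. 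Since the rules for decomposing a heap of a given size are identical in both positions and independent of the other heaps, every legal move of a player in one position has a twin in the other; I would sort these twinned moves into three types: (i) a move on a heap of size $< i$, (ii) a move on one of the size-$i$ heaps, and (iii) a move on a heap of size $> i$.

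For a type-(i) move the size-$i$ coordinate is untouched and the tail is unchanged, so the two resulting positions again differ only by the shift $d_i$ in the size-$i$ coordinate at the same value of $v$, but with strictly smaller birthday; the inner induction hypothesis gives them equal outcome. For a type-(ii) move one size-$i$ heap is consumed, lowering $v$ to $v - 1$ in both positions, so the two results differ by $d_i$ at the smaller parameter $v - 1$, and the outer induction hypothesis applies. For a type-(iii) move a heap of size $j > i$ is replaced by smaller heaps; this sends the tail to a configuration strictly preceding $(y_{i+1}, \ldots, y_n)$ in the colexicographic order and may inject some number $\delta \ge 0$ of fresh size-$i$ heaps, so the two results have size-$i$ coordinates $r_i + (v + \delta)$ and $r_i + d_i + (v + \delta)$ over a preceding tail; taking $u = v + \delta$ in hypothesis \eqref{item-2-mike} yields equal outcomes. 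Having matched $\Left$-options with $\Left$-options and $\Right$-options with $\Right$-options, the two sets of option-outcomes coincide, and Table \ref{table-outcomes} forces $o^-(g + v h_i) = o^-(g^* + v h_i)$.

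The hard part, I expect, is the bookkeeping that keeps the three induction parameters straight and, in particular, recognising why \eqref{item-2-mike} must quantify over all $u \ge 0$ rather than merely $u = v$: a move on a large heap can create additional heaps of size exactly $i$, so the size-$i$ coordinate after such a move is $r_i + v + \delta$ with $\delta$ unbounded, and only the ``for all $u$'' form of \eqref{item-2-mike} covers this. A secondary point requiring care is that the game is partizan, so I must verify the correspondence separately for $\Left$ and for $\Right$; but since the twin-move construction depends only on the fixed decomposition rules for a heap of each size and not on the ambient position, the same argument runs verbatim for each player, and no symmetry between $\Left$ and $\Right$ is needed.
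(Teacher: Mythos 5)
This theorem is imported verbatim from Weimerskirch's paper (it is stated as ``Theorem 2 of \cite{MIKE}'') and the thesis supplies no proof of it, so there is no in-paper argument to compare yours against. Judged on its own merits, your proposal is sound and is surely the intended argument: a double induction (outer on $v$, inner on the birthday of $g+vh_i$), together with the player-by-player matching of options into the three types, correctly routes every option of $g+vh_i$ to its twin in $g^*+vh_i$ --- type (i) to the inner inductive hypothesis at the same $v$, type (ii) to the outer inductive hypothesis at $v-1$ (which is legitimate since for $v\ge 1$ the size-$i$ coordinate $r_i+v$ is positive, so the move exists in both positions and lands at $r_i+(v-1)\ge r_i$), and type (iii) to hypothesis \eqref{item-2-mike} with $u=v+\delta$, your observation that the disturbed tail strictly precedes $(y_{i+1},\ldots,y_n)$ colexicographically being exactly why that hypothesis is phrased over the colex order and over all $u\ge 0$. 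Two small points deserve tightening. First, your parenthetical claim that the birthday equals the total number of tokens in heaps of size at most $i$ fails for heap games permitting pure splitting moves (which remove no tokens); drop the parenthetical and induct on birthday alone, which strictly decreases under any move. Second, Table \ref{table-outcomes} determines the outcome from the options only when the terminal cases are handled consistently under mis\`ere play, so you should note explicitly that for $v\ge 1$ each player has a legal move in $g+vh_i$ if and only if that player has one in $g^*+vh_i$ (the extra $d_i$ heaps of size $i$ cannot create a move for a player who had none, because both positions already contain a size-$i$ heap), while the case $v=0$ is disposed of entirely by hypothesis \eqref{item-1-mike}. With those two repairs the argument is complete.
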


We will use Weimerskirch's method of \cite{MIKE} to calculate the \mis monoids of two partizan subtraction games.  One of these monoids is finite, while the other is infinite, once again demonstrating the range that can occur with \mis monoids.  Neither of these \mis monoids have an underlying monoid structure which is isomorphic to a well-known monoid.

\section{The Partizan Subtraction Game $\Left(1,2)$, $\Right(1)$}

The first heap-based game we will investigate is the subtraction game $\Left(1,2)$, $\Right(1)$. In this game, Left has two possible moves, to subtract either one or two tokens from a heap, while Right has a single possible move, to subtract one token from a heap.  

\begin{definition}
	Let $h_n$ denote a heap of size $n$.
\end{definition}

We first determine the outcome classes of $h_n$ for arbitrary $n \in \mathbb{Z}^{\ge 0}$.

\begin{proposition}
	The outcomes for $h_n$ in the game $\Left(1,2)$, $\Right(1)$ are as follows:
		\[ o^-(h_n) = \begin{cases}
			\Next &\text{if } n = 0,2;\\
			\Prev &\text{if } n=1;\\
			\Left &\text{else.}
		\end{cases}\]
\end{proposition}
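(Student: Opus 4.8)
The plan is to proceed by induction on $n$, the size of the heap, computing the outcome class of $h_n$ directly from the outcomes of its options using the combinatorial game outcome rules (Table \ref{table-outcomes}). The key observation is that in the game $\Left(1,2)$, $\Right(1)$, the options of a heap $h_n$ are determined entirely by the subtraction rules: Left's options from $h_n$ are $h_{n-1}$ and $h_{n-2}$ (when these exist), while Right's only option is $h_{n-1}$ (when it exists). Thus the analysis is purely a finite recursion, and the main task is to establish enough base cases so that the inductive step stabilises into the claimed pattern, which is eventually constant at $\Left$ for $n \ge 3$.

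First I would compute the small cases explicitly. For $h_0$, neither player has a move, so $o^-(h_0) = \Next$. For $h_1$, both Left and Right can move only to $h_0$, so this is an impartial-type position with a single option to an $\Next$ position; by the outcome table this gives $o^-(h_1) = \Prev$. For $h_2$, Left can move to $h_1$ or $h_0$ and Right can move to $h_1$; since Left has a move to $h_1 \in \Prev$ and Right has a move to $h_1 \in \Prev$, the outcome table places $h_2$ in $\Next$. For $h_3$, Left's options are $h_2 \in \Next$ and $h_1 \in \Prev$, while Right's only option is $h_2 \in \Next$. Here Left has a move to the $\Prev$ position $h_1$ (so Left wins moving first), and Right's only move is to $h_2 \in \Next$ (so Right moving first leaves Left to win); checking against Table \ref{table-outcomes}, this yields $o^-(h_3) = \Left$.

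The inductive step is then the heart of the argument. I would show that for all $n \ge 3$, $o^-(h_n) = \Left$. Assume inductively that $o^-(h_{n-1}) = \Left$ and $o^-(h_{n-2}) \in \{\Next, \Left\}$ for $n \ge 4$ (the base cases $n = 3, 4$ anchoring this). The crucial point is that Left always retains a winning response: from $h_n$, Left can move to $h_{n-1}$, and whatever Right does (his only move being to decrement by one), Left can maintain control because Left's strictly richer move set lets her adjust parity. Concretely, I expect to verify that Left moving first has a good move (to a $\Prev$ or $\Left$ option) and that Left moving second also wins, because Right's single move to $h_{n-1} \in \Left$ (by induction) hands Left a winning position. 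The outcome table then forces $o^-(h_n) = \Left$.

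The main obstacle I anticipate is correctly handling the interaction between Left's two options and Right's single option in the outcome table, particularly ensuring that the ``Left wins regardless of who moves first'' condition holds uniformly for all large $n$ rather than oscillating. Since Right can only ever decrement by one while Left can decrement by one or two, Left effectively controls the parity of the remaining heap; the informal reason $h_n \in \Left$ for large $n$ is that Left can always steer toward leaving Right in a losing configuration, while Right is too constrained to escape. Making this parity-control argument airtight through the outcome table (rather than hand-waving about strategy) is the step that requires care, and I would pin it down by tracking exactly which outcome classes $h_{n-1}$ and $h_{n-2}$ occupy at each step and confirming the induction closes with the stable value $\Left$.
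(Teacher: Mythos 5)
Your proposal is correct and follows essentially the same route as the paper: compute $h_0$ through $h_3$ directly, then induct on $n\ge 4$ using the single fact that $h_{n-1}\in\Left$, since Left can move there and Right must move there. The extra bookkeeping you carry about $o^-(h_{n-2})$ and parity control is harmless but unnecessary — the induction closes on $h_{n-1}$ alone.
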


\begin{proof}
	Clearly $o^-(h_0) = \Next$ as neither Left nor Right has any move from a heap of size zero.
	
	From $h_1$, both Left and Right can only move to $h_0$, an $\Next$ position.  Therefore $o^-(h_1) = \Prev$.
	
	From $h_2$, both Left and Right have the option of moving to $h_1$, a $\Prev$ position.  Therefore $o^-(h_2) = \Next$.
	
	We claim that $o^-(h_n) = \Left$ for all $n \in \mathbb{Z}^{\ge 3}$.  We proceed by induction on $n$.
	
	From $h_3$, Left takes two tokens and moves to $h_1$, a $\Prev$ position.  Right can only move to $h_2$, an $\Next$ position.  Therefore $o^-(h_3) = \Left$.  This shows the base case.
	
	Suppose true for all $3 \le n < k$ and consider $n = k$.  By induction, $o^-(h_{k-1}) = \Left$.  Left will move from $h_k$ to $h_{k-1}$, while this is Right's only available move.  Therefore $o^-(h_k) = \Left$, as required.
\end{proof}

We will now calculate the \mis monoid of this game using the method of \cite{MIKE}.  

We begin by looking at the position $(x_1)$.  The outcomes $o^-((x_1))$ are given in Table \ref{table-L(1,2)R(1)-x1}.

\begin{table}[htb]
\begin{center}
	\begin{tabular}{rccccccc}
		$x_1=$ & 0 & 1 & 2 & 3 & 4 & 5 & $\ldots$\\
		$o^-((x_1))=$&\cellcolor[gray]{0.8}$\Next$&\cellcolor[gray]{0.8}$\Prev$&\cellcolor[gray]{0.8}$\Next$&\cellcolor[gray]{0.8}$\Prev$&\cellcolor[gray]{0.8}$\Next$&\cellcolor[gray]{0.8}$\Prev$&\cellcolor[gray]{0.8}$\ldots$
	\end{tabular}	
\end{center}
\caption{The outcomes for positions $(x_1)$ in the game $\Left(1,2)$, $\Right(1)$.}
\label{table-L(1,2)R(1)-x1}
\end{table}	

It is easy to see that the pattern $o^-(x_1) = o^-(x_1 +2)$ will continue.  Using the notation of \cite{MIKE}, we let $R_1$ denote the pre-period and $D_1$ denote the period, and so $R_1 = (0)$, $D_1 = (2)$.  This gives the candidate monoid
	\begin{align*}
	\monoid{M}_{\cl{h_1}}^{\heartsuit} &= \ideal{1,a \mid a^2=1} \\
	\Next &= \{1\} \\
	\Prev &= \{a\} \\
	\Left &= \emptyset \\
	\Right &= \emptyset.
	\end{align*}
via the map
	\begin{align*}
		h_0 &\mapsto 1, \\
		h_1 &\mapsto a.
	\end{align*}
As this monoid has only two elements ($1$ and $a$) and they are clearly distinguishable (as they have different outcome classes), there are no further relations and $\monoid{M}_{\cl{h_1}}^{\heartsuit} = \monoid{M}_{\cl{h_1}}$.  

We now continue to investigate by examining positions $(x_1, x_2)$.  Table \ref{table-L(1,2)R(1)-x1.x2} gives outcomes $o^-((x_1, x_2))$.

\begin{table}[htb]
\begin{center}

\begin{tabular}{rccccc}
$x_1=$&0&1&2&3&\ldots\\
$o^-((x_1,0))=$&\cellcolor[gray]{0.8}$\Next$&\cellcolor[gray]{0.8}$\Prev$&\cellcolor[gray]{0.8}$\Next$&\cellcolor[gray]{0.8}$\Prev$&\cellcolor[gray]{0.8}$\ldots$\\
$o^-((x_1,1))=$&$\Next$&$\Left$&$\Next$&$\Left$&$\ldots$\\
$o^-((x_1,2))=$&\cellcolor[gray]{0.8}$\Left$&\cellcolor[gray]{0.8}$\Left$&\cellcolor[gray]{0.8}$\Left$&\cellcolor[gray]{0.8}$\Left$&\cellcolor[gray]{0.8}$\ldots$\\
$o^-((x_1,3))=$&$\Left$&$\Left$&$\Left$&$\Left$&$\ldots$\\
\end{tabular}

\end{center}
\caption{The outcomes for positions $(x_1,x_2)$ in the game $\Left(1,2)$, $\Right(1)$.}
\label{table-L(1,2)R(1)-x1.x2}
\end{table}	

To determine the outcomes of positions in Table \ref{table-L(1,2)R(1)-x1.x2}, we proceeded as follows:  From any position in the table, Left's moves are
	\begin{enumerate}
		\item to move one position to the left (corresponding with taking a 
		one token from a heap of size one),
		\item to move to the position diagonally right and up (corresponding with taking one token from a heap of size two), or
		\item to move one position up (corresponding with taking two tokens from a heap of size two).
	\end{enumerate}

Right's moves are
	\begin{enumerate}
		\item to move one position to the left (corresponding with taking a 
		one token from a heap of size one), or
		\item to move to the position diagonally right and up (corresponding with taking one token from a heap of size two).
	\end{enumerate}

That the first row in Table \ref{table-L(1,2)R(1)-x1.x2} is periodic follows from the periodicity of positions of the form $(x_1)$.  For the subsequent three rows, it is easy to see that once the outcomes repeat, they have become periodic as the outcome relies on the row immediately previous (which is periodic), and the preceding element in the row.  

Letting $i=2$, $r_2=2$, and $d_2 = 1$, Table \ref{table-L(1,2)R(1)-x1.x2} gives us that the outcomes of $(x_1, 2)$ and $(x_1, 3)$ agree for all $x_1$.  Thus we have satisfied condition \eqref{item-1-mike} of Theorem \ref{theorem-2-mike}.   Condition \eqref{item-2-mike} follows vacuously, so Theorem \ref{theorem-2-mike} has been satisfied.   We then have pre-period $R_2 = (0,2)$ and period $D_2 = (2,1)$.  This gives the candidate monoid
	\begin{align*}
	\monoid{M}_{\cl{h_1, h_2}}^{\heartsuit} &= \ideal{1,a, b \mid a^2=1, b^2 = b^3} \\
	\Next &= \{1,b\} \\
	\Prev &= \{a\} \\
	\Left &= \{b^2, ab, ab^2\} \\
	\Right &= \emptyset.
	\end{align*}
via the map
	\begin{align*}
		h_0 &\mapsto 1, \\
		h_1 &\mapsto a,\\
		h_2 &\mapsto b.
	\end{align*}
It remains to check if there are any further indistinguishability relations.  If two elements are in different outcome classes, then they are distinguishable.  Thus, we must only check elements which are in the same outcome classes.

The elements $1$ and $b$ are distinguished by $a$ with $1 \cdot a = a$ is a $\Prev$ position while $ab$ is an $\Left$ position.

The element $ab$ is distinguishable from both $b^2$ and $ab^2$ by $a$ since $a^2b=b$ has outcome $\Next$ while $ab^2$ and $a^2b^2 = b^2$ both have outcome $\Left$.   However $b^2$ and $ab^2$ are indistinguishable, as shown in Table \ref{table-b2,ab2-indistinguishable-L(1,2)R(1)}.

	\begin{table}[htb]
	\begin{center}
	    \begin{tabular}{llclc}
	    $\boldsymbol{x}$&$\boldsymbol{b^2x}$&$\boldsymbol{o^-(b^2x)}$&$\boldsymbol{ab^2x}$&$\boldsymbol{o^-(ab^2x)}$ 
	    \\ 
	    \cellcolor[gray]{0.8}1 &\cellcolor[gray]{0.8}$b^2$ &\cellcolor[gray]{0.8}$\Left$ &\cellcolor[gray]{0.8}$ab^2$ &\cellcolor[gray]{0.8}$\Left$ \\
	    $a$ & $ab^2$ & $\Left$ & $b^2$ & $\Left$ \\
		\cellcolor[gray]{0.8}$b$
	    &\cellcolor[gray]{0.8}$b^2$&\cellcolor[gray]{0.8}$\Left$&\cellcolor[gray]{0.8}$ab^2$&\cellcolor[gray]{0.8}$\Left$\\	    
		$b^2$
	    & $b^2$ & $\Left$ & 
	    $b^2$
	    & $\Left$\\
	    \cellcolor[gray]{0.8}$ab$
	    &\cellcolor[gray]{0.8}$ab^2$&\cellcolor[gray]{0.8}$\Left$&\cellcolor[gray]{0.8}$ab^2$&\cellcolor[gray]{0.8}$\Left$\\	    
	    $ab^2$&$ab^2$&$\Left$&$ab^2$&$\Left$\\
	    \end{tabular}
	\end{center}
	\caption{The indistinguishability of $b^2$ and $ab^2$ in $\monoid{M}_{\cl{h_1,h_2}}$ in the game $\Left(1,2)$, $\Right(1)$.}
	\label{table-b2,ab2-indistinguishable-L(1,2)R(1)}
	\end{table}		

Therefore, we add in the relation $ab^2 = b^2$ to our monoid, yielding the final monoid
	\[ \monoid{M}_{\cl{h_1,h_2}} = \ideal{1,a,b \mid a^2=1, b^2 = b^3, ab^2 = b^2}.\]

We now move on to $h_3$.  We calculate the outcome tables up to $x_3 = 2$.  Note that since $R_2 = (0,2)$ and $D_1 = (1,1)$, we only (initially) calculate up to $x_1 = 2$ and $x_2 = 3$, i.e.\ to the point where the previous heaps demonstrated their periodicity.  If the outcome tables for $x_3$ become periodic by this point, they will then remain periodic.  Table \ref{table-L(1,2)R(1)-x1.x2,x3} gives the outcome tables for $x_3=0$, $1$, and $2$.

\begin{table}[htb]
\begin{center}
	\begin{tabular}{llcccc}
		$\boldsymbol{x_3=0}$&$x_1=$&0&1&2&$\ldots$\\
			&$x_2=0$&\cellcolor[gray]{0.8}$\Next$&\cellcolor[gray]{0.8}$\Prev$&\cellcolor[gray]{0.8}$\Next$&\cellcolor[gray]{0.8}$\ldots$\\
			&$x_2=1$&$\Next$&$\Left$&$\Next$&$\ldots$\\
			&$x_3=2$&\cellcolor[gray]{0.8}$\Left$&\cellcolor[gray]{0.8}$\Left$&\cellcolor[gray]{0.8}$\Left$&\cellcolor[gray]{0.8}$\ldots$\\
			&$x_3=3$&$\Left$&$\Left$&$\Left$&$\ldots$\\
	\end{tabular}
	
	\text{ } \\ 
	\text{ } \\
	
	\begin{tabular}{llcccc}
		$\boldsymbol{x_3=1}$&$x_1=$&$0$&$1$&$2$&$\ldots$\\
			&$x_2=0$&\cellcolor[gray]{0.8}$\Left$&\cellcolor[gray]{0.8}$\Left$&\cellcolor[gray]{0.8}$\Left$&\cellcolor[gray]{0.8}$\ldots$ \\
			&$x_2=1$&$\Left$&$\Left$&$\Left$&$\ldots$ \\
			&$x_3=2$&\cellcolor[gray]{0.8}$\Left$&\cellcolor[gray]{0.8}$\Left$&\cellcolor[gray]{0.8}$\Left$&\cellcolor[gray]{0.8}$\ldots$ \\
			&$x_3=3$&$\Left$&$\Left$&$\Left$&$\ldots$ \\
	\end{tabular}

	\text{ } \\ 
	\text{ } \\
	
	\begin{tabular}{llcccc}
		$\boldsymbol{x_3=2}$&$x_1=$&$0$&$1$&$2$&$\ldots$\\
			&$x_2=0$&\cellcolor[gray]{0.8}$\Left$&\cellcolor[gray]{0.8}$\Left$&\cellcolor[gray]{0.8}$\Left$&\cellcolor[gray]{0.8}$\ldots$ \\
			&$x_2=1$&$\Left$&$\Left$&$\Left$&$\ldots$ \\
			&$x_3=2$&\cellcolor[gray]{0.8}$\Left$&\cellcolor[gray]{0.8}$\Left$&\cellcolor[gray]{0.8}$\Left$&\cellcolor[gray]{0.8}$\ldots$ \\
			&$x_3=3$&$\Left$&$\Left$&$\Left$&$\ldots$ \\
	\end{tabular}
	
\end{center}
\caption{The outcomes for positions $(x_1,x_2,x_3)$ in the game $\Left(1,2)$, $\Right(1)$.}
\label{table-L(1,2)R(1)-x1.x2,x3}
\end{table}	

We can see that outcomes for $(x_1, x_2, 1)$ agree with those for $(x_1, x_2, 2)$ and the periodicity of $x_1$ and $x_2$ remain intact.  Therefore, by Theorem \ref{theorem-2-mike}, we have pre-period $R_3 = (0,2,1)$ and period $D_3 = (2,1,1)$.  This gives the candidate quotient
	\begin{align*}
	\monoid{M}_{\cl{h_1, h_2,h_3}}^{\heartsuit} &= \ideal{1,a, b,c \mid a^2=1, b^2 = b^3, c=c^2} \\
	\Next &= \{1,b\} \\
	\Prev &= \{a\} \\
	\Left &= \{b^2, c, ab, ab^2, ac, bc, b^2c, abc, ab^2c\} \\
	\Right &= \emptyset.
	\end{align*}
via the map
	\begin{align*}
		h_0 &\mapsto 1, \\
		h_1 &\mapsto a,\\
		h_2 &\mapsto b,\\
		h_3 &\mapsto c.
	\end{align*}
We note that provided there is at least one copy of $h_3$ in the position, the position is an $\Left$ position.  

It remains to check for the distinguishability of the elements.  Again, $1$ and $b$ are distinguished by $a$.  We now have the set of $\Left$ elements to check.  As every element with $c$ is an $\Left$ position, we have that
	\[ c = ac = bc = b^2c = abc =ab^2c\]
since $o^-(cx) = o^-(xyc) = \Left$.  It remains to check whether $b^2$, $c$, $ab$, and $ab^2$ are distinguishable.  Table \ref{table-b2,c,ab2-indistinguishable-L(1,2)R(1)} shows that while $ab$ is distinguishable from the other three, the rest are indistinguishable.  

	\begin{table}[htb]
	\begin{center}
	    \begin{tabular}{llclclclc} 
	    $\boldsymbol{x}$&$\boldsymbol{b^2x}$&$\boldsymbol{o^-(b^2x)}$& $\boldsymbol{cx}$&$\boldsymbol{o^-(cx)}$&$\boldsymbol{abx}$&$\boldsymbol{o^-(abx)}$&$\boldsymbol{ab^2x}$&$\boldsymbol{o^-(ab^2x)}$
	    \\ 
	    \rowcolor[gray]{0.8}1 & $b^2$ & $\Left$ & $c$ & $\Left$ & $ab$ & $\Left$& $ab^2$ & $\Left$\\
	    $a$ & $ab^2$ & $\Left$ & $ac$ & $\Left$ & $b$ & $\Next$ & $b^2$ & $\Left$ \\
	    \rowcolor[gray]{0.8}$b$
	    & $b^2$ & $\Left$ & $bc$ & $\Left$ & $ab^2$ & $\Left$ &$ab^2$ & $\Left$\\
		$b^2$
	    & $b^2$ & $\Left$ & 
	    $b^2c$
	    & $\Left$ & $ab^2$ &$\Left$ & $ab^2$ & $\Left$\\
	    \rowcolor[gray]{0.8}$c$ & $b^2c$ & $\Left$ & $c$ & $\Left$ & $abc$ & $\Left$ & $ab^2c$ & $\Left$\\
	    $ab$
	    & $ab^2$ & $\Left$ & $abc$ & $\Left$ &$b^2$ & $\Left$ & $b^2$ & $\Left$\\
	    \rowcolor[gray]{0.8}$ab^2$&$ab^2$&$\Left$&$ab^2c$&$\Left$ & $b^2$ & $\Left$ & $b^2c$ & $\Left$\\
	    $ac$ & $ab^2c$ & $\Left$ & $ac$ & $\Left$ & $bc$ & $\Left$ & $b^2c$ & $\Left$\\
	    \rowcolor[gray]{0.8}$bc$ & $b^2c$ & $\Left$ & $bc$ & $\Left$ &$ab^2c$ & $\Left$ &$ab^2c$ & $\Left$\\
	    $b^2c$ & $b^2c$ & $\Left$ & $b^2c$ & $\Left$ & $ab^2c$ & $\Left$ & $ab^2c$ & $\Left$\\
	    \rowcolor[gray]{0.8}$abc$ & $ab^2c$ & $\Left$ & $abc$ & $\Left$ & $b^c$ & $\Left$ & $b^c$ & $\Left$\\
	    $ab^2c$ & $ab^2c$ & $\Left$ & $ab^2c$ & $\Left$ & $b^2c$ & $\Left$ & $b^2c$ & $\Left$\\
	    \end{tabular}
	\end{center}
	\caption{The indistinguishability of $b^2$, $c$, and $ab^2$ in $\monoid{M}_{\cl{h_1,h_2,h_3}}$ in the game $\Left(1,2)$, $\Right(1)$.}
	\label{table-b2,c,ab2-indistinguishable-L(1,2)R(1)}
	\end{table}

We now adjust our monoid.  Rather than
	\[ h_3 \mapsto c,\]
we change to
	\[ h_3 \mapsto b^2,\]
and we add the relation $b^2 = ab^2$ into our monoid to obtain
	\begin{align*}
	\monoid{M}_{\cl{h_1, h_2,h_3}} &= \ideal{1,a, b \mid a^2=1, b^2 = b^3, b^2 = ab^2} \\
	\Next &= \{1,b\} \\
	\Prev &= \{a\} \\
	\Left &= \{b^2, ab\} \\
	\Right &= \emptyset.
	\end{align*}
via the map
	\begin{align*}
		h_0 &\mapsto 1, \\
		h_1 &\mapsto a,\\
		h_2 &\mapsto b,\\
		h_3 &\mapsto b^2.
	\end{align*}	

When we begin calculating values with $h_4 \ge 1$, we notice that our first few values all seem have outcome $\Left$.  This pattern continues, as the following Proposition shows:

\begin{proposition}\label{prop-L(1,2)R(1)-x4-L}
	Consider position $(x_1, x_2, x_3, x_4)$ where $x_4 \ge 1$.  Then
		\[ o^-((x_1, x_2, x_3, x_4)) \in \Left.\]
\end{proposition}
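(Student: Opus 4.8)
The plan is to prove the proposition by induction on the total number of tokens $N = x_1 + 2x_2 + 3x_3 + 4x_4$, noting that every move strictly decreases $N$. For a position $(x_1,x_2,x_3,x_4)$ with $x_4 \ge 1$ I would verify the two conditions from Table~\ref{table-outcomes} that together characterise an $\Left$ position: that some Left option lies in $\Left \cup \Prev$, and that every Right option lies in $\Left \cup \Next$. The base case is $h_4 = (0,0,0,1)$, for which we already know $o^-(h_4) = \Left$.

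First I would dispose of Right's options, i.e.\ the condition that every $\xi^R$ is in $\Left \cup \Next$. Since $x_4 \ge 1$, Right has at least one move, and Right's only moves are to remove a single token from a heap of size $1$, $2$, $3$, or $4$, yielding $(x_1-1,x_2,x_3,x_4)$, $(x_1+1,x_2-1,x_3,x_4)$, $(x_1,x_2+1,x_3-1,x_4)$, or $(x_1,x_2,x_3+1,x_4-1)$ respectively. Each of the first three still has $x_4 \ge 1$ and so is $\Left$ by the induction hypothesis. The fourth, reducing a heap of size $4$, is also $\Left$ by induction when $x_4 \ge 2$; and when $x_4 = 1$ it is the position $(x_1,x_2,x_3+1)$, which contains at least one heap of size $3$ and hence is $\Left$ by the fact recorded during the computation of $\monoid{M}_{\cl{h_1,h_2,h_3}}$. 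Thus every Right option is $\Left$, hence in $\Left \cup \Next$.

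Next I would exhibit a single winning first move for Left. Because $x_4 \ge 1$, Left may always remove one token from a heap of size $4$, reaching $(x_1,x_2,x_3+1,x_4-1)$; by exactly the argument just given (whether $x_4 \ge 2$ or $x_4 = 1$) this position is $\Left$, hence in $\Left \cup \Prev$, so it is a winning move for Left moving first. With both conditions of Table~\ref{table-outcomes} satisfied, $(x_1,x_2,x_3,x_4)$ is an $\Left$ position, completing the induction.

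The feature that makes the induction close cleanly is the asymmetry between the players at the boundary of the $\Left$-region: removing only one token, Right can never reduce the last heap of size $4$ to anything smaller than a heap of size $3$, and so never escapes the winning region; whereas Left's two-token move could land in $\cl{h_1,h_2}$, but Left is never forced to use it. The main point needing care is therefore not a calculation but the correct invocation of the auxiliary fact that every position with at least one heap of size $3$ and none larger is $\Left$. To keep the argument self-contained I would either cite it from the $\monoid{M}_{\cl{h_1,h_2,h_3}}$ analysis or observe directly that every such position maps to the $\Left$-element $b^2$ of $\monoid{M}_{\cl{h_1,h_2,h_3}}$, since $h_3 \mapsto b^2$ and $a^i b^{j+2} = b^2$ for all $i,j \ge 0$ under the relations $a^2 = 1$, $b^2 = b^3$, and $ab^2 = b^2$.
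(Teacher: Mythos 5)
Your proof is correct and follows essentially the same route as the paper's: all of Right's single-token moves preserve a heap of size at least three (landing either back in the $x_4\ge 1$ region or in a position containing an $h_3$, which is $\Left$), and Left wins by making the same move that shrinks a $4$-heap to a $3$-heap. The only difference is bookkeeping — you run a single induction on the total token count where the paper nests an induction on $x_4$ inside an induction on the options of $(x_1,x_2,x_3)$ — and your explicit appeal to Table~\ref{table-outcomes} and to the monoid computation showing $a^i b^{j+2}=b^2$ makes the argument, if anything, slightly tighter.
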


\begin{proof}
	We proceed by induction on $x_4$.  When $x_4 = 1$, we have position $(x_1, x_2, x_3, 1)$.  We proceed by induction on the options of $(x_1, x_2, x_3)$.  When $(x_1, x_2, x_3) = (0,0,0)$, we have the position $(0,0,0,1)$.  Right's only move is to $(0,0,1,0)$, which is an $\Left$ position.  Left also makes that move, so 
		\[ o^-((0,0,0,1))  = \Left.\]
	Now take position $(x_1, x_2, x_3)$ and suppose that for $(y_1, y_2,y_3)$ any option of $(x_1, x_2, x_3)$ that
		\[ o^-((y_1, y_2,y_3,1)) = \Left.\]
	No matter what move Right makes, there will be at least one heap of size either three or four.  By induction, this means that Right can only move to $\Left$ positions.  Left will take one of those moves as well, and so we have our desired result.
	
	Now suppose true for all $1 \le x_4 < k$ and consider the position $(x_1, x_2, x_3, k)$.  We proceed by induction on the options of $(x_1, x_2, x_3)$.  When $(x_1, x_2, x_3) = (0,0,0)$,  Right moves to $(0,0,1,k-1)$, which is an $\Left$ position by induction.  Left makes the same move, and so we have our result.  Now suppose the result is true for all options of $(x_1, x_2, x_3)$.  Right moving first in $(x_1, x_2, x_3, k)$ has the following four possible moves:
		\begin{enumerate}
			\item $(x_1-1,x_2,x_3,k)$,
			\item $(x_1+1, x_2-1, x_3,k)$,
			\item $(x_1, x_2+1, x_3-1, k)$,
			\item $(x_1, x_2, x_3+1, k-1)$,
		\end{enumerate}
	where the first three are $\Left$ positions by induction on $(x_1, x_2, x_3)$ and the last is an $\Left$ position by induction on $k$.  Left will also make one of those moves and so $o^-((x_1, x_2, x_3, k)) = \Left$.
\end{proof}

We could have, equivalently, used the method of \cite{MIKE} rather than a tedious induction to show this result.  This should convince any doubters of why \cite{MIKE}, a much prettier method, should be used in calculations of heap-based \mis monoids.  However, whatever method we use to obtain Proposition \ref{prop-L(1,2)R(1)-x4-L}, we also obtain the following corollary.

\begin{corollary}
	In $\cl{h_1,h_2,h_3,h_4}$, we have
		\[ h_3 \equiv h_4 \imod{\cl{h_1,h_2,h_3,h_4}}.\]
\end{corollary}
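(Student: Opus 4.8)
The plan is to unwind the definition of indistinguishability (Definition \ref{def-indis}): to prove $h_3 \equiv h_4 \imod{\cl{h_1,h_2,h_3,h_4}}$ I must verify that $o^-(h_3 + \gamma) = o^-(h_4 + \gamma)$ for every $\gamma \in \cl{h_1,h_2,h_3,h_4}$. Rather than compare the two outcomes case by case, I would show that both are always $\Left$; since both sides land in the same outcome class for every $\gamma$, the required equality is automatic. The reason to expect this is the overarching feature of the game $\Left(1,2)$, $\Right(1)$, namely that once a sufficiently large heap is present, Left's extra move makes the whole position a win for her.

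Concretely, write $\gamma = (x_1, x_2, x_3, x_4)$. Then $h_4 + \gamma = (x_1, x_2, x_3, x_4 + 1)$ has $x_4 + 1 \ge 1$, so Proposition \ref{prop-L(1,2)R(1)-x4-L} gives $o^-(h_4 + \gamma) = \Left$ immediately. For the other side, $h_3 + \gamma = (x_1, x_2, x_3 + 1, x_4)$ contains at least one heap of size three, and I would split into two subcases: if $x_4 \ge 1$, then again Proposition \ref{prop-L(1,2)R(1)-x4-L} applies and the position is $\Left$; if $x_4 = 0$, then $h_3 + \gamma$ lies in $\cl{h_1,h_2,h_3}$ with at least one copy of $h_3$, so it is $\Left$ by the outcome computation recorded in Table \ref{table-L(1,2)R(1)-x1.x2,x3} (equivalently, every such position maps to the element $b^2$ of $\monoid{M}_{\cl{h_1,h_2,h_3}}$, which lies in the $\Left$ portion). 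Either way $o^-(h_3 + \gamma) = \Left$, matching $o^-(h_4 + \gamma)$.

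The point that needs a line of care — and the closest thing to an obstacle — is the legitimacy of invoking the $\cl{h_1,h_2,h_3}$ result for a position considered inside the larger universe $\cl{h_1,h_2,h_3,h_4}$. This is harmless because the misère outcome $o^-$ of a fixed position depends only on that position's game tree, not on the closed set in which we happen to place it; a position with $x_4 = 0$ has the same outcome whether viewed in $\cl{h_1,h_2,h_3}$ or in $\cl{h_1,h_2,h_3,h_4}$. Having dispatched both subcases, I would conclude that $o^-(h_3 + \gamma) = \Left = o^-(h_4 + \gamma)$ for all $\gamma \in \cl{h_1,h_2,h_3,h_4}$, which is exactly the statement $h_3 \equiv h_4 \imod{\cl{h_1,h_2,h_3,h_4}}$. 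In short, the corollary is a genuine corollary: the real work was already done in Proposition \ref{prop-L(1,2)R(1)-x4-L} and in the $\cl{h_1,h_2,h_3}$ monoid, and the remaining argument is just a bookkeeping split on whether $\gamma$ supplies a heap of size four.
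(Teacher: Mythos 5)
Your proof is correct and follows essentially the same route as the paper's: the paper's one-line argument is precisely that any position containing a heap of size three or four is an $\Left$ position (combining Proposition \ref{prop-L(1,2)R(1)-x4-L} with the earlier observation for $\cl{h_1,h_2,h_3}$), so $h_3+\gamma$ and $h_4+\gamma$ always share the outcome $\Left$. Your version merely makes explicit the case split on whether $\gamma$ contains a heap of size four, and the (correct) remark that $o^-$ depends only on the position's game tree, not on the ambient closed set.
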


\begin{proof}
	Any element which has at least one heap of size three or one heap of size four is an $\Left$ position.  Therefore $h_3$ and $h_4$ are indistinguishable.
\end{proof}

We can now easily calculate $\monoid{M}_{\cl{h_1,h_2,h_3,h_4}}$;  it will be isomorphic to $\monoid{M}_{\cl{h_1, h_2,h_3}}$ with $h_4 \mapsto b^2$, i.e.\
	\begin{align*}
	\monoid{M}_{\cl{h_1, h_2,h_3, h_4}} &= \ideal{1,a, b \mid a^2=1, b^2 = b^3} \\
	\Next &= \{1,b\} \\
	\Prev &= \{a\} \\
	\Left &= \{b^2, ab, ab^2\} \\
	\Right &= \emptyset.
	\end{align*}
via the map
	\begin{align*}
		h_0 &\mapsto 1, \\
		h_1 &\mapsto a,\\
		h_2 &\mapsto b,\\
		h_3 &\mapsto b^2,\\
		h_4 &\mapsto b^2.
	\end{align*}	

Moreover, we can repeat the same argument with $h_5$, and then $h_6$, etc.  That is,
	\begin{align*}
	\monoid{M}_{\Left(1,2),\Right(1)} &= \ideal{1,a, b \mid a^2=1, b^2 = b^3} \\
	\Next &= \{1,b\} \\
	\Prev &= \{a\} \\
	\Left &= \{b^2, ab, ab^2\} \\
	\Right &= \emptyset.
	\end{align*}
via the map
	\begin{align*}
		h_0 &\mapsto 1, \\
		h_1 &\mapsto a,\\
		h_2 &\mapsto b,\\
		h_n &\mapsto b^2 \text{ for all } n \ge 3.
	\end{align*}

\section{The Partizan Subtraction Game $\Left(1)$, $\Right(2)$}

This section gives a partizan subtraction game with an infinite \mis monoid.  
In the subtraction game $\Left(1)$, $\Right(2)$, Left's only move is to subtract one token from a heap, while Right's only move is to subtract two tokens from a heap.  

We will again proceed using the method in \cite{MIKE}.  

We begin by looking at the position $(x_1)$.  The outcomes $o^-((x_1))$ are given in Table \ref{table-L(1)R(2)-x1}.

\begin{table}[htb]
\begin{center}
	\begin{tabular}{rccccccc}
		$x_1=$&0&1&2&3&4&5&$\ldots$\\
		$o^-((x_1))=$&\cellcolor[gray]{0.8}$\Next$&\cellcolor[gray]{0.8}$\Right$&\cellcolor[gray]{0.8}$\Right$&\cellcolor[gray]{0.8}$\Right$&\cellcolor[gray]{0.8}$\Right$&\cellcolor[gray]{0.8}$\Right$&\cellcolor[gray]{0.8}$\ldots$
	\end{tabular}
\end{center}
\caption{The outcomes for positions $(x_1)$ in the game $\Left(1)$, $\Right(2)$.}
\label{table-L(1)R(2)-x1}
\end{table}	

It is easy to see that the pattern $o^-(x_1) = o^-(x_1 +1)$ will continue.  Using the notation of \cite{MIKE}, we let $R_1$ denote the pre-period and $D_1$ denote the period, and so $R_1 = (1)$, $D_1 = (1)$.  This gives the candidate monoid
	\begin{align*}
	\monoid{M}_{\cl{h_1}}^{\heartsuit} &= \ideal{1,a \mid a=a^2} \\
	\Next &= \{1\} \\
	\Prev &= \emptyset \\
	\Left &= \emptyset \\
	\Right &= \{a\}.
	\end{align*}
via the map
	\begin{align*}
		h_0 &\mapsto 1, \\
		h_1 &\mapsto a.
	\end{align*}
As this monoid has only two elements ($1$ and $a$) and they are clearly distinguishable as they have different outcome classes, there are no further relations and $\monoid{M}_{\cl{h_1}}^{\heartsuit} = \monoid{M}_{\cl{h_1}}$.  

We now continue to investigate by examining positions $(x_1, x_2)$.  Table \ref{table-L(1)R(2)-x1.x2} gives outcomes $o^-((x_1, x_2))$ with $x_1 \le 7$ and $x_2 \le 6$.  

\begin{table}[htb]
\begin{center}

\begin{tabular}{rccccccccc}
$x_1=$ & 0 &1& 2&3&4&5&6&7&$\ldots$\\ 
	$o^-((x_1,0))$= & \cellcolor[gray]{0.9}$\Next$ & $\Right$ & 
	    \cellcolor[gray]{0.8}$\Right$ & $\Right$ & 
	    \cellcolor[gray]{0.9}$\Right$ & $\Right$ & 
	    \cellcolor[gray]{0.8}$\Right$ & $\Right$ &$\ldots$ \\
	$o^-((x_1,1))$= & $\Prev$ & 
	    \cellcolor[gray]{0.9}$\Next$ & $\Right$ & 
	    \cellcolor[gray]{0.8}$\Right$ & $\Right$ & 
	    \cellcolor[gray]{0.9}$\Right$ & $\Right$ & 
	    \cellcolor[gray]{0.8}$\Right$ &$\ldots$ \\
	$o^-((x_1,2))$= & \cellcolor[gray]{0.8}$\Right$ & $\Prev$ & 
	    \cellcolor[gray]{0.9}$\Next$ & $\Right$ & 
	    \cellcolor[gray]{0.8}$\Right$ & $\Right$ & 
	    \cellcolor[gray]{0.9}$\Right$ & $\Right$ &$\ldots$ \\
	$o^-((x_1,3))$= & $\Next$ & 
	    \cellcolor[gray]{0.8}$\Right$ & $\Prev$ & 
	    \cellcolor[gray]{0.9}$\Next$ & $\Right$ & 
	    \cellcolor[gray]{0.8}$\Right$ & $\Right$ & 
	    \cellcolor[gray]{0.9}$\Right$ & $\ldots$ \\
	$o^-((x_1,4))$= & \cellcolor[gray]{0.9} $\Prev$ & $\Next$ & 
	    \cellcolor[gray]{0.8}$\Right$ & $\Prev$ & 
	    \cellcolor[gray]{0.9}$\Next$ & $\Right$ & 
	    \cellcolor[gray]{0.8}$\Right$ & $\Right$ &$\ldots$ \\
	$o^-((x_1,5))$= & $\Right$ & 
	    \cellcolor[gray]{0.9}$\Prev$ & $\Next$ & 
	    \cellcolor[gray]{0.8}$\Right$ & $\Prev$ & 
	    \cellcolor[gray]{0.9}$\Next$ & $\Right$ & 
	    \cellcolor[gray]{0.8}$\Right$ &$\ldots$ \\
	$o^-((x_1,6))$= & \cellcolor[gray]{0.8}$\Next$ & $\Right$ & 
	    \cellcolor[gray]{0.9}$\Prev$ & $\Next$ &
	    \cellcolor[gray]{0.8}$\Right$ & $\Prev$ & 
	    \cellcolor[gray]{0.9}$\Next$ & $\Right$ &$\ldots$ 
\end{tabular}
\end{center}
\caption{The outcomes for positions $(x_1,x_2)$ with $x_1 \le 7$ and $x_2 \le 6$ in the game $\Left(1,2)$, $\Right(1)$.}
\label{table-L(1)R(2)-x1.x2}
\end{table}

To determine the outcomes of positions in Table \ref{table-L(1)R(2)-x1.x2}, we proceeded as follows:  From any position in the table, Left's moves are
	\begin{enumerate}
		\item to move one position to the left (corresponding with taking a 
		one token from a heap of size one),
		\item to move to the position diagonally right and up (corresponding with taking one token from a heap of size two).
	\end{enumerate}

Right's moves are
	\begin{enumerate}
		\item to move one position up (corresponding with taking a 
		two tokens from a heap of size two).
	\end{enumerate}

We can see that the table has become, to use \cite{MIKE}'s terminology, ``diagonally" periodic, that is $o^-((x_1, x_2)) = o^-((x_1+1, x_2+1))$.  Moreover, when $x_1 > x_2$, we have $o^-((x_1,x_2)) = \Right$.  We encapsulate these results in the following proposition.

\begin{proposition}
	Let $(x_1, x_2)$ be a position in the game $\Left(1)$, $\Right(2)$.  Then
		\[ o^-((x_1, x_2)) = \begin{cases}
		\Right &\text{if } x_1 > x_2; \\
		\Next & \text{if } x_1 \le x_2,\text{ } x_1 \equiv x_2 \imod{3}; \\
		\Prev & \text{if } x_1 \le x_2,\text{ } x_1 \equiv x_2 + 2 \imod{3};\\
		\Right &\text{if } x_1 \le x_2,\text{ } x_1 \equiv x_2 + 1 \imod{3}.
		\end{cases}\]
\end{proposition}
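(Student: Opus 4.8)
The plan is to prove the formula by induction on the total number of tokens $x_1 + 2x_2$ in the position (equivalently, by induction on the options, as is done throughout this chapter and in the proof of Theorem \ref{theorem-B-oc}). The base case is the empty position $(0,0)$, which has no moves for either player and hence satisfies $o^-((0,0)) = \Next$; since $x_1 = x_2 = 0$ lies in the regime $x_1 \le x_2$ with $x_1 \equiv x_2 \imod{3}$, this agrees with the claimed value. For the inductive step I would reparametrize by $k = x_2 - x_1$, so that the four clauses read: $\Right$ when $k < 0$; $\Next$ when $k \ge 0$ and $k \equiv 0 \imod{3}$; $\Prev$ when $k \ge 0$ and $k \equiv 1 \imod{3}$; and $\Right$ when $k \ge 0$ and $k \equiv 2 \imod{3}$. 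The only moves available, as recorded before Table \ref{table-L(1)R(2)-x1.x2}, are Left's moves to $(x_1 - 1, x_2)$ and $(x_1 + 1, x_2 - 1)$ and Right's single move to $(x_1, x_2 - 1)$; under these moves $k$ changes by $+1$, $-2$, and $-1$ respectively.

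With this set-up, each case is verified by determining who wins moving first, recalling that a move to a position $P$ is a win for Left exactly when $P \in \{\Prev, \Left\}$ and a win for Right exactly when $P \in \{\Prev, \Right\}$. First I would handle $k < 0$ (that is, $x_1 > x_2$): Right's move to $(x_1, x_2-1)$, or, when $x_2 = 0$, Right's having no move at all, leaves a $\Right$ or terminal position, so Right wins moving first, while every Left option has either $k' \ge 0$ with $k' \equiv 0 \imod{3}$ or $k' < 0$, hence is an $\Next$ or $\Right$ position, so Left loses moving first; this yields $\Right$. The cases $k \ge 0$ are analogous bookkeeping: for $k \equiv 1 \imod{3}$ I would show every option of either player is a first-player-losing move (Left's two options land in $\Right$, Right's option in $\Next$), giving $\Prev$; for $k \equiv 2 \imod{3}$ I would show Right's move reaches a $\Prev$ position while both of Left's moves reach $\Next$ positions, giving $\Right$; and for $k \equiv 0 \imod{3}$ I would exhibit a winning opening move for each player, giving $\Next$. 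Throughout, the outcome of $(x_1,x_2)$ can be read off directly, or cross-checked against Table \ref{table-outcomes}.

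The step requiring the most care is the $k \equiv 0 \imod{3}$ case, precisely because Left's winning move must be chosen so as not to stray out of the region $x_1 \le x_2$. When $k \ge 3$, the move $(x_1 + 1, x_2 - 1)$ decreases $k$ to $k - 2 \equiv 1 \imod{3}$, a $\Prev$ position, and is winning for Left; but when $k = 0$ with $x_1 = x_2 = m \ge 1$, that same move produces $x_1' > x_2'$, a $\Right$ position that is fatal for Left, so Left must instead play $(x_1 - 1, x_2)$, increasing $k$ to $1$ (again $\Prev$), a move available exactly because $x_1 = m \ge 1$. The main obstacle, then, is not any single deep idea but the disciplined boundary analysis: for each of the four sign/residue regimes one must track which moves exist (the constraints $x_1 \ge 1$ and $x_2 \ge 1$) and whether a move crosses the diagonal $x_1 = x_2$ into the $\Right$ region, and confirm in every subcase that the induction hypothesis applies to the resulting smaller position. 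Once these subcases are organized—most cleanly in a short table mirroring the moves-and-$k$-shifts described above—the verification is routine.
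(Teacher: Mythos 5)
Your proof is correct, and it takes a genuinely different (and more self-contained) route than the paper. The paper's entire proof of this proposition is ``Follows from inspection of Table \ref{table-L(1)R(2)-x1.x2}'': it computes the outcome table for $x_1 \le 7$, $x_2 \le 6$ from the local move rules, observes the diagonal periodicity $o^-((x_1,x_2)) = o^-((x_1+1,x_2+1))$ in the style of Weimerskirch's method used throughout the chapter, and reads the four-case formula off the table. You instead prove the closed form directly by induction on options, and your reparametrization by $k = x_2 - x_1$ (with the three moves shifting $k$ by $+1$, $-2$, $-1$) is exactly the right organizing device: I checked all four regimes and the boundary subcases, including the one you flag as delicate --- that in the $k \equiv 0 \imod 3$ case Left must switch from the move $(x_1+1,x_2-1)$ to $(x_1-1,x_2)$ when $x_1 = x_2 = m \ge 1$ to avoid crossing into the $x_1 > x_2$ region --- and they all go through; you also correctly handle the mis\`ere convention at terminal positions (Right winning outright when $x_2 = 0$ in the $k<0$ case, and the base case $(0,0)$ being $\Next$). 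What your approach buys is a rigorous, table-free verification that the pattern persists for all $(x_1,x_2)$, whereas the paper's approach buys brevity at the cost of leaning on the periodicity machinery and an ``inspection'' step; either is acceptable in context, but yours would stand alone outside this chapter.
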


\begin{proof}
	Follows from inspection of Table \ref{table-L(1)R(2)-x1.x2}.
\end{proof}

We now construct a candidate quotient $\monoid{M}^{\heartsuit}_{\cl{h_1,h_2}}$ via the map
	\begin{align*}
		h_0 &\mapsto 1, \\
		h_1 &\mapsto a,\\
		h_2 &\mapsto b.
	\end{align*}
We redraw Table \ref{table-L(1)R(2)-x1.x2} in terms of $a^n$ and $b^m$, giving us Table \ref{table-L(1)R(2)-x1.x2-mn}.

\begin{table}[htb]
\begin{center}
\begin{tabular}{lccccccccc}
 \cellcolor[gray]{0.9}$m \backslash n$ & 0 &1& 2&3&4&5&6&7&$\ldots$\\ 
	0 & \cellcolor[gray]{0.9}$\Next$ & $\Right$ & 
	    \cellcolor[gray]{0.8}$\Right$ & $\Right$ & 
	    \cellcolor[gray]{0.9}$\Right$ & $\Right$ & 
	    \cellcolor[gray]{0.8}$\Right$ & $\Right$ &$\ldots$ \\
	1 & $\Prev$ & 
	    \cellcolor[gray]{0.9}$\Next$ & $\Right$ & 
	    \cellcolor[gray]{0.8}$\Right$ & $\Right$ & 
	    \cellcolor[gray]{0.9}$\Right$ & $\Right$ & 
	    \cellcolor[gray]{0.8}$\Right$ &$\ldots$ \\
	2 & \cellcolor[gray]{0.8}$\Right$ & $\Prev$ & 
	    \cellcolor[gray]{0.9}$\Next$ & $\Right$ & 
	    \cellcolor[gray]{0.8}$\Right$ & $\Right$ & 
	    \cellcolor[gray]{0.9}$\Right$ & $\Right$ &$\ldots$ \\
	3 & $\Next$ & 
	    \cellcolor[gray]{0.8}$\Right$ & $\Prev$ & 
	    \cellcolor[gray]{0.9}$\Next$ & $\Right$ & 
	    \cellcolor[gray]{0.8}$\Right$ & $\Right$ & 
	    \cellcolor[gray]{0.9}$\Right$ & $\ldots$ \\
	4 & \cellcolor[gray]{0.9} $\Prev$ & $\Next$ & 
	    \cellcolor[gray]{0.8}$\Right$ & $\Prev$ & 
	    \cellcolor[gray]{0.9}$\Next$ & $\Right$ & 
	    \cellcolor[gray]{0.8}$\Right$ & $\Right$ &$\ldots$ \\
	5 & $\Right$ & 
	    \cellcolor[gray]{0.9}$\Prev$ & $\Next$ & 
	    \cellcolor[gray]{0.8}$\Right$ & $\Prev$ & 
	    \cellcolor[gray]{0.9}$\Next$ & $\Right$ & 
	    \cellcolor[gray]{0.8}$\Right$ &$\ldots$ \\
	6 & \cellcolor[gray]{0.8}$\Next$ & $\Right$ & 
	    \cellcolor[gray]{0.9}$\Prev$ & $\Next$ &
	    \cellcolor[gray]{0.8}$\Right$ & $\Prev$ & 
	    \cellcolor[gray]{0.9}$\Next$ & $\Right$ &$\ldots$ 
\end{tabular}
\end{center}
\caption{The outcomes for positions $a^nb^m$ in $\monoid{M}^{\heartsuit}_{\cl{h_1,h_2}}$with $n \le 7$ and $m \le 6$ in the game $\Left(1,2)$, $\Right(1)$.}
\label{table-L(1)R(2)-x1.x2-mn}
\end{table}

Due to the diagonal periodicity, we know that we have the indistinguishability relation $ab= 1$.  Because of this, all elements in $\monoid{M}^{\heartsuit}_{\cl{h_1,h_2}}$ are of the form $a^n$ or $b^m$ for $n, m \in \mathbb{Z}^{\ge 0}$.  Using Table \ref{table-L(1)R(2)-x1.x2-mn}, we then get the following candidate monoid and outcome tetrapartition:
	\begin{align*}
		\monoid{M}^{\heartsuit}_{\cl{h_1,h_2}} &= \ideal{1,a,b \mid 1 = ab}\\
		\Next &= \{ b^m \mid m \equiv 0 \imod{3}\} \\
		\Prev &= \{ b^m \mid m \equiv 1 \imod{3}\} \\
		\Left &= \emptyset\\
		\Right &= \{b^m \mid m \equiv 2 \imod{3}\} \cup \{a^n \mid n \in \nat\}.
	\end{align*}

We will now check that there are no indistinguishability relations other than $ab=1$.  

\begin{proposition}\label{prop-L(1)R(2)-ab=1}
	The only indistinguishability relation on $\monoid{M}^{\heartsuit}_{\cl{h_1,h_2}}$ is $ab= 1$.  Thus $\monoid{M}^{\heartsuit}_{\cl{h_1,h_2}} = \monoid{M}_{\cl{h_1,h_2}}$.
\end{proposition}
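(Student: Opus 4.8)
The plan is to recognise that the entire outcome structure of $\cl{h_1,h_2}$ is governed by a single integer invariant, and then to exhibit a uniform distinguishing element. First I would record the consequence of the preceding outcome proposition that $o^-((x_1,x_2))$ depends only on the difference $d = x_2 - x_1$: writing $O(d)$ for this common outcome, we have $O(d) = \Right$ whenever $d < 0$, and for $d \ge 0$ the value is $\Next$, $\Prev$, or $\Right$ according as $d \equiv 0, 1, 2 \imod 3$. Under the assignment $h_1 \mapsto a$, $h_2 \mapsto b$ together with the already-validated relation $ab = 1$ (which is exactly the diagonal periodicity $o^-((x_1,x_2)) = o^-((x_1+1,x_2+1))$), every element of $\monoid{M}^{\heartsuit}_{\cl{h_1,h_2}}$ is $b^d$ for a unique integer $d \ge 0$ or $a^{-d}$ for a unique integer $d < 0$; thus the elements are indexed bijectively by $\field{Z}$. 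Since $\cl{h_1,h_2}$ contains a position of every difference $d$ (namely $(0,d)$ for $d \ge 0$ and $(-d,0)$ for $d<0$), and since disjunctive sum adds the differences, the outcome of any sum is $O$ of the sum of the corresponding integers.

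With this reduction, the proposition amounts to showing that distinct integer indices give distinguishable elements, so that the only relation collapsing $\monoid{M}^{\heartsuit}_{\cl{h_1,h_2}}$ is $ab=1$. Given $d \ne d'$, I would assume without loss of generality $d < d'$ and take the distinguishing position $\gamma \in \cl{h_1,h_2}$ to be the one of difference $e = -d'$. Then the element of index $d'$ added to $\gamma$ has total difference $0$, with $O(0) = \Next$, while the element of index $d$ added to $\gamma$ has total difference $d - d' < 0$, with outcome $\Right$. As $\Next \neq \Right$, the position $\gamma$ distinguishes the two elements. This single choice of $\gamma$ works uniformly across all cases --- elements of the form $a^n$ versus $a^{n'}$, $b^m$ versus $b^{m'}$, and $a^n$ versus $b^m$ --- so no two distinct elements are indistinguishable and $\monoid{M}^{\heartsuit}_{\cl{h_1,h_2}} = \monoid{M}_{\cl{h_1,h_2}}$.

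The only genuinely substantive step is the first one: collapsing the two-variable outcome table to the one-variable function $O(d)$ and checking that the closure realises every integer difference, so that the distinguisher of difference $e = -d'$ actually lives in $\cl{h_1,h_2}$. Once that is in place there is no real obstacle, because $O$ takes the value $\Right$ on the whole negative half-line but $\Next$ at $0$; shifting the larger of the two indices onto $0$ automatically pushes the smaller one strictly negative, and the two outcomes must differ. I would double-check the edge behaviour --- for instance that $\gamma$ reduces to the empty position $0$ when $d' = 0$, and that the construction is symmetric under interchanging the roles of $a$ and $b$ (replacing $\gamma$ by the position of difference $-d$) --- but I expect no complications there.
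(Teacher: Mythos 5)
Your proof is correct, but it takes a genuinely different route from the paper's. The paper argues by cases on the outcome tetrapartition: for each pair of same-outcome elements ($b^{3j}$ vs.\ $b^{3k}$, $b^{3j+1}$ vs.\ $b^{3k+1}$, and the three subcases among the $\Right$ elements) it exhibits a separately tailored distinguishing element ($a^{3j+2}$, $a^{3j+3}$, $a^{3j+4}$, $a$, $b^m$) and computes the two resulting outcomes. You instead collapse the two-variable outcome table to the single invariant $d = x_2 - x_1$ --- which is legitimate, since the outcome proposition shows $o^-((x_1,x_2))$ is $\Right$ for $d<0$ and depends only on $d \imod 3$ for $d \ge 0$, and disjunctive sum adds differences --- and then use the one uniform distinguisher of difference $-\max(d,d')$, exploiting that $O(0)=\Next$ while $O$ is identically $\Right$ on the negative half-line. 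Your version is shorter and makes the mechanism transparent (shifting the larger index to $0$ forces the smaller one strictly negative); the paper's version is more explicit and verifies each outcome directly against the table, which is in keeping with its computational style. The one point you flag as needing care --- that the distinguisher of difference $-d'$ actually lies in $\cl{h_1,h_2}$, including the degenerate case $d'=0$ where it is the empty position $0$ --- is handled correctly, so there is no gap.
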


\begin{proof}
	Firstly take $j$, $k$, $m$, and $n$ such that $j < k$ and $0 <m < n$.  We will show that all elements are distinguishable.  Again, we need only check for elements which have the same outcome class as elements with differing outcome classes are distinguished by $1$.

	We divide into the three outcome cases:
		\begin{enumerate}
		 	\item Take two elements of $\monoid{M}^{\heartsuit}_{\cl{h_1,h_2}}$ which are both $\Next$, $b^{3j}$ and $b^{3k}$.  Consider element $a^{3j+2}$. Then
		 		\begin{align*}
					o^-(a^{3j+2}b^{3j}) &= o^-(a^2) = \Right \\
					o^-(a^{3j+2}b^{3k}) &= o^-(b^{3k-3j-2}) = \Prev.
		 		\end{align*} 	
		 	Therefore $b^{3j}$ and $b^{3k}$ are distinguishable.
		 	
		 	\item Take two elements of $\monoid{M}^{\heartsuit}_{\cl{h_1,h_2}}$ which are both $\Prev$, $b^{3j+1}$ and $b^{3k+1}$.  Consider element $a^{3j+3}$.  Then
		 		\begin{align*}
					o^-(a^{3j+3}b^{3j+1}) &= o^-(a^2) = \Right \\
					o^-(a^{3j+3}b^{3k+1}) &= o^-(b^{3k-3j-2}) = \Prev.
		 		\end{align*}
		 	Therefore $b^{3j+1}$ and $b^{3k+1}$ are distinguishable.
		 	
		 	\item Take two elements of $\monoid{M}^{\heartsuit}_{\cl{h_1,h_2}}$ which are both $\Right$.
		 		\begin{enumerate}
					\item Take elements $b^{3j+2}$ and $b^{3k+2}$.  Firstly, we claim that $3j +4 < 3k+2$.  Suppose not.  Then
						\[ 3j + 4 \ge 3k + 2 \implies 3j + 2 \ge 3k.\]
					But we assumed $j < k$, so $3j < 3k$, giving the following inequalities:
						\[ 3j < 3k \le 3j +2,\]
					which is a contradiction.  Therefore $3j + 4 < 3k+2$.
					
					Consider element $a^{3j+4}$.  Then
						\begin{align*}
							o^-(a^{3j+4}b^{3j+2}) &= o^-(a^2) = \Right \\
							o^-(a^{3j+4}b^{3k+2}) &= o^-(b^{3k-3j-2}) = \Prev.
						\end{align*}
					Therefore $b^{3j+2}$ and $b^{3k+2}$ are distinguishable.
					
					\item Take elements $b^{3j+2}$ and $a^m$.  We will use $a$ to show that these two elements are distinguishable.  Then
						\begin{align*}
							o^-(ab^{3j+2}) &= o^-(b^{3j+1}) = \Prev,\\
							o^-(aa^m) &= o^-(a^{m+1}) = \Right.
						\end{align*}
					Therefore $b^{3j+2}$ and $a^m$ are distinguishable.
					
					\item  Take elements $a^m$ and $a^n$.  We will use $b^m$ to show that these two elements are distinguishable.  Then
						\begin{align*}
							o^-(a^mb^m) &= o^-(1) = \Next,\\
							o^-(a^nb^m) &= o^-(a^{n-m}) = \Right.
						\end{align*}
					Therefore $a^m$ and $a^n$ are distinguishable.
		 		\end{enumerate}
		 \end{enumerate}
		
		 Thus all elements are distinguishable, so there are no more indistinguishability relations on $\monoid{M}^{\heartsuit}_{\cl{h_1,h_2}}$ and so $\monoid{M}^{\heartsuit}_{\cl{h_1,h_2}} = \monoid{M}_{\cl{h_1,h_2}}$.
\end{proof}

\begin{corollary}
	$\monoid{M}_{\cl{h_1,h_2}}$ contains an infinite number of elements.
\end{corollary}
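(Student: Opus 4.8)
The plan is to derive this corollary directly from Proposition \ref{prop-L(1)R(2)-ab=1}, since the essential work has already been carried out there. The key structural fact I would first record is that, because the only relation holding in $\monoid{M}_{\cl{h_1,h_2}}$ is $ab = 1$, every element of the monoid can be reduced to one of the forms $a^n$ or $b^m$ with $n, m \in \nat$. Thus it suffices to exhibit an infinite family among these normal forms that is pairwise distinguishable, and hence consists of genuinely distinct monoid elements.

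The natural family to use is $\{a^n \mid n \in \nat\}$. I would then invoke the computation already appearing in case 3(c) of the proof of Proposition \ref{prop-L(1)R(2)-ab=1}: for $m < n$, the element $b^m$ distinguishes $a^m$ from $a^n$, because $o^-(a^m b^m) = o^-(1) = \Next$ while $o^-(a^n b^m) = o^-(a^{n-m}) = \Right$. Since this holds for every pair $m \neq n$, the elements $a^0, a^1, a^2, \ldots$ are pairwise distinguishable, so the canonical quotient map sends them to pairwise distinct classes of $\monoid{M}_{\cl{h_1,h_2}}$. A monoid possessing a countably infinite set of pairwise distinct elements is itself infinite, which gives the conclusion.

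There is essentially no obstacle remaining at this stage; the difficulty of the section was concentrated entirely in establishing periodicity of the outcome table and then verifying in Proposition \ref{prop-L(1)R(2)-ab=1} that $ab = 1$ is the \emph{only} indistinguishability relation, so that no collapse occurs among the powers of $a$. Once that proposition is in hand, the corollary is a one-line consequence, and I would keep the proof correspondingly short, simply citing the distinguishability of $\{a^n\}$ from the proof of Proposition \ref{prop-L(1)R(2)-ab=1}.
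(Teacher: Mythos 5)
Your proof is correct and follows essentially the same route as the paper, which likewise deduces the corollary by citing the pairwise distinguishability of the elements $a^n$ and $b^m$ established in the proof of Proposition \ref{prop-L(1)R(2)-ab=1}. Restricting attention to the single infinite family $\{a^n \mid n \in \nat\}$ and quoting case 3(c) is a perfectly adequate instantiation of that argument.
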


\begin{proof}
	The proof of Proposition \ref{prop-L(1)R(2)-ab=1} showed that any two elements $x$ and $y$ such that 
		\[x,y \in \{a^n \mid n \in \nat\} \cup \{b^m \mid m \in \nat\}\]
	are distinguishable. Therefore $\monoid{M}^{\heartsuit}_{\cl{h_1,h_2}}$ has infinite cardinality.
\end{proof}

\begin{corollary}
	$\monoid{M}_{\cl{h_1,h_2, h_3, \ldots}}$ contains an infinite number of elements.
\end{corollary}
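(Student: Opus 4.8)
The plan is to leverage the preceding corollary together with Proposition \ref{proposition-distinguishable-subseteq}, which guarantees that distinguishability is preserved under enlargement of a closed set. The point is that $\cl{h_1,h_2}$ sits inside $\cl{h_1,h_2,h_3,\ldots}$ as a closed subset, so the infinitely many pairwise distinguishable elements already found in the smaller closure cannot collapse in the larger one.

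First I would observe that $\cl{h_1,h_2} \subseteq \cl{h_1,h_2,h_3,\ldots}$: the closure operation is monotone in its generating set, since adjoining the extra generators $h_3, h_4, \ldots$ only enlarges the smallest closed set containing them, and both sets are closed by construction. Next, I would recall from the proof of Proposition \ref{prop-L(1)R(2)-ab=1} that the positions $h_1, 2 h_1, 3 h_1, \ldots$, which map to $a, a^2, a^3, \ldots$ in $\monoid{M}_{\cl{h_1,h_2}}$, are pairwise distinguishable over $\cl{h_1,h_2}$, each pair $a^m$, $a^n$ being separated by the element $b^m = m h_2$. Then I would invoke Proposition \ref{proposition-distinguishable-subseteq} with $\Upsilon = \cl{h_1,h_2}$ and $\Gamma = \cl{h_1,h_2,h_3,\ldots}$ to conclude that these same positions remain pairwise distinguishable over the larger closure; indeed, the very same distinguishing elements $m h_2$ still lie in $\Gamma$. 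Since an infinite family of pairwise distinguishable positions yields an infinite family of distinct equivalence classes, $\monoid{M}_{\cl{h_1,h_2,h_3,\ldots}}$ has infinite cardinality.

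There is essentially no real obstacle here, as the substantive work was already done in establishing the infinitude of $\monoid{M}_{\cl{h_1,h_2}}$; the only point requiring care is the verification that the containment $\cl{h_1,h_2} \subseteq \cl{h_1,h_2,h_3,\ldots}$ holds \emph{between closed sets}, so that Proposition \ref{proposition-distinguishable-subseteq} applies verbatim without any further argument.
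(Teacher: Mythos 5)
Your proof is correct and takes essentially the same approach as the paper: both arguments exhibit an infinite pairwise-distinguishable family in $\cl{h_1,h_2}$ and observe that the distinguishing elements persist in the larger closure (the paper inlines the monotonicity argument rather than citing Proposition \ref{proposition-distinguishable-subseteq} explicitly). The only cosmetic difference is that you use the family $m\, h_1$ distinguished by multiples of $h_2$, whereas the paper uses multiples of $h_2$ distinguished by multiples of $h_1$.
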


\begin{proof}
	In the proof of Proposition \ref{prop-L(1)R(2)-ab=1}, we saw that $3j$ copies of $h_2$ is distinguishable from $3k$ copies of $h_2$ for $j < k$ and the distinguishing element was $3j+2$ copies of $h_1$.  Since arbitrary sums of $h_1$ and $h_2$ are contained in $\cl{h_1, h_2, h_3, \ldots}$, $3j$ copies of $h_2$ are still distinguishable from $3k$ copies of $h_2$ for $j<k$ by $3j+2$ copies of $h_1$ in $\cl{h_1, h_2, h_3, \ldots}$.  Thus $\monoid{M}_{\cl{h_1,h_2, h_3, \ldots}}$ is of infinite cardinality.
\end{proof}

Therefore we have found a partizan subtraction game which gives an infinite monoid.

\section{Conclusion}
The rules of our two examples $\Left(1,2),\Right(1)$ and $\Left(1), \Right(2)$ are very similar, yet their \mis monoids could not be more different, with the first being finite and the second infinite.  Even the smallest change can have huge repercussions in \mis play.  

Since subtraction games are relatively easy to manipulate and to build computer programs for their analysis, partizan subtraction games are an excellent area for further investigations into \mis partizan quotients.

\chapter{Conclusion}\label{chapter-conclusion}

The thesis takes the first serious look at partizan \mis play games and gives some initial results, the most important of which is the classification of all sets of positions $\Upsilon$ such that $\monoid{M}_{\cl{\Upsilon}} \cong \monoid{M}_{\cl{*}}$.  However, work on partizan \mis play games is far from being completed.  We conclude our discussion by listing some areas for future work which appear in or are suggested by this thesis.

\begin{enumerate}
	\item In Chapter \ref{chapter-examples}, the partial orders of the \mis monoids calculated as examples were given.   However, we have yet to find any link between the partial orders and results regarding the \mis monoids.  Does knowing something about the partial order given any results about the monoid, or vice versa?  For example, if the partial order is a lattice, what does that say, if anything about the structure of the monoid?
	
	\item In Chapter \ref{chapter-cardinality-left}, we discussed the cardinality of the \mis monoids and listed some positions which force an infinite \mis monoid.  Further work on Open Problem \ref{op-chi-finite} is needed: Classify which positions $\chi$ are such that $\monoid{M}_{\cl{\chi}}$ is a finite/infinite monoid.
	
	\item In Chapter \ref{chapter-stars-0}, we showed that if $\xi$ was an all-small position, then $* + * \equiv 0 \imod{\cl{\xi}}$. Solving Open Problem \ref{op-*+*=0} is the next step, i.e.\ determining which positions $\xi$ with $* \in \cl{\xi}$ have the property that $* + * \equiv 0 \imod{\cl{\xi}}$.
	
	\item In Chapter \ref{chapter-0} we found some sets $\Upsilon$ of positions such that for all $\xi \in \Upsilon$, $\xi + \overline{\xi} \equiv 0 \imod{\Upsilon}$.  Doing such allowed us to find a Tweedledum-Tweedledee type strategy for these positions.  What other sets of positions have these properties?
	
	\item Investigate Open Problem \ref{op-ab3}:  Investigate whether $\ab{3}$ positions share any other normal play properties than having a Tweedledee-Tweedledum type strategy.
	
	\item Prove Conjecture 	\ref{conjecture-arrow-impartial}.  That is, prove the following: If $\alpha$, $\beta$, and $\gamma$ are impartial positions and $\alpha \to \beta$ and $\beta \to \gamma$ exist, then there exists an arrow $\alpha \to \gamma$ where an arrow $\delta \to \varepsilon$ exists if Left moving second can win $\varepsilon + \delta^{\circ}$ (or something similar with a slight variation of the winning condition for Left and/or Right).
	
	\item Chapter \ref{chapter-cat} was all about categories and suggested that perhaps we should be looking at taxons rather than categories.  More work must be done to see if this is the case.
	
	\item Investigate Open Problem \ref{op-L-monoid}:  Classify all positions $\xi$ such that $\monoid{M}_{\cl{\xi}} \cong \monoid{M}_{\cl{\L(\xi)}}$. 
	
	\item Prove Conjecture \ref{conjecture-replace-*}.  That is, prove the following: If $\xi$ is a position with $o^-(\xi) = \Prev$ and $\monoid{M}_{\cl{\xi}} \cong \monoid{M}_{\cl{*}}$, then we can replace $*$ by $\xi$ in any position which has $*$ as an option without changing the resultant \mis monoid.
	
	\item We would like to extend the isomorphism results of Chapter \ref{chapter-congruent} from sets isomorphic to $\monoid{M}_{\cl{*}}$ to isomorphisms for other sets.  In particular, we would like to either prove or give counterexample to the following:
		Given two closed sets of positions $S_1$ and $S_2$, with \mis monoids $\monoid{M}_{S_1}$ and $\monoid{M}_{S_2}$ respectively, if
			\[ \monoid{M}_{S_1} \cong \monoid{M}_{S_2},\]
		is it then true that
			\[ \monoid{M}_{S_1+S_2} \cong  \monoid{M}_{S_1},\]
		where
		\[ S_1 + S_2 = \{s_1 + s_2 \mid s_1 \in S_1, s_2 \in S_2\}.\]
	
	\item Given a closed set of positions $\Upsilon$ and \mis monoid $\monoid{M}_{\Upsilon}$, is there a closed set $\Psi$ with the property $\monoid{M}_{\Upsilon} \cong \monoid{M}_{\Psi}$ such that $\Psi$ is minimal in some sense?  Some possibilities for minimal include in cardinality, in terms of the birthday of the elements within, or in terms of \mis canonical forms \cite{MCF}.
\end{enumerate}

Perhaps now that the initial steps have been taken in analysing \mis play games, we can finally drop the moniker which has plagued the subject, \emph{miserable \ms}.

\appendix
\chapter{Frequently Used Positions}\label{appendixA}

\section{$0$}

\emph{Game notation}: $\combgame{\{\cdot\mid\cdot\}}$\\
\emph{Game tree}:
		\unitlength 8pt
		\begin{center}
		\begin{graph}(0,0)(1,1)
		\graphnodesize{0.4}
		\fillednodestrue
		
		\roundnode{c1}(1,1)
		
		\end{graph}
		\end{center}
\emph{\Mis Outcome}: $\Next$\\
\emph{Other}: impartial, all-small, ab0\\
\emph{First introduced}: Example \ref{example-0-1-*}

\section{$*$}
\emph{Game notation}: $\combgame{\{0\mid0\}}$\\
\emph{Game tree}:
		\unitlength 8pt
		\begin{center}
		\begin{graph}(2,2)(0,0)
		\graphnodesize{0.4}
		\fillednodestrue
		
		\roundnode{A}(1,2)
		\roundnode{B}(0,0)
		\roundnode{C}(2,0)
		
		\edge{A}{C}
		\edge{A}{B}
		
		\end{graph}
		\end{center}
\emph{\Mis Outcome}: $\Prev$\\
\emph{Other}: impartial, all-small, ab1\\
\emph{First introduced}: Example \ref{example-0-1-*}

\section{$1$}
\emph{Game notation}: $\combgame{\{0\mid\cdot\}}$\\
\emph{Game tree}:
		\unitlength 8pt
		\begin{center}
		\begin{graph}(2,2)(0,0)
		\graphnodesize{0.4}
		\fillednodestrue
		
		\roundnode{A}(1,2)
		\roundnode{B}(0,0)
		%\roundnode{C}(2,0)
		
		%\edge{A}{C}
		\edge{A}{B}
		
		\end{graph}
		\end{center}
\emph{\Mis Outcome}: $\Right$\\
\emph{Other}: partizan\\
\emph{First introduced}: Example \ref{example-0-1-*}

\section{$\overline{1}$}
\emph{Game notation}: $\combgame{\{\cdot\mid0\}}$\\
\emph{Game tree}:
		\unitlength 8pt
		\begin{center}
		\begin{graph}(2,2)(0,0)
		\graphnodesize{0.4}
		\fillednodestrue
		
		\roundnode{A}(1,2)
		%\roundnode{B}(0,0)
		\roundnode{C}(2,0)
		
		\edge{A}{C}
		%\edge{A}{B}
		
		\end{graph}
		\end{center}
\emph{\Mis Outcome}: $\Left$\\
\emph{Other}: partizan\\
\emph{First introduced}: Example \ref{example-conjugate-0-1-*}

\section{$\sigma$}
\emph{Game notation}: $\combgame{\{*\mid\cdot\}}$\\
\emph{Game tree}:
		\begin{center}
		\begin{graph}(4,4)(-1,0)
		\graphnodesize{0.4}
		\fillednodestrue

		\roundnode{A}(1,4)
		\roundnode{B}(0,2)
		%\roundnode{C}(2,2)
		\roundnode{D}(-1,0)
		\roundnode{E}(1,0)

		%\edge{A}{C}
		\edge{A}{B}
		\edge{B}{D}
		\edge{B}{E}
		
		\end{graph}
		\end{center}	
\emph{\Mis Outcome}: $\Next$\\
\emph{Other}: partizan\\
\emph{First introduced}: Definition \ref{def-sigma}

\section{$\sigmab$}
\emph{Game notation}: $\combgame{\{\cdot\mid*\}}$\\
\emph{Game tree}:
		\begin{center}
		\begin{graph}(4,4)(-1,0)
		\graphnodesize{0.4}
		\fillednodestrue

		\roundnode{A}(1,4)
		%\roundnode{B}(0,2)
		\roundnode{C}(2,2)
		\roundnode{D}(3,0)
		\roundnode{E}(1,0)

		\edge{A}{C}
		\edge{C}{E}
		\edge{C}{D}
		%\edge{A}{B}
		%\edge{B}{D}
		%\edge{B}{E}
		
		\end{graph}
		\end{center}	
\emph{\Mis Outcome}: $\Next$\\
\emph{Other}: partizan\\
\emph{First introduced}: Section \ref{sec-sigma}

\section{$\rho$}
\emph{Game notation}: $\combgame{\{*\mid0\}}$\\
\emph{Game tree}:
		\unitlength 8pt
		\begin{center}
		\begin{graph}(4,4)(-1,0)
		\graphnodesize{0.4}
		\fillednodestrue

		\roundnode{A}(1,4)
		\roundnode{B}(0,2)
		\roundnode{C}(2,2)
		\roundnode{D}(-1,0)
		\roundnode{E}(1,0)
		
		\edge{A}{C}
		\edge{A}{B}
		\edge{B}{D}
		\edge{B}{E}
		
		\end{graph}
		\end{center}	
\emph{\Mis Outcome}: $\Left$\\
\emph{Other}: all-small, ab2\\
\emph{First introduced}: Definition \ref{def-rho}

\section{$\rhob$}
\emph{Game notation}: $\combgame{\{0\mid*\}}$\\
\emph{Game tree}:
		\unitlength 8pt
		\begin{center}
		\begin{graph}(4,4)(-1,0)
		\graphnodesize{0.4}
		\fillednodestrue

		\roundnode{A}(1,4)
		\roundnode{B}(0,2)
		\roundnode{C}(2,2)
		\roundnode{D}(3,0)
		\roundnode{E}(1,0)
		
		\edge{A}{C}
		\edge{A}{B}
		%\edge{B}{D}
		%\edge{B}{E}
		\edge{C}{D}
		\edge{C}{E}
		
		\end{graph}
		\end{center}	
\emph{\Mis Outcome}: $\Right$\\
\emph{Other}: all-small, ab2\\
\emph{First introduced}: Section \ref{sec-rho-rhob}

\section{$\tau$}
\emph{Game notation}: $\combgame{\{*\mid*\}}$\\
\emph{Game tree}:
		\unitlength 8pt
		\begin{center}
		\begin{graph}(4,4)(-1,0)
		\graphnodesize{0.4}
		\fillednodestrue

		\roundnode{A}(1,4)
		\roundnode{B}(-1,2)
		\roundnode{C}(3,2)
		\roundnode{D}(-2,0)
		\roundnode{E}(0,0)
		\roundnode{F}(2,0)
		\roundnode{G}(4,0)

		\edge{A}{C}
		\edge{A}{B}
		\edge{B}{D}
		\edge{B}{E}
		\edge{C}{F}
		\edge{C}{G}
		
		\end{graph}
		\end{center}		
\emph{\Mis Outcome}: $\Next$\\
\emph{Other}: impartial, all-small, ab2\\
\emph{First introduced}: Definition \ref{def-tau}

\section{$\tau^n$}
\emph{Game notation}: $\tau^0 = *$, $\tau^n = \combgame{\{\tau^{n-1}\mid\tau^{n-1}\}}$\\	
\emph{\Mis Outcome}: 	\[ o^-(\tau^n) = \begin{cases}
\Prev &\text{if } n \equiv 0 \imod 2;\\
\Next &\text{if } n \equiv 1 \imod 2.
\end{cases}\]
\emph{Other}: impartial, all-small, ab($k+1$)\\
\emph{First introduced}: Definition \ref{def-tau^n}

\section{$\L(\xi)$}
\emph{Game notation}: $\combgame{\{\xi\mid\cdot\}}$\\
\emph{\Mis Outcome}: 
	\[ o^-(\L(\xi)) = \begin{cases}
	\Next &\text{if } o^-(\xi) = \Prev \cup \Left;\\
	\Right &\text{if } o^-(\xi) = \Next \cup \Right.
	\end{cases}\]
\emph{First introduced}: Definition \ref{def-L(xi)}

\section{$\R(\xi)$}
\emph{Game notation}: $\combgame{\{\cdot\mid\xi\}}$\\
\emph{\Mis Outcome}: 
	\[ o^-(\R(\xi)) = \begin{cases}
	\Next &\text{if } o^-(\xi) = \Prev \cup \Right;\\
	\Left &\text{if } o^-(\xi) = \Next \cup \Left.
	\end{cases}\]
\emph{First introduced}: Definition \ref{def-L(xi)}

\section{$\overline{\xi}$}
\emph{Game notation}: For a position $\xi = \combgame{\{\xi^L\mid\xi^R\}}$, we recursively define $\overline{\xi}$ as $\overline{\xi} = \combgame{\{\overline{\xi^R}\mid\overline{\xi^L}\}}$ where $\overline{0} = 0$\\
\emph{\Mis Outcome}: 
		\begin{align*}
			o^-(\xi) = \Left &\implies o^-(\overline{\xi}) = \Right;\\
			o^-(\xi) = \Next &\implies o^-(\overline{\xi}) = \Next;\\
			o^-(\xi) = \Prev &\implies o^-(\overline{\xi}) = \Prev;\\
			o^-(\xi) = \Right &\implies o^-(\overline{\xi}) = \Left.
		\end{align*}
\emph{First introduced}: Definition \ref{def-conjugate}	
	
\section{$*_n$}
\emph{Game notation}: For $n \in \nat$, the position $*_n$ is defined recursively as follows:
		\begin{align*}
			*_n &= \combgame{\{0, *_1, *_2, \ldots, *_{n-1}\mid0, *_1, *_2, \ldots, *_{n-1}\}}.
		\end{align*}
	Generally, instead of $*_1$, we merely write $*$.\\	
\emph{\Mis Outcome}: 	\[ o^-(*_n) = \begin{cases}
\Prev &\text{if } n=1;\\
\Next &\text{if } n>1.
\end{cases}\]
\emph{Other}: impartial, all-small\\
\emph{First introduced}: 
	\begin{itemize}
		\item $*$:  Example \ref{example-0-1-*}
		\item $*_2$: Table \ref{table-normal-mis-outcomes}
		\item $*_n$: Definition \ref{def-cgstarn}
	\end{itemize}

\section{$\eta$}
\emph{Game notation}: $\eta = \combgame{\{\combgame{\{0\mid\combgame{\{*\mid0\}}\}}\mid*\}}$\\
\emph{Game tree}:
		\unitlength 8pt
		\begin{center}
		\begin{graph}(7,10)(0,0)
		\graphnodesize{0.4}
		\fillednodestrue

		\roundnode{a1}(0,0)
		\roundnode{a2}(2,0)
		
		\roundnode{b1}(1,2)
		\roundnode{b2}(3,2)
		
		\roundnode{c1}(0,4)
		\roundnode{c2}(2,4)
		\roundnode{c3}(4,4)
		\roundnode{c4}(6,4)
		
		\roundnode{d1}(1,6)
		\roundnode{d2}(5,6)
		
		\roundnode{e1}(3,8)
		
		\edge{a1}{b1}
		\edge{a2}{b1}
		\edge{b1}{c2}
		\edge{b2}{c2}
		\edge{c1}{d1}
		\edge{c2}{d1}
		\edge{c3}{d2}
		\edge{c4}{d2}
		\edge{d1}{e1}
		\edge{d2}{e1}

		\end{graph}
		\end{center}	
\emph{\Mis Outcome}: $\Next$\\
\emph{Other}: all-small, $\ab{4}$, used to give an example of an all-small game where 
	\[* + * \not \equiv 0 \imod{\cl{\L(\eta)}}\]
\emph{First introduced}: Example \ref{example-eta}

\section{$\theta$}
\emph{Game notation}: $\theta = \combgame{\{\combgame{\{*\mid\rho\}}\mid\combgame{\{\rhob\mid*\}}\}}$\\
\emph{Game tree}:
		\begin{center}
		\begin{graph}(14,10)(0,0)
		\graphnodesize{0.4}
		\fillednodestrue

		\roundnode{a1}(3,0)
		\roundnode{a2}(5,0)
		\roundnode{a3}(9,0)
		\roundnode{a4}(11,0)
		
		\roundnode{b1}(0,2)
		\roundnode{b2}(2,2)
		\roundnode{b3}(4,2)
		\roundnode{b4}(6,2)
		\roundnode{b5}(8,2)
		\roundnode{b6}(10,2)
		\roundnode{b7}(12,2)
		\roundnode{b8}(14,2)

		\roundnode{c1}(1,4)
		\roundnode{c2}(5,4)
		\roundnode{c3}(9,4)
		\roundnode{c4}(13,4)
		
		\roundnode{d1}(3,6)
		\roundnode{d2}(11,6)
		
		\roundnode{e1}(7,8)
		
		\edge{b3}{a1}
		\edge{b3}{a2}
		\edge{c2}{b3}
		\edge{c2}{b4}
		\edge{b1}{c1}
		\edge{b2}{c1}
		\edge{d1}{c1}
		\edge{d1}{c2}
		\edge{c3}{b5}
		\edge{c3}{b6}
		\edge{c4}{b7}
		\edge{c4}{b8}
		\edge{b6}{a3}
		\edge{b6}{a4}
		\edge{d2}{c3}
		\edge{d2}{c4}
		\edge{e1}{d1}
		\edge{e1}{d2}
		
		\end{graph}

		\end{center}		
\emph{\Mis Outcome}: $\Next$ \\
\emph{Other}: all-small, $\ab{4}$, used as an example of a binary, all-small position with $o^-(\theta + \overline{\theta}) = \Prev$ \\
\emph{First introduced}: Example \ref{example-theta-prev}

\bibliographystyle{plain}
\bibliography{meghan}

\end{document}